\newtheorem{theorem}{Theorem}[section]
\newtheorem{lemma}[theorem]{Lemma}
\newtheorem{definition}[theorem]{Definition}
\newtheorem{proposition}[theorem]{Proposition}
\newtheorem{corollary}[theorem]{Corollary}
\newtheorem{remark}[theorem]{Remark}
\newtheorem{example}[theorem]{Example}
\newtheorem{conjecture}[theorem]{Conjecture}
\DeclareMathAlphabet{\mathpzc}{OT1}{pzc}{m}{it}
\begin{document}


 \thesistitlepage                     

\pagenumbering{roman} 

\tableofcontents                     

\begin{thesisacknowledgments}        

I would like to thank my supervisors Dr Jonathan Warren and Dr Nikolaos Zygouras for the invaluable guidance. I am also grateful to Dr Bruce Westbury for mentioning the Berele correspondence on which my work on Chapter 4 was based. Finally, I would like to thank the reviewers, Dr Sigurd Assing and Dr Sergey Oblezin, for carefully reading my manuscript and for giving detailed comments and suggestions.
\end{thesisacknowledgments}

\begin{thesisdeclaration}        
I hereby declare that this Ph.D. thesis entitled \enquote{\textbf{Positive temperature dynamics on Gelfand-Tsetlin patterns restricted by wall}} was carried out by me for the degree of Doctor of
Philosophy in Statistics under the guidance and supervision of Dr Jonathan Warren and Dr Nikolaos Zygouras. None of this work have been previously published or submitted for a degree or any other qualification at this University or to any other institute. Work of others I have made use of is acknowledged at the respective place in the text.
\end{thesisdeclaration}

\begin{thesisabstract} 
The thesis focuses on processes on symplectic Gelfand-Tsetlin patterns. In chapter 4, a process with dynamics inspired by the Berele correspondence \cite{Berele_1986} is presented. It is proved that the shape of the pattern is a Doob $h$-transform of independent random walks with $h$ given by the symplectic Schur function. This is followed by an extension to a $q$-weighted version. This randomised version has itself a branching structure and is related to a $q$-deformation of the $\mathfrak{so}_{2n+1}$-Whittaker functions. In chapter 5, we present a fully randomised process. This process $q$-deforms a process proposed in \cite{Warren_Windridge_2009}. In chapter 7 we prove the convergence of the $q$-deformation of the $\mathfrak{so}_{2n+1}$-Whittaker functions to the classical $\mathfrak{so}_{2n+1}$-Whittaker functions when $q\to 1$. Finally, in chapter 8 we turn our interest to the continuous setting and construct a process on patterns which contains a positive temperature analogue of the Dyson's Brownian motion of type $B/C$. The processes obtained are $h$-transforms of Brownian motions killed at a continuous rate that depends on their distance from the boundary of the Weyl chamber of type $B/C$, with $h$ related with the $\mathfrak{so}_{2n+1}$-Whittaker functions.

\end{thesisabstract}


\pagenumbering{arabic} 

\chapter{Introduction}
In recent years there has been intense activity around the interplay of probabilistic models and algebraic/integrable structures. Integrable probability is concerned with the study of the structure of probabilistic models combining ideas and techniques from algebraic combinatorics, representation theory, theory of symmetric functions, integrable systems etc.

The onset of this activity was the work of Baik-Deift-Johansson \cite{Baik_Deift_Johansson_1999}, where the authors studied the distribution of the length of the longest increasing subsequence in a random permutation and proved its relation with the \emph{Tracy-Widom GUE} distribution, namely the distribution of the largest eigenvalue of a random matrix from the \emph{Gaussian Unitary Ensemble}. The Gaussian Unitary Ensemble GUE$(N)$ is described by a probability measure on the set of Hermitian matrices $H=\{H(i,j)\}_{1\leq i,j\leq N}$ with density given by
\[\dfrac{1}{Z_{GUE(N)}}e^{-\frac{N}{2}trH^2}\]
where $Z_{GUE(N)}$ is the normalising constant and $tr$ denotes the \emph{trace} of a matrix.

The Robinson-Schensted correspondence, is a combinatorial bijection between permutations of integers and a pair of Young tableaux $(P,Q)$ \cite{Robinson_1938, Scensted_1961, Stanley_2001}. The longest subsequence of a permutation is then given by the length of the first row of the two tableaux providing a connection between random matrices and Young tableaux.

Further connections of probabilistic models to algebraic structures were due to Okounkov \cite{Okounkov_2001} and Okounkov-Reshetikhin \cite{Okounkov_Reshetikhin_2003}, where the \emph{Schur functions}, namely the characters of the symmetric group, play an important role. Okounkov and Reshetikhin defined the Schur process to be a probability measure on sequences of Young diagrams given in terms of Schur functions.

A parallel approach in the spirit of Okounkov-Reshetikhin was developed in the Macdonald processes setting. A complete review of processes on Young diagrams is due to Borodin-Corwin \cite{Borodin_Corwin_2011} where the authors study probability measures on Young tableaux. The main object for the study of these models is the Macdonald polynomials, which are generalisation of the Schur functions introduced by Macdonald \cite{Macdonald_2003}. An example of such processes comes from the $q$-deformation of the Robinson-Schensted correspondence proposed by O'Connell-Pei \cite{O'Connell_Pei_2013}. We refer to the work of Borodin-Petrov \cite{Borodin_Petrov_2014} for a full classification of dynamics that preserve the class of Macdonald processes. 

In the continuous setting, the main object of interest for us is the O' Connell-Yor semi-discrete random polymer \cite{O'Connell_Yor_2001} and a path transformation of Brownian paths proposed by O' Connell \cite{O'Connell_2012} that generalises Matsumoto-Yor's process \cite{Matsumoto_Yor_1999,Matsumoto_Yor_2000} in the type $A$ setting. In the core of these models we find the $\mathfrak{gl}_{N}$-Whittaker functions. 

These works correspond to processes associated with root systems of type $A$. We extend this setting in studying dynamics of particles that correspond and are linked to root systems of type $B/C$. Adopting the Gelfand-Tsetlin framework this corresponds to looking at dynamics on particles restricted by a wall. This brought us over to objects like \emph{Macdonald-Koornwinder polynomials}, and more specifically \emph{$q$-Whittaker functions of type B} (see e.g. \cite{Koornwinder_1992}, \cite{vanDiejen_1999}, \cite{vanDiejen_Emsiz_2015a}, \cite{vanDiejen_Emsiz_2015b}) in the discrete setting and \emph{Whittaker of type B} (see e.g. \cite{Gerasimov_et_al_2012}) in the continuous setting. Many of these objects had already appeared in the literature but we arrived to these following a probabilistic paths.

In the rest of the Introduction we will present some models mainly in the type $A$ setting in order to motivate the study of the models that appear in the main body of this thesis.
\section{Examples of processes on Gelfand-Tsetlin patterns in the zero-temperature setting}
Let us start with the simplest example of a discrete-space process, namely an $N$-dimensional random walk. Suppose we are instead interested in an $N$-dimensional random walk killed upon exiting the set
\[\mathcal{W}^N_{\mathbb{Z}} = \{z=(z_{1},...,z_{n})\in \mathbb{Z}^N: z_{1}\geq ...\geq z_{N}\}\]
which is conditioned not to be killed. The formalism for the \enquote{conditioning} is via Doob's $h$-transform and we will explain this concept more in Chapter \ref{prerequisites}.

In order to construct such a process we need a function which is harmonic for the generator of the killed process. We fix a parameter $a=(a_{1},...,a_{N})\in \mathbb{R}^N_{>0}$. Let us consider the process $Z=(Z_{1},...,Z_{N})$ which evolves as follows. The particle $Z_{i}$ waits, independently of the other particles, an exponential waiting time with parameter $a_{i}$ and then jumps to the right. 

The Schur function $S_{x}^{(N)}$, which is the character of the unitary group, is a symmetric function parametrized by $x \in \mathcal{W}^N_{\mathbb{Z}}$ defined as follows
\[S_{x}^{(N)}(a)=\dfrac{\det (a_{j}^{x_{i}+N-i})_{1\leq i,j \leq N}}{\det (a_{j}^{N-i})_{1\leq i,j\leq N}}.\]
These functions are known to satisfy the Pieri formula (see for example \cite{Stanley_2001}, cor. 7.15.3)
\[\sum_{i=1}^N \mathbbm{1}_{\{z+e_{i}\in \mathcal{W}^N_{\mathbb{Z}}\}}S^{(N)}_{z+e_{i}}(a)=\sum_{i=1}^Na_{i}S^{(N)}_{z}(a)\]
where $e_{1},...,e_{N}$ denotes the standard basis of $\mathbb{Z}^N$. The Pieri formula implies that for the function $h_{N}(z)=a_{1}^{-z_{1}}...a_{N}^{-z_{N}}S_{z}^{(N)}(a)$, the following identity holds
\[\sum_{i=1}^N \mathbbm{1}_{\{z+e_{i}\in \mathcal{W}^N_{\mathbb{Z}}\}}a_{i}h_{N}(z+e_{i})=\sum_{i=1}^Na_{i} h_{N}(z)\]
and hence the function $h_{N}$ is harmonic for the generator of the $N$-dimensional random walk killed when it exits the set $\mathcal{W}^N_{\mathbb{Z}}$.

Then the $h$-transform of $Z$ is a Markov process with state space $\mathcal{W}^N_{\mathbb{Z}}$ whose transition rates are given by
\begin{equation}
Q(z,z+e_{i})=a_{i}\dfrac{h_{N}(z+e_{i})}{h_{N}(z)}\mathbbm{1}_{\{z+e_{i}\in \mathcal{W}^N_{\mathbb{Z}}\}}=\dfrac{S^{(N)}_{z+e_{i}}(a)}{S^{(N)}_{z}(a)}\mathbbm{1}_{\{z+e_{i}\in \mathcal{W}^N_{\mathbb{Z}}\}}, \text{ for }1\leq i \leq N
\label{eq:Schur_process_rates(intro)}
\end{equation}
and $Q(z,z)=-\sum_{i=1}^N a_{i}$.

Moving to higher dimensional objects we may define the \emph{integer-valued Gelfand-Tsetlin pattern} to be a collection of points $\mathbf{z}=(z^1,...,z^N)$ where $z^k \in \mathbb{Z}^k$, satisfying the interlacing condition
\[z^{k+1}_{i+1}\leq z^k_{i}\leq z^{k+1}_{i}, \text{ for }1\leq i \leq k < N.\]
An integer-valued Gelfand-Tsetlin pattern is in one-to-one correspondence with the Young tableau (\cite{Berenshtein_Zeleviskii_1988}). Let us denote by $\mathbb{K}^N_{\mathbb{Z}}$ the set of integer-valued Gelfand-Tsetlin patterns with $N$ levels. We observe that $\mathbb{K}^N_{\mathbb{Z}}$ essentially consists of nested elements of $\{\mathcal{W}^k_{\mathbb{Z}},1\leq k \leq N\}$. There are several probabilistic models studied in the literature with state space $\mathbb{K}^N_{\mathbb{Z}}$. We present here only a few of those.

The first example was proposed by Warren-Windridge in \cite{Warren_Windridge_2009}. The particles perform independent simple random walks, but a jump of a particle $Z^k_{i}$ is suppressed if otherwise it would land outside of the interval 
\begin{equation*}
\mathcal{I}^k_{i}:=\left\{ \begin{array}{ll}
\lbrack Z^{k-1}_{1}, \infty) & \text{ if }i=1\\
\lbrack Z^{k-1}_{i}, Z^{k-1}_{i-1} \rbrack & \text{ if }1<i<k\\
( -\infty,Z^{k-1}_{k-1}\rbrack & \text{ if } i=k
\end{array} \right. 
\end{equation*}
Moreover, there is a pushing mechanism which ensures the interlacing condition with the levels below.

We formally define the process $\mathbf{Z}$ as follows.
\begin{definition}\label{Warren-Windridge-process}
Let $a = (a_{1},...,a_{N})\in \mathbb{R}_{>0}^N$ be fixed parameter. The continuous-time Markov process $\mathbf{Z}=(\mathbf{Z}(t),t\geq 0)$ has state space $\mathbb{K}^N_{\mathbb{Z}}$ and evolves as follows. The particle $Z^k_{i}$ waits, independently of the other particles, an exponential time with parameter $a_{k}$ and then attempts a rightward jump. If, for the jump time $T$, $Z^k_{i}(T-)=Z^{k-1}_{i-1}(T-)$, then the jump is suppressed since such a transition would lead to violation of the interlacing condition. Otherwise, the jump is performed. If at a jump time $T$, $Z^k_{i}$ performs a right jump and  $Z^k_{i}(T-)=Z^{k+1}_{i}(T-)$ then $Z^{k+1}_{i}$ simultaneously performs itself a right jump, in order to maintain the ordering. 
\end{definition}

We already gave a definition for the Schur functions. Let us now give a second combinatorial definition. At each $\mathbf{z}\in \mathbb{K}^N_{\mathbb{Z}}$ we attach a weight
\[w^N_a(\mathbf{z})=\prod_{k=1}^Na_{k}^{\sum_{i=1}^k z^k_{i}-\sum_{i=1}^{k-1}z^{k-1}_{i}}.\]
The Schur function parametrized by $z\in \mathcal{W}^N_{\mathbb{Z}}=\{(z_{1},...,z_{N})\in \mathbb{Z}^N: z_{1}\geq ... \geq z_{N}\}$ is given by
\begin{equation}
S_{z}^{(N)}(a_{1},...,a_{N})=\sum_{z \in \mathbb{K}^N_{\mathbb{Z}}[z]}w^N_{a}(\mathbf{z})
\label{eq:Schur_comb}
\end{equation}
where $\mathbb{K}^N_{\mathbb{Z}}[z]$ denotes the set of Gelfand-Tsetlin patterns with shape $z^N=z$. Then the function
\[M_{a}^N(\mathbf{z};z)=\dfrac{w^N_{a}(\mathbf{z})}{S^{(N)}_{z}(a)}\]
is a probability measure on the set of Gelfand-Tsetlin patterns with shape $z$.
\begin{theorem}[\cite{Warren_Windridge_2009}]\label{intro_theorem_schur}Suppose the process $\mathbf{Z}$, defined in \ref{Warren-Windridge-process}, has initial distribution $M^N_{a}(\cdot;z)$, for some $z \in \mathcal{W}^N_{\mathbb{Z}}$. Then, $(Z^N(t),t\geq 0)$ is a Markov process, started at $z$, with state space $\mathcal{W}^N_{\mathbb{Z}}$ and transition rate matrix $Q=\{Q(z,z'),z,z' \in \mathcal{W}^N_{\mathbb{Z}}\}$, where $Q(z,z')$ are as in \eqref{eq:Schur_process_rates(intro)} . 
\end{theorem}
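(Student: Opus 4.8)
The plan is to realise $(Z^N(t),t\ge0)$ as a Markov function of the pattern process $\mathbf{Z}$ via the intertwining criterion of Rogers and Pitman. Write $\phi:\mathbb{K}^N_{\mathbb{Z}}\to\mathcal{W}^N_{\mathbb{Z}}$ for the shape map $\phi(\mathbf{z})=z^N$, let $\mathcal{L}$ be the generator of $\mathbf{Z}$ read off from Definition~\ref{Warren-Windridge-process}, and let $\Lambda$ be the Markov kernel from $\mathcal{W}^N_{\mathbb{Z}}$ to $\mathbb{K}^N_{\mathbb{Z}}$ defined by $\Lambda(z,\mathbf{z})=M^N_a(\mathbf{z};z)$, which by construction is supported on $\{\mathbf{z}:\phi(\mathbf{z})=z\}$. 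The Pieri formula shows that $Q$ of \eqref{eq:Schur_process_rates(intro)} has constant total jump rate $\sum_i a_i$, so it generates a genuine non-explosive Markov semigroup $\bar{P}_t$ on $\mathcal{W}^N_{\mathbb{Z}}$; it therefore suffices to establish the intertwining of generators $\Lambda\mathcal{L}=Q\Lambda$. This relation propagates to the semigroups, $\Lambda\bar{P}_t=P_t\Lambda$, and combined with the support property of $\Lambda$ the Rogers--Pitman theorem gives that, from the initial law $M^N_a(\cdot;z)$, the process $Z^N(t)=\phi(\mathbf{Z}(t))$ is Markov with rates $Q$ started at $z$ --- in fact with the stronger conclusion that $\mathbf{Z}(t)$ conditioned on $(Z^N(s))_{s\le t}$ remains $M^N_a(\cdot;Z^N(t))$-distributed.

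To verify $\Lambda\mathcal{L}=Q\Lambda$ I would compute it entrywise. Fixing a target pattern $\mathbf{z}'$ with shape $z'=\phi(\mathbf{z}')$ and a source shape $z$, the support of $\Lambda$ collapses the right-hand side to $Q(z,z')\,M^N_a(\mathbf{z}';z')$, which vanishes unless $z'=z$ or $z'=z+e_i$ for some admissible $i$; so the task reduces to checking
\[\sum_{\mathbf{z}\,:\,\phi(\mathbf{z})=z}M^N_a(\mathbf{z};z)\,\mathcal{L}(\mathbf{z},\mathbf{z}')=Q(z,z')\,M^N_a(\mathbf{z}';z')\]
in those two cases. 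For $z'=z+e_i$ the nonzero summands come from transitions of $\mathbf{Z}$ that move the top particle $Z^N_i$ one step to the right --- either directly, or as the top end of a cascade of pushes set off lower in the pattern; after cancelling $S^{(N)}_z(a)$ and unwinding the weight ratios $w^N_a(\mathbf{z})/w^N_a(\mathbf{z}')$ the sum telescopes to $w^N_a(\mathbf{z}')/S^{(N)}_z(a)$, the combinatorial expression \eqref{eq:Schur_comb} of $S^{(N)}$ as a sum of weights $w^N_a$ over $\mathbb{K}^N_{\mathbb{Z}}[z]$ being precisely the identity that rewrites the partial sums. For $z'=z$ one collects the diagonal term $\mathcal{L}(\mathbf{z}',\mathbf{z}')$, which is minus the total jump rate out of $\mathbf{z}'$, together with the incoming flux from shape-preserving transitions --- a particle $Z^k_i$ with $k<N$ jumping without the cascade reaching level $N$; after weighting by $M^N_a$ and summing, the $\mathbf{z}'$-dependent parts cancel and what remains is exactly the constant diagonal $Q(z,z)=-\sum_i a_i$, again an instance of the Pieri formula.

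I expect the main obstacle to be exactly this bookkeeping of the push--block dynamics inside the generator sum. A single attempted jump of some $Z^k_i$ may be suppressed (when $Z^k_i=Z^{k-1}_{i-1}$) or may trigger a run of simultaneous rightward jumps $Z^k_i,Z^{k+1}_i,\dots,Z^m_i$, so $\mathcal{L}(\mathbf{z},\cdot)$ is supported on a whole family of correlated pattern increments; to evaluate $\sum_{\mathbf{z}}M^N_a(\mathbf{z};z)\mathcal{L}(\mathbf{z},\mathbf{z}')$ one must correctly list every pair $(\mathbf{z},(k,i))$ whose transition lands on a prescribed $\mathbf{z}'$ and keep track of how $w^N_a$ changes along each cascade (each pushed coordinate on level $\ell$ shifts the exponent of $a_\ell$). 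The delicate point is arranging these weighted sums to collapse to exactly the Pieri identity rather than to Pieri plus leftover boundary contributions; the feature that makes the collapse clean is that $Z^k_i$ rings at a rate $a_k$ depending only on the level $k$ and not on the position $i$, and pinning down where that is used is the heart of the argument.
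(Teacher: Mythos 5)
Your proposal follows essentially the same route as the paper (and the cited Warren--Windridge argument): the Rogers--Pitman criterion with the kernel $\Lambda(z,\cdot)=M^N_a(\cdot;z)$, the intertwining verified at the level of rate matrices and lifted to semigroups via the boundedness of the rates, and the Pieri formula as the algebraic identity that makes the weighted sums over push-cascades collapse. The only organizational difference is that the paper carries out the generator intertwining by induction on the number of levels with a two-level ``helper'' relation rather than a single global telescoping sum, but this is the same computation arranged differently.
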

A second example of a process on $\mathbb{K}^N_{\mathbb{Z}}$, with the same behaviour regarding the bottom level is inspired from the Robinson-Schensted correspondence and corresponds to a process where only the leftmost particles $Z^k_{k}$ can jump independently to the right at rate $a_{k}$. Whenever a particle $Z^k_{j}$, $1\leq j \leq k$ jumps to the right, it triggers the right jump of exactly one of its lower nearest neighbours, the right neighbour $Z^{k+1}_{j}$, if $Z^{k+1}_{j}=Z^k_{j}$, or the left, $Z^{k+1}_{j+1}$, otherwise. This process is a Poissonization of a process described by O' Connell \cite{O'Connell_2003}. \\

Moving to the continuous setting, one of the most well-known examples of an interacting particles system is the so called Dyson's Brownian Motion. Let us consider a Hermitian random matrix $H=(H(t))_{t\geq 0}$ of size $N$ of the form
\begin{equation*}
\small
H(t)=\left [ 
\begin{array}{llll}
B_{11}(t) & \frac{1}{\sqrt{2}}(B^{(R)}_{12}(t)+iB^{(I)}_{12}(t)) & ... & \frac{1}{\sqrt{2}}(B^{(R)}_{1N}(t)+iB^{(I)}_{1N}(t))\\
\frac{1}{\sqrt{2}}(B^{(R)}_{21}(t)-iB^{(I)}_{21}(t)) & B_{22}(t) & ... & \frac{1}{\sqrt{2}}(B^{(R)}_{2N}(t)+iB^{(I)}_{2N}(t))\\
\vdots & \vdots & \ddots & \vdots \\
\frac{1}{\sqrt{2}}(B^{(R)}_{N1}(t)+iB^{(I)}_{N1}(t))& \frac{1}{\sqrt{2}}(B^{(R)}_{N2}(t)+iB^{(I)}_{N2}(t))& ... & B_{NN}(t)
\end{array}
\right ] 
\end{equation*} 
where $B_{jk}^{(R)}=B_{kj}^{(R)}$, $B_{jk}^{(I)}=B_{kj}^{(I)}$ and $B_{jj}$, for $1\leq j \leq N$, $B_{jk}^{(R)},B_{jk}^{(I)}$ for $1\leq j <k \leq N$ are standard independent Brownian motions.

Since the matrix $H$ is Hermitian all its eigenvalues are real. Dyson proved in \cite{Dyson_1962} that the eigenvalues $(X^N_{1}(t)\geq ... \geq X^N_{N}(t))_{t\geq 0}$ form a diffusion process with evolution given by 
\[dX_{i} = dB_{i}+\sum_{1\leq j\neq i \leq N}\dfrac{dt}{X_{i}-X_{j}}, \qquad 1\leq i \leq N\]
where $B_{i}$, for $1\leq i \leq N$, are independent, standard Brownian motions.

Let us consider the function
\[h_{N}(x)=\prod_{1\leq i<j \leq N}(x_{i}-x_{j}).\]
The function $h_{N}$ has the following properties
\begin{itemize}
\item it is harmonic for the generator of the $N$-dimensional standard Brownian motions;
\item it vanishes at the boundary of the Weyl chamber $\mathcal{W}^N$, i.e.
\[h_{N}(x)\big|_{x_{i}=x_{i+1}}=0.\]
\end{itemize}
Therefore, the function $h_{N}$ can be used to re-write the generator of the Dyson's Brownian motion as follows
\begin{equation}
\mathcal{L}=\dfrac{1}{2}\sum_{i=1}^N \dfrac{\partial^2}{\partial x_{i}^2}+\sum_{i=1}^N \dfrac{\partial}{\partial x_{i}}\log h_{N} \dfrac{\partial}{\partial x_{i}}=\dfrac{1}{h_{N}}\Big(\, \dfrac{1}{2}\sum_{i=1}^N \dfrac{\partial^2}{\partial x_{i}^2}\, \Big)h_{N}.
\label{eq:DysonA_generator_intro}
\end{equation}
Therefore, $X$ can be identified as an $N$-dimensional standard Brownian motion killed whenever two particles collide, conditioned not to be killed. 

The top-left $(N-1) \times (N-1)$ minor of $H$ is itself a Hermitian matrix $H_{N-1}$ with the same structure as $H$, therefore its eigenvalues are also real and are distributed as an $(N-1)$-dimensional Dyson's Brownian motion. Moreover, they satisfy the interlacing condition 
\[x^N_{N}\leq x^{N-1}_{N-1} \leq ... \leq x^{N-1}_{1}\leq x^N_{1}.\]
In \cite{Warren_2007}, Warren extended the observation that two Dyson's Brownian motions are intertwined and constructed a process on the real-valued Gelfand-Tsetlin cone, denoted by $\mathbb{K}^N$, which is defined as the set of points $(x^{1},...,x^N)$ with $x^k \in \mathbb{R}^k$ satisfying the interlacing condition
\[x^{k+1}_{i+1}\leq x^{k}_{i}\leq x^{k+1}_{i},\text{ for }1\leq i \leq k \leq N-1. \]

\begin{definition}\label{Warren-process}
Let $(B^k_{i},1\leq i \leq k \leq N)$ be a collection of independent standard Brownian motions. We define the Markov process $\mathbf{X}=(\mathbf{X}(t),t\geq 0)$ with state space the Gelfand-Tsetlin cone $\mathbb{K}^N$ and evolution described as follows. For $t>0$
\begin{equation}
\begin{split}
X^1_{1}(t)&=B^1_{1}(t)\\
X^k_{i}(t)&=B^k_{i}(t)+L^{k,-}_{i}(t)-L^{k,+}_{i}(t)
\end{split}
\label{eq:Warren_process}
\end{equation}
where 
\[L^{k,-}_{i}(t)=\int_{0}^t \mathbbm{1}_{X^k_{i}(s)=X^{k-1}_{i}(s)}dL_{i}^{k,-}(s),\quad  L^{k,+}_{i}(t)=\int_{0}^t \mathbbm{1}_{X^k_{i}(s)=X^{k-1}_{i-1}(s)}dL_{i}^{k,+}(s)\]
are continuous, non-decreasing processes that increase only when $X^k_{i}(s)=X^{k-1}_{i}(s)$ and $X^k_{i}(s)=X^{k-1}_{i-1}(s)$ respectively. By convention, we set $L^{k,-}_{k}=L^{k,+}_{1}\equiv 0$.
\end{definition}
Then the following result holds.
\begin{theorem}[\cite{Warren_2007},prop. 6] Let $(\mathbf{X}(t);t\geq 0)$ be the process defined in \ref{Warren-process} started at the origin. Then for $k=1,2,...,N$, $(X^k_{1}(t),...,X^k_{k}(t);t\geq 0)$ is distributed as a $k$-dimensional Dyson's Brownian motion.
\end{theorem}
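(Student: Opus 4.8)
The plan is to construct $\mathbf{X}$ one level at a time and then identify the law of each level by combining a Markov-functions (Rogers--Pitman) argument with an intertwining between the Dyson semigroups of consecutive dimensions. First I would settle well-posedness. Given the continuous trajectories of level $k-1$, the coordinates $X^k_1,\dots,X^k_k$ do not interact with one another: each $X^k_i$ is confined to the time-varying interval whose endpoints are the adjacent level-$(k-1)$ particles (with the outer endpoints at $\pm\infty$), and within it $X^k_i$ solves a one-dimensional Skorokhod reflection problem driven by $B^k_i$, the reflecting/pushing terms $L^{k,\pm}_i$ of Definition~\ref{Warren-process} being exactly the boundary local times. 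Each such problem has a unique strong solution with continuous non-decreasing $L^{k,\pm}_i$, so $\mathbf{X}$ is well-defined by induction on the level; starting from the origin, where the intervals are initially degenerate, is handled by a short entrance-law argument.

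For the identification, fix $k$ and observe that the multilevel process $(X^1,\dots,X^k)$ is itself Markov on $\mathbb{K}^k$: the dynamics of $(X^1,\dots,X^{k-1})$ are autonomous, and those of $X^k$ depend on the remaining coordinates only through the level-$(k-1)$ particles, which belong to the state. Let $\Lambda_k$ be the Markov kernel from $\mathcal{W}^k$ to $\mathbb{K}^k$ given by normalized Lebesgue measure on the Gelfand--Tsetlin polytope $\{(x^1,\dots,x^{k-1},x):\ x^1\prec\dots\prec x^{k-1}\prec x\}$ with the top level fixed at $x$ --- the continuous, zero-temperature analogue of the measure $M^N_a(\,\cdot\,;z)$ of the Schur picture --- whose normalizing constant is proportional to $h_k(x)$, by iterating the integral identity $\int_{y\prec x}h_m(y)\,dy \propto h_{m+1}(x)$ (the continuous counterpart of the Pieri formula of the Introduction). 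The key step is then the intertwining
\[
\Lambda_k\,\widehat P^{(k)}_t \;=\; Q^{(k)}_t\,\Lambda_k,
\]
where $\widehat P^{(k)}_t$ denotes the transition semigroup of $(X^1,\dots,X^k)$ and $Q^{(k)}_t$ the $k$-dimensional Dyson semigroup. Granting it, the Rogers--Pitman criterion for Markov functions applied to the projection $(x^1,\dots,x^k)\mapsto x^k$ --- the required compatibility of initial laws holding trivially at the origin, where $\Lambda_k$ is a point mass --- shows that $X^k$ is Markov with semigroup $Q^{(k)}_t$, that is, a $k$-dimensional Dyson Brownian motion started at the origin.

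The main obstacle is the intertwining, which I would prove infinitesimally, in the form $\Lambda_k\,\widehat{\mathcal{G}}^{(k)} = \mathcal{G}^{(k)}_{\mathrm{Dyson}}\,\Lambda_k$ on a suitable domain, with $\mathcal{G}^{(k)}_{\mathrm{Dyson}} = h_k^{-1}\big(\tfrac12\Delta\big)h_k$ by \eqref{eq:DysonA_generator_intro}. In the interior of $\mathbb{K}^k$ this is a direct computation exploiting that the Lebesgue density of $\Lambda_k$ is harmonic in the lower variables. The delicate point is the boundary behaviour across each interlacing hyperplane $\{x^k_i = x^{k-1}_i\}$ and $\{x^k_i = x^{k-1}_{i-1}\}$, where $\widehat{\mathcal{G}}^{(k)}$ carries a reflecting (Neumann-type) term coming from $L^{k,\pm}_i$: one must check that, after integrating against $\Lambda_k$, the net flux through each such hyperplane is reproduced exactly on the right-hand side. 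This is where the algebra has to conspire: the flux is governed by $h_{k-1}$ restricted to a face of the polytope, and the integral identity above together with the vanishing of the Vandermonde $h_k$ on $\partial\mathcal{W}^k$ makes the boundary contributions telescope to zero. Making this rigorous --- equivalently, upgrading the generator intertwining to a semigroup intertwining in the presence of the reflecting boundary --- is the real work; the remainder of the argument is then routine.
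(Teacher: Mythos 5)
This theorem is quoted in the thesis from \cite{Warren_2007} without proof, so there is no in-paper argument to compare against; the closest internal analogues are the proofs of the $q$-deformed results in Chapters 4 and 5 and the continuous result in Chapter 8, all of which follow exactly the template you describe (a conditional-law kernel on patterns with fixed shape, an intertwining of dynamics, then Rogers--Pitman/Kurtz). Your strategy --- the kernel $\Lambda_k$ given by normalized Lebesgue measure on the Gelfand--Tsetlin polytope with normalization $\propto h_k$, the intertwining $\Lambda_k\widehat P^{(k)}_t=Q^{(k)}_t\Lambda_k$, and the Markov-functions conclusion with the point-mass initial law at the origin --- is the correct and standard one, and the direction of the intertwining and the compatibility of initial conditions are both stated correctly.

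The one substantive difference from Warren's actual proof lies in how the intertwining is established, and it is precisely at the step you flag as ``the real work.'' You propose a generator intertwining with Neumann-type boundary terms and then an upgrade to semigroups; in the reflected continuous setting there is no analogue of the bounded-$Q$-matrix Lemma \ref{Q_intertwining} that the thesis uses in the discrete case, and justifying the upgrade for a degenerate reflected diffusion started from a corner of the polytope is genuinely delicate. Warren instead works two levels at a time: he writes down the explicit transition density of the pair $(X^{k-1},X^k)$ with $X^k$ Skorokhod-reflected off $X^{k-1}$ (a determinantal formula built from the heat kernel and its reflections), verifies the semigroup intertwining with the two-level kernel $\lambda_{k-1,k}(x,dy)\propto \tfrac{h_{k-1}(y)}{h_k(x)}\mathbbm{1}_{y\prec x}\,dy$ by direct integration, and then inducts; your full-pattern kernel factorizes into these two-level kernels, so the two formulations are equivalent. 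The density computation buys you the semigroup identity outright and sidesteps the boundary-flux/domain issues entirely, whereas your infinitesimal route, while morally correct, would still need a nontrivial argument (e.g.\ via the submartingale problem or Kurtz's Theorem \ref{Kurtz} with a carefully specified domain) to become a proof. As a blind reconstruction the architecture is right; to close it, replace the generator step with the explicit two-level transition-density intertwining.
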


In the process $\mathbf{X}$ defined in \ref{Warren-process}, a particle $X^{k+1}_{i}$ is reflected, in the Skorokhod sense, down by $X^{k}_{i-1}$ and up by $X^k_{i}$.  This is not the only process on the Gelfand-Tsetlin cone where the shape (i.e. the bottom level) evolves as a Dyson's Brownian motion. O' Connell, in \cite{O'Connell_2003}, proposed a transformation of continuous paths $\eta:\mathbb{R}_{+}\to \mathbb{R}^N$ with $\eta(0)=0$ inspired by the column insertion version of the Robinson-Schensted algorithm as follows.

For $k=1,...,N-1$, define
\begin{equation*}
(\mathcal{P}_{k}\eta)(t)=\eta(t)+\inf_{0\leq s \leq t}\big(\eta_{k}(s)-\eta_{k+1}(s)\big)(e_{k}-e_{k+1})
\end{equation*}
where $e_{1},...,e_{N}$ denotes the standard basis in $\mathbb{R}^N$. Let $\mathcal{G}_{1}$ denote the identity operator and for $2\leq k \leq N$, let
\begin{equation}
\mathcal{G}_{k}=(\mathcal{P}_{1}\circ ... \circ \mathcal{P}_{k-1})\circ \mathcal{G}_{k-1}.
\label{eq:OConnell_transform_zero_intro}
\end{equation}
The path transformation $\mathcal{G}_{k}$ is a generalisation of Pitman's $2M-X$ transformation \cite{Pitman_1975}. More specifically for $N=2$ and $\eta = (\eta_{1},\eta_{2})$ a standard 2-dimensional Brownian motion we have for $t\geq 0$
\[\dfrac{1}{\sqrt{2}}\Big((\mathcal{G}_{2}\eta)_{2}(t)-(\mathcal{G}_{2}\eta)_{1}(t) \Big)=2\sup_{0\leq s \leq t}\Big(\dfrac{1}{\sqrt{2}}\big(\eta_{1}(s)-\eta_{2}(s)\big)\Big)-\dfrac{1}{\sqrt{2}}\big(\eta_{1}(t)-\eta_{2}(t)\big).\]
If we set for $t\geq 0$, $B(t)=\dfrac{1}{\sqrt{2}}\big(\eta_{1}(t)-\eta_{2}(t)\big)$, then $(B(t);t\geq 0)$ is a standard Brownian motion and the Pitman transform is obtained, i.e.
\[\dfrac{1}{\sqrt{2}}\Big((\mathcal{G}_{2}\eta)_{2}(t)-(\mathcal{G}_{2}\eta)_{1}(t) \Big)=2\sup_{0\leq s \leq t}B(s)-B(t).\]
Pitman, in \cite{Pitman_1975} proved that the process $Y=(Y(t);t\geq 0)$ with $Y(t)=2\sup_{0\leq s \leq t}B(s)-B(t)$ is a 3-dimensional Bessel process, i.e. a diffusion in $\mathbb{R}_{\geq 0}$ with infinitesimal generator
\[\dfrac{1}{2}\dfrac{d^2}{dy^2}+\dfrac{1}{y}\dfrac{d}{dy}.\]
O' Connell proved an analogous result for general $N>1$. We first remark that for every continuous path $\eta$, $\mathcal{G}_{N}\eta$ remains in the Weyl chamber $\mathcal{W}^N=\{x \in \mathbb{R}^N: x_{1}>...>x_{N}\}$. Moreover, if $\eta=(\eta_{1},...,\eta_{N})$ is a standard $N$-dimensional Brownian motion, then $((\mathcal{G}_{k}\eta)_{i}(t),1\leq i \leq k \leq N)$ has state space the Gelfand-Tsetlin cone $\mathbb{K}^N$ and the following result holds for its shape.
\begin{theorem}[\cite{O'Connell_2003}, th. 8.1] $\mathcal{G}_{N}\eta = ((\mathcal{G}_{N}\eta)_{i},1\leq i \leq N)$ is distributed as a Dyson Brownian motion.
\end{theorem}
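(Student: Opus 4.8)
The plan is to prove the statement by induction on $N$, exploiting the recursive definition $\mathcal{G}_{N}=(\mathcal{P}_{1}\circ\cdots\circ\mathcal{P}_{N-1})\circ\mathcal{G}_{N-1}$. The case $N=1$ is trivial and the case $N=2$ is precisely Pitman's $2M-X$ theorem recalled just above, which exhibits $\tfrac{1}{\sqrt{2}}\big((\mathcal{G}_{2}\eta)_{2}-(\mathcal{G}_{2}\eta)_{1}\big)$ as a $3$-dimensional Bessel process, equivalently a pair of Brownian motions conditioned never to collide. To feed the induction I would prove a stronger statement: that the whole array $\big((\mathcal{G}_{k}\eta)_{i},\,1\le i\le k\le N\big)$ is a Markov process started at the origin with state space the Gelfand--Tsetlin cone $\mathbb{K}^{N}$, and that its transition semigroup intertwines, through explicit Markov kernels $\Lambda^{k}_{k-1}$ on the cone, the Dyson Brownian motion semigroups on consecutive levels (the same kind of structure that underlies Warren's construction in Definition~\ref{Warren-process}). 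Granting this, the assertion about the shape $\big((\mathcal{G}_{N}\eta)_{1},\dots,(\mathcal{G}_{N}\eta)_{N}\big)$ follows from the Rogers--Pitman criterion for functions of Markov processes.

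For the inductive step, set $Z=\mathcal{G}_{N-1}\eta$. By the induction hypothesis its first $N-1$ coordinates form a process on $\mathbb{K}^{N-1}$ with the prescribed law, while $\eta_{N}$ is a standard Brownian motion independent of it, and I have to identify the law of the image of $(Z_{1},\dots,Z_{N-1},\eta_{N})$ under $\mathcal{P}_{1}\circ\cdots\circ\mathcal{P}_{N-1}$. Two ingredients are needed. First, the operators $\mathcal{P}_{k}$ satisfy the braid relations $\mathcal{P}_{i}\mathcal{P}_{j}=\mathcal{P}_{j}\mathcal{P}_{i}$ for $|i-j|\ge 2$ and $\mathcal{P}_{i}\mathcal{P}_{i+1}\mathcal{P}_{i}=\mathcal{P}_{i+1}\mathcal{P}_{i}\mathcal{P}_{i+1}$; these make $\mathcal{G}_{N}$ well defined, guarantee that $\mathcal{G}_{N}\eta$ stays in the Weyl chamber, and allow the composition to be reorganised one adjacent pair at a time. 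Second, a one-step Pitman/Burke-type lemma: when $\mathcal{P}_{k}$ is applied to a pair of adjacent coordinates in which the upper coordinate already carries the correct non-colliding (Dyson-type) law and the lower is an independent Brownian motion, the output pair has again the required joint law, \emph{together} with the conditional-independence property that lets the output serve as a legitimate input for the next operator in the chain. This is the continuous counterpart of the key lemma in the combinatorial (Robinson--Schensted/RSK) analysis and generalises the $2M-X$ identity.

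Putting the two ingredients together, one shows inductively that $\mathcal{G}_{N}\eta$ is Markov on $\mathbb{K}^{N}$ and that its semigroup has the claimed intertwining with the Dyson Brownian motion semigroups on successive levels. Applying the Rogers--Pitman "Markov functions" theorem at the bottom level then yields that $\big((\mathcal{G}_{N}\eta)_{i},\,1\le i\le N\big)$ is an $N$-dimensional Dyson Brownian motion, which is exactly the claim.

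I expect the main obstacle to be the one-step lemma together with the bookkeeping of conditional independence needed to make the composition telescope: one must check that after an application of $\mathcal{P}_{k}$ the relevant $\sigma$-algebras decouple correctly, and in the continuous setting this has to be carried out alongside the Skorokhod-type regularity of the reflection terms $\inf_{0\le s\le t}\big(\eta_{k}(s)-\eta_{k+1}(s)\big)$ built into the $\mathcal{P}_{k}$. A cleaner route avoids these analytic issues by first establishing the discrete analogue — for a random walk with i.i.d.\ increments, where the path transformation is literally the one read off from column-insertion RSK and Greene's theorem makes the conditional structure transparent, the image being the walk conditioned in Doob's sense to remain in $\mathcal{W}^{N}$ — and then passing to the Brownian limit via Donsker's invariance principle; the difficulty there migrates to proving continuity of $\mathcal{G}_{N}$ on path space and the attendant tightness, which is routine but not entirely free.
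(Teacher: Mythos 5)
First, a caveat: this theorem is quoted from \cite{O'Connell_2003} as background; the thesis itself offers no proof to compare against, so your proposal has to be judged against the known proofs in the literature. Your two routes are both recognisable: the ``one-step Burke lemma plus intertwining'' route is essentially the O'Connell--Yor queueing argument, and the ``discrete RSK plus Donsker'' route is essentially how Theorem 8.1 of \cite{O'Connell_2003} is actually obtained (the real content of that paper is the random-walk statement, proved via the RSK correspondence and Doob $h$-transforms by Schur functions, with the Brownian statement extracted by an invariance principle and continuity of the path transformation). So the overall architecture is sound.

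The genuine gap is in the inductive step as you have formulated it. Your one-step lemma --- ``apply $\mathcal{P}_{k}$ to a pair whose upper coordinate already carries the correct non-colliding law and whose lower coordinate is an independent Brownian motion'' --- is not the statement that telescopes. After the first application of $\mathcal{P}_{N-1}$ in the chain $\mathcal{P}_{1}\circ\cdots\circ\mathcal{P}_{N-1}$, the inputs to the subsequent operators are neither independent Brownian motions nor individually ``Dyson-type''; only the full vector $\mathcal{G}_{N-1}\eta$ has a distinguished law, and its coordinates are strongly dependent. What one actually needs is either (i) a global identity in law for the whole vector at a fixed time, proved by time reversal together with a Brownian generalisation of Burke's output theorem (this is how O'Connell--Yor avoid the coordinate-by-coordinate bookkeeping entirely), or (ii) the full Markov-functions machinery: exhibit the entire array $((\mathcal{G}_{k}\eta)_{i})$ as a Markov process on $\mathbb{K}^{N}$ and write down the intertwining kernels $\Lambda^{k}_{k-1}$ explicitly --- which you assert but do not construct, and which is itself a substantial theorem rather than a lemma. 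A smaller inaccuracy: the braid relations are not needed for $\mathcal{G}_{N}$ to be well defined (it is given by one explicit reduced word); they are needed only if you want to reorganise the composition, and if your argument relies on such a reorganisation you must say which one and prove the relations you use. As it stands, the proposal is a correct map of where the proof lives, but the load-bearing step is named rather than proved, and in the form you state it it would not go through.
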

We remark that, even though the bottom level of the process defined in \ref{Warren-process} has the same evolution as the bottom level of $(\mathcal{G}_{k}\eta, 1\leq k \leq N)$, they are completely different. In the process defined via the path transformation \ref{eq:OConnell_transform_zero_intro}, the only source of randomness is contained in the $N$-dimensional Brownian motion driving the evolution of the left edge of the pattern $((\mathcal{G}_{k}\eta)_{k}(t),1\leq k \leq N)_{t\geq 0}$ whereas the process defined in \ref{Warren-process} is fully randomised, in the sense that all the coordinates are driven by their own independent Brownian motions.

\section{Examples of processes on Gelfand-Tsetlin patterns in the positive temperature setting}
In the previous section we presented a transformation of Brownian paths proposed by O' Connell \cite{O'Connell_2012} which when $N=2$ reduces to the Pitman's transform. Let us start this section by presenting the positive temperature analogue of this process and then we will move to higher dimensional processes.

Matsumoto and Yor in \cite{Matsumoto_Yor_1999,Matsumoto_Yor_2000}, studied the following transformation of a Brownian motion. Fix $\mu \in \mathbb{R}$ and consider the standard Brownian motion with drift $\mu$, $B^{\mu}_{t}=B_{t}+\mu t$. For $t\geq 0$, we define the random variable
\[Y(t)=\log \int_{0}^t e^{2B^{\mu}_{s}}ds-B^{\mu}_{t}.\]
The process $Y=(Y(t);t\geq 0)$ is a diffusion in $\mathbb{R}$ started from $-\infty$ with infinitesimal generator
\[\mathcal{L}^{MY}_{\mu} = \dfrac{1}{2}\dfrac{d^2}{dy^2}+\dfrac{d}{dy}\log K_{\mu}(e^{-y})\dfrac{d}{dy}\]
where $K_{\mu}(z)$ denotes the modified Bessel function of second kind (also called the Macdonald function) which has the following integral representation
\[K_{\mu}(z)=\frac{1}{2}\int_{0}^{\infty}t^{\mu - 1}\exp \Big(-\frac{z}{2}\big(t+\frac{1}{t}\big)\Big)dt.\]
The Macdonald function is a solution to the following differential equation
\begin{equation}
\frac{d^2}{dz^2}K_{\mu}(z)+\dfrac{1}{z}\dfrac{d}{dz}K_{\mu}(z)-\big(1+\dfrac{\mu^2}{z^2}\big)K_{\mu}(z)=0
\label{eq:eigen_macdonald_intro}
\end{equation}
therefore the generator $\mathcal{L}^{MY}_{\mu}$ can be rewritten as follows
\[\mathcal{L}^{MY}_{\mu} = \dfrac{1}{\phi_{\mu}}\dfrac{1}{2}\Big(\dfrac{d^2}{dy^2}-e^{-2y}-\mu^2\Big)\phi_{\mu}\]
where $\phi_{\mu}(y)=K_{\mu}(e^{-y})$.

The last expression shows that $\mathcal{L}^{MY}$ is essentially the generator of a process killed at rate $\frac{1}{2}e^{-2y}$ which is conditioned, in the Doob's sense, not to be killed. We observe that the killing term becomes very large when $x\leq 0$ and is negligible for $x>0$.

Let us now fix a positive parameter $\beta$, which in the terminology of random polymers is called the \emph{inverse temperature parameter}, and let us consider, for $t\geq 0$, the random variable
\[Y^{\beta}(t)=\dfrac{1}{\beta}\log \int_{0}^t e^{2\beta B^{\mu}_{s}}ds-B^{\mu}_{t}.\]
As $\beta \to \infty$, in which case the temperature tends to zero, by Laplace method the following asymptotic equivalence holds
\[\dfrac{1}{\beta}\log \int_{0}^t e^{2\beta B^\mu_{s}}ds\approx \sup_{0\leq s\leq t}(2B^{\mu}_{s})\]
therefore as $\beta$ tends to infinity we recover the Pitman transformation. In this sense Matsumoto-Yor's process can be thought as a positive temperature analogue of the 3-dimensional Bessel process. Proceeding to higher dimensional processes, O' Connell in \cite{O'Connell_2012} proposed a transformation of Brownian paths that can be though as a positive temperature analogue of the transformation in \ref{eq:OConnell_transform_zero_intro}. For $k=1,...,N-1$ and $\eta:(0,\infty)\mapsto \mathbb{R}^N$ a continuous map, define
\begin{equation}
(\mathcal{T}_{k}\eta)(t)=\eta(t)+\big( \log \int_{0}^te^{\eta_{k+1}(s)-\eta_{k}(s)}ds\big)(e_{k}-e_{k+1}).
\label{eq:OConnell_transform_positive_intro}
\end{equation}
Let $\Pi_{1}$ denote the identity operator and for $2\leq k \leq N$, let
\[\Pi_{k}=(\mathcal{T}_{1}\circ ... \circ \mathcal{T}_{k-1})\circ \Pi_{k-1}.\]
Let $\nu=(\nu_{1},...,\nu_{N})$ be a real-valued vector and assume that $\eta=(\eta_{1},...,\eta_{N})$ is an $N$-dimensional standard Brownian motion where $\eta_{k}$ has drift $\nu_{k}$. Then $((\Pi_{k}\eta)_{i},1\leq i \leq k \leq N)$ is a diffusion in $\mathbb{R}^{N(N+1)/2}$. Similarly to the zero-temperature case, it is proved in \cite{O'Connell_2012} that $\Pi_{N}\eta=((\Pi_{N}\eta)(t),t\geq 0)$ is a diffusion in $\mathbb{R}^N$. In order to formulate the result we need some functions that will play the role of the functions $h_{N}(x)=\prod_{i<j}(x_{i}-x_{j})$ in the sense that they will allow us to condition a diffusion with killing not to be killed. 

The Toda operator associated with the $\mathfrak{gl}_{N}$-Lie algebra is a differential operator given by
\[\mathcal{H}^{\mathfrak{gl}_{N}}=\frac{1}{2}\Delta-\sum_{i=1}^{N-1}e^{x_{i+1}-x_{i}}\]
where $\Delta = \sum_{i=1}^N \dfrac{\partial^2}{\partial x_{i}^2}$ denotes the Laplacian.\\
The Toda operator can be though as the generator of a diffusion killed at rate
\[k(x)=\sum_{i=1}^{N-1}e^{x_{i+1}-x_{i}}>0.\]
Observe that the killing term is negligible when $x$ is away from the boundary of the Weyl chamber $\mathcal{W}^N$, so that the process then essentially evolves as an $N$-dimensional Brownian motion. The $\mathfrak{gl}_{N}$-Whittaker functions are eigenfunctions of the operator $\mathcal{H}^{\mathfrak{gl}_{N}}$, parametrized by $\nu = (\nu_{1},...,\nu_{N})\in \mathbb{C}^N$, that satisfy the eigenrelation
\[\mathcal{H}^{\mathfrak{gl}_{N}}\Psi^{\mathfrak{gl}_{N}}_{\nu}(x)=\dfrac{1}{2}\sum_{i=1}^N \nu_{i}^2\Psi^{\mathfrak{gl}_{N}}_{\nu}(x).\]
Givental in \cite{Givental_1996} proved that the $\mathfrak{gl}_{N}$-Whittaker function has an integral representation as follows
\begin{equation}
\Psi_{\nu}^{\mathfrak{gl}_{N}}(x)=\int_{\Gamma[x]}\prod_{k=1}^{N-1}\prod_{i=1}^k dx^k_{i}e^{\mathcal{F}_{\nu}(\mathbf{x})}
\end{equation}
where $\Gamma[x]$ is a deformation of the subspace $\{\mathbf{x}=(x^1,...,x^N): x^{k}\in \mathbb{R}^k \text{ such that }x^N=x\}$ such that the integral converges and 
\[\mathcal{F}_{\nu}(\mathbf{x}) = \exp\Big(\sum_{k=1}^{N-1}\nu_{k}(|x^k|-|x^{k-1}|)-\sum_{k=1}^N \sum_{i=1}^{k-1}(e^{x^k_{i+1}-x^{k-1}_{i}}+e^{x^{k-1}_{i}-x^k_{i}}) \Big).\]

For $x , \nu \in \mathbb{R}^N$ let us consider the probability measure on defined on $\mathbb{R}^{N(N+1)/2}$ by
\[\int fd\sigma^x_{\nu}=\Psi_{\nu}^{\mathfrak{gl}_{N}}(x)^{-1}\int_{\Gamma[x]}\prod_{i=1}^k dx^k_{i}f(\mathbf{x})e^{\mathcal{F}_{\nu}(\mathbf{x})} .\]
Then the following holds.

\begin{theorem}[\cite{O'Connell_2012}, th. 3.1]
Let $(\eta(t),t\geq 0)$ be a standard Brownian motion in $\mathbb{R}^N$ with drift $\nu$, then $((\Pi_{N}\eta)(t),t\geq 0)$ is a diffusion in $\mathbb{R}^N$ with infinitesimal generator given by
\begin{equation}
\mathcal{L}^{\mathfrak{gl}_{N}}_{\nu} = \dfrac{1}{\Psi^{\mathfrak{gl}_{N}}_{\nu}}\Big(\frac{1}{2}\Delta-\sum_{i=1}^{N-1}e^{x_{i+1}-x_{i}}-\frac{1}{2}\sum_{i=1}^N\nu_{i}^2\Big)\Psi_{\nu}^{\mathfrak{gl}_{N}}= \frac{1}{2}\Delta + \nabla \log \Psi^{\mathfrak{gl}_{N}}_{\nu}\cdot \nabla
\label{eq:gln_whittaker_process_generator_intro}
\end{equation}
where $\nabla$ denotes the nabla operator, i.e. $\nabla = \big(\frac{\partial}{\partial x_{1}},...,\frac{\partial}{\partial x_{N}}\big)$. \\
Moreover, for each $t\geq 0$, the conditional law of $\{(\Pi_{k}\eta)_{i}(t),1\leq i \leq k \leq N\}$ given $\{(\Pi_{N}\eta)(s),s\leq t, (\Pi_{N}\eta)(t)=x\}$, for some $x \in \mathbb{R}^N$, is given by $\sigma^x_{\nu}$.
\end{theorem}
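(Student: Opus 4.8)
The plan is an induction on $N$ that proves, simultaneously, the assertion about the bottom marginal $\Pi_N\eta$ and the assertion about the conditional law of the full array $\mathbf X(t):=\big((\Pi_k\eta)_i(t):1\le i\le k\le N\big)$. The base case $N=1$ is trivial: $\Pi_1$ is the identity, $\Pi_1\eta=\eta_1$ is a Brownian motion with drift $\nu_1$, $\Psi^{\mathfrak{gl}_1}_\nu(x)=e^{\nu_1x}$, and $\mathcal L^{\mathfrak{gl}_1}_\nu=\tfrac12\partial_x^2+\nu_1\partial_x$. (One may equally start at $N=2$, where $\Pi_2=\mathcal T_1$ and, passing to the coordinate $\eta_2-\eta_1$, the process $(\Pi_2\eta)_2-(\Pi_2\eta)_1$ is, up to the obvious rescaling, the Matsumoto--Yor process recalled above, hence a diffusion with the stated generator; since $\eta_1+\eta_2$ is an independent Brownian motion and the class-one $\mathfrak{gl}_2$-Whittaker function is an exponential times a modified Bessel function solving \eqref{eq:eigen_macdonald_intro}, the pair $(\Pi_2\eta)$ is a diffusion with generator $\mathcal L^{\mathfrak{gl}_2}_\nu$.)

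For the inductive step, assume the statement for $N-1$ and write $\nu'=(\nu_1,\dots,\nu_{N-1})$. Since $\mathcal T_1,\dots,\mathcal T_{N-2}$ act only on the first $N-1$ coordinates, $\Pi_N\eta=(\mathcal T_1\circ\cdots\circ\mathcal T_{N-1})\,\zeta$ with $\zeta:=\big((\Pi_{N-1}\eta)_1,\dots,(\Pi_{N-1}\eta)_{N-1},\eta_N\big)$; by the induction hypothesis together with the independence of $\eta_N$, the path $\zeta$ is a diffusion on $\mathbb R^N$ with generator $\tfrac12\Delta+\nabla\log h\cdot\nabla$ for $h(x)=\Psi^{\mathfrak{gl}_{N-1}}_{\nu'}(x_1,\dots,x_{N-1})\,e^{\nu_Nx_N}$, and the level-$(N-1)$ subarray of $\mathbf X$ has, conditionally on $\zeta_{1:N-1}$ up to time $t$, law $\sigma^{\zeta_{1:N-1}(t)}_{\nu'}$. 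The composition $\Theta:=\mathcal T_1\circ\cdots\circ\mathcal T_{N-1}$ introduces $N-1$ new functionals $r_1(t),\dots,r_{N-1}(t)$, each of the form $\log\int_0^t e^{(\,\cdot\,)}ds$ and hence satisfying an ODE $dr_k=e^{(\,\cdot\,)-r_k}dt$, so that $\big(\zeta(t),r(t)\big)$ is a degenerate diffusion from which $\Theta\zeta$ is recovered by an affine map. The crux is then to show that the marginal $\Theta\zeta$ is Markov with generator $\mathcal L^{\mathfrak{gl}_N}_\nu$ and that, conditionally on $\Theta\zeta$ up to time $t$, the vector $\big(\zeta(t),r(t)\big)$ has a prescribed law; combining this with the level-$(N-1)$ conditional law (which depends only on $\zeta_{1:N-1}$, a function of $(\Theta\zeta,r)$), and using Givental's recursion
\[\Psi^{\mathfrak{gl}_N}_\nu(x)=\int_{\mathbb R^{N-1}}\exp\Big(\nu_N\big(\textstyle\sum_i x_i-\sum_i y_i\big)-\sum_{i=1}^{N-1}\big(e^{x_{i+1}-y_i}+e^{y_i-x_i}\big)\Big)\Psi^{\mathfrak{gl}_{N-1}}_{\nu'}(y)\,dy\]
to match normalisations, one concludes that the conditional law of $\mathbf X(t)$ given $\Pi_N\eta$ up to time $t$ is $\sigma^{(\Pi_N\eta)(t)}_\nu$, closing the induction.

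The step I expect to be the main obstacle is this ``one big step'' intertwining for $\Theta$, which I would handle through the Markov-function criterion of Rogers--Pitman: it suffices to exhibit the probability kernel $\Lambda(x,d\mathbf x)=\sigma^x_\nu(d\mathbf x)$, concentrated on the fibre $\{\mathbf x:x^N=x\}$, for which $\Lambda\mathbf 1=\mathbf 1$ is precisely Givental's integral representation of $\Psi^{\mathfrak{gl}_N}_\nu$, and to verify the generator intertwining $\Lambda\,\mathcal A=\mathcal L^{\mathfrak{gl}_N}_\nu\,\Lambda$, where $\mathcal A$ is the explicit generator of the array diffusion. After integrating by parts in the fibre variables and using that the weight $e^{\mathcal F_\nu(\mathbf x)}$ is built from the Toda-type potential $\sum_{k,i}\big(e^{x^k_{i+1}-x^{k-1}_i}+e^{x^{k-1}_i-x^k_i}\big)$, this reduces to the eigenrelation $\mathcal H^{\mathfrak{gl}_N}\Psi^{\mathfrak{gl}_N}_\nu=\tfrac12(\sum_i\nu_i^2)\Psi^{\mathfrak{gl}_N}_\nu$ --- a positive-temperature analogue of the reflection/intertwining computation underlying Warren's construction. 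Two further technical points need care: identifying $\mathcal A$ rigorously and checking it generates a Feller diffusion on $\mathbb R^{N(N+1)/2}$ (its drift coefficients being exponentials of coordinate differences, governed by the ODEs for the $r_k$); and the degenerate initial condition, since the array effectively ``starts at $-\infty$'' as $\log\int_0^0=-\infty$, which must be resolved by an entrance-law argument identifying the entrance law as the $t\downarrow0$ limit compatible with $\sigma^x_\nu$, after which the intertwining propagates the conclusion to all $t>0$. An alternative to the induction is a direct route: express the components of $\Pi_N\eta$ as hierarchical polymer partition functions, use Feynman--Kac for the Toda Hamiltonian to see that its semigroup sends $\Psi^{\mathfrak{gl}_N}_\nu$ to $e^{-t\sum_i\nu_i^2/2}\Psi^{\mathfrak{gl}_N}_\nu$, and combine this with a Girsanov change of measure and the path-transformation identities to read off both the $h$-transform form of the generator and the conditional law $\sigma^x_\nu$; the eigenrelation and Givental's representation remain the active ingredients either way.
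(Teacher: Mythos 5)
This theorem is quoted in the Introduction from \cite{O'Connell_2012} and the thesis gives no proof of it, so there is no in-paper argument to compare against line by line; the fair comparison is with O'Connell's original proof and with the thesis's own proof of the type-$B/D$ analogue (Theorem \ref{first_result} in Chapter 8), which follows the same template. Measured against those, your outline is essentially the right one: the active ingredients are exactly the kernel $\Lambda(x,d\mathbf{x})=\sigma^x_\nu(d\mathbf{x})$ built from the Givental integrand, the generator intertwining $\mathcal{L}^{\mathfrak{gl}_N}_\nu\Lambda=\Lambda\mathcal{A}$ reduced via Givental's recursion and integration by parts to the Toda eigenrelation, and the Markov-function criterion (Theorem \ref{rogers_pitman} / Theorem \ref{Kurtz}). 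One structural remark: rather than your ``one big step'' intertwining for $\Theta=\mathcal{T}_1\circ\cdots\circ\mathcal{T}_{N-1}$ acting on $\zeta$, both O'Connell and Chapter 8 of this thesis organise the induction through a two-level ``helper'' intertwining — the Toda operators in ranks $N-1$ and $N$ are intertwined by an explicit Baxter-type kernel $Q^{N-1,N}_{\nu_N}(y,x)$, and the full relation for the array is assembled from this plus the inductive hypothesis (compare Proposition \ref{main_intert_proposition(continuous)}). That decomposition is what makes the integration-by-parts computation tractable, and you would be well advised to adopt it. The two genuine technical debts you flag — rigorous identification and well-posedness of the array generator $\mathcal{A}$, and the entrance law at the degenerate start where all gaps are at $-\infty$ — are real and are precisely the points where O'Connell's proof does nontrivial work (and where the thesis's Chapter 8 version remains incomplete, as it only establishes Theorem \ref{first_result} for initial law $\sigma^x_\lambda$ with finite $x$); your sketch acknowledges them but does not discharge them.
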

We remark that $(\Pi_{N}\eta)_{1}$ is the logarithm of the partition function of the O'Connell-Yor semi-discrete polymer. As a corrolary one can calculate the Laplace transform of $(\Pi_{N}\eta)_{1}$ in terms of the $\mathfrak{gl}_{N}$-Whittaker function. The $\mathfrak{gl}_{N}$-Whittaker functions also appear in the study of the point-to-point log-gamma directed polymers studied in \cite{O'Connell_Seppalainen_Zygouras_2014, Corwin_O'Connell_Seppalainen_Zygouras_2014}.

As we mentioned before, the positive temperature for discrete-state Markov processes is explained in the setting of Macdonald processes. Let us fix a parameter $q \in (0,1)$ which plays the role of a tuning parameter. We can then $q$-deform the process on Gelfand-Tsetlin patterns we described in Definition \ref{Warren-Windridge-process}. Let $\mathbf{Z}$ be a continuous-time Markov process evolving on $\mathbb{K}^N_{\mathbb{Z}}$ as follows. Each particle $Z^k_{j}$, $1\leq j \leq k \leq N$ performs a right jump after waiting an exponential time with parameter
\begin{equation}
\label{eq:symmetric_jump_rates_intro}
R^k_{j} = a_{k}(1-q^{Z^{k-1}_{j-1}-Z^k_{j}})\dfrac{1-q^{Z^{k}_{j}-Z^k_{j+1}+1}}{1-q^{Z^{k}_{j}-Z^{k-1}_{j}+1}}.
\end{equation}
Moreover there exists a pushing mechanism in order to maintain the ordering. More specifically, if $Z^k_{j}$ performs a right jump and $Z^k_{j}=Z^{k+1}_{j}$, then $Z^{k+1}_{j}$ is simultaneously pushed to the right.

We observe that when $q\to 0$ the jump rate becomes $a_{k}\mathbbm{1}_{\{Z^{k-1}_{j-1}=Z^k_{j}\}}$, therefore we obtain the process of Warren-Windridge defined in \ref{Warren-Windridge-process}.

A $q$-deformation of the Robinson-Schensted correspondence was proposed in \cite{O'Connell_Pei_2013}. We describe the algorithm again in terms of dynamics on Gelfand-Tsetlin patterns. Only the leftmost particles $Z^k_{k}$, $1\leq k \leq N$ may jump of their own volition to the right at rate $a_{k}$. Whenever a particle $Z^k_{j}$ performs a right jump, it triggers the right move of exactly one of its nearest neighbours, the right $Z^{k+1}_{j}$ with probability $r^k_{j}$ and the left, $Z^{k+1}_{j+1}$, with probability $1-r^k_{j}$, where
\begin{equation}
\label{eq:q_push_probability_intro}
r^k_{j}=q^{Z^{k+1}_{j}-Z^k_{j}}\dfrac{1-q^{Z^k_{j-1}-Z^k_{j}}}{1-q^{Z^{k}_{j-1}-Z^k_{j}}}.
\end{equation}

The evolution of the bottom level of the pattern in both cases is then associated with a special case of the Macdonald polynomial called the $q$-Whittaker function. A combinatorial formula for the $q$-Whittaker functions was proved in \cite{Gerasimov_et_al_2010}. Let us denote by $w^N_{a,q}(\mathbf{z})$, the kernel of the $q$-Whittaker function for $\mathbf{z}\in \mathbb{K}^N_{\mathbb{Z}}[z]$, i.e. if $P_{z}^N(a;q)$ denotes the $q$-Whittaker function parametrized by $z \in \mathcal{W}^N_{\mathbb{Z}}$, then
\[P^{N}_{z}(a;q)=\sum_{\mathbf{z}\in \mathbb{K}^N_{\mathbb{Z}}[z]}w^N_{a;q}(\mathbf{z}).\]
The last formula allows us to define a probability measure on the set of Gelfand-Tsetlin patterns with shape $z \in \mathcal{W}^N_{\mathbb{Z}} $ as follows. For $\mathbf{z}\in \mathbb{K}^N_{\mathbb{Z}}[z]$ we set
\[M_{a,q}^N(\mathbf{z};z)=\dfrac{w^N_{a,q}(\mathbf{z})}{P^{(N)}_{z}(a;q)}.\]
If the pattern starts according to $M^N_{a,q}(\cdot;z)$, for some $z \in \mathcal{W}^N_{\mathbb{Z}}$, then $Z^N$ is a continuous time Markov process with transition rates given in terms of the $q$-Whittaker functions. For a detailed discussion on these processes we refer the reader to \cite{Borodin_Corwin_2011}.

\section{An overview of processes on Gelfand-Tsetlin pattern with wall}

Similarly to the type-$A$ Dyson's Brownian motion, one can obtain a process evolving in the interior of the type-$B$ Weyl chamber $\mathcal{W}^n_{0}=\{x\in \mathbb{R}^n:x_{1}>x_{2}>...>x_{n}>0\}$ by considering the Doob $h$-transform, with $h(x)=h^B_{n}(x)=\prod_{i=1}^n x_{i}\prod_{1\leq i\leq j\leq n}(x_{i}^2-x_{j}^2)$, of an $n$-dimensional standard Brownian motion killed when it exits $\mathcal{W}^n_{0}$. The Dyson's Brownian motion of type $B$ is a $n$-dimensional diffusion, started from the origin, satisfying the system of stochastic differential equations
\[dX_{i}^{(B)}=dB_{i}+\dfrac{1}{X_{i}^{(B)}}dt+\sum_{\substack{1\leq j \leq n \\ j\neq i}}\Big( \dfrac{1}{X^{(B)}_{i}-X^{(B)}_{j}}+\dfrac{1}{X_{i}^{(B)}+X_{j}^{(B)}}\Big)dt,\, 1\leq i \leq n.\]

The process $X^{(B)}$ also descibes the evolution of the eigenvalues of some special ensemble. We refer the reader to \cite{Katori_Tanemura_2004} for more details on the subject. 

We may consider another process where the wall is replaced by a reflected wall. The Dyson's Brownian motion of type D is a process with state space $\{x_{1}>...>x_{n}\geq 0\}$ whose evolution is given by
\[dX_{i}^{(D)}=dB_{i}+\frac{1}{2}\mathbbm{1}_{\{i=n\}}dL(t)+\sum_{\substack{1\leq j \leq n \\ j\neq i}}\Big( \dfrac{1}{X^{(D)}_{i}-X^{(D)}_{j}}+\dfrac{1}{X_{i}^{(D)}+X_{j}^{(D)}}\Big)dt, 1\leq i \leq n,\]
where $L(t)$ denotes the local time of $X^{(D)}_{n}$ at the origin.

In \cite{Borodin_et_al_2009} and \cite{Sasamoto_2011}, a multilevel extension of the type-$B$ Dyson's Brownian motion is proposed. The natural state space for such a process is the real-valued symplectic Gelfand-Tsetlin cone $\mathbb{K}^N_{0}$, i.e. the set of points $(x^1,...,x^N)$ with $x^k \in \mathbb{R}^{[\frac{k+1}{2}]}_{\geq 0}$, where $[\frac{k+1}{2}]$ denotes the integer part of $\frac{k+1}{2}$, satisfying the interlacing conditions
\[x^{2l-1}_{l}\leq x^{2l}_{l}\leq x^{2l-1}_{l-1}\leq ... \leq x^{2l-1}_{1}\leq x^{2l}_{1} \text{ for }1\leq l \leq \big[\dfrac{N+1}{2}\big]\]
and
\[x^{2l+1}_{l+1}\leq x^{2l}_{l}\leq x^{2l+1}_{l}\leq ... \leq x^{2l}_{1} \leq x^{2l+1}_{1}\text{ for }1\leq l \leq \big[\dfrac{N-1}{2}\big].\]
Let $\mathbf{X}=(\mathbf{X}(t),t\geq 0)$ be a $\mathbb{K}^N_{0}$-valued process evolving according to the rules: $X^1_{1}$ is a Brownian motion reflected by the wall at the origin and for $1\leq i \leq [\frac{k+1}{2}], 1\leq k \leq N$, $X^k_{i}$ evolves as a Brownian motion reflected, in the Skorokhod sense, down by $X^{k-1}_{i-1}$, if $i>1$, and up by $X^{k-1}_{i}$ or by the wall if $i=\frac{k+1}{2}$ and $k$ is odd.
\begin{theorem}[\cite{Borodin_et_al_2009}, prop. 2] Let $\mathbf{X}$ starts at the origin, then if $N=2n$
\[(X^N(t),t\geq 0)\overset{d}{=}(X^{(B)}(t),t\geq 0)\]
and if $N=2n-1$
\[(X^N(t),t\geq 0)\overset{d}{=}(X^{(D)}(t),t\geq 0)\]
where $X^{(B)}$ and $X^{(D)}$ are the $n$-dimensional Dyson's Brownian motions of type $B$ and type $D$ respectively.
\end{theorem}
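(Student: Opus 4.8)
The plan is to prove the statement by induction on the number of levels $N$, realising each one-level marginal as a Doob $h$-transform and gluing the levels together with an intertwining (Markov-functions) argument, exactly in the spirit of \cite{Warren_2007}. First I would fix the one-level semigroups: for $k=2l$ let $P^{(k)}_t$ be the semigroup of the $l$-dimensional type-$B$ Dyson Brownian motion, i.e. the $h$-transform of $l$-dimensional Brownian motion killed on exiting $\mathcal{W}^l_0$ by $h^B_l(x)=\prod_{i=1}^l x_i\prod_{1\le i<j\le l}(x_i^2-x_j^2)$, and for $k=2l-1$ let $P^{(k)}_t$ be the semigroup of the $l$-dimensional type-$D$ Dyson Brownian motion, the $h$-transform of Brownian motion in $\{x_1>\dots>x_l\ge 0\}$ reflected at the wall by $h^D_l(x)=\prod_{1\le i<j\le l}(x_i^2-x_j^2)$. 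Both $h^B_l$ and $h^D_l$ are harmonic polynomials (products over the positive roots of $B_l$, resp. $D_l$); the structural asymmetry that $h^B_l$ vanishes on $\{x_l=0\}$ while $h^D_l$ does not is precisely what separates the Dirichlet (killing) behaviour at the even levels from the Neumann (reflecting) behaviour at the odd levels, and I would keep track of it throughout.

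Second, I would introduce the link kernels between consecutive levels. Given a configuration $x^k$ at level $k$, let $\Lambda^{k}_{k-1}(x^k,\cdot)$ be the probability measure on configurations $x^{k-1}$ at level $k-1$ interlacing with $x^k$ in the sense of the two displayed interlacing chains, with Lebesgue density proportional to $h^{(k-1)}(x^{k-1})/h^{(k)}(x^k)$, where $h^{(k)}$ equals $h^B_l$ or $h^D_l$ according to the parity of $k$. That these are genuine probability kernels is a continuous, limiting form of the symplectic branching rule: one must verify $\int h^D_l(x^{2l-1})\,dx^{2l-1}\propto h^B_l(x^{2l})$ over $\{x^{2l-1}\prec x^{2l}\}$ and $\int h^B_l(x^{2l})\,dx^{2l}\propto h^D_{l+1}(x^{2l+1})$ over $\{x^{2l}\prec x^{2l+1}\}$; both reduce to standard Vandermonde-type integral computations.

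Third, and this is the analytic heart, I would prove the pairwise intertwining $\Lambda^{k}_{k-1}P^{(k-1)}_t=P^{(k)}_t\Lambda^{k}_{k-1}$ for every $k$. At the level of generators this is the commutation of the $h$-conjugated Laplacian at level $k$ with the integral operator $\Lambda^{k}_{k-1}$, which follows from the branching identities of the previous step together with the harmonicity of the $h$-functions; the only delicate point is matching the boundary contributions at the wall $\{x_l=0\}$, where a Dirichlet term on one side must be balanced by a Neumann term on the other, and this is exactly where the even/odd dichotomy enters. Having the pairwise intertwinings, and noting the initialisation at the origin is consistent with the kernels (trivially $\Lambda^{k}_{k-1}(0,\cdot)=\delta_0$), I would run the induction: assuming $(X^1,\dots,X^{k-1})$ is Markov with bottom level $X^{k-1}$ of the asserted Dyson type and conditional law $\prod\Lambda$, the pair $(X^{k-1},X^k)$ — with $X^k$ obtained from $X^{k-1}$ by the prescribed Skorokhod reflections, including the wall reflection of the edge particle when $k$ is odd — is Markov, and the version of the Rogers–Pitman criterion adapted to reflected/pushed dynamics used in \cite{Warren_2007} upgrades the pairwise intertwining to the assertion that $X^k$ alone evolves as the $k$-level Dyson process; combining with the inductive hypothesis closes the induction, and $k=N$ gives the claim.

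I expect the main obstacle to be the pathwise half of the last step in the presence of the wall: one must check that the explicit Skorokhod construction — in particular the local-time term by which $X^{2l-1}_l$ reflects off the origin — produces exactly the boundary behaviour demanded by the intertwining, so that the abstract Markov-functions lemma applies. This is genuinely harder than the type-$A$ situation of \cite{Warren_2007} because the induction alternates between two regimes (even$\to$odd adds a particle next to a reflecting wall, odd$\to$even removes one while the lower level acquires a killing wall), and $h^B_l$ and $h^D_l$ play asymmetric roles at $\{x_l=0\}$ that have to be reconciled at each step. Should the direct route become unwieldy, an alternative is to realise $\mathbf X$ as a function of the corners process of a Brownian motion in the appropriate matrix ensemble and invoke the Gibbs property of that corners process; but since the statement is phrased through the reflected construction, the intertwining argument above is the more natural one.
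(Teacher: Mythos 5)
This theorem is stated in the thesis as background and is \emph{not} proved there: it is quoted from \cite{Borodin_et_al_2009}, so there is no in-paper proof to compare against. Your plan is, in substance, a faithful reconstruction of the argument used in that reference and in \cite{Warren_2007}: identify each level marginal as a Doob $h$-transform (with $h^B_l$ Dirichlet at the wall for even levels and $h^D_l$ Neumann at the wall for odd levels), build link kernels with density $h^{(k-1)}(x^{k-1})/h^{(k)}(x^k)$ on the interlacing region (your two branching integrals are correct and are exactly what normalises these kernels), prove the pairwise semigroup intertwinings, and induct using a Rogers--Pitman/Kurtz-type Markov-functions argument together with the two-level statement that a process Skorokhod-reflected off a Dyson motion (and off the wall, at odd levels) is again a Dyson motion of the next type. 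This is the right route and I see no structural error.

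Two remarks. First, a small slip in your closing discussion: levels $2l-1$ and $2l$ both carry $l$ particles, so the odd-to-even step does not ``remove'' a particle; only the even-to-odd step ($2l\to 2l+1$) adds one. Your kernel integrals are written with the correct dimensions, so nothing downstream is affected, but the sentence as stated is wrong. Second, you correctly identify that essentially all of the work lives in the two-level pathwise step --- verifying that the explicit reflection construction (in particular the local time of $X^{2l-1}_l$ at the origin) realises the boundary behaviour encoded in the intertwining, and that the entrance law from the origin is consistent with the $h$-transform semigroups. In \cite{Borodin_et_al_2009} and \cite{Warren_2007} this is done via explicit two-level transition densities built from reflection-principle/Karlin--McGregor formulas rather than purely at the generator level; if you pursue the generator route you will need a well-posedness input of the kind the thesis itself invokes (Theorem \ref{Kurtz}) to upgrade generator intertwining to semigroup intertwining. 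As a plan the proposal is sound; as a proof it still owes that step.
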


The natural question that arises is whether there exists a Pitman-type transformation that gives the $B/D$-type Dyson's Brownian motion. The original Pitman's transformation accommodates the $N=2$ case. In \cite{Biane_et_al_2005} the authors proposed the following transformation. For $\eta :(0,\infty)\mapsto \mathbb{R}^n$ a continuous path, define
\[(\mathcal{P}_{k}\eta)(t)=\eta(t) - \inf_{0\leq s \leq t}\big(\eta_{k}(s)-\eta_{k+1}(s)\big)(e_{k}-e_{k+1}), \text{ for }1\leq k \leq n-1 \]
and 
\[(\mathcal{P}_{n}\eta)(t)=\eta(t) - \inf_{0\leq s \leq t}(2\eta_{n}(s))e_{n}.\]
We also define the following transform: set $\mathcal{G}_{1}=\mathcal{P}_{n}$ and for $2\leq k \leq n$
\[\mathcal{G}_{k} = (\mathcal{P}_{n-k+1}\circ ...\circ \mathcal{P}_{n-1} \circ \mathcal{P}_{n}\circ\mathcal{P}_{n-1}\circ... \circ \mathcal{P}_{n-k+1} )\circ \mathcal{G}_{k-1}.\]

If $\eta=(\eta(t),t\geq 0)$ is a standard $n$-dimensional Brownian motion, then $((\mathcal{G}_{k}\eta)_{i},1\leq i \leq k \leq n)$ is a Markov process of dimension $\frac{n(n+1)}{2}$. Observe that this transformation only describes the coordinates corresponding to the even levels of the symplectic Gelfand-Tsetlin pattern. We are not aware of any Pitman-type transformation that would describe the whole symplectic Gelfand-Tsetlin pattern. 
\begin{theorem}[\cite{Biane_et_al_2005}, th. 5.6] 
For $\eta=(\eta(t),t\geq 0)$ a standard $n$-dimensional Brownian motion, $\mathcal{G}_{n}\eta=((\mathcal{G}_{n}\eta)(t),t\geq 0)$ is distributed as a type-$B$ Dyson's Brownian motion.
\end{theorem}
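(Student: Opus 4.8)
The plan is to prove the statement by induction on $n$, reducing it to understanding how a single \emph{palindromic sweep} of one-dimensional Pitman operators acts on a lower-rank type-$B$ Dyson Brownian motion. First observe that $\mathcal{G}_n$ is the composition of $\mathcal{P}_1,\dots,\mathcal{P}_n$ taken in the order of a reduced word for the longest element $w_0$ of the Weyl group of type $B_n$: writing $\mathcal{S}_1=\mathcal{P}_n$ and, for $2\le k\le n$, $\mathcal{S}_k=\mathcal{P}_{n-k+1}\circ\cdots\circ\mathcal{P}_{n-1}\circ\mathcal{P}_n\circ\mathcal{P}_{n-1}\circ\cdots\circ\mathcal{P}_{n-k+1}$ for the $k$-th sweep, one has $\mathcal{G}_n=\mathcal{S}_n\circ\cdots\circ\mathcal{S}_1$, and $\mathcal{S}_n\cdots\mathcal{S}_1$ has length $1+3+\cdots+(2n-1)=n^2=\ell(w_0)$. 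For $n=1$ the map $\mathcal{G}_1=\mathcal{P}_1$ sends $\eta$ to the path whose single relevant coordinate is $\eta_1(t)-2\inf_{0\le s\le t}\eta_1(s)=2\sup_{0\le s\le t}\bigl(-\eta_1(s)\bigr)-\bigl(-\eta_1(t)\bigr)$, which by Pitman's $2M-X$ theorem recalled in the Introduction is a $3$-dimensional Bessel process, i.e. the type-$B$ Dyson Brownian motion for $n=1$; this is the base case.

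For the inductive step it suffices to establish a \emph{one-level lemma}. Let $\mathbf{X}^{(k-1)}$ be the partial pattern built after $\mathcal{S}_{k-1}\circ\cdots\circ\mathcal{S}_1$, whose bottom row, by the inductive hypothesis, is a $(k-1)$-dimensional type-$B$ Dyson Brownian motion started at the origin, and note that the coordinate $\eta_{n-k+1}$ has not yet been touched, so it is a fresh independent Brownian motion, which the sweep $\mathcal{S}_k$ inserts as the new top coordinate. The lemma asserts that $\mathcal{S}_k$ appends a new bottom row interlacing the old one, keeps the enlarged object Markov on the even levels of the symplectic Gelfand--Tsetlin cone $\mathbb{K}^{2k}_0$, and makes the new bottom row a $k$-dimensional type-$B$ Dyson Brownian motion. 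To identify this law I would use the theory of Markov functions (Rogers--Pitman, Dynkin's criterion): compute the generator $\mathcal{A}_k$ of the diffusion $\mathbf{X}^{(k)}$ produced by the reflection structure of $\mathcal{S}_k$, and exhibit a Markov kernel $\Lambda_k$, supported on patterns with a given bottom row $x\in\mathcal{W}^k_0$ with density proportional to the ratio $h^B_k(x)/h^B_{k-1}(\cdot)$ of type-$B$ Weyl denominators over the interlacing polytope --- the continuous analogue of the branching rule for the characters of $\mathfrak{so}_{2k+1}$ --- such that $\mathcal{A}_k\Lambda_k=\Lambda_k\mathcal{L}^B_k$ with $\mathcal{L}^B_k=\tfrac{1}{h^B_k}\bigl(\tfrac12\Delta\bigr)h^B_k$. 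The two properties of $h^B_k(x)=\prod_i x_i\prod_{i<j}(x_i^2-x_j^2)$ that matter here --- that it is harmonic for the Laplacian and vanishes on the boundary of $\mathcal{W}^k_0$, exactly as for $h_N$ in the type-$A$ discussion --- are what make $\mathcal{L}^B_k$ the Doob $h$-transform of Brownian motion killed on exiting $\mathcal{W}^k_0$ and make $\Lambda_k$ a consistent intertwining kernel, so that Rogers--Pitman's criterion upgrades the hypothesis by one level.

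The real work is in the one-level lemma itself: proving that the \emph{deterministic} palindromic composition $\mathcal{S}_k$ of one-dimensional Pitman (Skorokhod-reflection) maps, applied to the $(k-1)$-dimensional type-$B$ Dyson motion with an independent Brownian motion as new top coordinate, realises the $\Lambda_k$-intertwining. I would carry this out by tracking the semimartingale decomposition of each intermediate coordinate as it is swept through, using two ingredients: (a) the braid and commutation relations among the $\mathcal{P}_j$ --- non-neighbouring operators commute while $\mathcal{P}_{n-1}$ and $\mathcal{P}_n$ obey the length-$4$ relation characteristic of rank-$2$ type $B$, which is exactly why the palindromic word is the thing to iterate; and (b) a \enquote{continuous Pieri identity} for $h^B$ showing that the reflection local times conspire to give the appended coordinates the singular drift $\nabla\log\bigl(h^B_k/h^B_{k-1}\bigr)$ while enforcing the interlacing in the Skorokhod sense. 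I expect the main obstacle to be the interaction with the wall at $0$: the operator $\mathcal{P}_n$ built from $2\eta_n$ is the reflection in the short root, and the extra single factor $\prod_i x_i$ in $h^B_k$ (as opposed to the purely even Vandermonde $\prod_{i<j}(x_i^2-x_j^2)$, which would give type $D$) must be produced precisely by this reflection; checking that the composite reflection reproduces both the diagonal singularities $1/(X_i-X_j)$, $1/(X_i+X_j)$ and the wall singularity $1/X_i$ of the type-$B$ Dyson SDE is the delicate point, and is where the argument departs from O'Connell's type-$A$ computation. As an alternative, one may invoke the Littelmann-path picture directly --- the $\mathcal{P}_j$ are the Brownian analogues of Littelmann's root operators, and composing them along any reduced word for $w_0$ transforms a Brownian motion with dominant drift $\nu$ into the $h$-transform of the corresponding killed Brownian motion; specialising to type $B_n$ and letting $\nu\to 0$ recovers the stated case --- but the inductive route above is self-contained modulo Pitman's one-dimensional theorem and the Markov-function criterion.
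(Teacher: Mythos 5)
First, a point of reference: the thesis does not prove this statement. It is quoted verbatim from \cite{Biane_et_al_2005} as background in the Introduction, so there is no ``paper's own proof'' to compare against; your attempt has to be judged on its own terms and against the original argument of Biane--Bougerol--O'Connell, which is representation-theoretic (Pitman operators attached to simple roots, braid relations, Littelmann/crystal combinatorics, valid for an arbitrary root system), not the multilevel Gelfand--Tsetlin intertwining you propose. Your route --- induction on the rank, with each palindromic sweep $\mathcal{S}_k$ appending one level and the law of the new bottom row identified via a Rogers--Pitman intertwining whose kernel is the continuous branching rule for $h^B$ --- is a genuinely different and, in principle, viable strategy; it is the type-$B$ analogue of O'Connell's treatment of the type-$A$ path transformation, and your base case (Pitman's theorem giving the $3$-dimensional Bessel process) is correct.

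The difficulty is that everything beyond the base case is asserted rather than proved, and the assertion \emph{is} the theorem. Concretely: (i) Markovianity of the enlarged array $\mathbf{X}^{(k)}$ is not automatic --- the $\mathcal{S}_k$ are deterministic path functionals, not Skorokhod reflections driven by fresh noise, and as the Introduction itself stresses, the path-transformation process and the reflected process on the Gelfand--Tsetlin cone are different objects even though their bottom rows agree in law; establishing that the intermediate levels form a diffusion with an identifiable generator $\mathcal{A}_k$ requires a semimartingale analysis of the nested running infima that you only gesture at. (ii) The intertwining $\mathcal{A}_k\Lambda_k=\Lambda_k\mathcal{L}^B_k$ and the identification of the conditional law of the pattern given the bottom row are precisely the ``continuous Pieri identity'' you defer; nothing in the proposal verifies it, and this is where the wall term $\prod_i x_i$ and the singular drifts $1/(X_i\pm X_j)$, $1/X_i$ must actually be produced. (A small sign of the missing computation: for the kernel from $x\in\mathcal{W}^k_0$ to interlacing $y$ to be a probability kernel one needs density proportional to $h^B_{k-1}(y)/h^B_k(x)$, not $h^B_k(x)/h^B_{k-1}(y)$ as written.) So the proposal is a plausible programme, closer in spirit to \cite{O'Connell_2003} and \cite{Warren_2007} than to the cited source, but it does not yet constitute a proof; the honest shortcut is the one you name in your last sentence, namely to invoke the Littelmann-path/Pitman-operator theorem of \cite{Biane_et_al_2005} directly.
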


As expected a positive-temperature analogue of the transformations $\mathcal{P}$ and $\mathcal{G}$ also exists. Biane, Bougerol and O' Connell in \cite{Biane_et_al_2005} defined and Chhaibi in \cite{Chhaibi_2013} further studied the following transformation of Brownian paths.

For $t>0$ define the transformation
\[(\mathcal{T}_{k}\eta)(t)=\eta(t) + \big(\log \int_{0}^t e^{\eta_{k+1}(s)-\eta_{k}(s)}ds\big)(e_{k}-e_{k+1}), \text{ for }1\leq k \leq n-1\]
\[(\mathcal{T}_{n}\eta)(t)=\eta(t) + \big(\log \int_{0}^t e^{-2\eta_{n}(s)}ds\big)e_{n}\]
and set $\Pi_{1}=\mathcal{T}_{n}$ and for $1<k \leq n$
\[\Pi_{k} = (\mathcal{T}_{n-k+1}\circ ...\circ \mathcal{T}_{n-1} \circ \mathcal{T}_{n}\circ\mathcal{T}_{n-1}\circ... \circ \mathcal{T}_{n-k+1} )\circ \Pi_{k-1}.\]

Chhaibi proved that $\Pi_{n}\eta$ is a diffusion in $\mathbb{R}^n$ with drift given in terms of the Whittaker function associated with the $\mathfrak{so}_{2n+1}$-Lie algebra. Moreover he was able to obtain a spectral decomposition theorem for the Whittaker functions and hence compute the entrance law for the process $\Pi_{n}\eta$.

As we can see, similarly to the type $A$ setting, $(\Pi_{n}\eta)_{1}$ can be identified as the logarithm of the partition function of a semi-discrete directed polymer. Bisi and Zygouras in \cite{Bisi_Zygouras_2017} studied the partition function of the point-to-line and point-to-half-line log-gamma directed polymer and obtained its Laplace transform which is also expressed in terms of the $\mathfrak{so}_{2n+1}$-Whittaker functions.

Here we need to mention that the Whittaker functions that appear in Chhaibi's work are obtained by the Whittaker functions introduced by Jacquet in \cite{Jacquet_1967}.  In the main part of the thesis we will present an integral representation for the $\mathfrak{so}_{2n+1}$-Whittaker functions proved in \cite{Gerasimov_et_al_2012} but we were unable to identify how the two forms of the Whittaker functions that appear in the literature relate. 

Finally, we close this section by presenting a discrete-space process with wall, constructed by Warren-Windridge in \cite{Warren_Windridge_2009}. The  state space of the process is $\mathbb{K}^N_{\mathbb{Z}_{\geq 0}}$, the set of symplectic Gelfand-Tsetlin patterns, i.e. the set of points $\mathbf{z}=(z^1,...,z^N)$ where $z^k \in \mathbb{Z}^{[\frac{k+1}{2}]}_{\geq 0}$ satisfying the intertwining conditions
\[z^{2l-1}_{l}\leq z^{2l}_{l}\leq z^{2l-1}_{l-1}\leq ... \leq z^{2l-1}_{1}\leq z^{2l}_{1} \text{ for }1\leq l \leq \big[\dfrac{N+1}{2}\big]\]
and
\[z^{2l+1}_{l+1}\leq z^{2l}_{l}\leq z^{2l+1}_{l}\leq ... \leq z^{2l}_{1} \leq z^{2l+1}_{1}\text{ for }1\leq l \leq \big[\dfrac{N-1}{2}\big].\]

More specifically, we construct a process $\mathbf{Z}=(\mathbf{Z}(t);t\geq 0)$ with state space $\mathbb{K}^N_{\mathbb{Z}_{\geq 0}}$ and dynamics as follows. We fix $n \in \mathbb{N}$ and assume that $N=2n$ or $N=2n-1$. Moreover we fix parameters $a=(a_{1},...,a_{n})\in \mathbb{R}^n_{>0}$.

The particle $Z^{2l-1}_{i}$, for $1\leq i \leq l$, attempts a jump to the right after waiting an exponential time with parameter $a_{l}$ and to the left after waiting an exponential time with parameter $a_{l}^{-1}$. The jump is then performed unless it would lead to violation of the intertwining conditions. The transitions for a particle of even level $Z^{2l}_{i}$ for $1\leq i \leq l$ are similar but now the exponential time responsible for the right jumps has parameter $a_{l}^{-1}$ and the one for the left has parameter $a_{l}$. Again the transition is suppressed whenever a jump breaks the intertwining of the particles. We remark here that all the exponential clocks we described above are independent.

Let us now describe the pushing mechanism. Suppose that, for $1\leq i \leq [\frac{k+1}{2}]$, $Z^{k}_{i}=Z^{k+1}_{i}$ and $Z^{k}_{i}$ performs a right jump. Then the particle $Z^{k+1}_{i}$ is simultaneously pushed one step to the right. Similarly, if $Z^{k}_{i}=Z^{k+1}_{i+1}$ and $Z^k_{i}$ performs a left jump then the particle $Z^{k+1}_{i+1}$ is simultaneously pushed one step to the left.

The main result in \cite{Warren_Windridge_2009} is that under appropriate initial condition the bottom level of the process $Z$, $Z^N=(Z^{N}(t),t\geq 0)$ evolves as an autonomous Markov process with state space $\mathcal{W}^n_{0}=\{(z_{1},...,z_{n})\in \mathbb{Z}^n_{\geq 0}:z_{1}\geq z_{2}\geq ... \geq z_{n}\}$. The transition rate matrix is then given in terms of the \emph{symplectic Schur functions}, namely the character of the symplectic group.

\section{Thesis outline} The remaining of the thesis is organised as follows.
\begin{enumerate}
\setcounter{enumi}{1}

\item \textbf{The Macdonald-Koornwinder polynomials.} The Macdonald-Koornwinder polynomials are Laurent polynomials that are invariant under permutation and inversion of its variables. We present some results as they were proved in \cite{vanDiejen_Emsiz_2015a} . Also in \cite{vanDiejen_Emsiz_2015b} and \cite{vanDiejen_Emsiz_2016} the authors provide a branching formula for the Macdonald-Koornwinder polynomials. We conjecture a combinatorial formula for a special case of these polynomials and prove the validity of the conjecture for some special cases.

\item \textbf{Probabilistic prerequisites.} We present some known result from Theory of Markov functions that provide requirements for the function of a Markov process to be Markovian. The main result is due to Rogers-Pitman \cite{Rogers_Pitman_1981}. We also present the formalism of Doob's $h$-processes, namely a transformation of a Markov process with killing.

\item \textbf{The Berele insertion algorithm.} The Berele insertion correspondence is a combinatorial bijection, introduced by A. Berele in \cite{Berele_1986}, between words from an alphabet $\{1<\bar{1}<...<n<\bar{n}\}$ and a symplectic Young tableau along with a sequence of up/down diagrams. A symplectic tableau can also be represented as a symplectic Gelfand-Tsetlin pattern whose bottom level is given by the shape of the Young tableau. We will prove that if the GT pattern evolves according to dynamics dictated by the Berele algorithm, then the bottom level evolves as a Markov process. Moreover, we propose a randomization of the algorithm by introducing a parameter $q\in(0,1)$. When $q\to 0$, we recover the original version. Finally, we prove that under these new dynamics the shape of the tableau also evolves as a Markov process with transition rates a $q$-deformation of the symplectic Schur functions.

\item \textbf{A $q$-deformed Markov process on symplectic Gelfand Tsetlin patterns.} We propose a $q$-deformation of a model in the integer-valued Gelfand-Tsetlin cone by introducing long--range interactions among the particles with the effect of a boundary and present their behaviour. We prove that under certain initial conditions the bottom row evolves as a Markov chain. This model is a q-deformation of the model in \cite{Warren_Windridge_2009} and can be thought as a \enquote{fully randomized} version of the model we studied in Chapter 4.

\item \textbf{Calculating the law of processes on patterns.} Using properties of the Macdonald-Koornwinder polynomials presented in Chapter 2 and assuming that these polynomials have also
the combinatorial representation we conjecture in Chapter 2, we construct a law for the processes on Gelfand-Tsetlin pattern with dynamics as in Chapters 4 and 5.

\item \textbf{On a classical limit of the $q$-deformed $\mathfrak{so}_{2n+1}$-Whittaker functions.} In this chapter we present the $\mathfrak{so}_{2n+1}$-Whittaker functions, namely the eigenfunctions of the operator of the quantum Toda lattice associated with the $\mathfrak{so}_{2n+1}$ Lie algebra. Our main references for a branching formula are \cite{Gerasimov_et_al_2008} and \cite{Gerasimov_et_al_2012}. We then prove that these functions appear as limit of the polynomials we conjectured in Chapter 2 under an appropriate scaling.

\item \textbf{A positive temperature analogue of Dyson's Brownian motion with boundary.} We propose a modification of a multi-layered model of continuous state space studied by N. O'Connell in \cite{O'Connell_2012} where we introduce the effect of a \enquote{soft} wall at zero. We remark that this model can be thought as a positive temperature version of a model pre-existed in the literature, like in \cite{Borodin_et_al_2009}, that was related to the Dyson non-colliding Brownian Motion with wall. We prove that each level evolves as a continuous time Markov process with infinitesimal generator involving a transformation of the $\mathfrak{so}_{2n+1}$-Whittaker functions. Finally, we informally comment on the relation of the particle of the highest order of the bottom level with the corresponding particle in O'Connell's model.

\end{enumerate}

\chapter{The Macdonald-Koornwinder polynomials}
\label{Koornwinder}

\section{Partitions and Young diagrams}
\label{partitions}
A \emph{partition} is a sequence $\lambda = (\lambda_{1},\lambda_{2}, ...)$ of integers satisfying $\lambda_{1}\geq \lambda_{2}\geq ... \geq 0$ with finitely many non-zero terms. We call each $\lambda_{i}$ a part of $\lambda$ and the number of non-zero parts the \emph{length of the partition} $\lambda$, denoted by $l(\lambda)$. Moreover we call $|\lambda|:=\sum_{i\geq 1}\lambda_{i}$, the \emph{weight of the partition} $\lambda$. If $|\lambda|=k$, then we say that $\lambda$ partitions $k$ and we write $\lambda \vdash k$. The set of partitions of length at most $n$ is denoted by $\Lambda_{n}$.

We define the natural ordering on the space of partitions called the \emph{dominance order}. For two partitions $\lambda, \mu$ we write $\lambda \geq \mu$ if and only if
\[\lambda_{1}+...+\lambda_{i}\geq \mu_{1}+...+\mu_{i}, \text{ for all }i\geq 1.\]

We also define the notion of interlacing of partitions. Let $\lambda$ and $\mu$ be two partitions with $|\lambda|\geq |\mu|$. They are said to be \emph{interlaced}, and we write $\mu\preceq \lambda$ if and only if
\[\lambda_{1}\geq \mu_{1}\geq \lambda_{2}\geq \mu_{2}\geq ...\quad  .\]

A partition $\lambda$ can be represented in the plane by an arrangement of boxes called a \emph{Young diagram}. This arrangement is top and left justified with $\lambda_{i}$ boxes in the $i$-th row. We then say that the shape of the diagram, denoted by $sh$, is $\lambda$. For example the partition $\lambda = (3,2,2,1)$ can be represented as in figure \ref{fig:diagram}.\newpage

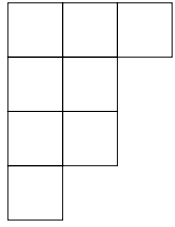
\begin{figure}[h]
\begin{center}
\scalebox{0.9}{
\begin{tikzpicture}
\draw [black] (0,0) rectangle (0.8,0.8);
\draw [black] (0.8,0) rectangle (1.6,0.8);
\draw [black] (1.6,0) rectangle (2.4,0.8);
\draw [black] (0,-0.8) rectangle (0.8,0);
\draw [black] (0.8,-0.8) rectangle (1.6,0);
\draw [black] (0,-1.6) rectangle (0.8,-0.8);
\draw [black] (0.8,-1.6) rectangle (1.6,-0.8);
\draw [black] (0,-2.4) rectangle (0.8,-1.6);
\end{tikzpicture}
}
\caption{A Young diagram of shape $(3,2,2,1)$.}
\label{fig:diagram}
\end{center} 
\end{figure}

Given two partitions/diagrams $\lambda$ and $\mu$ such that $\mu \subset \lambda$, we call the set difference $\lambda \setminus \mu$ a \emph{skew Young diagram}. A skew Young diagram $\lambda \setminus \mu$ is a horizontal strip if in each column it has at most one box. If we denote by $\lambda^\top$ the \emph{transpose partition} of $\lambda$, i.e. the partition with entries given by $\lambda_{i}^\top=|\{1\leq j \leq l(\lambda):\lambda_{j}\geq i\}|$, then $\lambda \setminus \mu$ is a horizontal strip if and only if $\lambda^\top_{i}-\mu^\top_{i}\in \{0,1\}$ for all $i$. It holds that $\mu \preceq \lambda$ if and only if the corresponding skew Young diagram $\lambda \setminus \mu$ is a horizontal strip.

Let us now consider a filling of a Young diagram.
\begin{definition}
\label{tableau_def} 
A Young tableau is a filling of a Young diagram with entries from an alphabet $\{1,...,n\}$ satisfying the conditions:\\
\textbf{(C1)} the entries increase weakly along the rows;\\
\textbf{(C2)} the entries increase strictly down the columns. \\
A Young tableau is called standard if the entries are distinct. Otherwise it is called semistandard.
\end{definition}
We will also define another filling of a Young diagram, namely the symplectic Young tableau. The following definition is due to King.
\begin{definition}[\cite{King_1971}]
\label{symp_tableau_def}
A symplectic Young tableau is a filling of a Young diagram with entries from the alphabet $\{1<\bar{1}<...<n< \bar{n}\}$ satisfying the following conditions.\\
\textbf{(S1)} The entries increase weakly along the rows;\\
\textbf{(S2)} The entries increase strictly down the columns;\\
\textbf{(S3)} No entry $<i$ occurs in row $i$, for any i.
\end{definition}
\begin{example}The filling of the diagram of shape $(4,3)$
\begin{figure*}[h]
\begin{center}
\scalebox{0.9}{
\begin{tikzpicture}
\draw [black] (0,0) rectangle (0.8,0.8);
\node at (0.4,0.4) {1};
\draw [black] (0.8,0) rectangle (1.6,0.8);
\node at (1.2,0.4) {$\bar{1}$};
\draw [black] (1.6,0) rectangle (2.4,0.8);
\node at (2,0.4) {$\bar{1}$};
\draw [black] (2.4,0) rectangle (3.2,0.8);
\node at (2.8,0.4) {$\bar{2}$};
\draw [black] (0,-0.8) rectangle (0.8,0);
\node at (0.4,-0.4) {2};
\draw [black] (0.8,-0.8) rectangle (1.6,0);
\node at (1.2,-0.4) {2};
\draw [black] (1.6,-0.8) rectangle (2.4,0);
\node at (2,-0.4) {$\bar{2}$};
\end{tikzpicture}
}
\end{center} 
\end{figure*}\\
is a symplectic Young tableau, whereas the filling\\
\begin{figure*}[h]
\begin{center}
\scalebox{0.9}{
\begin{tikzpicture}
\draw [black] (0,0) rectangle (0.8,0.8);
\node at (0.4,0.4) {1};
\draw [black] (0.8,0) rectangle (1.6,0.8);
\node at (1.2,0.4) {$\bar{1}$};
\draw [black] (1.6,0) rectangle (2.4,0.8);
\node at (2,0.4) {$\bar{1}$};
\draw [black] (2.4,0) rectangle (3.2,0.8);
\node at (2.8,0.4) {$\bar{2}$};
\draw [black] (0,-0.8) rectangle (0.8,0);
\node at (0.4,-0.4) {$\bar{1}$};
\draw [black] (0.8,-0.8) rectangle (1.6,0);
\node at (1.2,-0.4) {2};
\draw [black] (1.6,-0.8) rectangle (2.4,0);
\node at (2,-0.4) {$\bar{2}$};
\end{tikzpicture}
}
\end{center} 
\end{figure*}\\
is not since there exists an entry $\bar{1}$ at the second row violating condition $\textbf{(S3)}$.
\end{example}
Let $\lambda^{(k)}$ be the sub-tableau of $\lambda$ with entries $\leq k$. It then holds that $\lambda^{(k)}\preceq \lambda^{(k+1)}$ for all $k$ in the alphabet. So for the alphabet $\{1,...,n\}$ it holds that $\lambda^{(k)}\preceq \lambda^{(k+1)}$, for $1\leq k \leq n-1$. For the alphabet $\{1,\bar{1},...,n,\bar{n}\}$ the following interlacing conditions holds: $\lambda^{(2l-1)}\preceq \lambda^{(2l)}$, for $1\leq l \leq n$ and $\lambda^{(2l)}\preceq \lambda^{(2l+1)}$, for $1\leq l \leq n-1$.

\section{Symmetric functions}
The ring of Laurent polynomials in $n$ independent variables with coefficients from some field $\mathbb{F}$ is denoted by $\mathbb{F}[a_{1},...,a_{n},a_{1}^{-1},...,a_{n}^{-1}]$. The hyperoctahedral group $W_{n}=S_{n}\ltimes \mathbb{Z}^n_{2}$ acts on such polynomials by permuting and inverting its variables.

For a partition $\lambda \in \Lambda_{n}$, the \textit{hyperoctahedral monomial symmetric (Laurent) polynomials} are defined as

\begin{equation}
m_{\lambda}(a_{1},...,a_{n}) = \sum_{\mu \in W_{n} \lambda}\prod _{j=1}^n a_{j}^{\mu_{j}}.
\label{eq:monomial_symmetric}
\end{equation}
where $W_{n} \lambda$ denotes the orbit of $\lambda$ with respect to the action of $W_{n}$. The collection of polynomials $m_{\lambda}$ for $\lambda$ running over all elements of $\Lambda_{n}$ form a basis for the ring of Laurent polynomials that are \emph{$W_{n}$-invariant}, i.e. invariant under the action of permutation and inversion of the variables.

Another special $W_{n}$-invariant Laurent polynomial is the symplectic Schur function. This is the character of an irreducible finite-dimensional representation of the symplectic group and the definition follows from the Weyl character formula (see e.g. \cite{Fulton_Harris_2004})
\begin{equation}
Sp_{\lambda}^{(n)}(a_{1},...,a_{n}) = \dfrac{\det \Big(a_{i}^{\lambda_{j}+n-j+1}-a_{i}^{-(\lambda_{j}+n-j+1)} \Big)}{\det \Big(a_{i}^{n-j+1}-a_{i}^{-(n-j+1)}\Big)}.
\label{eq:symp_Schur_classical}
\end{equation}

We also give an equivalent combinatorial definition.
\begin{definition}[\cite{King_1971}]
\label{Symp_Schur}
The symplectic Schur function, parametrized by a partition $\lambda \in \Lambda_{n}$ is defined as
\[Sp_{\lambda}^{(n)}(a_{1},...,a_{n})=\sum_{T}weight(T)\]
where the summation is over the set of symplectic Young tableaux of shape $\lambda$, filled with entries from $\{1,\bar{1},...,n,\bar{n}\}$, and the weight is given by
\[weight(T)=\prod_{i=1}^n (a_{i})^{\#\{i's \text{ in }T\}-\#\{\bar{i}'s \text{ in }T\}}.\]
\end{definition}
Although it is not directly obvious, the expression $\sum_{T}weight(T)$ is $W_{n}$-invariant. A proof of the invariance is given in [\cite{Sundaram_1986}, th. 6.12].

The equivalence of the two definitions for the symplectic Schur function can be easily verified when $n=1$. Indeed, the right hand side of \eqref{eq:symp_Schur_classical}, for $n=1$, $a_{1}\in \mathbb{R}_{>0}$ and $\lambda = l \in \mathbb{Z}_{\geq 0}$, equals
\begin{equation*}
\begin{split}
\dfrac{a_{1}^{l+1}-a_{1}^{-(l+1)}}{a_{1}-a_{1}^{-1}} = a_{1}^{-l}\dfrac{(a_{1}^2)^{l+1}-1}{a_{1}^2 - 1}=a_{1}^{-l}\sum_{k=0}^l (a_{1}^2)^k=\sum_{k=0}^l a_{1}^{2k-l}
\end{split}
\end{equation*}
where the second equality follows from the formula of the sum of geometric series. We finally conclude the equivalence of the two formulas observing that $a_{1}^{2k-l}$ is the weight of a symplectic tableau with $k$ entries equal to $1$ and $l-k$ entries equal to $\bar{1}$.

\section{Some useful $q$-deformations}
We record some $q$-deformations of classical functions (\cite{Kac_Cheung_2002}). We assume throughout that $0<q<1$.\\

\noindent The \emph{$q$-Pochhammer symbol} is written as $(a;q)_{n}$ and defined via the product
\[(a;q)_{n} = \prod_{k=0}^{n-1}(1-aq^k), \qquad (a;q)_{\infty} = \prod_{k\geq 0} (1-aq^k).\]
We also write $(a_{1},...,a_{l};q)_{n}=(a_{1};q)_{n}...(a_{l};q)_{n}$.

\noindent The \emph{$q$-factorial} is written as $n!_{q}$ and is defined as 
\[n!_{q} = \dfrac{(q;q)_{n}}{(1-q)^n}=\prod_{k=1}^n[k]_{q},\text{ where }[k]_{q}:=\dfrac{1-q^k}{1-q}.\]
Observe that at the $q \to 1$ limit one recovers the classical factorial since
\[\lim_{q \to 1}n!_{q}=\lim_{q \to 1}\prod_{k=1}^n \dfrac{1-q^k}{1-q}=\prod_{k=1}^n \lim_{q \to 1}\dfrac{1-q^k}{1-q}=\prod_{k=1}^n k=n!. \]

\noindent The \emph{$q$-binomial coefficients} are defined in terms of $q$-factorials as
\[\dbinom{n}{k}_{q} = \dfrac{n!_{q}}{k!_{q}(n-k)!_{q}}.\]
We record some properties of the $q$-binomial coefficient
\begin{equation}
\begin{split}
\dbinom{n+1}{k}_{q}=\dbinom{n}{k}_{q}\dfrac{1-q^{n+1}}{1-q^{n-k+1}} \qquad \dbinom{n-1}{k}_{q}=\dbinom{n}{k}_{q}\dfrac{1-q^{n-k}}{1-q^n}\\
\dbinom{n}{k+1}_{q}=\dbinom{n}{k}_{q}\dfrac{1-q^{n-k}}{1-q^{k+1}} \qquad \dbinom{n}{k-1}_{q}=\dbinom{n}{k}_{q}\dfrac{1-q^{k}}{1-q^{n-k+1}}.
\end{split}
\label{eq:q_binomial}
\end{equation}
Let us check the first property, the others can be checked in a similar way. For $0\leq k \leq n$ integers, we have
\[\dbinom{n+1}{k}_{q}=\dfrac{(q;q)_{n+1}}{(q;q)_{k}(q;q)_{n-k+1}} =\dfrac{(q;q)_{n}(1-q^{n+1})}{(q;q)_{k}(q;q)_{n-k}(1-q^{n-k+1})}=\dbinom{n}{k}_{q}\dfrac{1-q^{n+1}}{1-q^{n-k+1}}. \]

\section{Deformed hyperoctahedral $q$-Whittaker functions}
In this section we will make a review of the deformed hyperoctahedral $q$-Whittaker functions, which correspond to a special case of a six-parameter family of polynomials, called the Macdonald-Koornwinder polynomials introduced in \cite{Koornwinder_1992}. Our main references for the deformed hyperoctahedral $q$-Whittaker functions are \cite{vanDiejen_Emsiz_2015a}, \cite{vanDiejen_Emsiz_2015b} and \cite{vanDiejen_Emsiz_2016}. 

In the case of a single variable the Macdonald-Koornwinder polynomials reduce to a special five-parameter family of orthogonal polynomials called the Askey-Wilson polynomials. 

\begin{definition}The function ${}_{r}\phi_{s}$ is the basic hypergeometric series which for $r=s+1$ is given by
\[{}_{s+1} \phi_s\left(\begin{matrix}&a_{1},\,...\,,a_{s+1}&\\&b_{1}\,,...,\,b_{s}&\end{matrix}\Big|q;z\right)=\sum_{k=0}^{\infty}\dfrac{(a_{1},...,a_{s+1};q)_{k}}{(b_{1},...,b_{s};q)_{k}}\dfrac{z^k}{(q;q)_{k}}.\]
Then the Askey-Wilson polynomials with parameters $q$, $\mathbf{t}=(t_{0},t_{1},t_{2},t_{3})$ are defined as follows
\[p_{l}(a;q,\mathbf{t})=\dfrac{(t_{0}t_{1},t_{0}t_{2},t_{0}t_{3};q)_{l}}{t_{0}^l}\setlength\arraycolsep{1pt}
{}_4 \phi_3\left(\begin{matrix}&q^{-l},t_{0}t_{1}t_{2}t_{3}q^{l-1},t_{0}a,t_{0}a^{-1}&\\&t_{0}t_{1},t_{0}t_{2},t_{0}t_{3}&\end{matrix}\Big|q;q\right).\]
\end{definition}
If $l>k$, then $(q^{-l};q)_{k}=0$, therefore in the definition of the Askey-Wilson polynomial the series terminates at $k=l$.

The Askey-Wilson polynomials contain other hypergeometric polynomials as special cases. We refer the reader to [\cite{Koekoek_Lesky_Swarttouw_2010}, ch. 14] for a detailed list of these special polynomials and their connections. In \cite{Koekoek_Lesky_Swarttouw_2010} (section 14.4) it is stated that when $t_{1}=t_{2}=t_{3}=0$, the Askey-Wilson polynomials correspond to the continuous big $q$-Hermite polynomials given by
\[H_{l}(a;q,t_{0})=t_{0}^{-l}{}_{3} \phi_2\left(\begin{matrix}&q^{-l},t_{0}x,t_{0}x^{-1}&\\&0,0&\end{matrix}\Big|q;q\right).\]
Moreover, the limit $t_{0}\to 0$ exists and we obtain the continuous $q$-Hermite polynomial, defined as 
\begin{equation}
H_{l}(a|q)=\sum_{m=0}^l \dbinom{l}{m}_{q}a^{2m-l}.
\label{eq:q-Hermite}
\end{equation}
Observe that when $q\to 0$, the $q$-binomial converges to $1$, therefore the continuous $q$-Hermite polynomial recovers the symplectic Schur function $Sp^{(1)}_{(l)}(a)$, we defined in \ref{Symp_Schur}.

In a similar way, the Macdonald-Koornwinder polynomials contain other special polynomials as specials cases. Our main interest are the $q$-deformed $\mathfrak{so}_{2n+1}$-Whittaker functions. These correspond to the following choice of parameters $t_{0}\to 0$ and $t_{r}=0$, for $1\leq r \leq 3$. 

\begin{definition}
Let $q,\mathbf{t}=(t_{0},t_{1},t_{2},t_{3})$ such that $0< q, |t_{l}|<1$. The deformed hyperoctahedral $q$-Whittaker function $P_{\lambda}^{(n)}(\cdot;q,\mathbf{t})$, parametrized by partition $\lambda \in \Lambda_{n}$ is the unique $W_{n}$-invariant Laurent polynomial with coefficients rational functions of $q, \mathbf{t}$ satisfying the following two conditions.
\begin{enumerate}[1)]
\item 
\[P_{\lambda}^{(n)}(\cdot;q, \mathbf{t}) = m_{\lambda}+\sum_{\substack{\mu \in \Lambda_{n}\\ \mu<\lambda}}c_{\lambda,\mu}m_{\mu}\]
where $c_{\lambda, \mu}$ are functions of $q$ and $\mathbf{t}$ and $m_{\lambda}$ are the hyperoctahedral monomial symmetric polynomials we defined in \eqref{eq:monomial_symmetric}.
\item They are orthogonal with respect to the inner product
\begin{equation}
\label{eq:torus_scalar}
\langle f,g\rangle_{\hat{\Delta}^{(n)}} = \frac{1}{(2\pi i)^n n!}\int_{\mathbb{T}^n} f(a)\overline{g(a)}\hat{\Delta}^{(n)}(a) \prod_{j=1}^n \frac{da_{j}}{a_{j}}
\end{equation}
where $\mathbb{T}^n$ is the $n$-dimensional torus $|a_{j}|=1$, $j=1,...,n$ and 
\begin{equation*}
\hat{\Delta}^{(n)}(a)=\prod_{1\leq j \leq n} \dfrac{(a_{j}^2,a_{j}^{-2};q)_{\infty}}{\prod_{0\leq l \leq 3}(t_{l}a_{j},t_{l}a_{j}^{-1};q)_{\infty}}\prod_{1\leq j<k \leq n} (a_{j}a_{k},a_{j}^{-1}a_{k},a_{j}a_{k}^{-1},a_{j}^{-1}a_{k}^{-1};q)_{\infty}.
\end{equation*}
\end{enumerate}
\end{definition}

In this thesis we will mainly focus on the $q$-deformed $\mathfrak{so}_{2n+1}$-Whittaker functions. These can be obtained from the deformed hyperoctahedral $q$-Whittaker functions if we set $t_{0}\to 0$ and $t_{1}=t_{2}=t_{3}=0$. For completeness we give a definition for the $q$-deformed $\mathfrak{so}_{2n+1}$-Whittaker functions as it is implied form the definition of the deformed hyperoctahedral $q$-Whittaker functions.
\begin{definition}
Let $0< q<1$. The $q$-deformed $\mathfrak{so}_{2n+1}$-Whittaker function $P_{\lambda}^{(n)}(\cdot;q)$, parametrized by partition $\lambda \in \Lambda_{n}$ is the unique $W_{n}$-invariant Laurent polynomial with coefficients that are rational functions of $q$ satisfying the following two conditions.
\begin{enumerate}[1)]
\item 
\[P_{\lambda} ^{(n)}(\cdot;q)= m_{\lambda}+\sum_{\substack{\mu \in \Lambda_{n}\\ \mu<\lambda}}c_{\lambda,\mu}m_{\mu}\]
where $c_{\lambda, \mu}$ are functions of $q$ and $m_{\lambda}$ are the hyperoctahedral monomial symmetric polynomials we defined in \eqref{eq:monomial_symmetric}.
\item They are orthogonal with respect to the inner product
\begin{equation}
\label{eq:torus_scalar_qWhittaker}
\langle f,g\rangle_{\hat{\Delta}^{(n)}} = \frac{1}{(2\pi i)^n n!}\int_{\mathbb{T}^n} f(a)\overline{g(a)}\hat{\Delta}^{(n)}(a) \prod_{j=1}^n \frac{da_{j}}{a_{j}}
\end{equation}
where $\mathbb{T}^n$ is the $n$-dimensional torus $|a_{j}|=1$, $j=1,...,n$ and 
\begin{equation*}
\hat{\Delta}^{(n)}(a)=\prod_{1\leq j \leq n} (a_{j}^2,a_{j}^{-2};q)_{\infty}\prod_{1\leq j<k \leq n} (a_{j}a_{k},a_{j}^{-1}a_{k},a_{j}a_{k}^{-1},a_{j}^{-1}a_{k}^{-1};q)_{\infty}.
\end{equation*}
\end{enumerate}
\end{definition}

In figure \ref{fig:polynomials_diagram} we list the polynomials that appear in this chapter and their connections.
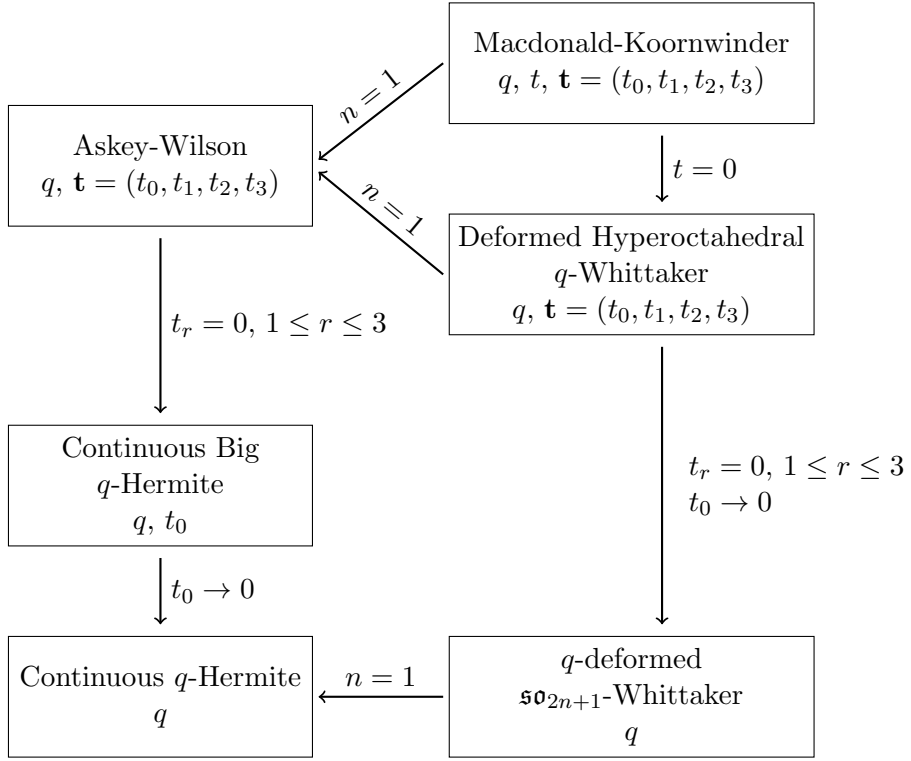
\begin{figure}[h]
\begin{center}
\begin{tikzpicture}[scale=0.8]

\draw  (2,-5) rectangle (8,-3)node[pos=.5, black] {\begin{tabular}{c} Macdonald-Koornwinder \\ $q$, $t$, $\mathbf{t} = (t_{0},t_{1},t_{2},t_{3})$ \end{tabular}};

\draw [thick, ->] (1.9,-4)-- node [above,midway, sloped] {$n=1$}(-0.15,-5.6);

\draw [thick,->] (5.5,-5.2)--node [right, midway] {$t=0$}(5.5,-6.3);

\draw  (2,-8.5) rectangle (8,-6.5)node[pos=.5, black] {\begin{tabular}{c}  Deformed Hyperoctahedral \\ $q$-Whittaker \\ $q$, $\mathbf{t} = (t_{0},t_{1},t_{2},t_{3})$ \end{tabular}};

\draw [thick, ->] (1.9,-7.5) -- node [above,midway, sloped] {$n=1$} (-0.15,-5.8);

\draw [thick,->] (5.5,-8.7)-- node [right, midway]{\begin{tabular}{l} $t_{r}=0, \, 1\leq r \leq 3$ \\ $t_{0}\to 0$ \end{tabular}}(5.5,-13.3);

\draw (2,-15.5) rectangle (8,-13.5)node[pos=.5, black] {\begin{tabular}{c}  $q$-deformed \\ $\mathfrak{so}_{2n+1}$-Whittaker \\ $q$ \end{tabular}};

\draw [thick, ->] (1.9,-14.5) -- node [above,midway, sloped] {$n=1$} (-0.15,-14.5);

\draw (-5.25,-6.7) rectangle (-0.25,-4.7)node[pos=.5, black] {\begin{tabular}{c}  Askey-Wilson  \\ $q$, $\mathbf{t} = (t_{0},t_{1},t_{2},t_{3})$ \end{tabular}};

\draw [thick,->] (-2.75,-6.9)-- node [right, midway]{$t_{r}=0, \, 1\leq r \leq 3$}(-2.75,-9.8);

\draw (-5.25,-12) rectangle (-0.25,-10)node[pos=.5, black] {\begin{tabular}{c} Continuous Big \\ $q$-Hermite  \\ $q$, $t_{0}$ \end{tabular}};

\draw [thick,->] (-2.75,-12.2)-- node [right, midway]{$t_{0}\to 0$}(-2.75,-13.3);

\draw (-5.25,-15.5) rectangle (-0.25,-13.5)node[pos=.5, black] {\begin{tabular}{c} Continuous $q$-Hermite  \\ $q$\end{tabular}};
\end{tikzpicture}
\end{center} 
\caption{Special cases of Askey-Wilson and Macdonald-Koornwinder polynomials for different choices of parameters. When $n=1$, the Koornwinder polynomial gives the corresponding Askey-Wilson polynomial.} 
\label{fig:polynomials_diagram}
\end{figure}

\newpage
In the rest of the section we review the main results about these special $W_{n}$-invariant functions, as stated in \cite{vanDiejen_Emsiz_2015a}, \cite{vanDiejen_Emsiz_2015b} and \cite{vanDiejen_Emsiz_2016}.We remark that although this choice of parameters does not satisfy the condition $|t_{r}|>0$, according to van Diejen and Emsiz, their results as stated in \cite{vanDiejen_Emsiz_2015a} are still valid. 
\subsection{Orthogonality and Completeness}
In the last section we defined the $q$-deformed $\mathfrak{so}_{2n+1}$-Whittaker functions to be orthogonal with respect to the inner product $\langle \cdot, \cdot \rangle_{\hat{\Delta}^{(n)}}$, given in \eqref{eq:torus_scalar_qWhittaker}. More specifically the following orthogonality relation holds
\begin{equation}
\langle P_{\lambda}^{(n)}, P^{(n)}_{\mu}\rangle_{\hat{\Delta}^{(n)}}=\mathbbm{1}_{\lambda = \mu}/\Delta^{(n)}_{\lambda}
\label{eq:hyperoctahedral_orthogonality}
\end{equation}
where
\[\Delta_{\lambda}^{(n)}=\dfrac{(q;q)_{\infty}^n}{(q,q)_{\lambda_{n}}\prod_{1\leq j <n}(q;q)_{\lambda_{j}-\lambda_{j+1}}}.\]
The $q$-deformed $\mathfrak{so}_{2n+1}$-Whittaker functions form an orthogonal basis for the space of $W_{n}$-invariant Laurent polynomials. More specifically, we have the following result.
\begin{theorem}[\cite{vanDiejen_Emsiz_2015a}]
The transformation
\[(\mathbf{F}f)(a)=\sum_{\lambda \in \Lambda_{n}}f(\lambda)P^{(n)}_{\lambda}(a;q)\Delta_{\lambda}^{(n)}\]
defines an isometry from $L^2(\Lambda_{n}, \Delta^{(n)})$ to $L^2(\mathbb{T}^n, \hat{\Delta}^{(n)}\frac{da}{a})$ with inverse transform given by
\[(\mathbf{F}^{-1}\hat{f})(\lambda)=\dfrac{1}{(2\pi i)^n n!}\int_{\mathbb{T}^n}\hat{f}(a)\overline{P_{\lambda}^{(n)}(a;q)}\hat{\Delta}^{(n)}(a)\frac{da}{a}.\]
The weight function $\hat{\Delta}^{(n)}$ in this case is
\begin{equation*}
\hat{\Delta}^{(n)}(a)=\prod_{1\leq j \leq n} (a_{j}^2,a_{j}^{-2};q)_{\infty}\prod_{1\leq j<k \leq n} (a_{j}a_{k},a_{j}^{-1}a_{k},a_{j}a_{k}^{-1},a_{j}^{-1}a_{k}^{-1};q)_{\infty}.
\end{equation*}
\end{theorem}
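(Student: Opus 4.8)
This is a Plancherel-type completeness statement for the orthonormal system of normalized $q$-deformed $\mathfrak{so}_{2n+1}$-Whittaker functions, and the plan is to deduce it from two inputs: the orthogonality relation \eqref{eq:hyperoctahedral_orthogonality}, already recorded above, and density of $W_{n}$-invariant Laurent polynomials in the target Hilbert space. First I would set $\widetilde{P}_{\lambda}:=(\Delta^{(n)}_{\lambda})^{1/2}P^{(n)}_{\lambda}$ and work in $H:=L^{2}(\mathbb{T}^n,\hat{\Delta}^{(n)}\tfrac{da}{a})$. One checks that $\hat{\Delta}^{(n)}$ is a genuine finite weight: on $|a_{j}|=1$ the factors $(a_{j}^{2},a_{j}^{-2};q)_{\infty}=\prod_{m\geq 0}|1-a_{j}^{2}q^{m}|^{2}$ and $(a_{j}a_{k},a_{j}^{-1}a_{k},a_{j}a_{k}^{-1},a_{j}^{-1}a_{k}^{-1};q)_{\infty}=\prod_{m\geq0}|1-a_{j}a_{k}q^{m}|^{2}|1-a_{j}^{-1}a_{k}q^{m}|^{2}$ are non-negative and each infinite product converges uniformly on the compact torus, so $\hat{\Delta}^{(n)}$ is continuous, non-negative and bounded and $\hat{\Delta}^{(n)}\tfrac{da}{a}$ is a finite positive measure. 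Since the inner product $\langle\cdot,\cdot\rangle_{\hat{\Delta}^{(n)}}$ of \eqref{eq:torus_scalar_qWhittaker} is exactly the inner product of $H$, the relation \eqref{eq:hyperoctahedral_orthogonality} says precisely that $\{\widetilde{P}_{\lambda}:\lambda\in\Lambda_{n}\}$ is an orthonormal family in $H$.

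Next I would establish completeness. By condition~1) in the definition of the $q$-deformed $\mathfrak{so}_{2n+1}$-Whittaker functions, the transition from the basis $\{m_{\mu}:\mu\in\Lambda_{n}\}$ of the ring of $W_{n}$-invariant Laurent polynomials to $\{P^{(n)}_{\lambda}\}$ is unitriangular for the dominance order; since $\mu\leq\lambda$ forces $\mu_{i}\leq\mu_{1}\leq\lambda_{1}$ for all $i$, the set $\{\mu\in\Lambda_{n}:\mu\leq\lambda\}$ is finite, so this transition is invertible and $\{\widetilde{P}_{\lambda}\}$ is again a vector-space basis of the $W_{n}$-invariant Laurent polynomials. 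By Stone--Weierstrass the Laurent polynomials are sup-norm dense in $C(\mathbb{T}^n)$, hence dense in $H$; applying the averaging projection $\mathcal{S}f=\frac{1}{|W_{n}|}\sum_{w\in W_{n}}f\circ w$---which is a contraction on $H$ (as $\hat{\Delta}^{(n)}$ is $W_{n}$-invariant) and sends Laurent polynomials onto the $W_{n}$-invariant ones---shows that $\operatorname{span}\{\widetilde{P}_{\lambda}\}$ is dense in the closed subspace $H^{W_{n}}\subset H$ of $W_{n}$-invariant elements. Hence $\{\widetilde{P}_{\lambda}:\lambda\in\Lambda_{n}\}$ is an orthonormal basis of $H^{W_{n}}$, which (in the conventions of this literature) is the Hilbert space the theorem denotes $L^{2}(\mathbb{T}^n,\hat{\Delta}^{(n)}\tfrac{da}{a})$.

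It then remains to package this. The map $f\mapsto\big(f(\lambda)(\Delta^{(n)}_{\lambda})^{1/2}\big)_{\lambda}$ is an isometric isomorphism $L^{2}(\Lambda_{n},\Delta^{(n)})\cong\ell^{2}(\Lambda_{n})$, and $\mathbf{F}f=\sum_{\lambda}\big(f(\lambda)(\Delta^{(n)}_{\lambda})^{1/2}\big)\widetilde{P}_{\lambda}$, so $\mathbf{F}$ is the composition of this isomorphism with the unitary $\ell^{2}(\Lambda_{n})\to H^{W_{n}}$ attached to the orthonormal basis; in particular it is norm-preserving with image $H^{W_{n}}$. By Parseval, for $\hat{f}\in H^{W_{n}}$ one obtains $(\mathbf{F}^{-1}\hat{f})(\lambda)=(\Delta^{(n)}_{\lambda})^{-1/2}\langle\hat{f},\widetilde{P}_{\lambda}\rangle_{H}=\langle\hat{f},P^{(n)}_{\lambda}\rangle_{H}=\frac{1}{(2\pi i)^{n}n!}\int_{\mathbb{T}^n}\hat{f}(a)\,\overline{P^{(n)}_{\lambda}(a)}\,\hat{\Delta}^{(n)}(a)\tfrac{da}{a}$, which is the asserted formula. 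The identity $\mathbf{F}^{-1}\mathbf{F}=\mathrm{id}$ can also be verified directly, without the basis language: $\langle\mathbf{F}f,P^{(n)}_{\mu}\rangle_{H}=\sum_{\lambda}f(\lambda)\Delta^{(n)}_{\lambda}\langle P^{(n)}_{\lambda},P^{(n)}_{\mu}\rangle_{H}=f(\mu)$ by \eqref{eq:hyperoctahedral_orthogonality}.

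I expect the only genuinely substantive step to be the completeness input---the density of $\operatorname{span}\{P^{(n)}_{\lambda}\}=\operatorname{span}\{m_{\mu}\}$ in $H^{W_{n}}$ together with the invertibility of the unitriangular $m$-to-$P$ change of basis. Given that and the orthogonality relation \eqref{eq:hyperoctahedral_orthogonality} (taken as known from \cite{vanDiejen_Emsiz_2015a}, and itself the deeper ingredient, established there by simultaneously diagonalizing a commuting family of $q$-difference operators for which the $P^{(n)}_{\lambda}$ are joint eigenfunctions), the remaining steps are routine Hilbert-space arguments.
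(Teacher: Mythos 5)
Your proof is correct. The paper itself offers no proof of this statement: it is quoted verbatim from \cite{vanDiejen_Emsiz_2015a} as part of the background review, and the only thing proved in the text is the completeness Corollary \ref{orthogonality_rates} that follows from it. Your derivation is the standard Plancherel argument and all the steps check out: the weight $\hat{\Delta}^{(n)}$ is indeed a bounded non-negative continuous function on $\mathbb{T}^n$ (each factor is $|1-\cdot\,q^m|^2$ on the torus), the relation \eqref{eq:hyperoctahedral_orthogonality} gives orthonormality of $(\Delta^{(n)}_{\lambda})^{1/2}P^{(n)}_{\lambda}$, the unitriangular change of basis from $\{m_{\mu}\}$ works because every down-set $\{\mu\leq\lambda\}$ in dominance order is finite, and Stone--Weierstrass plus $W_{n}$-averaging gives density in the invariant subspace, which is indeed the space meant by $L^{2}(\mathbb{T}^n,\hat{\Delta}^{(n)}\tfrac{da}{a})$ in this literature. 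You also correctly identify that the entire nontrivial content is the quadratic norm evaluation $\langle P^{(n)}_{\lambda},P^{(n)}_{\lambda}\rangle_{\hat{\Delta}^{(n)}}=1/\Delta^{(n)}_{\lambda}$, which you take as input from \cite{vanDiejen_Emsiz_2015a} exactly as the paper does; given that, the rest is routine Hilbert-space bookkeeping, and your direct verification $\langle\mathbf{F}f,P^{(n)}_{\mu}\rangle=f(\mu)$ matches how the paper uses the theorem in the proof of Corollary \ref{orthogonality_rates}.
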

The theorem implies the following completeness result for the $q$-deformed $\mathfrak{so}_{2n+1}$-Whittaker functions.
\begin{corollary}
\label{orthogonality_rates}
For $g\in L_{2}(\mathbb{T}^n, \hat{\Delta}^{(n)}\frac{db}{b})$ the following relation holds
\[\dfrac{1}{(2\pi i)^n n!}\int_{\mathbb{T}^n}\hat{\Delta}^{(n)}(b)\sum_{\lambda \in \Lambda_{n}}P^{(n)}_{\lambda}(a;q)\overline{P^{(n)}_{\lambda}(b;q)}\Delta_{\lambda}^{(n)}g(b)\dfrac{db}{b}=g(a)\]
for every $a \in \mathbb{T}^n$.
\end{corollary}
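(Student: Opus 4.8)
The plan is to read the corollary as the assertion that $\mathbf{F}\circ\mathbf{F}^{-1}$ is the identity on $L^2(\mathbb{T}^n,\hat\Delta^{(n)}\frac{da}{a})$, written out in sum–integral form, and to deduce it directly from the isometry theorem. First I would substitute the explicit formula for $\mathbf{F}^{-1}g$ into the formula for $\mathbf{F}$: for $g\in L^2(\mathbb{T}^n,\hat\Delta^{(n)}\frac{db}{b})$,
\[(\mathbf{F}^{-1}g)(\lambda)=\frac{1}{(2\pi i)^n n!}\int_{\mathbb{T}^n}g(b)\overline{P^{(n)}_{\lambda}(b;q)}\hat\Delta^{(n)}(b)\frac{db}{b}=\langle g,P^{(n)}_{\lambda}\rangle_{\hat\Delta^{(n)}},\]
and therefore
\[\big(\mathbf{F}(\mathbf{F}^{-1}g)\big)(a)=\sum_{\lambda\in\Lambda_{n}}\Big(\frac{1}{(2\pi i)^n n!}\int_{\mathbb{T}^n}g(b)\overline{P^{(n)}_{\lambda}(b;q)}\hat\Delta^{(n)}(b)\frac{db}{b}\Big)P^{(n)}_{\lambda}(a;q)\Delta^{(n)}_{\lambda}.\]
Interchanging $\sum_{\lambda}$ with $\int_{\mathbb{T}^n}$ turns the right-hand side into exactly the left-hand side of the corollary, and since the theorem asserts that $\mathbf{F}$ is an isometric isomorphism with inverse $\mathbf{F}^{-1}$, the left-hand side equals $g(a)$. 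So the corollary reduces to justifying this interchange and the pointwise evaluation.

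For that, the cleanest route is to first treat $g$ in the dense subspace of $W_{n}$-invariant Laurent polynomials. If $g=\sum_{\mu}b_{\mu}P^{(n)}_{\mu}$ is such a finite combination, then by the orthogonality relation \eqref{eq:hyperoctahedral_orthogonality} one has $(\mathbf{F}^{-1}g)(\lambda)=b_{\lambda}/\Delta^{(n)}_{\lambda}$, which has finite support; the sum over $\lambda$ is then finite, Fubini is trivial, and the computation collapses to $\sum_{\lambda}b_{\lambda}P^{(n)}_{\lambda}(a;q)=g(a)$. For general $g\in L^2$, the series $\sum_{\lambda}(\mathbf{F}^{-1}g)(\lambda)P^{(n)}_{\lambda}(\cdot;q)\Delta^{(n)}_{\lambda}$ converges to $g$ in $L^2(\mathbb{T}^n,\hat\Delta^{(n)}\frac{da}{a})$ by the theorem; Cauchy--Schwarz in $L^2(\Lambda_{n},\Delta^{(n)})$ then controls the pairing of the partial-sum kernels $\sum_{\lambda}P^{(n)}_{\lambda}(a;q)\overline{P^{(n)}_{\lambda}(b;q)}\Delta^{(n)}_{\lambda}$ against $g$, so that one may either interpret the displayed identity in the reproducing-kernel sense against such $g$, or observe that for fixed $a\in\mathbb{T}^n$ the functional $g\mapsto\big(\mathbf{F}(\mathbf{F}^{-1}g)\big)(a)$ is bounded on $L^2$ provided $\sum_{\lambda}|P^{(n)}_{\lambda}(a;q)|^{2}\Delta^{(n)}_{\lambda}<\infty$, and extend the polynomial identity by continuity.

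The hard part will be exactly this last point: making the identity rigorous pointwise "for every $a\in\mathbb{T}^n$" rather than merely in $L^2$, which amounts to controlling $\sum_{\lambda}|P^{(n)}_{\lambda}(a;q)|^{2}\Delta^{(n)}_{\lambda}$ uniformly on the torus, i.e. establishing the reproducing-kernel property of the series. Everything else is the formal unpacking of $\mathbf{F}\mathbf{F}^{-1}=\mathrm{id}$ together with an application of Fubini, and the polynomial case handles the algebraic content with no analytic subtleties.
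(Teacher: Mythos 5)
Your proposal is correct and its core is the same as the paper's: both read the display as $(\mathbf{F}\mathbf{F}^{-1}g)(a)=g(a)$ and deduce it from the isometry theorem. The only divergence is in upgrading the almost-everywhere identity to "for every $a\in\mathbb{T}^n$": the paper does this by picking a sequence $b_{n}\to a$ at which the relation holds and passing to the limit (implicitly using continuity of both sides, which it does not justify), whereas you propose first verifying the identity for $W_{n}$-invariant Laurent polynomials, where the $\lambda$-sum is finite and everything is exact, and then extending by $L^2$-density, which requires the pointwise bound $\sum_{\lambda}|P^{(n)}_{\lambda}(a;q)|^{2}\Delta^{(n)}_{\lambda}<\infty$. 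You correctly flag this kernel bound as the genuine analytic content; the paper's sequence argument quietly sidesteps the same issue. Neither write-up fully closes that gap, but your diagnosis of where the difficulty lies is the more honest of the two, and your polynomial-density route is the more standard way to make the argument rigorous.
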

\begin{proof}
We first observe that the required relation can be re-written as follows
\[(FF^{-1}g)(a)=g(a).\]
By the fact that $\mathbf{F}^{-1}$ is the inverse transform of $\mathbf{F}$ we have that for almost every $b$ on the torus $\mathbb{T}^n$
\[(FF^{-1}g)(b)=g(b).\]
The result follows considering a sequence of points $\{b_{n}\}_{n\geq 1}$ on $\mathbb{T}^n$ such that $b_{n}\to a$, as $n\to \infty$ which satisfy the relation
\[(FF^{-1}g)(b_{n})=g(b_{n})\]
for every $n\geq 1$.
\end{proof}
\subsection{Difference operators and Pieri formula}
From the results of \cite{Koornwinder_1992},\cite{Macdonald_2003} ,\cite{vanDiejen_Emsiz_2015a} an equivalent definition is implied for the deformed hyperoctahedral $q$-Whittaker functions via the Koornwinder difference operator, which for the choice of parameters $t_{0}\to 0$ and $t_{1}=t_{2}=t_{3}=0$, is given by
\begin{equation}
\label{eq:Koornwinder_operator}
D_{n}=\sum_{i=1}^nA_{i}(a;q)(\mathcal{T}_{q,i}-I)+\sum_{i=1}^nA_{i}(a^{-1};q)(\mathcal{T}_{q^{-1},i}-I)
\end{equation}
where
\begin{equation}
\label{eq:Koornwinder_operator_A}
A_{i}(a;q)=\dfrac{1}{(1-a_{i}^2)(1-qa_{i}^2)}\prod_{j\neq i}\dfrac{1}{(1-a_{i}a_{j})(1-a_{i}a_{j}^{-1})}
\end{equation}
and the \textit{shift operator} $\mathcal{T}_{u,i}$ is defined as
\[(\mathcal{T}_{u,i}f)(a_{1},...,a_{n})=f(a_{1},...,a_{i-1},ua_{i},a_{i+1},...,a_{n}).\]
\begin{proposition}[\cite{Koornwinder_1992},\cite{Macdonald_2003} ,\cite{vanDiejen_Emsiz_2015a}]
\label{q_eigenrelation_rates}
For any partition $\lambda \in \Lambda_{n}$, it holds
\[D_{n}P_{\lambda}^{(n)}(a_{1},...,a_{n};q)=(q^{-\lambda_{1}}-1)P_{\lambda}^{(n)}(a_{1},...,a_{n};q).\]
\end{proposition}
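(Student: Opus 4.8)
The plan is to derive the eigenrelation from two structural facts about $D_n$, combined with the defining properties of $P_\lambda^{(n)}$. The facts are: (i) $D_n$ is \emph{triangular} with respect to the dominance order, $D_n m_\lambda = (q^{-\lambda_1}-1)\,m_\lambda + \sum_{\mu\in\Lambda_n,\ \mu<\lambda} d_{\lambda,\mu}\,m_\mu$ with coefficients $d_{\lambda,\mu}$ rational in $q$; and (ii) $D_n$ is \emph{symmetric} for the torus inner product of \eqref{eq:torus_scalar_qWhittaker}, i.e. $\langle D_n f,g\rangle_{\hat\Delta^{(n)}}=\langle f,D_n g\rangle_{\hat\Delta^{(n)}}$ for all $W_n$-invariant Laurent polynomials $f,g$. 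Granting (i) and (ii), the orthogonality \eqref{eq:hyperoctahedral_orthogonality} together with the unitriangular normalisation $P_\lambda^{(n)}=m_\lambda+\sum_{\mu<\lambda}c_{\lambda,\mu}m_\mu$ force $P_\lambda^{(n)}$ to be the eigenfunction with eigenvalue $q^{-\lambda_1}-1$. (Equivalently, one may read the identity off from the classical Koornwinder eigenrelation $D(q,t;\mathbf t)K_\lambda=\mathcal E_\lambda K_\lambda$ of \cite{Koornwinder_1992,Macdonald_2003} by specialising $t=0$, $t_1=t_2=t_3=0$, $t_0\to0$, where $\mathcal E_\lambda$ degenerates to $q^{-\lambda_1}-1$; I sketch the self-contained route because these parameters lie on the boundary of the domain considered there.)

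For (i), one first verifies that $D_n$ really maps Laurent polynomials to Laurent polynomials: it commutes with permutations of the $a_j$ (each $A_i$ being symmetric in $\{a_j:j\ne i\}$) and with the simultaneous inversion $a_j\mapsto a_j^{-1}$ (which swaps the two sums in \eqref{eq:Koornwinder_operator}), while the apparent poles of $A_i(a^{\pm1};q)$ along $a_i^{2}=1$, $a_ia_j=1$ and $a_i=a_j$ cancel once one acts on a $W_n$-invariant $f$: near $a_i=1$, for example, the residues of the two $i$-summands add up to $f(\dots,q,\dots)-f(\dots,q^{-1},\dots)$, which vanishes by inversion-invariance, and the residue along $a_i=a_j$ cancels between the $i$- and $j$-summands by permutation-invariance. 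So $D_n$ preserves each finite-dimensional space $\mathrm{span}\{m_\mu:\mu\le\lambda\}$; the coefficient of $m_\lambda$ in $D_n m_\lambda$ equals the coefficient of the dominant monomial $a^{\lambda}=a_1^{\lambda_1}\cdots a_n^{\lambda_n}$ in $D_n m_\lambda$, and organising the rational expansions of the $A_i$ to extract this coefficient gives $q^{-\lambda_1}-1$ — this is the delicate point, and the answer can be cross-checked in the reduction $n=1$, where $D_1$ must fix the continuous $q$-Hermite polynomial \eqref{eq:q-Hermite} with eigenvalue $q^{-l}-1$.

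Fact (ii) reflects that the weight $\hat\Delta^{(n)}$ is tailored to make the shift operators mutually adjoint: using the telescoping identity $(x;q)_\infty/(qx;q)_\infty=1-x$ one computes $\hat\Delta^{(n)}(\mathcal T_{q^{\pm1},i}a)/\hat\Delta^{(n)}(a)$ explicitly and finds it provides exactly the intertwiner that turns $\langle A_i(a;q)\,\mathcal T_{q,i}f,\,g\rangle$ into $\langle f,\,A_i(a^{-1};q)\,\mathcal T_{q^{-1},i}g\rangle$ after shifting the contour; deforming the torus $\mathbb T^n$ is permitted because, the $t_l$-factors being absent, the relevant integrand is holomorphic in the appropriate annulus. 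Summing over $i$ and adding the $-I$ terms (which contribute the same multiplication operator to both sides) yields the symmetry. This is the standard self-adjointness of Macdonald--Koornwinder difference operators.

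Finally, set $R_\lambda:=\bigl(D_n-(q^{-\lambda_1}-1)I\bigr)P_\lambda^{(n)}$. By (i) and the expansion of $P_\lambda^{(n)}$ the $m_\lambda$-coefficient of $R_\lambda$ is zero, so $R_\lambda\in\mathrm{span}\{m_\mu:\mu<\lambda\}=\mathrm{span}\{P_\mu^{(n)}:\mu<\lambda\}$ (a finite set, since $\mu\le\lambda$ forces $|\mu|\le|\lambda|$ and $\mu$ has at most $n$ parts). For each $\nu<\lambda$, symmetry gives $\langle R_\lambda,P_\nu^{(n)}\rangle_{\hat\Delta^{(n)}}=\langle P_\lambda^{(n)},\bigl(D_n-(q^{-\lambda_1}-1)I\bigr)P_\nu^{(n)}\rangle_{\hat\Delta^{(n)}}$, and by (i) again $\bigl(D_n-(q^{-\lambda_1}-1)I\bigr)P_\nu^{(n)}$ lies in $\mathrm{span}\{P_\mu^{(n)}:\mu\le\nu<\lambda\}$, hence is orthogonal to $P_\lambda^{(n)}$ by \eqref{eq:hyperoctahedral_orthogonality}. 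Therefore all coordinates of $R_\lambda$ in the basis $\{P_\mu^{(n)}\}$ vanish, so $R_\lambda=0$, which is the assertion. I expect step (i) — the pole cancellations that make $D_n$ well defined on Laurent polynomials and, above all, the clean extraction of the leading coefficient $q^{-\lambda_1}-1$ from the rational functions $A_i$ — to be the main obstacle; step (ii) and the concluding orthogonality argument are then essentially bookkeeping.
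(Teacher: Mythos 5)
The paper does not actually prove this proposition: it is imported verbatim from \cite{Koornwinder_1992}, \cite{Macdonald_2003} and \cite{vanDiejen_Emsiz_2015a}, so there is no in-paper argument to compare yours against. Your strategy --- triangularity of $D_n$ with respect to dominance order plus self-adjointness for $\langle\cdot,\cdot\rangle_{\hat\Delta^{(n)}}$, followed by the unitriangularity/orthogonality characterisation of $P_\lambda^{(n)}$ --- is exactly the standard mechanism by which those references establish the eigenrelation, and your concluding step (that $R_\lambda$ lies in $\mathrm{span}\{P_\mu^{(n)}:\mu<\lambda\}$ and is orthogonal to every element of that span, hence vanishes) is correct as written, including the observation that $\{\mu\le\lambda\}$ is finite.

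The genuine gap is in your step (i). First, ``$D_n$ maps $W_n$-invariant Laurent polynomials to Laurent polynomials'' does not imply ``$D_n$ preserves $\mathrm{span}\{m_\mu:\mu\le\lambda\}$'': the pole-cancellation argument controls regularity, not which monomials survive, and your ``So $D_n$ preserves each finite-dimensional space'' is a non sequitur without a separate dominance estimate on the Laurent expansion of the $A_i$ (standardly obtained by clearing the type-$BC$ Weyl denominator, or by expanding $A_i$ in the region $|a_1|\gg\cdots\gg|a_n|\gg 1$ and tracking exponents). Second, the diagonal coefficient $q^{-\lambda_1}-1$, which you rightly single out as the delicate point, is left uncomputed; an $n=1$ cross-check does not substitute for it, since the fact that only the $i=1$ term survives (rather than, say, a contribution $q^{-\lambda_i}$ from every $i$) is precisely the degeneration $t\to 0$ of the Koornwinder eigenvalue $\sum_i t^{i-1}(q^{-\lambda_i}-1)+\cdots$, i.e.\ it is the content of the proposition. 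A smaller omission: the pole of $A_i(a;q)$ along $qa_i^2=1$ is not cancelled by pairing the two $i$-summands; it is removed because $(\mathcal T_{q,i}f-f)$ vanishes at $a_i=\pm q^{-1/2}$ by inversion-invariance of $f$. Your parenthetical alternative --- specialise the known Koornwinder eigenrelation and verify that the eigenvalue degenerates to $q^{-\lambda_1}-1$ under $t=0$, $t_1=t_2=t_3=0$, $t_0\to0$ --- closes these holes with the least work and is evidently what the paper's citation intends.
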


We define another operator $H^n$ which acts on functions $g:\Lambda_{n}\to \mathbb{C}$ as follows
\begin{equation}
H^ng(\lambda)=\sum_{i=1}^nf_{n}(\lambda, \lambda + e_{i})g(\lambda+e_{i})+\sum_{i=1}^nf_{n}(\lambda, \lambda - e_{i})g(\lambda-e_{i})
\label{eq:q-Toda}
\end{equation}
with
\begin{equation}
\label{eq:us}
f_{n}(\lambda, \lambda')=\left\{ \begin{array}{ll}
1 & \text{, if }\lambda'=\lambda + e_{1}\\
1-q^{\lambda_{i-1}-\lambda_{i}}& \text{, if }\lambda' = \lambda + e_{i} \text{ for some }1<i\leq n\\
1-q^{\lambda_{i}-\lambda_{i+1}}& \text{, if }\lambda' = \lambda - e_{i} \text{ for some }1\leq i< n\\
1-q^{\lambda_{n}} & \text{, if }\lambda'=\lambda - e_{n}
\end{array} \right. 
\end{equation}
Note that if for some $2\leq i \leq n$, $\lambda_{i}=\lambda_{i-1}$, then $f_{n}(\lambda,\lambda + e_{i})=0$ and the term $f_{n}(\lambda, \lambda + e_{i})g(\lambda + e_{i})$ will vanish. Similarly, if for $1\leq i \leq n-1$, $\lambda_{i}=\lambda_{i+1}$ or if $\lambda_{n}=0$, the term $f_{n}(\lambda, \lambda - e_{i})g(\lambda - e_{i})$ will also vanish. 
\begin{proposition}[\cite{Koornwinder_1992},\cite{Macdonald_2003},\cite{vanDiejen_1999}]
\label{q_eigenrelation_partition}
Let $n\geq 1$, $\lambda \in \Lambda_{n}$ and $q\in (0,1)$. The q-deformed $\mathfrak{so}_{2n+1}$-Whittaker functions, satisfy the Pieri identity
\begin{equation}
H^nP_{\lambda}^{(n)}(a_{1},...,a_{n};q)=\sum_{i=1}^n(a_{i}+a_{i}^{-1})P_{\lambda}^{(n)}(a_{1},...,a_{n};q)
\end{equation}
for $(a_{1},...,a_{n})\in \mathbb{C}^n\setminus \{0\}$.
\end{proposition}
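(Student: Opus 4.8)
This is the standard fact that the $q$-deformed $\mathfrak{so}_{2n+1}$-Whittaker functions diagonalise the $q$-Toda difference operator, and I would recover it from the material above as follows. Writing $e_1(a)=\sum_{i=1}^n(a_i+a_i^{-1})=m_{(1,0,\dots,0)}(a)$, the claimed identity is equivalent to the Pieri-type expansion
\[
e_1(a)\,P_\lambda^{(n)}(a;q)=\sum_{i=1}^n f_n(\lambda,\lambda+e_i)\,P_{\lambda+e_i}^{(n)}(a;q)+\sum_{i=1}^n f_n(\lambda,\lambda-e_i)\,P_{\lambda-e_i}^{(n)}(a;q)
\]
with $f_n$ as in \eqref{eq:us}. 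So the plan is: (i) show the expansion $e_1 P_\lambda^{(n)}=\sum_\mu c_{\lambda\mu}P_\mu^{(n)}$ in the basis $\{P_\mu^{(n)}\}$ is supported on the single-box moves $\mu=\lambda\pm e_i$; (ii) identify the coefficients $c_{\lambda\mu}$ with $f_n(\lambda,\mu)$.

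For (i): since $e_1=m_{(1,0,\dots,0)}$ and $P_\lambda^{(n)}=m_\lambda+(\text{dominance-lower})$, the product is $W_n$-invariant with dominant exponent $\lambda+e_1$, so $c_{\lambda\mu}\neq0$ forces $\mu\le\lambda+e_1$, and $c_{\lambda,\lambda+e_1}$ is the coefficient of $a^{\lambda+e_1}$, namely $1$. From $m_\mu(-a)=(-1)^{|\mu|}m_\mu(a)$ and the uniqueness in the definition of $P_\lambda^{(n)}$ (the map $a\mapsto -a$ commutes with $W_n$ and preserves $\langle\cdot,\cdot\rangle_{\hat{\Delta}^{(n)}}$) one gets $P_\lambda^{(n)}(-a;q)=(-1)^{|\lambda|}P_\lambda^{(n)}(a;q)$, and since $e_1(-a)=-e_1(a)$ this forces the parity constraint $|\mu|\equiv|\lambda|+1\pmod 2$ on nonzero $c_{\lambda\mu}$. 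Finally $\overline{e_1(a)}=e_1(a)$ on the torus, so multiplication by $e_1$ is self-adjoint for $\langle\cdot,\cdot\rangle_{\hat{\Delta}^{(n)}}$, whence $c_{\lambda\mu}\langle P_\mu^{(n)},P_\mu^{(n)}\rangle_{\hat{\Delta}^{(n)}}=c_{\mu\lambda}\langle P_\lambda^{(n)},P_\lambda^{(n)}\rangle_{\hat{\Delta}^{(n)}}$; in particular $c_{\lambda\mu}\neq0$ also forces $\lambda\le\mu+e_1$. Triangularity, its transpose and the parity constraint together leave exactly $\mu=\lambda\pm e_i$ with $\lambda\pm e_i\in\Lambda_n$.

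For (ii): the self-adjointness relation together with $\langle P_\nu^{(n)},P_\nu^{(n)}\rangle_{\hat{\Delta}^{(n)}}=1/\Delta_\nu^{(n)}$ from \eqref{eq:hyperoctahedral_orthogonality} gives $c_{\lambda,\lambda-e_i}=c_{\lambda-e_i,\lambda}\,\Delta_{\lambda-e_i}^{(n)}/\Delta_\lambda^{(n)}$, and a short computation with the explicit $\Delta^{(n)}$ shows that if the ``add-a-box'' coefficients satisfy $c_{\mu,\mu+e_i}=f_n(\mu,\mu+e_i)$ then this ratio yields exactly $f_n(\lambda,\lambda-e_i)$, in particular the $1-q^{\lambda_n}$ at the wall $i=n$. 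Thus the crux is computing $c_{\mu,\mu+e_i}$; we have $c_{\mu,\mu+e_1}=1$, so the remaining case is $i\ge2$, which I would handle by induction on $n$. The base case $n=1$ is $(a+a^{-1})H_l(a|q)=H_{l+1}(a|q)+(1-q^l)H_{l-1}(a|q)$, the three-term recurrence of the continuous $q$-Hermite polynomials \eqref{eq:q-Hermite}, verified by matching the coefficient of $a^{2k-(l+1)}$ with the $q$-Pascal relations in \eqref{eq:q_binomial}. For the inductive step I would use the branching formula of \cite{vanDiejen_Emsiz_2015b,vanDiejen_Emsiz_2016}, writing $P_\lambda^{(n)}(a_1,\dots,a_n;q)$ as a sum of $P^{(n-1)}(a_1,\dots,a_{n-1};q)$'s against explicit weights in $a_n$, splitting $e_1(a)=\sum_{i<n}(a_i+a_i^{-1})+(a_n+a_n^{-1})$, applying the inductive hypothesis to the first group and the $n=1$ recurrence to the last term, and re-collecting.

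The main obstacle is that last step. The branching for type $BC_n$ runs through the two interlacing chains of the symplectic Gelfand--Tsetlin pattern, so ``pushing $e_1$ through'' the decomposition yields a double sum whose reorganisation into the moves $\lambda\mapsto\lambda\pm e_i$ with weights \eqref{eq:us} requires careful bookkeeping of the boundary cases: the unit coefficient for a box added or removed at the first part, the coefficient $1-q^{\lambda_n}$ at the wall, and the vanishing $f_n(\lambda,\lambda+e_i)=0$ when $\lambda_{i-1}=\lambda_i$ (equivalently $\lambda+e_i\notin\Lambda_n$). An alternative that avoids branching is to invoke the full commuting family of Koornwinder $q$-difference operators $D_n^{(1)}=D_n,D_n^{(2)},\dots,D_n^{(n)}$, whose joint eigenrelations determine every $c_{\lambda\mu}$; note that Proposition~\ref{q_eigenrelation_rates} by itself does not suffice, since applying $D_n$ to $e_1 P_\lambda^{(n)}$ only constrains $c_{\lambda,\lambda\pm e_1}$ --- the states $\lambda\pm e_i$ with $i\ge2$ share the first part $\lambda_1$ and are invisible to $D_n$ --- which is precisely the gap the extra operators, or the induction on $n$, are there to close.
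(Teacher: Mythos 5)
The paper offers no proof of this proposition at all: it is quoted from \cite{Koornwinder_1992}, \cite{Macdonald_2003}, \cite{vanDiejen_1999}. (The closest in-house analogue is Corollary \ref{Pieri_mypolynomials(Berele)}, which establishes the same Pieri identity for the conjecturally equal polynomials $\mathcal{P}^{(n)}$ by an entirely different, probabilistic route, namely conservativity of the intertwined rate matrix; that argument yields the present statement only modulo Conjecture \ref{conjecture}.) So your proposal must be judged on its own merits, and it has a genuine gap.

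The gap is in step (i). The three constraints you extract --- $\mu\le\lambda+e_{1}$, $\lambda\le\mu+e_{1}$ and $|\mu|\equiv|\lambda|+1\pmod 2$ --- do \emph{not} confine the support to single-box moves once $n\geq 3$. Take $\lambda=(3,0,0)$ and $\mu=(2,2,0)$: then $|\mu|=|\lambda|+1$, $\mu\le\lambda+e_{1}=(4,0,0)$ and $\lambda\le\mu+e_{1}=(3,2,0)$ in the dominance order of Section \ref{partitions}, yet $\mu\neq\lambda\pm e_{i}$ for any $i$. (Your parity and self-adjointness observations are themselves correct --- $a\mapsto -a$ does preserve $\hat{\Delta}^{(n)}$, and multiplication by $e_{1}$ is self-adjoint on the torus --- they are simply too weak.) Confining the support of such an expansion to single-box moves is precisely the nontrivial content of van Diejen's Pieri formula and cannot be read off from dominance triangularity. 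Step (ii) is in better shape: the reduction $c_{\lambda,\lambda-e_{i}}=c_{\lambda-e_{i},\lambda}\,\Delta^{(n)}_{\lambda-e_{i}}/\Delta^{(n)}_{\lambda}$ is valid and the explicit ratios do reproduce $f_{n}(\lambda,\lambda-e_{i})$, including the factor $1-q^{\lambda_{n}}$ at the wall; and you rightly observe that Proposition \ref{q_eigenrelation_rates} alone cannot determine the coefficients at $\lambda\pm e_{i}$ with $i\geq 2$. But the addition coefficients for $i\geq 2$ --- which you yourself identify as the crux --- are only sketched, deferred to an induction through the two-step branching of \cite{vanDiejen_Emsiz_2016} that is never carried out. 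As it stands, the proposal is a plausible plan containing one incorrect step and one unexecuted step, not a proof.
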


\subsection{Branching rule}
\label{branching_vanDiejen}
In \cite{Rains_2005}, it was shown that the Koornwinder polynomials exhibit a recursive structure with respect to the order $n$ which implies that the deformed hyperoctahedral $q$-Whittaker functions satisfy the branching rule
\[P_{\lambda}^{(n)}(a_{1},...,a_{n};q,\mathbf{t})=\sum _{\nu} P_{\lambda\setminus \nu}(a_{n};q,\mathbf{t})P_{\nu}^{(n-1)}(a_{1},...,a_{n-1};q,\mathbf{t})\]
where the summation is over $\nu \in \Lambda_{n-1}$ such that $\lambda, \nu$ differ by two horizontal strips. 

\begin{definition}
The one-variable polynomial $P_{\lambda\setminus \nu}(a_{n};q,\mathbf{t})$ is called the \emph{branching polynomial}.
\end{definition}

In \cite{vanDiejen_Emsiz_2015b}, the authors gave a closed form for the branching polynomial for the Macdonald-Koornwinder polynomial for any choice of parameters. The branching polynomial for the deformed hyperoctahedral $q$-Whittaker functions can be found in \cite{vanDiejen_Emsiz_2016}. Let us only present here the branching rule for the $q$-deformed hyperoctahedral Whittaker function with $t_{1}=t_{2}=t_{3}=0$ and $0<|t_{0}|<1$.

Since the two partitions $\lambda, \nu$ differ by two horizontal strips it follows that $\lambda^\top_{i}-\nu^\top_{i}\in \{0,1,2\}$ for every $i$ such that $1\leq i \leq l(\lambda^\top )=m$. We set $d=d(\lambda, \nu)=|\{1\leq i \leq m: \lambda_{i}^\top-\nu_{i}^\top=1\}|$. For $1\leq r \leq d$, we define the following quantity

\[B^r_{\lambda \setminus \nu}(q,t_{0})=c(\lambda, \nu;q)\sum_{\substack{I_{+},I_{-}\subset J^{\mathsf{c}}\\ I_{+}\cap I_{-}=\emptyset \\ |I_{+}|+|I_{-}|=d-r}}A(\nu^*;I_{+},I_{-},t_{0})\]
where $\nu^* = n^m-\nu^\top=(n-\nu_{m-j+1}^\top)_{1\leq j \leq m}$ and if we also define the partition $\lambda^* = (n+1)^m-\lambda^\top=(n+1-\lambda_{m-j+1}^\top)_{1\leq j \leq m}$ then the subset $J^{\mathsf{c}}$ of $[m]:=\{1,...,m\}$ is the set $J^{\mathsf{c}}=J^{\mathsf{c}}(\nu^*, \lambda^*)=\{1\leq j \leq m: \nu^*_{j}= \lambda^*_{j}\}$ and
\[c(\lambda, \nu;q)=\prod_{\substack{1\leq j < k \leq m \\ \nu^\top_{k}<\nu^\top_{j}\\ \lambda^\top_{k}=\lambda^\top_{j}}}\dfrac{1-q^{1+k-j}}{1-q^{k-j}}\]
Finally the contribution of the partition $(I_{+},I_{-},I_{0}=J^{\mathsf{c}}\setminus (I_{+}\cup I_{-}))$ of $J^{\mathsf{c}}$ to $B_{\lambda \setminus \nu}^{r}(q;t_{0})$ equals
\begin{equation}
\begin{split}
A(\nu^*;I_{+},I_{-},t_{0})
&=\prod_{\substack{j,k \in J^{\mathsf{c}} \\  \nu^*_{j}=\nu^*_{k} \\ \epsilon_{j}>\epsilon_{k}}}\dfrac{1-q^{1+k-j}}{1-q^{k-j}} \prod_{\substack{j \in I_{-},k\in I_{+} \\ \nu^*_{j}=\nu^*_{k}+1}}\dfrac{1-q^{1+k-j}}{1-q^{k-j}}\\
 & \hspace{110pt} \prod_{j \in J^{\mathsf{c}}}t_{0}^{-\epsilon_{j}}\prod_{\substack{j,k \in J^{\mathsf{c}}\\j<k\\\epsilon_{j}\neq \epsilon_{k}=0}}q^{-\epsilon_{j}}\prod_{\substack{j,k \in J^{\mathsf{c}}\\j<k\\ \nu^*_{j}=\nu^*_{k}\\ \epsilon_{k}-\epsilon_{j}=1}}q^{-1}\
\end{split}
\label{eq:partition_contribution}
\end{equation}
with
\begin{equation*}
\epsilon_{j}=\epsilon_{j}(I_{+},I_{-})=\left\{ \begin{array}{ll}
1& j \in I_{+}\\
-1 & j \in I_{-}\\
0 & j \in J^{\mathsf{c}}\backslash (I_{+}\cup I_{-})
\end{array} \right..
\end{equation*}

\begin{theorem}[\cite{vanDiejen_Emsiz_2016}]
The branching polynomials for the deformed hyperoctahedral $q$-Whittaker function with $\mathbf{t}=(t_{0},0,0,0)$, for $0<|t_{0}|<1$, are given by
\[P_{\lambda \setminus \nu}(a;q,t_{0})=\sum_{r=0}^dB^r_{\lambda \setminus \nu}(q,t_{0})\langle a;t_{0}\rangle_{q,r}\] 
where the expanding Laurent polynomials $\langle x;t_{0}\rangle_{q,r}$ are defined as
\begin{equation}
\begin{split}
\langle a;t_{0}\rangle_{q,0}&=1\\
\langle a;t_{0}\rangle_{q,r}&=\prod_{l=1}^r(a+a^{-1}-t_{0}q^{l-1}-t_{0}^{-1}q^{-(l-1)}), \text{ for }r\geq 1
\end{split}
\label{eq:expanding_polynomials}
\end{equation}
\end{theorem}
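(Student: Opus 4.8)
This theorem is the degeneration, at Macdonald parameter $t\to 0$ and $t_1=t_2=t_3=0$, of the closed-form branching rule for the full six-parameter Macdonald--Koornwinder polynomials obtained in \cite{vanDiejen_Emsiz_2015b}; thus the quickest route is to start from that formula and specialise. I would first record the structural facts that make the statement well posed. By the branching rule recalled in Section~\ref{branching_vanDiejen} together with the orthogonality of the rank-$(n-1)$ deformed hyperoctahedral $q$-Whittaker functions with the same parameters $\mathbf t=(t_0,0,0,0)$, the branching polynomial $P_{\lambda\setminus\nu}(a_n;q,t_0)$ is the coefficient of $P^{(n-1)}_\nu$ in the expansion of $P^{(n)}_\lambda(\,\cdot\,,a_n;q,t_0)$ in the first $n-1$ variables, hence is, up to an $a_n$-independent normalisation,
\[
P_{\lambda\setminus\nu}(a_n;q,t_0)\ \propto\ \big\langle\, P^{(n)}_\lambda(\,\cdot\,,a_n;q,t_0),\, P^{(n-1)}_\nu(\,\cdot\,;q,t_0)\,\big\rangle_{\hat{\Delta}^{(n-1)}},
\]
the inner product taken in $a_1,\dots,a_{n-1}$ with $a_n$ frozen. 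From the triangularity $P^{(n)}_\lambda=m_\lambda+\sum_{\mu<\lambda}c_{\lambda\mu}m_\mu$ one reads off that $P_{\lambda\setminus\nu}$ is an $a_n\mapsto a_n^{-1}$--symmetric Laurent polynomial of degree bounded a priori by $\lambda_1$, and since the expanding polynomials $\{\langle a_n;t_0\rangle_{q,r}\}_{r\ge0}$ form a basis of such Laurent polynomials it admits a unique finite expansion $P_{\lambda\setminus\nu}=\sum_r b^r_{\lambda\setminus\nu}(q,t_0)\langle a_n;t_0\rangle_{q,r}$. Everything then reduces to identifying these coefficients: that $b^r_{\lambda\setminus\nu}=0$ for $r>d$ and $b^r_{\lambda\setminus\nu}=B^r_{\lambda\setminus\nu}$ for $0\le r\le d$.

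For the identification I would take the branching formula of \cite{vanDiejen_Emsiz_2015b}, whose one-variable factor is already written in an analogous expanding basis depending on all four boundary parameters with coefficients built from products of $q$-ratios $(1-q^{1+k-j})/(1-q^{k-j})$ and powers of the $t_r$'s, and perform the limit $t\to0$ (which produces the deformed hyperoctahedral $q$-Whittaker functions, as in \cite{vanDiejen_Emsiz_2016}) followed by $t_1=t_2=t_3=0$. Under this specialisation every factor of the form $(1-t_r\,\cdot)$ collapses to $1$, the summations over the index sets attached to $t_1,t_2,t_3$ trivialise, the surviving index set is exactly $J^{\mathsf c}=\{j:\nu^*_j=\lambda^*_j\}$, the signs $\epsilon_j\in\{+1,-1,0\}$ record the three possible ways a column of $\lambda^*$ can be matched against $\nu^*$, and the degree drops to $d=|\{i:\lambda^\top_i-\nu^\top_i=1\}|$; collecting the remaining powers of $t_0$ and $q$ reproduces $A(\nu^*;I_+,I_-,t_0)$, hence $B^r_{\lambda\setminus\nu}$. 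The honest base case for the bookkeeping is $n=1$: there $P^{(1)}_\lambda$ is the continuous big $q$-Hermite polynomial $H_{\lambda_1}(a;q,t_0)$, and using $(t_0a;q)_k(t_0a^{-1};q)_k=(-t_0)^kq^{\binom{k}{2}}\langle a;t_0\rangle_{q,k}$ one expands $H_{\lambda_1}$ directly in the basis $\langle\,\cdot\,;t_0\rangle_{q,r}$ and checks that the coefficients agree with $B^r_{(\lambda_1)\setminus\emptyset}(q,t_0)$.

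The main obstacle is the degeneration bookkeeping in the second step: one must verify that the double limit $t\to0$, $t_r\to0$ creates no $0/0$ indeterminacies, that the products over index pairs reorganise correctly once whole blocks of indices drop out, and that the combinatorial data $(d,J^{\mathsf c},\epsilon)$ in the statement is precisely the residue of the corresponding data in the general formula. An alternative, more self-contained route that avoids \cite{vanDiejen_Emsiz_2015b} is to observe that the points $a=t_0q^{l-1}$ are exactly the zeros of $\langle a;t_0\rangle_{q,l},\dots,\langle a;t_0\rangle_{q,d}$, so the $b^r_{\lambda\setminus\nu}$ are $q$-divided differences of $P_{\lambda\setminus\nu}$ at these nodes; evaluating $P_{\lambda\setminus\nu}(t_0q^j)$ through the branching rule in terms of the principal specialisations $P^{(n)}_\lambda(\,\cdot\,,t_0q^j)$ and $P^{(n-1)}_\nu$, for which product formulas are available, and then carrying out the divided-difference computation, yields the $B^r_{\lambda\setminus\nu}$ by induction on $n$; the obstacle there is the $q$-binomial-type summation identities needed to close the induction.
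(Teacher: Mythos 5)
This theorem is not proved in the thesis at all: it is imported verbatim from \cite{vanDiejen_Emsiz_2016}, so there is no in-paper argument to compare yours against. Judged on its own terms, your sketch correctly identifies the structural reductions — the branching polynomial is a $W_1$-invariant Laurent polynomial in $a_n$, the expanding polynomials $\langle a;t_0\rangle_{q,r}$ form a basis for such polynomials so the expansion exists and is unique, the identity $(t_0a;q)_k(t_0a^{-1};q)_k=(-t_0)^kq^{\binom{k}{2}}\langle a;t_0\rangle_{q,k}$ is right, and the $n=1$ base case does reduce to the continuous big $q$-Hermite expansion recorded in Theorem \ref{Askey_expansion}. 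The overall strategy (specialise the six-parameter Macdonald--Koornwinder branching formula of \cite{vanDiejen_Emsiz_2015b} at $t\to 0$, $t_1=t_2=t_3=0$) is also the route the cited reference actually takes.

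The gap is that the entire content of the theorem — that the surviving coefficients after specialisation are exactly $B^r_{\lambda\setminus\nu}(q,t_0)=c(\lambda,\nu;q)\sum A(\nu^*;I_+,I_-,t_0)$, with the index set $J^{\mathsf c}$, the signs $\epsilon_j$, and the degree bound $r\le d$ as stated — is asserted in one sentence (\enquote{collecting the remaining powers of $t_0$ and $q$ reproduces $A(\nu^*;I_+,I_-,t_0)$}) rather than derived. The a priori degree bound you extract from triangularity is $\lambda_1$, not $d$; showing that the coefficients of $\langle a;t_0\rangle_{q,r}$ vanish for $r>d$ is itself part of what must be proved, and it does not follow from the setup. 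Likewise, the absence of $0/0$ indeterminacies in the iterated limit $t\to 0$ followed by $t_r\to 0$, and the reorganisation of the pair-products $(1-q^{1+k-j})/(1-q^{k-j})$ into the factor $c(\lambda,\nu;q)$ times the residual products in $A$, are flagged as \enquote{the main obstacle} but not addressed. Since the statement is precisely a closed combinatorial formula, a proof that defers the combinatorics is not yet a proof; either the degeneration bookkeeping or your alternative divided-difference computation at the nodes $a=t_0q^{l-1}$ would have to be carried out in full for at least the general inductive step, not only for $n=1$.
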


In \cite{Komori_Noumi_Shiraishi_2009} an expansion formula for the Askey-Wilson polynomials in terms of the polynomials $\langle x;t_{0}\rangle_{q;r}$ is provided, which agrees with the formula provided by  for $n=1$. More specifically, the following result is proved.
\begin{theorem}[\cite{Komori_Noumi_Shiraishi_2009}, th. 5.2]
\label{Askey_expansion}
The continuous big $q$-Hermite polynomials, which are Askey-Wilson polynomials with $t_{1}=t_{2}=t_{3}=0$, are expressed as follows in terms of the polynomials $\langle x;t_{0}\rangle_{q;r}
$
\[H_{j}(x;t_{0}|q)=t_{0}^{-j} \sum_{r=0}^{j}\dbinom{j}{r}_{q}t_{0}^r q^{r^2-jr}\langle x;t_{0}\rangle_{q,r}.\]
\end{theorem}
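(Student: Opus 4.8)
The plan is to prove the expansion $H_j(x;t_0|q) = t_0^{-j}\sum_{r=0}^j \binom{j}{r}_q t_0^r q^{r^2 - jr}\langle x;t_0\rangle_{q,r}$ by induction on $j$, using the three-term recurrence satisfied by the continuous big $q$-Hermite polynomials together with an explicit three-term recurrence for the expanding polynomials $\langle x;t_0\rangle_{q,r}$. First I would record the standard recurrence for the continuous big $q$-Hermite polynomials $H_j(x;t_0|q)$ (available from \cite{Koekoek_Lesky_Swarttouw_2010}, ch. 14): writing $x = \tfrac12(a + a^{-1})$, it has the form $2x\,H_j = H_{j+1} + t_0 q^j H_j + (1-q^j) H_{j-1}$, so that $(a+a^{-1})H_j(x;t_0|q) = H_{j+1}(x;t_0|q) + t_0 q^j H_j(x;t_0|q) + (1-q^j) H_{j-1}(x;t_0|q)$, with $H_0 = 1$ and $H_{-1} = 0$. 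The base cases $j=0$ and $j=1$ are checked directly against the claimed formula, using $\langle a;t_0\rangle_{q,0}=1$ and $\langle a;t_0\rangle_{q,1}=a+a^{-1}-t_0-t_0^{-1}$.

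Next I would establish the key auxiliary identity: a recurrence expressing $(a+a^{-1})\langle a;t_0\rangle_{q,r}$ as a linear combination of $\langle a;t_0\rangle_{q,r+1}$, $\langle a;t_0\rangle_{q,r}$, and $\langle a;t_0\rangle_{q,r-1}$. This follows by brute force from the product definition \eqref{eq:expanding_polynomials}: since $\langle a;t_0\rangle_{q,r+1} = \langle a;t_0\rangle_{q,r}\cdot(a+a^{-1}-t_0 q^{r}-t_0^{-1}q^{-r})$, one immediately gets $(a+a^{-1})\langle a;t_0\rangle_{q,r} = \langle a;t_0\rangle_{q,r+1} + (t_0 q^{r}+t_0^{-1}q^{-r})\langle a;t_0\rangle_{q,r}$, so in fact only a \emph{two}-term relation is needed, which is a pleasant simplification. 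With this in hand, I would substitute the induction hypothesis for $H_j$ and $H_{j-1}$ into the right-hand side of the $H$-recurrence, expand everything into the basis $\{\langle a;t_0\rangle_{q,r}\}_{r\ge 0}$ using the two-term relation, and collect the coefficient of each $\langle a;t_0\rangle_{q,r}$.

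The coefficient of $\langle a;t_0\rangle_{q,r}$ in the resulting expression for $H_{j+1}$ will be, after multiplying through by $t_0^{j+1}$, a combination of $\binom{j}{r-1}_q$ (from the $\langle\cdot\rangle_{q,r-1}\mapsto\langle\cdot\rangle_{q,r}$ shift applied to the $H_j$ term), $\binom{j}{r}_q$ (from the diagonal term in the $H_j$ recurrence and from the $H_{j-1}$ term combined via the two-term relation), carrying the appropriate powers $q^{r^2-jr}$, the factors $t_0 q^j$, $t_0 q^r$, $t_0^{-1}q^{-r}$, and $(1-q^j)$. The target is to show this equals $\binom{j+1}{r}_q q^{r^2-(j+1)r}$. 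This reduces to a finite $q$-binomial identity; I expect it to follow from the Pascal-type relations for $q$-binomial coefficients already recorded in \eqref{eq:q_binomial}, namely $\binom{j+1}{r}_q = \binom{j}{r}_q\frac{1-q^{j+1}}{1-q^{j-r+1}}$ and $\binom{j}{r-1}_q = \binom{j}{r}_q\frac{1-q^r}{1-q^{j-r+1}}$, together with the $q$-Pascal rule $\binom{j+1}{r}_q = \binom{j}{r-1}_q + q^r\binom{j}{r}_q$ (and its variant $\binom{j+1}{r}_q = q^{j+1-r}\binom{j}{r-1}_q + \binom{j}{r}_q$). The main obstacle is precisely this bookkeeping step: making sure every power of $q$ and $t_0$ lines up so that the $t_0$-dependent cross terms cancel and the remaining $q$-binomial identity is one of the standard ones. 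I would handle it by treating the generic interior case $1\le r\le j$ first and then separately verifying the edge coefficients $r=0$ (where only the diagonal and $H_{j-1}$ contributions appear) and $r=j+1$ (where only the shifted leading term contributes, giving coefficient $1 = \binom{j+1}{j+1}_q q^{(j+1)^2-(j+1)^2}$), which closes the induction.
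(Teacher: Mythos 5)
Your proposal is correct, but note that the paper does not prove this statement at all: it is quoted verbatim as Theorem 5.2 of \cite{Komori_Noumi_Shiraishi_2009}, so there is no in-paper argument to compare against. Your induction supplies a genuine self-contained proof. The two ingredients you identify are exactly the right ones: the three-term recurrence $(a+a^{-1})H_j=H_{j+1}+t_0q^jH_j+(1-q^j)H_{j-1}$ and the two-term relation $(a+a^{-1})\langle a;t_0\rangle_{q,r}=\langle a;t_0\rangle_{q,r+1}+(t_0q^r+t_0^{-1}q^{-r})\langle a;t_0\rangle_{q,r}$, which is immediate from \eqref{eq:expanding_polynomials}. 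The bookkeeping you defer does close: writing $c_{j,r}=\binom{j}{r}_q t_0^{r-j}q^{r^2-jr}$, the coefficient of $\langle a;t_0\rangle_{q,r}$ in $H_{j+1}$ is $c_{j,r-1}+(t_0q^r+t_0^{-1}q^{-r})c_{j,r}-t_0q^jc_{j,r}-(1-q^j)c_{j-1,r}$; the terms carrying $t_0^{r-j+1}$ cancel because $(1-q^j)\binom{j-1}{r}_q=(1-q^{j-r})\binom{j}{r}_q$ (the second identity in \eqref{eq:q_binomial}), and the terms carrying $t_0^{r-j-1}$ sum to $c_{j+1,r}$ via the $q$-Pascal rule $\binom{j+1}{r}_q=\binom{j}{r}_q+q^{j+1-r}\binom{j}{r-1}_q$, with the edge cases $r=0$ and $r=j+1$ checking trivially. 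The one point to be careful about in a write-up is the notational clash between the Laurent variable (called $x$ in the theorem statement, $a$ in \eqref{eq:expanding_polynomials}) and the orthogonal-polynomial variable $\cos\theta=\tfrac12(a+a^{-1})$ appearing in the standard recurrence; you handle this correctly.
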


Ideally we would like to obtain a branching rule for the $q$-deformed $\mathfrak{so}_{2n+1}$-Whittaker functions which correspond to the limit $t_{0}\to 0$, but because the expanding polynomials $\langle a;t_{0}\rangle_{q,r}$ contain negative powers of $t_{0}$ we cannot set $t_{0}\to 0$ in the above formula. In the next section we will investigate this case further. 

\section{Branching rule for $q$-deformed $\mathfrak{so}_{2n+1}$-Whittaker functions}
In section \ref{branching_vanDiejen} we presented a branching formula for the $q$-deformed hyperoctahedral Whittaker function with parameters $q\in (0,1)$, $0<|t_{0}|<1$ and $t_{r}=0$, for $1\leq r \leq 3$. In this section we will conjecture a combinatorial formula for the case $t_{0}\to 0$ which similarly to \ref{branching_vanDiejen} also admits a recursive structure and we will prove that for some special cases of the partitions $\lambda, \nu$ the branching polynomial $P_{\lambda \setminus \nu}(a;q,t_{0})$ of \ref{branching_vanDiejen} converge to the one we conjecture when $t_{0}\to 0$. 
\begin{definition}
\label{recursion_q_whittaker}
For $n\geq 1$, $\lambda \in \Lambda_{n}$ and $a=(a_{1},...,a_{n})\in \mathbb{R}_{>0}^n$, define the Laurent polynomial $\mathcal{P}^{(n)}_{\lambda}(a;q)$ to be a solution to the recursion relation
\[\mathcal{P}_{\lambda}^{(n)}(a_{1},...,a_{n};q)=\sum_{\nu \in \Lambda_{n-1}}Q_{a_{n},q}^{(n-1, n)}(\nu, \lambda)\mathcal{P}_{\nu}^{(n-1)}(a_{1},...,a_{n-1};q)\]
with
\[Q_{a_{n},q}^{(n-1, n)}(\nu, \lambda)=\sum_{\substack{\mu \in \Lambda_{n}: \\ \nu \preceq \mu \preceq \lambda}}a_{n}^{2|\mu|-|\nu|-|\lambda|} \prod_{i=1}^{n-1}\dbinom{\lambda_{i}-\lambda_{i+1}}{\lambda_{i}-\mu_{i}}_{q}\dbinom{\mu_{i}-\mu_{i+1}}{\mu_{i}-\nu_{i}}_{q}\dbinom{\lambda_{n}}{\lambda_{n}-\mu_{n}}_{q}.\]
When $n=1$, $\mathcal{P}^{(n)}$ correspond to the continuous $q$-Hermite polynomials defined in \eqref{eq:q-Hermite}.
\end{definition}
We remark that if there exists no $\mu \in \Lambda_{n}$ such that $\nu \preceq \mu \preceq \lambda$ then $Q_{a_{n},q}^{(n-1,n)}(\nu, \lambda)=0$, therefore the summation in the recursion formula is over $\nu \in \Lambda_{n-1}$ such that $\lambda, \nu$ differ by two horizontal strips.

Later, in chapter \ref{Berele} we will show that the polynomials $\mathcal{P}^{(n)}$ are eigenfunctions of the operator $H^n$ given in \eqref{eq:q-Toda}. 
\begin{conjecture}
\label{conjecture}
As $t_{0}\to 0$, the deformed hyperoctahedral $q$-Whittaker function converges to the polynomial $\mathcal{P}^{(n)}$, defined in \ref{recursion_q_whittaker}.
\end{conjecture}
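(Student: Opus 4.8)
\section*{Proof strategy}

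The plan is to argue by induction on the number of variables $n$, reducing the statement to the convergence and identification of the branching kernels. The base case $n=1$ is immediate: with $\mathbf t=(t_0,0,0,0)$ the deformed hyperoctahedral $q$-Whittaker function is the continuous big $q$-Hermite polynomial $H_l(a;q,t_0)$, which tends to the continuous $q$-Hermite polynomial $H_l(a\,|\,q)$ of \eqref{eq:q-Hermite} as $t_0\to 0$, and by the last line of Definition \ref{recursion_q_whittaker} this is exactly $\mathcal P^{(1)}_{(l)}(a;q)$. For the inductive step, fix $\lambda\in\Lambda_n$ and use the branching rule of Section \ref{branching_vanDiejen},
\[
P^{(n)}_\lambda(a_1,\dots,a_n;q,t_0)=\sum_{\nu}P_{\lambda\setminus\nu}(a_n;q,t_0)\,P^{(n-1)}_\nu(a_1,\dots,a_{n-1};q,t_0),
\]
a finite sum over those $\nu\in\Lambda_{n-1}$ differing from $\lambda$ by two horizontal strips, which are precisely the $\nu$ for which $Q^{(n-1,n)}_{a_n,q}(\nu,\lambda)\neq 0$. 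By the inductive hypothesis the factors $P^{(n-1)}_\nu(\cdot;q,t_0)$ converge to $\mathcal P^{(n-1)}_\nu(\cdot;q)$, so the conjecture at level $n$ follows once one knows that each branching polynomial $P_{\lambda\setminus\nu}(a;q,t_0)$ converges as $t_0\to 0$ and that its limit is $Q^{(n-1,n)}_{a,q}(\nu,\lambda)$.

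Convergence of the individual branching polynomials is a consequence of the unitriangular structure built into the definitions. The family $\{P^{(n-1)}_\nu(\cdot;q,t_0)\}_\nu$, as well as the limiting family $\{\mathcal P^{(n-1)}_\nu(\cdot;q)\}_\nu$, expands unitriangularly in the monomial basis $\{m_\mu\}_{\mu\in\Lambda_{n-1}}$ with respect to the dominance order, with change-of-basis matrices that are invertible and depend continuously on $t_0$ up to $t_0=0$ (this is the sense in which, per the remark recorded earlier in this chapter, the van Diejen--Emsiz theory survives the degeneration of parameters). Granting likewise that $P^{(n)}_\lambda(\cdot;q,t_0)$ itself converges, to the $q$-deformed $\mathfrak{so}_{2n+1}$-Whittaker function $P^{(n)}_\lambda(\cdot;q)$, one expresses the branching rule in the monomial basis in the $(a_1,\dots,a_{n-1})$ variables and inverts the transition matrix to conclude that the coefficients $P_{\lambda\setminus\nu}(a_n;q,t_0)$ converge, coefficient-by-coefficient in $a_n$, to some Laurent polynomials $\widetilde Q_{a_n,q}(\nu,\lambda)$, and that $P^{(n)}_\lambda(\cdot;q)=\sum_\nu \widetilde Q_{a_n,q}(\nu,\lambda)\,\mathcal P^{(n-1)}_\nu(\cdot;q)$. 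Since the $\mathcal P^{(n-1)}_\nu$ are linearly independent, the whole conjecture is now equivalent to the one-variable identities $\widetilde Q_{a,q}(\nu,\lambda)=Q^{(n-1,n)}_{a,q}(\nu,\lambda)$.

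It then remains to compute $\widetilde Q_{a,q}(\nu,\lambda)=\lim_{t_0\to 0}P_{\lambda\setminus\nu}(a;q,t_0)$ from the closed form of \cite{vanDiejen_Emsiz_2016}, namely $P_{\lambda\setminus\nu}(a;q,t_0)=\sum_{r=0}^d B^r_{\lambda\setminus\nu}(q,t_0)\,\langle a;t_0\rangle_{q,r}$. Each expanding polynomial $\langle a;t_0\rangle_{q,r}=\prod_{l=1}^r(a+a^{-1}-t_0q^{l-1}-t_0^{-1}q^{-(l-1)})$, and each power-of-$t_0$ factor appearing in $B^r_{\lambda\setminus\nu}(q,t_0)$, is expanded as a Laurent series in $t_0$; by the previous paragraph the negative powers cancel in the full sum over $r$, so $\widetilde Q_{a,q}(\nu,\lambda)$ is the $t_0^0$-coefficient of that sum. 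The claim to be verified is that this coefficient equals
\[
\sum_{\nu\preceq\mu\preceq\lambda}a^{2|\mu|-|\nu|-|\lambda|}\prod_{i=1}^{n-1}\dbinom{\lambda_i-\lambda_{i+1}}{\lambda_i-\mu_i}_q\dbinom{\mu_i-\mu_{i+1}}{\mu_i-\nu_i}_q\,\dbinom{\lambda_n}{\lambda_n-\mu_n}_q .
\]
Translating the combinatorics of the index sets $I_+,I_-\subset J^{\mathsf c}$, the signs $\epsilon_j$, and the ratios $\tfrac{1-q^{1+k-j}}{1-q^{k-j}}$ of \eqref{eq:partition_contribution} into this clean sum of products of $q$-binomial coefficients over interlacing patterns is, I expect, the main obstacle; this is plausibly why only special configurations of $(\lambda,\nu)$ (small partitions, or cases where $d(\lambda,\nu)$ and $|J^{\mathsf c}|$ are small) can be treated here rather than the general case.

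A parallel route, avoiding the $B^r_{\lambda\setminus\nu}$ combinatorics, is to identify $\mathcal P^{(n)}_\lambda$ with $P^{(n)}_\lambda(\cdot;q)$ directly through the characterising properties of the latter: one verifies that $\mathcal P^{(n)}_\lambda$ (i) expands unitriangularly as $m_\lambda+\sum_{\mu<\lambda}c_{\lambda,\mu}m_\mu$, which follows by induction on $n$ from the recursion in Definition \ref{recursion_q_whittaker} once the leading behaviour of $Q^{(n-1,n)}_{a_n,q}(\nu,\lambda)$ is checked; (ii) satisfies the Pieri identity of Proposition \ref{q_eigenrelation_partition}, equivalently is an eigenfunction of the Koornwinder operator $D_n$ with eigenvalue $q^{-\lambda_1}-1$ as in Proposition \ref{q_eigenrelation_rates}, which is exactly the statement announced for Chapter \ref{Berele}; and (iii) is $W_n$-invariant; uniqueness of the joint eigenfunctions of the commuting Koornwinder difference operators would then conclude. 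On this route the bottleneck is (iii): proving $W_n$-invariance of $\mathcal P^{(n)}_\lambda$ directly from its branching definition, a $q$-deformation of Sundaram's invariance argument for symplectic tableaux [\cite{Sundaram_1986}, th.~6.12], to which the statement reduces when $q\to 0$, where $\mathcal P^{(n)}_\lambda$ becomes $Sp^{(n)}_\lambda$.
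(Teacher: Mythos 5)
The statement you are asked to prove is stated in the paper as a \emph{conjecture}, and the paper itself does not prove it in general: it only establishes (i) Proposition \ref{leading_term}, that the extreme (leading) coefficients of $Q^{(n-1,n)}_{x,q}(\nu,\lambda)$ and of $P_{\lambda\setminus\nu}(x;q,t_{0})$ agree for arbitrary $(\lambda,\nu)$, and (ii) the identity $\lim_{t_{0}\to 0}P_{\lambda\setminus\nu}(x;q,t_{0})=Q^{(n-1,n)}_{x,q}(\nu,\lambda)$ for two special families of pairs $(\lambda,\nu)$ — a single interlacing ``gap'' of width $d$ (handled via the Komori--Noumi--Shiraishi expansion of the continuous big $q$-Hermite polynomials, Theorem \ref{Askey_expansion}), and $d$ well-separated gaps of width $1$ (handled via a recursion for $F_{d}(x;q,t_{0})$ showing the $t_{0}$-dependence drops out). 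Your primary route — induction on $n$ through the branching rule, reducing everything to the one-variable limit of the branching polynomials — is exactly the paper's route, and your diagnosis that the obstruction is the combinatorics of $B^{r}_{\lambda\setminus\nu}$, so that only special configurations of $(\lambda,\nu)$ are tractable, matches precisely why the statement remains a conjecture. Two remarks on the differences. First, where you propose to extract the limit as ``the $t_{0}^{0}$-coefficient'' after an abstract continuity argument for the change-of-basis matrices, the paper instead computes the limit directly from the van Diejen--Emsiz closed form; your continuity argument for existence of the limit is plausible but is not something the paper establishes, and it is doing real work (the individual summands $B^{r}_{\lambda\setminus\nu}\langle a;t_{0}\rangle_{q,r}$ blow up as $t_{0}\to 0$, so cancellation of negative powers is not automatic). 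Second, your alternative route via the characterising properties of $P^{(n)}_{\lambda}(\cdot;q)$ corresponds to the paper's other piece of supporting evidence: Corollary \ref{Pieri_mypolynomials(Berele)} in Chapter \ref{Berele} proves that $\mathcal{P}^{(n)}$ satisfies the Pieri identity of Proposition \ref{q_eigenrelation_partition}; you are right that the missing ingredients on that route are the $W_{n}$-invariance of $\mathcal{P}^{(n)}$ and a uniqueness statement for the joint eigenfunctions, neither of which the paper supplies. In short, your proposal is a faithful reconstruction of the paper's strategy and correctly identifies why it falls short of a proof; it does not close the gap, but neither does the paper.
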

\begin{proposition}
\label{leading_term}
The polynomial $Q^{(n-1,n)}_{x,q}(\nu, \lambda)$ is of the form
\[Q_{x,q}^{(n-1,n)}(\nu, \lambda)=c(\lambda, \nu;q)(x^d+x^{-d})+\sum_{l=-(d-1)}^{d-1}c_{q}(l)x^l\]
for some coefficients $c_{q}(l), \, -(d-1)\leq l \leq d-1$. So the leading term of $Q^{(n-1,n)}_{x,q}(\nu, \lambda)$ is the same as the leading term of $P_{\lambda \setminus \nu}(x;q,t_{0})$.
\end{proposition}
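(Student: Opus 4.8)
The plan is to split the claim into three pieces: (i) pin down the range of powers of $x$ that occur in $Q^{(n-1,n)}_{x,q}(\nu,\lambda)$; (ii) evaluate the coefficients of the two extreme powers; (iii) match these with the leading term of $P_{\lambda\setminus\nu}(x;q,t_{0})$. The comparison in (iii) is quick and I would do it first: by the branching theorem of van Diejen--Emsiz quoted above, $P_{\lambda\setminus\nu}(x;q,t_{0})=\sum_{r=0}^{d}B^{r}_{\lambda\setminus\nu}(q,t_{0})\langle x;t_{0}\rangle_{q,r}$, and each $\langle x;t_{0}\rangle_{q,r}$ is, by \eqref{eq:expanding_polynomials}, a Laurent polynomial of top degree $r$ with leading coefficient $1$; hence only the $r=d$ term contributes to the coefficient of $x^{d}$, which is $B^{d}_{\lambda\setminus\nu}(q,t_{0})=c(\lambda,\nu;q)\,A(\nu^{*};\emptyset,\emptyset,t_{0})$. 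Substituting $\epsilon_{j}\equiv 0$ into \eqref{eq:partition_contribution} makes every product appearing there empty, so $A(\nu^{*};\emptyset,\emptyset,t_{0})=1$; thus the leading term of $P_{\lambda\setminus\nu}$ is $c(\lambda,\nu;q)\,x^{d}$, independently of $t_{0}$. So it remains to show that $Q^{(n-1,n)}_{x,q}(\nu,\lambda)$ has all $x$-exponents in $\{-d,\dots,d\}$ and has coefficient $c(\lambda,\nu;q)$ at both $x^{d}$ and $x^{-d}$.

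For (i), I would pass to transposes. In the summand indexed by $\mu$ the power of $x$ is $2|\mu|-|\nu|-|\lambda|$, and (using the characterisation of $\preceq$ via horizontal strips from Section~\ref{partitions}) the condition $\nu\preceq\mu\preceq\lambda$ is equivalent, column by column, to $\max(\nu^{\top}_{j},\lambda^{\top}_{j}-1)\le\mu^{\top}_{j}\le\min(\nu^{\top}_{j}+1,\lambda^{\top}_{j})$ for $1\le j\le m:=l(\lambda^{\top})$. Since $\lambda^{\top}_{j}-\nu^{\top}_{j}\in\{0,1,2\}$, an elementary check in the three cases shows that this interval has length $1$ exactly on the $d$ columns with $\lambda^{\top}_{j}-\nu^{\top}_{j}=1$ and length $0$ otherwise, and that $\min(\nu^{\top}_{j}+1,\lambda^{\top}_{j})+\max(\nu^{\top}_{j},\lambda^{\top}_{j}-1)=\nu^{\top}_{j}+\lambda^{\top}_{j}$ in all three cases. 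Summing over $j$ gives $|\mu_{\min}|\le|\mu|\le|\mu_{\max}|$ with $|\mu_{\max}|-|\mu_{\min}|=d$ and $|\mu_{\max}|+|\mu_{\min}|=|\lambda|+|\nu|$, so the exponents all lie in $\{-d,\dots,d\}$, and the values $\pm d$ are attained only at the componentwise-extremal admissible $\mu$'s, which are unique (each column entry is set independently to its bound, and the result is weakly decreasing, being a $\min$ resp.\ $\max$ of two weakly decreasing sequences) and read in row coordinates $(\mu_{\max})_{i}=\min(\lambda_{i},\nu_{i-1})$ and $(\mu_{\min})_{i}=\max(\lambda_{i+1},\nu_{i})$, with the conventions $\nu_{0}=+\infty$ and $\lambda_{n+1}=\nu_{n}=0$. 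In particular each extreme coefficient of $Q$ is a single product of $q$-binomials.

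For (ii), I would evaluate $\prod_{i=1}^{n-1}\dbinom{\lambda_{i}-\lambda_{i+1}}{\lambda_{i}-\mu_{i}}_{q}\dbinom{\mu_{i}-\mu_{i+1}}{\mu_{i}-\nu_{i}}_{q}\dbinom{\lambda_{n}}{\lambda_{n}-\mu_{n}}_{q}$ at $\mu=\mu_{\max}$ and at $\mu=\mu_{\min}$ and show each equals $c(\lambda,\nu;q)=\prod_{\substack{1\le j<k\le m\\ \nu^{\top}_{k}<\nu^{\top}_{j},\ \lambda^{\top}_{k}=\lambda^{\top}_{j}}}\frac{1-q^{1+k-j}}{1-q^{k-j}}$. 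At $\mu_{\max}$ one has $\lambda_{i}-(\mu_{\max})_{i}=(\lambda_{i}-\nu_{i-1})^{+}$ and $(\mu_{\max})_{i}-\nu_{i}=\min(\lambda_{i},\nu_{i-1})-\nu_{i}$, so a large number of the $q$-binomials are trivially $1$ (whenever the relevant rows of $\mu_{\max}$ agree with those of $\lambda$, resp.\ $\nu$); rewriting the survivors as ratios $(q;q)_{\bullet}/(q;q)_{\bullet}$ and translating into column coordinates, they should telescope along each maximal run of equal columns of $\lambda$ on which the columns of $\nu$ strictly decrease --- which is precisely the index set of $c(\lambda,\nu;q)$. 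The cleanest rigorous route is probably induction on $n$, peeling off the variable $a_{n}$: for $\mu=\mu_{\max}$ the pair $(\nu,\mu_{\max})$ is again the extremal configuration one level down, and the $q$-binomial recursions \eqref{eq:q_binomial} reproduce exactly the new factors of $c$. The computation at $\mu_{\min}$ is the mirror image; small cases ($n=2$) can be checked by hand and do match (giving e.g.\ $Q=c(\lambda,\nu;q)(x^{d}+x^{-d})+\cdots$ with $c=1+q$ for $\lambda=(2),\nu=(1)$ and for $\lambda=(3,1),\nu=(2)$).

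The main obstacle is step (ii): turning the row-indexed product of $q$-binomials at the extremal partition into the column-indexed product that defines $c(\lambda,\nu;q)$. The degree count in (i) is a routine $\max/\min$ argument and the comparison in (iii) is just a substitution into formulas already recorded in this chapter, but the combinatorial translation between the two coordinate systems --- keeping track of which $q$-binomials are trivial and checking that the surviving factors cancel down to precisely the prescribed product --- is where the genuine work lies.
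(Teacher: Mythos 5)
Your overall strategy is the same as the paper's: bound the exponents $2|\mu|-|\nu|-|\lambda|$ by working column by column with the transposes, identify the unique extremal $\mu$'s as $(\mu_{\max})_i=\min\{\lambda_i,\nu_{i-1}\}$ and $(\mu_{\min})_i=\max\{\lambda_{i+1},\nu_i\}$, and then show that the product of $q$-binomials evaluated there equals $c(\lambda,\nu;q)$. Your step (i) is correct and your step (iii) is a welcome explicit addition (the paper leaves the observation that the $x^d$-coefficient of $P_{\lambda\setminus\nu}$ is $B^d_{\lambda\setminus\nu}=c(\lambda,\nu;q)\cdot A(\nu^*;\emptyset,\emptyset,t_0)=c(\lambda,\nu;q)$ implicit).

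However, step (ii) is precisely the substance of the proposition, and you have not carried it out: you write that the surviving $q$-binomials ``should telescope'' and that ``the cleanest rigorous route is probably induction on $n$,'' which is an assertion, not a proof. The paper closes this gap by going in the opposite direction: it rewrites $c(\lambda,\nu;q)=\prod_{i=1}^n\prod_{j,k\in\{\lambda_{i+1}+1,\dots,\lambda_i\},\,\nu^\top_k<\nu^\top_j}\frac{1-q^{1+k-j}}{1-q^{k-j}}$ as a product over the row-blocks $\{\lambda_{i+1}+1,\dots,\lambda_i\}$ and, for each block, distinguishes four cases according to whether $\nu_i\geq\lambda_{i+1}$ and whether $\nu_{i-1}\leq\lambda_i$; in each case the double product over the block collapses (by explicitly performing the telescoping products $\prod_j\prod_k\frac{1-q^{1+k-j}}{1-q^{k-j}}$) to the pair $\binom{\lambda_i-\lambda_{i+1}}{\lambda_i-\min\{\nu_{i-1},\lambda_i\}}_q\binom{\min\{\nu_{i-1},\lambda_i\}-\min\{\nu_i,\lambda_{i+1}\}}{\min\{\nu_{i-1},\lambda_i\}-\nu_i}_q$, which is the $i$-th factor of the coefficient of $x^d$; a separate $(q;q)_\bullet$ rearrangement then shows this product also equals the $\max$-form giving the coefficient of $x^{-d}$. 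Until you perform this block-by-block computation (or a genuine induction on $n$ with the base case and inductive step verified), the identity ``extremal coefficient $=c(\lambda,\nu;q)$'' — and hence the proposition — remains unproved; checking $n=2$ examples does not substitute for it.
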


\begin{proof}
Let us first start by calculating the most extreme values the power of $x$ can take in $Q_{x;q}^{(n-1,n)}(\nu, \lambda)$. For $\mu \in \Lambda_{n}$ such that $\nu \preceq \mu \preceq \lambda$ we have
\begin{equation*}
\begin{split}
2|\mu|-|\nu|-|\lambda| &= 2|\mu^\top|-|\nu^\top|-|\lambda^\top| \\
&= \Big( \sum_{\substack{1\leq j \leq m: \\ \lambda_{j}^\top-\nu_{j}^\top=0 }}+\sum_{\substack{1\leq j \leq m: \\ \lambda_{j}^\top-\nu_{j}^\top=1 }}+\sum_{\substack{1\leq j \leq m: \\ \lambda_{j}^\top-\nu_{j}^\top=2 }}\Big)(2\mu_{j}^\top-\nu_{j}^\top-\lambda_{j}').
\end{split}
\end{equation*}
When $\lambda_{j}^\top-\nu_{j}^\top=0$, then necessarily $\mu_{j}^\top=\lambda_{j}^\top=\nu_{j}^\top$ and therefore the summand for the first term vanishes.\\
When $\lambda_{j}^\top-\nu_{j}^\top=2$, then $\mu_{j}^\top=\lambda^\top_{j}-1=\nu_{j}^\top+1$, therefore
\[2\mu_{j}^\top-\nu_{j}^\top-\lambda_{j}^\top=2(\lambda^\top_{j}-1)-(\lambda^\top_{j}-2)-\lambda_{j}^\top.\]
Finally, if $\lambda_{j}^\top-\nu_{j}^\top=1$, then $\mu_{j}^\top$ can either equal $\lambda^\top_{j}$ or $\nu^\top_{j}$. Therefore we have the following sharp bounds for $2|\mu|-|\nu|-|\lambda|$
\begin{equation*}
\begin{split}
2|\mu|-|\nu|-|\lambda| &\leq \sum_{\substack{1\leq j \leq m: \\ \lambda_{j}^\top-\nu_{j}^\top=1 }} (2\lambda_{j}^\top-(\lambda^\top_{j}-1)-\lambda^\top_{j})=d\\
2|\mu|-|\nu|-|\lambda| &\geq \sum_{\substack{1\leq j \leq m: \\ \lambda_{j}^\top-\nu_{j}^\top=1 }} (2\nu_{j}^\top-\nu^\top_{j}-(\nu^\top_{j}+1))=-d.
\end{split}
\end{equation*}
The coefficient of $x^d$ corresponds to $\mu$ where each $\mu_{i}$ takes its largest value, therefore since
\[0\leq \mu_{n}\leq \nu_{n-1}\leq ... \leq \nu_{1}\leq \mu_{1}\]
\[0\leq \mu_{n}\leq \lambda_{n}\leq ... \leq \mu_{1}\leq \lambda_{1}\]
the coefficient of $x^d$ corresponds to $\mu_{i} = \min\{\nu_{i-1},\lambda_{i}\}$, for $1\leq i \leq n$ with the convention $\mu_{1}=\lambda_{1}$ and equals
\[\prod_{i=1}^{n-1}\dbinom{\lambda_{i}-\lambda_{i+1}}{\lambda_{i}-\min\{\nu_{i-1},\lambda_{i}\}}_{q}\dbinom{\min\{\nu_{i-1},\lambda_{i}\}-\min\{\nu_{i},\lambda_{i+1}\}}{\min\{\nu_{i-1},\lambda_{i}\}-\nu_{i}}_{q}\dbinom{\lambda_{n}}{\lambda_{n}-\min\{\nu_{n-1},\lambda_{n}\}}_{q}.\]
On the other hand the coefficient of $x^{-d}$ correspond to the choice of $\mu$ where each coordinate takes its smallest value, i.e. it corresponds to $\mu$ with $\mu_{i} = \max \{\nu_{i}, \lambda_{i+1}\}$, for $1\leq i \leq n$ with the convention $\mu_{n}=0$ and equals
\[\prod_{i=1}^{n-1}\dbinom{\lambda_{i}-\lambda_{i+1}}{\lambda_{i}-\max\{\nu_{i},\lambda_{i+1}\}}_{q}\dbinom{\max\{\nu_{i},\lambda_{i+1}\}-\max\{\nu_{i+1},\lambda_{i+2}\}}{\max\{\nu_{i},\lambda_{i+1}\}-\nu_{i}}_{q}\dbinom{\lambda_{n}}{\lambda_{n}-\max\{\nu_{n},\lambda_{n+1}\}}_{q}.\]
Let us now try to rewrite $c(\lambda, \nu;q)$ in a similar form.\\
By the definition of $\lambda^\top$ as the transpose of $\lambda$ it holds that $\lambda^\top_{j}=i$ for $j \in \{\lambda_{i+1}+1,...,\lambda_{i}\}$, for $1\leq i \leq n$. In the case of $i=n$, we set $\lambda_{i+1}=\lambda_{n+1}=0$.
Therefore
\[c(\lambda, \nu;q)=\prod_{i=1}^n \prod_{\substack{j,k\in \{\lambda_{i+1}+1,...,\lambda_{i}\}\\\nu^\top_{k}<\nu^\top_{j}}}\dfrac{1-q^{1+k-j}}{1-q^{k-j}}.\]
Let us focus on a block of indices $\{\lambda_{i+1}+1,...,\lambda_{i}\}$. Since $\lambda, \nu$ differ by two horizontal strips, it holds that $\nu^\top_{j} \in \{i,i-1,i-2\}$ for every $j\in \{\lambda_{i+1}+1,...,\lambda_{i}\}$. Let us consider the following cases.
\begin{enumerate}[I.]
\item If $\nu_{i}\geq \lambda_{i+1}$ and $\nu_{i-1}\leq \lambda_{i}$, then
\begin{equation*}
\begin{split}
\prod_{\substack{j,k\in \{\lambda_{i+1}+1,...,\lambda_{i}\}\\\nu^\top_{k}<\nu^\top_{j}}}&\dfrac{1-q^{1+k-j}}{1-q^{k-j}}\\
&= \prod_{j=\lambda_{i+1}+1}^{\nu_{i}}\prod_{k = \nu_{i}+1}^{\lambda_{i}}\dfrac{1-q^{1+k-j}}{1-q^{k-j}}\prod_{j=\nu_{i}+1}^{\nu_{i-1}}\prod_{k = \nu_{i-1}+1}^{\lambda_{i}}\dfrac{1-q^{1+k-j}}{1-q^{k-j}}\\
&=\prod_{j=\lambda_{i+1}+1}^{\nu_{i}}\dfrac{1-q^{1+\lambda_{i}-j}}{1-q^{\nu_{i}+1-j}} \prod_{j=\nu_{i}+1}^{\nu_{i-1}}\dfrac{1-q^{1+\lambda_{i}-j}}{1-q^{\nu_{i-1}+1-j}}\\
&= \dfrac{(1-q^{\lambda_{i}-\lambda_{i+1}})...(1-q^{\lambda_{i}-\nu_{i}+1})}{(1-q^{\nu_{i}-\lambda_{i+1}})...(1-q)}\cdot \dfrac{(1-q^{\lambda_{i}-\nu_{i}})...(1-q^{\lambda_{i}-\nu_{i-1}+1})}{(1-q^{\nu_{i-1}-\nu_{i}})...(1-q)}\\
&=\,\dbinom{\lambda_{i}-\lambda_{i+1}}{\lambda_{i}-\nu_{i}}_{q}\dbinom{\lambda_{i}-\nu_{i}}{\lambda_{i}-\nu_{i-1}}_{q}
\end{split}
\end{equation*}
where the last expression equals
\[\dbinom{\lambda_{i}-\lambda_{i+1}}{\lambda_{i}-\max\{\nu_{i},\lambda_{i+1}\}}_{q}\dbinom{\max\{\nu_{i-1},\lambda_{i}\}-\max\{\nu_{i},\lambda_{i+1}\}}{\max\{\nu_{i-1},\lambda_{i}\}-\nu_{i-1}}_{q}.\]
Moreover we observe that
\begin{equation*}
\begin{split}
\dbinom{\lambda_{i}-\lambda_{i+1}}{\lambda_{i}-\nu_{i}}_{q}\dbinom{\lambda_{i}-\nu_{i}}{\lambda_{i}-\nu_{i-1}}_{q}&=\dfrac{(q;q)_{\lambda_{i}-\lambda_{i+1}}}{(q;q)_{\lambda_{i}-\nu_{i}}(q;q)_{\nu_{i}-\lambda_{i+1}}}
\dfrac{(q;q)_{\lambda_{i}-\nu_{i}}}{(q;q)_{\lambda_{i}-\nu_{i-1}}(q;q)_{\nu_{i-1}-\nu_{i}}}\\
&=\dfrac{(q;q)_{\lambda_{i}-\lambda_{i+1}}}{(q;q)_{\lambda_{i}-\nu_{i-1}}(q;q)_{\nu_{i-1}-\lambda_{i+1}}}
\dfrac{(q;q)_{\nu_{i-1}-\lambda_{i+1}}}{(q;q)_{\nu_{i-1}-\nu_{i}}(q;q)_{\nu_{i}-\lambda_{i+1}}}\\
&= \dbinom{\lambda_{i}-\lambda_{i+1}}{\lambda_{i}-\nu_{i-1}}_{q}\dbinom{\nu_{i-1}-\lambda_{i+1}}{\nu_{i-1}-\nu_{i}}_{q}
\end{split}
\end{equation*}
which equals
\[\dbinom{\lambda_{i}-\lambda_{i+1}}{\lambda_{i}-\min\{\nu_{i-1},\lambda_{i}\}}_{q}\dbinom{\min\{\nu_{i-1},\lambda_{i}\}-\min\{\nu_{i},\lambda_{i+1}\}}{\min\{\nu_{i-1},\lambda_{i}\}-\nu_{i}}_{q}.\]
\item If $\nu_{i}< \lambda_{i+1}$ and $\nu_{i-1}\leq  \lambda_{i}$, then $\forall j \in \{\lambda_{i+1}+1,...,\lambda_{i}\}$ it holds that $\nu^\top_{j}\neq l$ and
\begin{equation*}
\begin{split}
\prod_{\substack{j,k\in \{\lambda_{i+1}+1,...,\lambda_{i}\}\\\nu^\top_{k}<\nu^\top_{j}}}\dfrac{1-q^{1+k-j}}{1-q^{k-j}}&=\prod_{j=\lambda_{i+1}+1}^{\nu_{i-1}}\prod_{k=\nu_{i-1}+1}^{\lambda_{i}}\dfrac{1-q^{1+k-j}}{1-q^{k-j}}\\
&=\dbinom{\lambda_{i}-\lambda_{i+1}}{\lambda_{i}-\nu_{i-1}}_{q}.
\end{split}
\end{equation*}
\item If $\nu_{i}\geq \lambda_{i+1}$ and $\nu_{i-1}>\lambda_{i}$, then $\forall j \in \{\lambda_{i+1}+1,...,\lambda_{i}\}$ it holds that $\nu^\top_{j}\neq i-2$ and
\begin{equation*}
\begin{split}
\prod_{\substack{j,k\in \{\lambda_{i+1}+1,...,\lambda_{i}\}\\\nu^\top_{k}<\nu^\top_{j}}}\dfrac{1-q^{1+k-j}}{1-q^{k-j}}&=\prod_{j=\lambda_{i+1}+1}^{\nu_{i}}\prod_{k=\nu_{i}+1}^{\lambda_{i}}\dfrac{1-q^{1+k-j}}{1-q^{k-j}}\\
&=\dbinom{\lambda_{i}-\lambda_{i+1}}{\lambda_{i}-\nu_{i}}_{q}.
\end{split}
\end{equation*}
\item If $\nu_{i}< \lambda_{i+1}$ and $\nu_{i-1}>\lambda_{i}$, then $\forall j \in \{\lambda_{i+1}+1,...,\lambda_{i}\}$ it holds that $\nu^\top_{j}=i-1$ therefore
\begin{equation*}
\prod_{\substack{j,k\in \{\lambda_{i+1}+1,...,\lambda_{i}\}\\\nu^\top_{k}<\nu^\top_{j}}}\dfrac{1-q^{1+k-j}}{1-q^{k-j}}=1.
\end{equation*}
\end{enumerate}
Therefore, we obtain the following
\[c(\lambda, \nu;q)=\prod_{i=1}^n \dbinom{\lambda_{i}-\lambda_{i+1}}{\lambda_{i}-\min\{\nu_{i-1},\lambda_{i}\}}_{q}\dbinom{\min\{\nu_{i-1},\lambda_{i}\}-\min\{\nu_{i},\lambda_{i+1}\}}{\min\{\nu_{i-1},\lambda_{i}\}-\nu_{i}}_{q}\]
which gives the coefficient of $x^d$, or the equivalent form
\[c(\lambda, \nu;q) = \prod_{i=1}^n \dbinom{\lambda_{i}-\lambda_{i+1}}{\lambda_{i}-\max\{\nu_{i},\lambda_{i+1}\}}_{q}\dbinom{\max\{\nu_{i-1},\lambda_{i}\}-\max\{\nu_{i},\lambda_{i+1}\}}{\max\{\nu_{i-1},\lambda_{i}\}-\nu_{i-1}}_{q}\]
which gives the coefficient of $x^{-d}$.
\end{proof}

We will now prove that for two special cases of partitions $\lambda, \nu$, as $t_{0}\to 0$, the branching polynomial $P_{\lambda \setminus \nu}(x;q,t_{0})$ converges to $Q^{(n-1,n)}_{x,q}(\nu, \lambda)$.

\begin{proposition}
Assume that the partitions $\lambda \in \Lambda_{n}$, $\nu \in \Lambda_{n-1}$ are such that there exists a unique index $1\leq s \leq n$ such that
\[\max\{\nu_{s}, \lambda_{s+1}\}<\min \{\nu_{s-1},\lambda_{s}\}\]
and
\[\max\{\nu_{j}, \lambda_{j+1}\}=\min \{\nu_{j-1},\lambda_{j}\}, \text{ for }j\neq s.\]
Then the limit $\lim_{t_{0}\to 0}P_{\lambda\setminus \nu}(x;q,t_{0})$ exists and $Q^{(n-1, n)}_{x,q}(\nu, \lambda)=\lim_{t_{0}\to 0}P_{\lambda\setminus \nu}(x;q,t_{0})$.
\end{proposition}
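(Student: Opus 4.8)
The plan is to show that, under the stated hypothesis, \emph{both} $Q^{(n-1,n)}_{x,q}(\nu,\lambda)$ and $\lim_{t_{0}\to 0}P_{\lambda\setminus\nu}(x;q,t_{0})$ equal $c(\lambda,\nu;q)\,H_{d}(x\,|\,q)$, where $H_{d}(\cdot\,|\,q)$ is the continuous $q$-Hermite polynomial of \eqref{eq:q-Hermite} and $d=d(\lambda,\nu)$; this at once gives the proposition. The combinatorial content of the hypothesis is that, setting $A_{s}=\max\{\nu_{s},\lambda_{s+1}\}$ and $B_{s}=\min\{\nu_{s-1},\lambda_{s}\}$ (so $B_{s}-A_{s}=d$), in any $\mu\in\Lambda_{n}$ with $\nu\preceq\mu\preceq\lambda$ the coordinate $\mu_{j}$ is forced for every $j\neq s$ while $\mu_{s}$ is free over the $d+1$ integers $A_{s},A_{s}+1,\dots,B_{s}$; moreover, running the computation in the proof of Proposition~\ref{leading_term}, $2|\mu|-|\nu|-|\lambda|=2k-d$ when $\mu_{s}=A_{s}+k$.

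For $Q^{(n-1,n)}_{x,q}(\nu,\lambda)$ this collapses the defining sum of Definition~\ref{recursion_q_whittaker} to $\sum_{k=0}^{d}c_{k}(q)\,x^{2k-d}$, where $c_{k}(q)$ is the product of $q$-binomial coefficients obtained by plugging in $\mu_{s}=A_{s}+k$ and the forced values of the remaining coordinates. All but at most three of these factors are independent of $k$, and the recursions \eqref{eq:q_binomial} show that, as $k\mapsto k+1$, the product of the (at most three) $k$-dependent factors gets multiplied by exactly $(1-q^{d-k})/(1-q^{k+1})=\binom{d}{k+1}_{q}/\binom{d}{k}_{q}$. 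Since $c_{0}(q)=c(\lambda,\nu;q)$ by Proposition~\ref{leading_term}, induction yields $c_{k}(q)=c(\lambda,\nu;q)\binom{d}{k}_{q}$, hence $Q^{(n-1,n)}_{x,q}(\nu,\lambda)=c(\lambda,\nu;q)H_{d}(x\,|\,q)$. (One runs through the cases given by the relative orders of $\nu_{s-1},\lambda_{s}$ and of $\nu_{s},\lambda_{s+1}$, and the boundary cases $s=1$, $s=n$; each is the same $q$-binomial bookkeeping, with fewer $k$-dependent factors at the boundary.)

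For $P_{\lambda\setminus\nu}$ the first task is to identify the data entering \eqref{eq:partition_contribution}. Because neither any part of $\nu$ nor any part of $\lambda$ lies strictly between $A_{s}$ and $B_{s}$, one has $\lambda^{\top}_{i}=s$ and $\nu^{\top}_{i}=s-1$ for every column $A_{s}<i\le B_{s}$, and these are exactly the $d$ columns where $\lambda^{\top}_{i}-\nu^{\top}_{i}=1$; passing through $\nu^{*}=n^{m}-\nu^{\top}$ and $\lambda^{*}=(n+1)^{m}-\lambda^{\top}$, this means $J^{\mathsf{c}}$ is a block of $d$ consecutive indices of $[m]$ on which $\nu^{*}$ (and hence $\lambda^{*}$) is constant. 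With $\nu^{*}$ constant on $J^{\mathsf{c}}$ the second product in \eqref{eq:partition_contribution} is empty and the conditions $\nu^{*}_{j}=\nu^{*}_{k}$ are vacuous; moreover a contribution $A(\nu^{*};I_{+},I_{-},t_{0})$ vanishes unless $(\epsilon_{j})_{j\in J^{\mathsf{c}}}$ is weakly decreasing along the block (otherwise a factor $(1-q^{1+k-j})/(1-q^{k-j})$ with $k=j-1$ is zero), i.e. unless $I_{+}$ is an initial sub-block and $I_{-}$ a terminal sub-block. For such a configuration, writing $a=|I_{+}|$, $r=|I_{0}|$, $b=|I_{-}|$ with $a+r+b=d$, telescoping the surviving $q$-products gives
\[
A(\nu^{*};I_{+},I_{-},t_{0})=\binom{d}{a}_{q}\binom{d-a}{r}_{q}\,t_{0}^{b-a}\,q^{-ar}.
\]
Feeding this into $P_{\lambda\setminus\nu}=\sum_{r=0}^{d}B^{r}_{\lambda\setminus\nu}(q,t_{0})\,\langle x;t_{0}\rangle_{q,r}$ and comparing, coefficient by coefficient in the basis $\{\langle x;t_{0}\rangle_{q,r}\}$, with the expansion of the continuous big $q$-Hermite polynomials in Theorem~\ref{Askey_expansion} — the elementary identity $\binom{d}{r}_{q}\binom{d-r}{j-r}_{q}=\binom{d}{j}_{q}\binom{j}{r}_{q}$ effecting the match — one arrives at the closed form
\[
P_{\lambda\setminus\nu}(x;q,t_{0})=c(\lambda,\nu;q)\sum_{j=0}^{d}\binom{d}{j}_{q}\,t_{0}^{d-j}\,H_{j}(x;q,t_{0}).
\]

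It then remains to let $t_{0}\to 0$: each $H_{j}(x;q,t_{0})$ converges to $H_{j}(x\,|\,q)$ (the limit recalled in the discussion of \eqref{eq:q-Hermite}), while $t_{0}^{d-j}\to 0$ for $j<d$ and $t_{0}^{0}=1$ for $j=d$; hence $\lim_{t_{0}\to 0}P_{\lambda\setminus\nu}(x;q,t_{0})=c(\lambda,\nu;q)H_{d}(x\,|\,q)$, which equals $Q^{(n-1,n)}_{x,q}(\nu,\lambda)$ by the second paragraph. I expect the real work to be the structural step — verifying that $J^{\mathsf{c}}$ is precisely a consecutive block on which $\nu^{*}$ is constant, together with the enumeration of which $(I_{+},I_{-})$ survive in \eqref{eq:partition_contribution} — after which the $q$-product telescopings and the passage to the limit are mechanical; the only external inputs are Theorem~\ref{Askey_expansion} and the known limit $H_{j}(\cdot;q,t_{0})\to H_{j}(\cdot\,|\,q)$.
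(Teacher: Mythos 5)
Your proposal is correct and follows essentially the same route as the paper: identify $J^{\mathsf{c}}$ as a consecutive block on which $\nu^{*}$ is constant, show only the "ordered" partitions $(I_{+},I_{0},I_{-})$ survive, telescope to $A(\nu^{*};I_{+},I_{-},t_{0})=\binom{d}{l}_{q}\binom{d-l}{r}_{q}t_{0}^{d-2l-r}q^{-rl}$, resum via Theorem~\ref{Askey_expansion} to get $P_{\lambda\setminus\nu}=c(\lambda,\nu;q)\sum_{j}\binom{d}{j}_{q}t_{0}^{d-j}H_{j}(x;t_{0}|q)$, and separately reduce $Q^{(n-1,n)}_{x,q}$ to the single free coordinate $\mu_{s}$ with coefficients $c(\lambda,\nu;q)\binom{d}{k}_{q}$. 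The only cosmetic difference is that you package the coefficient computation for $Q^{(n-1,n)}_{x,q}$ as an induction on $k$ via ratios of consecutive $q$-binomials, where the paper carries out the min/max case analysis explicitly.
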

\begin{proof}
Assume that the partitions $\lambda, \nu$ are as in the statement of the Proposition with
\[\min \{\nu_{s-1},\lambda_{s}\} - \max\{\nu_{s}, \lambda_{s+1}\}=d\]
for some $0\leq d \leq m=l(\lambda^\top)$, then it holds that $|\{1\leq i \leq l(\lambda^\top):\lambda^\top_{i}-\nu^\top_{i}=1\}|=d$ and for all $j \in \{1\leq i \leq l(\lambda^\top):\lambda^\top_{i}-\nu^\top_{i}=1\}$, $\nu^\top_{j}=s-1$, (as in figure \ref{fig:comparisonI}).

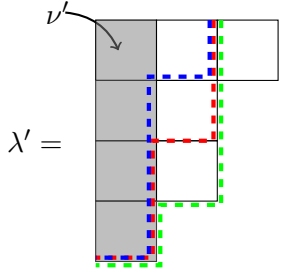
\begin{figure}[h]
\begin{center}
\scalebox{1}{
\begin{tikzpicture}
\node at (-0.8,-0.8) {$\lambda'=$};
\node at (-0.5,0.9) {$\nu'$};
\draw [thick] [->] (-0.4,0.9) to [bend left] (0.3,0.4);
\draw [fill, gray, opacity = 0.5] (0,0) rectangle (0.8,0.8);
\draw [black] (0,0) rectangle (0.8,0.8);
\draw [black] (0.8,0) rectangle (1.6,0.8);
\draw [black] (1.6,0) rectangle (2.4,0.8);
\draw [black] (0,-0.8) rectangle (0.8,0);
\draw [fill, gray, opacity = 0.5] (0,-0.8) rectangle (0.8,0);
\draw [black] (0.8,-0.8) rectangle (1.6,0);
\draw [black] (0,-1.6) rectangle (0.8,-0.8);
\draw [fill, gray, opacity =0.5] (0,-1.6) rectangle (0.8,-0.8);
\draw [black] (0.8,-1.6) rectangle (1.6,-0.8);
\draw [black] (0,-2.4) rectangle (0.8,-1.6);
\draw [fill, gray, opacity =0.5] (0,-2.4) rectangle (0.8,-1.6);
\draw [ultra thick, dashed, green] (1.65,0.8)--(1.65,-0.8)--(1.65,-1.65)--(0.85,-1.65)--(0.85,-2.45)--(0,-2.45);
\draw [ultra thick, dashed, red] (1.55,0.8)--(1.55,-0.8)--(0.75,-0.8)--(0.75,-1.6)--(0.75,-2.35)--(0,-2.35);
\draw [ultra thick, dashed, blue] (1.5,0.8)--(1.5,0.05)--(0.7,0.05)--(0.7,-1.6)--(0.7,-2.35)--(0,-2.35);
\end{tikzpicture}
}
\caption{For the partitions $\lambda=(4,3,1)$ and $\nu = (4,0)$ (marked with gray), we have $d=d(\lambda, \nu)=|\{1\leq j \leq 4: \lambda_{j}^\top-\nu_{j}^\top = 1\}|=2$. The \enquote{single} boxes, i.e. the boxes that are such that $\lambda^\top_{j}=\nu^\top_{j}+\{\square\}$, are stacked the one above the other. The dashed lines show the possible $\mu: \nu \preceq \mu \preceq \lambda$.}
\label{fig:comparisonI}
\end{center} 
\end{figure}
Then the set $J^{\mathsf{c}}=\{1\leq j \leq m: \nu_{j}^*=\lambda_{j}^*\}$, where we recall that $\nu_{j}^* = n-\nu^\top_{m-j+1}$ and $\lambda_{j}^* = n+1-\lambda^\top_{m-j+1}$, consists of subsequent indices, i.e. there exists some $0\leq i \leq m-d$ such that $J^{\mathsf{c}}=\{i+1,...,i+d\}$. 

Let us calculate the contribution of $A(\nu^*;I_{+},I_{-},t_{0})$, defined in \eqref{eq:partition_contribution}, for each partition $(I_{+},I_{-},I_{0})$ of $J^{\mathsf{c}}$. \\
First we observe that $A(\nu^*;I_{+},I_{-},t_{0})$ is zero unless the subsets $I_{+},I_{-},I_{0}$ are ordered such that all the indices in $I_{+}$ are smaller of the indices in $I_{0}$ which are smaller than the indices in $I_{-}$. For example the partition $I_{+}=\{i+1\}$, $I_{0}=\{i+2,...,i+d-1\}$, $I_{-}=\{i+d\}$ has non zero contribution, whereas the partition  $I_{-}=\{i+1\}$, $I_{0}=\{i+2,...,i+d-1\}$, $I_{+}=\{i+d\}$ has zero contribution. Let us explain why this is true.\\
We assume that there exists a pair of indices $(l,l+1)$ which is \enquote{out of order}, for example $l\in I_{0}$ and $l+1\in I_{+}$, then in the product 
\[\prod_{\substack{j,k \in J^{\mathsf{c}} \\  \nu^*_{j}=\nu^*_{k} \\ \epsilon_{j}>\epsilon_{k}}}\dfrac{1-q^{1+k-j}}{1-q^{k-j}}\]
the following term appears
\[\dfrac{1-q^{1+l-(l+1)}}{1-q^{l-(l+1)}}=0.\]
Moreover, we observe that for those partition where $A(\nu^*;I_{+},I_{-},t_{0})$ is non-zero, the set of pairs of indices $\{j,k \in J^{\mathsf{c}}: \, j<k \text{ s.t. } \nu^*_{j}=\nu^*_{k}, \epsilon_{k}-\epsilon_{j}=1\}$ is empty, therefore we end up with the following form
\[A(\nu^*;I_{+},I_{-},t_{0})=\prod_{\substack{j,k \in J^{\mathsf{c}} \\  \nu^*_{j}=\nu^*_{k} \\ \epsilon_{j}>\epsilon_{k}}}\dfrac{1-q^{1+k-j}}{1-q^{k-j}}  \prod_{j \in J^{\mathsf{c}}}t_{0}^{-\epsilon_{j}}\prod_{\substack{j,k \in J^{\mathsf{c}}\\j<k\\\epsilon_{j}\neq \epsilon_{k}=0}}q^{-\epsilon_{j}}.\]

Let us now fix a partition with non-zero contribution such that $|I_{+}|+|I_{-}|=d-r$ for some $0\leq r \leq d$. If $|I_{+}|=l$, for some $0\leq l \leq d-r$, it necessarily holds that the partition $(I_{+},I_{-},I_{0})$ of $J^{\mathsf{c}}$ is the following
\begin{equation*}
\begin{split}
I_{+}&= \{i+1,...,i+l\}\\
I_{0}\,&= \{i+l+1,...,i+l+r\}\\
I_{-}&= \{i+l+r+1,...,i+d\}
\end{split}
\end{equation*}
we then calculate
\begin{equation*}
\begin{split}
\prod_{\substack{j,k \in J^{\mathsf{c}} \\  \nu^*_{j}=\nu^*_{k} \\ \epsilon_{j}>\epsilon_{k}}}\dfrac{1-q^{1+k-j}}{1-q^{k-j}} &= \prod_{j=i+1}^{i+l}\prod_{k=i+l+1}^{i+d}\dfrac{1-q^{1+k-j}}{1-q^{k-j}}\prod_{j=i+l+1}^{i+l+r}\prod_{k=i+l+r+1}^{i+d}\dfrac{1-q^{1+k-j}}{1-q^{k-j}}\\
&= \dbinom{d}{l}_{q}\dbinom{d-l}{r}_{q}.
\end{split}
\end{equation*}
and hence
\[A(\nu^*;I_{+},I_{-},t_{0}) = \dbinom{d}{l}_{q}\dbinom{d-l}{r}_{q}t_{0}^{d-2l-r}q^{-rl}.\]
Therefore the branching polynomial $P_{\lambda \setminus \nu}(x;q,t_{0})$ equals
\[P_{\lambda \setminus \nu}(x;q,t_{0}) = c(\lambda, \nu;q)\sum_{0\leq r \leq d}\Big( \sum_{l=0}^{d-r}\dbinom{d}{l}_{q}\dbinom{d-l}{r}_{q}t_{0}^{d-2l-r}q^{-rl}\Big)\langle x;t_{0}\rangle_{q,r}.\]
Setting $j=r+l$ we have
\begin{equation*}
\begin{split}
\sum_{0\leq r \leq d}\Big( \sum_{l=0}^{d-r}\dbinom{d}{l}_{q}&\dbinom{d-l}{r}_{q}t_{0}^{d-2l-r}q^{-rl}\Big)\langle x;t_{0}\rangle_{q,r}\\
&= \sum_{r=0}^d\Big( \sum_{j=r}^{d}\dbinom{d}{j-r}_{q}\dbinom{d+r-j}{r}_{q}t_{0}^{d+r-2j}q^{-r(j-r)}\Big)\langle x;t_{0}\rangle_{q,r}\\
&=\sum_{j=0}^d \sum_{r=0}^j \dbinom{d}{j-r}_{q}\dbinom{d+r-j}{r}_{q}t_{0}^{d+r-2j}q^{r^2-rj}\langle x;t_{0}\rangle_{q,r}\\
&=\sum_{j=0}^d \dbinom{d}{j}_{q}t_{0}^{d-j}t_{0}^{-j} \sum_{r=0}^{j}\dfrac{\binom{d}{j-r}_{q}\binom{d+r-j}{k}_{q}}{\binom{d}{j}_{q}}t_{0}^r q^{r^2-jr}\langle x;t_{0}\rangle_{q,r}\\
&=\sum_{j=0}^d \dbinom{d}{j}_{q}t_{0}^{d-j}t_{0}^{-j} \sum_{r=0}^{j}\dbinom{j}{r}_{q}t_{0}^r q^{r^2-jr}\langle x;t_{0}\rangle_{q,r}\\
&=\sum_{j=0}^d \dbinom{d}{j}_{q}t_{0}^{d-j}H_{j}(x;t_{0}|q)
\end{split}
\end{equation*}
where, for the last equality we used the expansion of the continuous big $q$-Hermite polynomials, $H_{j}(x;t_{0}|q)$, stated in Theorem \ref{Askey_expansion}.
Therefore the polynomial $P_{\lambda \setminus \nu}(x;q,t_{0})$ can be rewritten as follows
\[P_{\lambda \setminus \nu}(x;q,t_{0})=c(\lambda, \nu;q)\sum_{j=0}^d \dbinom{d}{j}t_{0}^{d-j}H_{j}(x;t_{0}|q).\]
As $t_{0}\to 0$ the continuous big $q$-Hermite polynomials converge to the continuous $q$-Hermite polynomials, see Askey-Wilson scheme in \cite{Koekoek_Lesky_Swarttouw_2010},  given by
\[H_{j}(x;q)=\sum_{k=0}^j \dbinom{j}{k}_{q}x^{2k-j}\]
therefore we conclude that
\[\lim_{t_{0}\to 0}P_{\lambda\setminus \nu}(x;q,t_{0})=c(\lambda, \nu;q)H_{d}(x;q)=c(\lambda, \nu;q)\sum_{k=0}^d \dbinom{d}{k}_{q}x^{2k-d}.\]

Let us now calculate $Q_{x,q}^{(n-1,n)}(\nu, \lambda)$ for this specific case of partitions $\nu, \lambda$. The only choice for $\mu \in \Lambda_{n}$ such that $\nu \preceq \mu \preceq \lambda$ is the following
\[\mu_{s}\in [\max\{\nu_{s},\lambda_{s+1}\}, \min\{\nu_{s-1},\lambda_{s}\}]=[\max\{\nu_{s},\lambda_{s+1}\},\max\{\nu_{s},\lambda_{s+1}\}+d]\]
and
\[\mu_{j}= \max\{\nu_{j},\lambda_{j+1}\}=\min\{\nu_{j-1},\lambda_{j}\}, \text{ for }j\neq s.\]
Let us denote by $\mu^*$ the partition with the smallest possible coordinates, i.e. $\mu_{j}^*=\max\{\nu_{j},\lambda_{j+1}\}$, for $1\leq j \leq n$. Using Proposition \ref{leading_term} we have that
\[x^{2|\mu^*|-|\nu|-|\lambda|}=x^{-d}\]
therefore for $\mu \in \Lambda_{n}$ with $\mu_{s}=\max\{\nu_{s},\lambda_{s+1}\}+k$, for some $0\leq k \leq d$, it holds that
\[x^{2|\mu|-|\nu|-|\lambda|}=x^{2(|\mu^*|+k)-|\nu|-|\lambda|}=x^{2k-d}.\]
Moreover we have
\begin{equation*}
\begin{split}
\prod_{i=1}^{n-1}\dbinom{\lambda_{i}-\lambda_{i+1}}{\lambda_{i}-\mu_{i}}_{q}\dbinom{\mu_{i}-\mu_{i+1}}{\mu_{i}-\nu_{i}}_{q}&\dbinom{\lambda_{n}}{\lambda_{n}-\mu_{n}}_{q}\\
 = \prod_{i=1}^{n-1}&\dbinom{\lambda_{i}-\lambda_{i+1}}{\lambda_{i}-\mu^*_{i}}_{q}\dbinom{\mu^*_{i}-\mu^*_{i+1}}{\mu_{i}-\nu_{i}}_{q}\dbinom{\lambda_{n}}{\lambda_{n}-\mu^*_{n}}_{q}\\
&\times \dfrac{\dbinom{\lambda_{s}-\lambda_{s+1}}{\lambda_{s}-\mu_{s}}_{q}}{\dbinom{\lambda_{s}-\lambda_{s+1}}{\lambda_{s}-\mu^*_{s}}_{q}}\dfrac{\dbinom{\mu_{s-1}-\mu_{s}}{\mu_{s-1}-\nu_{s-1}}_{q}}{\dbinom{\mu^*_{s-1}-\mu^*_{s}}{\mu^*_{s-1}-\nu_{s-1}}_{q}}\dfrac{\dbinom{\mu_{s}-\mu_{s+1}}{\mu_{s}-\nu_{s}}_{q}}{\dbinom{\mu^*_{s}-\mu^*_{s+1}}{\mu^*_{s}-\nu_{s}}_{q}}.
\end{split}
\end{equation*}
By Proposition \ref{leading_term} we have that
\[\prod_{i=1}^{n-1}\dbinom{\lambda_{i}-\lambda_{i+1}}{\lambda_{i}-\mu^*_{i}}_{q}\dbinom{\mu^*_{i}-\mu^*_{i+1}}{\mu_{i}-\nu_{i}}_{q}\dbinom{\lambda_{n}}{\lambda_{n}-\mu^*_{n}}_{q}=c(\lambda, \nu;q).\]
Regarding the remaining part we calculate
\begin{equation*}
\begin{split}
\dbinom{\lambda_{s}-\lambda_{s+1}}{\lambda_{s}-\mu_{s}}_{q}&=\dbinom{\lambda_{s}-\lambda_{s+1}}{\lambda_{s}-\mu_{s}^*-k}_{q}\\
& = \dfrac{(q;q)_{\lambda_{s}-\lambda_{s+1}}}{(q;q)_{\lambda_{s}-\mu_{s}^*-k}(q;q)_{\mu_{s}^*-\lambda_{s+1}+k}}\\
& = \dfrac{(q;q)_{\lambda_{s}-\lambda_{s+1}}}{(q;q)_{\lambda_{s}-\mu_{s}^*}(q;q)_{\mu_{s}^*-\lambda_{s+1}}} \dfrac{(q;q)_{\lambda_{s}-\mu_{s}^*}(q;q)_{\mu_{s}^*-\lambda_{s+1}}}{(q;q)_{\lambda_{s}-\mu_{s}^*-k}(q;q)_{\mu_{s}^*-\lambda_{s+1}+k}}\\
&=\dbinom{\lambda_{s}-\lambda_{s+1}}{\lambda_{s}-\mu_{s}^*}_{q} \dfrac{(q;q)_{\lambda_{s}-\mu_{s}^*}(q;q)_{\mu_{s}^*-\lambda_{s+1}}}{(q;q)_{\lambda_{s}-\mu_{s}^*-k}(q;q)_{\mu_{s}^*-\lambda_{s+1}+k}}
\end{split}
\end{equation*}
and similarly we find that
\[\dbinom{\mu_{s-1}-\mu_{s}}{\mu_{s-1}-\nu_{s-1}}_{q}=\dbinom{\mu_{s-1}^*-\mu_{s}^*}{\mu_{s-1}^*-\nu_{s-1}}_{q} \dfrac{(q;q)_{\nu_{s-1}-\mu_{s}^*}(q;q)_{\mu_{s-1}^*-\mu_{s}^*-k}}{(q;q)_{\mu_{s-1}^*-\mu_{s}^*}(q;q)_{\nu_{s-1}-\mu_{s}^*-k}}\]
and
\[\dbinom{\mu_{s}-\mu_{s+1}}{\mu_{s}-\nu_{s}}_{q}=\dbinom{\mu_{s}^*-\mu_{s+1}^*}{\mu_{s}^*-\nu_{s}}_{q} \dfrac{(q;q)_{\mu_{s}^*-\nu_{s}}(q;q)_{\mu_{s}^*-\mu_{s+1}^*+k}}{(q;q)_{\mu_{s}^*-\mu_{s+1}^*}(q;q)_{\mu_{s}^*-\nu_{s}+k}}.\]
We claim that the coefficient of $x^{2k-d}$ equals 
\[c(\lambda, \nu;q)\dbinom{d}{k}_{q}\]
for every possible case of $\min\{\nu_{s-1},\lambda_{s}\}$, $\max\{\nu_{s},\lambda_{s+1}\}$ with $\min\{\nu_{s-1},\lambda_{s}\}-\max\{\nu_{s},\lambda_{s+1}\} = d$. Let us check the case $\min\{\nu_{s-1},\lambda_{s}\}=\nu_{s-1}$, $\max\{\nu_{s},\lambda_{s+1}\}=\lambda_{s+1}$. All the other cases can be checked in a similar way. If $\min\{\nu_{s-1},\lambda_{s}\}=\nu_{s-1}$, $\max\{\nu_{s},\lambda_{s+1}\}=\lambda_{s+1}$ then 
\begin{equation*}
\begin{split}
&\mu_{s}^* = \max\{\nu_{s},\lambda_{s+1}\}=\lambda_{s+1}\\
&\mu_{s-1}^* = \max\{\nu_{s-1},\lambda_{s}\}=\lambda_{s}\\
&\mu_{s+1}^* = \max\{\nu_{s+1},\lambda_{s+2}\}=\min\{\nu_{s},\lambda_{s+1}\}=\nu_{s}
\end{split}
\end{equation*}
where for the value of $\mu^*_{s+1}$ we recall that the equality $\max\{\nu_{j},\lambda_{j+1}\}=\min\{\nu_{j-1},\lambda_{j}\}$ holds for every $j \neq s$. Therefore
\[\dfrac{(q;q)_{\lambda_{s}-\mu_{s}^*}(q;q)_{\mu_{s}^*-\lambda_{s+1}}}{(q;q)_{\lambda_{s}-\mu_{s}^*-k}(q;q)_{\mu_{s}^*-\lambda_{s+1}+k}} = \dfrac{(q;q)_{\lambda_{s}-\lambda_{s+1}}}{(q;q)_{\lambda_{s}-\lambda_{s+1}-k}(q;q)_{k}}\]
\[\dfrac{(q;q)_{\nu_{s-1}-\mu_{s}^*}(q;q)_{\mu_{s-1}^*-\mu_{s}^*-k}}{(q;q)_{\mu_{s-1}^*-\mu_{s}^*}(q;q)_{\nu_{s-1}-\mu_{s}^*-k}} = \dfrac{(q;q)_{\nu_{s-1}-\lambda_{s+1}}(q;q)_{\lambda_{s}-\lambda_{s+1}-k}}{(q;q)_{\lambda_{s}-\lambda_{s+1}}(q;q)_{\nu_{s-1}-\lambda_{s+1}-k}}\]
\[\dfrac{(q;q)_{\mu_{s}^*-\nu_{s}}(q;q)_{\mu_{s}^*-\mu_{s+1}^*+k}}{(q;q)_{\mu_{s}^*-\mu_{s+1}^*}(q;q)_{\mu_{s}^*-\nu_{s}+k}} =  \dfrac{(q;q)_{\lambda_{s+1}-\nu_{s}}(q;q)_{\lambda_{s+1}-\nu_{s}+k}}{(q;q)_{\lambda_{s+1}-\nu_{s}}(q;q)_{\lambda_{s+1}-\nu_{s}+k}}=1\]
Combining the three ratios we conclude that the coefficient of $x^{2k-d}$ is given by
\begin{equation*}
\begin{split}
c(\lambda, \nu;q)\dbinom{\nu_{s-1}-\lambda_{s+1}}{k}_{q}&= c(\lambda, \nu;q)\dbinom{\min\{\nu_{s-1},\lambda_{s}\}-\max\{\nu_{s},\lambda_{s+1}\}}{k}_{q}
\end{split}
\end{equation*}
which equals $ c(\lambda, \nu;q)\dbinom{d}{k}_{q}$ as required.
\end{proof}

\begin{proposition}
Assume that the partitions $\lambda \in \Lambda_{n}$, $\nu \in \Lambda_{n-1}$ are such that there exist $d$ indices, for some $1\leq d \leq l(\lambda^\top)$, $1\leq s_{1}<...<s_{d} \leq n$ satisfying $s_{i+1}>s_{i}+1$ such that for $1\leq i \leq d$
\[\min \{\nu_{s_{i}-1},\lambda_{s_{i}}\}-\max\{\nu_{s_{i}}, \lambda_{s_{i}+1}\}=1\]
and
\[\max\{\nu_{j}, \lambda_{j+1}\}=\min \{\nu_{j-1},\lambda_{j}\}, \text{ for }j\neq s_{1},...,s_{d}.\]
Then $P_{\lambda\setminus \nu}(x;q,t_{0})$ does not depend on  $t_{0}$ and $Q^{(n-1, n)}_{x,q}(\nu, \lambda)=P_{\lambda\setminus \nu}(x;q,t_{0})$.
\end{proposition}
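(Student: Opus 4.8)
The plan is to proceed exactly as in the previous proposition, exploiting the fact that the $d$ "single boxes" are now spread over $d$ distinct, non-adjacent blocks of the transpose diagram, so that the combinatorics of the set $J^{\mathsf{c}}$ decouples into $d$ independent pieces of size one each.

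First I would translate the hypothesis into the language of $\lambda^\top,\nu^\top$: the condition $\min\{\nu_{s_i-1},\lambda_{s_i}\}-\max\{\nu_{s_i},\lambda_{s_i+1}\}=1$ for $d$ separated indices $s_1<\dots<s_d$ means that $\{1\le j\le m:\lambda^\top_j-\nu^\top_j=1\}$ has exactly $d$ elements, say $j_1,\dots,j_d$, and these are "isolated" in the sense that the corresponding entries $\nu^\top_{j_i}$ are pairwise distinct (they equal $s_i-1$ respectively). Consequently, in the description of Section \ref{branching_vanDiejen}, the set $J^{\mathsf{c}}=J^{\mathsf{c}}(\nu^*,\lambda^*)=\{1\le j\le m:\nu^*_j=\lambda^*_j\}$ again has $d$ elements, but now \emph{no two of them satisfy $\nu^*_j=\nu^*_k$}; each sits in its own fibre of $\nu^*$. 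I would verify this carefully, since it is the structural fact that makes everything collapse.

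Next I would compute $A(\nu^*;I_+,I_-,t_0)$ for each partition $(I_+,I_-,I_0)$ of $J^{\mathsf{c}}$. Because all the products in \eqref{eq:partition_contribution} that can produce $q$-factors or $t_0$-cancellations are indexed over pairs $j,k\in J^{\mathsf{c}}$ with $\nu^*_j=\nu^*_k$ — and there are no such pairs here — every such product is empty and equals $1$; likewise the products $\prod_{j<k,\ \epsilon_j\ne\epsilon_k=0}q^{-\epsilon_j}$ and $\prod_{\nu^*_j=\nu^*_k,\ \epsilon_k-\epsilon_j=1}q^{-1}$ are empty. Hence $A(\nu^*;I_+,I_-,t_0)=\prod_{j\in J^{\mathsf{c}}}t_0^{-\epsilon_j}=t_0^{-(|I_+|-|I_-|)}$. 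Summing over $I_+,I_-\subset J^{\mathsf{c}}$ disjoint with $|I_+|+|I_-|=d-r$ gives
\[
B^r_{\lambda\setminus\nu}(q,t_0)=c(\lambda,\nu;q)\sum_{a+b=d-r}\binom{d}{a}\binom{d-a}{b}t_0^{b-a},
\]
where $\binom{\cdot}{\cdot}$ denotes the ordinary binomial coefficient (no $q$ appears since the relevant $q$-products vanished). I would then substitute into $P_{\lambda\setminus\nu}(x;q,t_0)=\sum_{r=0}^d B^r_{\lambda\setminus\nu}(q,t_0)\langle x;t_0\rangle_{q,r}$ and show, by the same reindexing $j=a+r$ trick used in the previous proof — replacing the role of $H_j(x;t_0|q)$ via Theorem \ref{Askey_expansion} — that all the $t_0$-powers telescope and cancel. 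Concretely, $\sum_{a+b=d-r}\binom{d}{a}\binom{d-a}{b}t_0^{b-a}\langle x;t_0\rangle_{q,r}$ reorganises into $\sum_{j=0}^d\binom{d}{j}t_0^{\,d-2j}\big(\text{something}\big)$, and the $\langle x;t_0\rangle$ sum over fixed $j$ is exactly $t_0^{j}H_j(x;t_0|q)$ up to the combinatorial weight, leaving $P_{\lambda\setminus\nu}(x;q,t_0)=c(\lambda,\nu;q)\sum_{k=0}^d\binom{d}{k}_q x^{2k-d}=c(\lambda,\nu;q)H_d(x|q)$, manifestly independent of $t_0$.

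Finally I would compute $Q^{(n-1,n)}_{x,q}(\nu,\lambda)$ directly from Definition \ref{recursion_q_whittaker}. The interlacing $\nu\preceq\mu\preceq\lambda$ forces $\mu_j$ to be pinned to $\max\{\nu_j,\lambda_{j+1}\}=\min\{\nu_{j-1},\lambda_j\}$ for $j\notin\{s_1,\dots,s_d\}$, and allows $\mu_{s_i}\in\{\max\{\nu_{s_i},\lambda_{s_i+1}\},\ \max\{\nu_{s_i},\lambda_{s_i+1}\}+1\}$ independently for each $i$. Writing $\mu^*$ for the minimal such $\mu$, Proposition \ref{leading_term} gives $x^{2|\mu^*|-|\nu|-|\lambda|}=x^{-d}$, so if $\mu$ has $k$ of the $\mu_{s_i}$ raised then the monomial is $x^{2k-d}$, and there are $\binom{d}{k}$ such $\mu$. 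For each I would expand the product of $q$-binomials as a correction to $c(\lambda,\nu;q)$ exactly as in the previous proposition — localising the change to the blocks around $s_i$ — and check that each single raised coordinate contributes a factor whose product over the $k$ raised sites telescopes to $c(\lambda,\nu;q)\binom{d}{k}_q$ (this is the content of the four-case computation there, applied $k$ times, and the cross-terms vanish because the $s_i$ are non-adjacent). Matching $x^{2k-d}$-coefficients then yields $Q^{(n-1,n)}_{x,q}(\nu,\lambda)=c(\lambda,\nu;q)\sum_{k=0}^d\binom{d}{k}_q x^{2k-d}=P_{\lambda\setminus\nu}(x;q,t_0)$.

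\textbf{Main obstacle.} The delicate point is the bookkeeping in the last step: showing that the product of ratios of $q$-binomials for a $\mu$ with several raised coordinates factors as $\binom{d}{k}_q$ times $c(\lambda,\nu;q)$, i.e.\ that the contributions from the separated sites $s_1,\dots,s_d$ genuinely multiply without interference. The separation hypothesis $s_{i+1}>s_i+1$ is exactly what guarantees the $q$-binomials $\binom{\mu_i-\mu_{i+1}}{\mu_i-\nu_i}_q$ at adjacent indices never couple two raised sites, but making this rigorous requires carefully tracking which of the four cases of Proposition \ref{leading_term} applies at each $s_i$ and verifying the telescoping is insensitive to that choice — this is where I expect the bulk of the routine-but-careful work to lie.
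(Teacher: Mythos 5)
There is a genuine error at the heart of your computation of $A(\nu^*;I_+,I_-,t_0)$, and it propagates to a wrong final answer on both sides of the identity. In \eqref{eq:partition_contribution} the product $\prod_{j<k,\;\epsilon_j\neq\epsilon_k=0}q^{-\epsilon_j}$ is taken over \emph{all} pairs $j<k$ in $J^{\mathsf{c}}$ with $k\in I_0$ and $j\in I_+\cup I_-$; it carries no condition $\nu^*_j=\nu^*_k$ and therefore does \emph{not} vanish under the separation hypothesis. Only the three products that are restricted to pairs with $\nu^*_j=\nu^*_k$ (or $\nu^*_j=\nu^*_k+1$) become empty. So $A(\nu^*;I_+,I_-,t_0)=\prod_{j}t_0^{-\epsilon_j}\prod_{j<k,\epsilon_j\neq\epsilon_k=0}q^{-\epsilon_j}$, which genuinely depends on $q$ and on the relative positions of $I_\pm$ and $I_0$, not just on $|I_+|$ and $|I_-|$. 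Because of this, the route via Theorem \ref{Askey_expansion} (which was the right tool for the \emph{previous} proposition, where all $d$ single boxes sit in one fibre of $\nu^*$) is not available here. The paper instead sets $U_{d,r}(q,t_0)=\sum A(\nu^*;I_+,I_-,t_0)$, $F_d(x;q,t_0)=\sum_r U_{d,r}\langle x;t_0\rangle_{q,r}$, and proves the recursion $U_{d+1,r}=U_{d,r-1}+(t_0q^r+t_0^{-1}q^{-r})U_{d,r}$, hence $F_{d+1}=(x+x^{-1})F_d$ with $F_0=1$; this is what makes the $t_0$-dependence disappear, and it yields $P_{\lambda\setminus\nu}(x;q,t_0)=c(\lambda,\nu;q)(x+x^{-1})^d$.

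The same confusion shows up on the $Q^{(n-1,n)}_{x,q}$ side: you assert that a subset of $k$ raised sites contributes $c(\lambda,\nu;q)\binom{d}{k}_q x^{2k-d}$, but the correct statement is that \emph{each individual} choice of $\mathcal{S}_r$ with $|\mathcal{S}_r|=r$ contributes exactly $c(\lambda,\nu;q)x^{2r-d}$, because at every raised site the three $q$-binomial ratios collapse to $\frac{1-q^{\min\{\nu_{j-1},\lambda_j\}-\max\{\nu_j,\lambda_{j+1}\}}}{1-q}=\frac{1-q}{1-q}=1$ precisely since each gap equals $1$ by hypothesis (in the previous proposition the single gap was $d$, which is where the $q$-binomial $\binom{d}{k}_q$ came from). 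Summing over the $\binom{d}{r}$ subsets then gives $Q^{(n-1,n)}_{x,q}(\nu,\lambda)=c(\lambda,\nu;q)\sum_{r=0}^d\binom{d}{r}x^{2r-d}=c(\lambda,\nu;q)(x+x^{-1})^d$ with \emph{ordinary} binomial coefficients. Your claimed common value $c(\lambda,\nu;q)H_d(x|q)$ is simply not equal to this for $d\geq 2$ (e.g.\ $\lambda=(2,1,1)$, $\nu=(1,1)$, $n=3$ gives $x^2+2+x^{-2}$, not $x^2+(1+q)+x^{-2}$), so while your two sides would formally "match", both computations are incorrect and the matching is spurious.
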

\begin{proof}
Assume that $\lambda$, $\nu$ are as in the statement of the Proposition, then the set $J^{\mathsf{c}}=\{1\leq j \leq l(\lambda^\top)=m:\nu_{j}^* = \lambda_{j}^*\}$, where we recall that $\nu_{j}^* = n-\nu^\top_{m-j+1}$ and $\lambda^*_{j}=n+1-\lambda^\top_{m-j+1}$, consists of $d$ indices $J^{\mathsf{c}}=\{i_{1},...,i_{d}\}$ and it holds that $\nu^*_{i_{j}}> \nu^*_{i_{j+1}}+1$ for every $j=1,...,d-1$. This case corresponds to partitions $\lambda, \mu$ as in figure \ref{fig:fig2}
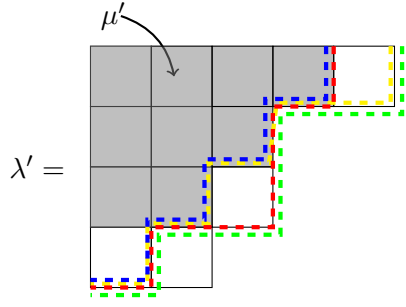
\begin{figure}[h]
\begin{center}
\scalebox{1}{
\begin{tikzpicture}
\node at (-1.5,-0.8) {$\lambda'=$};
\node at (-0.5,1.2) {$\mu'$};
\draw [thick] [->] (-0.4,1.2) to [bend left] (0.3,0.4);
\draw [fill, gray, opacity = 0.5] (0,0) rectangle (2.4,0.8);
\draw [black] (0,0) rectangle (0.8,0.8);
\draw [black] (0.8,0) rectangle (1.6,0.8);
\draw [black] (1.6,0) rectangle (2.4,0.8);
\draw [black] (2.4,0) rectangle (3.2,0.8);
\draw [black] (0,-0.8) rectangle (0.8,0);
\draw [fill, gray, opacity = 0.5] (0,-0.8) rectangle (1.6,0);
\draw [black] (0.8,-0.8) rectangle (1.6,0);
\draw [black] (0,-1.6) rectangle (0.8,-0.8);
\draw [fill, gray, opacity =0.5] (0,-1.6) rectangle (0.8,-0.8);
\draw [black] (0.8,-1.6) rectangle (1.6,-0.8);
\draw [black] (0,-2.4) rectangle (0.8,-1.6);
\draw [black] (0,0.8) rectangle (-0.8,0);
\draw [black] (0,0) rectangle (-0.8,-0.8);
\draw [black] (0,-0.8) rectangle (-0.8,-1.6);
\draw [black] (0,-1.6) rectangle (-0.8,-2.4);
\draw [fill, gray, opacity =0.5] (0,0.8) rectangle (-0.8,-1.6);
\draw [ultra thick, dashed, green] (3.3,0.8)--(3.3,-0.1)--(1.7,-0.1)--(1.7,-1.7)--(0.1,-1.7)--(0.1,-2.5)--(-0.8,-2.5);
\draw [ultra thick, dashed, yellow] (3.15,0.8)--(3.15,0.05)--(1.65,0.05)--(1.55,-0.75)--(0.75,-0.75)--(0.75,-1.55)--(-0.05,-1.55)--(-0.05,-2.35)--(-0.8,-2.35);
\draw [ultra thick, dashed, red] (2.4,0.8)--(2.4,0)--(1.6,0)--(1.6,-1.6)--(0,-1.6)--(0,-2.4)--(-0.8,-2.4);
\draw [ultra thick, dashed, blue] (2.3,0.8)--(2.3,0.1)--(1.5,0.1)--(1.5,-0.7)--(0.7,-0.7)--(0.7,-1.5)--(-0.05,-1.5)--(-0.05,-2.3)--(-0.8,-2.3);
\end{tikzpicture}
}
\caption{For the partitions $\lambda=(4,4,3,1,1)$ and $\mu = (3,3,2,1)$ (marked with gray), we have $d=d(\lambda, \mu)=|\{1\leq j \leq 4: \lambda_{j}^\top-\mu_{j}^\top = 1\}|=2$. The \enquote{single} boxes are well separated, i.e. there exists at least one column separating them. The dashed lines show the possible $\nu: \mu \preceq \nu \preceq \lambda$.}
\label{fig:fig2}
\end{center} 
\end{figure}\\
In this case, for a partition $(I_{+},I_{-},I_{0})$ of $J^{\mathsf{c}}$ we calculate
\[A(\nu^*;I_{+},I_{-},t_{0})=\prod_{j \in J^{\mathsf{c}}}t_{0}^{-\epsilon_{j}}\prod_{\substack{j,k\in J^{\mathsf{c}}\\j<k\\ \epsilon_{j}\neq \epsilon_{k}=0}}q^{-\epsilon_{j}}\]
so $A(\nu^*;I_{+},I_{-},t_{0})$ doesn't actually depend on the relation among the coordinates of $\nu^*$ or on the specific form of the set $J^{\mathsf{c}}$ but only on the number of indices it contains. Let us then consider the following quantity
\begin{equation*}
\begin{split}
U_{d,r}(q,t_{0})&=\sum_{\substack{I_{+},I_{-}\subset J^{\mathsf{c}}\\ I_{+}\cap I_{-}=\emptyset \\ |I_{+}|+|I_{-}|=d-r}}A(\nu^*;I_{+},I_{-},t_{0})\\
&=\sum_{\substack{I_{+},I_{-}\subset [d]\\ I_{+}\cap I_{-}=\emptyset \\ |I_{+}|+|I_{-}|=d-r}}\Bigg \{\prod_{j\in [d]}t_{0}^{-\epsilon_{j}}\prod_{\substack{j,k \in [d]\\j<k \\ \epsilon_{j}\neq \epsilon_{k}=0}}q^{-\epsilon_{j}}\Bigg\}
\end{split}
\end{equation*}
where $[d]=\{1,...,d\}$. Therefore we conclude that
\[P_{\lambda \setminus \nu}(x;q,t_{0})=c(\lambda, \nu;q)F_{d}(x;q,t_{0})\]
with
\begin{equation}
F_{d}(x;q,t_{0}) = \sum_{r=0}^d U_{d,r}(q,t_{0})\langle x;t_{0} \rangle_{q,r}.
\label{eq:branching_vDE_special2}
\end{equation}
The function $F_{d}(x;q,t_{0})$ satisfies the recursion
\[F_{d+1}(x;q,t_{0})=(x+x^{-1})F_{d}(x;q,t_{0})\]
with $F_{0}(x;q,t_{0})=1$. \\
Let us check the validity of this statement. We have
\begin{equation*}
\begin{split}
(x+x^{-1})F_{d}(x;&q,t_{0})\\
&=\sum_{r=0}^d(x+x^{-1}-t_{0}q^{r}-t_{0}^{-1}q^{-r})U_{d,r}(q,t_{0})\langle x;t_{0}\rangle_{q,r}\\
&\hspace{128pt}+\sum_{r=0}^d(t_{0}q^{r}+t_{0}^{-1}q^{-r})U_{d,r}(q,t_{0})\langle x;t_{0}\rangle_{q,r}\\
&= \sum_{r=0}^d U_{d,r}(q,t_{0})\langle x;t_{0}\rangle_{q,r+1}+\sum_{r=0}^d(t_{0}q^{r}+t_{0}^{-1}q^{-r})U_{d,r}(q,t_{0})\langle x;t_{0}\rangle_{q,r}\\
&= \sum_{r=1}^{d+1} U_{d,r-1}(q,t_{0})\langle x;t_{0}\rangle_{q,r}+\sum_{r=0}^d(t_{0}q^{r}+t_{0}^{-1}q^{-r})U_{d,r}(q,t_{0})\langle x;t_{0}\rangle_{q,r}.
\end{split}
\end{equation*}
We observe that $U_{d,d}(q,t_{0})=U_{d+1,d+1}(q,t_{0})=1$. Moreover for $r\geq 1$ we have
\begin{equation*}
\begin{split}
U_{d+1,r}(q,t_{0})&=\sum_{\substack{I_{+},I_{-}\subset [d+1]\\ I_{+}\cap I_{-}=\emptyset \\ |I_{+}|+|I_{-}|=d+1-r}}\Bigg \{\prod_{j\in [d+1]}t_{0}^{-\epsilon_{j}}\prod_{\substack{j,k \in [d+1]\\j<k \\ \epsilon_{j}\neq \epsilon_{k}=0}}q^{-\epsilon_{j}}\Bigg\}\\
&= \sum_{\substack{I_{+},I_{-}\subset [d+1]\setminus \{1\}\\ I_{+}\cap I_{-}=\emptyset \\ |I_{+}|+|I_{-}|=d+1-r\\ \epsilon_{1}=0}}\Bigg \{\prod_{j\in [d+1]}t_{0}^{-\epsilon_{j}}\prod_{\substack{j,k \in [d+1]\\j<k \\ \epsilon_{j}\neq \epsilon_{k}=0}}q^{-\epsilon_{j}}\Bigg\}\\
& \hspace{50pt}+\sum_{\substack{I_{+},I_{-}\subset [d+1]\setminus \{1\}\\ I_{+}\cap I_{-}=\emptyset \\ |I_{+}|+|I_{-}|=d-r\\ \epsilon_{1}=\pm 1}}\Bigg \{\prod_{j\in [d+1]}t_{0}^{-\epsilon_{j}}\prod_{\substack{j,k \in [d+1]\\j<k \\ \epsilon_{j}\neq \epsilon_{k}=0}}q^{-\epsilon_{j}}\Bigg\}\\
&= \sum_{\substack{I_{+},I_{-}\subset [d+1]\setminus \{1\}\\ I_{+}\cap I_{-}=\emptyset \\ |I_{+}|+|I_{-}|=d-(r+1)}}\Bigg \{\prod_{j\in [d+1]\setminus \{1\}}t_{0}^{-\epsilon_{j}}\prod_{\substack{j,k \in [d+1]\setminus \{1\}\\j<k \\ \epsilon_{j}\neq \epsilon_{k}=0}}q^{-\epsilon_{j}}\Bigg\}\\
& \hspace{50pt}+\sum_{\substack{I_{+},I_{-}\subset [d+1]\setminus \{1\}\\ I_{+}\cap I_{-}=\emptyset \\ |I_{+}|+|I_{-}|=d-r\\ \epsilon_{1}=\pm 1}}\Bigg \{t_{0}^{-\epsilon_{1}}\prod_{j\in [d+1]\setminus \{1\}}t_{0}^{-\epsilon_{j}}\underbrace{\prod_{\substack{k\in [d+1]\setminus \{1\}\\ \epsilon_{k}=0}}q^{-\epsilon_{1}}}_{q^{-k\epsilon_{1}}}\prod_{\substack{j,k \in [d+1]\setminus \{1\}\\j<k \\ \epsilon_{j}\neq \epsilon_{k}=0}}q^{-\epsilon_{j}}\Bigg\}\\
&=U_{d,r-1}(q,t_{0})+(t_{0}q^r+t_{0}^{-1}q^{-r})U_{d,r}(q,t_{0})
\end{split}
\end{equation*}
which proves the recursion. 

From the initial condition together with the recursion we conclude that the polynomials $F_{d}(x;q,t_{0})$ do not depend on $t_{0}$, therefore the branching polynomials $P_{\lambda \setminus\nu}(x;q,t_{0})$ do not depend  on $t_{0}$ either.

Let us now calculate the polynomial $Q_{x,q}^{(n-1,n)}(\nu,\lambda)$ for this case of $\nu, \lambda$. We have that the only choice for $\mu \in \Lambda_{n}$ such that $\nu \preceq \mu \preceq \lambda$ is the following:
\[\mu_{s_{i}}\in \{\max\{\nu_{s_{i}},\lambda_{s_{i}+1}\},\max\{\nu_{s_{i}},\lambda_{s_{i}+1}\}+1\}, \text{ for }i=1,...,d\]
and
\[\mu_{j}=\max\{\nu_{j},\lambda_{j+1}\}=\min\{\nu_{j-1},\lambda_{j}\}, \text{ for }j\neq s_{1},...,s_{d}.\]
Let us denote by $\mu^*$ the partition that corresponds to the smallest value for its coordinates, i.e.
\[\mu_{j}^*=\max\{\nu_{j},\lambda_{j+1}\}, \text{ for }1\leq j \leq n.\]
We proved in Proposition \ref{leading_term} that this corresponds to 
\[x^{2|\mu^*|-|\nu|-|\lambda|}=x^{-d}.\]
Fix $0\leq r \leq d$ and let us choose a subset $\mathcal{S}_{r}$ of the indices $\{s_{1},...,s_{d}\}$ such that $|\mathcal{S}_{r}|=r$ and consider the partition $\mu$ as follows
\begin{equation*}
\mu_{j}=\left\{ \begin{array}{ll}
\max\{\nu_{j},\lambda_{j+1}\}+1 & \text{ for }j\in \mathcal{S}_{r}\\
\max\{\nu_{j},\lambda_{j+1}\} & \text{ for }j \in \{s_{1},...,s_{d}\}\setminus \mathcal{S}_{r}\\
\max\{\nu_{j},\lambda_{j+1}\} = \min\{\nu_{j-1},\lambda_{j}\} & \text{ for }j \not \in \{s_{1},...,s_{d}\}
\end{array} \right. .
\end{equation*}
Then
\[x^{2|\mu|-|\nu|-|\lambda|}=x^{2(|\mu^*|+r)-|\nu|-|\lambda|}=x^{2r-d}.\]
Moreover we have
\begin{equation*}
\begin{split}
\prod_{i=1}^{n-1}\dbinom{\lambda_{i}-\lambda_{i+1}}{\lambda_{i}-\mu_{i}}_{q}\dbinom{\mu_{i}-\mu_{i+1}}{\mu_{i}-\nu_{i}}_{q}&\dbinom{\lambda_{n}}{\lambda_{n}-\mu_{n}}_{q}\\
 = \prod_{i=1}^{n-1}&\dbinom{\lambda_{i}-\lambda_{i+1}}{\lambda_{i}-\mu^*_{i}}_{q}\dbinom{\mu^*_{i}-\mu^*_{i+1}}{\mu_{i}-\nu_{i}}_{q}\dbinom{\lambda_{n}}{\lambda_{n}-\mu^*_{n}}_{q}\\
&\times \prod_{j \in \mathcal{S}_{r}}\dfrac{\dbinom{\lambda_{j}-\lambda_{j+1}}{\lambda_{j}-\mu_{j}}_{q}}{\dbinom{\lambda_{j}-\lambda_{j+1}}{\lambda_{j}-\mu^*_{j}}_{q}}\dfrac{\dbinom{\mu_{j-1}-\mu_{j}}{\mu_{j-1}-\nu_{j-1}}_{q}}{\dbinom{\mu^*_{j-1}-\mu^*_{j}}{\mu^*_{j-1}-\nu_{j-1}}_{q}}\dfrac{\dbinom{\mu_{j}-\mu_{j+1}}{\mu_{j}-\nu_{j}}_{q}}{\dbinom{\mu^*_{j}-\mu^*_{j+1}}{\mu^*_{j}-\nu_{j}}_{q}}.
\end{split}
\end{equation*}
By Proposition \ref{leading_term} we have that
\[\prod_{i=1}^{n-1}\dbinom{\lambda_{i}-\lambda_{i+1}}{\lambda_{i}-\mu^*_{i}}_{q}\dbinom{\mu^*_{i}-\mu^*_{i+1}}{\mu_{i}-\nu_{i}}_{q}\dbinom{\lambda_{n}}{\lambda_{n}-\mu^*_{n}}_{q}=c(\lambda, \nu;q).\]
Using the properties of the $q$-binomial recorded in \eqref{eq:q_binomial}, we calculate for each $j\in \mathcal{S}_{r}$ the following
\[\dbinom{\lambda_{j}-\lambda_{j+1}}{\lambda_{j}-\mu_{j}}_{q}=\dbinom{\lambda_{j}-\lambda_{j+1}}{\lambda_{j}-\mu^*_{j}-1}_{q}=\dbinom{\lambda_{j}-\lambda_{j+1}}{\lambda_{j}-\mu^*_{j}}_{q}\dfrac{1-q^{\lambda_{j}-\mu^*_{j}}}{1-q^{\mu^*_{j}-\lambda_{j+1}+1}}\]
\[\dbinom{\mu_{j-1}-\mu_{j}}{\mu_{j-1}-\nu_{j-1}}_{q}=\dbinom{\mu^*_{j-1}-\mu^*_{j}-1}{\mu^*_{j-1}-\nu_{j-1}}_{q}=\dbinom{\mu^*_{j-1}-\mu^*_{j}}{\mu^*_{j-1}-\nu_{j-1}}_{q}\dfrac{1-q^{\nu_{j-1}-\mu^*_{j}}}{1-q^{\mu_{j-1}^*-\mu^*_{j}}}\]
and
\[\dbinom{\mu_{j}-\mu_{j+1}}{\mu_{j}-\nu_{j}}_{q}=\dbinom{\mu^*_{j}-\mu^*_{j+1}+1}{\mu^*_{j}-\nu_{j}+1}_{q}=\dbinom{\mu^*_{j}-\mu^*_{j+1}}{\mu^*_{j}-\nu_{j}}_{q}\dfrac{1-q^{\mu_{j}^*-\mu_{j+1}^*+1}}{1-q^{\mu^*_{j}-\nu_{j}+1}}.\]
We claim that for any possible case of $\min\{\nu_{j-1},\lambda_{j}\}$, $\max\{\nu_{j},\lambda_{j+1}\}$, for $j\in \mathcal{S}_{r}$, the term corresponding to the partition $\mu$ equals
\[c(\lambda, \nu;q)x^{2r-d}.\]
Let us assume that for a fixed $j\in \mathcal{S}_{r}$, $\min\{\nu_{j-1},\lambda_{j}\}=\lambda_{j}$ and $\max\{\nu_{j},\lambda_{j+1}\}=\nu_{j}$, then 
\begin{equation*}
\begin{split}
&\mu^*_{j}=\max\{\nu_{j},\lambda_{j+1}\}=\nu_{j}\\
& \mu^*_{j-1}=\max\{\nu_{j-1},\lambda_{j}\}=\nu_{j-1}\\
&\mu_{j+1}^*=\max\{\nu_{j+1},\lambda_{j+2}\}=\min\{\nu_{j},\lambda_{j+1}\}=\lambda_{j+1}.
\end{split}
\end{equation*}
We remark that the equality $\max\{\nu_{j+1},\lambda_{j+2}\}=\min\{\nu_{j},\lambda_{j+1}\}$ holds since whenever $j \in \{s_{1},...,s_{d}\}$, $j+1\not \in \{s_{1},...,s_{d}\}$ (recall that $s_{j+1}>s_{j}+1$). We then compute the following
\[\dfrac{1-q^{\lambda_{j}-\mu^*_{j}}}{1-q^{\mu^*_{j}-\lambda_{j+1}+1}}=\dfrac{1-q^{\lambda_{j}-\nu_{j}}}{1-q^{\nu_{j}-\lambda_{j+1}+1}}\]
\[\dfrac{1-q^{\nu_{j-1}-\mu^*_{j}}}{1-q^{\mu_{j-1}^*-\mu^*_{j}}}=\dfrac{1-q^{\nu_{j-1}-\nu_{j}}}{1-q^{\nu_{j-1}-\nu_{j}}}=1\]
and
\[\dfrac{1-q^{\mu_{j}^*-\mu_{j+1}^*+1}}{1-q^{\mu^*_{j}-\nu_{j}+1}}=\dfrac{1-q^{\nu_{j}-\lambda_{j+1}+1}}{1-q^{\nu_{j}-\nu_{j}+1}}.\]
Combining all three ratios we conclude to the following expression 
\[\dfrac{1-q^{\lambda_{j}-\nu_{j}}}{1-q}=\dfrac{1-q^{\min\{\nu_{j-1},\lambda_{j}\}-\max\{\nu_{j},\lambda_{j+1}\}}}{1-q}=1\]
since for $j\in \{s_{1},...,s_{d}\}$, we assumed that
\[\min\{\nu_{j-1},\lambda_{j}\}-\max\{\nu_{j},\lambda_{j+1}\} = 1.\]
All the other cases can be calculated in a similar manner. 

Therefore $Q^{(n-1,n)}_{x,q}(\nu, \lambda)$ takes the form 
\[Q^{(n-1,n)}_{x,q}(\nu, \lambda)=\sum_{r=0}^d \sum_{\substack{\mathcal{S}_{r}\subset \{s_{1},...,s_{d}\}:\\ |\mathcal{S}_{r}|=r}}c(\lambda;\nu;q)x^{2r-d}=c(\lambda, \nu;q)\sum_{r=0}^d \dbinom{d}{r}x^{2r-d}.\]
Using the identity 
\[\dbinom{d+1}{r}= \dbinom{d}{r}+\dbinom{d}{r-1}\]
we observe that the function $\sum_{r=0}^d \dbinom{d}{r}x^{2r-d}$ satisfies the same recursion as the function $F_{d}(x;q,t_{0})$ with the same initial condition. Therefore we conclude that
\[P_{\lambda \setminus \nu}(x;q,t_{0})=Q^{(n-1,n)}_{x,q}(\nu, \lambda).\]
\end{proof}

\chapter{Probabilistic prerequisites}
\label{prerequisites}
In this chapter we will present some of the background knowledge required for the study of the processes in the subsequent chapters.

\section{Intertwining of Markov processes}
In the introduction we presented some two-dimensional models evolving on Gelfand-Tsetlin cone, where under certain initial conditions, each single level evolves as a Markov process. In this chapter we will present the main theory that provides sufficient conditions for a function of a Markov process to be Markovian itself.

Let $X=(X_{t};t\geq 0)$ be a continuous-time Markov process with state space $S$ and let $f$ be a function from  $S$ to some other space $\hat{S}$. For $t\geq 0$, let $Y_{t}=f(X_{t})$. The result of Rogers-Pitman \cite{Rogers_Pitman_1981} provides sufficient conditions such that $Y=(Y_{t};t\geq 0)$ is Markovian. Here we will use a version of the Rogers-Pitman result proved in \cite{Jansen_Kurt_2014}.

Before we state the result we need to define the notion of Markov kernels.
\begin{definition}
\label{MarkovKernel}
Let $(S,\mathcal{S})$, $(\hat{S}, \hat{\mathcal{S}})$ be measurable spaces. A Markov kernel from $(S, \mathcal{S})$ to $(\hat{S}, \hat{\mathcal{S}})$ is a mapping $\mathcal{K}:S\times \mathcal{\hat{S}}\mapsto [0,1]$ with the following properties
\begin{enumerate}[1.]
\item $x\mapsto \mathcal{K}(x,B)$ is $\mathcal{S}$-measurable for every $B\in \hat{\mathcal{S}}$;
\item $B \mapsto \mathcal{K}(x,B)$ is a probability measure on $(\hat{S},\hat{\mathcal{S}})$ for every $x \in S$.
\end{enumerate}
\end{definition}
\begin{theorem} [\cite{Jansen_Kurt_2014}]
\label{rogers_pitman} 
Let $X=(X_{t},t \geq 0)$ be a continuous-time Markov process with state space $S$, initial distribution $\mu$ and transition kernel $P=(P_{t};t \geq 0)$. Let $f:S \to \hat{S}$, be a surjective function from $S$ to $\hat{S}$. Suppose there exists a Markov kernel $\mathcal{K}$ from $\hat{S}$ to $S$ supported on the set $\{y \in \hat{S}:f(x)=y\}$. Also assume there exists a second transition kernel $\hat{P}=(\hat{P}_{t};t \geq 0)$ on $\hat{S}$ satisfying for every $t> 0$ the intertwining relation 
\[\hat{P}_{t}\circ\mathcal{K}=\mathcal{K}\circ P_{t} .\]
Finally, we assume that the initial distribution of $X$ satisfies $\mu(\cdot) = \mathcal{K}(y,\cdot)$ for some $y\in \hat{S}$. Then, $Y$ is a Markov process on $\hat{S}$, with transition kernel $\hat{P}$, starting from $y$.
\end{theorem}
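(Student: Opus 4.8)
\emph{Strategy and set-up.} The plan is to show that the finite-dimensional distributions of $Y=(f(X_t))_{t\ge 0}$ coincide with those of the $\hat S$-valued Markov chain with transition kernels $(\hat P_t)$ started from $y$; since a process whose finite-dimensional laws agree with those of a Markov process is itself Markov with the same transition kernel, this suffices. I write $\mathcal{K}$ also for the operator $(\mathcal{K}h)(y')=\int_S h(x)\,\mathcal{K}(y',dx)$ on bounded measurable functions, and I use the hypothesis that $\mathcal{K}(y',\cdot)$ is carried by the fibre $f^{-1}(\{y'\})$ only through the identity
\[
\int_S g(f(x))\,h(x)\,\mathcal{K}(y',dx)=g(y')\,(\mathcal{K}h)(y'),
\]
valid for all bounded measurable $g:\hat S\to\mathbb{R}$, $h:S\to\mathbb{R}$. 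Read on measures, the intertwining hypothesis $\hat P_t\circ\mathcal{K}=\mathcal{K}\circ P_t$ says $\int_S\mathcal{K}(y',dx)P_t(x,\cdot)=\int_{\hat S}\hat P_t(y',dy'')\,\mathcal{K}(y'',\cdot)$ as kernels from $\hat S$ to $S$.

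\emph{Step 1: the single-time identity.} Since $X$ starts from $\mu=\mathcal{K}(y,\cdot)$, the law of $X_t$ is $\mu P_t=(\mathcal{K}\circ P_t)(y,\cdot)=(\hat P_t\circ\mathcal{K})(y,\cdot)=\int_{\hat S}\hat P_t(y,dy')\,\mathcal{K}(y',\cdot)$. Evaluating this measure on $x\mapsto g(f(x))h(x)$ and applying the fibre identity gives, for every $t>0$ and every $y\in\hat S$,
\[
\mathbb{E}_{\mathcal{K}(y,\cdot)}\big[g(Y_t)\,h(X_t)\big]=\int_{\hat S}\hat P_t(y,dy')\,g(y')\,(\mathcal{K}h)(y')=\big(\hat P_t[\,g\cdot\mathcal{K}h\,]\big)(y),
\]
where $\mathbb{E}_{\mathcal{K}(y,\cdot)}$ (resp.\ $\mathbb{E}_x$ below) denotes expectation for $X$ started from $\mathcal{K}(y,\cdot)$ (resp.\ from $x$). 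Taking $h\equiv 1$ shows $Y_t$ has law $\hat P_t(y,\cdot)$; combining the two displays gives also $\mathbb{E}_{\mathcal{K}(y,\cdot)}[h(X_t)\mid Y_t]=(\mathcal{K}h)(Y_t)$ a.s. (the case $t=0$ being trivial, as then $Y_0=f(X_0)=y$ a.s.).

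\emph{Step 2: induction on the number of times.} I would prove that for every $n\ge 1$, every $y\in\hat S$, all $0\le t_1\le\cdots\le t_n$, and all bounded measurable $g_1,\dots,g_n:\hat S\to\mathbb{R}$, $h:S\to\mathbb{R}$,
\[
\mathbb{E}_{\mathcal{K}(y,\cdot)}\!\Big[h(X_{t_n})\prod_{i=1}^n g_i(Y_{t_i})\Big]=\Big(\hat P_{t_1}\big[g_1\,\hat P_{t_2-t_1}[g_2\cdots\hat P_{t_n-t_{n-1}}[g_n\cdot\mathcal{K}h]\cdots]\big]\Big)(y).
\]
The case $n=1$ is Step 1. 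For the passage from $n$ to $n+1$, condition on $\sigma(X_s:s\le t_1)$ and use the Markov property of $X$ at time $t_1$: writing $s_i=t_{i+1}-t_1$ and $\Phi(x)=\mathbb{E}_x[h(X_{s_n})\prod_{i=1}^n g_{i+1}(Y_{s_i})]$, the left-hand side for $n+1$ times equals $\mathbb{E}_{\mathcal{K}(y,\cdot)}[g_1(Y_{t_1})\,\Phi(X_{t_1})]$. Apply the single-time identity of Step 1 with $(g,h)$ replaced by $(g_1,\Phi)$ to get $\int_{\hat S}\hat P_{t_1}(y,dy')\,g_1(y')\,(\mathcal{K}\Phi)(y')$, and observe that $(\mathcal{K}\Phi)(y')=\mathbb{E}_{\mathcal{K}(y',\cdot)}[h(X_{s_n})\prod_{i=1}^n g_{i+1}(Y_{s_i})]$, to which the inductive hypothesis applies with basepoint $y'$, times $s_1\le\cdots\le s_n$, functions $g_2,\dots,g_{n+1}$. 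Since $s_1=t_2-t_1$ and $s_{i+1}-s_i=t_{i+2}-t_{i+1}$, substituting the resulting expression yields exactly the claimed formula for $n+1$ times.

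\emph{Conclusion and main obstacle.} Taking $h\equiv 1$ in Step 2 (so $\mathcal{K}h\equiv 1$) shows that $(Y_{t_1},\dots,Y_{t_n})$ has the finite-dimensional distributions of the Markov chain with kernels $(\hat P_t)$ started from $y$, for all $n$ and all choices of times; hence $Y$ is a Markov process on $\hat S$ with transition kernel $\hat P$ and $Y_0=y$. The one genuinely delicate step is the induction: after invoking the Markov property of $X$ at time $t_1$ one is left with an expectation started from the \emph{random} point $X_{t_1}$, which need not be of the form $\mathcal{K}(y',\cdot)$, so the inductive hypothesis does not apply directly. The device that repairs this — and the reason the induction must carry the free basepoint $y$ rather than being proved only for the fixed initial law $\mu$ — is to first integrate $X_{t_1}$ out against $g_1(Y_{t_1})$ via the single-time identity of Step 1, which replaces $X_{t_1}$ by the averaged initial law $\mathcal{K}(y',\cdot)$ under $\hat P_{t_1}(y,dy')$; only after this reduction is the inductive hypothesis usable. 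Everything else — the bookkeeping of time increments and the elementary manipulations with $\mathcal{K}$ — is routine.
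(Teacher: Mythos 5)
Your argument is correct. The paper itself gives no proof of this statement --- it is quoted from Jansen--Kurt (and ultimately Rogers--Pitman) --- and your induction on finite-dimensional distributions, carrying the auxiliary function $h$ of $X_{t_n}$ through the induction so that the fibre identity and the intertwining can be applied at each stage, is exactly the standard argument for this result; you also correctly identify the key point that one must integrate out $X_{t_1}$ via the single-time identity before the inductive hypothesis (with the free basepoint $y'$) becomes applicable. As a bonus, your Step 2 with general $h$ already yields the filtering identity $\mathbb{E}[h(X_t)\mid Y_{t_1},\dots,Y_{t_n},Y_t]=(\mathcal{K}h)(Y_t)$ at deterministic times, which is the stronger form of the theorem usually stated in the literature.
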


Very often, the evolution of a continuous-time Markov process with discrete state space will be described by its transition rate matrix instead of its semigroup. The following result of Warren-Windridge provides conditions such that the intertwining of the transition rate matrices imply intertwining of the corresponding semigroups.
\begin{lemma}[\cite{Warren_Windridge_2009}]
\label{Q_intertwining}
Suppose $Q$ and $\hat{Q}$ are uniformly bounded conservative Q-matrices on discrete spaces $S$ and $\hat{S}$, i.e. there exists  
$c>0$ such that for all $s, s'\in S$ and $\hat{s}, \hat{s}'\in \hat{S}$
\[|Q(s,s')|,|\hat{Q}(\hat{s},\hat{s}')|<c. \] 
Moreover, we assume that $Q, \hat{Q}$ are intertwined by the Markov kernel $\mathcal{K}:\hat{S}\times S \to [0,1]$, i.e.
\[\hat{Q}\mathcal{K}=\mathcal{K}Q.\]
Then the transition kernels for the Markov processes with transition rate matrices $Q$ and $\hat{Q}$ are also intertwined. 
\end{lemma}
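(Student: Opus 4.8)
The plan is to realise $Q$, $\hat Q$ and $\mathcal K$ as bounded linear operators on spaces of bounded functions and then deduce the semigroup identity from the corresponding identity for the exponential series. First I would observe that the hypothesis $|Q(s,s')|<c$, together with the fact that $Q$ is a conservative $Q$-matrix, forces $-Q(s,s)=\sum_{s'\neq s}Q(s,s')<c$, and hence $\sup_{s}\sum_{s'}|Q(s,s')|=\sup_{s}2|Q(s,s)|\le 2c$. Consequently $Q$ defines a bounded operator on $\ell^\infty(S)$ with $\|Q\|\le 2c$, and similarly $\hat Q$ is bounded on $\ell^\infty(\hat S)$. The Markov kernel $\mathcal K$ acts on $g\in\ell^\infty(S)$ by $(\mathcal K g)(\hat s)=\sum_{s}\mathcal K(\hat s,s)g(s)$ and is a contraction from $\ell^\infty(S)$ to $\ell^\infty(\hat S)$, since $\mathcal K(\hat s,\cdot)$ is a probability measure. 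With these identifications all the compositions below make sense as bounded operators, composition is associative, and it is norm-continuous in each argument.

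Next I would use that, because the jump rates are uniformly bounded there is no explosion, so the unique transition semigroup associated with $Q$ is given by the exponential series $P_t=\sum_{n\ge 0}\frac{t^n}{n!}Q^n$, the series converging in operator norm on $\ell^\infty(S)$ uniformly for $t$ in compact sets; likewise $\hat P_t=\sum_{n\ge 0}\frac{t^n}{n!}\hat Q^n$ on $\ell^\infty(\hat S)$. This reduces the claim $\hat P_t\circ\mathcal K=\mathcal K\circ P_t$ to an identity between the partial sums of these two series.

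I would then prove by induction on $n$ that $\hat Q^n\mathcal K=\mathcal K Q^n$ as operators $\ell^\infty(S)\to\ell^\infty(\hat S)$: the case $n=0$ is trivial, $n=1$ is the hypothesis $\hat Q\mathcal K=\mathcal K Q$, and the inductive step is
$\hat Q^{n+1}\mathcal K=\hat Q(\hat Q^{n}\mathcal K)=\hat Q(\mathcal K Q^{n})=(\hat Q\mathcal K)Q^{n}=(\mathcal K Q)Q^{n}=\mathcal K Q^{n+1}$,
using associativity of the bounded operators. Summing, and using that $\mathcal K$ is bounded hence continuous so it may be interchanged with the norm-convergent series,
\[
\hat P_t\mathcal K=\Big(\sum_{n\ge 0}\tfrac{t^n}{n!}\hat Q^n\Big)\mathcal K=\sum_{n\ge 0}\tfrac{t^n}{n!}\,\hat Q^n\mathcal K=\sum_{n\ge 0}\tfrac{t^n}{n!}\,\mathcal K Q^n=\mathcal K\Big(\sum_{n\ge 0}\tfrac{t^n}{n!}Q^n\Big)=\mathcal K P_t,
\]
which is the asserted intertwining of the transition kernels.

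The main technical point to watch is that, on an infinite discrete state space, the matrix identity $\hat Q\mathcal K=\mathcal K Q$ (a statement about doubly-indexed sums) must be upgraded to an operator identity, the iterated compositions $\hat Q^n\mathcal K$ and $\mathcal K Q^n$ must be associative, and the limit must be interchanged with $\mathcal K$; all of this is legitimate precisely because the uniform bound on the entries makes $Q$ and $\hat Q$ (and trivially $\mathcal K$) bounded operators on the relevant $\ell^\infty$ spaces, so that Fubini applies to every rearrangement of sums involved. Apart from this bookkeeping the argument is routine.
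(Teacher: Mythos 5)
Your argument is correct, and it is the standard (indeed essentially the only) proof of this fact: the uniform bound on the entries plus conservativity makes $Q$ and $\hat{Q}$ bounded operators on the relevant $\ell^\infty$ spaces, non-explosion identifies the semigroups with the norm-convergent exponential series, and the intertwining then follows by induction on the powers and term-by-term interchange with the bounded operator $\mathcal{K}$. The thesis itself states this lemma as a cited result from Warren--Windridge without reproducing a proof, and the proof given there proceeds exactly along the lines you describe, so there is nothing to fault here.
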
 

A martingale formulation for the Rogers-Pitman result can be found in the work of Kurtz \cite{Kurtz_1998}.
Let us first introduce some notation. If $E$ is a topological space we denote by $B(E)$ the set of Borel measurable functions on $E$, by $C_{b}(E)$ the set of bounded continuous functions on $E$ and by $\mathcal{P}(E)$ the set of Borel probability measures on $E$. If $A$ is an operator, we denote by $\mathcal{D}(A)\subset C_{b}(E)$, the set of bounded functions $A$ acts on. 

\begin{theorem}[\cite{Kurtz_1998}, cor. 3.5]
\label{Kurtz}
Assume that $E$ is locally compact, that $A : \mathcal{D}(A) \subset C_b(E) \mapsto C_b(E)$, and that $\mathcal{D}(A)$ is closed under multiplication, separates points and is convergence determining. Let $F$ be another complete, separable metric space, $\gamma :E \mapsto F$ continuous and $\Lambda(y, dx)$ a Markov transition kernel from $F$ to $E$ such that
$\Lambda(g \circ \gamma) = g$ for all $g \in B(F)$, where $\Lambda f(y)=\int_{E}f(x)\Lambda(y,dx)$, for $f \in B(E)$. Let $B:\mathcal{D}(B)\subset B(F)\mapsto B(F)$ where $\Lambda(\mathcal{D}(A))\subset \mathcal{D}(B)$ and suppose
\[B\Lambda f = \Lambda A f, \qquad f \in \mathcal{D}(A).\]
Let $\mu \in \mathcal{P}(F)$ and set $\nu = \int_{F}\mu(dy)\Lambda(y,dx)\in \mathcal{P}(E)$. Suppose that the martingale problems for $(A, \nu)$ and $(B, \mu)$ are well-posed, and that $X$ is a solution to the martingale problem for $(A, \nu)$. Then $Y = \gamma \circ X$ is a Markov process and a solution to the
martingale problem for $(B, \mu)$. Furthermore, for each $t \geq 0$ and $g \in B(F)$ we have, almost surely,
\[E(g(X_{t})|Y_{s},0\leq s \leq t)=\int_{E}g(x)\Lambda(Y_{t},dx).\]
\end{theorem}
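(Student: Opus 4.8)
To prove this — it is Kurtz's \cite{Kurtz_1998} ``Markov functions'' result — the plan is to reconstruct the standard argument, whose technical core is the \emph{filtering identity}
\[
\mathbb{E}\big[\, h(X_t)\,\big|\,Y_u,\ 0\le u\le t\,\big]=\Lambda h(Y_t),\qquad h\in B(E),
\]
which is exactly the last displayed assertion of the theorem; everything else will be extracted from it by routine conditioning. First I would record that, by well-posedness of the martingale problem for $(A,\nu)$ together with the hypotheses that $\mathcal D(A)$ separates points, is closed under multiplication and is convergence determining, $X$ is a (time-homogeneous, strong) Markov process; let $P=(P_t)$ be its transition semigroup, and let $\hat P=(\hat P_t)$ be the transition semigroup of the unique solution of the martingale problem for $(B,\mu)$, which exists and is Markov by well-posedness of that problem.

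The first genuine step is to upgrade the infinitesimal intertwining $B\Lambda f=\Lambda Af$ to the semigroup intertwining $\hat P_t\Lambda f=\Lambda P_t f$. The cleanest route is via resolvents: using that $R_\lambda f=\int_0^\infty e^{-\lambda t}P_tf\,dt$ lies in $\mathcal D(A)$ with $(\lambda-A)R_\lambda f=f$, the intertwining on $\mathcal D(A)$ (together with $\Lambda(\mathcal D(A))\subset\mathcal D(B)$ and the corresponding statements for $B$) gives $\Lambda R_\lambda f=\hat R_\lambda\Lambda f$; inverting the Laplace transform yields $\Lambda P_tf=\hat P_t\Lambda f$ for $f\in\mathcal D(A)$, and this extends to $f\in B(E)$ by a monotone-class argument exploiting that $\mathcal D(A)$ is convergence determining.

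The second step is the $\Lambda$-coupling. On an auxiliary space I would take $\tilde Y_0\sim\mu$, draw $\tilde X_0\sim\Lambda(\tilde Y_0,\cdot)$ (so $\tilde X_0\sim\nu$ and, since $\Lambda(y,\cdot)$ is supported on $\gamma^{-1}(y)$, automatically $\gamma(\tilde X_0)=\tilde Y_0$), and then run $\tilde X$ as the Markov process with semigroup $P$ started from $\tilde X_0$; set $\tilde Y_t=\gamma(\tilde X_t)$. An induction over a partition $0=s_0<s_1<\dots<s_n=t$ — at each step conditioning on $\mathcal F^{\tilde X}_{s_j}$, applying $P_{s_{j+1}-s_j}$, and rewriting via the semigroup intertwining of Step~1 together with $\Lambda(g\circ\gamma)=g$ — shows both that $\tilde Y$ is Markov with semigroup $\hat P$ and that $\mathbb E[h(\tilde X_t)\mid\mathcal F^{\tilde Y}_t]=\Lambda h(\tilde Y_t)$. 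Since the martingale problem for $(A,\nu)$ is well-posed, $\tilde X$ and $X$ have the same law as processes, hence so do $(\tilde X,\gamma\circ\tilde X)$ and $(X,Y)$, and transporting these conclusions gives that $Y$ is Markov with semigroup $\hat P$ and the filtering identity for $(X,Y)$. That $Y$ \emph{solves} the martingale problem for $(B,\mu)$ then follows by expanding, for $f\in\mathcal D(A)$, $\mathbb E[\Lambda f(Y_t)-\Lambda f(Y_s)-\int_s^t B\Lambda f(Y_r)\,dr\mid\mathcal F^Y_s]$: write $\Lambda f(Y_t)=\mathbb E[f(X_t)\mid\mathcal F^Y_t]$ by the filtering identity, use that $M^f_t=f(X_t)-f(X_0)-\int_0^t Af(X_r)\,dr$ defines an $\mathcal F^X$-martingale, and apply the filtering identity once more to $Af\in\mathcal D(A)$ together with $B\Lambda f=\Lambda Af$.

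The main obstacle is the combination of the first two steps: converting the generator-level data $\big(B\Lambda f=\Lambda Af,\ \Lambda(g\circ\gamma)=g\big)$ into control of finite-dimensional distributions. This is precisely where one needs both martingale problems to be well-posed (to get uniqueness for the resolvent/forward equations in Step~1 and to identify $X$ with $\tilde X$ in Step~2) and the topological hypotheses on $E$ and $\mathcal D(A)$ (so that $\mathcal D(A)$ is rich enough for the martingale problem to pin down the process, and to pass from $\mathcal D(A)$ to all bounded measurable test functions). By contrast, once the filtering identity is available, the conditioning computations that yield the Markov property of $Y$ and the martingale-problem assertion are routine.
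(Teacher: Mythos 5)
The thesis does not prove this statement: it is quoted from \cite{Kurtz_1998} (Corollary 3.5) in the prerequisites chapter without proof, so there is no in-paper argument to compare against. Your reconstruction must therefore stand on its own, and it follows the classical Rogers--Pitman/semigroup-intertwining route rather than Kurtz's. Kurtz's actual proof goes through uniqueness for the \emph{filtered} martingale problem (his Theorem 3.2): one checks that the pair $\bigl(\gamma(X_t),\,P(X_t\in\cdot\mid\mathcal F^Y_t)\bigr)$ and the pair $\bigl(\tilde Y_t,\,\Lambda(\tilde Y_t,\cdot)\bigr)$, where $\tilde Y$ is the solution of the martingale problem for $(B,\mu)$, both solve the filtered martingale problem for $A$ (the second because $\Lambda f(\tilde Y_t)-\int_0^t B\Lambda f(\tilde Y_s)\,ds$ is a martingale and $B\Lambda f=\Lambda Af$), and uniqueness for that problem delivers the law of $Y$, its Markov property, and the filtering identity all at once. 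No semigroups or resolvents are ever constructed.

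The genuine gap in your proposal is Step 1. Well-posedness of the martingale problem for the single pair $(A,\nu)$ does not produce a transition semigroup $(P_t)$ defined from every starting point, and even granting such a semigroup, nothing in the hypotheses makes $A$ its Hille--Yosida generator: $\mathcal D(A)$ is merely an algebra of test functions (in this thesis's applications, compactly supported $C^2$ functions), so the resolvent $R_\lambda f=\int_0^\infty e^{-\lambda t}P_tf\,dt$ need not land in $\mathcal D(A)$, and the identities $(\lambda-A)R_\lambda f=f$ and the Laplace inversion are unavailable; the same objection applies on the $B$ side. Your Step 2 coupling likewise presumes solvability of the martingale problem for $A$ from the initial laws $\Lambda(y,\cdot)$ for arbitrary $y$, and for $B$ from arbitrary Dirac masses, which exceeds the stated hypotheses. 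These are precisely the obstructions the filtered-martingale-problem formulation is designed to avoid. Your final step --- extracting the martingale problem for $Y$ from the filtering identity --- is correct (and note that you only need $Af\in B(E)$ there, not $Af\in\mathcal D(A)$), but it sits downstream of the unestablished semigroup intertwining.
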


\section{Doob's $h$-processes}
In the introduction we referred to stochastic processes obtained from processes with killing via a transformation. In this section we will first present this transformation in the discrete time setting to get some intuition and then we will formalise the continuous time. For the discrete-time setting we refer to O' Connell \cite{O'Connell_2003b} and for the continuous-time to  draft notes of Bloemendal \cite{Bloemendal_2010}.

Let $\Sigma$ be a countably infinite set and let $\Pi: \Sigma \times \Sigma \mapsto [0,1]$ be a substochastic matrix, that is
\[\sum_{y\in \Sigma}\Pi(x,y)\leq 1\]
for all $x \in \Sigma$.\\
A function $h:\Sigma \mapsto \mathbb{R}$ is harmonic for $\Pi$ if for every $x \in \Sigma$
\[\sum_{y\in \Sigma}\Pi(x,y)h(y)=h(x).\]
Let us define a new matrix $\Pi_{h}$ as follows
\[\Pi_{h}(x,y)=\dfrac{h(y)}{h(x)}\Pi(x,y)\]
then, due to the fact that $h$ is harmonic for $\Pi$, it follows that $\Pi_{h}$ is stochastic
\[\sum_{y\in \Sigma}\Pi_{h}(x,y)=\dfrac{1}{h(x)}\sum_{y\in \Sigma}h(y)\Pi(x,y)=1.\]
The matrix $\Pi_{h}$ is called the \emph{Doob $h$-transform} of $\Pi$.

In the language of Markov chains we have the following interpretation. Let $X=(X_{n},n\geq 0)$ be a Markov chain on an extended space $\Sigma \cup \{\Delta\}$ which contains some special state here denoted by $\Delta$. Moreover assume that the transition probabilities for  $X$ are given by
\begin{equation*}
\begin{split}
\mathbb{P}[X_{n+1}=y|X_{n}=x]&=\Pi(x,y), \text{ for }x,y \in \Sigma\\
\mathbb{P}[X_{n+1}=\Delta|X_{n}=x]&=1-\sum_{y \in \Sigma}\Pi(x,y), \text{ for }x \in \Sigma\\
\mathbb{P}[X_{n+1}=\Delta|X_{n}=\Delta]&=1.
\end{split}
\end{equation*}
As we can see from the transition probabilities, $\Delta$ is an absorbing state. Suppose now we want to restrict the chain $X$ to $\Sigma$. Then the transition matrix for $X$ restricted to $\Sigma$ is $\Pi$. Then the Doob $h$-transform of $X$, denoted by $X^h$, is a Markov chain with state space $\Sigma$ and transition probabilities given by
\[\mathbb{P}[X^h_{n+1}=y|X^h_{n}=x]=\Pi_{h}(x,y)=\dfrac{h(y)}{h(x)}\Pi(x,y), \text{ for }x,y \in \Sigma\]

Let us now proceed to the continuous-time setting. We consider a Markov process $X=(X_{t})_{t \geq 0}$ with state space $S$ and transition kernel $P^t(x,dy)$. We want to consider a process obtained from $X$ which is conditioned to stay within a subset $A\subset S$ for every $t\geq 0$.

Let $h:S \to [0, \infty)$ be a function which is strictly positive and harmonic in $A$ and vanishes outside $A$, i.e.
\begin{equation*}
\begin{split}
P^th(x):= \int_{A}P^t(x,dy)h(y)=h(x)>0,& \text{  for }x\in A\\
h(x)=0, & \text{  for }x\in S \setminus A.
\end{split}
\end{equation*}

\begin{definition}
A function $h\in C_{0}(S)$ belongs to the domain, $\mathcal{D}(\mathcal{L})$, of the infinitesimal generator of a Feller process, if the limit
\[\mathcal{L}h:=\lim_{t \to 0}\dfrac{(P^t-I)h}{t}\]
exists, under the uniform topology on $C_{0}(S)$.\\
The operator $\mathcal{L}:\mathcal{D}(\mathcal{L})\mapsto C_{0}(S)$ is called the \emph{infinitesimal generator }of the process.
\end{definition}

\begin{remark}The condition $P^th=h$ can be rewritten as $\mathcal{L}h=0$, for $h \in \mathcal{D}(\mathcal{L})$. Then we will say that the function $h$ is \emph{$\mathcal{L}$-harmonic}.
\end{remark}

\begin{definition}
The Doob's $h$-process of $X$ is the process $\hat{X}$, with state space $A$ and transition kernel given by
\begin{equation}
\hat{P}^t(x,dy)=\dfrac{h(y)}{h(x)}P^t(x,dy).
\label{eq:Doob}
\end{equation}
\end{definition}

Let us now consider the case where the process $X$ is a diffusion on $\mathbb{R}^N$, possibly with some killing, i.e. $X$ has generator 
\[\mathcal{L} = \dfrac{1}{2}\sum_{i,j=1}^N a_{i,j}\dfrac{\partial^2}{\partial x_{i}\partial x_{j}} + \sum_{i=1}^N b_{i}\dfrac{\partial}{\partial x_{i}}-c\]
where the matrix $a(x)=(a_{ij}(x))_{i,j=1}^N$ is symmetric and non-negative, $b(x)$ denotes the drift and $c(x)\geq 0$ corresponds to the killing.

Let $h$ be a $\mathcal{L}$-harmonic function on $A\subset \mathbb{R}^N$ vanishing on its boundary. The Doob's $h$-transform of $X$ is then a diffusion on $A$ with generator given by
\[\hat{\mathcal{L}}=h^{-1}\mathcal{L}h=\dfrac{1}{2}\sum_{i,j=1}^N a_{i,j}\dfrac{\partial^2}{\partial x_{i}\partial x_{j}} + \sum_{i=1}^N b_{i}\dfrac{\partial}{\partial x_{i}}+\sum_{i,j=1}^N a_{ij}\Big(\dfrac{\partial}{\partial x_{j}}\log h\Big) \dfrac{\partial}{\partial x_{i}}.\]

\chapter{The Berele insertion algorithm}
\label{Berele}
In chapter \ref{Koornwinder} we introduced the Young diagram which is a way of representing a partition via a collection of left justified boxes. We also introduced, see Definition \ref{tableau_def}, the Young tableau which is a Young diagram filled with entries from some alphabet $\{1,...,n\}$. It is well known (see \cite{Robinson_1938, Scensted_1961, Stanley_2001}) that there is a one-to-one correspondence, namely the Robinson-Schensted correspondence, between the set of words of length $k$ in the alphabet $\{1,...,n\}$ and the set of pairs of Young tableaux $(P,Q)$, where $P$ is a semistandard Young tableau with entries from the given alphabet and $Q$ is a standard tableau with entries $\{1,2,...,k\}$. The two tableaux have the same shape which is a partition of the length of the word.

A. Berele in \cite{Berele_1986} proved that similarly to the Robinson-Schensted algorithm, there exists a one-to-one correspondence between words of length $k$ from the alphabet $\{1<\bar{1}<...<n< \bar{n}\}$  and the sets of pairs $(P,(f^0,...,f^k))$, where $P$ is a symplectic tableau, as defined in \ref{symp_tableau_def}, of shape given by a partition of $k$ and $(f^0,...,f^k)$ is a recording sequence of up-down diagrams. The Berele correspondence can be used to prove identities for the symplectic Schur function. For example, Sundaram in \cite{Sundaram_1990} gave a combinatorial proof for the Cauchy identity for the symplectic group
\[\prod_{1\leq i<j \leq n}(1-b_{i}b_{j})\prod_{i,j=1}^n (1-b_{i}a_{j})^{-1}(1-b_{i}a_{j}^{-1})^{-1}=\sum_{\mu : l(\mu)\leq n}Sp^{(n)}_{\mu}(a_{1},...,a_{n})S^{(n)}_{\mu}(b_{1},...,b_{n}).\]
In section 4.1, we will describe the Berele insertion algorithm. 

A different way of representing a symplectic Young tableau is a symplectic Gelfand-Tsetlin pattern. In section 4.2 we will describe dynamics obtained from the Berele algorithm for the Gelfand Tsetlin pattern and we will deduce that for appropriate initial conditions, the marginal distribution of each even-indexed level of the pattern is a Markov process. In section 4.3, we will $q$-deform the dynamics described in section 4.2. All the proofs can be found in section 4.4.

\section{The Berele insertion algorithm}
We recall that the Robinson-Schensted algorithm has two versions; the row insertion and the column insertion. The Berele algorithm is similar to the row insertion version. Before we describe the insertion algorithm we need to introduce a sliding algorithm called the \textit{jeu de taquin} algorithm.
\begin{definition}
A punctured tableau of shape $z$ is a Young diagram of shape $z$ in which every box except one is filled. We will refer to this special box as the empty box. 
\end{definition}
\begin{definition}[jeu de taquin]
Let $T$ be a punctured tableau with $(\alpha, \beta)$ entry $t_{\alpha \beta}$, where $\alpha$ denotes a row and $\beta$ a column, and with empty box in position $(i,j)$. We consider the transformation $jdt: T \to jdt(T)$ defined as follows
\begin{itemize}
\item if $T$ is an ordinary tableau then $jdt(T)=T$;
\item while $T$ is a punctured tableau
\begin{equation*}
T \rightarrow \left\{ \begin{array}{ll}
 T\text{ switching the empty box and } t_{i,j+1} & \text{, if } t_{i,j+1}<t_{i+1,j}\\
 T\text{ switching the empty box and }t_{i+1,j} & \text{, if }t_{i,j+1}\geq t_{i+1,j}.
\end{array} \right.
\end{equation*}
\end{itemize}
Here we will use the convention that if the empty box has only one right/down neighbouring box $(\alpha, \beta)$, then
\[T \to T \text{ switching the empty box and }t_{\alpha \beta}.\]
\end{definition}
\begin{example} \hspace{5pt }
\begin{figure}[h]
\begin{center}
\scalebox{0.9}{
\begin{tikzpicture}
\node at (-0.5,0) {$T =$ };
\draw [black] (0,0) rectangle (0.8,0.8);
\draw [black] (0.8,0) rectangle (1.6,0.8);
\draw [black] (1.6,0) rectangle (2.4,0.8);
\draw [black] (0,-0.8) rectangle (0.8,0);
\draw [black] (0.8,-0.8) rectangle (1.6,0);
\node at (1.2,0.4) {1};
\node at (2,0.4) {2};
\node at (0.4,-0.4) {2};
\node at (1.2,-0.4) {2};

\draw [thick] [->] (3,0) -- (4.5,0) node[above] {$t_{12}<t_{21} \qquad \qquad$};

\draw [black] (5,0) rectangle (5.8,0.8);
\draw [black] (5.8,0) rectangle (6.6,0.8);
\draw [black] (6.6,0) rectangle (7.4,0.8);
\draw [black] (5,-0.8) rectangle (5.8,0);
\draw [black] (5.8,-0.8) rectangle (6.6,0);
\node at (5.4,0.4) {1};
\node at (7,0.4) {2};
\node at (5.4,-0.4) {2};
\node at (6.2,-0.4) {2};

\draw [thick] [->] (8,0) -- (9.5,0) node[above] {$t_{13}=t_{22}\qquad \qquad$ };

\draw [black] (10,0) rectangle (10.8,0.8);
\draw [black] (10.8,0) rectangle (11.6,0.8);
\draw [black] (11.6,0) rectangle (12.4,0.8);
\draw [black] (10,-0.8) rectangle (10.8,0);
\node at (10.4,0.4) {1};
\node at (12,0.4) {2};
\node at (10.4,-0.4) {2};
\node at (11.2,0.4) {2};

\node at (13.5,0) {$=jdt(T)$};
\end{tikzpicture}
}
\end{center} 
\end{figure}
\end{example}

We are now ready to describe the Berele insertion algorithm. To insert a letter $i$ from the alphabet $\{1<\bar{1}<...<n< \bar{n}\}$  to a symplectic tableau $P$, we begin by trying to place the letter at the end of the first row. If the result is a symplectic tableau we are done. Otherwise, the smallest entry which is larger than $i$ is bumped and we proceed by inserting the bumped letter to the second row and so on. If at some instance of the insertion process condition \textbf{(S3)} in Definition \ref{symp_tableau_def} is violated then we proceed as follows: assume that we tried to insert letter $l$ to the $l$-th row and bumped an $\bar{l}$. Since we cannot insert $\bar{l}$ to the $(l+1)$-th row we erase both $l$ and $\bar{l}$, leaving the position formerly occupied by the $\bar{l}$ as hole. Note that this is the only way condition \textbf{S3} is  violated as we can never try to insert a letter smaller than $l$ to the $l$-th row and if we try to insert a letter greater than $l$ this letter would not bump an $\bar{l}$. We then apply jeu de taquin slide algorithm to cover the hole. We now give an example of the Berele insertion algorithm.

\begin{example}
Insert $\bar{1}$ to \\
\begin{figure}[h]
\begin{center}
\scalebox{0.9}{
\begin{tikzpicture}
\node at (-0.5,-0.4) {P = };
\draw [black] (0,0) rectangle (0.8,0.8);
\draw [black] (0.8,0) rectangle (1.6,0.8);
\draw [black] (1.6,0) rectangle (2.4,0.8);
\draw [black] (2.4,0) rectangle (3.2,0.8);
\draw [black] (0,-0.8) rectangle (0.8,0);
\draw [black] (0.8,-0.8) rectangle (1.6,0);
\draw [black] (1.6,-0.8) rectangle (2.4,0);
\draw [black] (0,-1.6) rectangle (0.8,-0.8);
\draw [black] (0.8,-1.6) rectangle (1.6,-0.8);
\node at (0.4,0.4) {1};
\node at (1.2,0.4) {1};
\node at (2,0.4) {2};
\node at (2.8,0.4) {$\bar{2}$};
\node at (0.4,-0.4) {2};
\node at (1.2,-0.4) {$\bar{2}$};
\node at (2,-0.4) {3};
\node at (0.4,-1.2) {3};
\node at (1.2,-1.2) {$\bar{3}$};
\node at (3.6,0.4) {+$\bar{1}$};

\draw [thick] [->] (4,-0.4) -- (5.5,-0.4) node[above] {bump 2 \hspace{40pt} };

\draw [black] (6,0) rectangle (6.8,0.8);
\draw [black] (6.8,0) rectangle (7.6,0.8);
\draw [black] (7.6,0) rectangle (8.4,0.8);
\draw [black] (8.4,0) rectangle (9.2,0.8);
\draw [black] (6,-0.8) rectangle (6.8,0);
\draw [black] (6.8,-0.8) rectangle (7.6,0);
\draw [black] (7.6,-0.8) rectangle (8.4,0);
\draw [black] (6,-1.6) rectangle (6.8,-0.8);
\draw [black] (6.8,-1.6) rectangle (7.6,-0.8);
\node at (6.4,0.4) {1};
\node at (7.2,0.4) {1};
\node at (8,0.4) {$\mathbf{\bar{1}}$};
\node at (8.8,0.4) {$\bar{2}$};
\node at (6.4,-0.4) {2};
\node at (7.2,-0.4) {$\bar{2}$};
\node at (8,-0.4) {3};
\node at (6.4,-1.2) {3};
\node at (7.2,-1.2) {$\bar{3}$};
\node at (9.6,-0.4) {+2};

\draw [thick] [->] (10,-0.4) -- (11.5,-0.4) node[above] {bump $\bar{2}$ \hspace{40pt} };

\draw [black] (0,-3) rectangle (0.8,-2.2);
\draw [black] (0.8,-3) rectangle (1.6,-2.2);
\draw [black] (1.6,-3) rectangle (2.4,-2.2);
\draw [black] (2.4,-3) rectangle (3.2,-2.2);
\draw [black] (0,-3.8) rectangle (0.8,-3);
\draw [black] (0.8,-3.8) rectangle (1.6,-3);
\draw [black] (1.6,-3.8) rectangle (2.4,-3);
\draw [black] (0,-4.6) rectangle (0.8,-3.8);
\draw [black] (0.8,-4.6) rectangle (1.6,-3.8);
\node at (0.4,-2.6) {1};
\node at (1.2,-2.6) {1};
\node at (2,-2.6) {$\bar{1}$};
\node at (2.8,-2.6) {$\bar{2}$};
\node at (0.4,-3.4) {2};
\node at (1.2,-3.4) {$\mathbf{2}$};
\node at (2,-3.4) {3};
\node at (0.4,-4.2) {3};
\node at (1.2,-4.2) {$\bar{3}$};
\node at (3.6,-4.2) {+$\bar{2}$};

\draw [thick] [->] (4,-3.4) -- (5.5,-3.4) node[above] {cancel 2,$\bar{2}$ \hspace{40pt} };

\draw [black] (6,-3) rectangle (6.8,-2.2);
\draw [black] (6.8,-3) rectangle (7.6,-2.2);
\draw [black] (7.6,-3) rectangle (8.4,-2.2);
\draw [black] (8.4,-3) rectangle (9.2,-2.2);
\draw [black] (6,-3.8) rectangle (6.8,-3);
\draw [black] (6.8,-3.8) rectangle (7.6,-3);
\draw [black] (7.6,-3.8) rectangle (8.4,-3);
\draw [black] (6,-4.6) rectangle (6.8,-3.8);
\draw [black] (6.8,-4.6) rectangle (7.6,-3.8);
\node at (6.4,-2.6) {1};
\node at (7.2,-2.6) {1};
\node at (8,-2.6) {$\bar{1}$};
\node at (8.8,-2.6) {$\bar{2}$};
\node at (6.4,-3.4) {2};
\node at (8,-3.4) {3};
\node at (6.4,-4.2) {3};
\node at (7.2,-4.2) {$\bar{3}$};

\draw [thick] [->] (10,-3.4) -- (11.5,-3.4) node[above] {jeu de taquin \hspace{40pt} };

\draw [black] (0,-6) rectangle (0.8,-5.2);
\draw [black] (0.8,-6) rectangle (1.6,-5.2);
\draw [black] (1.6,-6) rectangle (2.4,-5.2);
\draw [black] (2.4,-6) rectangle (3.2,-5.2);
\draw [black] (0,-6.8) rectangle (0.8,-6);
\draw [black] (0.8,-6.8) rectangle (1.6,-6);
\draw [black] (0,-7.6) rectangle (0.8,-6.8);
\draw [black] (0.8,-7.6) rectangle (1.6,-6.8);
\node at (0.4,-5.6) {1};
\node at (1.2,-5.6) {1};
\node at (2,-5.6) {$\bar{1}$};
\node at (2.8,-5.6) {$\bar{2}$};
\node at (0.4,-6.4) {2};
\node at (1.2,-6.4) {3};
\node at (0.4,-7.2) {3};
\node at (1.2,-7.2) {$\bar{3}$};
\node at (4.5,-6.4) { = (P $\leftarrow \bar{1}$)};
\end{tikzpicture}
}
\end{center} 
\end{figure}
\end{example}
The insertion algorithm can be applied to a word $w=w_{1},...,w_{m}$, with $w_{i}\in \{1,\bar{1},...,n ,\bar{n}\}$, starting with an empty tableau. The output, denoted by $\mathcal{B}(w)$, is a symplectic Young tableau along with a sequence $(f^{0},...,f^{m})$ that records the shapes of the tableau for all the intermediate steps. More specifically, if we denote by $P(i)$ the tableau after the insertion of the $i$-th letter then $shP(i)=f^{i}$ for $i=1,..,m$. Since we start with an empty tableau, it follows that $f^{0}=\emptyset$. We also remark that any two consecutive shapes must differ by exactly one box, i.e. $f^{i}\setminus f^{i-1}=(1)$ or $f^{i-1}\setminus f^{i}=(1)$ since the insertion of a letter can lead to either the addition of a box or to a deletion of a box if the condition \textbf{S3} is violated.\\
\begin{example}
Applying the Berele algorithm to the word $w=\bar{3}2\bar{1}\bar{3}121$ yields the pair $\mathcal{B}(w)=(P,(f^{0},...,f^{7}))$, where

\begin{figure}[h]
\begin{center}
\scalebox{0.9}{
\begin{tikzpicture}
\node at (-0.4,-0.4) {$P(i):$};
\node at (-0.4,-3) {$f^i:$};

\node at (0.4,-0.4) {$\emptyset$};
\node at (0.4,-3) {$\emptyset$};

\draw [black] (1.3,0) rectangle (2.1,-0.8);
\node at (1.7,-0.4) {$\bar{3}$};
\node at (1.7,-3) {$(1)$};

\draw [black] (2.6,0) rectangle (3.4,-0.8);
\node at (3,-0.4) {2};
\draw [black] (2.6,-0.8) rectangle (3.4,-1.6);
\node at (3,-1.2) {$\bar{3}$};
\node at (3,-3) {$(1,1)$};

\draw [black] (3.9,0) rectangle (4.7,-0.8);
\node at (4.3,-0.4) {$\bar{1}$};
\draw [black] (3.9,-0.8) rectangle (4.7,-1.6);
\node at (4.3,-1.2) {2};
\draw [black] (3.9,-1.6) rectangle (4.7,-2.4);
\node at (4.3,-2) {$\bar{3}$};
\node at (4.3,-3) {$(1,1,1)$};

\draw [black] (5.2,0) rectangle (6,-0.8);
\node at (5.6,-0.4) {$\bar{1}$};
\draw [black] (5.2,-0.8) rectangle (6,-1.6);
\node at (5.6,-1.2) {2};
\draw [black] (5.2,-1.6) rectangle (6,-2.4);
\node at (5.6,-2) {$\bar{3}$};
\draw [black] (6,0) rectangle (6.8,-0.8);
\node at (6.4,-0.4) {$\bar{3}$};
\node at (6.2,-3) {$(2,1,1)$};

\draw [black] (7.3,0) rectangle (8.1,-0.8);
\node at (7.7,-0.4) {$2$};
\draw [black] (7.3,-0.8) rectangle (8.1,-1.6);
\node at (7.7,-1.2) {$\bar{3}$};
\draw [black] (8.1,0) rectangle (8.9,-0.8);
\node at (8.5,-0.4) {$\bar{3}$};
\node at (8.1,-3) {$(2,1)$};

\draw [black] (9.4,0) rectangle (10.2,-0.8);
\node at (9.8,-0.4) {$2$};
\draw [black] (9.4,-0.8) rectangle (10.2,-1.6);
\node at (9.8,-1.2) {$\bar{3}$};
\draw [black] (10.2,0) rectangle (11,-0.8);
\node at (10.6,-0.4) {$2$};
\draw [black] (10.2,-0.8) rectangle (11,-1.6);
\node at (10.6,-1.2) {$\bar{3}$};
\node at (10.2,-3) {$(2,2)$};

\draw [black] (11.5,0) rectangle (12.3,-0.8);
\node at (11.9,-0.4) {$1$};
\draw [black] (11.5,-0.8) rectangle (12.3,-1.6);
\node at (11.9,-1.2) {$2$};
\draw [black] (11.5,-1.6) rectangle (12.3,-2.4);
\node at (11.9,-2) {$\bar{3}$};
\draw [black] (12.3,0) rectangle (13.1,-0.8);
\node at (12.7,-0.4) {$2$};
\draw [black] (12.3,-0.8) rectangle (13.1,-1.6);
\node at (12.7,-1.2) {$\bar{3}$};
\node at (12.4,-3) {$(2,2,1)$ .};
\end{tikzpicture}
}
\end{center} 
\end{figure}
\end{example}
Analogously to the Robinson-Schensted correspondence, Berele proved the following result.
\begin{theorem}[\cite{Berele_1986}]
$\mathcal{B}$ is a bijection between words $w_{1},...,w_{m}$ in $\{1,\bar{1},...,n ,\bar{n}\}$ and pairs $(P,(f^0,...,f^m))$ in which $P$ is a symplectic tableau and $(f^0,...,f^{m})$ is a sequence of up/down shapes.
\end{theorem}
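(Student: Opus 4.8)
The plan is to prove bijectivity in the usual way: construct an explicit two‑sided inverse of $\mathcal{B}$ built out of \emph{reverse bumping} and \emph{reverse jeu de taquin}, and then check that the two composites are the identity. The substantive work is concentrated entirely in the steps where condition \textbf{(S3)} has to be repaired; everything else is a routine adaptation of the Robinson--Schensted reversibility argument.

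First I would verify that $\mathcal{B}$ really lands in the asserted codomain. By induction on the length of the word this reduces to the claim that inserting a single letter $i$ into a symplectic tableau again yields a symplectic tableau whose shape differs by exactly one box. Preservation of \textbf{(S1)} and \textbf{(S2)} is clear, since a bumping step only replaces, in one row, the leftmost entry exceeding $i$, and a jeu de taquin slide only moves entries up and to the left while preserving weak row‑monotonicity and strict column‑monotonicity. For \textbf{(S3)} one proves, again by induction along the bumping chain, that a letter handed down to row $l$ is always $\ge l$, the sole exception being an $\bar l$ produced by inserting $l$ into row $l$ — which is precisely the configuration in which the algorithm cancels $l$ and $\bar l$ rather than passing $\bar l$ down, after which the slide covering the vacated cell cannot create an entry smaller than its row index. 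Since every elementary move adds or deletes exactly one box and $f^0=\emptyset$, the sequence $(f^0,\dots,f^m)$ is an up/down sequence.

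Next I would build the inverse. Given $(P,(f^0,\dots,f^m))$ one peels off $w_m,w_{m-1},\dots,w_1$ and the intermediate tableaux $P=P(m),P(m-1),\dots,P(0)=\emptyset$, reading the recording sequence from the top. At stage $j$: if $f^j\setminus f^{j-1}$ is a box, that box of $P(j)$ is the terminus of a bumping chain, and we reverse‑bump — remove it, carry its value up one row where it ejects the rightmost entry strictly smaller than it, and iterate up to the first row, whose ejected entry is $w_j$ and whose residue is $P(j-1)$. If instead $f^{j-1}\setminus f^j$ is a box, we first undo a jeu de taquin slide: declare that corner cell of $P(j)$ empty and slide the hole upward and to the left, at each step swapping it past the larger of its up‑ and left‑neighbours, until it comes to rest in some cell $(l,c')$; we then reinsert $\bar l$ into that cell, obtaining the symplectic tableau of shape $f^{j-1}$ that existed just before the cancellation, and finally reverse‑bump the letter $l$ up through rows $l-1,\dots,1$ exactly as above to read off $w_j$ and $P(j-1)$. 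One checks throughout that each reverse move is forced and returns a symplectic tableau of the prescribed shape.

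Finally I would check the two composites are identities. For a word, it suffices by induction to see that a single reverse stage exactly undoes the corresponding forward stage; for an ``up'' stage this is the classical reversibility of row insertion, the bump path being retraced in reverse. The crux — and the step I expect to be the main obstacle — is the ``down'' stage: one must show that the jeu de taquin slide used to cover the hole is injective \emph{when restricted to the holes the algorithm actually creates}, i.e.\ that reverse‑sliding from the resulting corner deterministically returns the hole to the cell $(l,c')$ from which it was produced and with the row index $l$ identifying the cancelled pair, so that reinserting $\bar l$ restores exactly the pre‑cancellation configuration and feeds the correct letter $l$ into the reverse bumping. This exploits the special form of those configurations — $\bar l$ sits immediately after the block of $l$'s in an \textbf{(S3)}‑valid row $l$ — and once it is established, the converse direction (inverse followed by $\mathcal{B}$) and all remaining verifications follow from the same local checks.
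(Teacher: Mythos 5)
The paper does not actually prove this theorem: it is quoted verbatim from Berele's 1986 paper and used as a black box, so there is no in-text argument to compare yours against. Judged on its own terms, your architecture is the standard (and Berele's own) one: check well-definedness of $\mathcal{B}$, construct an inverse by reverse bumping and reverse jeu de taquin, and verify both composites are identities. Your well-definedness step is sound — in particular the observation that the only way \textbf{(S3)} can fail is when inserting $l$ into row $l$ bumps an $\bar l$ is correct, since every entry of row $l$ is $\geq l$ and only the letter $l$ itself can bump an $\bar l$ there, and the slide preserves \textbf{(S3)} because it only moves entries up or left.

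The genuine gap is in the ``down'' stage of your inverse, and it is not merely the verification you defer but the \emph{definition} of the map. In the forward direction the hole's journey terminates for geometric reasons: it stops when it has no right or down neighbour, i.e.\ at an outer corner. In the reverse direction there is no such geometric stopping point — a hole sliding up and to the left by swapping with the larger of its two neighbours will happily continue until it reaches the cell $(1,1)$. Your phrase ``until it comes to rest in some cell $(l,c')$'' therefore has no content as written: you must supply an explicit termination criterion that identifies, from the data $(P(j), f^{j-1}\setminus f^j)$ alone, the unique cell at which the hole was created and hence the row index $l$ of the cancelled pair. This is precisely the hard combinatorial lemma in Berele's proof (one shows that along the reverse path there is exactly one cell where reinserting $\bar l$, with $l$ the row index of that cell, produces a symplectic tableau from which the forward algorithm would have generated the observed configuration), and without it the inverse map is not defined, so neither composite can be checked. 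Flagging this as ``the main obstacle'' is the right instinct, but the proposal stops exactly where the proof begins.
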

\section{Dynamics on Gelfand Tsetlin pattern}

As we already mentioned, a different way to represent a symplectic Young tableau is via a symplectic Gelfand-Tsetlin pattern. Let us make this statement more precise. For a symplectic tableau $P$ with entries from $\{1<\bar{1}<...<n<\bar{n}\}$ let $z^{2l-1}=shP^l$ and $z^{2l}=shP^{\bar{l}}$, where $P^k$ is the sub-tableau of $P$ that contains only entries that are less or equal to $k$. For example, if
\begin{figure}[h]
\begin{center}
\scalebox{0.9}{
\begin{tikzpicture}
\node at (-0.5,0) {P = };
\draw [black] (0,0) rectangle (0.8,0.8);
\draw [black] (0.8,0) rectangle (1.6,0.8);
\draw [black] (1.6,0) rectangle (2.4,0.8);
\draw [black] (2.4,0) rectangle (3.2,0.8);
\draw [black] (3.2,0) rectangle (4,0.8);
\draw [black] (0,-0.8) rectangle (0.8,0);
\draw [black] (0.8,-0.8) rectangle (1.6,0);
\node at (0.4,0.4) {1};
\node at (1.2,0.4) {$\bar{1}$};
\node at (2,0.4) {2};
\node at (2.8,0.4) {2};
\node at (3.6,0.4) {$\bar{2}$};
\node at (0.4,-0.4) {$\bar{2}$};
\node at (1.2,-0.4) {$\bar{2}$};
\end{tikzpicture}
}
\end{center} 
\end{figure}\\
\noindent then $z^1 = shP^{1}=(1)$, $z^{2}=shP^{\bar{1}}=(2)$, $z^3=shP^{2}=(4,0)$ and $z^4 = shP^{\bar{2}}=(5,2)$. By the definition of the symplectic Young tableau it follows that $z^{k-1}\preceq z^k$, for $1\leq k \leq N$. We recall the definition of the symplectic Gelfand-Tsetlin cone.
\begin{definition}
\label{GT_cone(Berele)}
Let $N$ be a positive integer. We denote by $\mathbb{K}^N_{\mathbb{Z}_{\geq 0}}=\mathbb{K}_{0}^N$ the symplectic Gelfand-Tsetlin cone, i.e. the set of integer-valued symplectic Gelfand Tsetlin patterns defined by
\[\mathbb{K}_{0}^N = \{\mathbf{z}=(z^1,...,z^N): z^{2l-1},z^{2l}\in \mathbb{Z}^l_{\geq 0}, \,0\leq l \leq \big[\frac{N+1}{2}\big] \text{ s.t. }z^{k-1}\preceq z^k, \, 2 \leq k \leq N\}\]
where the symbol $\big[ \frac{N+1}{2}\big]$ denotes the integer part of $\frac{N+1}{2}$.
\end{definition}
Schematically, a symplectic Gelfand-Tsetlin pattern is represented as in figure \ref{symp_GT_cone(Berele)}.
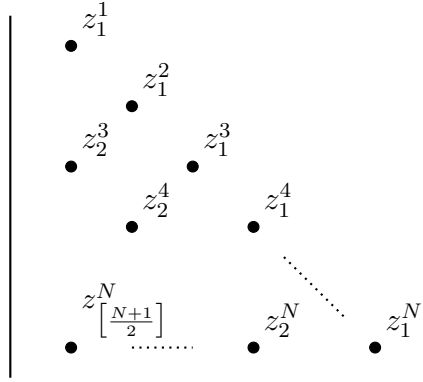
\begin{figure}[h]
\begin{center}
\begin{tikzpicture}[scale = 0.8]

\draw [thick] (0,0) -- (0,6) ;
\draw [fill] (1,5.5) circle [radius=0.09];
\node [above right, black] at (1,5.5) {\large{$z_{1}^1$}};
\draw [fill] (2,4.5) circle [radius=0.09];
\node [above right, black] at (2,4.5) {\large{$z_{1}^2$}};
\draw [fill] (3,3.5) circle [radius=0.09];
\node [above right, black] at (3,3.5) {\large{$z_{1}^3$}};
\draw [fill] (1,3.5) circle [radius=0.09];
\node [above right, black] at (1,3.5) {\large{$z_{2}^3$}};
\draw [fill] (4,2.5) circle [radius=0.09];
\node [above right, black] at (4,2.5) {\large{$z_{1}^4$}};
\draw [fill] (2,2.5) circle [radius=0.09];
\node [above right, black] at (2,2.5) {\large{$z_{2}^4$}};
\draw [fill] (6,0.5) circle [radius=0.09];
\node [above right, black] at (6,.5) {\large{$z_{1}^N$}};
\draw [fill] (4,0.5) circle [radius=0.09];
\node [above right, black] at (4,.5) {\large{$z_{2}^N$}};
\draw [fill] (1,0.5) circle [radius=0.09];
\node [above right, black] at (1,.5) {$z_{\tiny{\big[\frac{N+1}{2}\big]}}^N$};
\draw [dotted, thick] (4.5,2) -- (5.5,1);
\draw [dotted, thick] (2,0.5) -- (3,0.5);
\end{tikzpicture}
\end{center} 
\caption{This figure shows an element of $\mathbb{K}^{N}_0$. Both levels with indices $2l$, $2l-1$ contain $l$ particles. At each level the particles satisfy the interlacing property, i.e. $z^{2l-1}_{l}\leq z^{2l}_{l}\leq z^{2l-1}_{l-1}\leq ... \leq z^{2l-1}_{1}\leq z^{2l}_{1}$ and $z^{2l+1}_{l+1}\leq z^{2l}_{l}\leq z^{2l+1}_{l}\leq ... \leq z^{2l}_{1}\leq z^{2l+1}_{1}$.} 
\label{symp_GT_cone(Berele)}
\end{figure}
In this chapter we will only focus on patterns with even number of levels.

For $t\geq 0$, let $\mathbf{Z}(t)$ denote the configuration of the pattern at time $t$. We construct a process $(\mathbf{Z}(t),t \geq 0)$ on $\mathbb{K}_{0}^{2n}$ whose dynamics are dictated by Berele's insertion algorithm. The main characteristic of the dynamics is that any transition propagates all the way to the bottom of the pattern. So, whenever a particle attempts a jump, this will cause the movement of exactly one of its lower nearest neighbours, which will trigger the movement of one of its own nearest neighbours and so on. 

We fix $a = (a_{1},...,a_{n})\in \mathbb{R}^n_{>0}$. We will describe the dynamics of the process and then give an example.

The top particle $Z^1_{1}$ attempts a right jump after waiting an exponential time with parameter $a_{1}$. Let us denote by $T$ a jump time and by $T-$ the time before the jump. If $Z^1_{1}(T-)=Z^2_{1}(T-)$ then the jump is performed and $Z^2_{1}$ is simultaneously pushed to the right. Otherwise, the jump is suppressed and $Z^2_{1}$ jumps to the left instead.\\
The particles at the right edge, are the only ones that can jump to the right of their own volition. More specifically, the particle $Z^{2l-1}_{1}$ take steps to the right at rate $a_{l}$ and $Z^{2l}_{1}$ at rate $a_{l}^{-1}$. Such a jump corresponds to the insertion of letter $l$ and $\bar{l}$ respectively.\\
If at some jump time $T$ a particle $Z^k_{i}$, for some $1\leq i \leq [\frac{k+1}{2}]$, $1\leq k \leq 2n$, attempts a rightward jump, then
\begin{enumerate}[i)]
\item if $Z^k_{i}(T-)=Z^{k+1}_{i}(T-)$ the jump is performed and $Z^{k+1}_{i}$ is simultaneously pushed one step to the right. This means that a letter is either added at the end of the $i$-th row or it is added somewhere in the interior of the row causing a greater letter to be bumped to the next row of the corresponding tableau;
\item if $Z^k_{i}(T-)<Z^{k+1}_{i}(T-)$ we have two different cases
\begin{enumerate}[a.]
\item if $i=\frac{k+1}{2}$, with $k>1$ odd, the jump is suppressed and $Z^{k+1}_{i}$ is pulled to the left instead. The transition described here corresponds to the cancellation step in Berele's algorithm;
\item for all the other particles, the jump is performed and the particle $Z^{k+1}_{i+1}$ is pulled to the right. This means that a letter is added at the $i$-th row of the corresponding tableau and bumped another letter to the row beneath.
\end{enumerate}
\end{enumerate}

If at some jump time $T$, $Z^k_{i}$ performs a left jump, then it triggers the leftward move of exactly one of its nearest lower neighbours; the left, $Z^{k+1}_{i+1}$, if $Z^{k}_{i}(T-)=Z^{k+1}_{i+1}(T-)$, and the right, $Z^{k+1}_{i}$, otherwise. If $k=2n$ then the procedure ends with the left jump of $Z^{2n}_{i}$. The transitions we describe here corresponds to the jeu de taquin step of Berele's algorithm. Let us explain why.\\
We assume without loss of generality that $k=2l$ and at some jump time $T$, $Z^{2l}_{i}$, for some $1\leq i \leq l$, jumps to the left. This means that a letter $\leq \bar{l}$ is removed from a box in the $i$-th row of the tableau. Note that the empty box can now have only letters $\geq l+1$ to its right. If $Z^{2l}_{i}(T-)=Z^{2l+1}_{i+1}(T-)$, then beneath the empty box there is a letter $\leq l+1$, therefore according to the jeu de taquin algorithm we should swap the empty box with the box beneath. Therefore, the $(i+1)$-th row now contains one less box with entry $\leq l+1$, causing $Z^{2l+1}_{i+1}$ to jump to the left.

\begin{example}
At a jump time $T$, in step a) the particle $Z^2_{1}$ performs a rightward jump. Since $Z^2_{1}(T-)<Z^3_{1}(T-)$, $Z^2_{1}$ triggers the move of $Z^3_{2}$. In b), $Z^3_{2}$ attempts to jump to the right, but since $Z^3_{2}(T-)<Z^4_{2}(T-)$, the jump is suppressed. Finally, in step c) since the jump of the particle $Z^3_{2}$ was suppressed , $Z^4_{2}$ performs a leftward jump instead.
\begin{figure}[h]
\begin{center}
\begin{tikzpicture}[scale = 0.8]
\node [right, black] at (-1,6) {\textit{a)}};
\draw [thick] (0,2) -- (0,6.5) ;
\draw [fill] (1,5.5) circle [radius=0.09];
\node [above, black] at (1,5.5) {1};
\draw [fill] (2,4.5) circle [radius=0.09];
\node [above, black] at (2,4.5) {2};
\draw [thick, red] [->](2.15,4.55) to [bend left = 60] (2.85,4.55);
\draw [fill, blue] (3,4.5) circle [radius=0.09];
\node [above, blue] at (3,4.5) {3};
\draw [fill] (3,3.5) circle [radius=0.09];
\node [above, black] at (3,3.5) {3};
\draw [fill] (1,3.5) circle [radius=0.09];
\node [above, black] at (1,3.5) {1};
\draw [fill] (4,2.5) circle [radius=0.09];
\node [above, black] at (4,2.5) {4};
\draw [fill] (2,2.5) circle [radius=0.09];
\node [above, black] at (2,2.5) {2};
\node [right, black] at (5,6) {\textit{b)}};
\draw [thick] (6,2) -- (6,6.5) ;
\draw [fill] (7,5.5) circle [radius=0.09];
\node [above, black] at (7,5.5) {1};
\draw [fill, blue] (9,4.5) circle [radius=0.09];
\node [above, blue] at (9,4.5) {3};
\draw [fill] (9,3.5) circle [radius=0.09];
\node [above, black] at (9,3.5) {3};
\draw [fill] (7,3.5) circle [radius=0.09];
\node [above, black] at (7,3.5) {1};
\draw [thick, red] [->](7.1,3.6) to [bend left = 60] (7.9,3.6);
\draw [thick, red] (7.3,3.4)--(7.5,4);
\draw [fill] (10,2.5) circle [radius=0.09];
\node [above, black] at (10,2.5) {4};
\draw [fill] (8,2.5) circle [radius=0.09];
\node [above, black] at (8,2.5) {2};
\node [right, black] at (11,6) {\textit{c)}};
\draw [thick] (12,2) -- (12,6.5) ;
\draw [fill] (13,5.5) circle [radius=0.09];
\node [above, black] at (13,5.5) {1};
\draw [fill, blue] (15,4.5) circle [radius=0.09];
\node [above, blue] at (15,4.5) {3};
\draw [fill] (15,3.5) circle [radius=0.09];
\node [above, black] at (15,3.5) {3};
\draw [fill] (13,3.5) circle [radius=0.09];
\node [above, black] at (13,3.5) {1};
\draw [fill] (16,2.5) circle [radius=0.09];
\node [above, black] at (16,2.5) {4};
\draw [fill] (14,2.5) circle [radius=0.09];
\node [above, black] at (14,2.5) {2};
\draw [fill, blue] (13,2.5) circle [radius=0.09];
\node [above, blue] at (13,2.5) {1};
\draw [thick, red] [->](13.9,2.6) to [bend right = 60] (13.1,2.6);
\end{tikzpicture}
\end{center} 
\end{figure}
\end{example}

We recall the notation $\mathcal{W}_{0}^n = \{z \in \mathbb{Z}^n:z_{1}\geq ... \geq z_{n}\geq 0\}$ and the set of symplectic patterns $\mathbf{z}$ in $\mathbb{K}_{0}^{2n}$ with bottom row $z^{2n}$ equal to $z\in \mathcal{W}_{0}^n$, denoted by $\mathbb{K}_{0}^{2n}[z]$. The geometric weight $w_{2n}^a$ on $\mathbb{K}_{0}^{2n}$ is
\[w_{2n}^a(\mathbf{z})=\prod_{i=1}^n a_{i}^{2|z^{2i-1}|-|z^{2i}|-|z^{2i-2}|}=\prod_{i=1}^n (a_{i}^{-1})^{|z^{2i}|-|z^{2i-1}|}(a_{i})^{|z^{2i-1}|-|z^{2i-2}|}\]
using the convention that $|z^0|=0$. 

We recall that the symplectic Schur function $Sp^{(n)}_{z}$ parametrized by $z \in \mathcal{W}^n_{0}$ is given by (see Definition \ref{Symp_Schur} for the combinatorial formula of the symplectic Schur function due to King \cite{King_1971})
\[Sp_{z}^{(n)}(a_{1},...,a_{n})=\sum_{\mathbf{z}\in \mathbb{K}_{0}^{2n}[z]}w_{2n}^a(\mathbf{z}).\]
For $z\in \mathcal{W}_{0}^{n}$ we define $M^{n}_{a}(\cdot ; z):\mathbb{K}_{0}^{2n}[z] \to [0,1]$ by
\[M^{n}_{a}(\cdot;z)=\dfrac{w_{2n}^a(\cdot)}{Sp_{z}^{(n)}(a)}.\]
It then follows that $M^{n}_{a}(\cdot;z)$ gives a probability distribution on patterns in $\mathbb{K}_{0}^{2n}[z]$.\\

In the Introduction we presented a continuous-time Markov process with symplectic Schur dynamics, specified by the transition rate matrix $Q_{n}:\mathcal{W}_{0}^n \times \mathcal{W}_{0}^n \to \mathbb{R}$ as follows. For $z \in \mathcal{W}_{0}^n$ and $z\pm e_{i}\in \mathcal{W}_{0}^n$ for some $1\leq i \leq n$
\[Q_{n}(z, z \pm e_{i})= \dfrac{Sp_{z\pm e_{i}}^{(n)}(a_{1},...,a_{n})}{Sp_{z}^{(n)}(a_{1},...,a_{n})}.\]
All the other off-diagonal entries of $Q_{n}$ are zero and the diagonals are given by
\[Q_{n}(z,z) = -\sum_{i=1}^n (a_{i}+a_{i}^{-1}), \qquad z\in \mathcal{W}_{0}^{n}.\]

We then have the following result for the shape of the Gelfand-Tsetlin pattern.
\begin{theorem}Suppose the process $\mathbf{Z} = (\mathbf{Z}(t);t \geq 0)$ has initial distribution given by $M^{n}_{a}(\cdot;z)$, for some $z\in \mathcal{W}_{0}^n$, and evolves according to Berele's dynamics. Then $(Z^{2n}(t);t \geq 0)$ is distributed as a process with transition rate matrix $Q_{n}$, started from $z$. Moreover, for each $t>0$, the conditional law of $\mathbf{Z}(t)$ given $\{Z^{2n}(s),s\leq t\}$ is given by $M^n_{a}(\cdot;Z^{2n}(t))$.
\end{theorem}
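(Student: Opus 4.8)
The plan is to apply the Rogers--Pitman machinery (Theorem \ref{rogers_pitman}) together with Lemma \ref{Q_intertwining}, in the form of an intertwining between the full generator of the Berele dynamics on $\mathbb{K}_{0}^{2n}$ and the symplectic Schur transition rate matrix $Q_{n}$ on $\mathcal{W}_{0}^n$, with the link kernel given by the family $M^n_a(\cdot;z)$. Concretely, let $\mathcal{G}$ denote the generator of the process $\mathbf{Z}$ on $\mathbb{K}_{0}^{2n}$ (a bounded conservative $Q$-matrix, since the total jump rate is uniformly bounded by $n\max_i(a_i+a_i^{-1})$ up to pushing/pulling multiplicities, which are also bounded), let $f(\mathbf{z})=z^{2n}$ be the projection onto the bottom level, and let $\mathcal{K}$ be the Markov kernel from $\mathcal{W}_{0}^n$ to $\mathbb{K}_{0}^{2n}$ with $\mathcal{K}(z,\cdot)=M^n_a(\cdot;z)$, supported on $\mathbb{K}_{0}^{2n}[z]$. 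The goal is the identity
\[
\sum_{\mathbf{z}\in \mathbb{K}_{0}^{2n}[z']} M^n_a(z;d\mathbf{z})\,\mathcal{G}(\mathbf{z},\mathbf{z}')
\;=\; \sum_{z''} Q_n(z,z'')\, M^n_a(z''; \mathbf{z}')
\]
for all $z\in\mathcal{W}_{0}^n$ and all target patterns $\mathbf{z}'$; once this holds, Lemma \ref{Q_intertwining} promotes it to the semigroup intertwining $\hat P_t\circ\mathcal{K}=\mathcal{K}\circ P_t$, Theorem \ref{rogers_pitman} gives that $Z^{2n}$ is Markov with rate matrix $Q_n$ started from $z$, and the final statement ``the conditional law of $\mathbf{Z}(t)$ given $\{Z^{2n}(s),s\le t\}$ is $M^n_a(\cdot;Z^{2n}(t))$'' is exactly the last conclusion of Theorem \ref{rogers_pitman} (or its version in Theorem \ref{Kurtz}).

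The combinatorial heart is verifying the displayed intertwining relation, and I would do this by an induction on the number of levels, exploiting the branching structure of the Berele dynamics. Write $w^a_{2n}(\mathbf{z}) = w^a_{2n-2}(z^1,\dots,z^{2n-2})\cdot a_n^{2|z^{2n-1}|-|z^{2n}|-|z^{2n-2}|}$, so that $M^n_a$ factors through a one-step Markov link from level $2n-2$ to level $2n$. A transition of the pattern is generated by one of the right-edge particles $Z^{2l-1}_1$ or $Z^{2l}_1$ firing, after which the effect propagates deterministically downward by the pushing/pulling and cancellation rules. The key structural fact to isolate is that, conditionally on the configuration of the top $2n-2$ levels being distributed as $M^{n-1}_a$, the induced dynamics on the pair (level $2n-1$, level $2n$) together with the input from level $2n-2$ reproduces exactly the one-variable Pieri-type recursion for the symplectic Schur functions, i.e. $\sum_i (a_i+a_i^{-1})$ appears as the total rate and the weights $M^n_a$ are reproduced. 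This is the same bookkeeping that underlies the branching rule $Sp^{(n)}_z = \sum_{\mathbf{z}\in\mathbb{K}_0^{2n}[z]} w^a_{2n}(\mathbf{z})$ read dynamically: one checks that the sum over all ways the system can enter state $\mathbf{z}'$, weighted appropriately, telescopes against the weight normalisations.

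In practice I would organize the verification by cases according to which move the chain makes at the bottom two levels: (i) a push of $Z^{2n}_i$ to the right triggered from above when $Z^{2n-1}_i=Z^{2n}_i$; (ii) a pull of $Z^{2n}_i$ to the left when a left-jump propagates down; (iii) the cancellation case at $i=\tfrac{2n-1+1}{2}=n$ on an odd level that feeds a leftward move into $Z^{2n}_n$; and the ``do nothing at the bottom'' case where the transition is absorbed higher up. For each, one computes the rate $\mathcal{G}(\mathbf{z},\mathbf{z}')$ summed against $M^n_a(z;d\mathbf{z})$ and matches it with $Q_n(z,z')M^n_a(z';\mathbf{z}')$ using the explicit ratio formula $Q_n(z,z\pm e_i)=Sp^{(n)}_{z\pm e_i}(a)/Sp^{(n)}_z(a)$ together with the weight factorisation; the diagonal entries match because the total outgoing rate from any pattern is $\sum_{i=1}^n(a_i+a_i^{-1})$, independent of $\mathbf{z}$, which is precisely $-Q_n(z,z)$.

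I expect the main obstacle to be the careful case analysis at the cancellation/jeu-de-taquin steps: these are the transitions where a jump is \emph{suppressed} and instead produces a leftward move, so the bijective correspondence between ``an attempted jump'' and ``a change in the pattern'' is not one-to-one in the naive way, and one must check that the rate of entering a given $\mathbf{z}'$ via such a suppressed jump, summed with the rates via ordinary bumping moves that land on the same $\mathbf{z}'$, still reconstitutes $Q_n(z,z')M^n_a(z';\cdot)$. Keeping track of which particle's clock is responsible and ensuring no double-counting or omission across levels is the delicate point; the Berele bijection theorem (that $\mathcal{B}$ is a bijection between words and pairs $(P,(f^0,\dots,f^m))$) is the conceptual guarantee that the accounting closes, and I would lean on it to structure the induction cleanly rather than attempt a brute-force global computation.
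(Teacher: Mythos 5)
Your proposal follows essentially the same route as the paper: the paper omits the proof of this theorem, referring to the detailed proof of its $q$-deformed analogue (Theorem \ref{q_main_theorem(Berele)}), which proceeds exactly as you describe — an intertwining $Q_n\mathcal{L}_n=\mathcal{L}_n\hat Q_n$ with link kernel $M^n_a$, promoted to the semigroup level by Lemma \ref{Q_intertwining} and Theorem \ref{rogers_pitman}, and verified by induction on $n$ via a ``helper'' intertwining localized to the bottom three rows $(z^{2n-2},z^{2n-1},z^{2n})$ (your triple of levels), with the case analysis over pushes, pulls and cancellation moves and the diagonal matched by the constant total rate $\sum_i(a_i+a_i^{-1})$. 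The only slip is notational: in your displayed relation the left-hand sum should run over $\mathbf{z}\in\mathbb{K}_0^{2n}[z]$ (the support of $\mathcal{K}(z,\cdot)$), not over $\mathbb{K}_0^{2n}[z']$.
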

The proof of the theorem is omitted as it is similar to the proof of the result for the $q$-deformed process that we will study in the following section. 

\section{A $q$-deformation of Berele's insertion algorithm}
In this section we consider a generalization of Berele's insertion algorithm that depends on a parameter $q\in (0,1)$. The $q$-deformed algorithm can be thought as a randomization of the usual algorithm, as inserting a letter to a given tableau results in a distribution over a set of tableaux. Since a symplectic tableau is equivalent to a symplectic Gelfand-Tsetlin pattern we will present the algorithm in terms of dynamics on the pattern.

For $q\in (0,1)$ and $(x,y) \in \mathbb{R}^n \times \mathbb{R}^n$ or $(x,y) \in \mathbb{R}^{n-1} \times \mathbb{R}^n$, we define the quantities
\begin{equation}
r_{i}(y;x)=q^{y_{i}-x_{i}}\dfrac{1-q^{x_{i-1}-y_{i}}}{1-q^{x_{i-1}-x_{i}}} \text{,\hspace{5pt} }l_{i}(y;x)=q^{x_{i}-y_{i+1}}\dfrac{1-q^{y_{i+1}-x_{i+1}}}{1-q^{x_{i}-x_{i+1}}}
\label{eq:probabilities(Berele)}
\end{equation}
for $1\leq i \leq n$, with the convention that if $x=(x_{1},...,x_{n}) \in \mathbb{R}^n$, we set $x_{n+1}\equiv 0$ and $x_{0}\equiv \infty$ (similarly for $y$) and the quantities $l_{i}(y;x),r_{i}(y;x)$ are modified accordingly. The probabilities $r_{i},l_{i}$ we introduce here have appeared in the literature before. For example, O'Connell-Pei in \cite{O'Connell_Pei_2013} use the probabilities $r_{i}$ (compare with \eqref{eq:q_push_probability_intro}) for the $q$-deformation of the RSK algorithm.

We fix $n\in \mathbb{N}$ and set $N=2n$. Let us also consider a vector $a=(a_{1},...,a_{n})\in \mathbb{R}^n_{>0}$. We then construct a process $(\mathbf{Z}(t),t\geq 0)$ in the symplectic Gelfand Tsetlin cone which evolves according to the rules that follow.

The top particle $Z^1_{1}$ attempts a right jump after waiting an exponential time with parameter $a_{1}$. With probability $r_{1}(Z^2;Z^1)$ the jump is performed and $Z^2_{1}$ is simultaneously pushed to the right and with probability $1-r_{1}(Z^2;Z^1)$ the jump is suppressed and $Z^2_{1}$ jumps to the left instead.\\
The particles at the right edge, are the only ones that can jump to the right of their own volition. More specifically, the particle $Z^{2l}_{1}$ take steps to the right at rate $a_{l}^{-1}$ and $Z^{2l-1}_{1}$ at rate $a_{l}$.\\
If a particle $Z^k_{i}$ attempts a rightward jump, then
\begin{enumerate}[i)]
\item if $i=\frac{k+1}{2}$, with $k>1$ odd, then with probability $r_{i}(Z^{k+1};Z^k)$, the jump is performed and $Z^{k+1}_{i}$ is pushed to the right. Otherwise, with probability $1-r_{i}(Z^{k+1};Z^k)$ the jump is suppressed and $Z^{k+1}_{i}$ is pulled to the left instead;
\item for all the other particles, the jump is performed and either the particle $Z^{k+1}_{i}$ is pushed to the right with probability $r_{i}(Z^{k+1};Z^k)$ or the particle $Z^{k+1}_{i+1}$ is pulled to the right with probability $1-r_{i}(Z^{k+1};Z^k)$.
\end{enumerate}

If $Z^k_{i}$ performs a left jump, then it triggers the leftward move of exactly one of its nearest lower neighbours; the left, $Z^{k+1}_{i+1}$, with probability $l_{i}(Z^{k+1};Z^k)$ and the right, $Z^{k+1}_{i}$, with probability $1-l_{i}(Z^{k+1};Z^k)$.

Observe that if $x\preceq y$ with $x_{i}=y_{i}$, for some $i$, then $r_{i}(y;x)=1$, which means that if $x_{i}\to x_{i}+1$, then necessarily $y_{i}\to y_{i}+1$ in order to maintain the interlacing condition among $x,y$. Similarly, if $x_{i}=y_{i+1}$, then $l_{i}(y;x)=1$ causing the transition $y_{i+1}\to y_{i+1}-1$ whenever $x_{i}\to x_{i}-1$. Moreover, as $q\to 0$, the probabilities $r_{i}(y;x)$ and $l_{i}(y;x)$ converge to $\mathbbm{1}_{\{x_{i}=y_{i}\}}$ and $\mathbbm{1}_{\{x_{i}=y_{i+1}\}}$ and we therefore recover the dynamics from the usual Berele algorithm.

For each symplectic Gelfand-Tsetlin pattern $\mathbf{z} \in \mathbb{K}^{2n}_{0}$, we consider a real-valued weight $w^{2n}_{a,q}(\mathbf{z})$ given by
\begin{equation}
w^{2n}_{a,q}(\mathbf{z}) = \prod_{k=1}^n \Lambda_{k-1,k}^{a_{k},q} (z^{2k-2},z^{2k-1}, z^{2k})
\label{eq:q_weights(Berele)}
\end{equation}
where 
\begin{equation*}
\begin{split}
 \Lambda_{k-1,k}^{a_{k},q} (z^{2k-2},z^{2k-1}, z^{2k})=& a_{k}^{2|z^{2k-1}|-|z^{2k-2}|-|z^{2k}|}\\
& \times\mathlarger{\mathlarger{‎‎\prod}}_{i=1}^{k-1} \dbinom{z^{2k-1}_{i}-z^{2k-1}_{i+1}}{z^{2k-1}_{i}-z^{2k-2}_{i}}_{q} \dbinom{z^{2k}_{i}-z^{2k}_{i+1}}{z^{2k}_{i}-z^{2k-1}_{i}}_{q} \dbinom{z^{2k}_{k}}{z^{2k}_{k}-z^{2k-1}_{k}}_{q} .
\end{split}
\end{equation*}
If $k=1$, then 
\[\Lambda_{0,1}^{a_{1},q}(z^0,z^1,z^2) \equiv \Lambda_{0,1}^{a_{1},q}(z^1,z^2) = a_{1}^{2|z^1|-|z^2|}\dbinom{z^2_{1}}{z^2_{1}-z^1_{1}}_{q}.\]

Fix $z \in \mathcal{W}_{0}^n$, then the function $\mathcal{P}_{z}^{(n)}(\cdot ; q)$ defined in \ref{recursion_q_whittaker} is given by
\begin{equation}
\mathcal{P}_{z}^{(n)}(a;q) := \sum_{\mathbf{z}\in \mathbb{K}^{2n}_{0}[z]}w^{2n}_{a,q}(\mathbf{z}).
\label{eq:symp_q_whittaker(Berele)}
\end{equation}
Therefore, a natural probability mass function on patterns $\mathbf{z}\in \mathbb{K}^{2n}_0[z]$ can be given by
\begin{equation}
M_{a,q}^{n}(\cdot;z)=\dfrac{w^{n}_{a,q}(\cdot)}{\mathcal{P}^{(n)}_{z}(a;q)}.
\label{eq:initial_distribution(Berele)}
\end{equation}

The main result of the chapter is the following.
\begin{theorem}
\label{q_main_theorem(Berele)}
Let $(\mathbf{Z}(t),t \geq 0)$ be a Markov process with state space $\mathbb{K}^{2n}_{0}$, initial distribution $M_{a,q}^{n}(\cdot;z)$, for some $z\in \mathcal{W}_{0}^{n}$, that evolves according to the $q$-deformed Berele dynamics. Then $(Z^{2n}(t),t \geq 0)$ is a Markov process on $\mathcal{W}_{0}^n$, started at $z$, with transition rates $\{Q^q_{n}(z,z'), z,z' \in \mathcal{W}_{0}^{n}\}$ given for $z'\neq z$ by
\begin{equation}
Q^q_{n}(z,z')=\dfrac{\mathcal{P}^{(n)}_{z'}(a;q)}{\mathcal{P}^{(n)}_{z}(a;q)}f_{n}(z,z')
\label{eq:rate_matrix_q_berele(Berele)}
\end{equation}
where the function $f_{n}:\mathcal{W}_{0}^n \times \mathcal{W}_{0}^n \to \mathbb{R}_{+}$ is given by
\begin{equation*}
f_{n}(z,z') = \left\{ \begin{array}{ll}
1-q^{z_{i-1}-z_{i}} & \text{ if } z'-z = e_{i}, 1\leq i \leq n\\
1-q^{z_{i}-z_{i+1}} & \text{ if } z'-z =- e_{i}, 1\leq  i \leq n\\
0 & \text{ otherwise}
\end{array} \right.
\end{equation*}
where we use the convention $z_{0}=+\infty$ and $z_{n+1}=0$. The diagonal entries are
\[Q^q_{n}(z,z) =-\sum_{i=1}^{n}(a_{i}+a_{i}^{-1}).\]
Moreover, for each $t>0$ the conditional law of $\mathbf{Z}(t)$ given $\{Z^{2n}(s),s\leq t\}$ is given by $M^n_{a,q}(\cdot;Z^{2n}(t))$.
\end{theorem}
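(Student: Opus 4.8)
The plan is to apply the Markov functions machinery of Rogers--Pitman (Theorem \ref{rogers_pitman}), together with its $Q$-matrix version Lemma \ref{Q_intertwining}, to the map $f:\mathbb{K}^{2n}_0\to\mathcal{W}^n_0$ sending a pattern $\mathbf{z}$ to its bottom level $z^{2n}$. The Markov kernel $\mathcal{K}$ from $\mathcal{W}^n_0$ to $\mathbb{K}^{2n}_0$ is exactly $\mathcal{K}(z,\cdot)=M^n_{a,q}(\cdot;z)$, which by \eqref{eq:symp_q_whittaker(Berele)} and \eqref{eq:initial_distribution(Berele)} is supported on $\mathbb{K}^{2n}_0[z]=f^{-1}(z)$, so it is a genuine kernel in the sense required. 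Since the $q$-deformed Berele dynamics jump at bounded rates (the base clocks have rates $a_l$ or $a_l^{-1}$ and each firing triggers a bounded cascade down the $2n$ levels), the generator $Q$ of $\mathbf{Z}$ and the candidate generator $Q^q_n$ are uniformly bounded conservative $Q$-matrices, so it suffices to verify the intertwining relation $Q^q_n\,\mathcal{K}=\mathcal{K}\,Q$ at the level of rate matrices.

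First I would write out both sides of $Q^q_n\mathcal{K}=\mathcal{K}Q$ evaluated at a pair $(z,\mathbf{z}')$ with $z\in\mathcal{W}^n_0$ and $\mathbf{z}'\in\mathbb{K}^{2n}_0$. The left-hand side is $\sum_{z'}Q^q_n(z,z')M^n_{a,q}(\mathbf{z}';z')$, which by \eqref{eq:rate_matrix_q_berele(Berele)} and the definition of $M$ collapses to $\frac{1}{\mathcal{P}^{(n)}_z(a;q)}\sum_{z'}f_n(z,z')\,w^{2n}_{a,q}(\mathbf{z}')\mathbbm{1}_{f(\mathbf{z}')=z'}$ — i.e. it is $\frac{1}{\mathcal{P}^{(n)}_z(a;q)}f_n(z,z^{2n}(\mathbf{z}'))\,w^{2n}_{a,q}(\mathbf{z}')$ plus the diagonal contribution. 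The right-hand side is $\sum_{\mathbf{z}}M^n_{a,q}(\mathbf{z};z)Q(\mathbf{z},\mathbf{z}')=\frac{1}{\mathcal{P}^{(n)}_z(a;q)}\sum_{\mathbf{z}}w^{2n}_{a,q}(\mathbf{z})Q(\mathbf{z},\mathbf{z}')$. After cancelling $\mathcal{P}^{(n)}_z(a;q)$, the identity to prove becomes a purely combinatorial statement about the weight $w^{2n}_{a,q}$ and the transition rates of the cascade dynamics:
\[
\sum_{\mathbf{z}\in\mathbb{K}^{2n}_0}w^{2n}_{a,q}(\mathbf{z})\,Q(\mathbf{z},\mathbf{z}')=f_n\big(z^{2n}(\mathbf{z}'),z^{2n}(\mathbf{z}')\big)\,w^{2n}_{a,q}(\mathbf{z}')\ +\ \sum_{z'\neq z^{2n}(\mathbf{z}')}f_n\big(z^{2n}(\mathbf{z}'),z'\big)w^{2n}_{a,q}(\mathbf{z}')\mathbbm{1}_{\cdots},
\]
which after reorganising is a detailed-balance / flow-conservation identity that the weights must satisfy with respect to the dynamics. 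The natural strategy is to prove it by induction on $n$ (equivalently, on the number of levels), using the product structure $w^{2n}_{a,q}(\mathbf{z})=\prod_{k}\Lambda^{a_k,q}_{k-1,k}$ from \eqref{eq:q_weights(Berele)} and the $q$-binomial recurrences \eqref{eq:q_binomial}: one fixes the top $2n-2$ levels, observes that the induced dynamics on the bottom two levels, given the $(2n-2)$-th level, is a two-level version of the same picture, and checks that $\Lambda^{a_n,q}_{n-1,n}$ intertwines the $(2n-2)$-level generator with the $2n$-level one. This is where the Pieri identity of Proposition \ref{q_eigenrelation_partition} for the operator $H^n$ enters: the branching structure of $\mathcal{P}^{(n)}$ in Definition \ref{recursion_q_whittaker} is precisely $\mathcal{P}^{(n)}_z=\sum_\nu Q^{(n-1,n)}_{a_n,q}(\nu,z)\mathcal{P}^{(n-1)}_\nu$, and $f_n$ is the $H^n$-data, so the algebraic identity $H^n\mathcal{P}^{(n)}=\sum_i(a_i+a_i^{-1})\mathcal{P}^{(n)}$ is exactly the consistency relation that makes the intertwining close up.

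The main obstacle I expect is the bookkeeping in the inductive step: one has to carefully enumerate, for a fixed target pattern $\mathbf{z}'$, all source patterns $\mathbf{z}$ and all elementary moves of the cascade (a right jump of some $Z^k_i$ with the three cases i), ii-a), ii-b) plus all the pushes/pulls it induces, and separately the left jumps) that can produce the transition $\mathbf{z}\to\mathbf{z}'$, and to show that the weighted sum of the corresponding rates $r_i(Z^{k+1};Z^k)$, $l_i(Z^{k+1};Z^k)$, $1-r_i$, $1-l_i$ telescopes correctly against the ratios of $q$-binomials appearing in $\Lambda^{a_k,q}_{k-1,k}$. The probabilities $r_i,l_i$ in \eqref{eq:probabilities(Berele)} are tailored so that the ``local'' balance at each level holds — e.g. $w(\mathbf{z})r_i(\cdot) = w(\mathbf{z}^{\text{shifted}})\cdot(\text{ratio of }q\text{-binomials})$ — but assembling these local identities into the global one, keeping track of which neighbour gets pushed versus pulled and of the boundary conventions $z_0=+\infty$, $z_{n+1}=0$, is the delicate part. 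Once the rate-matrix intertwining is established, Lemma \ref{Q_intertwining} upgrades it to an intertwining of the transition semigroups, Theorem \ref{rogers_pitman} then gives that $(Z^{2n}(t),t\geq0)$ is Markov with generator $Q^q_n$ started from $z$ (using that the initial law $M^n_{a,q}(\cdot;z)=\mathcal{K}(z,\cdot)$), and the conditional-law statement $\mathrm{Law}(\mathbf{Z}(t)\mid Z^{2n}(s),s\leq t)=M^n_{a,q}(\cdot;Z^{2n}(t))$ is the standard filtering identity that comes packaged with the Rogers--Pitman/Kurtz conclusion (Theorem \ref{Kurtz}).
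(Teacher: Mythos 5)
Your plan coincides with the paper's proof: the same kernel $\mathcal{L}_{n}(z,\cdot)=M^{n}_{a,q}(\cdot;z)$, the same reduction via Theorem \ref{rogers_pitman} and Lemma \ref{Q_intertwining} to the rate-matrix intertwining $Q^q_{n}\mathcal{L}_{n}=\mathcal{L}_{n}\hat{Q}^q_{n}$, and the same induction on $n$ in which one first establishes a helper intertwining for the marginal dynamics of the bottom triple $(z^{2n-2},z^{2n-1},z^{2n})$ through the weight $\Lambda^{a_{n},q}_{n-1,n}$ and the $q$-binomial recurrences \eqref{eq:q_binomial}, then assembles it with the induction hypothesis. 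The one caveat is the direction of the Pieri identity: the paper does not feed Proposition \ref{q_eigenrelation_partition} (stated for the polynomials $P^{(n)}$, only conjecturally equal to $\mathcal{P}^{(n)}$) into the argument, but instead obtains the Pieri identity for $\mathcal{P}^{(n)}$ (Corollary \ref{Pieri_mypolynomials(Berele)}) as a consequence of the conservativity of $Q^q_{n}$, which itself follows from the intertwining — so the verification must remain the direct combinatorial one you describe rather than an appeal to that eigenrelation.
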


While proving theorem \ref{q_main_theorem(Berele)}, we will conclude that the matrix $\{Q^q_{n}(z,z'), z,z' \in \mathcal{W}_{0}^{n}\}$ is conservative, i.e. it holds that $\sum_{z'\in \mathcal{W}^n_{0}}Q^q_{n}(z,z')=0$, for every $z \in \mathcal{W}^n_{0}$. Hence the following corollary is direct.
\begin{corollary}
\label{Pieri_mypolynomials(Berele)}
Let $n\geq 1$, $z \in \mathcal{W}^n_{0}$ and $q \in (0,1)$. The function $\mathcal{P}_{z}^{(n)}(\cdot ; q)$ satisfies the identity
\[\sum_{i=1}^n(a_{i}+a_{i}^{-1})\mathcal{P}_{z}^{(n)}(a;q)=\sum_{i=1}^n \Big(\mathcal{P}_{z+e_{i}}^{(n)}(a;q)f_{n}(z,z+e_{i})+\mathcal{P}_{z-e_{i}}^{(n)}(a;q)f_{n}(z,z-e_{i})\Big)\]
for $a=(a_{1},...,a_{n})\in \mathbb{C}^n\setminus\{0\}$.
\end{corollary}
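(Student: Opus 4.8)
The asserted identity is nothing but the conservativity of the rate matrix $Q^q_n$ of Theorem~\ref{q_main_theorem(Berele)}, rewritten; so the plan is to extract that conservativity from (the proof of) the theorem and then unwind the definitions.

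First I would record why $Q^q_n$ is conservative. The proof of Theorem~\ref{q_main_theorem(Berele)} proceeds, via Lemma~\ref{Q_intertwining} and the Markov-functions machinery, by exhibiting an intertwining of the form $Q^q_n\, M^n_{a,q} = M^n_{a,q}\, \mathcal{Q}$, where $\mathcal{Q}$ is the (conservative, bounded-rate, non-explosive) generator of the full $q$-deformed Berele process on $\mathbb{K}^{2n}_0$ and $M^n_{a,q}(\cdot;z)$ is the Markov kernel from $\mathcal{W}^n_0$ to $\mathbb{K}^{2n}_0$ defined in~\eqref{eq:initial_distribution(Berele)}. Applying both sides to the constant function $\mathbf{1}$ and using $\mathcal{Q}\mathbf{1}=0$ together with $M^n_{a,q}\mathbf{1}=\mathbf{1}$ gives $Q^q_n\mathbf{1} = Q^q_n M^n_{a,q}\mathbf{1} = M^n_{a,q}\mathcal{Q}\mathbf{1} = 0$, i.e. $\sum_{z'\in\mathcal{W}^n_0}Q^q_n(z,z')=0$ for every $z\in\mathcal{W}^n_0$. (This is exactly the remark stated just before the corollary.)

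Next I would unfold this row-sum using~\eqref{eq:rate_matrix_q_berele(Berele)}. The diagonal entry contributes $-\sum_{i=1}^n(a_i+a_i^{-1})$. Among the off-diagonal entries, $f_n(z,z')=0$ unless $z'=z\pm e_i$ for some $1\le i\le n$; moreover whenever $z+e_i$ (resp. $z-e_i$) fails to lie in $\mathcal{W}^n_0$ one has $z_{i-1}=z_i$ (resp. $z_i=z_{i+1}$), so the factor $1-q^{z_{i-1}-z_i}$ (resp. $1-q^{z_i-z_{i+1}}$) vanishes and the corresponding term drops out — this is precisely why the conventions $z_0=+\infty$, $z_{n+1}=0$ (and the symbols $\mathcal{P}^{(n)}_{z\pm e_i}$ for out-of-range indices, which are always multiplied by $0$) cause no trouble. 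Collecting terms, the row-sum identity reads
\[
\frac{1}{\mathcal{P}^{(n)}_z(a;q)}\sum_{i=1}^n\Big(\mathcal{P}^{(n)}_{z+e_i}(a;q)f_n(z,z+e_i)+\mathcal{P}^{(n)}_{z-e_i}(a;q)f_n(z,z-e_i)\Big)=\sum_{i=1}^n(a_i+a_i^{-1}).
\]
For $a\in\mathbb{R}^n_{>0}$ one has $\mathcal{P}^{(n)}_z(a;q)=\sum_{\mathbf{z}\in\mathbb{K}^{2n}_0[z]}w^{2n}_{a,q}(\mathbf{z})>0$ by~\eqref{eq:symp_q_whittaker(Berele)}, since it is a finite sum of products of positive $q$-binomial coefficients and positive powers of the $a_i$; hence one may clear the denominator and obtain the claimed Pieri identity for all $a\in\mathbb{R}^n_{>0}$.

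Finally I would extend to complex parameters. Both sides are Laurent polynomials in $a_1,\dots,a_n$ with coefficients rational in $q$ — the $\mathcal{P}^{(n)}_\lambda(\cdot;q)$ are Laurent polynomials by Definition~\ref{recursion_q_whittaker}, $f_n$ does not depend on $a$, and each $a_i+a_i^{-1}$ is a Laurent polynomial — so an equality valid on the open set $\mathbb{R}^n_{>0}$ is an identity of Laurent polynomials, hence holds for every $a\in\mathbb{C}^n\setminus\{0\}$ with all coordinates nonzero. I do not anticipate a genuine obstacle: the entire content of the corollary is the conservativity of $Q^q_n$, which comes for free from the intertwining underlying Theorem~\ref{q_main_theorem(Berele)}; the only care needed is the bookkeeping of boundary terms (handled by the vanishing of $f_n$) and the routine polynomial-identity continuation from positive reals to $\mathbb{C}^n\setminus\{0\}$.
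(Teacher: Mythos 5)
Your proposal is correct and follows the paper's own route: the paper derives the identity exactly as you do, by noting that the intertwining $Q^q_n\mathcal{L}_n=\mathcal{L}_n\hat{Q}^q_n$ established in the proof of Theorem~\ref{q_main_theorem(Berele)} forces $\sum_{z'}Q^q_n(z,z')=0$ (since $\hat{Q}^q_n$ is conservative and $\mathcal{L}_n(z',\cdot)$ sums to one), and then unwinding the definition of $Q^q_n$ and clearing the denominator $\mathcal{P}^{(n)}_z(a;q)$. Your explicit bookkeeping of the boundary terms via the vanishing of $f_n$ and the Laurent-polynomial continuation to $\mathbb{C}^n\setminus\{0\}$ simply make precise what the paper asserts in one sentence after the corollary.
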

 Although for the scope of this section we assume that the indeterminate $a$ is real-valued, the proof of the Corollary does not depend on this assumption and the identity in the Corollary holds for all $a \in \mathbb{C}^n\setminus \{0\}$. Therefore we conclude that the function $\mathcal{P}^{(n)}(\cdot;q)$ satisfies the Pieri identity for the $q$-deformed $\mathfrak{so}_{2n+1}$-Whittaker function we stated in Proposition \ref{q_eigenrelation_partition} supporting our conjecture that the $q$-deformed $\mathfrak{so}_{2n+1}$-Whittaker functions coincide with the polynomials $\mathcal{P}^{(n)}$ defined in \ref{recursion_q_whittaker}.
\section{Proof of Theorem \ref{q_main_theorem(Berele)}}

Let us denote by $\hat{Q}^q_{n} = \{\hat{Q}^q_{n}(\mathbf{z},\mathbf{w}), \mathbf{z},\mathbf{w}\in \mathbb{K}^{2n}_{0}\}$ the transition kernel of the process $\mathbf{Z}$ which evolves according to the $q$-deformed Berele dynamics. We also introduce the kernel $\mathcal{L}_{n}$ from $\mathcal{W}^n_{0}$ to $\mathbb{K}^{2n}_{0}$ given by 
\[\mathcal{L}_{n}(z,\mathbf{z})=\dfrac{w_{a,q}^{n}(\mathbf{z})}{\mathcal{P}^{(n)}_{z}(a;q)}\mathbbm{1}_{z^{2n}=z}.\]
Using the formula given in \eqref{eq:symp_q_whittaker(Berele)} for the function $\mathcal{P}^{(n)}_{z}(\cdot ;q)$, we see that for each $z\in \mathcal{W}^n_{0}$, $\mathcal{L}_{n}(z,\cdot)$ is a probability measure on the set of patterns $\mathbb{K}^{2n}_{0}$. Let $f(\mathbf{z})=z^{2n}$ be the projection of $\mathbf{z}$ to the bottom level. We see that the kernel $\mathcal{L}_{n}(z,\cdot)$ is supported on the set $\{\mathbf{z}\in \mathbb{K}^{2n}_{0}:f(\mathbf{z})=z\}$, for every $z\in \mathcal{W}^n_{0}$.\\
We will prove that $Q^q_{n}$ is intertwined with $\hat{Q}^q_{n}$ via the kernel $\mathcal{L}_{n}$, i.e. it holds that
\begin{equation}
\label{eq:main_intert(Berele)}
Q_{n}^q \mathcal{L}_{n}=\mathcal{L}_{n}\hat{Q}^q_{n}.
\end{equation}

We observe that the intertwining relation implies that $Q^q_{n}$ is conservative, i.e. it holds that for every $z \in \mathcal{W}^n_{0}$
\[\sum_{z' \in \mathcal{W}^n_{0}}Q^q_{n}(z,z')=0.\]
Indeed, we have
\[\sum_{\mathbf{z}\in \mathbb{K}^{2n}_{0}}(Q^q_{n}\mathcal{L}_{n})(z,\mathbf{z})=\sum_{z'\in \mathcal{W}^n_{0}}Q^q_{n}(z,z')\sum_{\mathbf{z}\in \mathbb{K}^{2n}_{0}}\mathcal{L}_{n}(z',\mathbf{z})=\sum_{z'\in \mathcal{W}^n_{0}}Q^q_{n}(z,z')\]
where we used that, by the definition of $\mathcal{P}^{(n)}_{z'}(\cdot;q)$ in \ref{recursion_q_whittaker}, $\mathcal{L}_{n}(z',\cdot)$ sums to one for every $z'\in \mathcal{W}^n_{0}$. \\
On the other hand, we have
\[\sum_{\mathbf{z}\in \mathbb{K}^{2n}_{0}}(\mathcal{L}{_n}\hat{Q}^q_{n})(z,\mathbf{z})=\sum_{\mathbf{w}\in \mathbb{K}^{2n}_{0}}\mathcal{L}_{n}(z,\mathbf{w})\sum_{\mathbf{z}\in \mathbb{K}^{2n}_{0}}\hat{Q}^q_{n}(\mathbf{w},\mathbf{z})=0\]
where we used the fact that $\hat{Q}^q_{n}$, as the transition rate matrix of a continuous-time Markov chain, is conservative. Therefore, it follows that $Q^q_{n}$ is conservative, as required.

We then conclude, using Theorem \ref{rogers_pitman} combined with Lemma \ref{Q_intertwining}, that the projection of the process $\mathbf{Z}$ to the bottom level i.e. for $f(\mathbf{z})=z^{2n}$, is a Markov chain on $\mathcal{W}^n_{0}$ with transition rate matrix $Q^q_{n}$. Finally, using the intertwining relation along with the assumption on the initial condition we conclude that the conditional law of $\mathbf{Z}(t)$ is given by $\mathcal{L}_{n}(Z^{2n}(t),\cdot) = M^n_{a,q}(\cdot ;Z^{2n}(t))$.

Let us now prove the intertwining relation \eqref{eq:main_intert(Berele)} by induction on the dimension $n$.

\noindent \textbf{\underline{Base case:}} Let us first establish the base case. When $n=1$, the transition rate matrix $\hat{Q}^q_{1}$ is given, for $(x,y),(x',y')\in \mathbb{K}^2_{0}$ with $(x',y')\neq (x,y)$, by
\begin{equation*}
\hat{Q}^q_{1}((x,y),(x',y')) = \left\{ \begin{array}{ll}
a_{1} r_{1}(y;x) & \text{ if }(x',y')=(x+1,y+1)\\
a_{1}[1- r_{1}(y;x)] & \text{ if }(x',y')=(x,y-1)\\
a_{1}^{-1} & \text{ if }(x',y')=(x,y+1)\\
0 & \text{ otherwise}
\end{array} \right. 
\end{equation*}
where $a_{1}\in \mathbb{R}_{>0}$ and $r_{1}(y;x)$ is as in \eqref{eq:probabilities(Berele)}. The diagonal elements are given by
\[\hat{Q}^q_{1}((x,y),(x,y))=-a_{1}-a_{1}^{-1} \text{, for }(x,y)\in \mathbb{K}^2_{0}.\]

We define the mapping $m_{1}:\mathbb{K}^2_{0}\mapsto [0,1]$
\[m_{1}(x,y)=a_{1}^{2x-y}\dbinom{y}{y-x}_{q} \, \dfrac{1}{\mathcal{P}^{(1)}_{y}(a_{1};q)}\]
where 
\[\mathcal{P}^{(1)}_{y}(a_{1};q) = \sum_{0\leq x \leq y}a_{1}^{2x-y}\dbinom{y}{y-x}_{q}.\]
The Markov kernel $\mathcal{L}_{1}$ then takes the form
\[\mathcal{L}_{1}(y,(x',y'))=m_{1}(x',y')\mathbbm{1}_{y=y'}\]
and the intertwining relation \eqref{eq:main_intert(Berele)} reads
\begin{equation}
Q^q_{1}(y,y')m_{1}(x',y')=\sum_{\substack{x \in \mathcal{W}^1_{0}:\\(x,y)\in \mathbb{K}^2_{0}}}m_{1}(x,y)\hat{Q}^q_{1}((x,y),(x',y'))
\label{eq:intert_base(Berele)}
\end{equation}
for all $y\in \mathcal{W}^1_{0}$ and $(x',y')\in \mathbb{K}^2_{0}$.

The $y$-particle may only take unit step to either direction, hence we will check the equality \eqref{eq:intert_base(Berele)} for $y'=y+1$, $y'=y-1$ and $y'=y$.

The transition $y'=y+1$ may have occurred in one of the following ways; either $(x',y')=(x,y+1)$ in which case particle $y$ jumped to the right on its own at rate $a_{1}^{-1}$ or $(x',y')=(x+1,y+1)$ and the particle $y$ jumped to the right due to pushing from the particle $x$. The latter transition occurred at rate $a_{1}r_{1}(y'-1;x'-1)$. \\
Using the properties of the $q$-binomial coefficient, we recorded in \eqref{eq:q_binomial}, we calculate the contribution of each case to the sum at the right hand side of \eqref{eq:intert_base(Berele)}. \\
\begin{equation*}
\begin{split}
m_{1}(x',y'-1)\hat{Q}^q_{1}((x',y'-1),&(x',y'))\\
&=a_{1}^{2x'-y'+1}\dbinom{y'-1}{y'-x'-1}_{q}\dfrac{1}{\mathcal{P}^{(1)}_{y'-1}(a_{1};q)}a_{1}^{-1}\\
&=a_{1}^{2x'-y'}\dbinom{y'}{y'-x'}_{q}\dfrac{1-q^{y'-x'}}{1-q^{y'}}\dfrac{1}{\mathcal{P}^{(1)}_{y'}(a_{1};q)}\dfrac{\mathcal{P}^{(1)}_{y'}(a_{1};q)}{\mathcal{P}^{(1)}_{y'-1}(a_{1};q)}\\
&=m_{1}(x',y')\dfrac{1-q^{y'-x'}}{1-q^{y'}}\dfrac{\mathcal{P}^{(1)}_{y'}(a_{1};q)}{\mathcal{P}^{(1)}_{y'-1}(a_{1};q)}.
\end{split}
\end{equation*}
For the second case, we have
\begin{equation*}
\begin{split}
m_{1}(x'-1,y'-1)\hat{Q}^q_{1}(&(x'-1,y'-1),(x',y'))\\
&=a_{1}^{2x'-y'-1}\dbinom{y'-1}{y'-x'}_{q}\dfrac{1}{\mathcal{P}^{(1)}_{y'-1}(a_{1};q)}a_{1}[1-r_{1}(y'-1;x'-1)]\\
&=a_{1}^{2x'-y'}\dbinom{y'}{y'-x'}_{q}\dfrac{(1-q^{x'})q^{y'-x'}}{1-q^{y'}}\dfrac{1}{\mathcal{P}^{(1)}_{y'}(a_{1};q)}\dfrac{\mathcal{P}^{(1)}_{y'}(a_{1};q)}{\mathcal{P}^{(1)}_{y'-1}(a_{1};q)}\\
&=m_{1}(x',y')\dfrac{(1-q^{x'})q^{y'-x'}}{1-q^{y'}}\dfrac{\mathcal{P}^{(1)}_{y'}(a_{1};q)}{\mathcal{P}^{(1)}_{y'-1}(a_{1};q)}.
\end{split}
\end{equation*}
Adding up both cases leads to
\[m_{1}(x',y')\dfrac{\mathcal{P}^{(1)}_{y'}(a_{1};q)}{\mathcal{P}^{(1)}_{y'-1}(a_{1};q)}\]
which agrees with the left hand side of \eqref{eq:intert_base(Berele)}.

Next we consider the case $y'=y-1$. The right hand side of \eqref{eq:intert_base(Berele)} involves a single term corresponding to the transition $(x',y')=(x,y-1)$ occurring at rate $a_{1}[1-r_{1}(y'+1;x')]$ and equals
\[a_{1}^{2x'-y'}\dbinom{y'+1}{y'-x'+1}_{q}\dfrac{1}{\mathcal{P}^{(1)}_{y'+1}(a_{1};q)}(1-q^{y'-x'+1})=m_{1}(x',y')(1-q^{y'+1})\dfrac{\mathcal{P}_{y'}^{(1)}(a_{1};q)}{\mathcal{P}_{y'+1}^{(1)}(a_{1};q)}\]
which gives the left-hand side of \eqref{eq:intert_base(Berele)} for $y'=y-1$.

Finally, if $y'=y$ we have that both the right and the left hand side of \eqref{eq:intert_base(Berele)} equal
\[m_{1}(x',y')(-a_{1}-a_{1}^{-1}).\]

\noindent \textbf{\underline{General case:}} 
We assume that the result holds for $n-1$, i.e. we have that the intertwining relation
\[Q^q_{n-1}\mathcal{L}_{n-1}=\mathcal{L}_{n-1}\hat{Q}^q_{n-1}\]
holds.

Proving the intertwining relation \eqref{eq:main_intert(Berele)} can be very challenging due to the complexity of the transition rate matrix $\hat{Q}^q_{n}$. We will therefore prove a helper intertwining relation that focuses only on a part of the pattern and then deduce the result for the whole pattern.

Let $\mathcal{S}_{n}:=\{(x,y,z)\in \mathcal{W}_{0}^{n-1}\times\mathcal{W}_{0}^{n}\times\mathcal{W}_{0}^{n}:x\preceq y \preceq z\}$ and consider the matrix $\mathcal{A}_{n}$ with off diagonal entries for $(x,y,z)$, $(x',y',z')$ as found in table \ref{tab:transitions(Berele)}.
\begin{table}[ht]
\begin{center}
\scalebox{0.9}{
\begin{tabular}{|l|l|}\hline
$(x',y',z')$ & $\mathcal{A}_{n}((x,y,z),(x',y',z'))$\\ \hline
$(x+e_{i},y+e_{i},z+e_{i})$, $1\leq i \leq n-1$ & $Q^q_{n-1}(x,x+e_{i})r_{i}(y;x)r_{i}(z;y)$\\
$(x+e_{i},y+e_{i},z+e_{i+1})$, $1\leq i \leq n-1$ & $Q^q_{n-1}(x,x+e_{i})r_{i}(y;x)(1-r_{i}(z;y))$\\
$(x+e_{i},y+e_{i+1},z+e_{i+1})$, $1\leq i \leq n-1$ & $Q^q_{n-1}(x,x+e_{i})(1-r_{i}(y;x))r_{i+1}(z;y)$\\
$(x+e_{i},y+e_{i+1},z+e_{i+2})$, $1\leq i \leq n-2$ & $Q^q_{n-1}(x,x+e_{i})(1-r_{i}(y;x))(1-r_{i+1}(z;y))$\\
$(x+e_{n-1},y,z-e_{n})$ & $Q^q_{n-1}(x,x+e_{n-1})(1-r_{n-1}(y;x))(1-r_{n}(z;y))$\\
$(x,y+e_{1},z+e_{1})$ & $a_{n}r_{1}(z;y)$\\
$(x,y+e_{1},z+e_{2})$ & $a_{n}(1-r_{1}(z;y))$\\
$(x,y,z+e_{1})$ & $a_{n}^{-1}$\\
$(x-e_{i},y-e_{i},z-e_{i})$, $1\leq i \leq n-1$ & $Q^q_{n-1}(x,x-e_{i})(1-l_{i}(y;x))(1-l_{i}(z;y))$\\
$(x-e_{i},y-e_{i},z-e_{i+1})$, $1\leq i \leq n-1$ & $Q^q_{n-1}(x,x-e_{i})(1-l_{i}(y;x))l_{i}(z;y)$\\
$(x-e_{i},y-e_{i+1},z-e_{i+1})$, $1\leq i \leq n-2$ & $Q^q_{n-1}(x,x-e_{i})l_{i}(y;x)(1-l_{i+1}(z;y))$\\
$(x-e_{i},y-e_{i+1},z-e_{i+2})$, $1\leq i \leq n-2$ & $Q^q_{n-1}(x,x-e_{i})l_{i}(y;x)l_{i+1}(z;y)$\\
$(x-e_{n-1},y-e_{n},z-e_{n})$ & $Q^q_{n-1}(x,x-e_{n-1})l_{n-1}(y;x)$\\
\hline
\end{tabular}
}
\end{center}
\caption{Off-diagonal entries of $\mathcal{A}_{n}$. Any off-diagonal entry not listed above equals zero. The matrix $Q^q_{n-1}$ is given in \eqref{eq:rate_matrix_q_berele(Berele)} and the probabilities $r_{i}, \, l_{i}$ are as in \eqref{eq:probabilities(Berele)}.}
\label{tab:transitions(Berele)}
\end{table}\\
The diagonal entries of $\mathcal{A}_{n}$ are given by
\[\mathcal{A}_{n}((x,y,z),(x,y,z))=-\sum_{i=1}^n(a_{i}+a_{i}^{-1}).\]

Using the recursive structure of the weights in \eqref{eq:q_weights(Berele)} we see that $\mathcal{P}^{(n)}_{z}(a;q)$ satisfies the relation
\begin{equation*}
\mathcal{P}^{(n)}_{z}(a;q)= \sum_{x \preceq y\preceq z}\Lambda_{n-1,n}^{a_{n},q}(x,y,z) \mathcal{P}_{x}^{(n-1)}(\tilde{a};q)
\end{equation*}
where the summation is over $(x,y,z)\in \mathcal{S}_{n}$ and the vector $\tilde{a} $ contains the first $n-1$ entries of $a$. We will then define a mapping $m_{n}:\mathcal{S}_{n}\mapsto [0,1]$ as follows
\[m_{n}(x,y,z)=\Lambda_{n-1,n}^{a_{n},q}(x,y,z) \dfrac{\mathcal{P}_{x}^{(n-1)}(\tilde{a};q)}{\mathcal{P}^{(n)}_{z}(a;q)}\]
along with a kernel $\mathcal{K}_{n}$ from $\mathcal{W}_{0}^n$ to $\mathcal{S}_{n}$ defined by
\[\mathcal{K}_{n}(z,(x',y',z'))=m_{n}(x',y',z')\mathbbm{1}_{z'=z}.\]

\begin{proposition}
\label{helper_proposition(Berele)}
The intertwining relation
\begin{equation}
Q^q_{n}\mathcal{K}_{n}=\mathcal{K}_{n}\mathcal{A}_{n}
\label{eq:intert_helper(Berele)}
\end{equation}
holds, for every $n>1$.
\end{proposition}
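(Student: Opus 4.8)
The plan is to verify the matrix identity $Q^q_{n}\mathcal{K}_{n}=\mathcal{K}_{n}\mathcal{A}_{n}$ entrywise, exploiting the inductive hypothesis $Q^q_{n-1}\mathcal{L}_{n-1}=\mathcal{L}_{n-1}\hat{Q}^q_{n-1}$ together with the explicit recursive factorisation $m_{n}(x,y,z)=\Lambda_{n-1,n}^{a_{n},q}(x,y,z)\,\mathcal{P}^{(n-1)}_{x}(\tilde{a};q)/\mathcal{P}^{(n)}_{z}(a;q)$. Since $\mathcal{K}_{n}(z,\cdot)$ is supported on triples with bottom coordinate exactly $z$, both sides of \eqref{eq:intert_helper(Berele)}, evaluated at $(z,(x',y',z'))$, reduce to a sum over states connected to $(x',y',z')$ by a single $\mathcal{A}_{n}$-transition. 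Concretely, I would fix a target triple $(x',y',z')\in\mathcal{S}_{n}$ and prove
\[
\sum_{z}Q^q_{n}(z,z')\,m_{n}(x',y',z')\;=\;\sum_{(x,y,z)\in\mathcal{S}_{n}}m_{n}(x,y,z)\,\mathcal{A}_{n}((x,y,z),(x',y',z'))
\]
by splitting according to the type of move $z'-z'$-predecessor indicated in Table~\ref{tab:transitions(Berele)}: the $a_{n}^{-1}$ self-push of the right edge, the $a_{n}r_{1}$ and $a_{n}(1-r_{1})$ pushes, the $Q^q_{n-1}$-driven rightward moves (four sub-cases), the cancellation move $(x+e_{n-1},y,z-e_{n})$, and the leftward/jeu-de-taquin moves (five sub-cases).

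The key mechanism in each case is the same: write $m_{n}(x,y,z)=\Lambda^{a_n,q}_{n-1,n}(x,y,z)\mathcal{P}^{(n-1)}_{x}(\tilde a;q)/\mathcal{P}^{(n)}_{z}(a;q)$, substitute the explicit product form of $\Lambda^{a_n,q}_{n-1,n}$, and use the $q$-binomial identities \eqref{eq:q_binomial} to absorb the shift in one coordinate of $(x,y,z)$ into the ratio of adjacent $q$-binomials — this is exactly the computation already carried out in the base case for $n=1$ and for the one-dimensional kernel $m_1$. The factors $r_i(y;x)$, $1-r_i(y;x)$, $l_i(y;x)$, $1-l_i(y;x)$ are by design the ratios $\binom{\cdot}{\cdot}_q/\binom{\cdot}{\cdot}_q$ times a power of $q$ that appears when one compares $\Lambda^{a_n,q}_{n-1,n}$ at neighbouring arguments, so each transition rate in Table~\ref{tab:transitions(Berele)} is precisely the quantity that makes the detailed contribution telescope. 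The terms carrying $Q^q_{n-1}(x,x\pm e_i)$ should then be collected and recognised, via the inductive intertwining $Q^q_{n-1}\mathcal{L}_{n-1}=\mathcal{L}_{n-1}\hat Q^q_{n-1}$ applied through the factor $\mathcal{P}^{(n-1)}_{x}(\tilde a;q)$, as producing the ratio $\mathcal{P}^{(n-1)}_{x'}(\tilde a;q)/\mathcal{P}^{(n-1)}_{x}(\tilde a;q)$ needed to match the left-hand side; the diagonal entries $-\sum_{i=1}^n(a_i+a_i^{-1})$ match trivially on both sides.

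I expect the main obstacle to be bookkeeping rather than a conceptual difficulty: one must organise the many transition types of $\mathcal{A}_{n}$ so that the boundary cases — the last ($i=n-1$ or $i=n$) particle, the special cancellation transition $(x+e_{n-1},y,z-e_{n})$, and the conventions $x_0=\infty$, $x_{n+1}=0$ in \eqref{eq:probabilities(Berele)} — are handled without double-counting. In particular the cancellation move mixes a rightward attempt of $x$ with a leftward move of $z$, so the identity for the target $(x+e_{n-1},y,z-e_{n})$ will receive contributions from two distinct predecessor states and the telescoping of $q$-binomials there is the most delicate check. Once this case is dispatched, the argument for the "helper" relation is complete, and in the next step (outside this proposition) one bootstraps from $\mathcal{A}_{n}$ to the full kernel $\hat{Q}^q_{n}$ by a further intertwining that glues the odd levels $z^{2k-1}$ between the even levels, thereby obtaining \eqref{eq:main_intert(Berele)}.
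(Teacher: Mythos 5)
Your proposal matches the paper's proof in both structure and mechanism: the paper likewise verifies the relation entrywise for a fixed target $(x',y',z')$, splits into the diagonal case and the cases $z'=z\pm e_{i}$, and telescopes each contribution from Table~\ref{tab:transitions(Berele)} using the $q$-binomial identities \eqref{eq:q_binomial} exactly as you describe, with the rates $r_{i}$, $l_{i}$ designed to make the ratios of $\Lambda^{a_{n},q}_{n-1,n}$ at neighbouring arguments cancel. One correction of emphasis: the inductive hypothesis $Q^q_{n-1}\mathcal{L}_{n-1}=\mathcal{L}_{n-1}\hat{Q}^q_{n-1}$ plays no role in this proposition --- the ratio $\mathcal{P}^{(n-1)}_{x'}(\tilde{a};q)/\mathcal{P}^{(n-1)}_{x}(\tilde{a};q)$ you need is already built into the explicit definition \eqref{eq:rate_matrix_q_berele(Berele)} of $Q^q_{n-1}(x,x\pm e_{i})$, so the helper relation is a self-contained algebraic identity, and the induction enters only afterwards when the helper relation is glued to $\hat{Q}^q_{n-1}$ to deduce \eqref{eq:main_intert(Berele)}. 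Also note that for fixed $z$ the left-hand side $(Q^q_{n}\mathcal{K}_{n})(z,(x',y',z'))$ is the single term $Q^q_{n}(z,z')\,m_{n}(x',y',z')$, not a sum over $z$; the sum on the right runs only over the pairs $(x,y)$ with $(x,y,z)\in\mathcal{S}_{n}$.
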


\begin{proof}(of Proposition \ref{helper_proposition(Berele)}) We will prove for $z\in \mathcal{W}_{0}^n$ and $ (x',y',z')\in \mathcal{S}_{n}$ the following identity
\begin{equation}
\label{eq:intert(Berele)}
Q^q_{n}(z,z')=\sum_{\substack{(x,y)\in \mathcal{W}_{0}^{n-1}\times \mathcal{W}_{0}^n:\\(x,y,z)\in \mathcal{S}_{n}}}\dfrac{m_{n}(x,y,z)}{m_{n}(x',y',z')}\mathcal{A}_{n}((x,y,z),(x',y',z')).
\end{equation}

The case $z=z'$ follows directly since the right-hand side of \eqref{eq:intert(Berele)} consists of a single term which corresponds to $(x,y,z)=(x',y',z')$ and equals 
\[\mathcal{A}_{n}((x',y',z'),(x',y',z'))=-\sum_{i=1}^n (a_{i}+a_{i})^{-1}.\]

When $z\neq z'$, expression \eqref{eq:intert(Berele)} simplifies to
\begin{equation}
\begin{split}
\Lambda_{n-1,n}^{a_{n},q}(x',y',z')&\mathcal{P}_{x'}^{(n-1)}(\tilde{a};q)f_{n}(z,z')\\
&=\sum_{\substack{(x,y)\in \mathcal{W}_{0}^{n-1}\times \mathcal{W}^n_{0}:\\(x,y,z)\in \mathcal{S}_{n}}}\Lambda_{n-1,n}^{a_{n},q}(x,y,z)\mathcal{P}_{x}^{(n-1)}(\tilde{a};q)\mathcal{A}_{n}((x,y,z),(x',y',z')).
\label{eq:intert_off_diag(Berele)}
\end{split}
\end{equation}
As the particles can only make unit jumps, both sides of the expression \eqref{eq:intert_off_diag(Berele)} vanish unless $z'=z \pm e_{i}$, for some $1\leq i \leq n$.

Let us consider the case $z'=z+e_{i}$. If $i=1$, the right-hand side of \eqref{eq:intert_off_diag(Berele)} consists of three terms corresponding to the following transitions
\begin{enumerate}[i)]
\item{$(x,y,z)=(x',y',z'-e_{1})$:} in this case $Z_{1}$ jumps to the right of its own volition at rate $a_{n}^{-1}$, then its contribution to the right hand side of \eqref{eq:intert_off_diag(Berele)} equals 
\begin{equation*}
\Lambda_{n-1,n}^{a_{n},q}(x',y',z'-e_{1})\mathcal{P}_{x'}^{(n-1)}(\tilde{a};q)a_{n}^{-1}=\Lambda_{n-1,n}^{a_{n},q}(x',y',z')\mathcal{P}_{x'}^{(n-1)}(\tilde{a};q)\dfrac{1-q^{z_{1}'-y_{1}'}}{1-q^{z_{1}'-z_{2}'}}
\end{equation*}
where we use the equality
\begin{equation*}
\begin{split}
\Lambda_{n-1,n}^{a_{n},q}(x',y',z'-e_{1})&= a_{n}^{2|y'|-|x'|-(|z'|-1)}\prod_{i=1}^{n-1}\dbinom{y_{i}'-y_{i+1}'}{y_{i}'-x_{i}'}_{q}\\
&\hspace{40pt} \times \dbinom{(z'_{1}-1)-z'_{2}}{(z'_{1}-1)-y'_{1}}_{q}\prod_{i=2}^{n-1}\dbinom{z_{i}'-z_{i+1}'}{z_{i}'-y_{i}'}_{q}\dbinom{z_{n}'}{z_{n}'-y_{n}'}_{q}\\ &= \Lambda_{n-1,n}^{a_{n},q}(x',y',z')\,a_{n}\,\dfrac{1-q^{z_{1}'-y_{1}'}}{1-q^{z_{1}'-z_{2}'}}
\end{split}
\end{equation*}
which  follows by the properties of the $q$-binomial recorded in \eqref{eq:q_binomial};
\item{$(x,y,z)=(x',y'-e_{1},z'-e_{1})$:} this transition corresponds to a right jump of $Y_{1}$ at rate $a_{n}$ which then pushes $Z_{1}$ with probability $r_{1}(z'-e_{1};y'-e_{1})$, therefore the corresponding term to the right hand side of \eqref{eq:intert_off_diag(Berele)} equals 
\begin{equation*}
\begin{split}
\Lambda_{n-1,n}^{a_{n},q}(x',y'-e_{1}&,z'-e_{1})\mathcal{P}_{x'}^{(n-1)}(\tilde{a};q)\,a_{n}\,r_{1}(z'-e_{1};y'-e_{1})\\
& = \Lambda_{n-1,n}^{a_{n},q}(x',y',z')\mathcal{P}_{x'}^{(n-1)}(\tilde{a};q)\dfrac{q^{z_{1}'-y_{1}'}(1-q^{y_{1}'-x_{1}'})(1-q^{y_{1}'-z_{2}'})}{(1-q^{z_{1}'-z_{2}'})(1-q^{y_{1}'-y_{2}'})};
\end{split}
\end{equation*}

\item{$(x,y,z)=(x'-e_{1},y'-e_{1},z'-e_{1})$:} in this case $X_{1}$ performs as right jump at rate $Q^q_{n-1}(x'-e_{1},x')=\dfrac{\mathcal{P}^{(n-1)}_{x'}(\tilde{a};q)}{\mathcal{P}^{(n-1)}_{x'-e_{1}}(\tilde{a};q)}f_{n-1}(x'-e_{1},x')$ and pushes $Y_{1}$ with probability $r_{1}(y'_{1};x'_{1})$ which then pushes $Z_{1}$ with probability $r_{1}(z'_{1};y'_{1})$, therefore this transition contributes 
\begin{equation*}
\begin{split}
\Lambda_{n-1,n}^{a_{n},q}(x'-e_{1}&,y'-e_{1},z'-e_{1})\mathcal{P}^{(n-1)}_{x'-e_{1}}Q^q_{n-1}(x'-e_{1},x')r_{1}(y'_{1};x'_{1})r_{1}(z'_{1};y'_{1})\\
&=\Lambda_{n-1,n}^{a_{n},q}(x',y',z')\mathcal{P}_{x'}^{(n-1)}(\tilde{a};q)\dfrac{q^{z_{1}'-x_{1}'}(1-q^{x_{1}'-y_{2}'})(1-q^{y_{1}'-z_{2}'})}{(1-q^{z_{1}'-z_{2}'})(1-q^{y_{1}'-y_{2}'})}.
\end{split}
\end{equation*}

\end{enumerate}
Adding the three terms leads to $\Lambda_{n-1,n}^{a_{n},q}(x',y',z')\mathcal{P}_{x'}^{(n-1)}(\tilde{a};q)$ which equals the left-hand side of \eqref{eq:intert_off_diag(Berele)}.

If $z'=z+e_{i}$, with $2\leq i \leq n$, the only non-zero terms come from the following transitions
\begin{enumerate}[i)]
\item 
	\begin{enumerate}[a.]
	\item$(x,y,z)=(x'-e_{i-2},y'-e_{i-1},z'-e_{i})$ if $i>2$,
	\item $(x,y,z)=(x',y'-e_{1},z'-e_{2})$ if $i=2$;
	\end{enumerate}
\item $(x,y,z)=(x'-e_{i-1},y'-e_{i-1},z'-e_{i})$;
\item $(x,y,z)=(x'-e_{i-1},y'-e_{i},z'-e_{i})$;
\item $(x,y,z)=(x'-e_{i},y'-e_{i},z'-e_{i})$ for $i<n$.
\end{enumerate}
The corresponding terms are
\begin{enumerate}[i)]
\item $\Lambda_{n-1,n}^{a_{n},q}(x',y',z')\mathcal{P}_{x'}^{n-1}(\tilde{a};q)(1-q^{z_{i-1}'-z_{i}'+1})\dfrac{(1-q^{y_{i-1}'-x_{i-1}'})(1-q^{z_{i}'-y_{i}'})}{(1-q^{z_{i}'-z_{i+1}'})(1-q^{y_{i-1}'-y_{i}'})}$;
\item $\Lambda_{n-1,n}^{a_{n},q}(x',y',z')\mathcal{P}_{x'}^{n-1}(\tilde{a};q)(1-q^{z_{i-1}'-z_{i}'+1})\dfrac{q^{y_{i-1}'-x_{i-1}'}(1-q^{x_{i-1}'-y_{i}'})(1-q^{z_{i}'-y_{i}'})}{(1-q^{z_{i}'-z_{i+1}'})(1-q^{y_{i-1}'-y_{i}'})}$;
\item $\Lambda_{n-1,n}^{a_{n},q}(x',y',z')\mathcal{P}_{x'}^{n-1}(\tilde{a};q)(1-q^{z_{i-1}'-z_{i}'+1})\dfrac{q^{z_{i}'-y_{i}'}(1-q^{y_{i}'-x_{i}'})(1-q^{y_{i}'-z_{i+1}'})}{(1-q^{z_{i}'-z_{i+1}'})(1-q^{y_{i}'-y_{i+1}'})}$;
\item $\Lambda_{n-1,n}^{a_{n},q}(x',y',z')\mathcal{P}_{x'}^{n-1}(\tilde{a};q)(1-q^{z_{i-1}'-z_{i}'+1})\dfrac{q^{z_{i}'-x_{i}'}(1-q^{y_{i}'-z_{i+1}'})(1-q^{x_{i}'-y_{i+1}'})}{(1-q^{z_{i}'-z_{i+1}'})(1-q^{y_{i}'-y_{i+1}'})}$.
\end{enumerate}
Gathering all the terms together we conclude that if $z'=z+e_{i}$, the right-hand side of \eqref{eq:intert_off_diag(Berele)} equals $\Lambda_{n-1,n}^{a_{n},q}(x',y',z')\mathcal{P}_{x'}^{(n-1)}(\tilde{a};q)(1-q^{z_{i-1}'-z_{i}'+1})$.

Next we consider the case $z'=z-e_{i}$, for some $1\leq i \leq n$. If $i=1$ the sum consists of a single term corresponding to $(x,y,z)=(x'+e_{1},y'+e_{1},z'+e_{1})$ and equals $\Lambda_{n-1,n}(x',y',z')\mathcal{P}_{x'}^{(n-1)}(\tilde{a};q)(1-q^{z_{1}'-z_{2}'+1})$. 

If $2\leq i \leq n$, we have the contribution of four terms corresponding to the following transitions.
\begin{enumerate}[i)]
\item $(x,y,z)=(x'+e_{i-2},y'+e_{i-1},z'+e_{i})$, for $i>2$;
\item $(x,y,z)=(x'+e_{i-1},y'+e_{i-1},z'+e_{i})$;
\item $(x,y,z)=(x'+e_{i-1},y'+e_{i},z'+e_{i})$;
\item \begin{enumerate}[a)] \item $(x,y,z)=(x'+e_{i},y'+e_{i},z'+e_{i})$, if $i<n$ \item $(x,y,z)=(x'-e_{n-1},y',z'+e_{n})$, if $i=n$.\end{enumerate}
\end{enumerate}
The corresponding terms are given by
\begin{enumerate}[i)]
\item $\Lambda_{n-1,n}^{a_{n},q}(x',y',z')\mathcal{P}_{x'}^{(n-1)}(\tilde{a};q)(1-q^{z_{i}'-z_{i+1}'+1})\dfrac{q^{x_{i-2}'-z_{i}'}(1-q^{y_{i-2}'-x_{i-2}'})(1-q^{z_{i-1}'-y_{i-1}'})}{(1-q^{z_{i-1}'-z_{i}'})(1-q^{y_{i-2}'-y_{i-1}'})}$;
\item $\Lambda_{n-1,n}^{a_{n},q}(x',y',z')\mathcal{P}_{x'}^{(n-1)}(\tilde{a};q)(1-q^{z_{i}'-z_{i+1}'+1})\dfrac{q^{y_{i-1}'-z_{i}'}(1-q^{x_{i-2}'-y_{i-1}'})(1-q^{z_{i-1}'-y_{i-1}'})}{(1-q^{z_{i-1}'-z_{i}'})(1-q^{y_{i-2}'-y_{i-1}'})}$;
\item $\Lambda_{n-1,n}^{a_{n},q}(x',y',z')\mathcal{P}_{x'}^{(n-1)}(\tilde{a};q)(1-q^{z_{i}'-z_{i+1}'+1})\dfrac{q^{x_{i-1}'-y_{i}'}(1-q^{y_{i-1}'-x_{i-1}'})(1-q^{y_{i-1}'-z_{i}'})}{(1-q^{z_{i-1}'-z_{i}'})(1-q^{y_{i-1}'-y_{i}'})}$;
\item $\Lambda_{n-1,n}^{a_{n},q}(x',y',z')\mathcal{P}_{x'}^{(n-1)}(\tilde{a};q)(1-q^{z_{i}'-z_{i+1}'+1})\dfrac{(1-q^{y_{i-1}'-z_{i}'})(1-q^{x_{i-1}'-y_{i}'})}{(1-q^{z_{i-1}'-z_{i}'})(1-q^{y_{i-1}'-y_{i}'})}$.
\end{enumerate}
We conclude that if $z'=z-e_{i}$, for $2\leq i \leq n$, the right-hand side of \eqref{eq:intert_off_diag(Berele)} equals $\Lambda_{n-1,n}^{a_{n},q}(x',y',z')\mathcal{P}_{x'}^{(n-1)}(\tilde{a};q)(1-q^{z_{i}'-z_{i+1}'+1})$.

\end{proof}
Let us now proceed to the proof of the full intertwining relation \eqref{eq:intert(Berele)}. For $\mathbf{z}\in \mathbb{K}^{2n}_{0}$, let $\mathbf{z}^{1:2n-2}$ denote the top $2n-2$ levels of $\mathbf{z}$. Then the Markov kernel $\mathcal{L}_{n}$ can be decomposed as follows. Let $z\in \mathcal{W}^n_{0}$ and $\mathbf{z}\in \mathbb{K}^{2n}_{0}$, then
\[\mathcal{L}_{n}(z,\mathbf{z})=\mathcal{L}_{n-1}(z^{2n-2},\mathbf{z}^{1:2n-2})\mathcal{K}_{n}(z,(z^{2n-2},z^{2n-1},z^{2n})).\]

Due to the fact that any transition at the pattern propagates to the bottom of the pattern and any transition initiated at the lower levels does not affect the upper levels of the pattern we may re-write the matrix $\hat{Q}^q_{n}$ as follows. For $\mathbf{z},\mathbf{w}\in \mathbb{K}^{2n}_{0}$, we have
\[\hat{Q}^q_{n}(\mathbf{z},\mathbf{w})=\hat{Q}^q_{n}(\mathbf{z}^{1:2n-2},\mathbf{w}^{1:2n-2})\dfrac{\mathcal{A}_{n}((z^{2n-2},z^{2n-1},z^{2n}),(w^{2n-2},w^{2n-1},w^{2n}))}{Q^{q}_{n-1}(z^{2n-2},w^{2n-2})}.\]

\noindent Therefore, for $z\in \mathcal{W}^n_{0}$ and $\mathbf{w}\in \mathbb{K}^{2n}_{0}$ we have
\begin{equation*}
\begin{split}
(\mathcal{L}_{n}\hat{Q}^q_{n})(z,\mathbf{w})&=\sum_{\mathbf{z}\in \mathbb{K}^{2n}_{0}}\mathcal{L}_{n}(z,\mathbf{z})\hat{Q}^q_{n}(\mathbf{z},\mathbf{w})\\
&=\sum_{(z^{2n-2},z^{2n-1},z^{2n})\in \mathcal{S}_{n}}\mathcal{K}_{n}(z,(z^{2n-2},z^{2n-1},z^{2n}))\\
& \\
& \hspace{50pt} \times \dfrac{\mathcal{A}_{n}((z^{2n-2},z^{2n-1},z^{2n}),(w^{2n-2},w^{2n-1},w^{2n}))}{Q^{q}_{n-1}(z^{2n-2},w^{2n-2})}\\
& \\
& \hspace{50pt}  \times  \sum_{\mathbf{z}^{1:2n-2}\in \mathbb{K}^{2n-2}_{0}}\mathcal{L}_{n-1}(z^{2n-2},\mathbf{z}^{1:2n-2})\hat{Q}^q_{n-1}(\mathbf{z}^{1:2n-2},\mathbf{w}^{1:2n-2}).
\end{split}
\end{equation*}
The inner sum equals $(\mathcal{L}_{n-1}\hat{Q}^q_{n-1})(z^{2n-2},\mathbf{w}^{1:2n-2})$, which by induction and due to the special structure of the kernel $\mathcal{L}_{n-1}$, equals
\[Q^q_{n-1}(z^{2n-2},w^{2n-2})\mathcal{L}_{n-1}(w^{2n-2},\mathbf{w}^{1:2n-2}).\]
For the outer sum, we will use the intertwining relation \eqref{eq:intert_helper(Berele)} to conclude 
\[(Q^q_{n}\mathcal{K}_{n})(z,(w^{2n-2},w^{2n-1},w^{2n}))=Q^q_{n}(z,w^{2n})\mathcal{K}_{n}(w^{2n},(w^{2n-2},w^{2n-1},w^{2n})).\]
Combining the above calculations we conclude that
\begin{equation*}
\begin{split}
(\mathcal{L}_{n}\hat{Q}^q_{n})(z,\mathbf{w})&=Q^q_{n}(z,w^{2n})\mathcal{L}_{n}(w^{2n-2},\mathbf{w}^{1:2n-2})\mathcal{K}_{n}(w^{2n},(w^{2n-2},w^{2n-1},w^{2n}))\\
&=Q^q_{n}(z,w^{2n})\mathcal{L}_{n}(w^{2n},\mathbf{w})\\
&=(Q^q_{n}\mathcal{L}_{n})(z,\mathbf{w})
\end{split}
\end{equation*}
as required.

\chapter{A $q$-deformed Markov process on symplectic Gelfand Tsetlin patterns}
\label{discrete}
In chapter \ref{Berele} we described a Markov process on the integer-valued symplectic Gelfand-Tsetlin cone where only the particles at the edge of the pattern may jump of their own volition, meaning that only the particles at the edge have their own independent exponential clocks initiating their movement whereas all the other particles can move only as a result of a pushing or pulling from another particle. We also proposed a $q$-deformation of the process and we proved that under certain initial conditions the even-indexed levels evolve as Markov processes. In this chapter we will construct a second process which in contrast with the process in \ref{Berele} is fully randomized, in the sense that all the particles have their own independent exponential clocks that drive their jumps. The process we will describe is a $q$-deformation of the one proposed by Warren-Windridge in \cite{Warren_Windridge_2009} and we briefly described in the Introduction.

\section{A $q$-deformed process on $\mathbb{K}^N_{0}$}
\label{q_deformed_dynamics}
In the Introduction we presented a continuous-time $\mathbb{K}^N_{0}$-valued Markov process introduced in \cite{Warren_Windridge_2009}. Here we will modify this process in the following way. Assume a particle attempts a jump. The probability of the jump to succeed will now depend on the distance of the particle from the boundary of the interval $\mathcal{I}^k_{i}$, where
\begin{equation*}
\mathcal{I}^k_{i}:=\left\{ \begin{array}{ll}
\lbrack z^{k-1}_{1}, \infty \rbrack & \text{ if }i=1\\
\lbrack0,z^{k-1}_{i-1}\rbrack & \text{ if } i= \frac{k+1}{2} \text{ with }k\geq 1 \text{ odd}\\
\lbrack z^{k-1}_{i}, z^{k-1}_{i-1} \rbrack & \text{ otherwise}
\end{array} \right. .
\end{equation*}
Let us now formally describe the process. We fix $n \in \mathbb{N}$ and assume that $N=2n$ or $2n-1$. We also fix a parameter $q\in (0,1)$ and a vector $a=(a_{1},...,a_{n})$ of real positive numbers. It is also useful to extend the vector $a$ to $\bar{a}=(\bar{a_{1}},...,\bar{a}_{N})$ such that $\bar{a}_{2l}=a_{l}^{-1}$ and $\bar{a}_{2l-1}=a_{l}$. We define a continuous-time process $\mathbf{Z}=(\mathbf{Z}(t);t\geq 0)$ with state space $\mathbb{K}^N_{0}$. The stochastic evolution of the process is as follows.
\begin{itemize}
\item \textbf{Right jumps.} Each particle $Z_{j}^k$ independently jumps to the right at rate $\bar{a}_{k}R_{j}(Z^k;Z^{k-1})$. The quantity $R_{j}$ is given by
\begin{equation}
R_{j}(Z^k;Z^{k-1})=(1-q^{Z^{k-1}_{j-1}-Z^{k}_{j}})\dfrac{1-q^{Z^{k}_{j}-Z^k_{j+1}+1}}{1-q^{Z^{k}_{j}-Z^{k-1}_{j}+1}}.
\label{eq:RightProbabilities}
\end{equation}
If $Z_{j}^k = Z^{k+1}_{j}$ and $Z^k_{j}$ jumps to the right, then $Z^{k+1}_{j}$ is simultaneously pushed one step to the right.

\item \textbf{Left jumps.} Each particle $Z^{k}_{j}$ independently jumps to the left at rate $\bar{a}_{k}^{-1}L_{j}(Z^k;Z^{k-1})$. The quantity $L_{j}$ is defined as
\begin{equation}
L_{j}(Z^k;Z^{k-1})=(1-q^{Z^{k}_{j}-Z^{k-1}_{j}})\dfrac{1-q^{Z^{k}_{j-1}-Z^k_{j}+1}}{1-q^{Z^{k-1}_{j-1}-Z^{k}_{j}+1}}.
\label{eq:LeftProbabilities}
\end{equation}
If $Z_{j}^k = Z^{k+1}_{j+1}$ and $Z^k_{j}$ jumps to the left, then $Z^{k+1}_{j+1}$ is simultaneously pushed one step to the left.
\end{itemize}
Regarding the particles at the edges of the pattern we consider the conventions $Z^k_{0}=\infty$ and $Z^k_{[\frac{k+1}{2}]+1}=0$ and the probabilities $R$ and $L$ are modified appropriately. Schematically, the probabilities $R$ and $L$ are as in figure \ref{jumping_rates}.

\begin{figure}[h]
\begin{center}
\begin{tikzpicture}[scale = 0.8]
\draw [fill] (0.5,2.2) circle [radius=0.09];
\node [above, black] at (0.5,2.3) {\large{$Z^{k-1}_j$}};
\draw [fill] (3.5,2.2) circle [radius=0.09];
\node [above, black] at (3.5,2.3) {\large{$Z^{k-1}_{j-1}$}};
\draw [fill] (-1,0.8) circle [radius=0.09];
\node [above, black] at (-1,0.9) {\large{$Z_{j+1}^k$}};
\draw [fill, red] (2,0.8) circle [radius=0.09];
\node [above, red] at (2,0.9) {\large{$Z_{j}^k$}};
\draw [fill] (5,0.8) circle [radius=0.09];
\node [above, black] at (5,0.9) {\large{$Z_{j-1}^k$}};

\draw [ultra thick, red] [->](2,0.6) to [bend right = 60] (3,0.8);
\draw [thick, ->](2.1,1) to node [above]{$a^r$} (3.4,2);
\draw [thick, <-](1.8,0.8) to node [above]{$b^r$} (-0.8,0.8);
\draw [thick, <-](1.9,1) to node [above]{$c^r$} (0.6,2);

\node at (2,-1) {a)};


\draw [fill] (9,2.2) circle [radius=0.09];
\node [above, black] at (9,2.3) {\large{$Z^{k-1}_j$}};
\draw [fill] (12,2.2) circle [radius=0.09];
\node [above, black] at (12,2.3) {\large{$Z^{k-1}_{j-1}$}};
\draw [fill] (7.5,0.8) circle [radius=0.09];
\node [above, black] at (7.5,0.9) {\large{$Z_{j+1}^k$}};
\draw [fill, red] (10.5,0.8) circle [radius=0.09];
\node [above, red] at (10.5,0.9) {\large{$Z_{j}^k$}};
\draw [fill] (13.5,0.8) circle [radius=0.09];
\node [above, black] at (13.5,0.9) {\large{$Z_{j-1}^k$}};

\draw [ultra thick, red] [->](10.5,0.6) to [bend left = 60] (9.5,0.8);
\draw [thick, ->](10.6,1) to node [above]{$c^l$} (11.9,2);
\draw [thick, ->](10.7,0.8) to node [above]{$b^l$} (13.3,0.8);
\draw [thick, <-](10.4,1) to node [above]{$a^l$} (9.1,2);
\node at (10.5,-1) {b)};
\end{tikzpicture}
\end{center} 
\caption{The particle $Z^k_{j}$ performs a right jump at rate $\bar{a}_{k}R_{j}(Z^k;Z^{k-1})=\bar{a}_{k}(1-q^{|a^r|})\frac{1-q^{|b^r|+1}}{1-q^{|c^r|+1}}$ where $a^r, b^r$ and $c^r$ are the arrows in part a).The particle $Z^k_{j}$ performs a left jump at rate $\bar{a}_{k}^{-1}L_{j}(Z^k;Z^{k-1})=\bar{a}_{k}^{-1}(1-q^{|a^l|})\frac{1-q^{|b^l|+1}}{1-q^{|c^l|+1}}$ where $a^l, b^l$ and $c^l$ are the arrows in part b). For an arrow $v:\alpha \to \beta$, we write $|v|:=\beta - \alpha$.} 
\label{jumping_rates}
\end{figure}
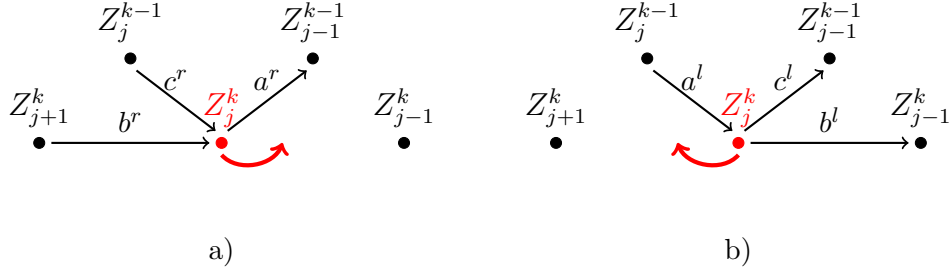

For each $\mathbf{z} \in \mathbb{K}^{N}_{0}$, we consider a real-valued weight $w^{(N)}_{a,q}(\mathbf{z})$ given by
\begin{equation}
w^{(N)}_{a,q}(\mathbf{z}) = \prod_{k=1}^N \Lambda_{k-1,k}^{\bar{a}_{k},q} (z^{k-1}, z^{k})
\label{eq:q_weights}
\end{equation}
where for $k=2l$ or $2l-1$, the weight of the "slice" $(z^{k-1},z^k)$ is calculated as
\begin{equation*}
\Lambda_{k-1,k}^{\bar{a}_{k},q} (z^{k-1}, z^{k})= (\bar{a}_{k})^{|z^{k}|-|z^{k-1}|}\mathlarger{\mathlarger{‎‎\prod}}_{i=1}^{l-1} \dbinom{z^{k}_{i}-z^{k}_{i+1}}{z^{k}_{i}-z^{k-1}_{i}}_{q}
\Big ( \mathbbm{1}_{\{k=2l-1\}} + \dbinom{z^{k}_{l}}{z^k_{l}-z^{k-1}_{l}}_{q} \mathbbm{1}_{\{k=2l\}}\Big ).
\end{equation*}
For example, if $n=1$, $a_{1} \in \mathbb{R}_{>0}$ and $q\in (0,1)$, the weight of $z^1 \in \mathbb{K}^{1}_0\equiv \mathbb{Z}_{\geq 0}$ is
\[w^{(1)}_{a_{1},q}(z^1)=a_{1}^{|z^1|}\]
and the weight of $(z^1,z^2)\in \mathbb{K}^2_{0}$ is
\[w^{(2)}_{a_{1},q}(z^1,z^2) = a_{1}^{2|z^1|-|z^2|}\dbinom{z^{2}_{1}}{z^{2}_{1}-z^1_{1}}_{q}.\]
Schematically, the weight $\Lambda^{\bar{a}_{k},q}_{k-1,k}(z^{k-1},z^{k})$ is shown in figure \ref{weight_slice} below.

\begin{figure}[h]
\begin{center}
\begin{tikzpicture}[scale = 0.7]

\draw [thick] (-3,0) -- (-3,3) ;
\draw [fill] (5,2) circle [radius=0.09];
\node [above right, black] at (5,2) {\large{$z_{1}^{k-1}$}};
\draw [fill] (-2,2) circle [radius=0.09];
\node [above right, black] at (-2,2) {\large{$z_{l}^{k-1}$}};
\draw [fill] (0,2) circle [radius=0.09];
\node [above right, black] at (0,2) {\large{$z_{l-1}^{k-1}$}};
\draw [fill] (6,0.5) circle [radius=0.09];
\node [above right, black] at (6,.5) {\large{$z_{1}^k$}};
\draw [fill] (4,0.5) circle [radius=0.09];
\node [above right, black] at (4,.5) {\large{$z_{2}^k$}};
\draw [fill] (1,0.5) circle [radius=0.09];
\node [above right, black] at (1,.5) {\large{$z_{l-1}^k$}};
\draw [fill] (-1,0.5) circle [radius=0.09];
\node [above right, black] at (-1,.5) {\large{$z_{l}^k$}};
\draw [dotted,thick] (2,0.5) -- (3,0.5);
\draw [dotted,thick] (2,2) -- (3,2);

\draw [thick][<->] (-2.8,0.5) -- (-1.2,0.5);
\draw [thick][<->] (-0.8,0.5) -- (0.8,0.5);
\draw [thick][<->] (4.2,0.5) -- (5.8,0.5);

\draw [gray, thick][<->] (-2,0.7) -- (-1.2,0.7);
\draw [gray, thick][<->] (0,0.7) -- (0.8,0.7);
\draw [gray,thick][<->] (5,0.7) -- (5.8,0.7);

\draw [dashed, thick](-2,1.8) -- (-2,0.7);
\draw [dashed, thick](0,1.8) -- (0,0.7);
\draw [dashed, thick](5,1.8) -- (5,0.7);

\node [below] at (2.5,-0.5) {a)};

\draw [thick] (8,0) -- (8,3) ;
\draw [fill] (15,2) circle [radius=0.09];
\node [above right, black] at (15,2) {\large{$z_{1}^{k-1}$}};
\draw [fill] (10,2) circle [radius=0.09];
\node [above right, black] at (10,2) {\large{$z_{l-1}^{k-1}$}};
\draw [fill] (16,0.5) circle [radius=0.09];
\node [above right, black] at (16,.5) {\large{$z_{1}^k$}};
\draw [fill] (14,0.5) circle [radius=0.09];
\node [above right, black] at (14,.5) {\large{$z_{2}^k$}};
\draw [fill] (11,0.5) circle [radius=0.09];
\node [above right, black] at (11,.5) {\large{$z_{l-1}^k$}};
\draw [fill] (9,0.5) circle [radius=0.09];
\node [above right, black] at (9,.5) {\large{$z_{l}^k$}};
\draw [dotted,thick] (12,0.5) -- (13,0.5);
\draw [dotted,thick] (12,2) -- (13,2);

\draw [thick][<->] (9.2,0.5) -- (10.8,0.5);
\draw [thick][<->] (14.2,0.5) -- (15.8,0.5);

\draw [gray, thick][<->] (10,0.7) -- (10.8,0.7);
\draw [gray,thick][<->] (15,0.7) -- (15.8,0.7);

\draw [dashed, thick](10,1.8) -- (10,0.7);
\draw [dashed, thick](15,1.8) -- (15,0.7);

\node [below] at (12.25,-0.5) {b)};
\end{tikzpicture}
\end{center} 
\caption{Weight for the \enquote{slice} $(z^{k-1},z^k)$ with a) $k=2l$ and b) $k=2l-1$. Each pair of arrows correspond to a $q$-binomial coefficient $\binom{\longleftrightarrow}{\, \,\textcolor{gray}{\leftrightarrow}}_{q}$ of $\Lambda^{\bar{a}_{k},q}_{k-1,k}(z^{k-1},z^{k})$.} 
\label{weight_slice}
\end{figure}
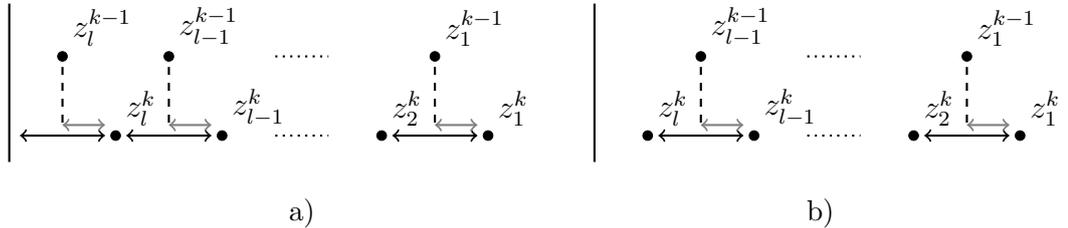
For each $z \in \mathcal{W}_{0}^n$, we define the function 
\begin{equation}
\hat{\mathcal{P}}_{z}^{(N)}(a;q) = \sum_{\mathbf{z}\in \mathbb{K}^{N}_0[z]}w^{(N)}_{a,q}(\mathbf{z}).
\label{eq:symp_q_whittaker}
\end{equation}
When $N=2n$ the function $\hat{\mathcal{P}}^{(N)}_{x}(a;q)$ gives the function $\mathcal{P}^{(n)}_{x}(a;q)$ we defined via the recursion in \ref{recursion_q_whittaker}.

For each $z \in \mathcal{W}^n_{0}$, the function $M^{(N)}_{a,q}(\cdot ; z): \mathbb{K}^N_{0}[z]\to [0,1]$ given by
\[M^{(N)}_{a,q}(\cdot ; z) = \dfrac{w^{(N)}_{a,q}(\cdot)}{\hat{\mathcal{P}}^{(N)}_{z}(a;q)}\]
defines a probability measure on $\mathbb{K}^N_{0}[z]$.

We are now ready to state the main result of the chapter.
\begin{theorem}
\label{q_main_theorem}
Let $n \in \mathbb{N}$ and $N=2n$ or $2n-1$. Suppose that the process $\mathbf{Z}=(\mathbf{Z}(t);t\geq 0)$ has initial distribution $M_{a,q}^{(N)}(\cdot;z)$, for some $z\in \mathcal{W}_{0}^{n}$. Then $Z^N=(Z^N(t),t \geq 0)$ is a Markov process in $\mathcal{W}_{0}^n$, started at $z$, with transition rates $\{Q_{N}^q(z,z'), z,z' \in \mathcal{W}_{0}^{n}\}$ given, for $z \neq z'$, by
\begin{equation}
Q_{N}^q(z,z')=\dfrac{\hat{\mathcal{P}}^{(N)}_{z'}(a;q)}{\hat{\mathcal{P}}^{(N)}_{z}(a;q)}f_{N}(z,z')
\label{eq:rate_matrix_q}
\end{equation}
where the function $f_{N}:\mathcal{W}_{0}^n \times \mathcal{W}_{0}^n \to \mathbb{R}$ is given by
\begin{equation*}
f_{N}(z,z') = \left\{ \begin{array}{lll}
1-q^{z_{i-1}-z_{i}} & \text{ if } z'-z = e_{i} &, 1\leq i \leq n\\
1-q^{z_{i}-z_{i+1}} & \text{ if } z'-z =- e_{i} &, 1\leq  i \leq n\\
0 & \text{ otherwise}
\end{array} \right.
\end{equation*}
with the convention $z_{0}=+\infty$ and $z_{n+1}=0$. The diagonal entries are
\begin{equation*}
Q_{N}^q(z,z) = \left\{ \begin{array}{ll}
-\sum_{i=1}^{n}(a_{i}+a_{i}^{-1}) & , N=2n\\
-\sum_{i=1}^{n-1}(a_{i}+a_{i}^{-1})-a_{n}-a_{n}^{-1}(1-q^{z_{n}}) & , N=2n-1.
\end{array} \right.
\end{equation*}
Moreover, for $t>0$ the conditional law of $\mathbb{Z}(t)$ given $\{Z^N(s),s\leq t\}$ is given by $M^{(N)}_{a,q}(\cdot;Z^N(t))$.
\end{theorem}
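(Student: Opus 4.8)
The plan is to follow exactly the same route as in the proof of Theorem~\ref{q_main_theorem(Berele)}: exhibit a Markov kernel $\mathcal{L}_{N}$ from $\mathcal{W}^{n}_{0}$ to $\mathbb{K}^{N}_{0}$, show it intertwines the (yet unknown) rate matrix $Q^{q}_{N}$ with the generator $\hat{Q}^{q}_{N}$ of the full process $\mathbf{Z}$, and then invoke Theorem~\ref{rogers_pitman} together with Lemma~\ref{Q_intertwining}. Concretely, define
\[
\mathcal{L}_{N}(z,\mathbf{z})=\dfrac{w^{(N)}_{a,q}(\mathbf{z})}{\hat{\mathcal{P}}^{(N)}_{z}(a;q)}\,\mathbbm{1}_{z^{N}=z},
\]
which by \eqref{eq:symp_q_whittaker} is a probability measure on $\mathbb{K}^{N}_{0}[z]$ supported on the fibre $\{f(\mathbf{z})=z\}$ of the projection $f(\mathbf{z})=z^{N}$. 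The target identity is
\[
Q^{q}_{N}\mathcal{L}_{N}=\mathcal{L}_{N}\hat{Q}^{q}_{N}.
\]
As in Chapter~\ref{Berele}, once this holds the conservativity of $Q^{q}_{N}$ is automatic (sum both sides over $\mathbf{z}$, using that each $\mathcal{L}_{N}(z',\cdot)$ sums to one and that $\hat{Q}^{q}_{N}$ is conservative), and the statement about the conditional law of $\mathbf{Z}(t)$ follows from the intertwining plus the hypothesis on the initial law.

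\textbf{Reduction to a one-slice relation and induction.} Rather than confronting $\hat{Q}^{q}_{N}$ directly, I would peel off the bottom slice. Write $\bar{a}_{N}$ for $a_{n}$ or $a_{n}^{-1}$ according to the parity of $N$, and use the recursive structure of \eqref{eq:q_weights}, namely
\[
\hat{\mathcal{P}}^{(N)}_{z}(a;q)=\sum_{y\,:\,y\preceq z}\Lambda^{\bar{a}_{N},q}_{N-1,N}(y,z)\,\hat{\mathcal{P}}^{(N-1)}_{y}(\tilde a;q),
\]
to factor $\mathcal{L}_{N}(z,\mathbf{z})=\mathcal{L}_{N-1}(z^{N-1},\mathbf{z}^{1:N-1})\,\mathcal{K}_{N}(z,(z^{N-1},z^{N}))$, where $\mathcal{K}_{N}(z,(y,z'))=m_{N}(y,z')\mathbbm{1}_{z'=z}$ and $m_{N}(y,z)=\Lambda^{\bar{a}_{N},q}_{N-1,N}(y,z)\hat{\mathcal{P}}^{(N-1)}_{y}(\tilde a;q)/\hat{\mathcal{P}}^{(N)}_{z}(a;q)$. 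Because every transition in the dynamics of Section~\ref{q_deformed_dynamics} that originates in the top $N-1$ levels propagates to level $N$ but never affects the levels below its origin, $\hat{Q}^{q}_{N}$ factorises along the same lines: $\hat{Q}^{q}_{N}(\mathbf{z},\mathbf{w})=\hat{Q}^{q}_{N-1}(\mathbf{z}^{1:N-1},\mathbf{w}^{1:N-1})\,\mathcal{B}_{N}((z^{N-1},z^{N}),(w^{N-1},w^{N}))/Q^{q}_{N-1}(z^{N-1},w^{N-1})$ for a two-level rate matrix $\mathcal{B}_{N}$ that I would tabulate explicitly (jump rates $\bar a_{N}R_{i}$, $\bar a_{N}^{-1}L_{i}$ of the $z^{N}$-particles, the contributions from $Q^{q}_{N-1}$-driven moves at level $N-1$ that push/pull level $N$ via the probabilities $r_{i},l_{i}$ of \eqref{eq:probabilities(Berele)}, cf.\ Table~\ref{tab:transitions(Berele)}). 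Then the full relation follows from the induction hypothesis $Q^{q}_{N-1}\mathcal{L}_{N-1}=\mathcal{L}_{N-1}\hat{Q}^{q}_{N-1}$ together with a \emph{helper intertwining} $Q^{q}_{N}\mathcal{K}_{N}=\mathcal{K}_{N}\mathcal{B}_{N}$ exactly as in Proposition~\ref{helper_proposition(Berele)}; the bookkeeping that assembles these two is verbatim the final computation of Section~4.4.

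\textbf{The base case and the core computation.} The base case $N=1$ is a two-state-per-site check (only a $z^{1}$-particle, jumping right at rate $a_{1}$, left at rate $a_{1}^{-1}(1-q^{z^{1}_{1}})$), carried out by the same $q$-binomial manipulations via \eqref{eq:q_binomial} as in the $n=1$ base case of Chapter~\ref{Berele}; for odd $N$ this is also where the modified diagonal entry $-a_{n}-a_{n}^{-1}(1-q^{z_{n}})$ first appears, reflecting that the innermost particle $z^{N}_{(N+1)/2}$ faces the hard wall at $0$ and so has left-rate damped by the factor $L_{(N+1)/2}$ evaluated against $z^{N-1}_{(N+1)/2}=0$. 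The main obstacle is the helper relation $Q^{q}_{N}\mathcal{K}_{N}=\mathcal{K}_{N}\mathcal{B}_{N}$: for each target $(y',z')$ one must collect all predecessor triples $(y,z)$ and verify that the weighted sum of $q$-binomial ratios telescopes to $\hat{\mathcal{P}}^{(N-1)}_{y'}(\tilde a;q)\Lambda^{\bar a_{N},q}_{N-1,N}(y',z')$ times $f_{N}(z,z')$. This is a longer version of the case analysis ($z'=z\pm e_{i}$, with special handling of $i=1$, $i=n$, and — when $N$ is odd — the wall-adjacent coordinate) done for Proposition~\ref{helper_proposition(Berele)}; the new feature here is that the level $N-1$ is itself randomised rather than deterministic, so each predecessor weight already carries its own $q$-binomial factors and the identities \eqref{eq:q_binomial} must be applied in both the $z^{N-1}\!/z^{N-2}$ and $z^{N}\!/z^{N-1}$ slices simultaneously. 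I expect no conceptual difficulty beyond that, only the careful enumeration of the (finitely many, parity-dependent) transition types and the routine but lengthy $q$-arithmetic; the odd-$N$ boundary terms are the place to be most careful.
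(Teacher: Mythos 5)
Your proposal follows essentially the same route as the paper's proof: the same kernel $\mathcal{L}_{N}$, the same reduction to a two-level helper intertwining $Q^{q}_{N}\mathcal{K}_{N}=\mathcal{K}_{N}\mathcal{A}_{N}$ handled separately for the odd-to-even and even-to-odd steps, and the same assembly via induction, Theorem \ref{rogers_pitman} and Lemma \ref{Q_intertwining}. The only small inaccuracy is that in this chapter the pushing from level $N-1$ to level $N$ is deterministic (triggered exactly when the particles coincide), not governed by the probabilities $r_{i},l_{i}$ of \eqref{eq:probabilities(Berele)} as in the Berele dynamics; otherwise your outline matches the paper's argument.
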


\newpage
\section{Proof of Theorem \ref{q_main_theorem}}
Let us denote by $\hat{Q}^q_{N} = \{\hat{Q}^q_{N}(\mathbf{z},\mathbf{w}),\mathbf{z},\mathbf{w}\in \mathbb{K}^{N}_{0}\}$ the transition kernel of the process $\mathbf{Z}$ introduced in section 5.1. We also introduce the kernel $\mathcal{L}_{N}$ from $\mathcal{W}^{[\frac{N+1}{2}]}_{0}$ to $\mathbb{K}^N_{0}$
\[\mathcal{L}_{N}(z,\mathbf{z})=\dfrac{w^{N}_{a,q}(\mathbf{z})}{\hat{\mathcal{P}}^{(N)}_{z}(a;q)}\mathbbm{1}_{z^N=z}.\]
By the definition of the function $\hat{\mathcal{P}}^{(N)}_{z}(\cdot;q)$ in \eqref{eq:symp_q_whittaker} we see that, for each $z\in\mathcal{W}^{[\frac{N+1}{2}]}_{0}$, $\mathcal{L}_{N}(z,\cdot)$ is a probability measure on the set of patterns $\mathbb{K}^N_{0}$ which is supported on the set $\{\mathbf{z}\in \mathbb{K}^N_{0}:f(\mathbf{z})=z\}$, where $f(\mathbf{z})=z^N$ is the projection of $\mathbf{z}$ to the bottom level. 

The key result for the proof of Theorem \ref{q_main_theorem} is the following intertwining relation
\begin{equation}
Q^q_{N}\mathcal{L}_{N}=\mathcal{L}_{N}\hat{Q}^q_{N}.
\label{eq:main_intert(discrete)}
\end{equation}

Suppose relation \eqref{eq:main_intert(discrete)} holds. The matrix $\hat{Q}^q_{N}$, as the transition kernel of a continuous-time Markov chain is conservative, i.e. $\sum_{\mathbf{z}\in \mathbb{K}^N_{0}}\hat{Q}^q_{N}(\mathbf{w},\mathbf{z})=0$, for every $\mathbf{w}\in \mathbb{K}^N_{0}$. Therefore, for each $z\in \mathcal{W}^{[\frac{N+1}{2}]}_{0}$ we have
\[\sum_{\mathbf{z}\in \mathbb{K}^N_{0}}(\mathcal{L}_{N}\hat{Q}^q_{N})(z,\mathbf{z})=\sum_{\mathbf{w}\in \mathbb{K}^N_{0}}\mathcal{L}_{N}(z,\mathbf{w})\sum_{\mathbf{z}\in \mathbb{K}^N_{0}}\hat{Q}^q_{N}(\mathbf{w},\mathbf{z})=0.\]
On the other hand
\[\sum_{\mathbf{z}\in \mathbb{K}^N_{0}}(Q^q_{N}\mathcal{L}_{N})(z,\mathbf{z})=\sum_{w\in \mathcal{W}^{[\frac{N+1}{2}]}_{0}}Q^q_{N}(z,w)\sum_{\mathbf{z}\in \mathbb{K}^N_{0}}\mathcal{L}_{N}(w,\mathbf{z})=\sum_{w\in \mathcal{W}^{[\frac{N+1}{2}]}_{0}}Q^q_{N}(z,w)\]
where we used the fact that for each $w\in \mathcal{W}^{[\frac{N+1}{2}]}_{0}$, $\mathcal{L}_{N}(w,\cdot)$ is a probability measure on $\mathbb{K}^N_{0}$. Therefore, the intertwining relation \eqref{eq:main_intert(discrete)} implies that the matrix $Q^q_{N}$ is also conservative. Along with the fact that all the off diagonal entries are non-negative, we conclude that $\{Q^q_{N}(z,z'),z,z'\in \mathcal{W}^{[\frac{N+1}{2}]}_{0}\}$ is the transition kernel of a continuous-time Markov chain on $\mathcal{W}^{[\frac{N+1}{2}]}_{0}$. 

We then conclude, using Theorem \ref{rogers_pitman} along with Lemma \ref{Q_intertwining}, that the projection of $\mathbf{Z}$ to the bottom level is a Markov chain on $\mathcal{W}^{[\frac{N+1}{2}]}_{0}$ with transition rate matrix $Q^q_{N}$ and the conditional law of $\mathbf{Z}(t)$ given $\{Z^N(s),s\leq t\}$ is given by $\mathcal{L}_{N}(Z^N(t),\cdot) = M^{(N)}_{a,q}(\cdot ; Z^N(t))$.

Let us now prove the intertwining relation \eqref{eq:main_intert(discrete)} by induction on $N$. When $N=1$ the pattern consists of a single particle. In this case we have that the transition kernels $Q^q_{1}$, $\hat{Q}^q_{1}$ coincide. Moreover, the Markov kernel $\mathcal{L}_{1}$ is given by $\mathcal{L}_{1}(z,z^1_{1})=\mathbbm{1}_{z^1_{1}=z}$, therefore relation \eqref{eq:main_intert(discrete)} holds trivially.  
 
Due to the different behaviour of the process at odd and even levels we will check the cases where $N$ is even and $N$ is odd separately. 

\paragraph{Part I: Iterating from odd to even row.}
Suppose that $N=2n$ for some $n\geq 1$ and
\[Q^q_{N-1}\mathcal{L}_{N-1}=\mathcal{L}_{N-1}\hat{Q}^q_{N-1}.\]
The transition kernel $\hat{Q}^q_{N}$ is very complicated to handle directly. Therefore we will first prove an intertwining relation that focuses on the two bottom levels of the pattern and then proceed to the intertwining relation for the whole pattern.

We introduce a matrix $\mathcal{A}_{N}$ on $\mathcal{W}_{0}^{n,n}=\{(x,y)\in \mathcal{W}_{0}^n \times \mathcal{W}_{0}^n: x \preceq y\}$ with off-diagonal entries given for $(x,y), (x',y') \in \mathcal{W}_{0}^{n,n}$ as in the following table\\ 
\begin{table}[ht]
\begin{center}
\begin{tabular}{|l|l|}\hline
$(x',y')$ & $\mathcal{A}_{N}((x,y),(x',y')))$\\ \hline
$(x+e_{i},y+e_{i}), x_{i}=y_{i}, \text{ for some }i\in [n]$ & $Q_{N-1}^q(x,x+e_{i})$\\
$(x+e_{i},y), x_{i}<y_{i}, \text{ for some } i\in [n]$ & $Q_{N-1}^q(x,x+e_{i})$\\
$(x-e_{i},y-e_{i+1}), x_{i}=y_{i+1},\text{ for some } i\in [n-1]$ & $Q_{N-1}^q(x,x-e_{i})$\\
$(x-e_{i},y), x_{i}>y_{i+1},\text{ for some } i\in [n-1]$ & $Q_{N-1}^q(x,x-e_{i})$\\
$(x-e_{n},y)$ & $Q_{N-1}^q(x,x-e_{n})$\\
$(x,y-e_{i}),\text{ for some }i\in [n]$ & $a_{n}L_{i}(y;x)$\\
$(x,y+e_{i}),\text{ for some }i\in [n]$ & $a_{n}^{-1}R_{i}(y;x)$\\
\hline
\end{tabular}
\end{center}
\end{table}

\noindent where $R_{i}, L_{i}$ are the probabilities defined in \eqref{eq:RightProbabilities}, \eqref{eq:LeftProbabilities}. All the other off-diagonal entries equal to zero. The diagonal entries are for $(x,y)\in \mathcal{W}_{0}^{n,n}$ equal to
\begin{small}
\begin{equation}
\mathcal{A}_{N}((x,y),(x,y)))=-\sum_{i=1}^{n-1}(a_{i}+a_{i}^{-1})-a_{n}-a_{n}^{-1}(1-q^{x_{n}})-\sum_{i=1}^n[a_{n}L_{i}(y;x)+a_{n}^{-1}R_{i}(y;x)].
\label{eq:Odd2EvenADiag}
\end{equation}
\end{small}

Let us define the mapping $m_{N}:\mathcal{W}_{0}^{n,n}\mapsto [0,1]$
\[m_{N}(x,y)=\Lambda_{N-1,N}^{a_{n}^{-1},q}(x,y) \dfrac{\hat{\mathcal{P}}_{x}^{(N-1)}(a;q)}{\hat{\mathcal{P}}^{(N)}_{y}(a;q)}.\]
Using the recursive structure of the weights in \eqref{eq:q_weights} we obtain the following relation for $\hat{\mathcal{P}}^{(N)}_{y}(a;q)$, with $N=2n$,
\begin{equation*}
\begin{split}
\hat{\mathcal{P}}^{(N)}_{y}(a;q)&= \sum_{\mathbf{z} \in \mathbb{K}^{N}_{0}[y]}w^{(N)}_{a,q}(\mathbf{z})\\
&= \sum_{x \in \mathcal{W}_{0}^n: x \preceq y} \Lambda_{N-1,N}^{a_{n}^{-1},q}(x,y)\sum_{\mathbf{w} \in \mathbb{K}^{N-1}_0[x]}w^{(N-1)}_{a,q}(\mathbf{w})\\
&= \sum_{x\in \mathcal{W}_{0}^n: x \preceq y}\Lambda_{N-1,N}^{a_{n}^{-1},q}(x,y) \hat{\mathcal{P}}_{x}^{(N-1)}(a;q)
\end{split}
\end{equation*}
therefore, we conclude that the mapping $m_{N}$ defines a Markov kernel $\mathcal{K}_{N}$ from $\mathcal{W}_{0}^n$ to $\mathcal{W}_{0}^{n,n}$ defined by
\[\mathcal{K}_{N}(y,(x',y'))=m_{N}(x',y')\mathbbm{1}_{y'=y}.\]

\begin{proposition}
\label{helper_intert_even(discrete)}
The transition kernel $Q_{N}^q$ is intertwined with $\mathcal{A}_{N}$ via the kernel $\mathcal{K}_{N}$, i.e. the relation
\[Q_{N}^q\mathcal{K}_{N} = \mathcal{K}_{N}\mathcal{A}_{N}\]
holds. 
\end{proposition}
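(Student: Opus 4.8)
The plan is to establish the intertwining $Q^{q}_{N}\mathcal{K}_{N}=\mathcal{K}_{N}\mathcal{A}_{N}$ entry by entry. Since $\mathcal{K}_{N}(z,(x,y))=m_{N}(x,y)\,\mathbbm{1}_{y=z}$, unwinding the two products reduces the claim to the identity
\[
Q^{q}_{N}(z,y')\,m_{N}(x',y')=\sum_{x\,:\,(x,z)\in\mathcal{W}_{0}^{n,n}}m_{N}(x,z)\,\mathcal{A}_{N}\bigl((x,z),(x',y')\bigr)
\]
for all $z\in\mathcal{W}_{0}^{n}$ and $(x',y')\in\mathcal{W}_{0}^{n,n}$. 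First I would substitute $m_{N}(x,z)=\Lambda_{N-1,N}^{a_{n}^{-1},q}(x,z)\,\hat{\mathcal{P}}_{x}^{(N-1)}(a;q)\big/\hat{\mathcal{P}}_{z}^{(N)}(a;q)$ and, wherever an entry $Q^{q}_{N-1}(x,x\pm e_{i})$ appears inside $\mathcal{A}_{N}$, its definition \eqref{eq:rate_matrix_q}; then the factors $\hat{\mathcal{P}}_{x'}^{(N-1)}(a;q)$ and $\hat{\mathcal{P}}_{z}^{(N)}(a;q)$ cancel out of every term, and what remains is a relation purely among the weights $\Lambda_{N-1,N}^{a_{n}^{-1},q}$, the functions $f_{N},f_{N-1}$, and the probabilities $R_{i},L_{i}$ of \eqref{eq:RightProbabilities}--\eqref{eq:LeftProbabilities} --- that is, an identity in $q$-binomial coefficients, to be handled with the rules in \eqref{eq:q_binomial}. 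Because the bottom particles move by unit steps, both sides vanish unless $y'=z$ or $y'=z\pm e_{i}$, so the verification splits into these cases.

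For $y'=z+e_{i}$ the interlacing $x'\preceq y'$ forces exactly one transition of $\mathcal{A}_{N}$ into $(x',y')$ from a state $(x,z)$: either the push $(x'-e_{i},z)\to(x',z+e_{i})$ at rate $Q^{q}_{N-1}(x'-e_{i},x')$, which is admissible precisely when $x'_{i}=z_{i}+1$ (in which case $(x',z)\notin\mathcal{W}_{0}^{n,n}$, so the self-jump is excluded), or else the self-jump $(x',z)\to(x',z+e_{i})$ at rate $a_{n}^{-1}R_{i}(z;x')$. In both subcases the identity reduces to a single ratio $\Lambda_{N-1,N}^{a_{n}^{-1},q}(x',z+e_{i})\big/\Lambda_{N-1,N}^{a_{n}^{-1},q}(\text{source})$, which I would expand via \eqref{eq:q_binomial}, using that raising an upper coordinate $z_{i}$ increases both arguments of the affected $q$-binomial by one while raising a lower coordinate $x'_{i}$ changes only the lower argument; the result is exactly $f_{N}(z,z+e_{i})=1-q^{z_{i-1}-z_{i}}$ times $\Lambda_{N-1,N}^{a_{n}^{-1},q}(x',z+e_{i})$. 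The case $y'=z-e_{i}$ is the mirror image, with the unique relevant transition being either the left-push $(x'+e_{i-1},z)\to(x',z-e_{i})$ for $2\le i\le n$ (admissible when $x'_{i-1}=z_{i}-1$) or the left self-jump at rate $a_{n}L_{i}(z;x')$, and the same bookkeeping yields $f_{N}(z,z-e_{i})$. Throughout, the slots $i=1$ and $i=n$ are handled through the conventions $x_{0}=z_{0}=+\infty$ and $z_{n+1}=0$; it is at $i=n$ that the special $q$-binomial $\binom{z_{n}}{z_{n}-x'_{n}}_{q}$ and the $1-q^{z_{n}}$ term of the odd-level diagonal of $Q^{q}_{N-1}$ come into play.

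The case $y'=z$ is the one genuine novelty compared with the corresponding step in Chapter~\ref{Berele}: there, every transition propagated all the way to the bottom, so the diagonal identity was a single term; here, each transition of the level-$(N-1)$ particle that does not meet a pushing condition leaves $z$ fixed, so the right-hand side is $m_{N}(x',z)\,\mathcal{A}_{N}((x',z),(x',z))$ together with the contributions of the transitions $(x'\mp e_{i},z)\to(x',z)$. After the $\hat{\mathcal{P}}$-factors cancel and one substitutes \eqref{eq:Odd2EvenADiag} and the odd-case diagonal of $Q^{q}_{N-1}$, this becomes the $q$-binomial identity
\begin{multline*}
\sum_{i=1}^{n}\Lambda_{N-1,N}^{a_{n}^{-1},q}(x'-e_{i},z)\,f_{N-1}(x'-e_{i},x')+\sum_{i=1}^{n}\Lambda_{N-1,N}^{a_{n}^{-1},q}(x'+e_{i},z)\,f_{N-1}(x'+e_{i},x')\\
=\Lambda_{N-1,N}^{a_{n}^{-1},q}(x',z)\Bigl(\sum_{i=1}^{n}\bigl[a_{n}L_{i}(z;x')+a_{n}^{-1}R_{i}(z;x')\bigr]-a_{n}^{-1}q^{x'_{n}}\Bigr),
\end{multline*}
with all sums over admissible configurations, which I would again reduce to \eqref{eq:q_binomial} by expanding each ratio $\Lambda_{N-1,N}^{a_{n}^{-1},q}(x'\mp e_{i},z)\big/\Lambda_{N-1,N}^{a_{n}^{-1},q}(x',z)$. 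I expect this diagonal identity, together with keeping straight all seven families of off-diagonal transitions of $\mathcal{A}_{N}$ and their admissibility constraints, to be the main obstacle; the individual off-diagonal verifications are routine once the $\hat{\mathcal{P}}$-cancellation is in place. Once the entrywise identity is verified for every $(x',y')$, Proposition~\ref{helper_intert_even(discrete)} follows.
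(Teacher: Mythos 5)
Your proposal is correct and follows essentially the same route as the paper: the same reduction to the entrywise identity $Q^q_N(z,y')\,m_N(x',y')=\sum_x m_N(x,z)\,\mathcal{A}_N((x,z),(x',y'))$, the same cancellation of the $\hat{\mathcal{P}}$-factors, the same case split $y'=z$, $y'=z\pm e_i$ with the push/self-jump dichotomy governed by interlacing, and in particular your displayed diagonal identity is exactly what the paper establishes in its ``STEP 1'' by telescoping the sums $\sum_i a_n(L_{i+1}+q^{y'_{i+1}-x'_{i+1}}-q^{y'_i-x'_i})$ and $\sum_i a_n^{-1}(R_i+q^{x'_{i-1}-y'_i}-q^{x'_i-y'_{i+1}})$. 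The $q$-binomial manipulations you defer as routine are precisely the computations carried out in the paper's proof.
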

\begin{proof}
The intertwining relation of Proposition \ref{helper_intert_even(discrete)} is equivalent to 
\begin{equation}
\label{eq:intert_even}
Q_{N}^q(y,y')=\sum_{x\preceq y}\dfrac{m_{N}(x,y)}{m_{N}(x',y')}\mathcal{A}_{N}((x,y),(x',y')), \quad y \in \mathcal{W}_{0}^n, (x',y')\in \mathcal{W}_{0}^{n,n}
\end{equation} 
where the summation is over $x \in \mathcal{W}_{0}^n$ such that $(x,y)\in \mathcal{W}_{0}^{n,n}$.

Both sides of \eqref{eq:intert_even} equal zero, except from the cases $y=y'$ and $y=y'+e_{i}$, $y=y'-e_{i}$, for some $1\leq i \leq n$. So our goal is to prove that \eqref{eq:intert_even} holds for these three cases, since every other choice is trivially true.\\

\noindent \textbf{STEP 1:} For the \textbf{diagonal case $y=y'$}, we note that $\mathcal{A}_{N}((x,y'),(x',y'))$ is non-zero only for \\
\textbf{a)} $x=x'+e_{i}$, for some $1\leq i \leq n$, assuming $x_{i}'<y_{i}'$;\\
\textbf{b)} $x=x'-e_{i}$, for some $1\leq i \leq n$, assuming $x_{i}'>y_{i+1}'$;\\
\textbf{c)} $x=x'$.

We will calculate the contribution of each case to the summation in the right-hand side of \eqref{eq:intert_even} separately and then we will combine the results together to obtain the left-hand side of \eqref{eq:intert_even}.\\
For \textbf{a)} let $x = x'+e_{i}$ for some $1\leq i \leq n$ which moreover satisfies $x_{i}'<y_{i}'$. Note that, if $x_{i}'\geq y_{i}'$, this would imply that $x_{i}=y_{i}+1$ which is not a valid configuration.

\noindent For $1\leq i<n$ we have 
\begin{equation*}
\begin{split}
\mathcal{A}_{N}((x'+e_{i},y'),(x',y'))&=Q_{N-1}^q(x'+e_{i},x')\\
&=\dfrac{\hat{\mathcal{P}}^{(N-1)}_{x'}(a;q)}{\hat{\mathcal{P}}^{(N-1)}_{x'+e_{i}}(a;q)}f_{N-1}(x'+e_{i},x')\\
&=\dfrac{\hat{\mathcal{P}}^{(N-1)}_{x'}(a;q)}{\hat{\mathcal{P}}^{(N-1)}_{x'+e_{i}}(a;q)}(1-q^{x'_{i}-x'_{i+1}+1})
\end{split}
\end{equation*}
and
\begin{equation*}
\begin{split}
m_{N}(x'+e_{i},y')&=\Lambda^{a_{n}^{-1},q}_{N-1,N}(x'+e_{i},y')\dfrac{\hat{\mathcal{P}}^{(N-1)}_{x'+e_{i}}(a;q)}{\hat{\mathcal{P}}^{(N)}_{y'}(a;q)}\\
&=m_{N}(x',y')\dfrac{\Lambda^{a_{n}^{-1},q}_{N-1,N}(x'+e_{i},y')}{\Lambda^{a_{n}^{-1},q}_{N-1,N}(x',y')}\dfrac{\hat{\mathcal{P}}^{(N-1)}_{x'+e_{i}}(a;q)}{\hat{\mathcal{P}}^{(N-1)}_{x'}(a;q)}
\end{split}
\end{equation*}
with
\begin{equation*}
\begin{split}
\Lambda^{a_{n}^{-1},q}_{N-1,N}(x'+e_{i},y')&=a_{n}^{\sum_{j=1}^nx_{j}'+1-\sum_{j=1}^n y'_{j}}\prod_{j\neq i}\dbinom{y'_{j}-y'_{j+1}}{y'_{j}-x'_{j}}_{q}\dbinom{y'_{i}-y'_{i+1}}{y'_{i}-(x'_{i}+1)}_{q}\dbinom{y'_{n}}{y'_{n}-x'_{n}}_{q}\\
&=\Lambda^{a_{n}^{-1},q}_{N-1,N}(x',y')\,a_{n}\dfrac{1-q^{y'_{i}-x'_{i}}}{1-q^{x'_{i}-y'_{i+1}+1}}
\end{split}
\end{equation*}
where for the last equality we used the following property of the $q$-binomial coefficient 
\[\dbinom{n}{k-1}_{q}=\dbinom{n}{k}_{q}\dfrac{1-q^k}{1-q^{n-k+1}}.\]
Therefore, we conclude that the contribution of the transition $x=x'+e_{i}$, for $1\leq i<n$ to the summation in the right-hand side of \eqref{eq:intert_even} is
\begin{equation*}
\begin{split}
\dfrac{m_{N}(x'+e_{i},y')}{m_{N}(x',y')}\mathcal{A}_{N}((x'+e_{i},y'),(x',y'))&=a_{n}(1-q^{y_{i}'-x_{i}'})\dfrac{1-q^{x_{i}'-x_{i+1}'+1}}{1-q^{x_{i}'-y_{i+1}'+1}}\\
&=a_{n}(L_{i+1}(y';x')+q^{y_{i+1}'-x_{i+1}'}-q^{y_{i}'-x_{i}'})
\end{split}
\end{equation*}
with 
\[L_{i+1}(y';x')=\dfrac{(1-q^{y'_{i+1}-x'_{i+1}})(1-q^{y'_{i}-y'_{i+1}+1})}{1-q^{x'_{i}-y'_{i+1}+1}}.\]
For the last equality we observe that the following holds
\[(1-q^{y_{i}'-x_{i}'})\dfrac{1-q^{x_{i}'-x_{i+1}'+1}}{1-q^{x_{i}'-y_{i+1}'+1}}=\dfrac{(1-q^{y'_{i+1}-x'_{i+1}})(1-q^{y'_{i}-y'_{i+1}+1})}{1-q^{x'_{i}-y'_{i+1}+1}}+ q^{y'_{i+1}-x'_{i+1}}-q^{y'_{i}-x'_{i}}.\]

\noindent When $i=n$, following the same reasoning, we have that
\[\dfrac{m_{N}(x'+e_{i},y')}{m_{N}(x',y')}\mathcal{A}_{N}((x'+e_{i},y'),(x',y')) = a_{n}(1-q^{y_{n}'-x_{n}'}).\]

\noindent For \textbf{b)} let $x=x'-e_{i}$, for some $1\leq i < n$ such that $x_{i}'>y_{i+1}'$. Then, calculations similar to the case \textbf{a)} lead to 
\begin{equation*}
\begin{split}
\dfrac{m_{N}(x'-e_{i},y')}{m_{N}(x',y')}\mathcal{A}_{N}((x'-e_{i},y'),(x',y'))&=a_{n}^{-1}(1-q^{x_{i}'-y_{i+1}'})\dfrac{1-q^{x_{i-1}'-x_{i}'+1}}{1-q^{y_{i}'-x_{i}'+1}}\\
&=a_{n}^{-1}(R_{i}(y';x')+q^{x_{i-1}'-y_{i}'}-q^{x_{i}'-y_{i+1}'})
\end{split}
\end{equation*}
where 
\[R_{i}(y';x') = \dfrac{(1-q^{x_{i-1}'-y_{i}'})(1-q^{y_{i}'-y_{i+1}'+1})}{1-q^{y_{i}'-x_{i}'+1}}\]
Similarly, if $x = x'-e_{n}$ such that $x_{n}'>0$, then
\[\dfrac{m_{N}(x'-e_{n},y')}{m_{N}(x',y')}\mathcal{A}_{N}((x'-e_{n},y'),(x',y')) = a_{n}^{-1}(R_{n}(y';x')+q^{x_{n-1}'-y_{n}'}-q^{x_{n}'}).\]

We observe that gathering all terms corresponding to case \textbf{a)}, we have the following
\begin{equation*}
\begin{split}
\sum_{i=1}^n \dfrac{m_{N}(x'+e_{i},y')}{m_{N}(x',y')}&\mathcal{A}_{N}((x;+e_{i},y'),(x',y'))\\
&= a_{n} \Big(\sum_{i=1}^{n-1}\big(L_{i+1}(y';x')+q^{y_{i+1}'-x_{i+1}'}-q^{y'_{i}-x'_{i}}\big)+1-q^{y_{n}'-x_{n}'} \Big)\\
& = a_{n}\Big(\sum_{i=1}^{n-1}L_{i+1}(y';x') + \sum_{i=1}^{n-1} q^{y'_{i+1}-x'_{i+1}}+1 - \sum_{i=1}^nq^{y'_{i}-x'_{i}} \Big)\\
&= a_{n}\Big(\sum_{i=2}^n L_{i}(y';x')+(1 - q^{y'_{1}-x'_{1}})\Big)\\
& = a_{n}\sum_{i=1}^n L_{i}(y';x').
\end{split}
\end{equation*}
Similarly for all the terms in case \textbf{b)} one can calculate
\[\sum_{i=1}^n \dfrac{m_{N}(x'-e_{i},y')}{m_{N}(x',y')}\mathcal{A}_{N}((x;-e_{i},y'),(x',y'))=a_{n}^{-1}\Big(\sum_{i=1}^n R_{i}(y';x')-q^{x_{n}'}\Big).\]
\noindent Combining the previous two cases with the case \textbf{c)}, which equals $\mathcal{A}_{N}((x',y'),(x',y'))$ defined as in \eqref{eq:Odd2EvenADiag}, we conclude that
\[\sum_{x \preceq y'}\dfrac{m_{N}(x,y')}{m_{N}(x',y')}\mathcal{A}_{N}((x,y'),(x',y'))=-\sum_{i=1}^n (a_{i}+a_{i}^{-1})=Q_{N}(y',y')\]
as required.\\

\noindent \textbf{STEP 2:} For the \textbf{off-diagonal cases $y\neq y'$}, we first recall that if $y \neq y'$
\[Q^q_{N}(y,y') = \dfrac{\hat{\mathcal{P}}^{(N)}_{y'}(a;q)}{\hat{\mathcal{P}}^{(N)}_{y}(a;q)}f_{N}(y,y')\]
and
\[\dfrac{m_{N}(x,y)}{m_{N}(x',y')} = \dfrac{\Lambda_{N-1,N}^{a_{n}^{-1},q}(x,y)}{\Lambda_{N-1,N}^{a_{n}^{-1},q}(x',y')}\dfrac{\hat{\mathcal{P}}^{(N)}_{y'}(a;q)}{\hat{\mathcal{P}}^{(N)}_{y}(a;q)}\dfrac{\hat{\mathcal{P}}^{(N-1)}_{x}(a;q)}{\hat{\mathcal{P}}^{(N-1)}_{x'}(a;q)}\]
therefore proving \eqref{eq:intert_even} for $y\neq y'$ is equivalent to proving the following identity
 \begin{equation}
\Lambda_{N-1,N}^{a_{n}^{-1},q}(x',y')f_{N}(y,y')= \sum_{x \preceq y}\Lambda_{N-1,N}^{a_{n}^{-1},q}(x,y)\dfrac{\hat{\mathcal{P}}^{(N-1)}_{x}(a;q)}{\hat{\mathcal{P}}^{(N-1)}_{x'}(a;q)}\mathcal{A}_{N}((x,y),(x',y')).
\label{eq:intert_even_off_diagonal}
\end{equation}

Let us first prove the relation \eqref{eq:intert_even_off_diagonal} for $y=y'-e_{i}$, for some $1\leq i \leq n$. We consider the dichotomy $y_{i}'=x_{i}'$ or $y_{i}'>x_{i}'$. Suppose we are in the former case, i.e. $y=y'-e_{i}$ and $y_{i}'=x_{i}'$, then necessarily $x=x'-e_{i}$, otherwise we would have $y_{i}=x_{i}-1$ and this would violate the interlacing condition. Therefore the right-hand side of \eqref{eq:intert_even_off_diagonal} equals

\begin{equation*}
\begin{split}
\Lambda_{N-1,N}^{a_{n}^{-1},q}(x'-e_{i},&y'-e_{i})\dfrac{\hat{\mathcal{P}}^{(N-1)}_{x'-e_{i}}(a;q)}{\hat{\mathcal{P}}^{(N-1)}_{x'}(a;q)}\mathcal{A}_{N}((x'-e_{i},y'-e_{i}),(x',y'))\\
&= \Lambda_{N-1,N}^{a_{n}^{-1},q}(x'-e_{i},y'-e_{i})\dfrac{\hat{\mathcal{P}}^{(N-1)}_{x'-e_{i}}(a;q)}{\hat{\mathcal{P}}^{(N-1)}_{x'}(a;q)}Q^q_{N-1}(x'-e_{i},x')\\
&=\Lambda_{N-1,N}^{a_{n}^{-1},q}(x',y')(1-q^{y_{i-1}'-y_{i}'+1})\dfrac{(1-q^{x_{i}'-y_{i+1}'})(1-q^{x_{i-1}'-x_{i}'+1})}{(1-q^{x_{i-1}'-y_{i}'+1})(1-q^{y_{i}'-y_{i+1}'})}
\end{split}
\end{equation*}
where for the last equality we used the properties of the $q$-binomial we recorded in \eqref{eq:q_binomial}. Using the assumption $x_{i}'=y_{i}'$ the last expression simplifies to $\Lambda_{N-1,N}^{a_{n}^{-1},q}(x',y')(1-q^{y_{i-1}'-y_{i}'+1})$.\\

\noindent On the other hand, if $y_{i}'>x_{i}'$, then it is not possible that the movement on the $i$-th component of $Y$ was performed due to pushing therefore the right--hand side of \eqref{eq:intert_even_off_diagonal} is given by
\begin{equation*}
\begin{split}
\Lambda_{N-1,N}^{a_{n}^{-1},q}&(x',y'-e_{i})\dfrac{\hat{\mathcal{P}}^{(N-1)}_{x'}(a;q)}{\hat{\mathcal{P}}^{(N-1)}_{x'}(a;q)}\mathcal{A}_{N}((x',y'-e_{i}),(x',y'))\\
&=\Lambda_{N-1,N}^{a_{n}^{-1},q}(x',y')a_{n}\dfrac{(1-q^{y_{i-1}'-y_{i}'+1})(1-q^{y_{i}'-x_{i}'+1})}{(1-q^{x_{i-1}'-y_{i}'})(1-q^{y_{i}'-y_{i+1}'+1})}a_{n}^{-1}R_{i}(y'-e_{i};x').
\end{split}
\end{equation*}
Using the definition of $R_{i}(y;x)$, in \eqref{eq:RightProbabilities}, we immediately see that the last quantity equals $\Lambda_{N-1,N}^{a_{n}^{-1},q}(x',y')(1-q^{y_{i-1}'-y_{i}'+1})$.\\

Let us now turn to the case $y=y'+e_{i}$ for some $1\leq i \leq n$. If $i=1$, the rightmost component of $Y$ doesn't have an upper right neighbour therefore the only valid transition is from $(x',y'+e_{1})$ to $(x',y')$, which occurs at rate $a_{n}L_{1}(y'+e_{i},x')$. The right-hand side of \eqref{eq:intert_even_off_diagonal} is then equal to 
\begin{equation*}
\begin{split}
\Lambda_{N-1,N}^{a_{n}^{-1},q}&(x',y'+e_{1})\dfrac{\hat{\mathcal{P}}^{(N-1)}_{x'}(a;q)}{\hat{\mathcal{P}}^{(N-1)}_{x'}(a;q)}\mathcal{A}_{N}((x',y'+e_{1}),(x',y'))\\
&=\Lambda_{N-1,N}^{a_{n}^{-1},q}(x',y')a_{n}^{-1}\dfrac{1-q^{y_{1}'-y_{2}'+1}}{1-q^{y_{1}'-x_{1}'+1}}a_{n}L_{1}(y'+e_{1};x').
\end{split}
\end{equation*}
where we used the properties of the $q$-binomial from \eqref{eq:q_binomial}. By the definition of $L_{1}(y;x)$, in \eqref{eq:LeftProbabilities}, the last expression equals $\Lambda_{N-1,N}^{a_{n}^{-1},q}(x',y')(1-q^{y_{1}'-y_{2}'+1})$, which is exactly the expression in the left-hand side of \eqref{eq:intert_even_off_diagonal}.

If $1<i \leq n$ we consider two cases; either $y_{i}'=x_{i-1}'$ or $y_{i}'<x_{i-1}'$. In the former case, i.e. $y_{i}=y_{i}+1$ and $y_{i}'=x_{i-1}'$, the transition occurred due to pushing since an independent jump of the $i$-th component of $Y$ would imply that $y_{i}-1=x_{i-1}$ and this would lead to violation of the intertwining property. So in this case the right-hand side of \eqref{eq:intert_even_off_diagonal} equals

\begin{equation*}
\begin{split}
\Lambda_{N-1,N}^{a_{n}^{-1},q}(x'+e_{i-1},& y'+e_{i})\dfrac{\hat{\mathcal{P}}^{(N-1)}_{x'+e_{i-1}}(a;q)}{\hat{\mathcal{P}}^{(N-1)}_{x'}(a;q)}\mathcal{A}_{N}((x'+e_{i-1},y'+e_{i}),(x',y'))\\
&=\Lambda_{N-1,N}^{a_{n}^{-1},q}(x',y')(1-q^{y_{i}'-y_{i+1}'+1})\dfrac{(1-q^{y_{i-1}'-x_{i-1}'})(1-q^{x_{i-1}'-x_{i}'+1})}{(1-q^{y_{i}'-x_{i}'+1})(1-q^{y_{i-1}'-y_{i}'})}
\end{split}
\end{equation*}
and using the assumption $y_{i}'=x_{i-1}'$, we conclude that the last quantity equals $\Lambda_{N-1,N}^{a_{n}^{-1},q}(x',y')(1-q^{y_{i}'-y_{i+1}'+1})$.\\

\noindent If we assume instead that $y_{i}'<x_{i-1}'$, then the only valid transition is from $(x',y'+e_{i})$ to $(x',y')$ which occurs at rate $a_{n}L_{i}(y'+e_{i};x')$. Then the right-hand side of \eqref{eq:intert_even_off_diagonal} is
\begin{equation*}
\begin{split}
\Lambda_{N-1,N}^{a_{n}^{-1},q}&(x',y'+e_{i})\dfrac{\hat{\mathcal{P}}^{(N-1)}_{x'}(a;q)}{\hat{\mathcal{P}}^{(N-1)}_{x'}(a;q)}\mathcal{A}_{N}((x',y'+e_{i}),(x',y'))\\
&=\Lambda_{N-1,N}^{a_{n}^{-1},q}(x',y')a_{n}^{-1}\dfrac{(1-q^{y_{i}'-y_{i+1}'+1})(1-q^{x_{i-1}'-y_{i}'})}{(1-q^{y_{i}'-x_{i}'+1})(1-q^{y_{i-1}'-y_{i}'})}a_{n}L_{i}(y'+e_{i};x').
\end{split}
\end{equation*}
which by the definition of $L_{i}(y;x)$, in \eqref{eq:LeftProbabilities}, equals $\Lambda_{N-1,N}^{a_{n}^{-1},q}(x',y')(1-q^{y_{i}'-y_{i+1}'+1})$.
\end{proof}

Let us now proceed to proving relation \eqref{eq:main_intert(discrete)}. For $\mathbf{z}\in \mathbb{K}^N_{0}$ we write $\mathbf{z}^{1:N-1}$ for the sub-pattern of the top $N-1$ levels of $\mathbf{z}$. The kernel $\mathcal{L}_{N}$ can be decomposed as follows, for $z\in \mathcal{W}^{n}_{0}$ and $\mathbf{z}\in \mathbb{K}^N_{0}$
\[\mathcal{L}_{N}(z,\mathbf{z})=\mathcal{L}_{N-1}(z^{N-1},\mathbf{z}^{1:N-1})\mathcal{K}_{N}(z,(z^{N-1},z^N)).\]
We also observe that any transition initiated at the top $N-1$ levels can affect $z^N$ only via $z^{N-1}$ and any transition initiated at the bottom level does not affect the sub-pattern $\mathbf{z}^{1:N-1}$, therefore the transition kernel $\hat{Q}^q_{N}$ of the process $\mathbf{Z}$ has off-diagonal entries, given for $\mathbf{z},\mathbf{w}\in \mathbb{K}^N_{0}$ by\\
\begin{table}[ht]
\begin{center}
\scalebox{0.9}{

\begin{tabular}{|l|l|}\hline
$\mathbf{w}$ & $\hat{Q}^q_{N}(\mathbf{z},\mathbf{w})$\\ \hline
$(z^{N-1}+e_{i},z^N+e_{i}), z^{N-1}_{i}=z^N_{i}, \text{ for some }i\in [n]$ & $\hat{Q}^q_{N}(\mathbf{z}^{1:N-1},\mathbf{w}^{1:N-1})$\\
$(z^{N-1}+e_{i},z^N), z^{N-1}_{i}<z^N_{i}, \text{ for some } i\in [n]$ & $\hat{Q}^q_{N}(\mathbf{z}^{1:N-1},\mathbf{w}^{1:N-1})$\\
$(z^{N-1}-e_{i},z^N-e_{i+1}), z^{N-1}_{i}=z^N_{i+1},\text{ for some } i\in [n-1]$ & $\hat{Q}^q_{N}(\mathbf{z}^{1:N-1},\mathbf{w}^{1:N-1})$\\
$(z^{N-1}-e_{i},z^N), z^{N-1}_{i}>z^N_{i+1},\text{ for some } i\in [n-1]$ & $\hat{Q}^q_{N}(\mathbf{z}^{1:N-1},\mathbf{w}^{1:N-1})$\\
$(z^{N-1}-e_{n},z^N)$ & $\hat{Q}^q_{N}(\mathbf{z}^{1:N-1},\mathbf{w}^{1:N-1})$\\
$(z^{1:N-1},z^N \pm e_{i}),\text{ for some } i \in [n]$ & $\mathcal{A}_{N}((z^{N-1},z^N),(w^{N-1},w^N))$\\
\hline
\end{tabular}

}
\end{center}
\end{table}

\noindent where $R_{i}, L_{i}$ are the probabilities defined in \eqref{eq:RightProbabilities}, \eqref{eq:LeftProbabilities}. All the other off-diagonal entries equal to zero. The diagonal entries are for $\mathbf{z}\in \mathbb{K}^{N}_{0}$ equal to
\begin{small}
\begin{equation}
\hat{Q}^q_{N}(\mathbf{z},\mathbf{z})=\hat{Q}^q_{N-1}(\mathbf{z}^{1:N-1},\mathbf{z}^{1:N-1})-\sum_{i=1}^n[a_{n}L_{i}(z^N;z^{N-1})+a_{n}^{-1}R_{i}(z^N;z^{N-1})].
\label{eq:Odd2EvenDiag}
\end{equation}
\end{small}
Then the intertwining relation \eqref{eq:main_intert(discrete)} is equivalent to proving for each $z\in \mathcal{W}^n_{0}$  and $\mathbf{w}\in \mathbb{K}^N_{0}$ the following
\begin{equation}
\begin{split}
Q_{N}^q(z,w^N)\mathcal{L}_{N}(w^N,\mathbf{w})&= \sum_{(z^{N-1},z^N)\in \mathcal{W}^{n,n}_{0}}\mathcal{K}_{N}(z,(z^{N-1},z^N))\\
&\hspace{50pt}\sum_{\mathbf{z}^{1:N-1}\in \mathbb{K}^{N-1}_{0}}\mathcal{L}_{N-1}(z^{N-1},\mathbf{z}^{1:N-1})\hat{Q}^q_{N}(\mathbf{z},\mathbf{w})
\end{split}
\label{eq:main_intert_decomposed_odd2even(discrete)}
\end{equation}
where $\mathbf{z}=(\mathbf{z}^{1:N-1},z^N)$.\\

\noindent \textbf{Diagonal case:} If $w^N = z$, then we necessarily have $w^N = z^N$ therefore the inner sum in the right hand side of \eqref{eq:main_intert_decomposed_odd2even(discrete)} equals
\begin{equation*}
\begin{split}
\sum_{\substack{\mathbf{z}^{1:N-1}\in \mathbb{K}^{N-1}_{0}:\\ \mathbf{z}^{1:N-1} \neq \mathbf{w}^{1:N-1}}}
\mathcal{L}_{N-1}&(z^{N-1},\mathbf{z}^{1:N-1})\hat{Q}^q_{N-1}(\mathbf{z}^{1:N-1},\mathbf{w}^{1:N-1})\\
+\mathcal{L}_{N-1}&(z^{N-1},\mathbf{w}^{1:N-1})\Big[\hat{Q}^q_{N-1}(w^{N-1},w^{N-1})\\
&\hspace{55pt}-\sum_{i=1}^n \big(a_{n}L_{i}(w^N;w^{N-1}) +a_{n}^{-1}R_{i}(w^N;w^{N-1})\big) \Big]\\
= \sum_{\mathbf{z}^{1:N-1}\in \mathbb{K}^{N-1}_{0}}&
\mathcal{L}_{N-1}(z^{N-1},\mathbf{z}^{1:N-1})\hat{Q}^q_{N-1}(\mathbf{z}^{1:N-1},\mathbf{w}^{1:N-1})\\
-\mathcal{L}_{N-1}&(z^{N-1},\mathbf{w}^{1:N-1})\sum_{i=1}^n \big(a_{n}L_{i}(w^N;w^{N-1}) +a_{n}^{-1}R_{i}(w^N;w^{N-1})\big)\\
= Q^q_{N-1}(z^{N-1}&,w^{N-1})\mathcal{L}_{N-1}(w^{N-1},\mathbf{w}^{1:N-1})\\
-\mathcal{L}_{N-1}&(z^{N-1},\mathbf{w}^{1:N-1})\sum_{i=1}^n \big(a_{n}L_{i}(w^N;w^{N-1}) +a_{n}^{-1}R_{i}(w^N;w^{N-1})\big)
\end{split}
\end{equation*}
where in the last equality we used the induction hypothesis.\\
Therefore, the right hand side of  \eqref{eq:main_intert_decomposed_odd2even(discrete)} equals
\begin{equation*}
\begin{split}
\mathcal{L}_{N-1}(w^{N-1},\mathbf{w}^{1:N-1})&\Big[\sum_{z^{N-1}\in \mathcal{W}^n_{0}}\mathcal{K}_{N}(z,(z^{N-1},z))Q^q_{N-1}(z^{N-1},w^{N-1})\\
-&\mathcal{K}_{N}(z,(w^{N-1},z))\sum_{i=1}^n \big(a_{n}L_{i}(w^N;w^{N-1}) +a_{n}^{-1}R_{i}(w^N;w^{N-1})\big)\Big].
\end{split}
\end{equation*}
We observe that the expression inside the square brackets equals
\[\sum_{z^{N-1}\in \mathcal{W}^n_{0}}\mathcal{K}_{N}(z,(z^{N-1},z))\mathcal{A}_{N}((z^{N-1},z),(w^{N-1},w^N))\]
for $z=w^N$.\\
Therefore, using Proposition \ref{helper_intert_even(discrete)}, we conclude that the right hand side of  \eqref{eq:main_intert_decomposed_odd2even(discrete)} equals
\[\mathcal{L}_{N-1}(w^{N-1},\mathbf{w}^{1:N-1})Q^q_{N}(z,w^N)\mathcal{K}_{N}(w^N,(w^{N-1},w^N)) = \mathcal{L}_{N}(z,\mathbf{w})Q^q_{N}(z,w^N)\]
as required.\\

\noindent \textbf{Off-diagonal case:} Assume that $z\neq w^N$, then the right hand side of  \eqref{eq:main_intert_decomposed_odd2even(discrete)} equals
\begin{equation*}
\begin{split}
\sum_{\substack{z^{N-1}\in \mathcal{W}^n_{0}:\\ z^{N-1}\neq w^{N-1}}}&\mathcal{K}_{N}(z,(z^{N-1},z))\sum_{\mathbf{z}^{1:N-1}\in \mathbb{K}^{N-1}_{0}}\mathcal{L}_{N-1}(z^{N-1},\mathbf{z}^{1:N-1})\hat{Q}^q_{N}(\mathbf{z},\mathbf{w})\\
+&\mathcal{K}_{N}(z,(w^{N-1},z))\sum_{\mathbf{z}^{1:N-1}\in \mathbb{K}^{N-1}_{0}}\mathcal{L}_{N-1}(w^{N-1},\mathbf{z}^{1:N-1})\hat{Q}^q_{N}(\mathbf{z},\mathbf{w}).
\end{split}
\end{equation*}
We observe that if $z^{N-1}=w^{N-1}$ and $z\neq w^N$ then necessarily $\mathbf{z}^{1:N-1}=\mathbf{w}^{1:N-1}$, therefore the second term equals
\[\mathcal{K}_{N}(z,(w^{N-1},z))\mathcal{L}_{N-1}(w^{N-1},\mathbf{w}^{1:N-1})\mathcal{A}_{N}((w^{N-1},z),(w^{N-1},w^N)).\]
The inner sum of the first term equals
\begin{equation*}
\begin{split}
\sum_{\mathbf{z}^{1:N-1}\in \mathbb{K}^{N-1}_{0}}\mathcal{L}_{N-1}(z^{N-1},\mathbf{z}^{1:N-1})&\hat{Q}^{q}_{N-1}(\mathbf{z}^{1:N-1},\mathbf{w}^{1:N-1})\\
& = (\mathcal{L}_{N-1}\hat{Q}^q_{N-1})(z^{N-1},\mathbf{w}^{1:N-1})\\
&= (Q^q_{N-1}\mathcal{L}_{N-1})(z^{N-1},\mathbf{w}^{1:N-1})
\end{split}
\end{equation*}
where the last equality is true from the induction hypothesis.\\
Therefore the right hand side of \eqref{eq:main_intert_decomposed_odd2even(discrete)} equals
\begin{equation*}
\begin{split}
\mathcal{L}_{N-1}(w^{N-1},\mathbf{w}^{1:N-1})\Big[&\sum_{\substack{z^{N-1}\in \mathcal{W}^{n}_{0}:\\ z^{N-1}\neq w^{N-1}}}\mathcal{K}_{N}(z,(z^{N-1},z))Q^q_{N-1}(z^{N-1},w^{N-1})\\
&+\mathcal{K}_{N}(z,(w^{N-1},z))\mathcal{A}_{N}((w^{N-1},z),(w^{N-1},w^N)).\Big]
\end{split}
\end{equation*}
Observe that the term inside the square brackets corresponds to 
\[(\mathcal{K}_{N}\mathcal{A}_{N})(z,(w^{N-1},w^N))\]
for $z\neq w^N$, therefore using the intertwining relation of Proposition \ref{helper_intert_even(discrete)} we conclude that the right hand side of \eqref{eq:main_intert_decomposed_odd2even(discrete)} equals
\begin{equation*}
\begin{split}
\mathcal{L}_{N-1}(w^{N-1},\mathbf{w}^{1:N-1})&(Q^q_{N}\mathcal{K}_{N})(z,(w^{N-1},w^N))\\
 &= \mathcal{L}_{N-1}(w^{N-1},\mathbf{w}^{1:N-1})Q^q_{N}(z,w^N)\mathcal{K}_{N}(w^N,(w^{N-1},w^N))\\
&= \mathcal{L}_{N}(z,\mathbf{w})Q^q_{N}(z,w^N)
\end{split}
\end{equation*}
as required.
\paragraph{Part II: Iterating from even to odd row.}
Suppose that $N=2n-1$, for some $n \geq 1$ and assume that
\[Q^q_{N-1}\mathcal{L}_{N-1}=\mathcal{L}_{N-1}\hat{Q}^q_{N-1}.\]
We again introduce a matrix $\mathcal{A}_{N}$ on $\mathcal{W}_{0}^{n-1,n}=\{(x,y)\in \mathcal{W}_{0}^{n-1} \times \mathcal{W}_{0}^n: x \preceq y\}$ which focuses only on the bottom two levels. The matrix $\mathcal{A}_{N}$ has off-diagonal entries given for $(x,y),(x',y')\in \mathcal{W}^{n-1,n}_{0}$ as in the following table
\\ 
\begin{table}[ht]
\begin{center}
\begin{tabular}{|l|l|}\hline
$(x',y')$ & $\mathcal{A}_{N}((x,y),(x',y')))$\\ \hline
$(x+e_{i},y+e_{i}), x_{i}=y_{i}, \text{ for some }i\in [n]$ & $Q_{N-1}^q(x,x+e_{i})$\\
$(x+e_{i},y), x_{i}<y_{i}, \text{ for some } i\in [n]$ & $Q_{N-1}^q(x,x+e_{i})$\\
$(x-e_{i},y-e_{i+1}), x_{i}=y_{i+1},\text{ for some } i\in [n-1]$ & $Q_{N-1}^q(x,x-e_{i})$\\
$(x-e_{i},y), x_{i}>y_{i+1},\text{ for some } i\in [n-1]$ & $Q_{N-1}^q(x,x-e_{i})$\\
$(x,y-e_{i}),\text{ for some }i\in [n]$ & $a_{n}^{-1}L_{i}(y;x)$\\
$(x,y+e_{i}),\text{ for some }i\in [n]$ & $a_{n}R_{i}(y;x)$\\
\hline
\end{tabular}
\end{center}
\end{table}

\noindent where $R_{i}, L_{i}$ are defined in \eqref{eq:RightProbabilities}, \eqref{eq:LeftProbabilities}. All the other off-diagonal entries equal to zero. The diagonal entries are for $(x,y)\in \mathcal{W}_{0}^{n-1,n}$ given by
\begin{small}
\begin{equation}
\mathcal{A}_{N}((x,y),(x,y)))=-\sum_{i=1}^{n-1}(a_{i}+a_{i}^{-1})-\sum_{i=1}^n[a_{n}^{-1}L_{i}(y;x)+a_{n}R_{i}(y;x)].
\label{eq:Even2OddADiag}
\end{equation}
\end{small}

We now need to define a Markov kernel from $\mathcal{W}_{0}^n$ to $\mathcal{W}^{n-1,n}_{0}$, i.e. a mapping from $\mathcal{W}^n_{0}\times \mathcal{W}^{n-1,n}_{0}$ to $[0,1]$ satisfying the properties of definition \ref{MarkovKernel}. It is easy to see that the function $\hat{\mathcal{P}}^{(N)}_{y}(a;q)$, with $N=2n-1$, defined in \eqref{eq:symp_q_whittaker} satisfies the following recursion
\[\hat{\mathcal{P}}^{(N)}_{y}(a;q)= \sum_{x\in \mathcal{W}_{0}^{n-1}: x \preceq y}\Lambda_{N-1,N}^{a_{n},q}(x,y) \hat{\mathcal{P}}_{x}^{(N-1)}(\tilde{a};q)\]
where for $a=(a_{1},...,a_{n})$, $\tilde{a}$ is a vector containing its first $n-1$ components. Therefore, $m:\mathcal{W}_{0}^{n-1,n}\mapsto [0,1]$, defined by
\[m_{N}(x,y)=\Lambda_{N-1,N}^{a_{n},q}(x,y) \dfrac{\hat{\mathcal{P}}_{x}^{(N-1)}(\tilde{a};q)}{\hat{\mathcal{P}}^{(N)}_{y}(a;q)}\]
gives a Markov kernel $\mathcal{K}_{N}$ from $\mathcal{W}_{0}^n$ to $\mathcal{W}_{0}^{n-1,n}$ defined by
\[\mathcal{K}_{N}(y,(x',y'))=m_{N}(x',y')\mathbbm{1}_{y'=y}.\]

\begin{proposition}
\label{helper_intert_odd(discrete)}
The transition kernel $Q^q_{N}$ is intertwined with $\mathcal{A}_{N}$ via the kernel $\mathcal{K}_{N}$, i.e. the relation
\[Q^q_{N}\mathcal{K}_{N} = \mathcal{K}_{N}\mathcal{A}_{N}\]
holds.
\end{proposition}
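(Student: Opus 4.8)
The plan is to follow, \emph{mutatis mutandis}, the proof of Proposition~\ref{helper_intert_even(discrete)}, with the roles of $a_{n}$ and $a_{n}^{-1}$ interchanged (reflecting $\bar a_{2n-1}=a_{n}$ in place of $\bar a_{2n}=a_{n}^{-1}$) and with care taken for the fact that the $x$-level now carries $n-1$ coordinates while the $y$-level carries $n$. First I would rewrite the intertwining $Q^q_{N}\mathcal K_{N}=\mathcal K_{N}\mathcal A_{N}$, using that $\mathcal K_{N}(y,(x',y'))$ is supported on $\{y'=y\}$, as the pointwise identity
\[
Q^q_{N}(y,y')=\sum_{x\,:\,(x,y)\in\mathcal W^{n-1,n}_{0}}\frac{m_{N}(x,y)}{m_{N}(x',y')}\,\mathcal A_{N}\big((x,y),(x',y')\big)
\]
for all $y\in\mathcal W^{n}_{0}$ and $(x',y')\in\mathcal W^{n-1,n}_{0}$. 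Since every transition recorded in the table for $\mathcal A_{N}$, and every off-diagonal entry of $Q^q_{N}$, moves $y$ by at most one unit step, both sides vanish unless $y=y'$ or $y=y'\pm e_{i}$ for some $1\le i\le n$, so it suffices to verify these three families.

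For the diagonal case $y=y'$ I would split the sum over $x$ into the contributions of $x=x'+e_{i}$ (valid only when $x'_{i}<y'_{i}$), of $x=x'-e_{i}$ (valid only when $x'_{i}>y'_{i+1}$), and of $x=x'$. On the first two families one uses the recursion $\hat{\mathcal P}^{(N)}_{y'}(a;q)=\sum_{x}\Lambda^{a_{n},q}_{N-1,N}(x,y')\hat{\mathcal P}^{(N-1)}_{x}(\tilde a;q)$ together with the $q$-binomial identities in~\eqref{eq:q_binomial} to express $\Lambda^{a_{n},q}_{N-1,N}(x'\pm e_{i},y')$ through $\Lambda^{a_{n},q}_{N-1,N}(x',y')$; the $\hat{\mathcal P}^{(N-1)}$ ratios cancel against the corresponding factor of $Q^q_{N-1}(x',x'\pm e_{i})$. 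Each summand then reduces, up to a telescoping correction, to a term of the form $a_{n}^{-1}L_{\bullet}(y';x')$ or $a_{n}R_{\bullet}(y';x')$, and summing (the same telescoping collapse as in Part~I) gives $a_{n}^{-1}\sum_{i}L_{i}(y';x')$ and $a_{n}\sum_{i}R_{i}(y';x')$, with the conventions $x_{0}=\infty$, $y_{n+1}=0$ at the bottom of the pattern producing the $-a_{n}-a_{n}^{-1}(1-q^{y'_{n}})$ part of the diagonal entry; adding $\mathcal A_{N}((x',y'),(x',y'))$ from~\eqref{eq:Even2OddADiag} leaves exactly $Q^q_{N}(y',y')$.

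For the off-diagonal cases $y=y'\pm e_{i}$ I would reduce, after cancelling the $\hat{\mathcal P}^{(N)}$ and $\hat{\mathcal P}^{(N-1)}$ ratios, to the slice identity
\[
\Lambda^{a_{n},q}_{N-1,N}(x',y')\,f_{N}(y,y')=\sum_{x\preceq y}\Lambda^{a_{n},q}_{N-1,N}(x,y)\,\frac{\hat{\mathcal P}^{(N-1)}_{x}(\tilde a;q)}{\hat{\mathcal P}^{(N-1)}_{x'}(\tilde a;q)}\,\mathcal A_{N}\big((x,y),(x',y')\big),
\]
and then treat $y=y'-e_{i}$ by the dichotomy $y'_{i}=x'_{i}$ (the step was forced by a pull from $X_{i}$, so $x=x'-e_{i}$) versus $y'_{i}>x'_{i}$ (an independent left jump, so $x=x'$), and $y=y'+e_{i}$ by the dichotomy $y'_{i}=x'_{i-1}$ (a push from $X_{i-1}$) versus $y'_{i}<x'_{i-1}$ (an independent right jump), with the boundary indices $i=1$ (no upper-right neighbour) and $i=n$ (the wall) handled separately. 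In every subcase the right-hand side collapses to $\Lambda^{a_{n},q}_{N-1,N}(x',y')$ times the relevant $q$-factor of $f_{N}$ by a single application of~\eqref{eq:q_binomial} together with the definitions~\eqref{eq:RightProbabilities} and~\eqref{eq:LeftProbabilities} of $R_{i}$ and $L_{i}$.

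The only genuinely delicate point, and the main obstacle, is bookkeeping the $a_{n}\leftrightarrow a_{n}^{-1}$ swap simultaneously with the reduced coordinate count of the even $x$-level: the conventions $x_{0}=\infty$, $x_{n}=0$ on the $(n-1)$-coordinate level and $y_{n+1}=0$ on the $n$-coordinate level interact precisely at the bottom of the pattern, and it is there that the nonuniform diagonal entry $-\sum_{i=1}^{n-1}(a_{i}+a_{i}^{-1})-a_{n}-a_{n}^{-1}(1-q^{z_{n}})$ of $Q^q_{N}$ arises; one must check that the telescoping in the diagonal case and the boundary subcase $i=n$ in the off-diagonal case are consistent with it. No new idea beyond the even-to-odd argument of Proposition~\ref{helper_intert_even(discrete)} is required.
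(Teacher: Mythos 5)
Your proposal follows exactly the paper's route: reduce the intertwining to the pointwise identity for each $y$ and $(x',y')$, observe both sides vanish unless $y'-y\in\{0,\pm e_{i}\}$, handle the diagonal case by summing the contributions of $x=x'\pm e_{i}$ and $x=x'$ with the telescoping collapse, and handle the off-diagonal cases via the slice identity and the push-versus-independent-jump dichotomy, which is precisely what the paper does (it writes out only the diagonal case and the $y=y'-e_{n}$ transition, deferring the rest to Part I). The only blemishes are cosmetic: you swap the words ``left'' and ``right'' when describing the jumps in the cases $y=y'\mp e_{i}$, and the diagonal-case sums equal $a_{n}^{-1}\sum_{i}L_{i}$ and $a_{n}\sum_{i}R_{i}$ only up to boundary corrections --- but you correctly identify that it is exactly these boundary terms, coming from the conventions at the bottom of the pattern, that produce the nonuniform diagonal entry $-a_{n}-a_{n}^{-1}(1-q^{y'_{n}})$.
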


\begin{proof}
The intertwining relation of Proposition \ref{helper_intert_odd(discrete)} is equivalent to
\begin{equation}
\label{eq:intert_odd}
Q_{N}^q(y,y')=\sum_{x\preceq y}\dfrac{m_{N}(x,y)}{m_{N}(x',y')}\mathcal{A}_{N}((x,y),(x',y')), \quad y \in \mathcal{W}_{0}^n, (x',y')\in \mathcal{W}_{0}^{n-1,n}
\end{equation} 
where the summation is over $x \in \mathcal{W}_{0}^{n-1}$ such that $(x,y)\in \mathcal{W}_{0}^{n-1,n}$.

Proving \eqref{eq:intert_odd} is similar to the calculations we performed for the odd-to-even intertwining relation \eqref{eq:intert_even}. Therefore, we will skip the calculations for most of the transitions and we will only prove the intertwining relation for the diagonal case, i.e. $y=y'$ and the rightward transition of $Y^n$.

\noindent \textbf{STEP 1:} For the \textbf{diagonal case $y=y'$}, we note that $\mathcal{A}_{N}((x,y'),(x',y'))$ is non-zero only for the following cases\\
\textbf{a)} $x=x'+e_{i}$, for some $1\leq i \leq n-1$ assuming $x_{i}'<y_{i}'$ ;\\
\textbf{b)} $x=x'-e_{i}$, for some $1\leq i \leq n-1$ assuming $x_{i}'>y_{i+1}'$;\\
\textbf{c)} $x=x'$.

Let us calculate the contribution of each case to the right-hand side of \eqref{eq:Even2OddADiag}.\\
\noindent For $\textbf{a)}$, let $x = x' + e_{i}$, for some $1\leq i \leq n-1$ where $x_{i}'<y_{i}'$. We then calculate
\begin{equation*}
\begin{split}
\dfrac{m_{N}(x'+e_{i},y')}{m_{N}(x',y')}\mathcal{A}_{N}((x'+e_{i},y'),(x',y'))&=\dfrac{m_{N}(x'+e_{i},y')}{m_{N}(x',y')}Q^q_{N-1}(x'+e_{i},x')\\
&=a_{n}^{-1}(1-q^{y_{i}'-x_{i}'})\dfrac{1-q^{x_{i}'-x_{i+1}'+1}}{1-q^{x_{i}'-y_{i+1}'+1}}.
\end{split}
\end{equation*}
It is easy to check that the last quantity equals
\[a_{n}^{-1}(L_{i+1}(y';x')+q^{y_{i+1}'-x_{i+1}'}-q^{y_{i}'-x_{i}'}).\]
For case \textbf{b)}, which corresponds to $x=x'-e_{i}$ for some $1\leq i \leq n-1$ with $x_{i}'>y_{i+1}'$, we have
\begin{equation*}
\begin{split}
\dfrac{m_{N}(x'-e_{i},y')}{m_{N}(x',y')}\mathcal{A}_{N}((x'-e_{i},y'),(x',y'))&=\dfrac{m_{N}(x'-e_{i},y')}{m_{N}(x',y')}Q^q_{N-1}(x'-e_{i},x')\\
&=a_{n}(1-q^{x_{i}'-y_{i+1}'})\dfrac{1-q^{x_{i-1}'-x_{i}'+1}}{1-q^{y_{i}'-x_{i}'+1}}
\end{split}
\end{equation*}
which is equal to
\[a_{n}(R_{i}(y';x')+q^{x_{i-1}'-y_{i}'}-q^{x_{i}'-y_{i+1}'}).\]
Finally, for case \textbf{c)} we have that this equals $\mathcal{A}_{N}((x',y'),(x',y'))$ as defined in \eqref{eq:Even2OddADiag}. Therefore, combining the cases \textbf{a)}, \textbf{b)} with the case corresponding to $x=x'$ we conclude that
\[\sum_{x \preceq y'}\dfrac{m_{N}(x,y')}{m_{N}(x',y')}\mathcal{A}_{N}((x,y'),(x',y'))=-\sum_{i=1}^{n-1} (a_{i}+a_{i}^{-1})-a_{n}-a_{n}^{-1}(1-q^{y_{n}'}).\]
as required.\\

\noindent \textbf{STEP 2:} For the rightward transition of $Y^n$, i.e. the case $y=y'-e_{n}$ we have that
\[Q^q_{N}(y'-e_{N},y') = \dfrac{\hat{\mathcal{P}}^{(N)}_{y'}(a;q)}{\hat{\mathcal{P}}^{(N)}_{y'-e_{n}}(a;q)}f_{N}(y'-e_{n},y')\]
and
\[\dfrac{m_{N}(x,y)}{m_{N}(x',y')} = \dfrac{\Lambda_{N-1,N}^{a_{n}^{-1},q}(x,y'-e_{n})}{\Lambda_{N-1,N}^{a_{n}^{-1},q}(x',y')}\dfrac{\hat{\mathcal{P}}^{(N)}_{y'}(a;q)}{\hat{\mathcal{P}}^{(N)}_{y'-e_{n}}(a;q)}\dfrac{\hat{\mathcal{P}}^{(N-1)}_{x}(\tilde{a};q)}{\hat{\mathcal{P}}^{(N-1)}_{x'}(\tilde{a};q)}.\]
Therefore, proving \eqref{eq:intert_odd}, for $y = y'-e_{n}$ is equivalent to proving the following relation
 \begin{equation}
\Lambda_{N-1,N}^{a_{n},q}(x',y')f_{N}(y'-e_{n},y')= \sum_{x \preceq y'-e_{n}}\Lambda_{N-1,N}^{a_{n},q}(x,y'-e_{n})\dfrac{\hat{\mathcal{P}}^{(N-1)}_{x}(\tilde{a};q)}{\hat{\mathcal{P}}^{(N-1)}_{x'}(\tilde{a};q)}\mathcal{A}_{N}((x,y'-e_{n}),(x',y')).
\label{eq:intert_odd_off_diagonal}
\end{equation}
The transition $y'=y+e_{n}$ can only have occurred due to jump of the leftmost particle of $Y$ of its own volition hence in this case $x=x'$ and the right hand side of \eqref{eq:intert_odd_off_diagonal} equals
\begin{equation*}
\begin{split}
\Lambda_{N-1,N}^{a_{n},q}&(x',y'-e_{n})\dfrac{\hat{\mathcal{P}}^{(N-1)}_{x'}(\tilde{a};q)}{\hat{\mathcal{P}}^{(N-1)}_{x'}(\tilde{a};q)}\mathcal{A}_{N}((x',y'-e_{n}),(x',y'))\\
&=\Lambda_{N-1,N}^{a_{n},q}(x',y')a_{n}^{-1}\dfrac{1-q^{y_{n-1}'-y_{n}'+1}}{1-q^{x_{n-1}'-y_{n}'+1}}a_{n}R_{n}(y'-e_{n};x').
\end{split}
\end{equation*}
Using the definition of $R_{i}(y;x)$ we immediately see that the last quantity equals $\Lambda_{N-1,N}^{a_{n},q}(x',y')(1-q^{y_{n-1}'-y_{n}'+1})$.
\end{proof}

Following the same arguments as for the odd-to-even case, one can prove the main intertwining relation \eqref{eq:main_intert(discrete)} for $N=2n-1$ from the intertwining relation of Proposition \ref{helper_intert_odd(discrete)}.

\chapter{Calculating the law of processes on patterns}
\label{q-whittaker-process}
In Chapters \ref{Berele} and \ref{discrete} we studied two processes whose state is space the set of integer-valued Gelfand-Tsetlin patterns denoted by $\mathbb{K}^N_{0}$ and proved that under appropriate initial conditions, if $N=2n$ for some $n\geq 1$ then the shape of the pattern for each process  is a partition that evolves as a continuous-time Markov process. More specifically, if $a=(a_{1},...,a_{n})\in \mathbb{R}^n$ and $q \in (0,1)$ then the shape evolves as a Markov process on the set of partitions with at most $n$ parts, $\Lambda_{n}\equiv \mathcal{W}^n_{0}$, with transition rate matrix 
with entries given by
\begin{equation*}
Q^q_{n}(z,z'):=\left\{ \begin{array}{ll}
\dfrac{\mathcal{P}^{(n)}_{z'}(a;q)}{\mathcal{P}^{(n)}_{z}(a;q)}f_{n}(z,z') & \text{ if } z'=z \pm e_{i},\text{ for some } 1\leq i \leq n\\
-\sum_{i=1}^n(a_{i}+a_{i}^{-1}) & \text{ if }z'=z\\
0 & \text{ otherwise}
\end{array} \right. 
\end{equation*}
where $f_{n}(z,z')$ is defined in \eqref{eq:us} and $\mathcal{P}^{(n)}_{z}(a;q)$ is defined via the recursion in \ref{recursion_q_whittaker}. 

In this chapter we will attempt to study the law of the process of the shape of the patterns. Then we can obtain the law of the whole pattern using the intertwining kernel. In order to do that we need an orthogonality and completeness result associated with the polynomials $\mathcal{P}^{(n)}$. Unfortunately, the only results that appear in the literature are in terms of the $q$-deformed $\mathfrak{so}_{2n+1}$-Whittaker functions defined in Chapter \ref{Koornwinder}. Therefore, the majority of the results in this Chapter are valid assuming the conjecture that the $q$-deformed $\mathfrak{so}_{2n+1}$-Whittaker functions satisfy the recursion in \ref{recursion_q_whittaker} is true. 

For $t\geq 0$, $z \in \mathcal{W}^n_{0}$ and $a \in \mathbb{R}^n_{>0}$, we define the function
\begin{equation}
p_{t}^{(n)}(z;a,q)= \dfrac{\mathcal{P}^{(n)}_{z}(a;q)Q_{z}^{(n)}(t;q)}{\Pi(a;t)}
\label{eq:law_discrete}
\end{equation}
where $\mathcal{P}^{(n)}_{z}(a;q)$ is the function defined in \ref{recursion_q_whittaker}
\begin{equation}
\label{eq:norm_constant}
\Pi(a;t):=e^{\sum_{i=1}^n(a_{i}+a_{i}^{-1})t}
\end{equation}
and
\begin{equation*}
Q_{z}^{(n)}(t;q):= \dfrac{\langle \Pi(\cdot;t),\mathcal{P}^{(n)}_{z}(\cdot;q)\rangle_{\hat{\Delta}^{(n)}}}{\langle \mathcal{P}^{(n)}_{z}(\cdot, q),\mathcal{P}^{(n)}_{z}(\cdot;q)\rangle_{\hat{\Delta}^{(n)}}}
\end{equation*}
where $\langle \cdot, \cdot \rangle_{\hat{\Delta}^{(n)}}$ is the inner product defined in \eqref{eq:torus_scalar_qWhittaker}. 

Assuming that Conjecture \ref{conjecture} holds, the polynomials $\mathcal{P}^{(n)}$ are orthogonal with respect to the inner product $\langle \cdot, \cdot \rangle_{\hat{\Delta}^{(n)}}$ and by \eqref{eq:hyperoctahedral_orthogonality} it holds that
\[\langle \mathcal{P}^{(n)}_{z}(\cdot;q),\mathcal{P}^{(n)}_{z}(\cdot;q) \rangle_{\hat{\Delta}^{(n)}}=1/\Delta_{z}^{(n)}\]
where
\[\Delta^{(n)}_{z}=\dfrac{(q;q)^n_{\infty}}{(q;q)_{z_{n}}\prod_{1\leq j <n}(q;q)_{z_{j}-z_{j+1}}}.\]
Therefore, we may re-write $Q^{(n)}_{z}(t;q)$ as follows
\begin{equation}
Q^{(n)}_{z}(t;q) = \Delta^{(n)}_{z}\langle \Pi(\cdot;t),\mathcal{P}^{(n)}_{z}(\cdot;q)\rangle_{\hat{\Delta}^{(n)}}.
\label{eq:Q(t;q)}
\end{equation}

\begin{lemma}
Under the assumption of Conjecture \ref{conjecture}, $p_{t}^{(n)}(\cdot;a,q)$ defines a probability measure on the set of partitions $\Lambda_{n}$.
\end{lemma}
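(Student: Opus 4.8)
The plan is to show that $p_t^{(n)}(\cdot;a,q)$ is non-negative and sums to one over $\Lambda_n$. Non-negativity is immediate once we observe that $\mathcal{P}^{(n)}_z(a;q) > 0$ for $a \in \mathbb{R}^n_{>0}$ (this follows from the combinatorial formula \eqref{eq:symp_q_whittaker}, since it is a sum of products of non-negative geometric weights and $q$-binomial coefficients), that $\Pi(a;t) > 0$, and that $Q^{(n)}_z(t;q) \geq 0$. For the last point, by \eqref{eq:Q(t;q)} we have $Q^{(n)}_z(t;q) = \Delta^{(n)}_z \langle \Pi(\cdot;t), \mathcal{P}^{(n)}_z(\cdot;q)\rangle_{\hat{\Delta}^{(n)}}$, and expanding $\Pi(\cdot;t) = e^{\sum_i(a_i+a_i^{-1})t}$ into the orthogonal basis $\{\mathcal{P}^{(n)}_\mu\}$ via the isometry $\mathbf{F}$ of the theorem of van Diejen-Emsiz (using Conjecture \ref{conjecture} to identify $\mathcal{P}^{(n)}$ with the $q$-deformed $\mathfrak{so}_{2n+1}$-Whittaker function), the inner product picks out the coefficient of $\mathcal{P}^{(n)}_z$, which I will argue is non-negative; alternatively one sees directly that $Q^{(n)}_z(t;q)$ is the transition function of the Markov chain with generator $Q^q_n$, hence non-negative, but establishing that identification is really the content of the second half.

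The main step is the normalisation $\sum_{z \in \Lambda_n} p_t^{(n)}(z;a,q) = 1$. The idea is to use the completeness result, Corollary \ref{orthogonality_rates} (again under Conjecture \ref{conjecture}, with $\mathcal{P}^{(n)}$ in place of $P^{(n)}$), which states that for suitable $g$,
\[
\frac{1}{(2\pi i)^n n!}\int_{\mathbb{T}^n}\hat{\Delta}^{(n)}(b)\sum_{\lambda \in \Lambda_n}\mathcal{P}^{(n)}_\lambda(a;q)\overline{\mathcal{P}^{(n)}_\lambda(b;q)}\Delta_\lambda^{(n)}g(b)\frac{db}{b} = g(a).
\]
Taking $g(b) = \Pi(b;t) = e^{\sum_i(b_i+b_i^{-1})t}$, which is $W_n$-invariant and on the torus $|b_j|=1$ has $b_j + b_j^{-1} = 2\,\mathrm{Re}(b_j) \in [-2,2]$, so $g$ is bounded and lies in $L^2(\mathbb{T}^n,\hat{\Delta}^{(n)}\frac{db}{b})$, the left-hand side becomes $\sum_{\lambda} \mathcal{P}^{(n)}_\lambda(a;q)\Delta^{(n)}_\lambda \langle \Pi(\cdot;t),\mathcal{P}^{(n)}_\lambda(\cdot;q)\rangle_{\hat{\Delta}^{(n)}}$. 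Recognising the inner product together with $\Delta^{(n)}_\lambda$ as exactly $Q^{(n)}_\lambda(t;q)$ by \eqref{eq:Q(t;q)}, and dividing by $\Pi(a;t)$, gives $\sum_{\lambda \in \Lambda_n} \frac{\mathcal{P}^{(n)}_\lambda(a;q) Q^{(n)}_\lambda(t;q)}{\Pi(a;t)} = \frac{g(a)}{\Pi(a;t)} = 1$, which is precisely $\sum_\lambda p_t^{(n)}(\lambda;a,q) = 1$.

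I expect the main obstacle to be a careful justification of the interchange of the sum over $\lambda$ with the torus integral and the application of Corollary \ref{orthogonality_rates} — the corollary as stated requires $g \in L^2(\mathbb{T}^n, \hat{\Delta}^{(n)}\frac{db}{b})$ and a continuity/approximation argument to pass from ``almost every $b$'' to the specific point $a \in \mathbb{R}^n_{>0}$ (note $a$ is not on the torus, so one must be slightly careful: the identity $(\mathbf{F}\mathbf{F}^{-1}g)(a) = g(a)$ in the corollary is stated for $a \in \mathbb{T}^n$, whereas here we want to evaluate the left side, which is a Laurent polynomial expression in $a$, at real positive $a$; since both sides are analytic in $a$ in a neighbourhood of the torus this extension is routine but should be remarked upon). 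The non-negativity of $Q^{(n)}_z(t;q)$ — equivalently the claim that these are genuine transition probabilities — is most cleanly obtained a posteriori from the fact, to be established later in the chapter, that $p^{(n)}_t$ solves the Kolmogorov equations for $Q^q_n$; for the present lemma it suffices to note non-negativity follows once the full probabilistic interpretation is in place, or to invoke the spectral expansion directly. I would also remark that all of this rests on Conjecture \ref{conjecture}, as the statement of the lemma already makes explicit.
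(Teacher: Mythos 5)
Your overall strategy coincides with the paper's: both prove non-negativity from the combinatorial positivity of $\mathcal{P}^{(n)}$ and obtain the normalisation by applying the completeness relation of Corollary \ref{orthogonality_rates} to $g=\Pi(\cdot;t)$, recognising $\Delta^{(n)}_\lambda\langle \Pi(\cdot;t),\mathcal{P}^{(n)}_\lambda(\cdot;q)\rangle_{\hat{\Delta}^{(n)}}$ as $Q^{(n)}_\lambda(t;q)$. However, there is a genuine gap in the step you dismiss as ``routine'': the passage from $a\in\mathbb{T}^n$ to $a\in\mathbb{R}^n_{>0}$. The left-hand side $f(a)=\sum_{z\in\Lambda_n}\mathcal{P}^{(n)}_{z}(a;q)Q^{(n)}_{z}(t;q)$ is an \emph{infinite} series of Laurent polynomials, not ``a Laurent polynomial expression in $a$''; to invoke the identity theorem one must first show that this series converges locally uniformly on a connected domain containing both $\mathbb{T}^n$ and the point $a$, so that $f$ is actually holomorphic there. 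This is the substantive analytic content of the paper's proof: one needs bounds showing that $|\mathcal{P}^{(n)}_{z}(a;q)|$ grows at most geometrically on annuli while $|Q^{(n)}_{z}(t;q)|$ decays fast enough to beat it. The paper carries this out only for $n=1$, bounding the continuous $q$-Hermite polynomials by $C((\sqrt{2}+1)d)^k$ via the Pieri recursion and bounding $|Q^{(1)}_{(k)}(t;q)|\le Cr^{-k}$ via Ismail's large-$k$ asymptotics together with a contour deformation to a circle of radius $r=((\sqrt{2}+1)d)^2$; for $n>1$ the authors explicitly state they do not have a proof of holomorphy and only ``believe'' it. So the extension is neither routine nor, in the paper's own account, fully established.

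A secondary point: your treatment of the non-negativity of $Q^{(n)}_z(t;q)$ is circular as stated. You propose to deduce it ``a posteriori'' from the fact that $p^{(n)}_t$ solves the Kolmogorov equations for $Q^q_n$, but that identification (Proposition \ref{law_shape(law)}) uses the present lemma as input, and the alternative ``spectral expansion'' argument is not actually carried out. If you want a self-contained argument you should either give the spectral/positivity argument explicitly or note, as the paper implicitly does, that non-negativity of $Q^{(n)}_z(t;q)$ needs to come from the probabilistic identification and flag the logical ordering.
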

\begin{proof}

Since all the terms that appear in the recursion \ref{recursion_q_whittaker} defining the function $\mathcal{P}^{(n)}$ are non-negative, we have that $p_{t}^{(n)}(z;a,q)\geq 0$ for every $t>0$, $z \in \mathcal{W}^n_{0}$. Moreover, we calculate
\begin{equation*}
\begin{split}
\sum_{z \in \mathcal{W}^n_{0}}\mathcal{P}^{(n)}_{z}(a;q)Q_{z}^{(n)}(t;q)&=\sum_{z \in \mathcal{W}^n_{0}}\mathcal{P}^{(n)}_{z}(a;q)\Delta^{(n)}_{z}\langle \Pi(\cdot;t),\mathcal{P}^{(n)}_{z}(\cdot;q)\rangle_{\hat{\Delta}^{(n)}}\\
&= \Big\langle \Pi(\cdot;t), \sum_{z\in \mathcal{W}^n_{0}}\Delta_{z}^{(n)}\mathcal{P}^{(n)}_{z}(a;q)P^{(n)}_{z}(\cdot;q)\Big\rangle_{\hat{\Delta}^{(n)}}.
\end{split}
\end{equation*}

If Conjecture \ref{conjecture} holds, we may use the orthogonality result of Corollary \ref{orthogonality_rates} we have that
\[\sum_{z \in \mathcal{W}^n_{0}}\mathcal{P}^{(n)}_{z}(a;q)Q_{z}^{(n)}(t;q) = \Pi(a;t)\]
for every $a\in \mathbb{T}^n$.

We would like to prove the identity for $a\in \mathbb{R}^n_{>0}$. Let us consider the function
\[f(a)=f(a_{1},...,a_{n})=\sum_{z \in \mathcal{W}^n_{0}}\mathcal{P}^{(n)}_{z}(a;q)Q_{z}^{(n)}(t;q)\]
for $a\in \mathbb{C}^n\setminus \{0\}$.

On the $n$-dimensional torus $\mathbb{T}^n$, we have that $f(a)=\Pi(a;t)$. If the function $f$ is holomorphic on $\mathbb{C}^n\setminus \{0\}$, then using the identity theorem we may conclude that $f$ coincides with $\Pi(\cdot;t)$, everywhere on $\mathbb{C}^n\setminus \{0\}$.

We will only prove that $f$ is holomorphic on $\mathbb{C}^n\setminus \{0\}$ when $n=1$. Although we don't have a proof for the general case, we believe that the result holds for every $n\geq 1$. 

The function $\mathcal{P}^{(1)}_{(k)}(a;q)Q_{(k)}^{(1)}(t;q)$ is holomorphic on $\mathbb{C}^n\setminus \{0\}$. In order to prove that 
\[f(a)=\sum_{k=0}^{\infty}\mathcal{P}_{(k)}^{(1)}(a;q)Q_{(k)}^{(1)}(t;q)\]
is holomorphic for every $a\in \mathbb{C}$ away from the origin we need to show that for any compact region $A\subset \mathbb{C}\setminus  \{0\}$ and any $\epsilon > 0$ there exists $k_{0}\in \mathbb{N}$ such that 
\begin{equation}
\label{eq:control_k_large}
\sum_{k=k_{0}}^{\infty}|\mathcal{P}_{(k)}^{(1)}(a;q)Q_{(k)}^{(1)}(t;q)|\leq \epsilon
\end{equation}
for all $a\in A$.

Fix $d>1$ and assume that $A\equiv A_{d}=\{a\in \mathbb{C}: \frac{1}{d}\leq |a|\leq d\}$. Then for every $k\geq 0$, we have the following bound
\begin{equation}
\label{eq:polynomial_upper_bound(calculate_moments)}
|\mathcal{P}^{(1)}_{k}(a;q)|\leq \dfrac{\sqrt{2}+1}{\sqrt{2}}((\sqrt{2}+1)d)^k.
\end{equation}
Let us prove the bound by induction on $k$. When $k=0$, the result holds trivially since $\mathcal{P}^{(1)}_{(0)}(a;q)=1$. When $k=1$, we have
\[\mathcal{P}^{(1)}_{(1)}(a;q)=a+a^{-1}\]
therefore
\[|\mathcal{P}^{(1)}_{(1)}(a;q)|\leq 2d < \dfrac{\sqrt{2}+1}{\sqrt{2}}((\sqrt{2}+1)d). \]
We assume now that the result holds for $l\leq k$. Using the Pieri formula we proved in Corollary \ref{Pieri_mypolynomials(Berele)} we have that $\mathcal{P}^{(1)}_{(k)}(\cdot;q)$ satisfies the recurrence relation
\[(a+a^{-1})\mathcal{P}^{(1)}_{(k)}(a;q)= \mathcal{P}^{(1)}_{(k+1)}(a;q)+(1-q^k)\mathcal{P}^{(1)}_{(k-1)}(a;q)\]
then, using the induction  hypothesis, we have
\begin{equation*}
\begin{split}
|\mathcal{P}^{(1)}_{(k+1)}(a;q)|& = |a+a^{-1}||\mathcal{P}^{(1)}_{(k)}(a;q)| + (1-q^k)|\mathcal{P}^{(1)}_{(k-1)}(a;q)|\\
&\leq 2d \dfrac{\sqrt{2}+1}{\sqrt{2}}((\sqrt{2}+1)d)^k + (1-q^k)\dfrac{\sqrt{2}+1}{\sqrt{2}}((\sqrt{2}+1)d)^{k-1}\\
& \leq \dfrac{\sqrt{2}+1}{\sqrt{2}}((\sqrt{2}+1)d)^{k+1}
\end{split}
\end{equation*}
as required. 

Let us now calculate $Q_{(k)}(t;q)$ for large $k$. Ismail in \cite{Ismail_1986} obtain asymptotics for the Askey-Wilson polynomials, as $k \to \infty$. For the continuous $q$-Hermite polynomials, $\mathcal{P}^{(1)}_{k}(b;q)$, he proved that, as $k \to \infty$, the leading term of $\mathcal{P}^{(1)}_{(k)}(b;q)$, for $b\in \mathbb{T}$ and $q\in (0,1)$ fixed, is asymptotically equivalent to
\[\dfrac{(q;q)_{k}}{(q;q)_{\infty}}\Big[\dfrac{b^k}{(b^{-2};q)_{\infty}} + \dfrac{b^{-k}}{(b^{2};q)_{\infty}}\Big] = \dfrac{(q;q)_{k}}{(q;q)_{\infty}}\dfrac{b^k(b^2;q)_{\infty}+b^{-k}(b^{-2};q)_{\infty}}{(b^2,b^{-2};q)_{\infty}}.\]

We now need to calculate upper bounds for the modulus of the integrals
\[I = \frac{1}{2\pi i}\int_{\mathbb{T}}e^{(b+b^{-1})t}b^k(b^2;q)_{\infty}\frac{db}{b}\]
and
\[II = \frac{1}{2\pi i}\int_{\mathbb{T}}e^{(b+b^{-1})t}b^{-k}(b^{-2};q)_{\infty}\frac{db}{b}.\]
For $I$, let us make the change of variables $a=br$, for some $r>0$, then if we denote by $\mathbb{T}_{r}=\{z\in \mathbb{C}:|z|=r\}$, the torus of radius $r$, we can write $I$ as follows
\[I = r^{-k}\int_{\mathbb{T}_{r}}e^{(ar^{-1}+a^{-1}r)t}a^k(a^2r^{-2};q)_{\infty}\frac{da}{a}.\]
Note that, for every $r>0$, both $\mathbb{T}_{r}$ and $\mathbb{T}$ enclose the same poles of the integrand, therefore, using Cauchy's theorem we have
\[I = r^{-k}\int_{\mathbb{T}}e^{(ar^{-1}+a^{-1}r)t}a^k(a^2r^{-2};q)_{\infty}\frac{da}{a}.\]
After making the change of variable $a=e^{i\theta}$ we find that
\begin{equation*}
\begin{split}
|I|&\leq r^{-n}\frac{1}{2\pi}\int_{0}^{2\pi}e^{t(r+r^{-1})\cos \theta}|(e^{2i\theta}r^{-2};q)_{\infty}|d\theta\\
&\leq r^{-n}(-r^{-2};q)_{\infty}\frac{1}{2\pi}\int_{0}^{2\pi}e^{t(r+r^{-1})\cos \theta}d\theta\\\\
& =r^{-n}(-r^{-2};q)_{\infty}I_{0}(t(r+r^{-1}))
\end{split}
\end{equation*}
where $I_{0}(z)$ is the modified Bessel function of first kind given by
\[I_{0}(z) = \frac{1}{\pi}\int_{0}^{\pi}e^{z\cos \theta}d\theta.\]
Similarly, making the change of variable $a=b/r$, we can show that 
\[|II |\leq r^{-n}(-r^{-2};q)_{\infty}I_{0}(t(r+r^{-1})).\]
Therefore, for $k$ sufficiently large we may bound $|Q^{(1)}_{(k)}(t;q)|$ by $Cr^{-k}$, where $C$ depends on $r,t$ but not on $k$. Choosing $r=((\sqrt{2}+1)d)^2$ we may conclude that for $k$ sufficiently large and $a\in A_{d}$
\[|P^{(1)}_{(k)}(a;q)Q_{(k)}^{(1)}(t;q)|\leq C((\sqrt{2}+1)d)^{-k}\]
for some $C$ that depends only on $t,d$ which completes the proof of the inequality \eqref{eq:control_k_large}.

\end{proof}
\begin{proposition}
\label{law_shape(law)}
Let $Z=(Z(t),t\geq 0)$ be the process with transition rate matrix $\{Q^q_{n}(z,z'),z,z' \in \mathcal{W}^n_{0}\}$, started from the origin. Then, under Conjecture \ref{conjecture}, $Z$ has law at time $t\geq 0$ given by $p^{(n)}_{t}(\cdot;a,q)$.
\end{proposition}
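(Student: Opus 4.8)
The plan is to verify that $p^{(n)}_t(\cdot;a,q)$ solves the Kolmogorov forward (master) equation associated with the rate matrix $Q^q_n$ with initial condition $\delta_{(0,\dots,0)}$, and then invoke uniqueness of the solution for a (uniformly bounded, conservative) $Q$-matrix. Since by the previous lemma $p^{(n)}_t(\cdot;a,q)$ is a genuine probability measure on $\Lambda_n$ for each $t$, it suffices to check the ODE system
\[
\frac{d}{dt}p^{(n)}_t(z;a,q)=\sum_{z'\in\mathcal{W}^n_0}p^{(n)}_t(z';a,q)\,Q^q_n(z',z),\qquad z\in\mathcal{W}^n_0,
\]
together with $p^{(n)}_0(z;a,q)=\mathbbm{1}_{z=(0,\dots,0)}$.

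First I would record the structural identity behind \eqref{eq:law_discrete}: writing $p^{(n)}_t(z;a,q)=\Pi(a;t)^{-1}\mathcal{P}^{(n)}_z(a;q)Q^{(n)}_z(t;q)$ with $Q^{(n)}_z(t;q)=\Delta^{(n)}_z\langle\Pi(\cdot;t),\mathcal{P}^{(n)}_z(\cdot;q)\rangle_{\hat\Delta^{(n)}}$, the time-dependence sits entirely in the two factors $\Pi(a;t)^{-1}$ and $\langle\Pi(\cdot;t),\mathcal{P}^{(n)}_z(\cdot;q)\rangle_{\hat\Delta^{(n)}}$. Differentiating in $t$ brings down a factor $\sum_i(a_i+a_i^{-1})$ from the former (with a minus sign) and, inside the inner product, a factor $\sum_i(b_i+b_i^{-1})$ acting on $\Pi(\cdot;t)$ from the latter. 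The Pieri identity of Proposition \ref{q_eigenrelation_partition} (equivalently Corollary \ref{Pieri_mypolynomials(Berele)}, under Conjecture \ref{conjecture}) says precisely that multiplication by $\sum_i(b_i+b_i^{-1})$ is the adjoint, with respect to $\langle\cdot,\cdot\rangle_{\hat\Delta^{(n)}}$, of the difference operator $H^n$ acting on the partition index; so one can move this factor off $\Pi$ and onto $\mathcal{P}^{(n)}_z$, turning $\langle\Pi(\cdot;t),(\sum_i(b_i+b_i^{-1}))\mathcal{P}^{(n)}_z\rangle$ into $\langle\Pi(\cdot;t),H^n\mathcal{P}^{(n)}_z\rangle=\sum_{z'}f_n(z,z')\langle\Pi(\cdot;t),\mathcal{P}^{(n)}_{z'}\rangle$. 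Here I need to be slightly careful with the self-adjointness of $H^n$ relative to the weight $\Delta^{(n)}_z$: the correct statement is that $H^n$ is self-adjoint on $\ell^2(\Lambda_n,\Delta^{(n)})$, which is exactly the compatibility underlying the isometry $\mathbf{F}$ in the theorem of van Diejen--Emsiz; tracking the ratios $\Delta^{(n)}_z/\Delta^{(n)}_{z'}$ will convert $f_n(z',z)$ into $f_n(z,z')$ correctly.

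Carrying this out, $\frac{d}{dt}\big(\Pi(a;t)^{-1}Q^{(n)}_z(t;q)\big)$ should reorganise into $\Pi(a;t)^{-1}\Delta^{(n)}_z\sum_{z'}f_n(z,z')\langle\Pi(\cdot;t),\mathcal{P}^{(n)}_{z'}\rangle - \big(\sum_i(a_i+a_i^{-1})\big)p^{(n)}_t(z;a,q)/\mathcal{P}^{(n)}_z(a;q)\cdot\mathcal{P}^{(n)}_z(a;q)$, and after multiplying through by $\mathcal{P}^{(n)}_z(a;q)$ and using $\Delta^{(n)}_z\langle\Pi,\mathcal{P}^{(n)}_{z'}\rangle=\Delta^{(n)}_z/\Delta^{(n)}_{z'}\cdot Q^{(n)}_{z'}(t;q)$ this becomes
\[
\sum_{z'}\frac{\mathcal{P}^{(n)}_z(a;q)}{\mathcal{P}^{(n)}_{z'}(a;q)}\,\frac{\Delta^{(n)}_z}{\Delta^{(n)}_{z'}}\,f_n(z,z')\,p^{(n)}_t(z';a,q)\;-\;\Big(\sum_i(a_i+a_i^{-1})\Big)p^{(n)}_t(z;a,q).
\]
Recognising $Q^q_n(z',z)=\frac{\mathcal{P}^{(n)}_z(a;q)}{\mathcal{P}^{(n)}_{z'}(a;q)}f_n(z',z)$ and $Q^q_n(z,z)=-\sum_i(a_i+a_i^{-1})$, and using the symmetry $\Delta^{(n)}_z f_n(z,z')=\Delta^{(n)}_{z'}f_n(z',z)$ (which is the self-adjointness of $H^n$ at the level of matrix entries), the right-hand side is exactly $\sum_{z'}p^{(n)}_t(z';a,q)Q^q_n(z',z)$, the forward equation. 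The initial condition $p^{(n)}_0(z;a,q)=\mathbbm{1}_{z=0}$ follows because $\Pi(\cdot;0)\equiv 1=\mathcal{P}^{(n)}_{(0,\dots,0)}(\cdot;q)$, so by orthogonality $\langle\Pi(\cdot;0),\mathcal{P}^{(n)}_z(\cdot;q)\rangle_{\hat\Delta^{(n)}}=\mathbbm{1}_{z=0}/\Delta^{(n)}_{(0,\dots,0)}$, giving $Q^{(n)}_z(0;q)=\mathbbm{1}_{z=0}$ and hence $p^{(n)}_0(z;a,q)=\mathbbm{1}_{z=0}$. Finally, since $Q^q_n$ is uniformly bounded (its diagonal is the constant $-\sum_i(a_i+a_i^{-1})$ and off-diagonal entries are controlled, as used throughout Chapter \ref{Berele}) and conservative, the forward equation has a unique probability-measure-valued solution, so $p^{(n)}_t(\cdot;a,q)$ is the law of $Z(t)$.

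I expect the main obstacle to be the bookkeeping in the adjoint/self-adjointness step: one must justify interchanging the $t$-derivative with the (infinite) sum defining the inner product and with the sum over $z'$, and must get the exact form of the duality between multiplication by $\sum_i(b_i+b_i^{-1})$ on the torus side and $H^n$ on the partition side, including the weight ratios $\Delta^{(n)}_z/\Delta^{(n)}_{z'}$. The differentiation-under-the-integral and summability issues are handled by the same uniform bounds established in the preceding lemma (the estimate \eqref{eq:polynomial_upper_bound(calculate_moments)} and the Bessel-type bound on $Q^{(1)}_{(k)}$, with their $n$-variable analogues), so modulo those analytic estimates the argument is an application of the Pieri identity plus uniqueness for Kolmogorov's forward equation; all the algebraic input is already available as Proposition \ref{q_eigenrelation_partition}, Corollary \ref{Pieri_mypolynomials(Berele)}, and the orthogonality/completeness statements \eqref{eq:hyperoctahedral_orthogonality} and Corollary \ref{orthogonality_rates}.
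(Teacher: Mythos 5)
Your proposal is correct and follows essentially the same route as the paper: both verify the Kolmogorov forward equation by differentiating $\Pi(a;t)^{-1}$ and $Q_z^{(n)}(t;q)$ in $t$, transferring the multiplication by $\sum_i(b_i+b_i^{-1})$ onto $\mathcal{P}^{(n)}_z$ via the Pieri identity together with the weight ratios $\Delta^{(n)}_{z\pm e_i}/\Delta^{(n)}_z$ (your symmetry $\Delta^{(n)}_z f_n(z,z')=\Delta^{(n)}_{z'}f_n(z',z)$ is exactly what those ratio computations establish), and both obtain the initial condition from $\Pi(\cdot;0)\equiv 1=\mathcal{P}^{(n)}_{\mathbf{0}}$ and orthogonality. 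Your explicit appeal to uniqueness of the forward-equation solution and your remarks on interchanging derivative and sum are points the paper leaves implicit, but they do not change the argument.
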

\begin{proof}
We have to show two things; first that $p_{t}^{(n)}(z;a,q)$ solves the forward equation for the process $Z$ and second that at time $t=0$, the function concentrates at the zero configuration.\\
For the forward equation we want to show that
\begin{equation*}
\dfrac{\partial}{\partial t}p_{t}^{(n)}(z;a,q)=\sum_{z'\neq z}\Big(p_{t}^{(n)}(z';a,q)Q^q_{n}(z',z)-p_{t}^{(n)}(z;a,q)Q^q_{n}(z,z')\Big).
\end{equation*}
Now, we have
\begin{equation*}
\begin{split}
\sum_{z'\neq z}p_{t}^{(n)}(z';a,q)Q^q_{n}(z',z) &= \sum_{z'\neq z}\dfrac{\mathcal{P}^{(n)}_{z'}(a;q)Q^{(n)}_{z'}(t;q)}{\Pi(a;t)}\dfrac{\mathcal{P}^{(n)}_{z}(a;q)}{\mathcal{P}^{(n)}_{z'}(a;q)}f_{n}(z',z)\\
&=\dfrac{\mathcal{P}^{(n)}_{z}(a;q)}{\Pi(a;t)}\sum_{z'\neq z}Q^{(n)}_{z'}(t;q)f_{n}(z',z)\\
&= \dfrac{\mathcal{P}^{(n)}_{z}(a;q)}{\Pi(a;t)}\sum_{i=1}^n \Big(Q_{z-e_{i}}^{(n)}(t;q)(1-q^{z_{i-1}-z_{i}+1})\\
& \hspace{120pt}+Q_{z+e_{i}}^{(n)}(t;q)(1-q^{z_{i}-z_{i+1}+1})\Big).
\end{split}
\end{equation*}
We calculate
\begin{equation*}
\begin{split}
\Delta_{z+e_{i}}^{(n)}& = \dfrac{(q;q)^n_{\infty}}{(q;q)_{z_{n}}\Big(\prod_{\substack{1\leq j<n\\ j\neq i-1,i}}(q;q)_{z_{j}-z_{j+1}}\Big)(q;q)_{z_{i-1}-(z_{i}+1)}(q;q)_{(z_{i}+1)-z_{i+1}}}\\
& = \Delta_{z}^{(n)}\dfrac{(q;q)_{z_{i-1}-z_{i}}(q;q)_{z_{i}-z_{i+1}}}{(q;q)_{z_{i-1}-z_{i}-1}(q;q)_{z_{i}-z_{i+1}+1}}\\
&=\Delta_{z}^{(n)}\dfrac{1-q^{z_{i-1}-z_{i}}}{1-q^{z_{i}-z_{i+1}+1}}
\end{split}
\end{equation*}
therefore
\begin{equation*}
\begin{split}
Q^{(n)}_{z+e_{i}}(t;q)&=\Delta_{z+e_{i}}^{(n)}\langle \Pi(\cdot;t),\mathcal{P}_{z+e_{i}}^{(n)}(\cdot;q) \rangle_{\hat{\Delta}^{(n)}}\\
&=\Delta_{z}^{(n)}\dfrac{1-q^{z_{i-1}-z_{i}}}{1-q^{z_{i}-z_{i+1}+1}}\langle \Pi(\cdot;t),\mathcal{P}_{z+e_{i}}^{(n)}(\cdot;q) \rangle_{\hat{\Delta}^{(n)}}\\
&=\Delta_{z}^{(n)}\dfrac{1}{1-q^{z_{i}-z_{i+1}+1}}\langle \Pi(\cdot;t),f_{n}(z,z+e_{i})\mathcal{P}_{z+e_{i}}^{(n)}(\cdot;q) \rangle_{\hat{\Delta}^{(n)}}.
\end{split}
\end{equation*}
Similarly we have
\[\Delta_{z-e_{i}}^{(n)}=\Delta_{z}^{(n)}\dfrac{1-q^{z_{i}-z_{i+1}}}{1-q^{z_{i-1}-z_{i}+1}}\]
and
\[Q^{(n)}_{z-e_{i}}(t;q) = \Delta_{z}^{(n)}\dfrac{1}{1-q^{z_{i-1}-z_{i}+1}}\langle \Pi(\cdot;t),f_{n}(z,z-e_{i})\mathcal{P}_{z-e_{i}}^{(n)}(\cdot;q) \rangle_{\hat{\Delta}^{(n)}}.\]
Combining the above calculations we conclude that
\begin{equation*}
\begin{split}
\sum_{z'\neq z}p_{t}^{(n)}&(z';a,q)Q^q_{n}(z',z) \\
&= \dfrac{\mathcal{P}^{(n)}_{z}(a;q)}{\Pi(a;t)}\Delta_{z}^{(n)}\dfrac{1}{(2\pi i)^n n!}\int_{\mathbb{T}^n}\Pi(b;t)\sum_{i=1}^n \Big(f_{n}(z,z+e_{i})\overline{\mathcal{P}^{(n)}_{z+e_{i}}(b;q)}\\
& \hspace{160pt}+f_{n}(z,z-e_{i})\overline{\mathcal{P}^{(n)}_{z-e_{i}}(b;q)} \Big)\hat{\Delta}^{(n)}(b)\prod_{j=1}^n \dfrac{db_{j}}{b_{j}}\\
&=\dfrac{\mathcal{P}^{(n)}_{z}(a;q)}{\Pi(a;t)}\Delta_{z}^{(n)}\dfrac{1}{(2\pi i)^n n!}\int_{\mathbb{T}^n}\Pi(b;t)\sum_{i=1}^n (b_{i}+b_{i}^{-1})\mathcal{P}^{(n)}_{z}(b;q)\hat{\Delta}^{(n)}(b)\prod_{j=1}^n \dfrac{db_{j}}{b_{j}}\\
&=\dfrac{\mathcal{P}^{(n)}_{z}(a;q)}{\Pi(a;t)} \frac{\partial}{\partial t}Q^{(n)}_{z}(t;q)
\end{split}
\end{equation*}
where for the second equality we used the eigenrelation of Proposition \ref{q_eigenrelation_partition}. Therefore, it follows that
\begin{equation*}
\begin{split}
\dfrac{\partial}{\partial t}p_{t}^{(n)}(z;a,q)&=\sum_{z'\neq z}p_{t}^{(n)}(z';a,q)Q^q_{n}(z',z)-p_{t}^{(n)}(z;a,q)\sum_{i=1}^n(a_{i}+a_{i}^{-1})\\
&=\sum_{z'\neq z}\Big(p_{t}^{(n)}(z';a,q)Q^q_{n}(z',z)-p_{t}^{(n)}(z;a,q)Q^q_{n}(z,z')\Big)
\end{split}
\end{equation*}
where for the second equality we used again Proposition \ref{q_eigenrelation_partition}.\\
Let us finally calculate the behaviour of the law at time $t=0$. By its definition it holds that $\Pi(\cdot;0)\equiv 1$ therefore
\[p_{0}^{(n)}(z;a,q)=P_{z}^{(n)}(a;q)\Delta_{z}^{(n)}\langle 1, \mathcal{P}^{(n)}_{z}(\cdot;q)\rangle_{\hat{\Delta}^{(n)}}.\]
Moreover, we observe that $P^{(n)}_{\mathbf{0}}(\cdot;q)=1$, since the set of symplectic Gelfand-Tsetlin patterns whose shape is the zero partition, which we denote by $\mathbf{0}$, contains only the pattern with all coordinates equal to $0$. Therefore
\begin{equation*}
\begin{split}
p_{0}^{(n)}(z;a,q)&=\mathcal{P}_{z}^{(n)}(a;q)\Delta_{z}^{(n)}\langle  \mathcal{P}^{(n)}_{\mathbf{0}}(\cdot;q), \mathcal{P}^{(n)}_{z}(\cdot;q)\rangle_{\hat{\Delta}^{(n)}}\\
&=\mathcal{P}_{z}^{(n)}(a;q)\mathbbm{1}_{z = \mathbf{0}}
\end{split}
\end{equation*}
where the last equality follows from \eqref{eq:hyperoctahedral_orthogonality}. Hence we conclude that 
\[p_{0}^{(n)}(z;a,q)=\mathbbm{1}_{z =\mathbf{0}}.\]
\end{proof}

Let $\mathbf{Z}=(\mathbf{Z}(t),t\geq 0)$ be the process on the set of integer-valued symplectic Gelfand Tsetlin patterns, $\mathbb{K}^N_{0}$, with $N=2n$, that evolves according to $q$-Berele dynamics from chapter \ref{Berele} or the fully randomised version of chapter \ref{discrete} and starts at the origin. Proposition \ref{law_shape(law)} together with Theorems \ref{q_main_theorem(Berele)} and \ref{q_main_theorem} imply the following for the law of the process on the pattern.
\begin{corollary}
\label{law_pattern(law)}
Suppose that $\mathbf{Z}$ starts at the origin. Then under Conjecture \ref{conjecture}, for $t\geq 0$
\[\mathbb{P}(\mathbf{Z}(t)=\mathbf{z})=\dfrac{w^N_{a,q}(\mathbf{z})Q_{z^N}(t;q)}{\Pi(a;t)}\]
where $w^N_{a,q}(\mathbf{z})$ is the kernel of the function $\mathcal{P}$ defined in \ref{eq:q_weights(Berele)}.
\end{corollary}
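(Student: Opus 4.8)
The plan is to combine the two structural results already established — the Markov property of the shape process (Theorems~\ref{q_main_theorem(Berele)} and~\ref{q_main_theorem}) and the explicit law of the shape (Proposition~\ref{law_shape(law)}) — with the intertwining kernels used in their proofs. Recall that in both chapters the key object is the kernel $\mathcal{L}_N$ from $\mathcal{W}_0^{n}$ to $\mathbb{K}^N_0$ given by $\mathcal{L}_N(z,\mathbf{z}) = \frac{w^N_{a,q}(\mathbf{z})}{\mathcal{P}^{(n)}_z(a;q)}\mathbbm{1}_{z^N=z}$, and that the conclusion of each of those theorems includes the statement that, for each $t>0$, the conditional law of $\mathbf{Z}(t)$ given $\{Z^N(s),\, s\le t\}$ is exactly $M^n_{a,q}(\cdot\,;Z^N(t)) = \mathcal{L}_N(Z^N(t),\cdot)$.

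First I would write, for $\mathbf{z}\in\mathbb{K}^N_0$ with $z^N = z$,
\begin{equation*}
\mathbb{P}(\mathbf{Z}(t)=\mathbf{z}) = \mathbb{E}\big[\mathbb{P}(\mathbf{Z}(t)=\mathbf{z}\mid Z^N(s),\, s\le t)\big] = \mathbb{E}\big[\mathcal{L}_N(Z^N(t),\mathbf{z})\big],
\end{equation*}
using the tower property together with the conditional-law statement from the relevant main theorem. Since $\mathcal{L}_N(z',\mathbf{z})$ vanishes unless $z'=z^N$, the right-hand side collapses to $\mathcal{L}_N(z,\mathbf{z})\,\mathbb{P}(Z^N(t)=z)$. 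Next I would invoke Proposition~\ref{law_shape(law)}: because $\mathbf{Z}$ starts at the origin, its shape $Z^N$ is the process with rate matrix $Q^q_n$ started from the origin, whose law at time $t$ is $p^{(n)}_t(\cdot\,;a,q)$. Hence $\mathbb{P}(Z^N(t)=z) = p^{(n)}_t(z;a,q) = \frac{\mathcal{P}^{(n)}_z(a;q)\,Q^{(n)}_z(t;q)}{\Pi(a;t)}$ by the definition~\eqref{eq:law_discrete}. Substituting and using $\mathcal{L}_N(z,\mathbf{z}) = \frac{w^N_{a,q}(\mathbf{z})}{\mathcal{P}^{(n)}_z(a;q)}$ produces the cancellation of $\mathcal{P}^{(n)}_z(a;q)$ and yields exactly $\mathbb{P}(\mathbf{Z}(t)=\mathbf{z}) = \frac{w^N_{a,q}(\mathbf{z})\,Q_{z^N}(t;q)}{\Pi(a;t)}$, which is the claim.

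The only genuine content is bookkeeping: one must be careful that the weight $w^N_{a,q}$ appearing in $\mathcal{L}_N$ in Chapter~\ref{Berele} (written $w^{2n}_{a,q}$ there, via~\eqref{eq:q_weights(Berele)}) and the one in Chapter~\ref{discrete} (via~\eqref{eq:q_weights}) agree with the $w^N_{a,q}$ in the statement for $N=2n$, and that $\hat{\mathcal{P}}^{(2n)}_z = \mathcal{P}^{(n)}_z$ as noted after~\eqref{eq:symp_q_whittaker}; both identifications are already recorded in the text. I would also remark that the entire argument is conditional on Conjecture~\ref{conjecture}, precisely because Proposition~\ref{law_shape(law)} is — the orthogonality and completeness inputs used to show $p^{(n)}_t$ is a probability measure and solves the forward equation rest on identifying $\mathcal{P}^{(n)}$ with the $q$-deformed $\mathfrak{so}_{2n+1}$-Whittaker function. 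There is no real obstacle here; the corollary is a two-line consequence once the tower property is set up correctly, and the main thing to get right is simply citing the conditional-law conclusions of the two main theorems rather than re-deriving them.
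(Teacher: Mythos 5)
Your proposal is correct and is exactly the argument the paper intends: the text states the corollary is implied by Proposition \ref{law_shape(law)} together with the conditional-law conclusions of Theorems \ref{q_main_theorem(Berele)} and \ref{q_main_theorem}, and your tower-property computation with the kernel $\mathcal{L}_N$ is the straightforward way to combine them. The only hypothesis worth making explicit is that starting at the origin means the initial distribution is $M^n_{a,q}(\cdot\,;\mathbf{0})$, the point mass on the zero pattern, so the initial-condition assumption of the main theorems is indeed satisfied.
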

\section{Calculating moments}
For a function $f:\mathcal{W}^n_{0}\to \mathbb{R}$, let $\langle f(Z)\rangle_{n,t}^{a,q}:=\sum_{z \in \mathcal{W}^n_{0}}f(z)p_{t}^{(n)}(z;a,q)$ denote the expectation of $f(Z)$ under $p_{t}^{(n)}(\cdot;a,q)$.
\begin{proposition}
\label{q_moments}
For $k \in \mathbb{N}$, assuming Conjecture \ref{conjecture} holds,it holds that
\[\langle q^{-kZ_{1}}\rangle_{n,t}^{a,q}=\sum_{j=0}^k\dbinom{k}{j}\dfrac{D^j_{n}\Pi(a;t)}{\Pi(a;t)}\]
where $D_{n}^j$ denotes the convolution of the Koornwinder operator $D_{n}$, we defined in \eqref{eq:Koornwinder_operator}, with itself $j$ times and $\Pi(a;t)$ is defined in \eqref{eq:norm_constant}.
\end{proposition}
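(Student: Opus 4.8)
The plan is to compute the expectation $\langle q^{-kZ_{1}}\rangle_{n,t}^{a,q}$ directly from the definition $\langle f(Z)\rangle_{n,t}^{a,q}=\sum_{z}f(z)p_{t}^{(n)}(z;a,q)$ by exploiting the eigenrelation for the Koornwinder operator $D_{n}$ from Proposition \ref{q_eigenrelation_rates} together with the orthogonality/completeness package for the $q$-deformed $\mathfrak{so}_{2n+1}$-Whittaker functions (which holds under Conjecture \ref{conjecture}). The starting point is the formula $p_{t}^{(n)}(z;a,q)=\mathcal{P}^{(n)}_{z}(a;q)\,\Delta_{z}^{(n)}\langle \Pi(\cdot;t),\mathcal{P}^{(n)}_{z}(\cdot;q)\rangle_{\hat{\Delta}^{(n)}}/\Pi(a;t)$, using \eqref{eq:Q(t;q)}. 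Substituting, one gets
\[
\langle q^{-kZ_{1}}\rangle_{n,t}^{a,q}=\frac{1}{\Pi(a;t)}\sum_{z\in\mathcal{W}^n_{0}}q^{-kz_{1}}\,\mathcal{P}^{(n)}_{z}(a;q)\,\Delta_{z}^{(n)}\,\langle \Pi(\cdot;t),\mathcal{P}^{(n)}_{z}(\cdot;q)\rangle_{\hat{\Delta}^{(n)}}.
\]
Pulling the $z$-sum inside the inner product (justified by the same kind of holomorphy/convergence estimate used in the proof of the preceding lemma, or by first working on $\mathbb{T}^n$ and extending), this becomes $\Pi(a;t)^{-1}\big\langle \Pi(\cdot;t),\sum_{z}q^{-kz_{1}}\mathcal{P}^{(n)}_{z}(a;q)\Delta_{z}^{(n)}\mathcal{P}^{(n)}_{z}(\cdot;q)\big\rangle_{\hat{\Delta}^{(n)}}$.

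The key step is then to recognize the operator identity that converts the factor $q^{-kz_{1}}$ into an action of $D_{n}$. By Proposition \ref{q_eigenrelation_rates}, $D_{n}\mathcal{P}^{(n)}_{z}=(q^{-z_{1}}-1)\mathcal{P}^{(n)}_{z}$, so iterating, $D_{n}^{j}\mathcal{P}^{(n)}_{z}=(q^{-z_{1}}-1)^{j}\mathcal{P}^{(n)}_{z}$. Writing $q^{-kz_{1}}=\big((q^{-z_{1}}-1)+1\big)^{k}=\sum_{j=0}^{k}\binom{k}{j}(q^{-z_{1}}-1)^{j}$ by the binomial theorem, we obtain $q^{-kz_{1}}\mathcal{P}^{(n)}_{z}=\sum_{j=0}^{k}\binom{k}{j}D_{n}^{j}\mathcal{P}^{(n)}_{z}$. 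Substituting this into the sum above and using that $D_{n}$ (acting in the $b$-variable on which $\hat{\Delta}^{(n)}$ and $\Pi(\cdot;t)$ depend) is symmetric with respect to $\langle\cdot,\cdot\rangle_{\hat{\Delta}^{(n)}}$ — more precisely, moving $D_{n}$ off of $\sum_z\Delta_z^{(n)}\mathcal{P}^{(n)}_z(a;q)\mathcal{P}^{(n)}_z(\cdot;q)$ and onto $\Pi(\cdot;t)$ — gives
\[
\langle q^{-kZ_{1}}\rangle_{n,t}^{a,q}=\frac{1}{\Pi(a;t)}\sum_{j=0}^{k}\binom{k}{j}\Big\langle D_{n}^{j}\Pi(\cdot;t),\sum_{z}\Delta_{z}^{(n)}\mathcal{P}^{(n)}_{z}(a;q)\mathcal{P}^{(n)}_{z}(\cdot;q)\Big\rangle_{\hat{\Delta}^{(n)}}.
\]
Now apply the completeness result of Corollary \ref{orthogonality_rates}: the reproducing-kernel identity collapses $\big\langle g,\sum_{z}\Delta_{z}^{(n)}\mathcal{P}^{(n)}_{z}(a;q)\mathcal{P}^{(n)}_{z}(\cdot;q)\big\rangle_{\hat{\Delta}^{(n)}}=g(a)$ for suitable $g$, which yields $\langle q^{-kZ_{1}}\rangle_{n,t}^{a,q}=\sum_{j=0}^{k}\binom{k}{j}\,D_{n}^{j}\Pi(a;t)/\Pi(a;t)$, as claimed.

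The main obstacle will be justifying the two analytic manipulations: interchanging the infinite sum over $z\in\mathcal{W}^n_{0}$ with the torus integral defining $\langle\cdot,\cdot\rangle_{\hat{\Delta}^{(n)}}$, and transferring the difference operator $D_{n}$ from the (divergent-looking) kernel sum onto the entire function $\Pi(\cdot;t)$ and then applying Corollary \ref{orthogonality_rates} to the non-polynomial function $D_n^j\Pi(\cdot;t)$ rather than to an element of $L^2(\mathbb{T}^n,\hat{\Delta}^{(n)}db/b)$. The cleanest route is to first establish the identity for $a$ on the torus $\mathbb{T}^n$, where $\Pi(\cdot;t)$ and each $D_n^j\Pi(\cdot;t)$ are bounded and the completeness of Corollary \ref{orthogonality_rates} applies verbatim, and then extend from $\mathbb{T}^n$ to $a\in\mathbb{R}^n_{>0}$ (indeed to $a\in\mathbb{C}^n\setminus\{0\}$) by analytic continuation, exactly as in the preceding lemma's argument via the identity theorem; the requisite uniform bounds on $|\mathcal{P}^{(n)}_{z}(a;q)|$ and on $|Q^{(n)}_{z}(t;q)|$ on compact subsets of $\mathbb{C}^n\setminus\{0\}$ are of the same type as \eqref{eq:polynomial_upper_bound(calculate_moments)} and \eqref{eq:control_k_large}, and for $n=1$ they are already in hand. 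A secondary technical point is to check that $D_{n}$ is symmetric for $\langle\cdot,\cdot\rangle_{\hat{\Delta}^{(n)}}$; this is standard for the Koornwinder operator with respect to its own weight (the shift operators $\mathcal{T}_{q^{\pm1},i}$ are adjoint to one another up to the coefficient functions $A_i$), but it should be stated explicitly, or circumvented entirely by instead expanding $\Pi(\cdot;t)$ in the basis $\{\mathcal{P}^{(n)}_{\mu}\}$ and letting $D_n^j$ act coefficient-wise, which turns the whole computation into a manipulation of the expansion coefficients of $\Pi$ and makes the role of the eigenvalues $(q^{-\mu_1}-1)^j$ transparent.
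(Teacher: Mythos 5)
Your proposal is correct and follows essentially the same route as the paper: the eigenrelation $D_n^j\mathcal{P}^{(n)}_z=(q^{-z_1}-1)^j\mathcal{P}^{(n)}_z$ from Proposition \ref{q_eigenrelation_rates}, the binomial expansion $q^{-kz_1}=\sum_{j}\binom{k}{j}(q^{-z_1}-1)^j$, and the identity $\sum_z\mathcal{P}^{(n)}_z(a;q)Q^{(n)}_z(t;q)=\Pi(a;t)$ established in the preceding lemma. The only difference is mechanical: the paper transfers $D_n^j$ onto $\Pi(a;t)$ by letting it act in the $a$-variable and interchanging it with the sum over $z$, whereas you act in the spectral variable inside the torus inner product and invoke self-adjointness of $D_n$ together with the completeness of Corollary \ref{orthogonality_rates} --- a more roundabout path to the same identity, carrying the same (and, in your version, some additional) analytic caveats that the paper also leaves largely implicit.
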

\begin{proof}
By Proposition \ref{q_eigenrelation_rates} we have that for every $j\geq 1$, it holds that
\[D_{n}^j\mathcal{P}_{z}^{(n)}(a;q)=(q^{-z_{1}}-1)^j\mathcal{P}_{z}^{(n)}(a;q)\]
therefore we have the following
\begin{equation*}
\begin{split}
\langle (q^{-Z_{1}}-1)^j\rangle_{n,t}^{a,q}&= \sum_{z \in \mathcal{W}^n_{0}}(q^{-z_{1}}-1)^j\dfrac{\mathcal{P}_{z}^{(n)}(a;q)Q_{z}^{(n)}(t;q)}{\Pi(a;t)}\\
&= \dfrac{1}{\Pi(a;t)}\sum_{z \in \mathcal{W}^n_{0}}D_{n}^j\mathcal{P}^{(n)}_{z}(a;q)Q_{z}^{(n)}(t;q)\\
&= \dfrac{1}{\Pi(a;t)}D_{n}^j\sum_{z \in \mathcal{W}^n_{0}}\mathcal{P}^{(n)}_{z}(a;q)Q_{z}^{(n)}(t;q)\\
& = \dfrac{D_{n}^j\Pi(a;t)}{\Pi(a;t)}.
\end{split}
\end{equation*}
We then find
\begin{equation*}
\begin{split}
\langle q^{-kZ_{1}}\rangle_{n,t}^{a,q}&=\langle \sum_{j=0}^k \dbinom{k}{j}(q^{-Z_{1}}-1)^j\rangle_{n,t}^{a,q}\\
&=\sum_{j=0}^k \dbinom{k}{j}\langle (q^{-Z_{1}}-1)^j\rangle_{n,t}^{a,q}\\
&= \sum_{j=0}^k\dbinom{k}{j}\dfrac{D^j_{n}\Pi(a;t)}{\Pi(a;t)}.
\end{split}
\end{equation*}
\end{proof}

The action of the operators $D_{n}$ on some special symmetric functions can be represented via contour integrals.
\begin{proposition}
For $F(u_{1},...,u_{n})=f(u_{1})...f(u_{n})$ such that $f(u)=f(u^{-1})$ 
\begin{equation*}
\begin{split}
\dfrac{((D_{n})^kF)(a)}{F(a)}=\dfrac{(-1)^k}{(2\pi i)^k}\oint ... \oint &\prod_{j=1}^k \dfrac{1}{1-qs^2_{j}}\Big(\prod_{i=1}^n \dfrac{1}{(s_{j}-a_{i})(s_{j}-a_{i}^{-1})}\Big)\\
& \times  \Big(\prod_{l=1}^{j-1}\dfrac{(s_{l}-s_{j})(s_{l}-s_{j}^{-1})}{(s_{l}-qs_{j})(s_{l}-q^{-1}s_{j}^{-1})}\dfrac{f(qs_{j})}{f(s_{j})}-1 \Big)\dfrac{ds_{j}}{s_{j}}
\end{split}
\label{eq:q-contour}
\end{equation*}
where the $s_{j}$-contour encircles the poles $q^{-1/2}$, $a_{1}^{\pm 1},...,a_{n}^{\pm 1}$, $(qs_{j+1})^{\pm 1},...,(qs_{k})^{\pm 1}$ and no other singularities of the integrand.
\end{proposition}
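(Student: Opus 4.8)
The plan is to prove the formula by induction on $k$, using the explicit form of the Koornwinder operator $D_n$ from \eqref{eq:Koornwinder_operator}--\eqref{eq:Koornwinder_operator_A} together with the residue theorem, and exploiting the hypothesis $f(u)=f(u^{-1})$ (so that $F$ is invariant under inverting any variable).

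For the base case $k=1$ I would first use that $F(u_1,\dots,u_n)=f(u_1)\cdots f(u_n)$ factorises and that each $\mathcal{T}_{u,i}$ moves only the $i$-th variable, so that
\[
\frac{(D_n F)(a)}{F(a)}=\sum_{i=1}^n A_i(a;q)\Big(\tfrac{f(qa_i)}{f(a_i)}-1\Big)+\sum_{i=1}^n A_i(a^{-1};q)\Big(\tfrac{f(q^{-1}a_i)}{f(a_i)}-1\Big),
\]
the second sum being the first with $a_i\mapsto a_i^{-1}$ by $f(u)=f(u^{-1})$. Then I would evaluate the claimed single contour integral by residues: using $(a_i-a_j)(a_i-a_j^{-1})=(1-a_ia_j)(1-a_ia_j^{-1})$ and $\tfrac{1}{a_i(a_i-a_i^{-1})}=-\tfrac{1}{1-a_i^2}$, the residue of the integrand at $s=a_i$ is $-A_i(a;q)\big(f(qa_i)/f(a_i)-1\big)$ and at $s=a_i^{-1}$ is $-A_i(a^{-1};q)\big(f(q^{-1}a_i)/f(a_i)-1\big)$, while the residues at the remaining poles $s=\pm q^{-1/2}$ of $(1-qs^2)^{-1}$ vanish because $f(q^{\pm1/2})=f(q^{\mp1/2})$ makes the bracket $f(qs)/f(s)-1$ vanish there. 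Multiplying by $-\tfrac{1}{2\pi i}$ and summing the residues reproduces the displayed expression, so the prescription \enquote{encircles $q^{-1/2}$} is harmless.

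For the inductive step, set $G_{k-1}:=(D_n^{k-1}F)/F$, which by the induction hypothesis is the $(k-1)$-fold nested integral in $s_1,\dots,s_{k-1}$ whose $a$-dependence enters only through the factors $\prod_i\frac{1}{(s_j-a_i)(s_j-a_i^{-1})}$. Applying $D_n$ to $D_n^{k-1}F=G_{k-1}F$ via the product rule $\mathcal{T}_{q,i}(G_{k-1}F)=(\mathcal{T}_{q,i}G_{k-1})(\mathcal{T}_{q,i}F)$ and $\mathcal{T}_{q,i}F/F=f(qa_i)/f(a_i)$ gives
\[
\frac{(D_n^{k}F)(a)}{F(a)}=\sum_{i=1}^n A_i(a;q)\Big[(\mathcal{T}_{q,i}G_{k-1})\tfrac{f(qa_i)}{f(a_i)}-G_{k-1}\Big]+\big(a\mapsto a^{-1}\big).
\]
The crux is to recognise this sum over $i$ as the residue expansion, at $s_k=a_i^{\pm1}$, of one more, outermost, contour integral in a new variable $s_k$: the shift $\mathcal{T}_{q,i}$ replaces $a_i$ by $qa_i$ inside the factors $\prod_l\frac{1}{(s_l-a_i)(s_l-a_i^{-1})}$ of $G_{k-1}$, and this replacement is absorbed, by a partial-fraction/contour-deformation manipulation, into the cross-term $\prod_{l<k}\frac{(s_l-s_k)(s_l-s_k^{-1})}{(s_l-qs_k)(s_l-q^{-1}s_k^{-1})}$ evaluated at $s_k=a_i$, together with the enlargement of each inner $s_l$-contour so that it additionally encircles $(qs_k)^{\pm1}$. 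As before, $f(u)=f(u^{-1})$ kills the residue at $s_k=\pm q^{-1/2}$ and symmetrises the $a\leftrightarrow a^{-1}$ contributions.

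The main obstacle is precisely this contour bookkeeping in the inductive step: one must check that deforming the inner $s_l$-contours to include the new poles $(qs_k)^{\pm1}$ produces exactly the extra residues needed to turn $\mathcal{T}_{q,i}G_{k-1}$ into the cross-term-corrected integrand, that the only new singularities are those at $s_l=qs_k$ and $s_l=q^{-1}s_k^{-1}$, and that they lie on the correct side of each contour, so that the \enquote{no other singularities} clause is preserved along the iteration. This is the standard but delicate nested-contour argument familiar from the computation of observables of Macdonald-type processes; the invariance $f(u)=f(u^{-1})$ is used repeatedly to keep the poles at $\pm q^{-1/2}$ inert and to maintain the $a\leftrightarrow a^{-1}$ symmetry at each stage.
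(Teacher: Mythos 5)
Your proposal is correct and follows essentially the same route as the paper: the base case is handled by the identical residue computation at $s=a_i^{\pm1}$ (with the $q^{-1/2}$ residue killed by $f(u)=f(u^{-1})$), and the inductive step likewise applies $D_n$ once more via the Leibniz rule for the shift operators and matches the resulting shifted terms $\tfrac{g(qa_i^{\pm1})}{g(a_i^{\pm1})}-1$ with the residues of the new outermost contour, the enlarged inner contours accounting for the poles at $(qs_k)^{\pm1}$. The paper phrases this as moving $D_n$ inside the integral and acting on $G(a)=\prod_i g(a_i)$, which is the same computation.
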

\begin{proof}
For $k=1$ using the residue theorem we have that the right-hand side of \eqref{eq:q-contour} equals
\[\sum_{p}Res\Big(\dfrac{-s^{-1}}{1-qs^2}\prod_{i=j}^n \dfrac{1}{(s-a_{j})(s-a_{j}^{-1})}\Big(\dfrac{f(qs)}{f(s)}-1\Big);p\Big)\]
where the summation is over $\{a_{1}^{\pm 1},...,a_{n}^{\pm 1},q^{-1/2}\}$, the set of singularities of the integrand.

The residue corresponding to the pole $p=a_{i}$, for some $1\leq i \leq n$ is given by
\begin{equation*}
\begin{split}
\dfrac{-s^{-1}}{1-qs^2}\dfrac{1}{s-a_{i}^{-1}}\prod_{j\neq i} &\dfrac{1}{(s-a_{j})(s-a_{j}^{-1})}\Big(\dfrac{f(qs)}{f(s)}-1\Big)\Bigg|_{s=a_{i}}\\
&=\dfrac{1}{(1-a_{i}^{2})(1-qa_{i}^{2})}\prod_{j\neq i} \dfrac{1}{(a_{i}-a_{j})(a_{i}-a_{j}^{-1})}\Big(\dfrac{f(qa_{i})}{f(a_{i})}-1\Big)\\
&=A_{i}(a;q)\Big(\dfrac{f(qa_{i})}{f(a_{i})}-1\Big)
\end{split}
\end{equation*}
where $A_{i}(a;q)$ is given in \eqref{eq:Koornwinder_operator_A}.\\
Similarly, the residue for $p=a_{i}^{-1}$, for some $1\leq i \leq n$, is calculated to be
\[A_{i}(a^{-1};q)\Big(\dfrac{f(qa_{i}^{-1})}{f(a_{i}^{-1})}-1\Big).\]
Finally, for the residue corresponding to $p=q^{-1/2}$ we have
\begin{equation*}
\begin{split}
\dfrac{-s^{-1}}{-2qs}\prod_{j=1}^n &\dfrac{1}{(s-a_{j})(s-a_{j}^{-1})}\Big(\dfrac{f(qs)}{f(s)}-1\Big)\Bigg|_{s=q^{-1/2}}\\
&=\dfrac{1}{2}\prod_{j=1}^n \dfrac{1}{(q^{-1/2}-a_{j})(q^{-1/2}-a_{j}^{-1})}\Big(\dfrac{f(q^{1/2})}{f(q^{-1/2})}-1\Big)\\
&=0
\end{split}
\end{equation*}
due to symmetry of the function $f$, $f(u^{-1})=f(u)$.

Therefore the right-hand side of \eqref{eq:q-contour} equals
\[\sum_{i=1}^n\Big( A_{i}(a;q)\big(\dfrac{f(qa_{i})}{f(a_{i})}-1\big)+A_{i}(a^{-1};q)\big(\dfrac{f(qa^{-1}_{i})}{f(a^{-1}_{i})}-1\big)\Big)\]
which equals the left-hand side due to the symmetry of the function $f$.

Assuming that the hypothesis is true for $k\geq 1$, we have that
\begin{equation*}
\begin{split}
\dfrac{((D_{n})^{k+1}F)(a)}{F(a)}&=\dfrac{1}{F(a)}D_{n}\Big[ F(a) \dfrac{(-1)^k}{(2\pi i)^k}\oint ... \oint \prod_{j=1}^k \dfrac{1}{1-qs^2_{j}}\Big(\prod_{i=1}^n \dfrac{1}{(s_{j}-a_{i})(s_{j}-a_{i}^{-1})}\Big)\\
& \hspace{50pt}\times  \Big(\prod_{l=1}^{j-1}\dfrac{(s_{l}-s_{j})(s_{l}-s_{j}^{-1})}{(s_{l}-qs_{j})(s_{l}-q^{-1}s_{j}^{-1})}\dfrac{f(qs_{j})}{f(s_{j})}-1 \Big)\dfrac{ds_{j}}{s_{j}}\Big]\\
&=\dfrac{1}{F(a)}D_{n}\Big[ \dfrac{(-1)^k}{(2\pi i)^k}\oint ... \oint \prod_{i=1}^n \Big(f(a_{i})\prod_{j=1}^k \dfrac{1}{(s_{j}-a_{i})(s_{j}-a_{i}^{-1})} \Big)\\
& \hspace{50pt}\times \prod_{j=1}^k \dfrac{1}{1-qs_{j}^2}\Big(\prod_{l=1}^{j-1}\dfrac{(s_{l}-s_{j})(s_{l}-s_{j}^{-1})}{(s_{l}-qs_{j})(s_{l}-q^{-1}s_{j}^{-1})}\dfrac{f(qs_{j})}{f(s_{j})}-1 \Big)\dfrac{ds_{j}}{s_{j}}\Big]\\
&=\dfrac{1}{F(a)} \dfrac{(-1)^k}{(2\pi i)^k}\oint ... \oint D_{n}\Big[\prod_{i=1}^n \Big(f(a_{i})\prod_{j=1}^k \dfrac{1}{(s_{j}-a_{i})(s_{j}-a_{i}^{-1})} \Big)\Big]\\
& \hspace{50pt}\times \prod_{j=1}^k \dfrac{1}{1-qs_{j}^2}\Big(\prod_{l=1}^{j-1}\dfrac{(s_{l}-s_{j})(s_{l}-s_{j}^{-1})}{(s_{l}-qs_{j})(s_{l}-q^{-1}s_{j}^{-1})}\dfrac{f(qs_{j})}{f(s_{j})}-1 \Big)\dfrac{ds_{j}}{s_{j}}
\end{split}
\end{equation*}
applying $D_{n}$ to the function 
\[G(a_{1},...,a_{n})=\prod_{i=1}^ng(a_{i})=\prod_{i=1}^n f(a_{i})\prod_{j=1}^k\dfrac{1}{(s_{j}-a_{i})(s_{j}-a_{i}^{-1})}\]
we find that
\begin{equation*}
\begin{split}
\dfrac{1}{F(a)}(D_{n}G)(a)&=\dfrac{1}{F(a)}\sum_{i=1}^n\Big( A_{i}(a;q)\big(\mathcal{T}_{q,i}G(a)-G(a)\big)+A_{i}(a^{-1};q)\big(T_{q^{-1},i}G(a)-G(a) \big)\Big)\\
&=\dfrac{G(a)}{F(a)}\sum_{i=1}^n \Big( A_{i}(a;q)\Big( \dfrac{g(qa_{i})}{g(a_{i})}-1\Big)+A_{i}(a^{-1};q)\Big( \dfrac{g(q^{-1}a_{i})}{g(a_{i})}-1\Big)\Big)\\
&=\prod_{j=1}^k\dfrac{1}{(s_{j}-a_{i})(s_{j}-a_{i}^{-1})}\sum_{i=1}^n \Big( A_{i}(a;q)\Big( \dfrac{g(qa_{i})}{g(a_{i})}-1\Big)+A_{i}(a^{-1};q)\Big( \dfrac{g(q^{-1}a_{i})}{g(a_{i})}-1\Big)\Big)
\end{split}
\end{equation*}
and hence we conclude that
\[\dfrac{((D_{n})^{k+1}F)(a)}{F(a)} = \sum_{i=1}^n (\mathcal{I}^+_{i}+\mathcal{I}^-_{i})\]
where
\begin{equation*}
\begin{split}
\mathcal{I}^+_{i}=A_{i}(a;q)\dfrac{(-1)^k}{(2\pi i)^k}&\oint ... \oint \prod_{j=1}^k \dfrac{1}{1-qs^2_{j}}\Big(\prod_{i=1}^n \dfrac{1}{(s_{j}-a_{i})(s_{j}-a_{i}^{-1})}\Big)\\
& \times  \Big(\prod_{l=1}^{j-1}\dfrac{(s_{l}-s_{j})(s_{l}-s_{j}^{-1})}{(s_{l}-qs_{j})(s_{l}-q^{-1}s_{j}^{-1})}\dfrac{f(qs_{j})}{f(s_{j})}-1 \Big)\Big(\dfrac{g(qa_{i})}{g(a_{i})}-1\Big)\dfrac{ds_{j}}{s_{j}}
\end{split}
\end{equation*}
and the $s_{j}$-contour contains the poles $q^{-1/2}, a_{1}^{\pm 1},...,a_{n}^{\pm 1}, (qs_{j+1})^{\pm 1},..., (qs_{k})^{\pm 1}$ as before and in addition the poles $(qa_{i})^{\pm 1}$, for $1\leq i \leq n$ due to the term $\dfrac{g(qa_{i})}{g(a_{i})}$. Similarly, the integrals $\mathcal{I}^-_{i}$ is given by
\begin{equation*}
\begin{split}
\mathcal{I}^-_{i}=A_{i}(a^{-1};q)\dfrac{(-1)^k}{(2\pi i)^k}&\oint ... \oint \prod_{j=1}^k \dfrac{1}{1-qs^2_{j}}\Big(\prod_{i=1}^n \dfrac{1}{(s_{j}-a_{i})(s_{j}-a_{i}^{-1})}\Big)\\
& \times  \Big(\prod_{l=1}^{j-1}\dfrac{(s_{l}-s_{j})(s_{l}-s_{j}^{-1})}{(s_{l}-qs_{j})(s_{l}-q^{-1}s_{j}^{-1})}\dfrac{f(qs_{j})}{f(s_{j})}-1 \Big)\Big(\dfrac{g(qa_{i}^{-1})}{g(a_{i}^{-1})}-1\Big)\dfrac{ds_{j}}{s_{j}}
\end{split}
\end{equation*}
with the $s_{j}$-contour containing the poles $q^{-1/2}, a_{1}^{\pm 1},...,a_{n}^{\pm 1}, (qs_{j+1})^{\pm 1},..., (qs_{k})^{\pm 1}$ and $(qa_{i}^{-1})^{\pm 1}$, for $1\leq i \leq n$ due to the term $\dfrac{g(qa_{i}^{-1})}{g(a_{i}^{-1})}$.

On the other hand let us consider the following multiple integral
\begin{equation*}
\begin{split}
\dfrac{(-1)^{k+1}}{(2\pi i)^{k+1}}\oint ... \oint \prod_{j=1}^{k+1}& \dfrac{1}{1-qs^2_{j}}\Big(\prod_{i=1}^n \dfrac{1}{(s_{j}-a_{i})(s_{j}-a_{i}^{-1})}\Big) \\
&\hspace{50pt}\times\Big(\prod_{l=1}^{j-1}\dfrac{(s_{l}-s_{j})(s_{l}-s_{j}^{-1})}{(s_{l}-qs_{j})(s_{l}-q^{-1}s_{j}^{-1})}\dfrac{f(qs_{j})}{f(s_{j})}-1 \Big)\dfrac{ds_{j}}{s_{j}}
\end{split}
\end{equation*}
if we apply the Residue Theorem to the $s_{k+1}$-contour we will also get $\sum_{i=1}^n (\mathcal{I}^+_{i}+\mathcal{I}^-_{i})$. The calculations are similar to the $k=1$ case. Therefore the hypothesis holds for $k+1$ and the proof concludes.
\end{proof}

\begin{corollary}
\label{q_moments_contour}
For $k \in \mathbb{N}$, assuming Conjecture \ref{conjecture} holds,it holds that
\begin{equation}
\begin{split}
\langle q^{-kZ_{1}}\rangle_{n,t}^{a,q}=\sum_{l=0}^k&\dbinom{k}{l}\dfrac{(-1)^l}{(2\pi i)^l}\oint ... \oint \prod_{j=1}^l \dfrac{1}{1-qs^2_{j}}\Big(\prod_{i=1}^n \dfrac{1}{(s_{j}-a_{i})(s_{j}-a_{i}^{-1})}\Big)\\
& \times  \Big(\prod_{i=1}^{j-1}\dfrac{(s_{i}-s_{j})(s_{i}-s_{j}^{-1})}{(s_{i}-qs_{j})(s_{i}-q^{-1}s_{j}^{-1})}e^{((q-1)s_{j}+(q^{-1}-1)s_{j}^{-1})t}-1 \Big)\dfrac{ds_{j}}{s_{j}}.
\end{split}
\end{equation}
where for the $l$-th term the $s_{j}$-contour encloses the poles $q^{-1/2}$, $a_{i}^{\pm 1}$, for $1\leq i \leq n$ and $(qs_{i})^{\pm 1}$, for $j+1\leq i \leq l$.
\end{corollary}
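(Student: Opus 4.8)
The plan is to combine the two previously established results: the moment formula from Proposition~\ref{q_moments}, which states $\langle q^{-kZ_{1}}\rangle_{n,t}^{a,q}=\sum_{j=0}^k\binom{k}{j}\frac{D_{n}^j\Pi(a;t)}{\Pi(a;t)}$, and the contour integral representation for $\frac{((D_{n})^jF)(a)}{F(a)}$ proved in the preceding proposition, applied to the specific choice $F=\Pi(\cdot;t)$.

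First I would check that $\Pi(a;t)=e^{\sum_{i=1}^n(a_i+a_i^{-1})t}$ has the required product form $F(u_1,\dots,u_n)=f(u_1)\cdots f(u_n)$ with $f(u)=f(u^{-1})$. Indeed, taking $f(u)=e^{(u+u^{-1})t}$, we have $\Pi(a;t)=\prod_{i=1}^n f(a_i)$ and clearly $f(u^{-1})=e^{(u^{-1}+u)t}=f(u)$, so the hypotheses of the contour-integral proposition are satisfied. Next I would substitute into the integrand: the ratio $\frac{f(qs_j)}{f(s_j)}$ becomes $\frac{e^{(qs_j+q^{-1}s_j^{-1})t}}{e^{(s_j+s_j^{-1})t}}=e^{((q-1)s_j+(q^{-1}-1)s_j^{-1})t}$, which is exactly the exponential factor appearing in the statement of the corollary. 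This directly yields, for each $0\le j\le k$,
\[
\frac{D_n^j\Pi(a;t)}{\Pi(a;t)}=\frac{(-1)^j}{(2\pi i)^j}\oint\cdots\oint \prod_{l=1}^j \frac{1}{1-qs_l^2}\Big(\prod_{i=1}^n\frac{1}{(s_l-a_i)(s_l-a_i^{-1})}\Big)\Big(\prod_{i=1}^{l-1}\frac{(s_i-s_l)(s_i-s_l^{-1})}{(s_i-qs_l)(s_i-q^{-1}s_l^{-1})}e^{((q-1)s_l+(q^{-1}-1)s_l^{-1})t}-1\Big)\frac{ds_l}{s_l},
\]
with the contour conventions inherited from that proposition, and the case $j=0$ giving the empty product equal to $1$.

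Then I would simply sum over $j$ against the binomial weights $\binom{k}{j}$, relabel the summation index from $j$ to $l$ to match the statement, and observe that the contour for the $i$-th integration variable in the $l$-th term encloses precisely $q^{-1/2}$, the points $a_i^{\pm1}$ for $1\le i\le n$, and $(qs_i)^{\pm1}$ for $j+1\le i\le l$ (here renaming $k\mapsto l$ in the earlier proposition's statement), which is exactly as claimed. The main obstacle, such as it is, is purely bookkeeping: one must be careful that the contour encirclement conditions stated in the source proposition (which used $k$ for the number of integrations) are transcribed correctly into the notation of the corollary (which uses $l$), and that the reindexing $j\leftrightarrow l$ does not introduce off-by-one errors in the nested product $\prod_{i=1}^{l-1}$ versus $\prod_{i=1}^{j-1}$. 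No genuinely new analytic input is needed; the work is assembling the two ingredients and verifying the substitution $f(u)=e^{(u+u^{-1})t}$ produces the advertised integrand.
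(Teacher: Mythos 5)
Your proposal is correct and matches the paper's proof exactly: the paper also deduces the corollary by applying the contour-integral formula \eqref{eq:q-contour} to $F(u_{1},\dots,u_{n})=\prod_{j=1}^{n}\exp\big((u_{j}+u_{j}^{-1})t\big)$ and then summing via Proposition \ref{q_moments}. Your verification that $f(u)=e^{(u+u^{-1})t}$ is symmetric and that $f(qs_{j})/f(s_{j})$ yields the stated exponential is the same (routine) substitution the paper leaves implicit.
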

\begin{proof}
The result follows directly from Proposition \ref{q_moments} using equation \eqref{eq:q-contour} for the function $F(u_{1},...,u_{n})=\prod_{j=1}^n\exp \big((u_{j}+u_{j}^{-1})t\big)$.
\end{proof}


\chapter{On a classical limit of the $q$-deformed $\mathfrak{so}_{2n+1}$-Whittaker functions}
In chapter \ref{Koornwinder} we defined \ref{recursion_q_whittaker} a family of polynomials parametrized by partitions and conjectured \ref{conjecture} that these polynomials coincide with the $q$-deformed $\mathfrak{so}_{2n+1}$-Whittaker functions. In this chapter we will prove that the classical $\mathfrak{so}_{2n+1}$-Whittaker functions are degenerations of these polynomials.

In section 7.1 we recall the definition of the $\mathfrak{so}_{2n+1}$-Whittaker functions. In section 7.2 we propose a scaling of the polynomials $\mathcal{P}^{(n)}$ and prove the convergence to the $\mathfrak{so}_{2n+1}$-Whittaker functions.

\section{The $\mathfrak{so}_{2n+1}$-Whittaker functions}
The quantum Toda lattice is a quantum integrable system associated with a Lie algebra $\mathfrak{g}$ and has Hamiltonian given by
\begin{equation}
\mathcal{H}^{\mathfrak{g}}=\frac{1}{2}\sum_{i=1}^n\frac{\partial^2}{\partial x_{i}^2}-\sum_{i=1}^nd_{i}e^{-\langle\alpha_{i}, x \rangle}
\label{eq:Hamiltonian_general}
\end{equation}
where $(\alpha_{i}, i=1,...,n)$ are the simple roots corresponding to the algebra $\mathfrak{g}$, $(d_{i},i=1,...,n)$ are appropriate constants and $\langle \cdot , \cdot \rangle$ denotes the dot product.

Our main interest focuses on the $B_{n}$ type root system which corresponds to the Lie algebra $\mathfrak{so}_{2n+1}$. The simple roots are
\[\alpha_{i}=e_{i}-e_{i+1}, \quad 1\leq i \leq n-1, \quad \alpha_{n}=e_{n}\]
where $(e_{1},...,e_{n})$ denotes the orthonormal basis in $\mathbb{R}^n$. The coroots, defined for a root $\alpha$ by $\alpha^{\vee}= \frac{2}{\langle \alpha, \alpha\rangle}\alpha$, are the following
\[\alpha_{i}^{\vee}=e_{i}-e_{i+1}, \quad 1\leq i \leq n-1, \quad \alpha_{n}^{\vee}=2e_{n}.\]
The coefficients, $d_{i}, i=1,...,n$, that appear in the Hamiltonian \eqref{eq:Hamiltonian_general} are chosen such that the matrix $(d_{i}\langle \alpha_{i}^{\vee}, \alpha_{j}\rangle)_{i,j=1}^n$ is symmetric. Therefore, the Hamiltonian corresponding to $\mathfrak{g}=\mathfrak{so}_{2n+1}$ takes the form
\begin{equation}
\mathcal{H}^{\mathfrak{so}_{2n+1}}=\frac{1}{2}\sum_{i=1}^n\frac{\partial^2}{\partial x_{i}^2}-\sum_{i=1}^{n-1}e^{x_{i+1}-x_{i}}-\frac{1}{2}e^{-x_{n}}.
\label{eq:Hamiltonian_so}
\end{equation}

The $\mathfrak{so}_{2n+1}$-Whittaker functions are eigenfunctions of the operator $\mathcal{H}^{\mathfrak{so}_{2n+1}}$. We will defined them via their Givental-type integral representation proved in \cite{Gerasimov_et_al_2008, Gerasimov_et_al_2012}.
\begin{definition}[\cite{Gerasimov_et_al_2008, Gerasimov_et_al_2012}]
\label{so-whittaker-definition}
For $n\geq 1$, $\lambda= (\lambda_{1},...,\lambda_{n}) \in \mathbb{C}^n$ and $x \in \mathbb{R}^n$, the $\mathfrak{so}_{2n+1}$-Whittaker function admits an integral representation as follows
\[\Psi_{\lambda}^{\mathfrak{so}_{2n+1}}(x)=\int_{\Gamma[x]}\prod_{k=1}^{2n-1}\prod_{i=1}^{[\frac{k+1}{2}]}dx^k_{i}e^{\mathcal{F}_{\lambda}(\mathbf{x})},\]
where the exponent $\mathcal{F}_{\lambda}(\mathbf{x})$ is given by
\begin{equation*}
\begin{split}
\mathcal{F}_{\lambda}(\mathbf{x})=\sum_{k=1}^n \lambda_{k}(2|x^{2k-1}|&-|x^{2k}|-|x^{2k-2}|)-\sum_{k=1}^n \Big(\sum_{i=1}^{k-1}(e^{x^{2k-1}_{i}-x^{2k}_{i}}+e^{x^{2k}_{i+1}-x^{2k-1}_{i}})\\
+&(e^{-x^{2k-1}_{k}}+e^{x^{2k-1}_{k}-x^{2k}_{k}})+\sum_{i=1}^{k-1}(e^{x^{2k-1}_{i+1}-x^{2k-2}_{i}}+e^{x^{2k-2}_{i}-x^{2k-1}_{i}})\Big)
\end{split}
\end{equation*}
and $\Gamma[x]$ is a small deformation of the subspace $\{\mathbf{x}=(x^1,...,x^{2n}):x^k \in \mathbb{R}^{[\frac{k+1}{2}]} \text{ with }x^{2n}=x\}$ such that the integral converges.
\end{definition}
We observe that $\Psi^{\mathfrak{so}_{2n+1}}(x)$ exhibits a recursive structure. For $\lambda_{n}\in \mathbb{C}$, $x^{2n}\in \mathbb{R}^n$ and $x^{2n-2}\in \mathbb{R}^{n-1}$ we define the kernel
\begin{equation}
\begin{split}
Q^{n-1,n}_{\lambda_{n}}(x^{2n-2},x^{2n})=\int_{\mathbb{R}^n} &\exp \Big(\lambda_{n}\big(2|x^{2n-1}|-|x^{2n}|-|x^{2n-2}|\big)\\
&-\sum_{i=1}^{n-1}\big(e^{x^{2n-1}_{i}-x^{2n}_{i}}+e^{x^{2n}_{i+1}-x^{2n-1}_{i}}\big)-\big(e^{-x^{2n-1}_{n}}+e^{x^{2n-1}_{n}-x^{2n}_{n}}\big)\\
&-\sum_{i=1}^{n-1}\big(e^{x^{2n-1}_{i+1}-x^{2n-2}_{i}}+e^{x^{2n-2}_{i}-x^{2n-1}_{i}}\big) \Big)\prod_{i=1}^ndx^{2n-1}_{i}.
\end{split}
\label{eq:so_kernel}
\end{equation}
Then
\begin{equation}
\Psi_{\lambda}^{\mathfrak{so}_{2n+1}}(x)=\int_{\mathbb{R}^{n-1}}Q^{n-1,n}_{\lambda_{n}}(x^{2n-2},x^{2n})\Psi_{\tilde{\lambda}}^{\mathfrak{so}_{2n-1}}(x^{2n-2})\prod_{j=1}^{n-1}dx^{2n-2}_{j}
\label{eq:whittaker-recursion}
\end{equation}
with $x\equiv x^{2n}$ and $\tilde{\lambda}=(\lambda_{1},...,\lambda_{n-1})$.\\

\noindent When $n=1$, the $\mathfrak{so}_{2n+1}$-Whittaker function is related with the Macdonald function as follows. For $\lambda \in \mathbb{C}$ and $x \in \mathbb{R}$
\[\Psi^{\mathfrak{so}_{2\cdot 1+1}}_{\lambda}(x)=\int_{\mathbb{R}}\exp \Big(\lambda(2x^1_{1}-x^2_{1})-(e^{-x^1_{1}}+e^{x^1_{1}-x^2_{1}})\Big)dx^1_{1},\, x^2_{1}=x\]
with the change of variable $t = e^{x^1_{1}-x^2_{1}/2}$ leading to the following representation
\[\Psi^{\mathfrak{so}_{2\cdot 1+1}}_{\lambda}(x)=\int_{0}^{\infty}t^{2\lambda - 1}\exp \Big(-e^{-x^2_{1}/2}\big(t+\frac{1}{t}\big)\Big) dt = 2K_{2\lambda}(2e^{-x/2}).\]

As we mentioned before the $\mathfrak{so}_{2n+1}$-Whittaker function are eigenfunction of the Toda operator associated with the $\mathfrak{so}_{2n+1}$-Lie algebra. More specifically, we have the following result.

\begin{theorem}[\cite{Gerasimov_et_al_2008, Gerasimov_et_al_2012}] For $\lambda = (\lambda_{1},...,\lambda_{n})\in \mathbb{C}^n$, $\Psi^{\mathfrak{so}_{2n+1}}_{\lambda}(x)$ solves the differential equation
\begin{equation}
\label{eq:eigenrelation_so}
\mathcal{H}^{\mathfrak{so}_{2n+1}}\Psi_{\lambda}^{\mathfrak{so}_{2n+1}}(x)=\frac{1}{2}\langle \lambda, \lambda\rangle \Psi_{\lambda}^{\mathfrak{so}_{2n+1}}(x).
\end{equation}
\end{theorem}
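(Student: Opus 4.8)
The theorem to prove is the eigenrelation
\[
\mathcal{H}^{\mathfrak{so}_{2n+1}}\Psi_{\lambda}^{\mathfrak{so}_{2n+1}}(x)=\tfrac{1}{2}\langle \lambda,\lambda\rangle\,\Psi_{\lambda}^{\mathfrak{so}_{2n+1}}(x),
\]
where $\Psi_{\lambda}^{\mathfrak{so}_{2n+1}}$ is given by Givental's integral representation of Definition \ref{so-whittaker-definition}. The plan is to argue by induction on $n$ using the recursive structure \eqref{eq:whittaker-recursion}, with the base case $n=1$ handled directly via the Macdonald function.

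\textbf{Base case.} For $n=1$ we have $\Psi^{\mathfrak{so}_{3}}_{\lambda}(x)=2K_{2\lambda}(2e^{-x/2})$, as recorded just before the theorem. The plan is to substitute $z=2e^{-x/2}$ and use the Bessel differential equation \eqref{eq:eigen_macdonald_intro} satisfied by $K_{\mu}(z)$: a short chain-rule computation converts $\tfrac{d^2}{dz^2}+\tfrac1z\tfrac{d}{dz}-(1+\tfrac{\mu^2}{z^2})$ acting on $K_{\mu}(z)$ into $\tfrac12\tfrac{d^2}{dx^2}-\tfrac12 e^{-x}$ acting on $\phi(x)=K_{2\lambda}(2e^{-x/2})$, modulo the constant $\tfrac12\lambda^2$. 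This is the routine calculation that $\mathcal{H}^{\mathfrak{so}_{3}}=\tfrac12\tfrac{d^2}{dx^2}-\tfrac12 e^{-x}$ applied to $K_{2\lambda}(2e^{-x/2})$ returns $\tfrac12\lambda^2$ times it; I would present it compactly rather than grind through every derivative.

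\textbf{Inductive step.} Assume the eigenrelation holds for $\mathfrak{so}_{2n-1}$ with eigenvalue $\tfrac12\langle\tilde\lambda,\tilde\lambda\rangle$, $\tilde\lambda=(\lambda_1,\dots,\lambda_{n-1})$. The key point is that the interaction terms in $\mathcal{H}^{\mathfrak{so}_{2n+1}}$ split: the terms $\sum_{i=1}^{n-2}e^{x^{2n-2}_{i+1}-x^{2n-2}_{i}}$ and $\tfrac12 e^{-x^{2n-2}_{n-1}}$ are precisely $\mathcal{H}^{\mathfrak{so}_{2n-1}}$ acting on the lower level, while the new ``hooking'' exponentials appearing in the kernel $Q^{n-1,n}_{\lambda_n}$ of \eqref{eq:so_kernel} — namely $e^{x^{2n-1}_i-x^{2n}_i}$, $e^{x^{2n}_{i+1}-x^{2n-1}_i}$, $e^{-x^{2n-1}_n}$, $e^{x^{2n-1}_n-x^{2n}_n}$, $e^{x^{2n-1}_{i+1}-x^{2n-2}_i}$, $e^{x^{2n-2}_i-x^{2n-1}_i}$ — couple only the three consecutive levels $x^{2n-2},x^{2n-1},x^{2n}$. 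So I would prove the intertwining identity: applying $\mathcal{H}^{\mathfrak{so}_{2n+1}}$ in the $x^{2n}=x$ variables to $Q^{n-1,n}_{\lambda_n}(x^{2n-2},x)$ and integrating against $dx^{2n-1}$, one can move derivatives onto the $x^{2n-1}$ integration variables and then onto $x^{2n-2}$, at the cost of a boundary-free integration by parts (the $e^{\mathcal F}$ factor decays on the deformed contour $\Gamma[x]$, so boundary terms vanish), producing $\bigl(\mathcal{H}^{\mathfrak{so}_{2n-1}}_{x^{2n-2}}+\tfrac12\lambda_n^2\bigr)$ acting on $\Psi^{\mathfrak{so}_{2n-1}}_{\tilde\lambda}(x^{2n-2})$. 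Concretely one shows $\mathcal{D}_x\, Q = \mathcal{D}_{x^{2n-2}}\, Q + \tfrac12\lambda_n^2 Q$ up to a total $x^{2n-1}$-divergence, where $\mathcal D$ denotes the relevant Toda operator; integrating this identity and invoking the inductive hypothesis plus \eqref{eq:whittaker-recursion} gives
\[
\mathcal{H}^{\mathfrak{so}_{2n+1}}\Psi_{\lambda}^{\mathfrak{so}_{2n+1}}(x)=\Bigl(\tfrac12\langle\tilde\lambda,\tilde\lambda\rangle+\tfrac12\lambda_n^2\Bigr)\Psi_{\lambda}^{\mathfrak{so}_{2n+1}}(x)=\tfrac12\langle\lambda,\lambda\rangle\,\Psi_{\lambda}^{\mathfrak{so}_{2n+1}}(x).
\]

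\textbf{Main obstacle.} The hard part is the bookkeeping in the intertwining identity $\mathcal{D}_x Q=\mathcal{D}_{x^{2n-2}}Q+\tfrac12\lambda_n^2 Q+(\text{divergence})$: one must carefully track how the Laplacian $\tfrac12\sum_i \partial^2_{x_i}$ generates, via the quadratic term $(\partial_{x_i}\mathcal F)^2$ and the linear term $\partial^2_{x_i}\mathcal F$, exactly the exponential potentials at level $2n-1$, and check that the cross terms and the extra constant organize into $\tfrac12\lambda_n^2$. Equivalently, this is the statement that the Givental kernel is the kernel of a Baxter-type $Q$-operator intertwining the type $B_{n-1}$ and $B_n$ Toda Hamiltonians; the cleanest route is to verify the first-order ``zero-curvature'' relations level by level, as in \cite{Gerasimov_et_al_2008, Gerasimov_et_al_2012}, and I would organize the computation around those rather than expanding $\mathcal{H}^{\mathfrak{so}_{2n+1}}e^{\mathcal F}$ blindly. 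The convergence of $\Gamma[x]$ and the vanishing of boundary terms under differentiation-under-the-integral is a technical point I would state but not belabor, since it is built into the definition of the contour.
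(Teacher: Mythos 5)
Your plan is sound and would yield the result, but it is worth noting that the paper itself only cites this theorem from Gerasimov--Lebedev--Oblezin; the proof it actually carries out is for the shifted variant $\Phi^{(N)}_\lambda$ in Proposition \ref{eigenrelation_phi_whittaker}, and there the decomposition is organized differently from yours. You induct in full-rank steps $B_{n-1}\to B_n$ using the composite kernel $Q^{n-1,n}_{\lambda_n}$ of \eqref{eq:so_kernel}, which forces you to prove the intertwining identity only "up to a total $x^{2n-1}$-divergence" and then integrate by parts over the intermediate odd level --- precisely the bookkeeping you flag as the main obstacle. The paper instead factorizes each step through an intermediate operator $\mathcal{H}^D_{n,\theta}$ (a degeneration of the $BC_n$ Toda Hamiltonian attached to the odd levels), proving two pointwise kernel identities $\mathcal{H}^B_n Q^{n,n}_\theta(x;y)=(\mathcal{H}^D_{n,\theta})^*_y Q^{n,n}_\theta(x;y)$ and $(\mathcal{H}^D_{n,\theta}-\tfrac12\theta^2)Q^{n,n-1}_\theta(x;y)=(\mathcal{H}^B_{n-1})^*_y Q^{n,n-1}_\theta(x;y)$ in which no internal integration variable appears; composing them recovers exactly your claimed relation $\mathcal{D}_x Q=\mathcal{D}_{x^{2n-2}}Q+\tfrac12\lambda_n^2 Q$ modulo a divergence, so the two routes are equivalent, but the factorized version reduces the hard step to elementary differentiation of a single exponential kernel and, as a bonus, produces the eigenrelation for the odd-level function $\Phi^{(2n-1)}_\lambda$, which the paper needs later. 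Your base case also sits one level higher: you start at $n=1$ with $\Psi^{\mathfrak{so}_3}_\lambda(x)=2K_{2\lambda}(2e^{-x/2})$ and the Bessel equation \eqref{eq:eigen_macdonald_intro}, whereas the paper starts at $N=1$ with the trivial eigenfunction $e^{\lambda_1 x}$ of $\mathcal{H}^D_{1,\lambda_1}$, so no special-function input is required. Both choices are fine; if you keep your single-kernel induction, you should at least write out the first-order relations verifying that the cross terms $(\partial_{x_i}\mathcal{F})^2$ and $\partial^2_{x_i}\mathcal{F}$ assemble into the level-$(2n-1)$ potentials plus $\tfrac12\lambda_n^2$, since that is where the content of the proof lives.
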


Baudoin and O'Connell give the following characterization for the $\mathfrak{so}_{2n+1}$-Whittaker function for real-valued parameter in the type-$B$ Weyl chamber.
\begin{proposition}[\cite{Baudoin-O'Connell_2011}, cor. 2.3, prop. 4.1]Let $\lambda \in \mathbb{R}^n$ such that $\lambda_{1}>...>\lambda_{n}>0$. The function $\Psi^{\mathfrak{so}_{2n+1}}_{\lambda}$ is the unique solution to the differential equation \eqref{eq:eigenrelation_so} such that $\Psi_{\lambda}^{\mathfrak{so}_{2n+1}}(x)e^{-\langle \lambda, x\rangle}$ is bounded with growth condition 
\[\Psi_{\lambda}^{\mathfrak{so}_{2n+1}}(x)e^{-\langle \lambda, x\rangle} \xrightarrow[\text{$1\leq i \leq n$}]{\text{$\alpha_{i}(x)\to \infty$}}\prod_{i=1}^n \Gamma(\langle \alpha_{i}^{\vee}, \lambda\rangle)\] 
where $\alpha_{i}, \alpha_{i}^{\vee}$ for $1\leq i \leq n$ are the roots and coroots for the $B_{n}$ root system. 
\label{Whittaker_characterization}
\end{proposition}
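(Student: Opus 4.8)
The plan is to read the characterisation as a statement about bounded harmonic functions of a killed diffusion, and to split it into an \emph{asymptotics} part, handled through the Givental-type integral representation of Definition \ref{so-whittaker-definition}, and a \emph{uniqueness} part, handled probabilistically. Write $h_{\lambda}(x):=\Psi^{\mathfrak{so}_{2n+1}}_{\lambda}(x)e^{-\langle\lambda,x\rangle}$ and conjugate: since $\mathcal{H}^{\mathfrak{so}_{2n+1}}-\tfrac12\langle\lambda,\lambda\rangle$ is, by \eqref{eq:Hamiltonian_so}, the generator of Brownian motion killed at rate $\kappa(x)+\tfrac12\langle\lambda,\lambda\rangle$ with $\kappa(x):=\sum_{i=1}^{n-1}e^{x_{i+1}-x_{i}}+\tfrac12 e^{-x_{n}}\ge0$, the operator $\mathcal{L}_{\lambda}:=e^{-\langle\lambda,\cdot\rangle}\circ(\mathcal{H}^{\mathfrak{so}_{2n+1}}-\tfrac12\langle\lambda,\lambda\rangle)\circ e^{\langle\lambda,\cdot\rangle}=\tfrac12\Delta+\langle\lambda,\nabla\rangle-\kappa$ is the generator of Brownian motion with constant drift $\lambda$ killed at rate $\kappa$. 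The eigenrelation \eqref{eq:eigenrelation_so} is exactly $\mathcal{L}_{\lambda}h_{\lambda}=0$, and the target constant $C_{\lambda}:=\prod_{i=1}^{n}\Gamma(\langle\alpha_{i}^{\vee},\lambda\rangle)$ is well defined because $\langle\alpha_{i}^{\vee},\lambda\rangle=\lambda_{i}-\lambda_{i+1}>0$ for $i<n$ and $\langle\alpha_{n}^{\vee},\lambda\rangle=2\lambda_{n}>0$. Thus it suffices to prove: (a) $h_{\lambda}$ is bounded on $\mathbb{R}^{n}$ and $h_{\lambda}(x)\to C_{\lambda}$ as $\alpha_{i}(x)\to\infty$ for every $i$; and (b) any bounded $\mathcal{L}_{\lambda}$-harmonic function with this same corner limit is uniquely determined (in particular it must equal $h_{\lambda}$, whence $\Psi^{\mathfrak{so}_{2n+1}}_{\lambda}$ is the claimed unique solution).

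\textbf{Part (a): asymptotics and boundedness.} I would argue by induction on $n$ using the recursion \eqref{eq:whittaker-recursion}, $\Psi^{\mathfrak{so}_{2n+1}}_{\lambda}(x)=\int_{\mathbb{R}^{n-1}}Q^{n-1,n}_{\lambda_{n}}(y,x)\,\Psi^{\mathfrak{so}_{2n-1}}_{\tilde\lambda}(y)\,dy$ with the kernel \eqref{eq:so_kernel}. The base case $n=1$ is explicit from the identity $\Psi^{\mathfrak{so}_{3}}_{\lambda}(x)=2K_{2\lambda}(2e^{-x/2})$ recorded after \eqref{eq:whittaker-recursion}: the classical estimates $K_{\mu}(z)\sim\tfrac12\Gamma(\mu)(z/2)^{-\mu}$ as $z\to0^{+}$ (valid for $\mu>0$) and $K_{\mu}(z)\sim\sqrt{\pi/(2z)}\,e^{-z}$ as $z\to\infty$ give $h_{\lambda}(x)=2K_{2\lambda}(2e^{-x/2})e^{-\lambda x}$ continuous, tending to $\Gamma(2\lambda)=C_{\lambda}$ as $x\to+\infty$ and to $0$ as $x\to-\infty$, hence bounded. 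For the inductive step one restricts to the corner where all $\alpha_{i}(x)\to\infty$; there the mass of the $x^{2n-1}$-integral concentrates in a region where, after a substitution of the form $t_{i}=e^{x^{2n-1}_{i}-x^{2n}_{i}/2}$, the coupling terms in \eqref{eq:so_kernel} organise so that the integral factorises into Euler-type integrals, each producing a $\Gamma$-factor; combined with the inductively known limit $\prod_{i=1}^{n-1}\Gamma(\langle\tilde\alpha_{i}^{\vee},\tilde\lambda\rangle)$ of $h_{\tilde\lambda}$, these factors must reassemble into $C_{\lambda}$. Simultaneously, uniform Laplace-type bounds on the same integral propagate the global boundedness of $h_{\tilde\lambda}$ up to $h_{\lambda}$. (An alternative route to the corner asymptotics is the Harish-Chandra/Jacquet evaluation of the Whittaker $c$-function, which gives the same product of $\Gamma$'s.)

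\textbf{Part (b): uniqueness.} Let $\xi=(\xi_{t})_{t\ge0}$ be Brownian motion with constant drift $\lambda$, which is trivially non-explosive, and let $\mathbb{E}_{x}$ denote expectation started from $x$. By Itô's formula and Feynman--Kac, a function $h$ is $\mathcal{L}_{\lambda}$-harmonic if and only if $M_{t}:=h(\xi_{t})\exp\!\big(-\!\int_{0}^{t}\kappa(\xi_{s})\,ds\big)$ is a local martingale; if moreover $h$ is bounded then, since $\kappa\ge0$, $|M_{t}|\le\|h\|_{\infty}$, so $M$ is a bounded martingale and converges almost surely and in $L^{1}$. Because $\lambda_{1}>\cdots>\lambda_{n}>0$ we have $\langle\alpha_{i},\xi_{t}\rangle/t\to\langle\alpha_{i},\lambda\rangle>0$ almost surely for every $i$, so $\alpha_{i}(\xi_{t})\to\infty$; along the same paths $\kappa(\xi_{s})$ decays exponentially in $s$, so $\int_{0}^{\infty}\kappa(\xi_{s})\,ds<\infty$ almost surely. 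Hence $M_{\infty}=C_{\lambda}\exp\!\big(-\!\int_{0}^{\infty}\kappa(\xi_{s})\,ds\big)$ whenever $h$ has corner limit $C_{\lambda}$, and therefore $h(x)=M_{0}=\mathbb{E}_{x}[M_{\infty}]=C_{\lambda}\,\mathbb{E}_{x}\!\big[\exp\!\big(-\!\int_{0}^{\infty}\kappa(\xi_{s})\,ds\big)\big]$. This pins $h$ down uniquely; applied to $h=h_{\lambda}$ (legitimate by part (a)) it identifies $\Psi^{\mathfrak{so}_{2n+1}}_{\lambda}$ as in the statement, and applied to any competitor it gives uniqueness.

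\textbf{Main obstacle.} The delicate part is (a): carrying out the joint asymptotics of the $\tfrac{n(n+1)}{2}$-fold Givental integral as \emph{all} $\alpha_{i}(x)\to\infty$ simultaneously, with enough uniformity to justify the factorisation into $\Gamma$-integrals and to identify the product of $\Gamma$-factors exactly (the combinatorial bookkeeping that makes the level-$n$ contribution upgrade $\prod_{i=1}^{n-1}\Gamma(\langle\tilde\alpha_{i}^{\vee},\tilde\lambda\rangle)$ to $\prod_{i=1}^{n}\Gamma(\langle\alpha_{i}^{\vee},\lambda\rangle)$), and also to control the integral \emph{away} from the corner, especially near the walls $\alpha_{i}(x)=0$, so as to obtain genuine global boundedness of $h_{\lambda}$ rather than boundedness only in the corner. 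A secondary technical point is confirming the Feynman--Kac representation of $\mathcal{L}_{\lambda}$-harmonicity and the law of large numbers for $\xi$ in the presence of the exponential killing rate; this is precisely where the chamber condition $\lambda_{1}>\cdots>\lambda_{n}>0$ is indispensable.
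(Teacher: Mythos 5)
The paper does not prove this proposition; it is quoted verbatim from Baudoin--O'Connell, so there is no internal proof to compare against. Your part (b) is essentially the argument used in that reference: the conjugation $\mathcal{L}_{\lambda}=\tfrac12\Delta+\langle\lambda,\nabla\rangle-\kappa$ is computed correctly, the bounded martingale $M_{t}=h(\xi_{t})\exp(-\int_{0}^{t}\kappa(\xi_{s})\,ds)$ converges in $L^{1}$, the chamber condition $\lambda_{1}>\cdots>\lambda_{n}>0$ gives $\alpha_{i}(\xi_{t})\to\infty$ a.s.\ and the a.s.\ integrability of $\kappa(\xi_{\cdot})$, and the representation $h(x)=C_{\lambda}\,\mathbb{E}_{x}[\exp(-\int_{0}^{\infty}\kappa(\xi_{s})\,ds)]$ pins down any bounded harmonic function with the stated corner limit. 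The base case $n=1$ of part (a) via $\Psi^{\mathfrak{so}_{3}}_{\lambda}(x)=2K_{2\lambda}(2e^{-x/2})$ and the small-argument asymptotics of $K_{\mu}$ is also correct and yields $\Gamma(2\lambda)=\Gamma(\langle\alpha_{1}^{\vee},\lambda\rangle)$.

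The genuine gap is the inductive step of part (a), which is asserted rather than proved, and which carries essentially all of the analytic content of the ``existence'' half of the proposition. Two specific points would have to be supplied. First, the corner of the level-$2n$ chamber does not automatically force the dominant region of the $x^{2n-2}$-integration in \eqref{eq:whittaker-recursion} into the corner of the level-$(2n-2)$ chamber, so one cannot simply plug in the inductive limit of $h_{\tilde\lambda}$; a uniform Laplace-type localisation of the kernel $Q^{n-1,n}_{\lambda_{n}}$ is needed, together with control near the walls $\alpha_{i}(x)=0$ to get \emph{global} boundedness of $h_{\lambda}$ (your part (b) genuinely requires boundedness everywhere, not just in the corner). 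Second, the claimed ``reassembly'' of $\Gamma$-factors is not a mere accumulation: passing from $B_{n-1}$ to $B_{n}$ replaces the factor $\Gamma(2\lambda_{n-1})$ coming from the short coroot of $\tilde\lambda$ by the two factors $\Gamma(\lambda_{n-1}-\lambda_{n})\Gamma(2\lambda_{n})$, so the level-$2n$ integral must produce exactly this exchange, which requires an explicit Euler-integral computation (or an appeal to the Harish-Chandra $c$-function evaluation, which would then itself need to be matched against the Givental normalisation used in Definition \ref{so-whittaker-definition}). As written, the proposal establishes uniqueness given the asymptotics but does not establish that $\Psi^{\mathfrak{so}_{2n+1}}_{\lambda}$ actually satisfies them for $n>1$.
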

\section{Whittaker functions as degenerations of $q$-Whittaker functions}
In this section we will show that the $q$-deformed $\mathfrak{so}_{2n+1}$-Whittaker functions converge to the $\mathfrak{so}_{2n+1}$-Whittaker functions when we consider certain rescaling. In order to do that we will use the recursive structure we conjectured in \ref{recursion_q_whittaker} for the $q$-deformed $\mathfrak{so}_{2n+1}$-Whittaker functions.
\begin{definition}
\label{scaling}
We introduce the following scaling:
\[z^{k}_{i}=\epsilon^{-1} x^{k}_{i}+(k-2(i-1))m(\epsilon),\quad 1\leq i \leq \big[\frac{k+1}{2}\big],\, 1\leq k \leq 2n\]
\[q=e^{-\epsilon},\quad a_{l}=e^{i \epsilon \lambda_{l}},1\leq l \leq n\]
\[m(\epsilon)=-[\epsilon^{-1}\log \epsilon],\quad \mathcal{A}(\epsilon)=-\frac{\pi^2}{6\epsilon}-\frac{1}{2}\log \frac{\epsilon}{2\pi}\]
\[ f_{\alpha}(y,\epsilon)=(q;q)_{[\epsilon^{-1}y]+\alpha m(\epsilon)}, \quad \alpha\in \mathbb{Z}_{\geq 1}.\]
\end{definition}
\noindent We also define the rescaling of the $q$-deformed $\mathfrak{so}_{2n+1}$-Whittaker function as
\[\Psi_{i\lambda,\epsilon}^{(n)}(x)=\epsilon^{n^2}e^{n^2\mathcal{A}(\epsilon)}\mathcal{P}_{z}^{(n)}(a;q).\] 
with $x = x^{2n}$ and $z = z^{2n}$. 

Although the coordinates of the $z$-vector are ordered such that $z_{1}\geq ... \geq z_{n}\geq 0$, the same is not true for the coordinates of the $x$-vector. We need for a given vector $x\in \mathbb{R}^n$ to be able to keep track of the indices that are out of order.
\begin{definition}
For $x\in \mathbb{R}^n$ define the set
\[\sigma(x) = \{1\leq i \leq n: x_{i}\leq x_{i+1}\}\]
with the convention $x_{n+1}=0$.
\end{definition}
\begin{theorem}
\label{convergence}
For all $n\geq 1$ and $\lambda \in \mathbb{R}^n$, the following hold.
\begin{enumerate}[1)]
\item For each $\sigma \subset \{1,...,n\}$, there exists a polynomial $R_{n,\sigma}$ of $n$ variables (chosen independently of $\lambda$ and $\epsilon$) such that for all $x$ with $\sigma(x)=\sigma$ we have the following estimate: for some $c^*>0$
\[|\Psi_{i\lambda, \epsilon}^{(n)}(x)|\leq R_{n,\sigma(x)}(x)\prod_{i\in \sigma(x)}\exp\{-c^*e^{-(x_{i}-x_{i+1})/2}\}\]
with $x_{n+1}=0$.
\item For $x$ varying in a compact domain, $\Psi_{i\lambda,\epsilon}^{(n)}(x)$ converges, as $\epsilon$ goes to zero, uniformly to $\Psi_{i \lambda}^{\mathfrak{so}_{2n+1}}(x)$.  
\end{enumerate}
\end{theorem}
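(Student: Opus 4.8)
The plan is to argue by induction on $n$, exploiting the recursive structure of both $\mathcal{P}^{(n)}$ (Definition \ref{recursion_q_whittaker}) and $\Psi^{\mathfrak{so}_{2n+1}}_{\lambda}$ (the kernel recursion \eqref{eq:whittaker-recursion}). The base case $n=1$ is the statement that the rescaled continuous $q$-Hermite polynomial converges to $2K_{2\lambda}(2e^{-x/2})$; this follows by a direct Laplace-type analysis of the sum $\sum_{m}\binom{l}{m}_{q}a^{2m-l}$ under the scaling of Definition \ref{scaling}, after recognizing that the $q$-binomial coefficient, rescaled, produces the exponential terms $e^{-x^1_1}+e^{x^1_1-x^2_1}$ in the integrand of $\Psi^{\mathfrak{so}_3}_\lambda$ while the $q$-Pochhammer prefactors combine with the $\mathcal{A}(\epsilon)$ and $\epsilon^{n^2}$ normalization. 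The key technical input here is the asymptotics $(q;q)_{[\epsilon^{-1}y]+\alpha m(\epsilon)}\sim$ (explicit Gaussian/Barnes-type factor) as $\epsilon\to 0$, which is why the functions $f_\alpha(y,\epsilon)$ and the constant $\mathcal{A}(\epsilon)$ are introduced in Definition \ref{scaling}; I would isolate this as a preliminary lemma on the $\epsilon\to 0$ behaviour of $q$-Pochhammer symbols with arguments of size $\Theta(\epsilon^{-1})$, including the shift by $m(\epsilon)=-[\epsilon^{-1}\log\epsilon]$.

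For the inductive step, I would rewrite the recursion
\[
\mathcal{P}^{(n)}_{z}(a;q)=\sum_{\nu\in\Lambda_{n-1}}Q^{(n-1,n)}_{a_n,q}(\nu,\lambda)\,\mathcal{P}^{(n-1)}_{\nu}(a;q)
\]
under the scaling, turning the sum over $\nu$ (and the internal sum over $\mu$ with $\nu\preceq\mu\preceq\lambda$) into a Riemann sum over the scaled variables $x^{2n-1},x^{2n-2}$. The claim is that
\[
\epsilon^{n^2}e^{n^2\mathcal{A}(\epsilon)}Q^{(n-1,n)}_{a_n,q}(\nu,\lambda)\,\epsilon^{-(2n-1)}\;\longrightarrow\;Q^{n-1,n}_{\lambda_n}(x^{2n-2},x^{2n}),
\]
i.e. the rescaled branching kernel converges to the integral kernel \eqref{eq:so_kernel}; here the products of $q$-binomial coefficients in the kernel of $\mathcal{P}$ (namely $w^{2n}_{a,q}$, via \eqref{eq:q_weights(Berele)} and \eqref{eq:symp_q_whittaker(Berele)}) degenerate under $z^k_i=\epsilon^{-1}x^k_i+(k-2(i-1))m(\epsilon)$, $q=e^{-\epsilon}$ into the exponential sums $e^{x^{2n-1}_i-x^{2n}_i}+e^{x^{2n}_{i+1}-x^{2n-1}_i}$, $e^{-x^{2n-1}_n}+e^{x^{2n-1}_n-x^{2n}_n}$, $e^{x^{2n-1}_{i+1}-x^{2n-2}_i}+e^{x^{2n-2}_i-x^{2n-1}_i}$, while the powers $a_n^{2|x^{2n-1}|-|x^{2n}|-|x^{2n-2}|}=e^{i\epsilon\lambda_n(\cdots)}$ give the linear term $\lambda_n(2|x^{2n-1}|-|x^{2n}|-|x^{2n-2}|)$ in the exponent. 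Combined with the inductive hypothesis $\Psi^{(n-1)}_{i\lambda,\epsilon}\to\Psi^{\mathfrak{so}_{2n-1}}_{i\lambda}$ and dominated convergence, this yields part 2).

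The main obstacle is uniform control of the sums so that the Riemann-sum-to-integral passage and the dominated convergence are justified — this is exactly what part 1) is for, and I would prove 1) and 2) simultaneously in the induction. Concretely: the exponential weights $\binom{\cdot}{\cdot}_q$ in $w^{2n}_{a,q}$ are bounded by $1$, but the telescoping structure (the difference $|x^{2n-1}_i-x^{2n-2}_i|$ type terms) must be shown to produce genuine decay of the form $\exp\{-c^*e^{-(x_i-x_{i+1})/2}\}$ off the Weyl chamber, uniformly in $\epsilon$; this requires carefully tracking, for indices $i\in\sigma(x)$ where $x_i\le x_{i+1}$, that the corresponding $q$-binomial contributes a factor decaying doubly-exponentially in $\epsilon^{-1}(x_i-x_{i+1})$, which after the $m(\epsilon)$ shift becomes the stated single-exponential decay. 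Once the uniform bound of 1) is in hand with a polynomial prefactor $R_{n,\sigma}$ independent of $\lambda,\epsilon$, an integrable dominating function for the inductive integral exists on compacts, uniform convergence follows, and the limit satisfies the recursion \eqref{eq:whittaker-recursion}, hence equals $\Psi^{\mathfrak{so}_{2n+1}}_{i\lambda}$. As a sanity check one may verify the limit solves the eigenrelation \eqref{eq:eigenrelation_so} and matches the characterization of Proposition \ref{Whittaker_characterization}, using that $\mathcal{P}^{(n)}$ satisfies the Pieri identity of Corollary \ref{Pieri_mypolynomials(Berele)} whose $\epsilon\to 0$ limit is the Toda eigenrelation.
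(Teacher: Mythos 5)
Your proposal follows essentially the same route as the paper: induction on $n$ via the branching recursion of Definition \ref{recursion_q_whittaker}, with the base case and the convergence of the rescaled kernel $Q^{(n-1,n)}_{a_n,q}$ to the Givental kernel \eqref{eq:so_kernel} both resting on the $\epsilon\to 0$ asymptotics of $f_\alpha(y,\epsilon)$ (which the paper imports as Propositions \ref{pochhammer_bounds}--\ref{pochhammer_asymptotics}), and with part 1) established alongside part 2) precisely to dominate the Riemann-sum-to-integral passage via the double-exponential integral bounds of Corollary \ref{double_exp_2}. The only cosmetic difference is your closing sanity check via the eigenrelation and Proposition \ref{Whittaker_characterization}, which the paper does not need since it identifies the limit directly with the integral recursion \eqref{eq:whittaker-recursion}.
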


Before we proceed to the proof of the theorem we need to investigate the behaviour of $f_{\alpha}(y,\epsilon)$, as $\epsilon$ goes to zero.

\begin{proposition}[\cite{Gerasimov_Lebedev_Oblezin_2012}, lem. 3.1]
For $\epsilon \to 0+$, the following expansions hold
\[f_{1}(y,\epsilon)=e^{\mathcal{A}(\epsilon)+e^{-y}+O(\epsilon)};\]
\[f_{\alpha > 1}(y,\epsilon)=e^{\mathcal{A}(\epsilon)+O(\epsilon^{\alpha - 1})}.\]
\end{proposition}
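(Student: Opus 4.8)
The plan is to reduce the statement to the small-$\epsilon$ asymptotics of $\log(q;q)_{N}$, where $N=N_{\alpha}(y,\epsilon):=[\epsilon^{-1}y]+\alpha\,m(\epsilon)$ and $q=e^{-\epsilon}$, and then to track separately the ``bulk'' factor $(q;q)_{\infty}$ and the ``tail'' factor $(q^{N+1};q)_{\infty}$. First I would use the elementary factorisation
\[
(q;q)_{N}=\frac{(q;q)_{\infty}}{(q^{N+1};q)_{\infty}},\qquad
-\log(q^{N+1};q)_{\infty}=\sum_{m\ge 1}\frac{1}{m}\,\frac{q^{m(N+1)}}{1-q^{m}},
\]
the second identity coming from $\log(1-u)=-\sum_{m\ge1}u^{m}/m$ applied to $\sum_{k>N}\log(1-q^{k})$ and summing the resulting geometric series in $k$. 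Hence $\log f_{\alpha}(y,\epsilon)=\log(q;q)_{\infty}+\sum_{m\ge1}\frac1m\frac{q^{m(N_{\alpha}+1)}}{1-q^{m}}$, and the whole statement follows from two inputs: (i) $\log(q;q)_{\infty}=\mathcal{A}(\epsilon)+O(\epsilon)$, and (ii) an estimate of the tail sum under the scaling of Definition \ref{scaling}.

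For (i) I would start from the Lambert-series identity $\log(q;q)_{\infty}=-\sum_{m\ge1}\frac{1}{m(e^{m\epsilon}-1)}$ and apply the Mellin representation $\tfrac{1}{e^{x}-1}=\frac{1}{2\pi i}\int_{(c)}\Gamma(s)\zeta(s)x^{-s}\,ds$ for $c>1$, which gives $\sum_{m\ge1}\frac{1}{m(e^{m\epsilon}-1)}=\frac{1}{2\pi i}\int_{(c)}\Gamma(s)\zeta(s)\zeta(s+1)\,\epsilon^{-s}\,ds$. Shifting the contour to the left past the poles at $s=1$ (simple, from $\zeta(s)$, residue $\tfrac{\pi^{2}}{6}\epsilon^{-1}$), $s=0$ (double, from $\Gamma$ and $\zeta(\,\cdot+1)$, residue $\tfrac12\log\tfrac{\epsilon}{2\pi}$ after using $\zeta(0)=-\tfrac12$ and $\zeta'(0)=-\tfrac12\log 2\pi$) and $s=-1$ (residue $-\tfrac{\epsilon}{24}$), and estimating the remaining integral, yields $\sum_{m\ge1}\frac{1}{m(e^{m\epsilon}-1)}=\frac{\pi^{2}}{6\epsilon}+\frac12\log\frac{\epsilon}{2\pi}-\frac{\epsilon}{24}+O(\epsilon^{2})$, hence $\log(q;q)_{\infty}=\mathcal{A}(\epsilon)+\frac{\epsilon}{24}+O(\epsilon^{2})=\mathcal{A}(\epsilon)+O(\epsilon)$. (Alternatively this is exactly the statement that, with $\tau=i\epsilon/2\pi$, the modular transformation $\eta(-1/\tau)=\sqrt{-i\tau}\,\eta(\tau)$ of the Dedekind eta function gives $(q;q)_{\infty}=\sqrt{2\pi/\epsilon}\;e^{\epsilon/24-\pi^{2}/6\epsilon}\bigl(1+O(e^{-4\pi^{2}/\epsilon})\bigr)$.)

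For (ii) I would first record that $\epsilon[\epsilon^{-1}t]=t+O(\epsilon)$ and $\epsilon\,m(\epsilon)=-\log\epsilon+O(\epsilon)$, uniformly for $t$ in a compact set, so that $\epsilon N_{\alpha}=y-\alpha\log\epsilon+O(\epsilon)$ and therefore $q^{N_{\alpha}}=e^{-\epsilon N_{\alpha}}=\epsilon^{\alpha}e^{-y}\bigl(1+O(\epsilon)\bigr)$ uniformly for $y$ bounded. Since $\tfrac{q}{1-q}=\tfrac{1}{\epsilon}+O(1)$, the $m=1$ term of the tail equals $\tfrac{q}{1-q}q^{N_{\alpha}}=\epsilon^{\alpha-1}e^{-y}+O(\epsilon^{\alpha})$, while the $m\ge2$ terms sum to $O(\epsilon^{2\alpha-1})$; hence $\sum_{m\ge1}\frac1m\frac{q^{m(N_{\alpha}+1)}}{1-q^{m}}=\epsilon^{\alpha-1}e^{-y}+O(\epsilon^{\alpha})$. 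Combining with (i): for $\alpha=1$ this gives $\log f_{1}(y,\epsilon)=\mathcal{A}(\epsilon)+e^{-y}+O(\epsilon)$; for $\alpha\ge2$ it gives $\log f_{\alpha}(y,\epsilon)=\mathcal{A}(\epsilon)+O(\epsilon^{\alpha-1})$ (for $\alpha=2$, which together with $\alpha=1$ is the only case occurring in the $q$-binomial weights attached to the scaled Gelfand--Tsetlin patterns of Definition \ref{recursion_q_whittaker}, the $O(\epsilon)$ error of (i) is of the same order). Exponentiating these two identities yields the displayed expansions.

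The main obstacle is part (i): one must justify the contour shift rigorously --- controlling the decay of $\Gamma(s)\zeta(s)\zeta(s+1)$ on vertical lines, the absolute convergence of the shifted integral, and the interchange of $\sum_{m}$ with $\int$ --- or else invoke the modularity of $\eta$ as a black box. Everything else is bookkeeping: the tail estimate in (ii) is an explicit geometric series, and the required uniformity in $y$ on compacts is automatic because all error factors involve only the bounded quantity $e^{-y}$.
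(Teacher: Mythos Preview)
Your argument is correct and is the standard route to this estimate. Note, however, that the paper does not supply its own proof here: the proposition is quoted verbatim from \cite{Gerasimov_Lebedev_Oblezin_2012}, Lemma~3.1, and the thesis simply invokes it (together with the sharper bounds of Proposition~\ref{pochhammer_bounds} from \cite{Borodin_Corwin_2011}) as input to the convergence proof in Theorem~\ref{convergence}. So there is no ``paper's own proof'' to compare against.

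That said, your decomposition $\log(q;q)_{N}=\log(q;q)_{\infty}-\log(q^{N+1};q)_{\infty}$ followed by (i) the modular/Mellin asymptotic for $(q;q)_{\infty}$ and (ii) the Lambert-series expansion of the tail is exactly how the cited lemma is proved in the literature. Your tail estimate is clean: the key observation that $q^{N_{\alpha}}=\epsilon^{\alpha}e^{-y}(1+O(\epsilon))$, so that the $m=1$ term dominates and gives $\epsilon^{\alpha-1}e^{-y}$, is the whole content of the second displayed line. Your remark that for $\alpha=2$ the $O(\epsilon)$ from (i) already matches the claimed $O(\epsilon^{\alpha-1})$ is also to the point. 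The only caveat you raise yourself: the Mellin shift requires the standard Stirling/functional-equation bounds on vertical strips, or one may bypass it entirely by citing the modular transformation of $\eta$, which gives the sharper error $O(e^{-4\pi^{2}/\epsilon})$ in place of $O(\epsilon^{2})$.
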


We moreover have the following bounds.
\begin{proposition}[\cite{Borodin_Corwin_2011}, prop. 4.1.9] 
\label{pochhammer_bounds}
Assume $\alpha\geq 1$, then for all $y$ such that $\epsilon^{-1}y_{\alpha, \epsilon}$, with $y_{\alpha,\epsilon}=y+\epsilon \alpha m_{\epsilon}$, is a non-negative integer
\[\log f_{\alpha}(y,\epsilon)-\mathcal{A}(\epsilon)\geq -c+\epsilon^{-1}e^{-y_{\alpha, \epsilon}}\]
where $c=c(\epsilon)>0$ is independent of all the other variables besides $\epsilon$ and tends to zero with $\epsilon$. On the other hand for all $b^{*}<1$ 
\[\log f_{\alpha}(y,\epsilon)-\mathcal{A}(\epsilon)\leq c'+(b^*)^{-1}\epsilon^{-1}\sum_{r=1}^{\infty}\dfrac{e^{-r(\epsilon+y_{\alpha, \epsilon})}}{r^2}\]
where $c' = c'(y_{\alpha, \epsilon})<C$ for $C<\infty$ fixed and independent of all the variables and where $c'(y_{\alpha, \epsilon})$ can be taken to zero as $y_{\alpha, \epsilon}$ increases.
\end{proposition}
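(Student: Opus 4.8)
The plan is to reduce the estimate to the classical small‑$\epsilon$ asymptotics of the Euler function $(q;q)_\infty$ together with two elementary pinching inequalities for $1/(e^{r\epsilon}-1)$, using crucially that the shift $m(\epsilon)$ forces the truncation point to tend to $+\infty$. First I would record the exact shape of the object. Since $\epsilon^{-1}y_{\alpha,\epsilon}=\epsilon^{-1}y+\alpha m(\epsilon)$ is assumed to be a non‑negative integer and $\alpha m(\epsilon)\in\mathbb{Z}$, the number $\epsilon^{-1}y$ is itself a non‑negative integer, so $[\epsilon^{-1}y]=\epsilon^{-1}y$ and hence $f_\alpha(y,\epsilon)=(q;q)_N$ with $N:=\epsilon^{-1}y_{\alpha,\epsilon}\in\mathbb{Z}_{\geq0}$; in particular $q^{rN}=e^{-r\epsilon N}=e^{-ry_{\alpha,\epsilon}}$. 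Then I would split
\[
\log f_\alpha(y,\epsilon)=\log(q;q)_\infty-\log(q^{N+1};q)_\infty
\]
and treat the two terms separately.

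For the first term I would invoke the classical asymptotics of the Dedekind eta function (equivalently the modular transformation $\eta(-1/\tau)=\sqrt{-i\tau}\,\eta(\tau)$, or a Mellin/Euler–Maclaurin analysis of $\sum_{r\geq1}\frac1{r(e^{r\epsilon}-1)}$), which give $(e^{-\epsilon};e^{-\epsilon})_\infty\sim\sqrt{2\pi/\epsilon}\;e^{-\pi^2/(6\epsilon)}$ as $\epsilon\to0^+$, i.e. $\delta(\epsilon):=\log(q;q)_\infty-\mathcal{A}(\epsilon)\to0$; this is exactly where the normalisation $\mathcal{A}(\epsilon)$ comes from, the $-\pi^2/(6\epsilon)$ carrying $\zeta(2)=\pi^2/6$ and the $-\tfrac12\log\tfrac{\epsilon}{2\pi}$ carrying the subleading Gaussian correction. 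For the second term I would expand the logarithm:
\[
-\log(q^{N+1};q)_\infty=\sum_{k\geq N+1}\sum_{r\geq1}\frac{q^{rk}}{r}=\sum_{r\geq1}\frac{q^{r(N+1)}}{r(1-q^{r})}=\sum_{r\geq1}\frac{e^{-ry_{\alpha,\epsilon}}}{r(e^{r\epsilon}-1)}.
\]

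The two inequalities then follow by pinching $1/(e^{r\epsilon}-1)$. For the lower bound I would use $\tfrac{x}{e^{x}-1}\geq1-\tfrac x2$ (equivalently $\tfrac x2\coth\tfrac x2\geq1$, which is $\coth u>1/u$ for $u>0$), giving
\[
\sum_{r\geq1}\frac{e^{-ry_{\alpha,\epsilon}}}{r(e^{r\epsilon}-1)}\geq\epsilon^{-1}\sum_{r\geq1}\frac{e^{-ry_{\alpha,\epsilon}}}{r^{2}}+\tfrac12\log\bigl(1-e^{-y_{\alpha,\epsilon}}\bigr)\geq\epsilon^{-1}e^{-y_{\alpha,\epsilon}}+o(1),
\]
the $o(1)$ being uniform in $y$ because $y_{\alpha,\epsilon}\geq\epsilon\alpha m(\epsilon)\sim-\alpha\log\epsilon\to\infty$; combining with $\delta(\epsilon)\to0$ yields the first inequality with $c=c(\epsilon)\to0$. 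For the upper bound, fix $b^{*}<1$ and pick $x_0=x_0(b^{*})$ with $1-e^{-x}\geq b^{*}x$ for $0<x\leq x_0$ (possible since $(1-e^{-x})/x\to1$); rewriting $\tfrac{e^{-ry_{\alpha,\epsilon}}}{e^{r\epsilon}-1}=\tfrac{e^{-r(\epsilon+y_{\alpha,\epsilon})}}{1-e^{-r\epsilon}}$, splitting the sum at $r\epsilon=x_0$ and using $1-e^{-r\epsilon}\geq b^{*}r\epsilon$ on the low range, $1-e^{-r\epsilon}\geq1-e^{-x_0}$ on the high range, gives
\[
\sum_{r\geq1}\frac{e^{-ry_{\alpha,\epsilon}}}{r(e^{r\epsilon}-1)}\leq(b^{*})^{-1}\epsilon^{-1}\sum_{r\geq1}\frac{e^{-r(\epsilon+y_{\alpha,\epsilon})}}{r^{2}}-\frac{1}{1-e^{-x_0}}\log\bigl(1-e^{-(\epsilon+y_{\alpha,\epsilon})}\bigr),
\]
and the trailing term, together with $\delta(\epsilon)$, is a bounded quantity $c'$ which moreover decays as $y_{\alpha,\epsilon}$ grows.

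The part I expect to demand the most care is the asymptotic $\log(q;q)_\infty=\mathcal{A}(\epsilon)+o(1)$: it is the only genuinely non‑elementary input and has to be taken out either from the modular behaviour of $\eta$ or from a careful Mellin/Euler–Maclaurin treatment, with attention to the constant in the Gaussian correction matching the $-\tfrac12\log\tfrac{\epsilon}{2\pi}$ in $\mathcal{A}(\epsilon)$. Everything else — the reduction using integrality of $\epsilon^{-1}y$, the geometric‑series expansion, the $\coth$ inequality, and the split of $1/(e^{r\epsilon}-1)$ at $r\epsilon=x_0$ — is routine, and the uniformity in $y$ comes for free from the observation that the shift $m(\epsilon)$ drives $y_{\alpha,\epsilon}$ to $+\infty$.
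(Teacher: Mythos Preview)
The paper itself does not prove this proposition: it is quoted from Borodin--Corwin (Prop.~4.1.9) and stated without proof, so there is no in-paper argument to compare against. Your overall plan --- the splitting $\log(q;q)_N=\log(q;q)_\infty-\log(q^{N+1};q)_\infty$, the modular asymptotic for $(q;q)_\infty$, and the series expansion $-\log(q^{N+1};q)_\infty=\sum_{r\geq1} e^{-ry_{\alpha,\epsilon}}/\bigl(r(e^{r\epsilon}-1)\bigr)$ followed by pinching $1/(e^{r\epsilon}-1)$ --- is sound and is essentially how the original source proceeds.

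There is, however, a genuine gap in your uniformity claims. You assert $y_{\alpha,\epsilon}\geq\epsilon\alpha m(\epsilon)\sim-\alpha\log\epsilon\to\infty$, but that inequality is equivalent to $y\geq0$, which is \emph{not} assumed: the hypothesis is only that $\epsilon^{-1}y_{\alpha,\epsilon}\in\mathbb{Z}_{\geq0}$, so $y_{\alpha,\epsilon}$ may be any non-negative multiple of $\epsilon$, including $0$. (Indeed in the paper's application one has e.g.\ $y=x^1_1$ ranging down to $-\epsilon m(\epsilon)$.) When $y_{\alpha,\epsilon}$ is near zero both of your remainder terms blow up: $\tfrac12\log(1-e^{-y_{\alpha,\epsilon}})\to-\infty$ in the lower bound, and $-\tfrac{1}{1-e^{-x_0}}\log\bigl(1-e^{-(\epsilon+y_{\alpha,\epsilon})}\bigr)\sim-\tfrac{1}{1-e^{-x_0}}\log\epsilon\to\infty$ in the upper bound. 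So as written neither estimate delivers $c(\epsilon)\to0$ or $c'<C$ uniformly.

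Both gaps are repairable within your framework. For the lower bound, do not discard the full dilogarithm immediately: write $\epsilon^{-1}\mathrm{Li}_2(e^{-y_{\alpha,\epsilon}})=\epsilon^{-1}e^{-y_{\alpha,\epsilon}}+\epsilon^{-1}\sum_{r\geq2}e^{-ry_{\alpha,\epsilon}}/r^2$ and use the second piece (which is of order $\epsilon^{-1}$ when $y_{\alpha,\epsilon}$ is bounded) to dominate the logarithmic singularity; for small $x=e^{-y_{\alpha,\epsilon}}$ a direct check gives $\epsilon^{-1}x^2/4+\tfrac12\log(1-x)\geq-O(\epsilon)$. For the upper bound, bounding the tail $\sum_{r>x_0/\epsilon}$ by $\sum_{r\geq1}$ is too crude; instead use $1/r<\epsilon/x_0$ on that range and sum the remaining geometric series to get a contribution $\lesssim e^{-x_0}/x_0$, which is the uniform constant you need.
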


\begin{corollary}[\cite{Borodin_Corwin_2011}, corol. 4.1.10]
\label{pochhammer_asymptotics}
Assume $\alpha\geq 1$, then for any $M>0$ and any $\delta >0$, there exists $\epsilon_{0}>0$ such that for all $\epsilon < \epsilon_{0}$ and all $y\geq -M$ such that $\epsilon ^{-1}y_{\alpha,\epsilon}$ is a nonnegative integer,
\[\log f_{\alpha}(y,\epsilon)-\mathcal{A}(\epsilon)-\epsilon ^{\alpha -1}e^{-y} \in [-\delta, \delta].\]
\end{corollary}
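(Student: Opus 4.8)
The corollary is a uniform refinement of the two one‑sided estimates in Proposition~\ref{pochhammer_bounds}, and I would obtain it simply by sandwiching $\log f_{\alpha}(y,\epsilon)-\mathcal{A}(\epsilon)$ between those two bounds and showing that, as $\epsilon\to0$, each bound approaches $\epsilon^{\alpha-1}e^{-y}$ with an error that tends to $0$ uniformly over $\{y\geq -M:\ \epsilon^{-1}y_{\alpha,\epsilon}\in\mathbb{Z}_{\geq0}\}$. The only ingredient beyond Proposition~\ref{pochhammer_bounds} is the elementary asymptotics of the regulator from Definition~\ref{scaling}: since $m(\epsilon)=-[\epsilon^{-1}\log\epsilon]$ one checks, writing $u=\epsilon^{-1}\log\epsilon$ and using $u-1<[u]\leq u$, that $\epsilon m(\epsilon)=-\log\epsilon+\theta(\epsilon)\epsilon$ for some $\theta(\epsilon)\in[0,1)$; in particular $\epsilon m(\epsilon)\to+\infty$. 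Hence, with $y_{\alpha,\epsilon}=y+\alpha\epsilon m(\epsilon)$,
\[
e^{-y_{\alpha,\epsilon}}=e^{-y}e^{-\alpha\epsilon m(\epsilon)}=\epsilon^{\alpha}e^{-y}e^{-\alpha\theta(\epsilon)\epsilon},\qquad\text{so}\qquad \epsilon^{-1}e^{-y_{\alpha,\epsilon}}=\epsilon^{\alpha-1}e^{-y}e^{-\alpha\theta(\epsilon)\epsilon}.
\]
Since $\alpha\in\mathbb{Z}_{\geq1}$ forces $\epsilon^{\alpha-1}\leq1$ for $\epsilon<1$ and $y\geq -M$ forces $e^{-y}\leq e^{M}$, this yields the uniform bound $\bigl|\epsilon^{-1}e^{-y_{\alpha,\epsilon}}-\epsilon^{\alpha-1}e^{-y}\bigr|\leq\alpha\epsilon e^{M}$, and likewise $\bigl|\epsilon^{-1}e^{-(\epsilon+y_{\alpha,\epsilon})}-\epsilon^{\alpha-1}e^{-y}\bigr|\leq(1+\alpha)\epsilon e^{M}$.

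\textbf{Lower bound.} Plugging the first inequality of Proposition~\ref{pochhammer_bounds}, $\log f_{\alpha}(y,\epsilon)-\mathcal{A}(\epsilon)\geq -c(\epsilon)+\epsilon^{-1}e^{-y_{\alpha,\epsilon}}$, into the estimate above gives
\[
\log f_{\alpha}(y,\epsilon)-\mathcal{A}(\epsilon)-\epsilon^{\alpha-1}e^{-y}\ \geq\ -c(\epsilon)-\alpha\epsilon e^{M}.
\]
As $c(\epsilon)\to0$ and is independent of $y$, the right‑hand side exceeds $-\delta$ once $\epsilon$ is small, which is the lower half of the claim.

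\textbf{Upper bound.} Starting from the second inequality of Proposition~\ref{pochhammer_bounds}, valid for every $b^{*}<1$, I would split the series at $r=1$. The $r=1$ term is $\epsilon^{-1}e^{-(\epsilon+y_{\alpha,\epsilon})}=\epsilon^{\alpha-1}e^{-y}e^{-(1+\alpha\theta(\epsilon))\epsilon}$, within $(1+\alpha)\epsilon e^{M}$ of $\epsilon^{\alpha-1}e^{-y}$ uniformly in $y$. For the tail, once $\epsilon$ is small enough that $e^{M}\epsilon^{\alpha}<\tfrac12$, one has (using $e^{-r(\epsilon+y_{\alpha,\epsilon})}\leq(\epsilon^{\alpha}e^{M})^{r}$)
\[
\epsilon^{-1}\sum_{r\geq2}\frac{e^{-r(\epsilon+y_{\alpha,\epsilon})}}{r^{2}}\ \leq\ \epsilon^{-1}\sum_{r\geq2}(\epsilon^{\alpha}e^{M})^{r}\ \leq\ 2e^{2M}\epsilon^{2\alpha-1}\ \leq\ 2e^{2M}\epsilon,
\]
again uniformly over $y\geq -M$. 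Finally $y_{\alpha,\epsilon}=y+\alpha\epsilon m(\epsilon)\geq -M+\alpha\epsilon m(\epsilon)\to+\infty$, so by the monotone decay of $c'$ in its argument we get $c'(y_{\alpha,\epsilon})\leq c'\!\bigl(-M+\alpha\epsilon m(\epsilon)\bigr)\to0$ uniformly. Collecting these,
\[
\log f_{\alpha}(y,\epsilon)-\mathcal{A}(\epsilon)-\epsilon^{\alpha-1}e^{-y}\ \leq\ c'(y_{\alpha,\epsilon})+\bigl((b^{*})^{-1}-1\bigr)\epsilon^{\alpha-1}e^{-y}+o(1),
\]
with $o(1)\to0$ uniformly. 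For $\alpha\geq2$ the middle term is $\leq\bigl((b^{*})^{-1}-1\bigr)\epsilon e^{M}\to0$, so the right side falls below $\delta$ for $\epsilon$ small; for $\alpha=1$, where $\epsilon^{\alpha-1}e^{-y}=e^{-y}\leq e^{M}$ does not vanish, I would first fix $b^{*}<1$ with $\bigl((b^{*})^{-1}-1\bigr)e^{M}<\delta/2$ and then send $\epsilon\to0$ to push $c'(y_{1,\epsilon})+o(1)$ below $\delta/2$.

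\textbf{Main obstacle.} The computation is essentially routine; the one point requiring care is precisely this last one — the prefactor $(b^{*})^{-1}>1$ in the upper estimate of Proposition~\ref{pochhammer_bounds} must be chosen, depending on $M$ and $\delta$, \emph{before} the limit $\epsilon\to0$, because in the case $\alpha=1$ the leading term $\epsilon^{\alpha-1}e^{-y}$ stays bounded away from zero. One should also record that the set $\{y\geq -M:\ \epsilon^{-1}y_{\alpha,\epsilon}\in\mathbb{Z}_{\geq0}\}$ is non‑empty for every small $\epsilon$ (e.g. $y=\epsilon n-\alpha\epsilon m(\epsilon)$ for integers $n\geq\alpha m(\epsilon)$ with $\epsilon n\geq\alpha\epsilon m(\epsilon)-M$), so there genuinely is a family of $y$ over which uniformity is asserted, and that the constants $c(\epsilon)$ and $c'(\cdot)$ furnished by Proposition~\ref{pochhammer_bounds} are, by hypothesis, independent of $y$.
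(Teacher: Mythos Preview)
The paper does not supply its own proof of this statement: it is quoted verbatim from \cite{Borodin_Corwin_2011} (Corollary~4.1.10) and used as a black box in the proof of Theorem~\ref{convergence}. Your argument, deriving the two-sided estimate by sandwiching with the bounds of Proposition~\ref{pochhammer_bounds} together with the elementary expansion $\epsilon m(\epsilon)=-\log\epsilon+O(\epsilon)$, is correct and is exactly the intended route; the one delicate point you flag---choosing $b^{*}$ close to $1$ (depending on $M,\delta$) before sending $\epsilon\to0$ in the $\alpha=1$ case---is precisely what is needed.
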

We also need upper bounds for some special integrals.
\begin{lemma}[\cite{Borodin_Corwin_2011},lemma 7.3.5]
\label{double_exp_1}
For all $m\geq 0$ there exists constant $c = c(m)\in (0, \infty)$ such that for $a\leq 0$
\[\int_{-\infty}^0|x|^m e^{-e^{-(a+x)}}dx\leq c(m)e^{-e^{-a}}.\]
\end{lemma}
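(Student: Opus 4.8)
\textbf{Proof plan for Lemma \ref{double_exp_1}.} The statement to prove is: for each $m \geq 0$ there is a constant $c(m) \in (0,\infty)$ such that $\int_{-\infty}^0 |x|^m e^{-e^{-(a+x)}}\,dx \leq c(m)e^{-e^{-a}}$ for all $a \leq 0$. The plan is to reduce to a clean statement by the substitution $u = -(a+x)$, so that $x = -a-u$, $dx = -du$, and as $x$ ranges over $(-\infty,0)$, $u$ ranges over $(-a,\infty)$. Writing $A := -a \geq 0$, the integral becomes $\int_A^\infty |A-u|^m e^{-e^{-u}}\,du \leq \int_A^\infty (A+u)^m e^{-e^{-u}}\,du$. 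So it suffices to show $\int_A^\infty (A+u)^m e^{-e^{-u}}\,du \leq c(m)\,e^{-e^{A}}$ for all $A \geq 0$; note $e^{-e^{-a}} = e^{-e^{A}}$, which matches the target right-hand side.

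\textbf{Main steps.} First I would split the range of integration at $u = A$ versus $u \geq A$... actually the whole range is already $u \in [A,\infty)$, so instead I would further split at a fixed threshold, say $u = 1$, only if $A < 1$; the essential difficulty lives near $u = A$ where $e^{-u}$ is largest. Concretely: on $[A, A+1]$ we have $(A+u)^m \leq (2A+1)^m \leq 3^m (A+1)^m$ and $e^{-e^{-u}} \leq e^{-e^{-(A+1)}} = e^{-e^{-1}e^{-A}}$, so this piece is bounded by $3^m(A+1)^m e^{-e^{-1}e^{-A}}$, which in turn is $\leq c(m) e^{-e^{-a}}$ once one checks $(A+1)^m e^{-e^{-1}e^{-A}} \le c(m)e^{-e^A}\cdot e^{-e^A}$... here I need to be careful: $e^{-e^{-a}}$ in the \emph{original} variables equals $e^{-e^{A}}$ which \emph{decays} as $A \to \infty$, whereas $e^{-e^{-1}e^{-A}} \to 1$. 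So that naive bound is \emph{too weak}. The correct reading: since $a \le 0$, $e^{-a} \ge 1$, so $e^{-e^{-a}} \le e^{-1}$ is bounded away from $0$ only... no — it goes to $0$ as $a \to -\infty$. Thus I must retain the factor $e^{-e^{-(a+x)}}$ carefully: for $x \le 0$, $-(a+x) = -a-x \ge -a$, so $e^{-(a+x)} \ge e^{-a}$, giving $e^{-e^{-(a+x)}} \le e^{-e^{-a}}$ pointwise. That is the key elementary observation. Therefore $\int_{-\infty}^0 |x|^m e^{-e^{-(a+x)}}\,dx \le e^{-\frac12 e^{-a}}\int_{-\infty}^0 |x|^m e^{-\frac12 e^{-(a+x)}}\,dx$ after splitting the exponent $e^{-e^{-(a+x)}} = e^{-\frac12 e^{-(a+x)}}e^{-\frac12 e^{-(a+x)}} \le e^{-\frac12 e^{-a}}e^{-\frac12 e^{-(a+x)}}$, using $e^{-(a+x)}\ge e^{-a}$ in one of the two halves.

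\textbf{Finishing.} It then remains to bound $\int_{-\infty}^0 |x|^m e^{-\frac12 e^{-(a+x)}}\,dx$ by a constant depending only on $m$ (uniformly in $a \le 0$). Here I would again use $e^{-(a+x)} \ge e^{-x}$ for $a \le 0$, so $e^{-\frac12 e^{-(a+x)}} \le e^{-\frac12 e^{-x}}$, whence the integral is at most $\int_{-\infty}^0 |x|^m e^{-\frac12 e^{-x}}\,dx = \int_0^\infty t^m e^{-\frac12 e^{t}}\,dt$ (substituting $t = -x$), a finite constant $c_0(m)$ since $e^{-\frac12 e^t}$ decays faster than any power. Combining, $\int_{-\infty}^0 |x|^m e^{-e^{-(a+x)}}\,dx \le c_0(m)\, e^{-\frac12 e^{-a}} \le c_0(m)\, e^{-e^{-a}/2}$; if the statement demands the exponent $e^{-a}$ rather than $e^{-a}/2$, I absorb the discrepancy by noting $e^{-\frac12 e^{-a}} = e^{-e^{-a}} e^{\frac12 e^{-a}}$ is \emph{not} bounded by a constant times $e^{-e^{-a}}$, so I should instead split the exponent as $e^{-e^{-(a+x)}} \le e^{-e^{-a}}\cdot 1$ directly (the pointwise bound above, with no factor of $\frac12$), giving $\int_{-\infty}^0 |x|^m e^{-e^{-(a+x)}}\,dx \le e^{-e^{-a}}\int_{-\infty}^0 |x|^m e^{-(e^{-(a+x)} - e^{-a})}\,dx$, and then bound the remaining integral uniformly in $a \le 0$ by the substitution argument (for $x \le 0$, $e^{-(a+x)} - e^{-a} = e^{-a}(e^{-x}-1) \ge e^{-x} - 1$ since $e^{-a}\ge 1$ and $e^{-x}-1 \ge 0$), reducing to $\int_0^\infty t^m e^{-(e^t-1)}\,dt < \infty$. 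This cleaner route is what I would actually write up.

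\textbf{Expected obstacle.} There is no deep obstacle; the only subtlety — and the place where a careless argument fails — is correctly exploiting the sign condition $a \le 0$ (equivalently $e^{-a} \ge 1$) to turn the double exponential $e^{-e^{-(a+x)}}$ into the product of the $a$-dependent decay $e^{-e^{-a}}$ and an $a$-independent integrable tail, rather than losing a factor by an over-generous split of the exponent. Everything else is the standard fact that $e^{-ce^t}$ beats any polynomial $t^m$ on $[0,\infty)$.
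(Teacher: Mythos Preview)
The paper does not give its own proof of this lemma; it is simply quoted from \cite{Borodin_Corwin_2011} (Lemma~7.3.5 there) and then used as a black box in the proof of Corollary~\ref{double_exp_2}. So there is nothing to compare against.

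That said, your ``cleaner route'' at the end is a correct and complete proof. The key identity
\[
e^{-e^{-(a+x)}} = e^{-e^{-a}}\, e^{-(e^{-(a+x)} - e^{-a})},
\]
together with the two sign observations $e^{-a}\ge 1$ (from $a\le 0$) and $e^{-x}-1\ge 0$ (from $x\le 0$), gives
\[
e^{-(a+x)}-e^{-a}=e^{-a}(e^{-x}-1)\ge e^{-x}-1,
\]
so the leftover integral is dominated by the $a$-independent constant $\int_0^\infty t^m e^{-(e^t-1)}\,dt<\infty$. This is exactly the right way to exploit the hypothesis $a\le 0$, and it yields the stated bound with the full exponent $e^{-a}$ rather than $\tfrac12 e^{-a}$.

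The earlier parts of your write-up (the substitution $u=-(a+x)$ and the attempted split on $[A,A+1]$, then the $\tfrac12$-split of the exponent) are dead ends you correctly diagnose and abandon; in a final version you should drop them and go straight to the factorisation argument.
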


\begin{corollary}
\label{double_exp_2}
For all $m\geq 0$, $c^* \in (0,1]$ and $a,b\in \mathbb{R}$ there exists polynomial of two variables $R_{m,s}$, that depends on $m$ and $s = sign(a-b)$ such that
\[\int_{-\infty}^{\infty}x^m\exp \{-c^*(e^{-(a-x)}+e^{-(x - b)})\}dx \leq R_{m,s}(a,b)\exp\{-c^*e^{-(a-b)/2}\mathbbm{1}_{a\leq b}\}.\]
\end{corollary}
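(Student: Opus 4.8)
The plan is to exploit the elementary structure of the exponent $S(x):=e^{-(a-x)}+e^{-(x-b)}=e^{x-a}+e^{b-x}$: it is convex in $x$, attains its minimum at $x_*=(a+b)/2$ with $S(x_*)=2e^{-(a-b)/2}$, and satisfies the coercive bounds $S(x)\ge e^{x-a}$ and $S(x)\ge e^{b-x}$. I would split into the two sign regimes, because whether the minimum value $2e^{-(a-b)/2}$ is $\ge 1$ or $<1$ determines which mechanism supplies the estimate.

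\emph{Case $a\le b$} (so $s\in\{-1,0\}$ and $\mathbbm 1_{a\le b}=1$). Put $T:=e^{-(a-b)/2}=e^{(b-a)/2}\ge 1$ and substitute $x=x_*+y$, turning the integral into $\int_{\mathbb R}\bigl(\tfrac{a+b}{2}+y\bigr)^m\exp\{-c^*T(e^{y}+e^{-y})\}\,dy$. Since $e^{y}+e^{-y}=1+(e^{y}+e^{-y}-1)$ with $e^{y}+e^{-y}-1>0$, and $T\ge1$, we get $c^*T(e^{y}+e^{-y})\ge c^*T+c^*(e^{y}+e^{-y}-1)$, hence the integrand is $\le e^{c^*}\exp\{-c^*T\}\exp\{-c^*(e^{y}+e^{-y})\}$. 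Using $e^{y}+e^{-y}\ge e^{|y|}$, expanding $(\tfrac{a+b}{2}+y)^m$ by the binomial theorem, and bounding each $\int_{\mathbb R}|y|^k\exp\{-c^*e^{|y|}\}\,dy$ by a finite constant (a rescaled instance of Lemma~\ref{double_exp_1}), one is left with a polynomial in $\tfrac{a+b}{2}$; replacing odd powers $|\tfrac{a+b}{2}|^{j}$ by even-power majorants gives a genuine polynomial $R_{m,s}(a,b)$ of degree $m$ and the bound $R_{m,s}(a,b)\exp\{-c^*e^{-(a-b)/2}\}$, which is exactly the claim since $\mathbbm 1_{a\le b}=1$.

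\emph{Case $a>b$} (so $s=1$, $\mathbbm 1_{a\le b}=0$, and only a polynomial bound is needed). Decompose $\mathbb R=(-\infty,b]\cup[b,a]\cup[a,\infty)$. On $[b,a]$ use $e^{-c^*S(x)}\le1$, so the contribution is $\le\int_b^a|x|^m\,dx\le (a-b)(|a|+|b|)^m$. On $[a,\infty)$ write $x=a+u$, $u\ge0$; then $S(x)\ge e^{u}$, so the contribution is $\le\int_0^\infty(|a|+u)^m e^{-c^*e^{u}}\,du$, a polynomial in $|a|$ with coefficients $\int_0^\infty u^k e^{-c^*e^{u}}\,du<\infty$ (again Lemma~\ref{double_exp_1}); the region $(-\infty,b]$ is symmetric under $x=b-u$ via $S(x)\ge e^{u}$. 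Summing the three polynomial bounds and passing to an even-power majorant yields $R_{m,1}(a,b)$, and the right-hand side of the Corollary is just $R_{m,1}(a,b)$ because $\exp\{-c^*e^{-(a-b)/2}\cdot 0\}=1$.

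The routine parts (binomial expansions, the scalar double-exponential integral estimates) are all covered by Lemma~\ref{double_exp_1}. \emph{The point requiring care} is making the prefactor an honest polynomial in $(a,b)$ and getting the $\mathbbm 1_{a\le b}$ to appear in exactly the right place: in the regime $a\le b$ the decay $\exp\{-c^*e^{-(a-b)/2}\}$ must be produced by the shift to $x_*$ combined with $T\ge1$, whereas in the regime $a>b$ the minimum of the exponent is $O(1)$, no exponential factor can be extracted, and one instead bounds the integral by the ``bounded effective width near $x_*$ times a polynomial'' obtained from the three-region split. Finally, $R_{m,s}$ is permitted to depend on the fixed $c^*\in(0,1]$; the statement only stresses its dependence on $m$ and $s$, the latter governing which of the two shapes above applies.
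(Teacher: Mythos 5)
Your proof is correct and follows essentially the same route as the paper: split on the sign of $a-b$, use the three-region decomposition $(-\infty,b]\cup[b,a]\cup[a,\infty)$ when $a>b$, and extract the decay $\exp\{-c^*e^{-(a-b)/2}\}$ from the midpoint $x_*=(a+b)/2$ when $a\le b$, with Lemma~\ref{double_exp_1} supplying the convergent double-exponential integrals. The only (harmless) variations are that you handle general $c^*$ directly via the inequality $(T-1)(e^y+e^{-y}-1)\ge 0$ rather than by the paper's shift $a\mapsto a-\log c^*$, $b\mapsto b+\log c^*$, and in the case $a\le b$ you extract the factor $e^{-c^*T}$ globally instead of splitting at the midpoint and dropping one exponential on each half.
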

\begin{proof}
The proof we will present here follows the step of the proof of Proposition 4.1.3 of \cite{Borodin_Corwin_2011}. Let us start by proving the case $c^* = 1$. We will consider separately the two cases $a>b$ and $a\leq b$.\\
\textbf{CASE I:} Let $a>b$, then we may split the integral as follows
\[\Big(\int_{-\infty}^b + \int_{b}^a +\int_{a}^{+\infty}\Big)x^m \exp\Big(-e^{-(a-x)}-e^{-(x-b)} \Big)dx.\]
Since $e^{-(a-x)}\geq 0$, we have for the first integral
\begin{equation*}
\begin{split}
\int_{-\infty}^b x^m \exp\Big(-e^{-(a-x)}-e^{-(x-b)} \Big)dx&\leq \int_{-\infty}^b x^m \exp \Big(-e^{-(x-b)} \Big)dx\\
&=\int_{-\infty}^0(z+b)^m \exp \big(-e^{-z} \big)dz\\
&\leq \sum_{r=0}^m \dbinom{m}{r}|b|^{m-r}\int_{-\infty}^0|z|^r\exp \big( -e^{-z}\big)dz
\end{split}
\end{equation*}
using Lemma \ref{double_exp_1}, the last expression is bounded by 
\[\sum_{r=0}^m \dbinom{m}{r}c(r)|b|^{m-r}\]
where $c(r)$ is some constants depending only on $r$.\\
We will bound the second integral by
\[\int_{b}^a x^mdx = \dfrac{a^{m+1}-b^{m+1}}{m+1}.\]
Finally for the third integral we calculate
\begin{equation*}
\begin{split}
\int_a^{+\infty}x^m \exp \Big(-e^{-(a-x)}-e^{-(x-b)} \Big)& \leq \int_{a}^{+\infty} x^m \exp \big(-e^{-(a-x)} \big)dx\\
&=\int_{-\infty}^0(a-z)^m \exp \big(-e^{-z} \big)dz
\end{split}
\end{equation*}
and using the same argument that we used for the first integral, we may bound the third integral by
\[\sum_{r=0}^m \dbinom{m}{r}c(r)|a|^{m-r}.\]
Therefore, when $a>b$ the integral is bounded by a polynomial in $a,b$, as required.\\
\textbf{CASE II:} If $a\leq b$ we will split the integral as follows
\[\Big(\int_{-\infty}^{\frac{a+b}{2}}+\int_{\frac{a+b}{2}}^{+\infty} \Big)x^m \exp \Big(-e^{-(a-x)}-e^{-(x-b)} \Big)dx.\]
We will then bound the first integral by
\begin{equation*}
\begin{split}
\int_{-\infty}^{\frac{a+b}{2}}x^m \exp \big(-e^{-(x-b)} \big)dx&= \int_{-\infty}^0\big(z+\dfrac{a+b}{2}\big)^m\exp \big(-e^{-(z+\frac{a-b}{2})} \big)dz\\
& \leq \sum_{r=0}^m \dbinom{m}{r}\big|\dfrac{a+b}{2}\big|^{m-r}\int_{-\infty}^0|z|^r\exp \big(-e^{-(z+\frac{a-b}{2})} \big)dz
\end{split}
\end{equation*}
and using Lemma \eqref{double_exp_1}, we conclude that the first integral is bounded by
\[\sum_{r=0}^m \dbinom{m}{r}\big(\dfrac{|a|+|b|}{2}\big)^{m-r}c(r)\exp \Big(-e^{-\frac{a-b}{2}} \Big).\]
The second integral is also bounded by the same quantity, therefore when $a \leq b$ we conclude that
\[\int_{-\infty}^{\infty}x^m \exp \big(-e^{-(a-x)}-e^{-(x-b)} \big)dx\leq 2\sum_{r=0}^m \dbinom{m}{r}\big(\dfrac{|a|+|b|}{2}\big)^{m-r}c(r)\exp \Big(-e^{-\frac{a-b}{2}} \Big).\]
When $c^*<1$, we apply the result with $A=a-\log c^*$ and $B=b+\log c^*$.
\end{proof}
We are now ready to prove the convergence of the $q$-deformed $\mathfrak{so}_{2n+1}$-Whittaker function.

\begin{proof}(of Theorem \ref{convergence})
We will prove the result by induction on the order $n$ taking advantage of the recursive structure of the function $\mathcal{P}_{z}^{(n)}(a;q)$ we presented in \ref{recursion_q_whittaker}. 

\noindent \textbf{STEP 1 - Base case:} For $n=1$, $z^2_{1}\in \mathbb{Z}_{\geq 0}$ and $a_{1}\in \mathbb{R}_{>0}$, using the scaling from Definition \ref{scaling}, we have
\[a_{1}^{2z_{1}^1-z^2_{1}}=e^{i\lambda_{1}(2x^1_{1}-x^2_{1})}\]
and
\[\dbinom{z^2_{1}}{z^2_{1}-z^1_{1}}_{q}=\dfrac{f_{2}(x^2_{1}, \epsilon)}{f_{1}(x^1_{1}, \epsilon)f_{1}(x^2_{1}-x^1_1, \epsilon)}.\]
The scaled function then takes the form 
\begin{equation}
\Psi_{i\lambda_{1},\epsilon}^{(1)}(x^2_{1})=\epsilon\sum_{x^1_{1}\in R_{1}^{\epsilon}(x^2_{1})} e^{i\lambda_{1}(2x^1_{1}-x^2_{1})}\dfrac{f_{2}(x^2_{1},\epsilon)e^{-\mathcal{A}(\epsilon)}}{f_{1}(x^1_{1},\epsilon)e^{-\mathcal{A}(\epsilon)}f_{1}(x^2_{1}-x^1_{1},\epsilon)e^{-\mathcal{A}(\epsilon)}}
\label{eq:scaled_n=1}
\end{equation}
where $R_{1}^{\epsilon}(x^2_{1})=\{x^1_{1}\in \epsilon \mathbb{Z}: -\epsilon m(\epsilon) \leq x^1_{1}\leq x^2_{1}+\epsilon m(\epsilon)\}$.

\noindent \textbf{Part 1:} Let us first prove an upper bound for the scaled function $\Psi_{i\lambda_{1},\epsilon}^{(1)}(x^2_{1})$. Using Proposition \ref{pochhammer_bounds} we have that for some $c=c(\epsilon)>0$ that depends only on $\epsilon$
\[|(f_{1}(x^1_{1},\epsilon)e^{-\mathcal{A}(\epsilon)}f_{1}(x^2_{1}-x^1_{1},\epsilon)e^{-\mathcal{A}(\epsilon)})^{-1}|\leq e^c \exp \big(-e^{-x^1_{1}}-e^{x^1_{1}-x^2_{1}}\big)\]
and for any $0<b^*<1$ and $C>0$, both independent of all the variables, 
\[|f_{2}(x^2_{1},\epsilon)e^{-\mathcal{A}(\epsilon)}|\leq \exp \Big(C + (b^*)^{-1}\epsilon^{-1}\sum_{r=1}^{\infty}\dfrac{e^{-r(x^2_{1}+2\epsilon m(\epsilon))}}{r^2} \Big).\]
Let us calculate an upper bound for the series. We have that $z^2_{1}\in \mathbb{Z}_{\geq 0}$ which implies that $x^2_{1}+2\epsilon m(\epsilon)\geq 0$. Therefore $a=e^{-(x^2_{1}+2\epsilon m(\epsilon)\geq 0)}\in (0,1]$ and hence the series converges to the dilogarithm of $a$
\[Li_{2}(a)=\sum_{r=1}^{\infty}\dfrac{a^r}{r^2}.\]
We observe that the function $Li_{2}(a)/a$ is increasing in $(0,1]$, therefore
\[\dfrac{Li_{2}(a)}{a}\leq Li_{2}(1)=\dfrac{\pi^2}{6}.\]
Moreover if $0<a,b\leq 1$, the bound $2ab\leq a+b$ holds, therefore
\[Li_{2}(ab)\leq \dfrac{\pi^2}{12}(a+b).\]
Since $x^1_{1}\in R^{\epsilon}_{1}(x^2_{1})$ it follows that both $x^2_{1}-x^1_{1}+\epsilon m(\epsilon)$ and $x^1_{1}+\epsilon m(\epsilon)$ are non-negative, therefore
\[Li_{2}(e^{-(x^2_{1}-x^1_{1}+\epsilon m(\epsilon))}e^{-(x^1_{1}+\epsilon m(\epsilon))})\leq \dfrac{\pi^2}{12}(e^{-(x^2_{1}-x^1_{1}+\epsilon m(\epsilon))} + e^{-(x^1_{1}+\epsilon m(\epsilon))}).\]
From the last inequality we conclude that
\begin{equation}
\Big| \dfrac{f_{2}(x^2_{1},\epsilon)e^{-\mathcal{A}(\epsilon)}}{f_{1}(x^1_{1},\epsilon)e^{-\mathcal{A}(\epsilon)}f_{1}(x^2_{1}-x^1_{1},\epsilon)e^{-\mathcal{A}(\epsilon)}} \Big|\leq C\exp\{-c^*(e^{-(x^2_{1}-x^1_{1})}+e^{-x^1_{1}})\}
\label{eq:bound_for_qbinom}
\end{equation}
where $C$ is independent of all the variables and $c^* = 1-\frac{\pi^2}{12}(b^*)^{-1}\in (0,1]$, provided we choose $\frac{\pi^2}{12}\leq b^*<1$. Therefore
\[|\Psi^{(1)}_{i\lambda_{1}, \epsilon}(x^2_{1})|\leq \epsilon \sum_{x^1_{1}\in R^{\epsilon}_{1}(x^2_{1})}C\exp\{-c^*(e^{-(x^2_{1}-x^1_{1})}+e^{-x^1_{1}})\}\]
which is bounded above by the integral it converges
\[C\int_{-\infty}^{\infty}\exp\{-c^*(e^{-(x^2_{1}-x^1_{1})}+e^{-x^1_{1}})\}dx^1_{1}.\]
The required bound then follows using Corollary \ref{double_exp_2} with $a=x^2_{1}$ and $b=0$.\\

\noindent \textbf{Part 2:} We assume that $x^2_{1}$ varies in a compact set $D_{2}=[a_{2},b_{2}]\subset \mathbb{R}$. Let us consider a second compact subset $D_{1}=[a_{1},b_{1}]$ of $\mathbb{R}$ where $x^1_{1}$ will vary. The scaled function $\Psi_{i\lambda_{1},\epsilon}^{(1)}(x^2_{1})$, given in \eqref{eq:scaled_n=1}, can be split into three parts as follows
\begin{equation*}
\begin{split}
\Psi_{i\lambda_{1},\epsilon}^{(1)}(x^2_{1})=\Big(\epsilon\sum_{\substack{x^1_{1}\in \epsilon \mathbb{Z}:\\-\epsilon m(\epsilon)\leq x^1_{1}\leq a_{1}}}\,+\epsilon &\sum_{\substack{x^1_{1}\in \epsilon \mathbb{Z}:\\a_{1}\leq x^1_{1}\leq b_{1}}}\,+\epsilon\sum_{\substack{x^1_{1}\in \epsilon \mathbb{Z}:\\b_{1}\leq x^1_{1}\leq x^2_{1}+\epsilon m(\epsilon)}}\Big)\\
& e^{i\lambda_{1}(2x^1_{1}-x^2_{1})}\dfrac{f_{2}(x^2_{1},\epsilon)e^{-\mathcal{A}(\epsilon)}}{f_{1}(x^1_{1},\epsilon)e^{-\mathcal{A}(\epsilon)}f_{1}(x^2_{1}-x^1_{1},\epsilon)e^{-\mathcal{A}(\epsilon)}}.
\end{split}
\end{equation*}
Using Corollary \ref{pochhammer_asymptotics}, we have that in $D_{1}$
\[e^{i\lambda_{1}(2x^1_{1}-x^2_{1})}\dfrac{f_{2}(x^2_{1},\epsilon)e^{-\mathcal{A}(\epsilon)}}{f_{1}(x^1_{1},\epsilon)e^{-\mathcal{A}(\epsilon)}f_{1}(x^2_{1}-x^1_{1},\epsilon)e^{-\mathcal{A}(\epsilon)}}\]
converges uniformly to
\[e^{i\lambda_{1}(2x^1_{1}-x^2_{1})}\exp \Big(-\big(e^{x^1_{1}-x^2_{1}}+e^{-x^1_{1}}\big) \Big)\]
and the Riemann sum converges to the corresponding integral
\[\int_{D_{1}}e^{i\lambda_{1}(2x^1_{1}-x^2_{1})}\exp \Big(-\big(e^{x^1_{1}-x^2_{1}}+e^{-x^1_{1}}\big) \Big)dx^1_{1}\]
uniformly for $x^2_{1}\in D_{2}$.

Let us now calculate upper bounds for the other two sums. Using the bound in equation \eqref{eq:bound_for_qbinom} the first sum can be bounded as follows
\begin{equation*}
\begin{split}
\epsilon \sum_{\substack{x^1_{1}\in \epsilon \mathbb{Z}:\\-\epsilon m(\epsilon)\leq x^1_{1}\leq a_{1}}}C \exp\Big( -c^*(e^{-x^1_{1}}+e^{-(x^2_{1}-x^1_{1})})\Big)&\leq C\int_{-\infty}^{a_{1}}\exp\Big( -c^*(e^{-x^1_{1}}+e^{-(x^2_{1}-x^1_{1})})\Big)\\
&\leq C\int_{-\infty}^{a_{1}}\exp \Big(-e^{-(x^1_{1}-\log c^*)} \Big)dx^1_{1}\\
&=C\int_{-\infty}^0 \exp \Big( -e^{-(x^1_{1}+(a_{1}-\log c^*))}\Big)dx^1_{1}
\end{split}
\end{equation*}
using lemma \ref{double_exp_1}, we may bound the last quantity by $Ce^{-c^*e^{-a_{1}}}$ uniformly in $x^2_{1}\in D_{2}$, provided $a_{1}\leq \log c^*$. Similarly, the third sum can be bounded by $C e^{-c^*e^{-(b_{2}-b_{1})}}$ uniformly for $x^2_{1}\in D_{2}$, provided $b_{1}$ is chosen such that it satisfies $b_{1}\geq b_{2}-\log c^*$.

Let $\eta>0$, choose $D_{1}=[a_{1},b_{1}]$ such that the contribution of the extra two sums is bounded above by $\eta / 3$, i.e. choose $a_{1},b_{1}$ such that
\[Ce^{-c^*(e^{-a_{1}}+e^{-(b_{2}-b_{1})})}\leq \frac{\eta}{3}.\]
Then, using again Lemma \ref{double_exp_1} we can see that
\[\int_{\mathbb{R}\setminus D_{1}}e^{i\lambda_{1}(2x^1_{1}-x^2_{1})}\exp \Big(-(e^{-(x^2_{1}-x^1_{1})}+e^{-x^1_{1}}) \Big)dx^1_{1}\]
is also bounded by $\eta /3$. Therefore we conclude that there exists $\epsilon_{0}>0$ such that for every $\epsilon \leq \epsilon_{0}$
\[|\Psi_{i\lambda_{1},\epsilon}^{(1)}(x^2_{1})-\Psi_{i\lambda_{1}}^{\mathfrak{so}_{3}}(x^2_{1})|\leq \eta\]
uniformly in $x^2_{1}\in D_{2}$.

\noindent \textbf{STEP 2 - Induction step:} Let us now proceed to the proof of the bound for $n>1$. We assume that the result holds for $n-1$. Let $(a_{1},...,a_{n})\in \mathbb{R}^n_{>0}$ and $z^{2n}\in \Lambda_{n}=\{z\in \mathbb{Z}^n_{\geq 0}:z_{i}\geq z_{i+1}, 1\leq i <n\}$. We have conjectured in ... that the $q$-deformed $\mathfrak{so}_{2n+1}$-Whittaker function satisfies the recursion
\[\mathcal{P}_{z^{2n}}^{(n)}(a_{1},...,a_{n};q)=\sum_{z^{2n-2} \in \Lambda_{n-1}}Q_{a_{n},q}^{n-1, n}(z^{2n-2}, z^{2n})\mathcal{P}_{z^{2n-2}}^{(n-1)}(a_{1},...,a_{n-1};q)\]
with
\begin{equation*}
\begin{split}
Q_{a_{n},q}^{n-1,n}(z^{2n-2}, z^{2n})=\sum_{\substack{z^{2n-1} \in \Lambda_{n}: \\ z^{2n-2} \prec z^{2n-1} \prec z^{2n}}}&a_{n}^{2|z^{2n-1}|-|z^{2n}|-|z^{2n-2}|} \\
\prod_{i=1}^{n-1}&\dbinom{z^{2n-1}_{i}-z^{2n-1}_{i+1}}{z^{2n-1}_{i}-z^{2n-2}_{i}}_{q}\dbinom{z^{2n}_{i}-z^{2n}_{i+1}}{z^{2n}_{i}-z^{2n-1}_{i}}_{q}\dbinom{z^{2n}_{n}}{z^{2n}_{n}-z^{2n-1}_{n}}_{q}.
\end{split}
\end{equation*}
Using the scaling from Definition \ref{scaling} we have the following
\[a_{n}^{2|z^{2n-1}|-|z^{2n}|-|z^{2n-2}|}=e^{i\lambda_{n}(2|x^{2n-1}|-|x^{2n}|-|x^{2n-2}|)}\]
for $1\leq i <n$
\begin{equation*}
\begin{split}
\dbinom{z^{2n-1}_{i}-z^{2n-1}_{i+1}}{z^{2n-1}_{i}-z^{2n-2}_{i}}_{q}&=\dfrac{f_{2}(x^{2n-1}_{i}-x^{2n-1}_{i+1},\epsilon)}{f_{1}(x^{2n-1}_{i}-x^{2n-2}_{i},\epsilon)f_{1}(x^{2n-2}_{i}-x^{2n-1}_{i+1},\epsilon)}\\
\dbinom{z^{2n}_{i}-z^{2n}_{i+1}}{z^{2n}_{i}-z^{2n-1}_{i}}_{q}&=\dfrac{f_{2}(x^{2n}_{i}-x^{2n}_{i+1},\epsilon)}{f_{1}(x^{2n}_{i}-x^{2n-1}_{i},\epsilon)f_{1}(x^{2n-1}_{i}-x^{2n}_{i+1},\epsilon)}
\end{split}
\end{equation*}
and
\[\dbinom{z^{2n}_{n}}{z^{2n}_{n}-z^{2n-1}_{n}}_{q}=\dfrac{f_{2}(x^{2n}_{n},\epsilon)}{f_{1}(x^{2n-1}_{n},\epsilon)f_{1}(x^{2n}_{n}-x^{2n-1}_{n},\epsilon)}.\]
Therefore the scaled function takes the following form
\begin{equation}
\begin{split}
\Psi_{i\lambda, \epsilon}^{(n)}(x^{2n})& =\epsilon^n \sum_{x^{2n-1}\in R_{1}^{\epsilon}(x^{2n})} \Big(e^{i\lambda_{n}(|x^{2n-1}|-|x^{2n}|)}\Lambda^{\epsilon}(x^{2n-1},x^{2n})\\
& \hspace{20 pt} \times \epsilon^{n-1} \sum_{x^{2n-2}\in R_{2}^{\epsilon}(x^{2n-1})}\big(e^{i\lambda_{n}(|x^{2n-1}|-|x^{2n}|)}\Lambda^{\epsilon}(x^{2n-2},x^{2n-1})\Psi_{i\tilde{\lambda}, \epsilon}^{(n-1)}(x^{2n-2})\big)\Big)
\end{split}
\label{eq:scaled_n>1}
\end{equation}
where $R^{\epsilon}_{1}(x^{2n})=\{x^{2n-1}\in \epsilon \mathbb{Z}^n: x^{2n}_{i+1}-\epsilon m(\epsilon)\leq x^{2n-1}_{i}\leq x^{2n}_{i}+\epsilon m(\epsilon), \, 1\leq i \leq n\}$, with $x^{2n}_{n+1}=0$, and $R^{\epsilon}_{2}(x^{2n-1})=\{x^{2n-2}\in \epsilon \mathbb{Z}^{n-1}: x^{2n-1}_{i+1}-\epsilon m(\epsilon)\leq x^{2n-2}_{i}\leq x^{2n-1}_{i}+\epsilon m(\epsilon), \, 1\leq i \leq n-1\}$ and
\begin{equation*}
\begin{split}
\Lambda^{\epsilon}(x^{2n-1},x^{2n})&=\prod_{i=1}^{n-1}\dfrac{f_{2}(x^{2n}_{i}-x^{2n}_{i+1},\epsilon)e^{-\mathcal{A}(\epsilon)}}{f_{1}(x^{2n}_{i}-x^{2n-1}_{i},\epsilon)e^{-\mathcal{A}(\epsilon)}f_{1}(x^{2n-1}_{i}-x^{2n}_{i+1},\epsilon)e^{-\mathcal{A}(\epsilon)}}\\
& \hspace{100pt}\times \dfrac{f_{2}(x^{2n}_{n},\epsilon)e^{-\mathcal{A}(\epsilon)}}{f_{1}(x^{2n-1}_{n},\epsilon)e^{-\mathcal{A}(\epsilon)}f_{1}(x^{2n}_{n}-x^{2n-1}_{n},\epsilon)e^{-\mathcal{A}(\epsilon)}}\\
\Lambda^{\epsilon}(x^{2n-2},x^{2n-1})&=\prod_{i=1}^{n-1}\dfrac{f_{2}(x^{2n-1}_{i}-x^{2n-1}_{i+1},\epsilon)e^{-\mathcal{A}(\epsilon)}}{f_{1}(x^{2n-1}_{i}-x^{2n-2}_{i},\epsilon)e^{-\mathcal{A}(\epsilon)}f_{1}(x^{2n-2}_{i}-x^{2n-1}_{i+1},\epsilon)e^{-\mathcal{A}(\epsilon)}}.
\end{split}
\end{equation*}

\noindent \textbf{PART 1:} Let us start by proving the bound for the scaled function. Using the same argument we used for proving inequality \eqref{eq:bound_for_qbinom}, we have the following bounds

\begin{equation*}
\begin{split}
|\Lambda^{\epsilon}(x^{2n-1},x^{2n})|&\leq C \prod_{i=1}^{n-1}\exp\{-c^*(e^{-(x^{2n}_{i}-x^{2n-1}_{i})}+e^{-(x^{2n-1}_{i}-x^{2n}_{i+1})})\}\\
& \hspace{100pt}\times \exp\{-c^*(e^{-(x^{2n}_{n}-x^{2n-1}_{n})}+e^{-x^{2n-1}_{n}})\}\\
|\Lambda^{\epsilon}(x^{2n-2},x^{2n-1})|&\leq C \prod_{i=1}^{n-1}\exp\{-c^*(e^{-(x^{2n-1}_{i}-x^{2n-2}_{i})}+e^{-(x^{2n-2}_{i}-x^{2n-1}_{i+1})})\}
\end{split}
\end{equation*}
for some $C>0$ fixed and independent of all the variables and $c^* \in (0,1]$.

By the inductive step, $|\Psi_{i\tilde{\lambda}, \epsilon}^{(n-1)}(x^{2n-2})|$ is essentially bounded by a polynomial, $R_{n-1,\sigma(x^{2n-2})}(x^{2n-2})$, therefore the inner sum is bounded by the integral it converges
\[\int_{\mathbb{R}^{n-1}}C \prod_{i=1}^{n-1}\exp\{-c^*(e^{-(x^{2n-1}_{i}-x^{2n-2}_{i})}+e^{-(x^{2n-2}_{i}-x^{2n-1}_{i+1})})\}R_{n-1,\sigma(x^{2n-2})}(x^{2n-2})\prod_{i=1}^{n-1}dx^{2n-2}_{i}\]
which, using Corollary \ref{double_exp_2}, is bounded by a polynomial of $n$ variables in $x^{2n-1}$, which we denote by $R_{n,\sigma(x^{2n-1})}(x^{2n-1})$, multiplied by a product of double exponentials, which are bounded by 1. Moving to the outer sum we then have
\begin{equation*}
\begin{split}
|\Psi_{i\lambda, \epsilon}^{(n)}(x^{2n})| &\leq  \int_{\mathbb{R}^n}C \prod_{i=1}^{n-1}\exp\{-c^*(e^{-(x^{2n}_{i}-x^{2n-1}_{i})}+e^{-(x^{2n-1}_{i}-x^{2n}_{i+1})})\}\\
& \hspace{50pt}\times \exp\{-c^*(e^{-(x^{2n}_{n}-x^{2n-1}_{n})}+e^{-x^{2n-1}_{n}})\}R_{n, \sigma(x^{2n-1})}(x^{2n-1}) \prod_{i=1}^n dx^{2n-1}_{i} 
\end{split}
\end{equation*}
and again applying Corollary \ref{double_exp_2} we conclude that
\[|\Psi_{i\lambda, \epsilon}^{(n)}(x^{2n})| \leq R_{n,\sigma(x^{2n})}(x^{2n})\prod_{i\in \sigma(x^{2n})}\exp\{-c^*e^{-(x^{2n}_{i}-x^{2n}_{i+1})/2}\}\]
as required.

\noindent \textbf{PART 2:} Let us now fix a compact subset $D_{2n}$ of $\mathbb{R}^n$ in which $x^{2n}$ may vary. For $\eta >0$ we may find a compact subset $D_{2n-1}$ of $\mathbb{R}^n$ in which $x^{2n-1}$ may vary such that outside of that subset the contribution to the expression \eqref{eq:scaled_n>1}
is bounded by $\eta / 3$. Similarly, the integral for the Whittaker function in the same domain is also bounded by $\eta/3$. Using the same reasoning as for the $n=1$ case along with the induction hypothesis that $\Psi^{\mathfrak{so}_{2n-1}}_{i\tilde{\lambda}}(x^{2n-2})$, we may prove that uniformly in $x^{2n-1}\in D_{2n-1}$, the inner sum in the expression \eqref{eq:scaled_n>1} converges to 
\[\int_{\mathbb{R}^{n-1}}e^{i\lambda_{n}(|x^{2n-1}|-|x^{2n-2}|)}\prod_{i=1}^{n-1}\exp\{-(e^{-(x^{2n}_{i}-x^{2n-1}_{i})}+e^{-(x^{2n-1}_{i}-x^{2n}_{i+1})})\}\Psi^{\mathfrak{so}_{2n-1}}_{i\tilde{\lambda}}(x^{2n-2})\prod_{i=1}^{n-1}dx^{2n-2}_{i}.\]
We conclude the proof using the same strategy for the outer sum.
\end{proof}

\chapter{A positive temperature analogue of Dyson's Brownian motion with boundary}
In the introduction we presented a model on the real-valued symplectic Gelfand-Tsetlin cone proposed in\cite{Borodin_et_al_2009, Sasamoto_2011} where if started from the origin, the even-indexed levels evolve as Dyson's Brownian motion of type $B$ and the odd-indexed levels as Dyson's Brownian motion of type $D$.

In this chapter we will construct a Markov process on two dimensional arrays that can be considered as a positive temperature analogue of the Brownian motion on the symplectic Gelfand-Tsetlin cone.

We fix $n \in \mathbb{N}$ and assume that $N=2n$ or $2n-1$. Let $\lambda = (\lambda_{1},...,\lambda_{n})$ be a real-valued vector such that $\lambda_{1}>...>\lambda_{n}>0$. Let $B^k_{i},1\leq i \leq \big[\frac{k+1}{2}\big],1\leq k \leq N$ be a collection of independent one dimensional standard Brownian motions where we assume that, for $1\leq i \leq k$, the Brownian motions $B^{2k}_{i}$, $B^{2k-1}_{i}$ have drifts $-\lambda_{k}$ and $\lambda_{k}$ respectively. Note that for convenience we drop the dependence of the drift from the symbol of the Brownian motion. Let us finally consider the Markov process $\mathbf{X}=(\mathbf{X}(t),t \geq 0)$, with state space $\Gamma_{N} := \{\mathbf{x}=(x^1,...,x^N): x^{2k-1},x^{2k}\in \mathbb{R}^k\}$ that evolves according to the following rules.

$X^1_{1}$ is an autonomous Markov process satisfying $dX^1_{1}=dB^1_{1} + e^{-X^1_{1}}dt$. The particles in even-indexed levels, i.e. if $k=2l$, evolve according to 
\begin{equation}
\left\{ \begin{array}{ll}
 dX^k_{1}=dB^k_{1}+e^{X^{k-1}_{1}-X^k_{1}}dt & \\
 dX^k_{m}=dB^k_{m} + (e^{X^{k-1}_{m}-X^k_{m}} - e^{X^k_{m}-X^{k-1}_{m-1}})dt, & 2\leq m \leq l
\end{array} \right.
\label{eq:SDEs_even}
\end{equation}
whereas the particles in odd-indexed level, i.e. if $k=2l-1$, evolve according to the SDEs
\begin{equation}
\left\{ \begin{array}{ll}
 dX^k_{1}=dB^k_{1}+e^{X^{k-1}_{1}-X^k_{1}}dt & \\
 dX^k_{m}=dB^k_{m} + (e^{X^{k-1}_{m}-X^k_{m}} - e^{X^k_{m}-X^{k-1}_{m-1}})dt, & 2\leq m \leq l-1\\
dX^k_{l}=dB^k_{l} + (e^{-X^k_{l}} - e^{X^k_{l}-X^{k-1}_{l-1}})dt &
\end{array} \right. .
\label{eq:SDEs_odd}
\end{equation}
So, a particle with index $(k,m)$ evolves as a standard Brownian motion with a drift that depends on its distance from the particles indexed by $(k-1,m)$ and $(k-1,m-1)$ or by the distance from $0$ if $k$ is odd and $m=\frac{k+1}{2}$. We remark that the particle-particle and the particle-wall interactions we propose for our model are similar to the interactions among the particles for the symmetric version of the process studied in \cite{O'Connell_2012}.

Our objective is to prove that under appropriate initial conditions, the even-indexed levels evolve as a positive temperature analogue of the Dyson's Brownian motion of type $B$ and the odd-indexed levels evolve as a positive temperature analogue of the Dyson's Brownian motion of type $D$. 

This chapter is organised as follows. In section 8.1  propose a generalisation of the $\mathfrak{so}_{2n+1}$-Whittaker functions we defined in \ref{so-whittaker-definition}. In section 8.2 we present the main result, which states that under certain initial conditions $X^N$ evolves as a diffusion on $\mathbb{R}^{[\frac{N+1}{2}]}$. In 8.3 we will attempt to connect the first coordinate of the $\mathfrak{gl}_{N}$-Whittaker process with the process $X^N$ in a similar manner as in \cite{Borodin_et_al_2009}, where it is proved that the maximum of the first coordinate of the $N$-dimensional type $A$ Dyson Brownian motion is given by the first coordinate of the type $B$ or $D$ Dyson Brownian motion if $N$ is even or odd, respectively.

\section{An extension of the $\mathfrak{so}_{2n+1}$-Whittaker function} 
For $n\geq 1$ we consider the differential operator $\mathcal{H}^B_{n}$ acting on twice differentiable function as follows
\begin{equation}
\mathcal{H}_{n}^B = \dfrac{1}{2}\sum_{i=1}^n\dfrac{\partial^2}{\partial x_{i}^2}-\sum_{i=1}^{n-1}e^{x_{i+1}-x_{i}}- e^{-x_{n}}.
\label{eq:operator_B}
\end{equation}
We remark that the operator $\mathcal{H}^B_{n}$ is related with the operator of the Toda lattice associated with the $\mathfrak{so}_{2n+1}$ Lie algebra, $\mathcal{H}^{\mathfrak{so}_{2n+1}}$ given in \eqref{eq:Hamiltonian_so} with the transformation $x_{i}\mapsto x_{i} + \log 2$.

\begin{definition}
Let $N=2n$ or $N=2n-1$ and let $\lambda = (\lambda_{1},...,\lambda_{n})$ be a $\mathbb{C}$-valued vector. We define the function $\Phi^{(N)}_{\lambda}:\mathbb{R}^n \to \mathbb{C}$ as follows
\begin{equation}
\Phi_{\lambda}^{(N)}(x):=\int_{\Gamma_{N}[x]} e^{\mathcal{F}_{\lambda}^{(N)}(\mathbf{x})}\prod_{k=1}^{N-1}\prod_{i=1}^{\big[\frac{k+1}{2}\big]}dx^k_{i}
\label{eq:sym_whittaker}
\end{equation}
where the integration is over the set  $\Gamma_{N}[x]=\{\mathbf{x}\in \Gamma_{N}: x^{N}=x\}$ and the exponent $\mathcal{F}_{\lambda}^{(N)}(\mathbf{x})$ is given by
\begin{equation*}
\begin{split}
\mathcal{F}_{\lambda}^{(N)}(\mathbf{x}) &= \sum_{k=1}^N \bar{\lambda}_{k}\big( |x^k|-|x^{k-1}|\big)\\
&-\sum_{k=1}^N \Big(\sum_{i=1}^{[\frac{k-1}{2}]}(e^{x^{k-1}_{i}-x^k_{i}}+e^{x^k_{i+1}-x^{k-1}_{i}})+(e^{x^{k-1}_{k/2}-x^{k}_{k/2}}+2e^{-x^{k-1}_{k/2}})\mathbbm{1}_{\{k \text{ even}\}}\Big)
\end{split}
\end{equation*}
where $\bar{\lambda}=(\bar{\lambda}_{1},...,\bar{\lambda}_{N})\in \mathbb{C}^N$ with $\bar{\lambda}_{2i-1}=\lambda_{i}$ and $\bar{\lambda}_{2i}=-\lambda_{i}$.
\end{definition}

\noindent The exponent $\mathcal{F}_{0}^{(N)}(\mathbf{x})$ is given schematically in figure \ref{weight_sketch} below. 
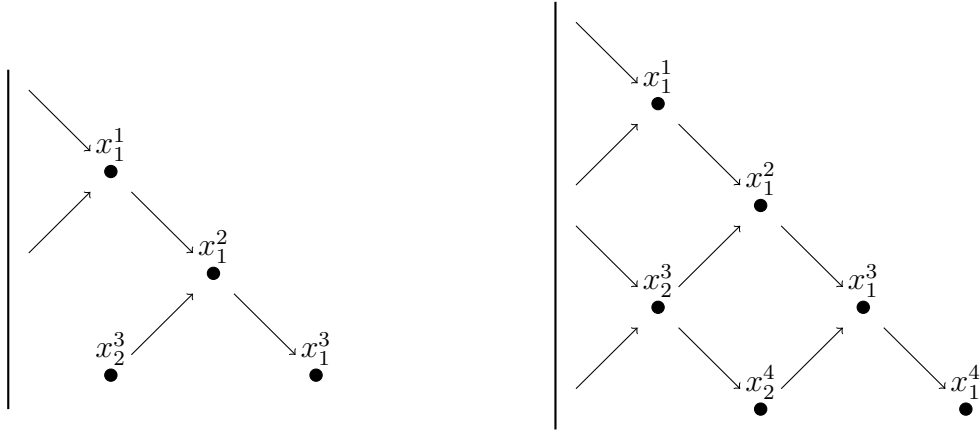
\begin{figure}[h]
\begin{center}
\begin{tikzpicture}[scale =0.9]
\draw [thick] (8,1.2) -- (8,7.5) ;
\draw [fill] (9.5,6) circle [radius=0.09];
\node [above, black] at (9.5,6) {\large{$x_{1}^1$}};
\draw [fill] (11,4.5) circle [radius=0.09];
\node [above, black] at (11,4.5) {\large{$x_{1}^2$}};
\draw [fill] (12.5,3) circle [radius=0.09];
\node [above, black] at (12.5,3) {\large{$x_{1}^3$}};
\draw [fill] (9.5,3) circle [radius=0.09];
\node [above, black] at (9.5,3) {\large{$x_{2}^3$}};
\draw [fill] (14,1.5) circle [radius=0.09];
\node [above, black] at (14,1.5) {\large{$x_{1}^4$}};
\draw [fill] (11,1.5) circle [radius=0.09];
\node [above, black] at (11,1.5) {\large{$x_{2}^4$}};
\draw [->]  (8.3,1.8)--(9.2,2.7);
\draw [->]  (11.3,1.8)--(12.2,2.7);
\draw [->]  (9.8,3.3)--(10.7,4.2);
\draw [->]  (8.3,4.8)--(9.2,5.7);
\draw [->]  (9.8,2.7)--(10.7,1.8);
\draw [->]  (12.8,2.7)--(13.7,1.8);
\draw [->]  (8.3,4.2)--(9.2,3.3);
\draw [->]  (11.3,4.2)--(12.2,3.3);
\draw [->]  (9.8,5.7)--(10.7,4.8);
\draw [->]  (8.3,7.2)--(9.2,6.3);
\draw [thick] (0,1.5) -- (0,6.5) ;
\draw [fill] (1.5,5) circle [radius=0.09];
\node [above, black] at (1.5,5) {\large{$x_{1}^1$}};
\draw [fill] (3,3.5) circle [radius=0.09];
\node [above, black] at (3,3.5) {\large{$x_{1}^2$}};
\draw [fill] (4.5,2) circle [radius=0.09];
\node [above, black] at (4.5,2) {\large{$x_{1}^3$}};
\draw [fill] (1.5,2) circle [radius=0.09];
\node [above, black] at (1.5,2) {\large{$x_{2}^3$}};
\draw [->]  (1.8,2.3)--(2.7,3.2);
\draw [->]  (0.3,3.8)--(1.2,4.7);
\draw [->]  (3.3,3.2)--(4.2,2.3);
\draw [->]  (1.8,4.7)--(2.7,3.8);
\draw [->]  (0.3,6.2)--(1.2,5.3);
\end{tikzpicture}
\end{center} 
\caption{The case $N=3$ (left) and $N=4$ (right). The vertical line corresponds to $0$. The exponent $\mathcal{F}^N_{0}(\mathbf{x})$ is given by $-\sum_{a \rightarrow b}e^{a-b}$.} 
\label{weight_sketch}
\end{figure}

The functions $\Phi_{\lambda}^{(N)}$ are related with the $\mathfrak{so}_{2n+1}$-Whittaker functions as follows. If $N=2n$ then $\Phi^{(2n)}_{\lambda}(x)=\Psi^{\mathfrak{so}_{2n+1}}_{\lambda}(x+1_{n}\cdot \log 2)$, where $1_{n}$ denotes the vector consisting of $n$ entries equal to $1$. This observation is immediate comparing the definition of the $\Phi^{(2n)}$ function with the integral representation for the $\mathfrak{so}_{2n+1}$-Whittaker function given in \ref{so-whittaker-definition}. Therefore, we may conclude that the function $\Phi^{(2n)}_{\lambda}$ is an eigenfunction of $\mathcal{H}^B_{n}$.

For $n \in \mathbb{N}$ and 
$\theta \in \mathbb{C}$ we define a second differential operator
\begin{equation}
\mathcal{H}_{n,\theta}^D = \dfrac{1}{2}\sum_{i=1}^n\dfrac{\partial^2}{\partial x_{i}^2}-\sum_{i=1}^{n-1}e^{x_{i+1}-x_{i}}+e^{-x_{n}}\dfrac{\partial}{\partial x_{n}}-\theta e^{-x_{n}}.
\label{eq:operator_D}
\end{equation}
Later we will show that, if $N=2n-1$, the function $\Phi_{\lambda}^{(2n-1)}$ is an eigenfunction of the operator $\mathcal{H}_{n,\lambda_{n}}^D$.

We introduce the integral operator $Q_{\theta}^{n,n}$ defined on a suitable class of functions by
\[Q_{\theta}^{n,n}f(x)=\int_{\mathbb{R}^n}Q_{\theta}^{n,n}(x;y)f(y)dy\]
with kernel given, for $x,y \in \mathbb{R}^n$, by
\begin{equation*}
Q_{\theta}^{n,n}(x;y)=\exp \Big( \theta \big(|y|-|x|\big) -\big(2e^{-y_{n}}+\sum_{i=1}^n e^{y_{i}-x_{i}}+\sum_{i=1}^{n-1} e^{x_{i+1}-y_{i}}\big)\Big ).
\end{equation*}
\begin{proposition}
\label{kernel_odd2even}
The following intertwining relation holds
\[\mathcal{H}_{n}^{B}\circ Q_{\theta}^{n,n}=Q_{\theta}^{n,n} \circ \mathcal{H}^{D}_{n,\theta}.\]
\end{proposition}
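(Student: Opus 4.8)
\textbf{Proof plan for Proposition \ref{kernel_odd2even}.}

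The plan is to verify the intertwining relation $\mathcal{H}_n^B \circ Q_\theta^{n,n} = Q_\theta^{n,n} \circ \mathcal{H}_{n,\theta}^D$ by a direct computation comparing the action of the two second-order differential operators when conjugated through the integral kernel $Q_\theta^{n,n}(x;y)$. Concretely, for a test function $f$ I would write $g(x) = (Q_\theta^{n,n}f)(x) = \int_{\mathbb{R}^n} K(x,y)f(y)\,dy$ with $K(x,y) = Q_\theta^{n,n}(x;y)$, and then compute $\mathcal{H}_n^B g(x)$ by differentiating under the integral sign. The key observation, standard for this Givental/Baxter-operator style of argument, is that $\mathcal{H}_n^B$ acting on $K(\cdot,y)$ in the $x$-variables should be reproducible, up to a total $y$-divergence, by the formal adjoint of $\mathcal{H}_{n,\theta}^D$ acting on $K(x,\cdot)$ in the $y$-variables. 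That is, I would aim to establish the pointwise kernel identity
\[
\mathcal{H}_n^{B,(x)} K(x,y) = \big(\mathcal{H}_{n,\theta}^{D,(y)}\big)^{*} K(x,y),
\]
where $(\cdot)^*$ denotes the formal $L^2(dy)$-adjoint; integrating against $f(y)$ and moving the adjoint back onto $f$ then gives $\mathcal{H}_n^B Q_\theta^{n,n} f = Q_\theta^{n,n}\mathcal{H}_{n,\theta}^D f$, provided the boundary terms vanish.

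The bulk of the work is the algebra of this kernel identity. Since $K(x,y) = \exp(\Xi(x,y))$ with
\[
\Xi(x,y) = \theta(|y|-|x|) - 2e^{-y_n} - \sum_{i=1}^n e^{y_i - x_i} - \sum_{i=1}^{n-1} e^{x_{i+1}-y_i},
\]
both sides reduce, after dividing by $K$, to polynomial identities in the exponential "monomials" $e^{y_i-x_i}$, $e^{x_{i+1}-y_i}$, $e^{-y_n}$, $e^{x_{i+1}-x_i}$. I would compute $\partial_{x_i}\Xi$, $\partial_{x_i}^2\Xi + (\partial_{x_i}\Xi)^2$ for each $i$, assemble $e^{-\Xi}\,\mathcal{H}_n^{B,(x)}e^{\Xi}$, and similarly compute $e^{-\Xi}(\mathcal{H}_{n,\theta}^{D,(y)})^* e^{\Xi}$ — taking care that the adjoint flips the sign of the first-order term $e^{-y_n}\partial_{y_n}$ and produces the extra term $-\partial_{y_n}(e^{-y_n})\cdot = e^{-y_n}$, which combines with the $-\theta e^{-y_n}$ potential. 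The cross terms $e^{x_{i+1}-x_i}$ should match the $-\sum_{i=1}^{n-1}e^{x_{i+1}-x_i}$ part of $\mathcal{H}_n^B$ via the telescoping $e^{x_{i+1}-y_i}\cdot e^{y_i-x_i}$ combinations; the wall terms $e^{-x_n}$ and $e^{-y_n}$ must cancel using the specific coefficient $2$ in front of $e^{-y_n}$, which is exactly the role of the shift $x_i \mapsto x_i + \log 2$ relating $\mathcal{H}_n^B$ to $\mathcal{H}^{\mathfrak{so}_{2n+1}}$. I would organize the verification index by index, checking that for each $x_i$-derivative contribution the surviving terms agree.

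The main obstacle I anticipate is twofold: first, keeping the bookkeeping of the many exponential terms under control and ensuring every term generated by $(\partial_{x_i}\Xi)^2$ is matched — in particular the "diagonal" terms like $e^{2(y_i-x_i)}$ must cancel between the two sides, which forces the precise structure of the $D$-type operator's kernel and is the reason the off-diagonal shape $e^{x_{i+1}-y_i}$ (rather than $e^{y_{i+1}-x_i}$) appears. Second, justifying the vanishing of boundary terms in the integration by parts that moves $(\mathcal{H}_{n,\theta}^D)^*$ back onto $f$: this requires the Gaussian-type super-exponential decay of $K(x,y)$ as $y_i \to \pm\infty$, coming from the $e^{y_i-x_i}$ and $e^{x_{i+1}-y_i}$ (and $e^{-y_n}$) factors, so that $K$, $\partial_{y_n}K$, and $f$ together kill all surface contributions; I would state the required decay as a lemma and restrict to $f$ in a suitable class (e.g. of at most exponential growth) where differentiation under the integral and Fubini are legitimate. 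Once the pointwise kernel identity and the boundary vanishing are in hand, the proposition follows immediately.
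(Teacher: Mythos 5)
Your plan is exactly the paper's proof: establish the pointwise kernel identity $\mathcal{H}_n^{B,(x)}Q_\theta^{n,n}(x;y)=(\mathcal{H}_{n,\theta}^{D,(y)})^{*}Q_\theta^{n,n}(x;y)$ by direct differentiation of the exponent and then integrate against $f$, with the adjoint producing the term $-\partial_{y_n}(2e^{-y_n}\,\cdot\,)-\theta e^{-y_n}$ that matches the wall terms. Your added attention to the vanishing of boundary terms in the integration by parts is a point the paper passes over silently, but the core computation is identical.
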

\begin{proof}
The intertwining of the operators follows directly from the following identity for the kernel $Q_{\theta}^{n,n}(x;y)$
\[\mathcal{H}^B_{n}Q_{\theta}^{n,n}(x;y)=(\mathcal{H}^D_{n,\theta})^* Q_{\theta}^{n,n}(x;y)\]
where $(\mathcal{H}^D_{n,\theta})^* $ is the adjoint of $\mathcal{H}^D_{n,\theta}$ with respect to the Lebesgue measure on $\mathbb{R}^n$ and acts on the $y$-variable and $\mathcal{H}^B_{n}$ acts on the $x$-variable.
Therefore we only need to prove the identity for the kernels. The adjoint of $\mathcal{H}^D_{n,\theta}$ with respect to the Lebesgue measure on $\mathbb{R}^n$ is
\[(\mathcal{H}_{n,\theta}^D)_{y}^* = \dfrac{1}{2}\sum_{i=1}^n\dfrac{\partial^2}{\partial y_{i}^2}-\sum_{i=1}^{n-1}e^{y_{i+1}-y_{i}}-\dfrac{\partial}{\partial y_{n}}e^{-y_{n}}-\theta e^{-y_{n}}.\]
We calculate for $1\leq i \leq n$
\[\dfrac{\partial^2}{\partial x_{i}^2}Q_{\theta}^{n,n}(x;y)=\Big((-\theta +e^{y_{i}-x_{i}}-e^{x_{i}-y_{i-1}}\mathbbm{1}_{i\geq 2})^2-e^{y_{i}-x_{i}}-e^{x_{i}-y_{i-1}}\mathbbm{1}_{i\geq 2}\Big)Q_{\theta}^{n,n}(x;y)\]
\begin{equation*}
\begin{split}
\dfrac{\partial^2}{\partial y_{i}^2}Q_{\theta}^{n,n}(x;y)=\Big((\theta -e^{y_{i}-x_{i}}&+e^{x_{i+1}-y_{i}}\mathbbm{1}_{i\leq n-1}+2e^{-y_{n}}\mathbbm{1}_{i=n})^2\\
&-e^{y_{i}-x_{i}}-e^{x_{i+1}-y_{i}}\mathbbm{1}_{i\leq n-1}-2e^{-y_{n}}\mathbbm{1}_{i=n}\Big)Q_{\theta}^{n,n}(x;y)
\end{split}
\end{equation*}
then
\begin{equation*}
\begin{split}
\Big(\sum_{i=1}^n \dfrac{\partial^2}{\partial x_{i}^2}-\sum_{i=1}^n \dfrac{\partial^2}{\partial y_{i}^2}\Big)Q_{\theta}^{n,n}(x;y)=\Big(&2\sum_{i=1}^{n-1}e^{x_{i+1}-x_{i}}+2e^{-x_{n}}-2\sum_{i=1}^{n-1}e^{y_{i+1}-y_{i}}\\
&-4e^{-2y_{n}}-4\theta e^{-y_{n}}+2e^{-x_{n}}+2e^{-y_{n}}\Big)Q_{\theta}^{n,n}(x;y).
\end{split}
\end{equation*}
We conclude the proof of the identity by observing that
\[\Big(-\dfrac{\partial}{\partial y_{n}}2e^{-y_{n}}-2\theta e^{-y_{n}}\Big)Q_{\theta}^{n,n}(x;y)=\Big(-4e^{-2y_{n}}-4\theta e^{-y_{n}}+2e^{-x_{n}}+2e^{-y_{n}}\Big)Q_{\theta}^{n,n}(x;y).\]
\end{proof}

Finally, we define the integral operator 
\[Q_{\theta}^{n,n-1}f(x)=\int_{\mathbb{R}^{n-1}}Q_{\theta}^{n,n-1}(x;y)f(y)dy\]
with kernel given for $x \in \mathbb{R}^n$, $y \in \mathbb{R}^{n-1}$ by
\[Q_{\theta}^{n,n-1}(x;y)=\exp \Big( \theta \big(|x|-|y|\big) -\sum_{i=1}^{n-1}\big (e^{x_{i+1}-y_{i}}+ e^{y_{i}-x_{i}}\big)\Big ).\]
This operator provides a relation between $\mathcal{H}^D_{n,\theta}$ and $\mathcal{H}^B_{n-1}$ as follows.
\begin{proposition}
\label{kernel_even2odd}
The following intertwining relation holds
\[(\mathcal{H}_{n,\theta}^D-\dfrac{1}{2}\theta^2)\circ Q_{\theta}^{n,n-1}=Q_{\theta}^{n,n-1}\circ \mathcal{H}^B_{n-1}.\]
\end{proposition}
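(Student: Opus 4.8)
The plan is to mimic the proof of Proposition \ref{kernel_odd2even} almost verbatim, since the two statements are structurally identical: both assert an intertwining of Toda-type operators via an integral kernel, and both reduce to a pointwise PDE identity for the kernel. First I would observe that $(\mathcal{H}^D_{n,\theta}-\tfrac12\theta^2)\circ Q^{n,n-1}_{\theta}=Q^{n,n-1}_{\theta}\circ\mathcal{H}^B_{n-1}$ holds as an operator identity on a suitable class of functions as soon as we verify the kernel identity
\[
\bigl(\mathcal{H}^B_{n-1}\bigr)_{y}\,Q^{n,n-1}_{\theta}(x;y)=\bigl(\mathcal{H}^D_{n,\theta}-\tfrac12\theta^2\bigr)^{*}_{x}\,Q^{n,n-1}_{\theta}(x;y),
\]
where $(\mathcal{H}^D_{n,\theta})^{*}$ is the formal adjoint with respect to Lebesgue measure on $\mathbb{R}^n$, acting on the $x$-variable, while $\mathcal{H}^B_{n-1}$ acts on the $y$-variable. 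As in the previous proof, the adjoint is
\[
(\mathcal{H}^D_{n,\theta})^{*}_{x}=\frac12\sum_{i=1}^n\frac{\partial^2}{\partial x_i^2}-\sum_{i=1}^{n-1}e^{x_{i+1}-x_i}-\frac{\partial}{\partial x_n}e^{-x_n}-\theta e^{-x_n},
\]
so the $-\tfrac12\theta^2$ shift is the natural constant making the two sides match.

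The core computation is then the same differentiation exercise. Writing $Q=Q^{n,n-1}_{\theta}(x;y)=\exp\bigl(\theta(|x|-|y|)-\sum_{i=1}^{n-1}(e^{x_{i+1}-y_i}+e^{y_i-x_i})\bigr)$, I would compute $\partial_{x_i}\log Q$ and $\partial_{y_i}\log Q$, then $\partial^2_{x_i}Q$ and $\partial^2_{y_i}Q$ exactly as in the displayed formulas of the proof of Proposition \ref{kernel_odd2even}, paying attention to the boundary index bookkeeping: here $x$ has $n$ coordinates and $y$ has $n-1$, so $x_n$ appears only through the term $e^{y_{n-1}-x_n}$ and there is no $x_{n+1}$. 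Summing $\sum_{i=1}^n\partial^2_{x_i}Q-\sum_{i=1}^{n-1}\partial^2_{y_i}Q$, the cross terms telescope and one is left with a combination of single exponentials $e^{x_{i+1}-x_i}$, $e^{y_{i+1}-y_i}$, $e^{-x_n}$, $e^{-2x_n}$, and $\theta e^{-x_n}$; the potential terms $-\sum e^{x_{i+1}-x_i}$ and $-\sum e^{y_{i+1}-y_i}$ on the two sides then cancel pairwise, and the residual terms involving $x_n$ are absorbed precisely by $-\partial_{x_n}e^{-x_n}Q-\theta e^{-x_n}Q=(-e^{-2x_n}+ \text{(terms)} -\theta e^{-x_n})Q$ together with the $-\tfrac12\theta^2$ correction. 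I would present this as a short chain of displayed equalities, closing each \begin{equation*}\end{equation*} block, analogous to the three displays in the previous proof.

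The main obstacle, such as it is, is purely organisational: getting the boundary conventions right so that the asymmetry between the $n$-dimensional $x$ and the $(n-1)$-dimensional $y$ does not produce a spurious leftover term. In particular one must check that the drift term $e^{-x_n}\partial_{x_n}$ in $\mathcal{H}^D_{n,\theta}$ — which in the previous proposition was matched against the reflection/wall interaction $2e^{-x_n}$ in the kernel — is now matched against the single wall-free structure of $Q^{n,n-1}_{\theta}$, so the constant shift is $\tfrac12\theta^2$ rather than nothing. I expect no genuine difficulty beyond careful bookkeeping; the identity is forced by the same mechanism (a Givental-type factorisation of the Whittaker integral) that underlies the recursion \eqref{eq:whittaker-recursion}, and indeed the two propositions together are exactly what is needed to iterate the construction of $\Phi^{(N)}_\lambda$ level by level.
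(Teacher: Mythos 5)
Your overall strategy (reduce the operator intertwining to a pointwise PDE identity for the kernel, then differentiate) is the paper's strategy, but the reduction itself is set up incorrectly: you have placed the adjoint on the wrong operator. Since $Q^{n,n-1}_{\theta}$ maps functions of $y\in\mathbb{R}^{n-1}$ to functions of $x\in\mathbb{R}^n$, the operator $\mathcal{H}^D_{n,\theta}-\tfrac12\theta^2$ on the left of the composition acts on the $x$-variable \emph{outside} the integral and therefore hits the kernel directly, with no adjoint; it is $\mathcal{H}^B_{n-1}$, sitting \emph{inside} the integral against $f(y)\,dy$, that must be integrated by parts, producing $(\mathcal{H}^B_{n-1})^*$ acting on the $y$-variable of the kernel. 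The correct identity to verify is therefore
\begin{equation*}
\bigl(\mathcal{H}^D_{n,\theta}-\tfrac12\theta^2\bigr)_x\,Q^{n,n-1}_{\theta}(x;y)=\bigl(\mathcal{H}^B_{n-1}\bigr)^*_y\,Q^{n,n-1}_{\theta}(x;y),
\end{equation*}
and since $\mathcal{H}^B_{n-1}$ is a Schr\"odinger-type operator (Laplacian plus potential, no drift) it is self-adjoint, so no adjoint computation is needed at all. In Proposition \ref{kernel_odd2even} the adjoint of $\mathcal{H}^D$ appeared because there $\mathcal{H}^D$ was the operator on the \emph{right} of the composition, acting on the $y$-variable; here the roles are reversed, and copying that structure verbatim is exactly what goes wrong.

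This is not merely notational: the identity you propose, $(\mathcal{H}^B_{n-1})_y Q=(\mathcal{H}^D_{n,\theta}-\tfrac12\theta^2)^*_x Q$, is false. Comparing it with the correct identity would force $e^{-x_n}\partial_{x_n}Q=-\partial_{x_n}(e^{-x_n}Q)$, i.e.\ $2\,\partial_{x_n}\log Q=1$, whereas $\partial_{x_n}\log Q^{n,n-1}_{\theta}=\theta-e^{x_n-y_{n-1}}$. Concretely, carrying out your plan one finds a leftover $2e^{-y_{n-1}}+e^{-x_n}-2\theta e^{-x_n}$ that does not vanish. A related symptom that you are pattern-matching the previous proof rather than computing: you anticipate an $e^{-2x_n}$ term, but $Q^{n,n-1}_{\theta}$ contains no standalone $e^{-y_{n-1}}$ or $e^{-x_n}$ factor in its exponent, so no such term arises here (it did for $Q^{n,n}_{\theta}$ because of the $2e^{-y_n}$ wall term). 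The actual mechanism closing the proof is the first-order identity $e^{-x_n}\partial_{x_n}Q-\theta e^{-x_n}Q=-e^{-y_{n-1}}Q$, which converts the drift and killing terms of $\mathcal{H}^D_{n,\theta}$ into the wall potential $-e^{-y_{n-1}}$ of $\mathcal{H}^B_{n-1}$, while the $\tfrac12\theta^2$ shift comes from the unmatched square of the drift $\theta$ in the extra $x$-coordinate.
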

\begin{proof}
The result follows from the identity
\[\big(\mathcal{H}^D_{n,\theta}-\dfrac{1}{2}\theta^2\big)Q_{\theta}^{n,n-1}(x;y)=(\mathcal{H}^B_{n-1})^{\ast}Q_{\theta}^{n,n-1}(x;y)\]
where $\mathcal{H}^D_{n,\theta}$ acts on the $x$-variable and $(\mathcal{H}^B_{n-1})^{\ast}$ denotes the adjoint of $\mathcal{H}^B_{n-1}$ with respect to the Lebesgue measure on $\mathbb{R}^{n-1}$ and acts on the $y$-variable. We note that the operator $\mathcal{H}^B_{n-1}$ is self-adjoint, i.e. $(\mathcal{H}^B_{n-1})^{\ast}=\mathcal{H}^B_{n-1}$. \\
We calculate for $1\leq i \leq n$
\begin{equation*}
\begin{split}\dfrac{\partial^2}{\partial x_{i}^2}Q_{\theta}^{n,n-1}(x;y)=\Big((\theta +e^{y_{i}-x_{i}}&\mathbbm{1}_{i\leq n-1}-e^{x_{i}-y_{i-1}}\mathbbm{1}_{i\geq 2})^2\\
&-e^{y_{i}-x_{i}}\mathbbm{1}_{i\leq n-1}-e^{x_{i}-y_{i-1}}\mathbbm{1}_{i\geq 2}\Big)Q_{\theta}^{n,n-1}(x;y)
\end{split}
\end{equation*}
and for $1\leq i \leq n-1$
\[\dfrac{\partial^2}{\partial y_{i}^2}Q_{\theta}^{n,n-1}(x;y)=\Big((-\theta +e^{x_{i+1}-y_{i}}-e^{y_{i}-x_{i}})^2-e^{x_{i+1}-y_{i}}-e^{y_{i}-x_{i}}\Big)Q_{\theta}^{n,n-1}(x;y)\]
Subtracting the two Laplacians we obtain the following form 
\begin{equation*}
\begin{split}
\Big(\sum_{i=1}^n \dfrac{\partial^2}{\partial x_{i}^2}-\sum_{i=1}^n \dfrac{\partial^2}{\partial y_{i}^2}\Big)&Q_{\theta}^{n,n-1}(x;y)\\
&=\Big(\theta^2 - 2\sum_{i=1}^{n-2}e^{y_{i+1}-y_{i}}+2\sum_{i=1}^{n-1}e^{x_{i+1}-x_{i}}\Big)Q_{\theta}^{n,n-1}(x;y).
\end{split}
\end{equation*}
We complete the proof by checking that
\[\big(2e^{-x_{n}}\frac{\partial}{\partial x_{n}}-2e^{-x_{n}}\theta \big)Q_{\theta}^{n,n-1}(x;y)=-2e^{-y_{n-1}}Q_{\theta}^{n,n-1}(x;y).\]
\end{proof}

The two kernels $Q^{n,n}$ and $Q^{n,n-1}$ already appeared in the literature in the work of Gerasimov-Lebedev-Oblezin \cite{Gerasimov_et_al_2009} where they provide intertwining relations for the Toda operators of different root systems. The two kernels we present here relate operators for the root systems of type $B_{n}$ and $B_{n-1}$ with a degeneration of the operator for the root system of type $BC_{n}$.

In Propositions \ref{kernel_odd2even} and \ref{kernel_even2odd} we proved that the operators $\mathcal{H}^B$ and $\mathcal{H}^D$ are intertwined. This implies that eigenfunctions of the one operator can be used to determine eigenfunctions for the other. We can more specifically state the following result.
\begin{proposition}
For $b \in \mathbb{C}$, let $f^r_{b}:\mathbb{R}^{n-1} \to \mathbb{C}$ be a right $b$-eigenfunction of $\mathcal{H}^B_{n-1}$, i.e. $\mathcal{H}^B_{n-1}f^r_{b}=bf^r_{b}$, then $Q_{\theta}^{n,n-1}f_{b}^r$ is a right $(b+\frac{1}{2}\theta^2)$-eigenfunction of $\mathcal{H}_{n,\theta}^D$.
\end{proposition}
\begin{proof}
The result follows directly from the intertwining of the two operators in proposition \ref{kernel_even2odd} as follows
\begin{equation*}
\begin{split}
\mathcal{H}^D_{n,\theta}(Q_{\theta}^{n,n-1}f_{b}^r)&=((\mathcal{H}^D_{n,\theta}-\dfrac{1}{2}\theta^2)\circ Q_{\theta}^{n,n-1})f_{b}^r+\dfrac{1}{2}\theta^2Q_{\theta}^{n,n-1}f_{b}^r\\
&=(Q_{\theta}^{n,n-1}\circ\mathcal{H}^B_{n-1})f^r_{b}+\dfrac{1}{2}\theta^2Q_{\theta}^{n,n-1}f_{b}^r\\
&=(b+\dfrac{1}{2}\theta^2)Q_{\theta}^{n,n-1}f^r_{b}.
\end{split}
\end{equation*}
\end{proof}
We observe that the function $\Phi_{\lambda}^{(N)}$ can be re-expressed using the two integral kernels as follows
\begin{equation}
\label{eq:recursion_odd2even}
\Phi_{\lambda_{1},...,\lambda_{n}}^{(2n)}=Q_{\lambda_{n}}^{n,n}\Phi_{\lambda_{1},...,\lambda_{n}}^{(2n-1)}
\end{equation} 
and
\begin{equation}
\label{eq:recursion_even2odd}
\Phi_{\lambda_{1},...,\lambda_{n}}^{(2n-1)}=Q_{\lambda_{n}}^{n,n-1}\Phi_{\lambda_{1},...,\lambda_{n-1}}^{(2n-2)}.
\end{equation} 

\begin{proposition}
\label{eigenrelation_phi_whittaker}
For $\lambda=(\lambda_{1},...,\lambda_{n})\in \mathbb{C}^n$, it holds that
\[\mathcal{H}^B_{n}\Phi_{\lambda}^{(2n)}(x)=\dfrac{1}{2}\sum_{i=1}^n\lambda_{i}^2 \Phi_{\lambda}^{(2n)}(x)\]
and
\[\mathcal{H}^D_{n,\lambda_{n}}\Phi_{\lambda}^{(2n-1)}(x)=\dfrac{1}{2}\sum_{i=1}^n \lambda_{i}^2\Phi_{\lambda}^{(2n-1)}(x).\]
\end{proposition}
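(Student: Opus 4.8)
<br>

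The statement to prove is Proposition~\ref{eigenrelation_phi_whittaker}: the functions $\Phi^{(2n)}_\lambda$ and $\Phi^{(2n-1)}_\lambda$ are eigenfunctions of $\mathcal{H}^B_n$ and $\mathcal{H}^D_{n,\lambda_n}$ respectively, with eigenvalue $\tfrac12\sum_{i=1}^n\lambda_i^2$. The plan is to argue by induction on $n$, using the recursive structure of $\Phi^{(N)}$ expressed through the intertwining kernels $Q^{n,n}_\theta$ and $Q^{n,n-1}_\theta$ in equations \eqref{eq:recursion_odd2even} and \eqref{eq:recursion_even2odd}, together with the operator intertwinings of Propositions~\ref{kernel_odd2even} and~\ref{kernel_even2odd}.

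First I would set up the base case. For $n=1$, $\Phi^{(1)}_{\lambda_1}$ and $\Phi^{(2)}_{\lambda_1}$ are one- and two-level integrals that can be written out explicitly; the identity $\Phi^{(2n)}_\lambda(x)=\Psi^{\mathfrak{so}_{2n+1}}_\lambda(x+1_n\cdot\log 2)$ noted after the definition, combined with the eigenrelation \eqref{eq:eigenrelation_so} for $\Psi^{\mathfrak{so}_{2n+1}}$ and the remark that $\mathcal{H}^B_n$ is $\mathcal{H}^{\mathfrak{so}_{2n+1}}$ conjugated by the shift $x_i\mapsto x_i+\log 2$, already gives the even case $\mathcal{H}^B_n\Phi^{(2n)}_\lambda=\tfrac12\langle\lambda,\lambda\rangle\Phi^{(2n)}_\lambda$ for all $n$ directly, without induction. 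So the real content is the odd case, and there I would not invoke $\Psi^{\mathfrak{so}_{2n+1}}$ at all but instead argue as follows. Assuming (as just established, or inductively) that $\mathcal{H}^B_{n-1}\Phi^{(2n-2)}_{(\lambda_1,\dots,\lambda_{n-1})}=\tfrac12\sum_{i=1}^{n-1}\lambda_i^2\,\Phi^{(2n-2)}_{(\lambda_1,\dots,\lambda_{n-1})}$, apply $Q^{n,n-1}_{\lambda_n}$ to both sides. By \eqref{eq:recursion_even2odd} the left-hand side becomes $Q^{n,n-1}_{\lambda_n}\mathcal{H}^B_{n-1}\Phi^{(2n-2)}$, which by Proposition~\ref{kernel_even2odd} equals $(\mathcal{H}^D_{n,\lambda_n}-\tfrac12\lambda_n^2)Q^{n,n-1}_{\lambda_n}\Phi^{(2n-2)}=(\mathcal{H}^D_{n,\lambda_n}-\tfrac12\lambda_n^2)\Phi^{(2n-1)}_\lambda$; the right-hand side is $\tfrac12\sum_{i=1}^{n-1}\lambda_i^2\,\Phi^{(2n-1)}_\lambda$. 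Rearranging gives exactly $\mathcal{H}^D_{n,\lambda_n}\Phi^{(2n-1)}_\lambda=\tfrac12\sum_{i=1}^n\lambda_i^2\,\Phi^{(2n-1)}_\lambda$. One can similarly re-derive the even eigenrelation from the odd one via \eqref{eq:recursion_odd2even} and Proposition~\ref{kernel_odd2even}, so that the whole proof runs by a single induction alternating parities, with the $n=1$ even case as the seed.

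The main obstacle is justifying that the differential operators may be passed under the integral sign defining $Q^{n,n-1}_{\lambda_n}$ and $Q^{n,n}_{\lambda_n}$ — i.e.\ that $\mathcal{H}^D_{n,\lambda_n}(Q^{n,n-1}_{\lambda_n}f)=Q^{n,n-1}_{\lambda_n}(\mathcal{H}^B_{n-1}f)$ as genuine functions, not merely formally. This requires knowing that the relevant integrals and their termwise derivatives converge locally uniformly, which in turn rests on the double-exponential decay of the integrands; the estimates of Corollary~\ref{double_exp_2} (or the cruder bounds underlying Theorem~\ref{convergence}) provide exactly the kind of domination needed, so I would cite those to license differentiation under the integral and integration by parts (the adjoint computation in Propositions~\ref{kernel_odd2even} and~\ref{kernel_even2odd} silently discards boundary terms, which vanish by the same decay). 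A secondary, purely bookkeeping, point is to double-check the $\log 2$ shifts: the kernel $Q^{n,n}_\theta$ carries the factor $2e^{-y_n}$ whereas $\mathcal{F}^{(N)}$ and $\mathcal{H}^B_n$ are normalized so that $\Phi^{(2n)}_\lambda(x)=\Psi^{\mathfrak{so}_{2n+1}}_\lambda(x+1_n\log 2)$, and I would verify that the shift is consistent across \eqref{eq:recursion_odd2even}–\eqref{eq:recursion_even2odd} so the eigenvalue picks up no spurious constant.
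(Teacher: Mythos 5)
Your proposal is correct and follows essentially the same route as the paper: the paper also proves the proposition by induction alternating between parities, transporting the eigenrelation through the recursions \eqref{eq:recursion_odd2even}--\eqref{eq:recursion_even2odd} via the intertwinings of Propositions \ref{kernel_odd2even} and \ref{kernel_even2odd}, with the trivial $N=1$ computation as the seed (and it likewise remarks that the even case already follows from the $\mathfrak{so}_{2n+1}$ eigenrelation after the $\log 2$ shift). Your additional point about justifying differentiation under the integral and the vanishing of boundary terms is a legitimate care the paper passes over silently, but it does not change the argument.
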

\begin{proof}
Although the eigenrelation for $\Phi^{(2n)}_{\lambda}$ follows directly from the eigenrelation for the $\mathfrak{so}_{2n+1}$-Whittaker function stated in \ref{eq:eigenrelation_so} after performing the transform $x_{i}\mapsto x_{i}+\log 2$, for $1\leq i \leq n$, for reasons of completeness we will prove the eigenrelation for $\Phi_{\lambda}^{(N)}(x)$ for $N$ both even and odd. \\
Let us start with the simple case $N=1$. We have $\Phi^{(1)}_{\lambda_{1}}(x)=e^{\lambda_{1}x}$ and $\mathcal{H}_{1,\lambda_{1}}^D=\frac{1}{2}\frac{d^2}{dx^2}+e^{-x}\frac{d}{dx}-\lambda_{1}e^{-x}$. Then differentiating the function $\Phi_{\lambda_{1}}^{(1)}(x)$ leads directly to the relation
\[\mathcal{H}_{1,\lambda_{1}}^D\Phi^{(1)}_{\lambda_{1}}(x)=\frac{1}{2}\lambda_{1}^2\Phi^{(1)}_{\lambda_{1}}(x).\]
Assume now that the function $\Phi_{\lambda}^{(2n-1)}$, for $\lambda=(\lambda_{1},...,\lambda_{n})\in \mathbb{C}^n$, is an eigenfunction of the operator $\mathcal{H}_{n,\lambda_{n}}^D$, i.e. $\mathcal{H}_{n,\lambda_{n}}^D \Phi_{\lambda}^{(2n-1)}=\frac{1}{2}\sum_{i=1}^n \lambda_{i}^2 \Phi_{\lambda}^{(2n-1)}$, then for the function $\Phi_{\lambda}^{(2n)}$, determined via \eqref{eq:recursion_odd2even}, we have
\begin{equation*}
\begin{split}
\mathcal{H}_{n}^B\Phi_{\lambda_{1}}^{(2n)}(x)&= (\mathcal{H}_{n}^B \circ Q_{\lambda_{n}}^{n,n})\Phi_{\lambda}^{(2n-1)}(x)\\
&= (Q_{\lambda_{n}}^{n,n} \circ \mathcal{H}^{D}_{n,\lambda_{n}})\Phi_{\lambda_{1}}^{(2n-1)}(x)\\
&=\dfrac{1}{2}\sum_{i=1}^n \lambda_{i}^2Q_{\lambda_{n}}^{n,n}\Phi_{\lambda}^{(2n-1)}(x)\\
&=\dfrac{1}{2}\sum_{i=1}^n \lambda_{i}^2\Phi_{\lambda}^{(2n)}(x)
\end{split}
\end{equation*}
where for the second equality, we used the intertwining relation stated at Proposition \ref{kernel_odd2even}, and for the third comes from the induction hypothesis.\\
Moreover, for $\lambda_{n+1}\in \mathbb{C}$ the function $\Phi_{(\lambda, \lambda_{n+1})}^{(2n+1)}$ can be written, using \eqref{eq:recursion_even2odd}, as $\Phi_{(\lambda, \lambda_{n+1})}^{(2n+1)}(x)=Q_{\lambda_{n+1}}^{n+1,n}\Phi_{\lambda}^{(2n)}(x)$. Therefore
\begin{equation*}
\begin{split}
\mathcal{H}_{n+1,\lambda_{n+1}}^D\Phi_{(\lambda,\lambda_{n+1})}^{(2n+1)}(x)&= (\mathcal{H}_{n+1,\lambda_{n+1}}^D \circ Q_{\lambda_{n}}^{n+1,n})\Phi_{\lambda}^{(2n)}(x)\\
&= (Q_{\lambda_{n+1}}^{n+1,n} \circ (\mathcal{H}^{B}_{n}+\dfrac{1}{2}\lambda_{n+1}^2))\Phi_{\lambda}^{(2n)}(x)\\
& = Q_{\lambda_{n+1}}^{n+1,n}(\mathcal{H}^B_{n}\Phi_{\lambda}^{(2n)}(x)+\dfrac{1}{2}\lambda_{n+1}^2\Phi_{\lambda}^{(2n)}(x))\\
&=\dfrac{1}{2}\sum_{i=1}^{n+1} \lambda_{i}^2Q_{\lambda_{n+1}}^{n+1,n}\Phi_{\lambda}^{(2n)}(x)\\
&=\dfrac{1}{2}\sum_{i=1}^{n+1} \lambda_{i}^2\Phi_{(\lambda,\lambda_{n+1})}^{(2n+1)}(x)
\end{split}
\end{equation*}
where for the second equality we used the intertwining of the kernels in Proposition \ref{kernel_even2odd} and the fourth follows from the formula we proved for $\Phi^{(2n)}_{\lambda}$. The result then follows by induction on $N$.
\end{proof}
Similarly to the $\mathfrak{so}_{2n+1}$-Whittaker function, the function $\Phi^{(N)}$ also has controlled exponential decay. More specifically, we have the following result.
\begin{proposition}
\label{exponential_decay}
Let $n\geq 1$, $N=2n$ or $N=2n-1$ and $\lambda = (\lambda_{1},...,\lambda_{n})\in \mathbb{R}^n$ such that $\lambda_{1}>...>\lambda_{n}>0$. The function $e^{-\sum_{i=1}^n\lambda_{i}x_{i}}\Phi_{\lambda}^{(N)}(x)$ is bounded with the following limiting behaviour at infinity
\[e^{-\sum_{i=1}^n\lambda_{i}x_{i}}\Phi_{\lambda}^{(N)}(x) \to C, \text{ as }x\to \infty\]
for some $C=C(\lambda)>0$. Here $x\to \infty$ can be understood as $x_{i}-x_{i+1}\to \infty$, for $1\leq i <n$ and $x_{n}\to \infty$.
\end{proposition}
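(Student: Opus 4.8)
The plan is to argue by induction on $N$, exploiting the recursive representations \eqref{eq:recursion_odd2even} and \eqref{eq:recursion_even2odd} together with the exponential bounds on the kernels $Q^{n,n}_{\theta}$ and $Q^{n,n-1}_{\theta}$ that are visible from their explicit form. The base case $N=1$ is immediate: $\Phi^{(1)}_{\lambda_{1}}(x)=e^{\lambda_{1}x}$, so $e^{-\lambda_{1}x}\Phi^{(1)}_{\lambda_{1}}(x)\equiv 1$, which is bounded with limit $C=1$. For the inductive step I would treat the two parities uniformly through one quantitative estimate: assuming $e^{-\langle\tilde\lambda,y\rangle}\Phi^{(N-1)}_{\tilde\lambda}(y)$ is bounded and converges to a positive constant as $y\to\infty$ (where $\tilde\lambda$ drops the last coordinate when passing $2n-1\to 2n-2$, and keeps all $n$ coordinates when passing $2n\to 2n-1$, with $\theta=\lambda_n$), I would show the same for $\Phi^{(N)}_{\lambda}$ by analysing the integral
\[
e^{-\langle\lambda,x\rangle}\Phi^{(N)}_{\lambda}(x)=\int e^{-\langle\lambda,x\rangle}Q^{\bullet}_{\lambda_n}(x;y)\,\Phi^{(N-1)}_{\tilde\lambda}(y)\,dy
=\int \Big(e^{-\langle\lambda,x\rangle}Q^{\bullet}_{\lambda_n}(x;y)e^{\langle\tilde\lambda,y\rangle}\Big)\Big(e^{-\langle\tilde\lambda,y\rangle}\Phi^{(N-1)}_{\tilde\lambda}(y)\Big)\,dy.
\]
The first factor is a product of terms $\exp\{-e^{(\cdot)}\}$ decorated by the linear prefactors coming from $\langle\lambda,x\rangle$, $\langle\tilde\lambda,y\rangle$ and the $\theta(|x|-|y|)$ or $\theta(|y|-|x|)$ pieces in $Q^{\bullet}$; the bookkeeping of these linear exponents is where one uses $\lambda_1>\dots>\lambda_n>0$ to ensure integrability in $y$ uniformly for $x$ in a neighbourhood of infinity.

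Concretely, after the substitution $u_i = y_i-x_i$ (and $u_i=x_{i+1}-y_i$ where appropriate) the integral becomes, up to a bounded positive multiplicative function of $x$ that converges as $x\to\infty$, an integral over the $u$-variables of a fixed integrable double-exponential weight times the shifted quantity $e^{-\langle\tilde\lambda,y(x,u)\rangle}\Phi^{(N-1)}_{\tilde\lambda}(y(x,u))$. By the induction hypothesis this last factor is bounded, which gives boundedness of $e^{-\langle\lambda,x\rangle}\Phi^{(N)}_{\lambda}(x)$ immediately via $\int e^{-e^{-(a-u)}-e^{-(u-b)}}\,du<\infty$; and as $x\to\infty$ the argument $y(x,u)$ is driven to infinity (in the sense $y_i-y_{i+1}\to\infty$ and $y_{n-1}\to\infty$ for the even-to-odd step, similarly for the other), so the integrand converges pointwise to the corresponding weight times the limiting constant from the induction hypothesis. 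An application of dominated convergence, with dominating function the integrable double-exponential weight times the uniform bound on the $\Phi^{(N-1)}$ factor, then produces the limit $C(\lambda)>0$. The positivity of $C$ follows since the integrand is strictly positive and the limiting constant from the lower level is strictly positive by hypothesis. I would also note that this is consistent with, and in the even case a reformulation of, Proposition~\ref{Whittaker_characterization}: for $N=2n$, $\Phi^{(2n)}_{\lambda}(x)=\Psi^{\mathfrak{so}_{2n+1}}_{\lambda}(x+1_n\log 2)$, and the limiting constant is $\prod_{i=1}^{n}\Gamma(\langle\alpha_i^{\vee},\lambda\rangle)$ up to the explicit shift factor.

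The main obstacle will be the uniformity of the estimates in $x$ as $x\to\infty$: one must choose the change of variables so that the linear exponents produced by $\langle\lambda,x\rangle$ cancel exactly against those in the kernel (this is precisely the purpose of conjugating by $e^{-\langle\lambda,x\rangle}$ and $e^{\langle\tilde\lambda,y\rangle}$), leaving a residual $x$-dependence that is either a bounded convergent prefactor or is absorbed into the shifted arguments of $\Phi^{(N-1)}$. Verifying that the residual double-exponential weight in the $u$-variables is genuinely integrable and $x$-independent — and that no $u$-direction escapes control when some $x_i-x_{i+1}$ is large — is the delicate point; the tools for this are exactly Lemma~\ref{double_exp_1} and Corollary~\ref{double_exp_2}, applied level by level as in the proof of Theorem~\ref{convergence}. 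A secondary technical point is the odd case, where the kernel $Q^{n,n}_{\lambda_n}(x;y)$ carries the extra wall term $2e^{-y_n}$ and the operator side involves $\mathcal{H}^D_{n,\theta}$ rather than a self-adjoint operator; but this only affects the bookkeeping of one exponential factor and the limiting direction $x_n\to\infty$, not the structure of the argument.
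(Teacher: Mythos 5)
Your proposal is correct in outline, but it takes a genuinely different route from the paper. The paper does not run a full induction from $N=1$: it disposes of the even case $N=2n$ in one line by invoking the Baudoin--O'Connell characterization (Proposition~\ref{Whittaker_characterization}), since $\Phi^{(2n)}_{\lambda}(x)=\Psi^{\mathfrak{so}_{2n+1}}_{\lambda}(x+1_{n}\log 2)$, and then performs only the even-to-odd step: it bounds $e^{-\langle\tilde\lambda,y\rangle}\Phi^{(2n-2)}_{\tilde\lambda}(y)$ by a constant, substitutes $y_{i}\mapsto y_{i}+x_{i+1}$, and observes that the remaining integral \emph{factorizes exactly} into one-dimensional integrals, each equal to $e^{(\lambda_{i}-\lambda_{n})(x_{i}+x_{i+1})/2}\,\Psi^{\mathfrak{so}_{3}}_{(\lambda_{i}-\lambda_{n})/2}(x_{i}-x_{i+1})$; each factor is then controlled by Proposition~\ref{Whittaker_characterization} again with $n=1$, and dominated convergence gives the limit. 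You instead propose a two-parity induction starting from $\Phi^{(1)}_{\lambda_{1}}(x)=e^{\lambda_{1}x}$, conjugating each kernel by the appropriate exponentials and using the generic double-exponential estimates of Lemma~\ref{double_exp_1} and Corollary~\ref{double_exp_2}. This is more work (you must also carry out the odd-to-even step with the kernel $Q^{n,n}_{\lambda_{n}}$, where integrability rests on $\lambda_{i}+\lambda_{n}>0$ rather than on the strict ordering, and verify uniformity of the dominating function in $x$), but it buys a self-contained proof of boundedness that does not presuppose the known growth characterization of the $\mathfrak{so}_{2n+1}$-Whittaker function; the paper's route is shorter precisely because it leans on that external result and on the exact factorization into Macdonald functions, which your generic dominated-convergence bookkeeping does not exploit. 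One small point to make explicit if you write this up: in the even-to-odd step the linear exponent in the $y_{i}$ direction is $(\lambda_{i}-\lambda_{n})$, so the strict inequalities $\lambda_{1}>\dots>\lambda_{n}$ are genuinely needed there, whereas in the odd-to-even step only positivity of the $\lambda_{i}$ is used.
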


\begin{proof}
When $N=2n$ the result follows from Proposition \ref{Whittaker_characterization}, as $\Phi_{\lambda}^{(2n)}$ corresponds to $\Psi_{\lambda}^{\mathfrak{so}_{2n+1}}$ after shifting the variables by $\log 2$.

Let us then prove the result for $N=2n-1$. For $\lambda = (\lambda_{1},...,\lambda_{n})\in \mathbb{R}^n$ with $\lambda_{1}>...>\lambda_{n}>0$ and $x\in \mathbb{R}^n$ we have
\begin{equation*}
\begin{split}
\Phi^{(2n-1)}_{\lambda}(x)&e^{-\sum_{i=1}^n \lambda_{i}x_{i}}\\
&=e^{-\sum_{i=1}^n \lambda_{i}x_{i}}\int_{\mathbb{R}^{n-1}}Q^{n,n-1}_{\lambda_{n}}(x;y)\Phi^{(2n-2)}_{\tilde{\lambda}}(y)dy\\
&=e^{-\sum_{i=1}^n \lambda_{i}x_{i}}\int_{\mathbb{R}^{n-1}}Q^{n,n-1}_{\lambda_{n}}(x;y)e^{\sum_{i=1}^{n-1} \lambda_{i}y_{i}}\Big(e^{-\sum_{i=1}^{n-1} \lambda_{i}y_{i}}\Phi^{(2n-2)}_{\tilde{\lambda}}(y)\Big)dy.
\end{split}
\end{equation*}
Since the result holds in the even case, there exists constant $c\geq 0$, that depends on $\tilde{\lambda}$ such that
\[e^{-\sum_{i=1}^{n-1} \lambda_{i}y_{i}}\Phi^{(2n-2)}_{\tilde{\lambda}}(y)\leq c.\]
Therefore 
\begin{equation*}
\begin{split}
\Phi^{(2n-1)}_{\lambda}(x)&e^{-\sum_{i=1}^n \lambda_{i}x_{i}}\\
&\leq c \,e^{-\sum_{i=1}^n \lambda_{i}x_{i}}\int_{\mathbb{R}^{n-1}}Q^{n,n-1}_{\lambda_{n}}(x;y)e^{\sum_{i=1}^{n-1} \lambda_{i}y_{i}}dy\\
&= c \,e^{\sum_{i=1}^n (\lambda_{n}-\lambda_{i})x_{i}}\prod_{i=1}^{n-1}\int_{\mathbb{R}}e^{(\lambda_{i}-\lambda_{n})y_{i}}\exp \Big(-e^{x_{i+1}-y_{i}}-e^{y_{i}-x_{i}}\Big)dy_{i}.
\end{split}
\end{equation*}
Making the change of variables
\[y_{i}\mapsto y_{i}+x_{i+1}, \qquad 1\leq i \leq n-1\]
we have that
\begin{equation*}
\begin{split}
\Phi^{(2n-1)}_{\lambda}(x)&e^{-\sum_{i=1}^n\lambda_{i}x_{i}}\\
&\leq c\,e^{\sum_{i=1}^n (\lambda_{n}-\lambda_{i})x_{i}}\prod_{i=1}^{n-1}e^{(\lambda_{i}-\lambda_{n})\frac{x_{i}+x_{i+1}}{2}}\Psi_{(\lambda_{i}-\lambda_{n})/2}^{\mathfrak{so}_{2\cdot 1 + 1}}(x_{i}-x_{i+1})\\
& = c\prod_{i=1}^{n-1 }e^{-\frac{\lambda_{i}-\lambda_{n}}{2}(x_{i}-x_{i+1})}\Psi_{(\lambda_{i}-\lambda_{n})/2}^{\mathfrak{so}_{2\cdot 1 + 1}}(x_{i}-x_{i+1}).
\end{split}
\end{equation*}
We may use again Proposition \ref{Whittaker_characterization} to conclude that there exists $c_{i}\geq 0$ such that
\[e^{-\frac{\lambda_{i}-\lambda_{n}}{2}(x_{i}-x_{i+1})}\Psi_{(\lambda_{i}-\lambda_{n})/2}^{\mathfrak{so}_{2\cdot 1 + 1}}(x_{i}-x_{i+1}) \leq c_{i}\]
for $1\leq i \leq n-1$, which completes the proof for the bound.

Finally, we use Dominated Convergence theorem to conclude that $\Phi_{\lambda}^{(2n-1)}(x)$ behaves like $e^{\sum_{i=1}^n \lambda_{i}x_{i}}$ at infinity. 
\end{proof}
\section{The main result}
In section 8.1 we considered the operators $\mathcal{H}^B_{n}$ and $\mathcal{H}^D_{n,\theta}$ and calculated their eigenfunctions. We observe that 
\[\mathcal{H}^B_{n}-\frac{1}{2}\sum_{i=1}^n \lambda_{i}^2\]
can be thought as an $n$-dimensional standard Brownian motion with killing given by
\[c(x)=\sum_{i=1}^{n-1}e^{x_{i+1}-x_{i}}+e^{-x_{n}}+\frac{1}{2}\sum_{i=1}^n \lambda_{i}^2.\]
Similarly,
\[\mathcal{H}^D_{n,\lambda_{n}}-\frac{1}{2}\sum_{i=1}^n\lambda_{i}^2\]
is the generator of a diffusion with drift $b_{i}(x)=e^{-x_{n}}\mathbbm{1}_{i=n}$, for $1\leq i \leq n$ and killing given by
\[c(x) = \sum_{i=1}^{n-1}e^{x_{i+1}-x_{i}}+\lambda_{n}e^{-x_{n}}+\frac{1}{2}\sum_{i=1}^n\lambda_{i}^2.\]

In this section we will prove that under appropriate initial conditions, the bottom level of the process $\mathbf{X}$ with evolution described by the differential equations \eqref{eq:SDEs_even} and \eqref{eq:SDEs_odd}, is the $h$-transform, with $h=\Phi_{\lambda}^{(N)}$, of the process with generator $\mathcal{H}^B_{n}-\frac{1}{2}\sum_{i=1}^n \lambda_{i}^2$, if $N=2n$ and $\mathcal{H}^D_{n,\lambda_{n}}-\frac{1}{2}\sum_{i=1}^n$ if $N=2n-1$. \\

For $x, \lambda \in \mathbb{R}^n$, denote by $\sigma_{\lambda}^x$ the probability measure on the set $\Gamma_{N}$ defined by
\[\int fd\sigma_{\lambda}^x = \Phi_{\lambda}^{(N)}(x)^{-1}\int_{\Gamma_{N}[x]}f(\mathbf{x})e^{\mathcal{F}_{\lambda}^{(N)}(\mathbf{x})}\prod_{k=1}^{N-1}\prod_{i=1}^{\big[\frac{k+1}{2} \big]}dx^k_{i}.\]
\begin{theorem}
\label{first_result}
Fix $x\in \mathbb{R}^n$, $\lambda = (\lambda_{1}> ... > \lambda_{n}>0)$ and let $\mathbf{X}$ be the process evolving according to the system of stochastic differential equations described in \eqref{eq:SDEs_even}, \eqref{eq:SDEs_odd} with initial law $\sigma_{\lambda}^x$. Then $X^N$ is a Markov process started at $x$ with infinitesimal generator
\[\mathcal{L}_{n,\lambda}^{B} =(\Phi_{\lambda}^{(2n)})^{-1}\Big(\mathcal{H}^B_{n}-\dfrac{1}{2}\sum_{i=1}^n\lambda_{i}^2 \Big)\Phi_{\lambda}^{(2n)}, \text{ if  }N=2n \]
and
\[\mathcal{L}_{n,\lambda}^{D} =(\Phi_{\lambda}^{(2n-1)})^{-1}\Big(\mathcal{H}^D_{n,\lambda_{n}}-\dfrac{1}{2}\sum_{i=1}^n\lambda_{i}^2 \Big)\Phi_{\lambda}^{(2n-1)}, \text{ if }N=2n-1\]
where $\mathcal{H}^B_{n}$ and $\mathcal{H}^D_{n,\lambda_{n}}$ are as in \eqref{eq:operator_B} and \eqref{eq:operator_D}.
\end{theorem}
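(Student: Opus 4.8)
The plan is to realize $\mathbf{X}$ as a multilevel diffusion built from one-dimensional pieces and to propagate the required entrance law and Markov property from the top level down, using the intertwining of the operators $\mathcal{H}^B$, $\mathcal{H}^D$ proved in Propositions \ref{kernel_odd2even} and \ref{kernel_even2odd} together with the eigenrelation of Proposition \ref{eigenrelation_phi_whittaker}. This is the continuous analogue of the strategy in Chapters \ref{Berele} and \ref{discrete}, but now carried out in the SDE/generator framework, so the natural tool is the Markov-functions machinery of Rogers--Pitman in its martingale-problem form, Theorem \ref{Kurtz}. First I would set up the induction hypothesis: for $N-1$ levels the process $\mathbf{X}^{1:N-1}$ started from $\sigma^{x^{N-1}}_{\lambda}$ (with $\lambda$ truncated appropriately) has bottom level $X^{N-1}$ a diffusion with generator $\mathcal{L}^{B}_{n-1,\tilde\lambda}$ (if $N-1$ even) or $\mathcal{L}^{D}_{n,\lambda}$ (if $N-1$ odd), and conditionally on $\{X^{N-1}(s), s\le t\}$ the array $\mathbf{X}^{1:N-1}(t)$ has law $\sigma^{X^{N-1}(t)}_{\lambda}$.

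The inductive step splits into the two cases $N=2n$ and $N=2n-1$, mirroring Part~I and Part~II of the proof of Theorem \ref{q_main_theorem}. In each case the new level $X^N$ is driven, by \eqref{eq:SDEs_even} or \eqref{eq:SDEs_odd}, by its own Brownian motions and a drift depending only on $X^{N-1}$; so the pair $(X^{N-1},X^N)$ is itself Markov, and the key computation is to show that its generator is intertwined with $\mathcal{L}^{B}_{n,\lambda}$ (resp.\ $\mathcal{L}^{D}_{n,\lambda}$) via the kernel
\[
\mathcal{K}_N(z,dx^{N-1}) \;=\; \frac{Q^{n,n-1}_{\lambda_n}(z;x^{N-1})\,\text{or}\,Q^{n,n}_{\lambda_n}(x^{N-1};z)}{\Phi^{(N)}_{\lambda}(z)}\;\Phi^{(N-1)}_{\lambda}(x^{N-1})\,dx^{N-1},
\]
that is, the $h$-transformed version of $Q^{n,n-1}_{\lambda_n}$ or $Q^{n,n}_{\lambda_n}$ normalized by $\Phi^{(N)}_{\lambda}$; the identity $\Phi^{(N)}_{\lambda}=Q^{n,n}_{\lambda_n}\Phi^{(N-1)}_{\lambda}$ (resp.\ $Q^{n,n-1}_{\lambda_n}\Phi^{(N-1)}_{\lambda}$) from \eqref{eq:recursion_odd2even}--\eqref{eq:recursion_even2odd} makes $\mathcal{K}_N(z,\cdot)$ a probability measure, and $\sigma^x_\lambda$ decomposes as $\sigma^{x^{N-1}}_\lambda$ composed with $\mathcal{K}_N$. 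To verify the intertwining of the $h$-transformed kernels with the $h$-transformed operators, I would conjugate out $\Phi$: the required relation $\mathcal{L}^{B}_{n,\lambda}\circ(\Phi^{(2n)}_\lambda)^{-1}Q^{n,n}_{\lambda_n}\Phi^{(2n-1)}_\lambda=(\Phi^{(2n)}_\lambda)^{-1}Q^{n,n}_{\lambda_n}\Phi^{(2n-1)}_\lambda\circ\mathcal{L}^{D}_{n,\lambda}$ reduces, after cancelling the $\Phi$'s, to the plain operator intertwining $\mathcal{H}^B_n\circ Q^{n,n}_{\lambda_n}=Q^{n,n}_{\lambda_n}\circ\mathcal{H}^D_{n,\lambda_n}$ of Proposition \ref{kernel_odd2even} (and, in the odd step, $(\mathcal{H}^D_{n,\lambda_n}-\tfrac12\lambda_n^2)\circ Q^{n,n-1}_{\lambda_n}=Q^{n,n-1}_{\lambda_n}\circ\mathcal{H}^B_{n-1}$ of Proposition \ref{kernel_even2odd}), together with the shift of the eigenvalue sum by $\tfrac12\lambda_n^2$ accounted for in Proposition \ref{eigenrelation_phi_whittaker}. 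Here the exponential-decay estimate of Proposition \ref{exponential_decay} is needed to guarantee that all the integrals against $Q^{n,n}_{\lambda_n}$, $Q^{n,n-1}_{\lambda_n}$ converge, that $\Phi^{(N)}_\lambda$ is finite and strictly positive, and that the $h$-transform is well defined.

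Granting the intertwining, I would invoke Theorem \ref{Kurtz} with $E$ the state space of $(X^{N-1},X^N)$, $F=\mathbb{R}^{[(N+1)/2]}$, $\gamma$ the projection to the bottom level, $\Lambda=\mathcal{K}_N$ (composed with the inductively-given conditional law on the top $N-1$ levels), $A$ the generator of the full array and $B=\mathcal{L}^{B}_{n,\lambda}$ or $\mathcal{L}^{D}_{n,\lambda}$; one then gets both that $X^N$ is Markov with the claimed generator and that the conditional law of $\mathbf{X}(t)$ given $\{X^N(s),s\le t\}$ is $\sigma^{X^N(t)}_\lambda$, completing the induction; the base case $N=1$ is the Matsumoto--Yor identity recalled in the Introduction, since $dX^1_1=dB^1_1+e^{-X^1_1}dt$ has generator $\tfrac12\frac{d^2}{dx^2}+e^{-x}\frac{d}{dx}$, which equals $(\Phi^{(1)}_{\lambda_1})^{-1}(\mathcal{H}^D_{1,\lambda_1}-\tfrac12\lambda_1^2)\Phi^{(1)}_{\lambda_1}$ with $\Phi^{(1)}_{\lambda_1}(x)=e^{\lambda_1 x}$. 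The main obstacle I expect is the analytic bookkeeping needed to apply Theorem \ref{Kurtz} rigorously: one must exhibit a core $\mathcal{D}(A)$ for the full diffusion that is closed under multiplication, separates points and is convergence-determining, show $\Lambda(\mathcal{D}(A))\subset\mathcal{D}(B)$, and above all establish well-posedness of the martingale problems for the $h$-transformed, degenerate-drift generators $\mathcal{L}^{B}_{n,\lambda}$ and $\mathcal{L}^{D}_{n,\lambda}$ — the reflecting/soft-wall structure at zero and the unbounded exponential drifts mean one cannot simply quote a standard theorem, and the entrance-law claim (starting exactly from $x$ with the singular initial measure $\sigma^x_\lambda$ on the array) has to be handled via the decay bounds of Proposition \ref{exponential_decay} to control the behaviour near the boundary of the effective Weyl chamber.
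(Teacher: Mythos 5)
Your proposal follows essentially the same route as the paper: induction on $N$ alternating between the even and odd cases, a two-level ``helper'' intertwining via the $\Phi$-normalized kernels $Q^{n,n}_{\lambda_n}$, $Q^{n,n-1}_{\lambda_n}$ that reduces to Propositions \ref{kernel_odd2even} and \ref{kernel_even2odd}, and an application of Theorem \ref{Kurtz}, with non-explosion of $\mathcal{L}^{D}_{n,\lambda}$ handled (as in the paper) by a Lyapunov function built from the decay estimate of Proposition \ref{exponential_decay}. The only point to watch is that the two-level intertwining does not literally follow by ``cancelling the $\Phi$'s'': since the test function depends on both levels, one must also carry out the product-rule computation producing the first-order cross terms $\partial_{x_i}\log Q\cdot\partial_{x_i}$, exactly as in the paper's verification with the operators $\mathcal{G}^{n,n-1}_{\lambda}$ and $\mathcal{G}^{n,n}_{\lambda}$.
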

\begin{proof}
The proof of Theorem \ref{first_result} uses Theorem \ref{Kurtz}, therefore we need a Markov kernel $K$ that satisfies the property
\[K(f \circ \gamma) = f\]
where $\gamma(\mathbf{x})=x^N$ is the projection to the bottom level of the pattern $\mathbf{x}$.

Let us define the kernel $K^{N}_{\lambda}$ acting on bounded, measurable functions on $\Gamma_{N}$ as follows
\[(K^{N}_{\lambda}f)(x) = \Phi_{\lambda}^{(N)}(x)^{-1}\int_{\Gamma_{N}[x]}f(\mathbf{x})e^{\mathcal{F}_{\lambda}^{(N)}(\mathbf{x})}\prod_{k=1}^{N-1}\prod_{i=1}^{\big[\frac{k+1}{2} \big]}dx^k_{i}.\]
The kernel $K^{N}_{\lambda}$ satisfies the properties:
\begin{itemize}
\item if $f\equiv 1$ then, by the definition of the function $\Phi^{(N)}$, it holds
\[K^{N}_{\lambda}f = 1;\]
\item since $(f\circ \gamma)(\mathbf{x}) = f(x^N)$, it follows that
\begin{equation*}
\begin{split}
\big(K^{N}_{\lambda}(f\circ \gamma)\big)(\mathbf{x}) &= \Phi_{\lambda}^{(N)}(x)^{-1}\int_{\Gamma_{N}[x]}f(x^N)e^{\mathcal{F}_{\lambda}^{(N)}(\mathbf{x})}\prod_{k=1}^{N-1}\prod_{i=1}^{\big[\frac{k+1}{2} \big]}dx^k_{i}\\
& = f(x)\Phi_{\lambda}^{(N)}(x)^{-1}\int_{\Gamma_{N}[x]}e^{\mathcal{F}_{\lambda}^{(N)}(\mathbf{x})}\prod_{k=1}^{N-1}\prod_{i=1}^{\big[\frac{k+1}{2} \big]}dx^k_{i}\\
&= f(x).
\end{split}
\end{equation*}
\end{itemize}

Let us now denote the infinitesimal generator of the process $\mathbf{X}$ by $\mathcal{A}^{N}_{\lambda}$. The key step towards the proof of Theorem \ref{first_result} is the proof an intertwining relation among the generator $\mathcal{A}^{N}_{\lambda}$ and $\mathcal{L}_{n,\lambda}^{B}$ if $N=2n$ or $\mathcal{L}_{n,\lambda}^{D}$ if $N=2n-1$.

\begin{proposition}
\label{main_intert_proposition(continuous)}
Let $n\geq 1$. The following intertwining relations hold:
\[\mathcal{L}^B_{n,\lambda}\circ K^{N}_{\lambda} = K^{N}_{\lambda} \circ \mathcal{A}^{N}_{\lambda}, \text{ if }N=2n\]
and
\[\mathcal{L}^D_{n,\lambda}\circ K^{N}_{\lambda} = K^{N}_{\lambda} \circ \mathcal{A}^{N}_{\lambda},\text{ if }N=2n-1.\]
\end{proposition}
\begin{proof}(of Proposition \ref{main_intert_proposition(continuous)})
We will prove the result by induction on $N$.

When $N=1$, we have $K^{1}_{\lambda_{1}}f = f$
and since $\Phi^{(1)}_{\lambda_{1}}(x) = e^{\lambda_{1}x}$ we can easily check that
\[\mathcal{A}^{1}_{\lambda_{1}} = \dfrac{1}{2}\dfrac{d^2}{dx^2} + (\lambda_{1}+e^{-x})\dfrac{d}{dx} = \mathcal{L}_{1,\lambda_{1}}^D.\]
Therefore the intertwining relation holds trivially.\\
\paragraph*{Iterate from even to odd.} Suppose that $N=2n-1$ and the statement of Proposition \ref{main_intert_proposition(continuous)} holds for $N-1$, i.e. the relation
\[\mathcal{L}_{n-1,\tilde{\lambda}}^B \circ K^{N-1}_{\tilde{\lambda}} = K^{N-1}_{\tilde{\lambda}}\circ \mathcal{A}^{N-1}_{\tilde{\lambda}}\]
holds.

By the construction of the process, we observe that the evolution of $X^N$ depends on the rest of the pattern only via $X^{N-1}$. Therefore, it is convenient to obtain a helper intertwining relation that focuses only on the bottom two levels of the pattern and then deduce the required intertwining relation from the helper.

We consider the operator $\mathcal{G}^{n,n-1}_{\lambda}$ acting on appropriate functions on $\mathbb{R}^{n}\times \mathbb{R}^{n-1}$ as follows
\begin{equation*}
\begin{split}
(\mathcal{G}^{n,n-1}_{\lambda}f)(x,y)=\Big[\mathcal{L}^B_{n-1, \tilde{\lambda}}&+\frac{1}{2}\sum_{k=1}^n \frac{\partial ^2}{\partial x_{k}^2}+\frac{1}{2}(\lambda_{n}+e^{y_{1}-x_{1}})\frac{\partial}{\partial x_{1}}\\
&+ \frac{1}{2}\sum_{k=2}^{n-1}(\lambda_{n}+e^{y_{k}-x_{k}}-e^{x_{k}-y_{k-1}})\frac{\partial}{\partial x_{k}}\\
&+\frac{1}{2}(\lambda_{n}+e^{-x_{n}}-e^{x_{n}-y_{n-1}})\frac{\partial}{\partial x_{n}}\Big]f(x,y)
\end{split}
\end{equation*}
where $\mathcal{L}^B_{n-1, \tilde{\lambda}}$ acts on the $y$-variable. 

We also consider the operator  $\Lambda_{\lambda}^{n,n-1}$ defined for bounded, measurable functions on $\mathbb{R}^n\times \mathbb{R}^{n-1}$ by
\[(\Lambda^{n,n-1}_{\lambda} f)(x)=\Phi_{\lambda}^{(N)}(x)^{-1}\int_{\mathbb{R}^{n-1}}Q_{\lambda_{n}}^{n,n-1}(x;y)\Phi_{\tilde{\lambda}}^{(N-1)}(y)f(x,y)\prod_{i=1}^{n-1}dy_{i}.\]

\noindent \textbf{Claim.} The intertwining relation 
\begin{equation}
\mathcal{L}^D_{n,\lambda}\circ \Lambda_{\lambda}^{n.n-1} = \Lambda_{\lambda}^{n.n-1} \circ \mathcal{G}^{n,n-1}_{\lambda}
\label{eq:helper_even2odd(continuous)}
\end{equation}
holds.

Before we confirm \eqref{eq:helper_even2odd(continuous)} let us use it to complete the proof of the main intertwining relation.  

We observe that the operator $\mathcal{A}^{N}_{\lambda}$ consists of two parts as follows
\begin{equation*}
\begin{split}
\mathcal{A}^{N}_{\lambda} &= \mathcal{A}^{N-1}_{\tilde{\lambda}} + \frac{1}{2}\sum_{k=1}^n \frac{\partial ^2}{\partial (x^N_{k})^2}+(\lambda_{n}+e^{x^{N-1}_{1}-x^N_{1}})\frac{\partial}{\partial x^N_{1}}\\
&+ \sum_{k=2}^{n-1}(\lambda_{n}+e^{x^{N-1}_{k}-x^N_{k}}-e^{x^N_{k}-x^{N-1}_{k-1}})\frac{\partial}{\partial x^N_{k}}\\
&+(\lambda_{n}+e^{-x^N_{n}}-e^{x^N_{n}-x^{N-1}_{n-1}})\frac{\partial}{\partial x^N_{n}}
\end{split}
\end{equation*}
where the operator $\mathcal{A}^{N-1}_{\tilde{\lambda}}$ acts on the top $N-1$ levels of the pattern.

Then for any appropriate function $f$ on $\Gamma_{N}$ we have
\[(K^{N}_{\lambda}\circ \mathcal{A}^{N}_{\lambda})f(x) = \Phi_{\lambda}^{(N)}(x) ^{-1}\int_{\Gamma_{N}[x]}e^{\mathcal{F}^{(N)}_{\lambda}(\mathbf{x})}(\mathcal{A}^{N}_{\lambda}f)(\mathbf{x})\prod_{k=1}^{N-1}\prod_{i=1}^{\big[\frac{k+1}{2} \big]}dx^k_{i}=:\mathcal{I}_{1} + \mathcal{I}_{2}\]
where 
\[\mathcal{I}_{1} = \Phi_{\lambda}^{(N)}(x) ^{-1}\int_{\Gamma_{N}[x]}e^{\mathcal{F}^{(N)}_{\lambda}(\mathbf{x})}(\mathcal{A}^{N-1}_{\tilde{\lambda}}f)(\mathbf{x})\prod_{k=1}^{N-1}\prod_{i=1}^{\big[\frac{k+1}{2} \big]}dx^k_{i}\]
and
\begin{equation*}
\begin{split}
\mathcal{I}_{2} = \Phi_{\lambda}^{(N)}(x) ^{-1}\int_{\Gamma_{N}[x]}e^{\mathcal{F}^{(N)}_{\lambda}(\mathbf{x})}&\Big[ \frac{1}{2}\sum_{k=1}^n \frac{\partial ^2}{\partial x_{k}^2}+\frac{1}{2}(\lambda_{n}+e^{x^{N-1}_{1}-x_{1}})\frac{\partial}{\partial x_{1}}\\
&+ \frac{1}{2}\sum_{k=2}^{n-1}(\lambda_{n}+e^{x^{N-1}_{k}-x_{k}}-e^{x_{k}-x^{N-1}_{k-1}})\frac{\partial}{\partial x_{k}}\\
&+\frac{1}{2}(\lambda_{n}+e^{-x_{n}}-e^{x_{n}-x^{N-1}_{n-1}})\frac{\partial}{\partial x_{n}}\Big]f(\mathbf{x})\prod_{i=1}^{\big[\frac{k+1}{2} \big]}dx^k_{i}.
\end{split}
\end{equation*}

\noindent The operator $K^{N}_{\lambda}$ has a recursive structure as follows
\[K^{N}_{\lambda} = \Lambda_{\lambda}^{n,n-1}\circ K^{N-1}_{\tilde{\lambda}},\]
therefore $\mathcal{I}_{1}$ equals
\begin{equation*}
\begin{split}
\mathcal{I}_{1}&= \Phi^{(N)}_{\lambda}(x)^{-1}\int_{\mathbb{R}^{n-1}}Q_{\lambda_{n}}^{n,n-1}(x;y)\Phi_{\tilde{\lambda}}^{(N-1)}(y)(K^{N-1}_{\tilde{\lambda}}\circ \mathcal{A}^{N-1}_{\tilde{\lambda}})f(x,y)\prod_{i=1}^{n-1}dy_{i}\\
&= \Phi^{(N)}_{\lambda}(x)^{-1}\int_{\mathbb{R}^{n-1}}Q_{\lambda_{n}}^{n,n-1}(x;y)\Phi_{\tilde{\lambda}}^{(N-1)}(y)(\mathcal{L}_{n-1,\tilde{\lambda}}^B\circ K^{N-1}_{\tilde{\lambda}})f(x,y)\prod_{i=1}^{n-1}dy_{i}
\end{split}
\end{equation*}
where the second equality follows from the induction hypothesis, and $\mathcal{I}_{2}$ equals
\begin{equation*}
\begin{split}
\mathcal{I}_{2} = \Phi^{(N)}_{\lambda}(x)^{-1}\int_{\mathbb{R}^{n-1}}&Q_{\lambda_{n}}^{n,n-1}(x;y)\Phi_{\tilde{\lambda}}^{(N-1)}(y)\\
\times &\Big[ \frac{1}{2}\sum_{k=1}^n \frac{\partial ^2}{\partial x_{k}^2}+\frac{1}{2}(\lambda_{n}+e^{y_{1}-x_{1}})\frac{\partial}{\partial x_{1}}\\
&+ \frac{1}{2}\sum_{k=2}^{n-1}(\lambda_{n}+e^{y_{k}-x_{k}}-e^{x_{k}-y_{k-1}})\frac{\partial}{\partial x_{k}}\\
&+\frac{1}{2}(\lambda_{n}+e^{-x_{n}}-e^{x_{n}-y_{n-1}})\frac{\partial}{\partial x_{n}}\Big]f(x,y)\prod_{i=1}^{n-1}dy_{i}.
\end{split}
\end{equation*}
Combining the two integrals we then conclude that
\[(K^{N}_{\lambda}\circ \mathcal{A}^{N}_{\lambda})f(x) = (\Lambda_{\lambda}^{n,n-1}\circ \mathcal{G}^{n,n-1}_{\lambda}\circ K^{N-1}_{\tilde{\lambda}})f(x)\]
which, using the helper intertwining relation \eqref{eq:helper_even2odd(continuous)}, equals
\[(\mathcal{L}_{n,\lambda}^D  \circ \Lambda^{n,n-1}_{\lambda}\circ K^{N-1}_{\tilde{\lambda}})f(x)=(\mathcal{L}_{n,\lambda}^D \circ K^{N}_{\lambda}) f(x)\]
as required.\\

Let us conclude the proof for the even-to-odd case by proving relation \eqref{eq:helper_even2odd(continuous)}. The operator $\mathcal{G}^{n,n-1}_{\lambda}$ can then be re-expressed as follows
\[\mathcal{G}^{n,n-1}_{\lambda}=\Phi_{\tilde{\lambda}}^{(N-1)}(y)^{-1}\Big(V_{\lambda_{n}}^{n,n-1}-\frac{1}{2}\sum_{i=1}^{n-1}\lambda_{i}^2\Big)\Phi_{\tilde{\lambda}}^{(N-1)}(y)\]
with
\begin{equation*}
\begin{split}
V_{\lambda_{n}}^{n,n-1}=& (\mathcal{H}_{n-1}^B)_{y}+\frac{1}{2}\sum_{k=1}^n \frac{\partial ^2}{\partial x_{k}^2}+(\lambda_{n}+e^{y_{1}-x_{1}})\frac{\partial}{\partial x_{1}}\\
&+ \sum_{k=2}^{n-1}(\lambda_{n}+e^{y_{k}-x_{k}}-e^{x_{k}-y_{k-1}})\frac{\partial}{\partial x_{k}}\\
&+(\lambda_{n}+e^{-x_{n}}-e^{x_{n}-y_{n-1}})\frac{\partial}{\partial x_{n}}.
\end{split}
\end{equation*}
where $(\mathcal{H}_{n-1}^B)_{y}$ acts as the operator $\mathcal{H}_{n-1}^B$ on the $y$-variable.
 
\noindent Let $(\mathcal{H}^D_{n,\lambda_{n}})_{x}$ be the operator $\mathcal{H}_{n,\lambda_{n}}^D$ acting on the $x$-variable, then
\begin{equation*}
\begin{split}
(\mathcal{H}_{n,\lambda_{n}}^D)_{x}\int_{\mathbb{R}^{n-1}}&Q_{\lambda_{n}}^{n,n-1}(x;y)\Phi_{\tilde{\lambda}}^{(N-1)}(y)f(x,y)dy\\
=& \int_{\mathbb{R}^{n-1}}((\mathcal{H}_{n,\lambda_{n}}^D)_{x}Q_{\lambda_{n}}^{n,n-1}(x;y))\Phi_{\tilde{\lambda}}^{(N-1)}(y)f(x,y)dy\\
&+ \int_{\mathbb{R}^{n-1}}Q_{\lambda_{n}}^{n,n-1}(x;y)\frac{1}{2} \sum_{i=1}^n\dfrac{\partial^2}{\partial x_{i}^2}\Phi_{\tilde{\lambda}}^{(N-1)}(y)f(x,y)dy\\
&+ \int_{\mathbb{R}^{n-1}}\sum_{i=1}^n \Big(\dfrac{\partial}{\partial x_{i}}Q_{\lambda_{n}}^{n,n-1}(x;y)\Big)\Big(\dfrac{\partial}{\partial x_{i}}\Phi_{\tilde{\lambda}}^{(N-1)}(y)f(x,y)\Big)dt\\
&+ \int_{\mathbb{R}^{n-1}}Q_{\lambda_{n}}^{n,n-1}(x;y)e^{-x_{n}}\dfrac{\partial}{\partial x_{n}}\Phi_{\tilde{\lambda}}^{(N-1)}(y)f(x,y)dt.
\end{split}
\end{equation*}
Using Proposition \ref{kernel_even2odd} and the fact that $\mathcal{H}^B_{n-1}$ is self-adjoint, the first integral equals
\[ \int_{\mathbb{R}^{n-1}}Q_{\lambda_{n}}^{n,n-1}(x;y)((\mathcal{H}_{n-1}^B)_{y}+\frac{1}{2}\lambda_{n}^2)\Phi_{\tilde{\lambda}}^{(N-1)}(y)f(x,y)dy.\]
Using, for $1\leq i \leq n$, the formula
\[\dfrac{\partial}{\partial x_{i}}Q_{\lambda_{n}}^{n,n-1}(x;y)=(\lambda_{n}+e^{y_{i}-x_{i}}\mathbbm{1}_{i\leq n-1}-e^{x_{i}-y_{i-1}}\mathbbm{1}_{i>1})Q_{\lambda_{n}}^{n,n-1}(x;y)\]
we conclude that the last two integrals combined equal
\begin{equation*}
\begin{split}
\int_{\mathbb{R}^{n-1}}Q_{\lambda_{n}}^{n,n-1}\Big((\lambda_{n}+e^{y_{1}-x_{1}})&\frac{\partial}{\partial x_{1}}+\sum_{i=2}^{n-1}(\lambda_{n}+e^{y_{i}-x_{i}}-e^{x_{i}-y_{i-1}})\frac{\partial}{\partial x_{i}}\\
&(\lambda_{n}+e^{-x_{n}}-e^{x_{n}-y_{n-1}})\frac{\partial}{\partial x_{n}}\Big)\Phi_{\tilde{\lambda}}^{(N-1)}(y)f(x,y)dy
\end{split}
\end{equation*}
therefore 
\begin{equation*}
\begin{split}
(\mathcal{H}_{n,\lambda_{n}}^D)_{x}\int_{\mathbb{R}^{n-1}}Q_{\lambda_{n}}^{n,n-1}&(x;y)\Phi_{\tilde{\lambda}}^{(N-1)}(y)f(x,y)dy\\
&=\int_{\mathbb{R}^{n-1}}Q_{\lambda_{n}}^{n,n-1}(x;y)(V_{\lambda_{n}}^{n,n-1}+\frac{1}{2}\lambda_{n}^2)\Phi_{\tilde{\lambda}}^{(N-1)}(y)f(x,y)dy
\end{split}
\end{equation*}
We then conclude that for appropriate functions on $\mathbb{R}^n\times \mathbb{R}^{n-1}$
\[(\mathcal{L}_{n,\lambda}^D \circ \Lambda_{\lambda}^{n,n-1})f(x) = (\Lambda_{\lambda}^{n,n-1} \circ \mathcal{G}_{\lambda}^{n,n-1})f(x)\]
as required.

\paragraph*{Iterate from odd to even.}
Assume that $N=2n$ and the result holds for $N-1$, i.e. the relation
\[\mathcal{L}^D_{n-1,\lambda}\circ K_{N-1} = K_{N-1}\circ \mathcal{A}_{N-1}\]
holds.

Instead of proving the intertwining relation of Proposition \ref{main_intert_proposition(continuous)} we will provide a relation that focuses on the bottom two levels of the pattern. The required intertwining relation then follows using the same arguments as for the even-to-odd case.

We consider the operator $\mathcal{G}^{n,n}_{\lambda}$ that acts on appropriate functions on $\mathbb{R}^n\times \mathbb{R}^n$ as follows
\[(\mathcal{G}^{n,n}_{\lambda}f)(x,y) = \Big[\mathcal{L}^D_{n,\lambda}+\frac{1}{2}\sum_{i=1}^n\frac{\partial ^2}{\partial x_{i}^2}+\sum_{i=1}^n(-\lambda_{n}+e^{y_{i}-x_{i}}-e^{x_{i}-y_{i-1}}\mathbbm{1}_{i>1})\frac{\partial}{\partial x_{i}}\Big]f(x,y)\]
where the operator $\mathcal{L}^D_{n,\lambda}$ acts on the $y$-variable.

We also consider the operator $\Lambda_{\lambda}^{n,n}$ defined for bounded measurable functions on $\mathbb{R}^n\times \mathbb{R}^n$ by
\[(\Lambda_{\lambda}^{n,n}f)(x)=\Phi_{\lambda}^{(N)}(x)^{-1}\int_{\mathbb{R}^n} Q_{\lambda_{n}}^{n,n}(x;y)\Phi_{\lambda}^{(N-1)}(y)f(x,y)dy.\]

\noindent \textbf{Claim.} The intertwining relation
\begin{equation}
\mathcal{L}^B_{n,\lambda}\circ \Lambda^{n,n}_{\lambda} = \Lambda_{\lambda}^{n,n}\circ \mathcal{G}^{n,n}_{\lambda}
\label{eq:helper_odd2even(continuous)}
\end{equation}
holds.

For the proof of \eqref{eq:helper_odd2even(continuous)} we observe that $\mathcal{G}^{n,n}_{\lambda}$ can be re-written as follows
\[\mathcal{G}_{\lambda}^{n,n}=\Phi_{\lambda}^{(N-1)}(y)^{-1}\Big(V^{n,n}_{\lambda_{n}}-\frac{1}{2}\sum_{i=1}^n\lambda_{i}^2\Big)\Phi_{\lambda}^{(N-1)}(y)\]
with
\[V^{n,n}_{\lambda_{n}}=(\mathcal{H}_{n,\lambda_{n}}^D)_{y}+\frac{1}{2}\frac{\partial ^2}{\partial x_{i}^2}+\sum_{i=1}^n(-\lambda_{n}+e^{y_{i}-x_{i}}-e^{x_{i}-y_{i-1}}\mathbbm{1}_{i>1})\frac{\partial}{\partial x_{i}},\]
where $(\mathcal{H}^D_{n,\lambda_{n}})_{y}$ acts as the operator $\mathcal{H}^D_{n,\lambda_{n}}$ on the $y$-variable.

\noindent Let $(\mathcal{H}^B_{n})_{x}$ be the operator $\mathcal{H}_{n}^B$ acting on the $x$-variable, then
\begin{equation*}
\begin{split}
(\mathcal{H}_{n}^B)_{x}\int_{\mathbb{R}^n}Q_{\lambda_{n}}^{n,n}(x;y)&\Phi_{\lambda}^{(N-1)}(y)f(x,y)dy\\
=& \int_{\mathbb{R}^n}((\mathcal{H}_{n}^B)_{x}Q_{\lambda_{n}}^{n,n}(x;y))\Phi_{\lambda}^{(N-1)}(y)f(x,y)dy\\
&+ \int_{\mathbb{R}^n}Q_{\lambda_{n}}^{n,n}(x;y)\frac{1}{2} \sum_{i=1}^n\dfrac{\partial^2}{\partial x_{i}^2}\Phi_{\lambda}^{(N-1)}(y)f(x,y)dy\\
&+ \int_{\mathbb{R}^n}\sum_{i=1}^n \Big(\dfrac{\partial}{\partial x_{i}}Q_{\lambda_{n}}^{n,n}(x;y)\Big)\Big(\dfrac{\partial}{\partial x_{i}}\Phi_{\lambda}^{(N-1)}(y)f(x,y)\Big)dy .
\end{split}
\end{equation*}
Using Proposition \ref{kernel_odd2even}, the first integral equals
\[ \int_{\mathbb{R}^n}Q_{\lambda_{n}}^{n,n}(x;y)\mathcal{H}^D_{n,\lambda_{n}}\Phi_{\lambda}^{(N-1)}(y)f(x,y)dy.\]
The third integral is given by
\[\int_{\mathbb{R}^n}Q_{\lambda_{n}}^{n,n}(x;y)\sum_{i=1}^n(-\lambda_{n}+e^{y_{i}-x_{i}}-e^{x_{i}-y_{i-1}}\mathbbm{1}_{i>1})\frac{\partial}{\partial x_{i}}\Phi_{\lambda}^{(N-1)}(y)f(x,y)dy\]
since, for $1\leq i \leq n$,
\[\dfrac{\partial}{\partial x_{i}}Q_{\lambda_{n}}^{n,n}(x;y)=(-\lambda_{n}+e^{y_{i}-x_{i}}-e^{x_{i}-y_{i-1}}\mathbbm{1}_{i>1})Q_{\lambda_{n}}^{n,n}(x;y).\]
Combining all the above calculations we conclude that 
\begin{equation*}
\begin{split}
(\mathcal{H}_{n}^B)_{x}\int_{\mathbb{R}^n}Q_{\lambda_{n}}^{n,n}(x;y)&\Phi_{\lambda}^{(N-1)}(y)f(x,y)dy\\
=& \int_{\mathbb{R}^n}Q_{\lambda_{n}}^{n,n}(x;y)V_{\lambda_{n}}^{n,n}\Phi_{\lambda}^{(N-1)}(y)f(x,y)dy.
\end{split}
\end{equation*}
and the intertwining relation follows.
\end{proof}
Now that we have established the intertwining relation, let us check the remaining assumptions of Theorem \ref{Kurtz}. We have that the domain of the operator $\mathcal{A}^N_{\lambda}$ is $\mathcal{D}(\mathcal{A}^N_{\lambda}) = C_{c}^2(\Gamma_{N})$, the set of continuously, twice differentiable, compactly supported functions on $\Gamma_{N}$, which is closed under multiplication, separates points and is convergence determining. Moreover, the drift $\mathbf{x}\mapsto (b^k_{i}(\mathbf{x}),1\leq i \leq [\frac{k+1}{2}], 1\leq k \leq N)$ where for $k=2l$
\[b^{k}_{i}(\mathbf{x}) =-\lambda_{l} +e^{x^{k-1}_{i}-x^k_{i}}-e^{x^k_{i}-x^{k-1}_{i-1}}\mathbbm{1}_{i>1},\quad  1\leq i \leq l\]
and for $k=2l-1$
\[b^{k}_{i}(\mathbf{x}) =\lambda_{l} +e^{x^{k-1}_{i}-x^k_{i}}\mathbbm{1}_{i<l} +e^{-x^k_{l}}\mathbbm{1}_{i=l} -e^{x^k_{i}-x^{k-1}_{i-1}}\mathbbm{1}_{i>1}, \quad 1\leq i \leq l\]
is locally Lipschitz. Therefore, for every $\mathbf{x}\in \Gamma_{N}$, there exists a unique process with evolution given by the differential equations \eqref{eq:SDEs_even} and \eqref{eq:SDEs_odd} that starts from $\mathbf{x}$.

The domain of the operator $\mathcal{L}^{B/D}_{n,\lambda}$ is $\mathcal{D}(\mathcal{L}^{B/D}_{n,\lambda}) = C^2_{c}(\mathbb{R}^n)$. We need to show that a process associated with the operator $\mathcal{L}^{B/D}_{n,\lambda}$ does not explode in finite time. Baudoin and O'Connell already handled the operator $\mathcal{L}^B_{n,\lambda}$ in Theorem 3.1 of \cite{Baudoin-O'Connell_2011}, so we will only focus on the operator $\mathcal{L}^D_{n,\lambda}$. 

A process associated with the operator $\mathcal{L}^D_{n,\lambda}$ cannot explode in finite time if we can find a Lyapunov function $U$ for the operator $\mathcal{L}^D_{n,\lambda}$ that satisfies the following properties
\begin{itemize}
\item $U(x)\to \infty$ as $x\to \infty$;
\item $\mathcal{L}_{n,\lambda}^DU\leq cU$, for some $c\geq 0$.
\end{itemize}

\noindent We consider the function
\[U(x) =\dfrac{ \exp \big( \sum_{i=1}^{n-1} 2\lambda_{i}x_{i}\big)\Phi^{(1)}_{\lambda_{n}}(x_{n})}{\Phi^{(2n-1)}_{\lambda}(x)}.\]
The first property follows from the exponential decay of the function $\Phi^{(2n-1)}$ we proved in Proposition \ref{exponential_decay}. For the second property we have
\begin{equation*}
\begin{split}
(\mathcal{L}^D_{n,\lambda}U)(x) &= \dfrac{1}{\Phi_{\lambda}^{(N)}(x)}\Big(\mathcal{H}^D_{n,\lambda} - \frac{1}{2}\sum_{i=1}^n \lambda_{i}^2\Big)\exp \big( \sum_{i=1}^{n-1} 2\lambda_{i}x_{i}\big)\Phi^{(1)}_{\lambda_{n}}(x_{n})\\
&\leq  \dfrac{1}{\Phi_{\lambda}^{(N)}(x)}\Big( \frac{1}{2}\Delta + e^{-x_{n}}\frac{\partial}{\partial x_{n}}-\lambda_{n}e^{-x_{n}}- \frac{1}{2}\sum_{i=1}^n \lambda_{i}^2\Big)\exp \big( \sum_{i=1}^{n-1} 2\lambda_{i}x_{i}\big)\Phi^{(1)}_{\lambda_{n}}(x_{n})\\
&= \frac{3}{2}\sum_{i=1}^{n-1} \lambda_{i}^2 U(x)
\end{split}
\end{equation*}
where the last equality follows from the eigenrelation in Proposition \ref{eigenrelation_phi_whittaker}.

Therefore, for $x\in \mathbb{R}^n$, $\{X_{t}\}_{t\geq 0}$ with
\[X_{t} = x + \int_{0}^t \nabla \log \Phi^{(2n-1)}_{\lambda}(X_{s})ds + B_{t}\]
where $\{B_{t}\}_{t\geq 0}$ is a standard Brownian motion on $\mathbb{R}^n$, is a process with generator $\mathcal{L}^D_{n,\lambda}$ started from $x$. The uniqueness of the process follows from the uniqueness of the process with generator $\mathcal{A}^N_{\lambda}$ for any starting point $\mathbf{x}\in \Gamma_{N}$ along with the intertwining of the operators and the assumption on the initial distribution.

\end{proof}

\section{An informal connection with $\mathfrak{gl}_{N}$-Whittaker process} 
In the previous section we proved that under certain initial conditions the bottom level of the pattern with evolution described by the system of SDEs \eqref{eq:SDEs_even}, \eqref{eq:SDEs_odd} evolves as a diffusion on $\mathbb{R}^n$ with infinitesimal generator $\mathcal{L}^B_{n}$ if the pattern has $2n$ levels and $\mathcal{L}^D_{n}$ if its contains $2n-1$ levels. In this section we will discuss how these processes may relate with the partition function of a semi-discrete random polymer and via this relation we will conclude with the $\mathfrak{gl}_{N}$-Whittaker process. 

Let $N=2n$ or $N=2n-1$ for some $n\in \mathbb{N}$. We fix a vector $\lambda=(\lambda_{1},...,\lambda_{n})\in \mathbb{R}^n$ and consider the independent standard Brownian motions $\beta_{1},...,\beta_{N}$ such that $\beta_{2i-1}$ has drift $\lambda_{i}$ and $\beta_{2i}$ has drift $-\lambda_{i}$. For $t\geq 0$, define the random variables
\[\mathcal{Z}^k(t)\equiv \mathcal{Z}^{k}_{\lambda}(t)=\log \int_{0<s_{1}<...<s_{k}<s_{k+1}=t}e^{\sum_{i=1}^k \beta_{i}(s_{i},s_{i+1})}ds_{1}...ds_{k}, \qquad 1\leq k \leq N,\]
where for $s<t$, $\beta(s,t):=\beta(t)-\beta(s)$. \\
The evolution of the Markov process $(\mathcal{Z}^k(t),1\leq k \leq N)_{t\geq 0}$ is described by the system of stochastic differential equations
\begin{equation*}
\begin{array}{ll}
 d\mathcal{Z}^1=d\beta_{1}+e^{-\mathcal{Z}^{1}}dt & \\
 d\mathcal{Z}^k=d\beta_{k} + e^{\mathcal{Z}^{k-1}-\mathcal{Z}^k}dt, & 2\leq k \leq N.
\end{array} 
\end{equation*}
So, the process $(\mathcal{Z}^k(t), 1\leq k \leq N, t\geq 0)$ has the same evolution as the process on the \enquote{edge} of the pattern we described in the previous section. 

We remark that $\mathcal{Z}^N$ is the logarithm of the partition function for a model of a directed random polymer with up/right paths from $(0,0)$ to $(t,N)$. The energy of a horizontal segment $[t,t+\delta t]$ at level $i$ is given by $\beta_{i}(t,t+\delta t)=\beta_{i}(t+\delta t)-\beta_{i}(t)$, if $i\geq 1$ and equals $0$ if $i=0$.

The $\mathfrak{gl}_{N}$-Whittaker process is a diffusion on $\mathbb{R}^N$ with infinitesimal generator 
\begin{equation}
\mathcal{L}^A_{N,\nu}=[\Psi_{\nu}^{\mathfrak{gl}_{N}}]^{-1}\big(\frac{1}{2}\sum_{i=1}^N \dfrac{\partial^2}{\partial x_{i}^2}-\sum_{i=1}^{N-1}e^{x_{i+1}-x_{i}}-\frac{1}{2}\sum_{i=1}^N \nu_{i}^2\big)\Psi_{\nu}^{\mathfrak{gl}_{N}}
\label{eq:generator}
\end{equation}
where $\Psi_{\nu}^{\mathfrak{gl}_{N}}(x)$ is the $\mathfrak{gl}_{N}$-Whittaker function parametrized by the vector $\nu \in \mathbb{R}^N$.\\
In \cite{O'Connell_2012}, it was proved that the Whittaker process is related with the partition function of the semi-discrete random polymer introduced in \cite{O'Connell_Yor} as follows.\\

\noindent For $\gamma_{1},...,\gamma_{N}$ standard, independent Brownian motions with drift $\nu_{1},...,\nu_{N}$ respectively and for $t>0$, define, for $1\leq k \leq N$, the random variables
\[\mathcal{Y}^{k}(t)=\log \int_{0<s_{1}<...<s_{k-1}<t}e^{\gamma_{1}(s_{1})+\gamma_{2}(s_{1},s_{2})+...+\gamma_{k}(s_{n-1},t)}ds_{1}...ds_{k-1}.\]
\begin{theorem}[\cite{O'Connell_2012}] Let $\nu \in \mathbb{R}^N$ and $X^A$ be a Whittaker process on $\mathbb{R}^N$ with infinitesimal generator $\mathcal{L}_{N,\nu}$ such that $\lim_{t\to 0}(X_{i}^A(t)-X_{i+1}^A(t))=-\infty$, it follows that 
\begin{equation}\mathcal{Y}^{N}\overset{\mathcal{L}}{=}X_{1}^A .\label{eq:connection_2}\end{equation}
\label{comparison_2}
\end{theorem}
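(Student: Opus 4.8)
The plan is to follow the strategy of O'Connell \cite{O'Connell_2012}, which runs in parallel to the proof of Theorem \ref{first_result} in the present chapter. The central object is the positive-temperature path transformation $\Pi_N=(\mathcal{T}_1\circ\cdots\circ\mathcal{T}_{N-1})\circ\Pi_{N-1}$ built from the operators $\mathcal{T}_k$ of \eqref{eq:OConnell_transform_positive_intro}, which I would apply to the $N$-dimensional Brownian motion $\gamma=(\gamma_1,\dots,\gamma_N)$ with drift $\nu$. The proof then splits into three steps: (i) identify the first coordinate of $\Pi_N\gamma$ with the semi-discrete polymer partition function $\mathcal{Y}^N$; (ii) recognise $((\Pi_N\gamma)(t),t\ge0)$ as the diffusion with generator $\mathcal{L}^A_{N,\nu}$; and (iii) match entrance laws.

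For step (i) I would argue by induction on $N$. Writing $\mathbf{X}(t)=((\Pi_k\gamma)_i(t),\,1\le i\le k\le N)$, the defining recursion of $\Pi_N$ inserts at each level a term of the form $\log\int_0^t e^{\eta_{k+1}(s)-\eta_k(s)}\,ds$; unfolding these nested logarithms and integrals and comparing with the iterated integral defining $\mathcal{Y}^N(t)$ yields $(\Pi_N\gamma)_1=\mathcal{Y}^N$. Equivalently, one checks directly that the first coordinate solves $d\mathcal{Y}^1=d\gamma_1$ and $d\mathcal{Y}^k=d\gamma_k+e^{\mathcal{Y}^{k-1}-\mathcal{Y}^k}dt$, i.e. exactly the O'Connell--Yor system, with the convention that level $0$ contributes zero energy (this is precisely the convention distinguishing $\mathcal{Y}^k$ from the edge process $\mathcal{Z}^k$ of Section~8.3), and that $\mathcal{Y}^N(t)$ is its unique solution from the appropriate $-\infty$ initial data. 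This is the combinatorial heart of the argument.

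For steps (ii) and (iii) I would not repeat the computation but invoke the cited result (O'Connell \cite{O'Connell_2012}, Theorem 3.1): $\Pi_N\gamma$ is a diffusion with infinitesimal generator $\mathcal{L}^{\mathfrak{gl}_N}_\nu=\mathcal{L}^A_{N,\nu}$, and for each $t$ the conditional law of the full array given its bottom path is $\sigma^x_\nu$. The conclusion then follows the template of Theorem \ref{first_result}: a Markov-functions argument (Theorem \ref{rogers_pitman}, or its martingale-problem version Theorem \ref{Kurtz}), using $\sigma^x_\nu$ as intertwining kernel and the projection to the first coordinate, shows that $((\Pi_N\gamma)_1(t),t\ge0)$ is itself Markov with generator $\mathcal{L}^A_{N,\nu}$. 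It remains to pin down the entrance law: analysing the $t\to0$ asymptotics of the transformation (the $\log\int_0^t$ terms diverge to $-\infty$) gives $\lim_{t\to0}\big((\Pi_k\gamma)_i(t)-(\Pi_k\gamma)_{i+1}(t)\big)=-\infty$, which characterises the Whittaker diffusion uniquely, in the same spirit as the characterisation of Baudoin--O'Connell \cite{Baudoin-O'Connell_2011} in the orthogonal case. Hence $X_1^A\overset{\mathcal{L}}{=}(\Pi_N\gamma)_1=\mathcal{Y}^N$.

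The main obstacle I expect is step (i) together with the boundary analysis: verifying carefully that the iterated transformation reproduces exactly the nested-integral polymer partition function, keeping track of the indexing and the level-$0$ convention, and controlling the logarithmic singularities as $t\to0$ so that the entrance law is rigorously identified and non-explosion of the limiting diffusion holds. Everything downstream is a transcription of the intertwining and Markov-functions machinery already deployed for Theorem \ref{first_result}.
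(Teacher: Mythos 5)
The paper does not prove this statement at all: it is quoted verbatim as a cited result from O'Connell \cite{O'Connell_2012}, and the thesis only uses it downstream (in Corollary \ref{connection_so_gl}). So there is no in-paper proof to compare against; what you have written is a reconstruction of O'Connell's own argument, and as such the overall architecture — identify $(\Pi_N\gamma)_1$ with $\mathcal{Y}^N$, invoke Theorem 3.1 of \cite{O'Connell_2012} for the law of the bottom row $\Pi_N\gamma$, and pin down the entrance law via the $t\to 0$ divergence of the $\log\int_0^t$ terms — is the right one and is essentially complete.

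One point in your write-up is imprecise and should be fixed. You claim that a Markov-functions argument shows $((\Pi_N\gamma)_1(t),t\ge 0)$ "is itself Markov with generator $\mathcal{L}^A_{N,\nu}$". That cannot be right: $\mathcal{L}^A_{N,\nu}$ is a second-order operator on $\mathbb{R}^N$ and is the generator of the full $N$-dimensional process $\Pi_N\gamma$; the single coordinate $(\Pi_N\gamma)_1$ is not Markov on its own, and the theorem does not assert that it is. Fortunately this extra step is also unnecessary: once you know (a) $(\Pi_N\gamma)_1=\mathcal{Y}^N$ pathwise and (b) $\Pi_N\gamma$ is a diffusion with generator $\mathcal{L}^A_{N,\nu}$ and the prescribed entrance law, the equality in law $\mathcal{Y}^N\overset{\mathcal{L}}{=}X_1^A$ follows by taking first coordinates of two processes equal in law. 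So the Rogers--Pitman/Kurtz machinery is needed only inside the proof of O'Connell's Theorem 3.1 (to show the bottom row is Markov), not afterwards. With that correction, and with the level-$0$ convention you already flag (which is exactly what distinguishes $\mathcal{Y}^k$ from the edge process $\mathcal{Z}^k$ of Section 8.3), your sketch is a faithful account of the cited proof.
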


Let us prove a connection between the partition functions two polymers $\mathcal{Z}^N$ and $\mathcal{Y}^N$
\begin{theorem}
Let $\nu \in \mathbb{R}^n$ and let $\overleftarrow{\nu} = (\nu_{N},...,\nu_{1})$. Suppose that, if $N=2n$ then $\nu$ is chosen to satisfy $\overleftarrow{\nu} = (\lambda_{1},-\lambda_{1},...,\lambda_{n},-\lambda_{n})$ and if $N=2n-1$ then $\nu$ is chosen to satisfy $\overleftarrow{\nu} = (\lambda_{1},-\lambda_{1},...,\lambda_{n})$. Then for fixed $t \geq 0$
\begin{equation}
\mathcal{Z}^N(t)\overset{d}{=}\log \int_{0}^t e^{\mathcal{Y}^{N}(s)}ds
\label{eq:law_of_polymers}.
\end{equation}
\end{theorem}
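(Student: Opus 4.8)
The plan is to establish the stronger identity of laws $e^{\mathcal{Z}^N(t)}\overset{d}{=}\int_0^t e^{\mathcal{Y}^N(s)}\,ds$ at the fixed time $t$ directly, by writing both sides as iterated integrals over ordered sequences of times in $[0,t]$ and matching them through an explicit volume-preserving change of variables combined with Brownian time-reversal. First I would unfold the two definitions. On one side, for fixed $t$ and with $s_{N+1}:=t$,
\[
e^{\mathcal{Z}^N(t)}=\int_{0<s_1<\cdots<s_N<t}\exp\Bigl(\sum_{i=1}^N\bigl(\beta_i(s_{i+1})-\beta_i(s_i)\bigr)\Bigr)\,ds_1\cdots ds_N .
\]
On the other side, substituting $u_N=s$ and writing $u_0:=0$,
\[
\int_0^t e^{\mathcal{Y}^N(s)}\,ds=\int_{0<u_1<\cdots<u_N<t}\exp\Bigl(\sum_{i=1}^N\bigl(\gamma_i(u_i)-\gamma_i(u_{i-1})\bigr)\Bigr)\,du_1\cdots du_N .
\]
Then I would regroup each exponent by the breakpoint it attaches to (Abel summation): the first becomes $\beta_N(t)-\beta_1(s_1)+\sum_{j=2}^N(\beta_{j-1}(s_j)-\beta_j(s_j))$, and the second becomes $\gamma_N(u_N)+\sum_{j=1}^{N-1}(\gamma_j(u_j)-\gamma_{j+1}(u_j))$.

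Next, in the first integral I would reverse the order of the breakpoints via the volume-preserving bijection $u_j:=t-s_{N+1-j}$ of the simplex onto itself, and introduce the time-reversed drifted Brownian motions $\hat\beta_i(r):=\beta_i(t)-\beta_i(t-r)$, $r\in[0,t]$, so that $\beta_i(t-u)=\beta_i(t)-\hat\beta_i(u)$. Substituting, the boundary terms $\beta_i(t)$ collect into a deterministic remainder $\beta_N(t)-\beta_1(t)+\sum_{k=1}^{N-1}(\beta_{N-k}(t)-\beta_{N+1-k}(t))$, which telescopes to $0$; what survives is
\[
e^{\mathcal{Z}^N(t)}=\int_{0<u_1<\cdots<u_N<t}\exp\Bigl(\hat\beta_1(u_N)+\sum_{j=1}^{N-1}\bigl(\hat\beta_{N+1-j}(u_j)-\hat\beta_{N-j}(u_j)\bigr)\Bigr)\,du_1\cdots du_N .
\]
Finally I would invoke the law-identity of Brownian time-reversal: since $\beta_1,\dots,\beta_N$ are independent with $\beta_{2i-1},\beta_{2i}$ of drifts $\lambda_i,-\lambda_i$, the reversed family $(\hat\beta_1,\dots,\hat\beta_N)$ on $[0,t]$ is again independent, with $\hat\beta_i$ of the same drift as $\beta_i$. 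Setting $\delta_j:=\hat\beta_{N+1-j}$, the last display is exactly the representation of $\int_0^t e^{\mathcal{Y}^N(s)}\,ds$ with $\gamma$ replaced by $\delta$, and $(\delta_1,\dots,\delta_N)|_{[0,t]}\overset{d}{=}(\gamma_1,\dots,\gamma_N)|_{[0,t]}$ precisely when $\mathrm{drift}(\gamma_j)=\mathrm{drift}(\beta_{N+1-j})$ for every $j$. Reading off the drift sequence $(\lambda_1,-\lambda_1,\lambda_2,-\lambda_2,\dots)$ of $(\beta_1,\beta_2,\dots)$, this condition is equivalent to $\overleftarrow{\nu}=(\lambda_1,-\lambda_1,\lambda_2,-\lambda_2,\dots)$, terminating in $-\lambda_n$ when $N=2n$ and in $\lambda_n$ when $N=2n-1$, which is exactly the hypothesis. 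Taking logarithms then yields the claim.

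The computation is essentially bookkeeping, so the places needing genuine care are: (i) the reversal $\hat\beta_i(r)=\beta_i(t)-\beta_i(t-r)$ is an equality of laws valid only for a \emph{fixed} horizon $t$, consistent with the theorem being about the one-time marginal and not about the processes in $t$; (ii) the simplex flip $u_j=t-s_{N+1-j}$ reverses the index labelling, which is precisely why the matching hypothesis is phrased through $\overleftarrow{\nu}$ rather than $\nu$, and why the deterministic remainder must telescope to zero for the two representations to coincide; and (iii) the a.s. finiteness of all the iterated integrals, which is immediate since the integrands are continuous on bounded simplices. I expect the main opportunity for error to be step (ii) — correctly pairing the telescoping of the deterministic terms against the reversed drift labelling — rather than any conceptual difficulty. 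An alternative route would be to identify the two sides through their Whittaker marginals, using Theorem~\ref{first_result} for the left side (so that $\mathcal{Z}^N(t)=X^N_1(t)$ is the first coordinate of the $h$-transformed diffusion with $h=\Phi_\lambda^{(N)}$) and O'Connell's Theorem~\ref{comparison_2} together with a Laplace-transform identity for the right side; but the direct change-of-variables argument above is shorter and uses no spectral input.
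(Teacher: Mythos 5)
Your argument is correct and is essentially the paper's own proof: the same simplex flip $s\mapsto t-s$ (reversing the index labels), the same time-reversed Brownian motions $\hat\beta_i(r)=\beta_i(t)-\beta_i(t-r)$ preserving the drifts, and the same drift-matching condition read off through $\overleftarrow{\nu}$. The only cosmetic difference is that you make the Abel-summation regrouping and the telescoping of the $\beta_i(t)$ boundary terms explicit, whereas the paper absorbs this directly into the increment notation $\beta_i(s,t)$; note also that the telescoping remainder is random rather than deterministic, but since it vanishes identically this is immaterial.
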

\begin{proof}
Recall that $\mathcal{Z}^N(t)$ has the integral representation
\[\mathcal{Z}^N(t)=\log \int_{0<t_{1}<...<t_{N}<t_{N+1}= t}e^{\sum_{i=1}^N\beta_{i}(t_{i},t_{i+1})}dt_{1}...dt_{N}\]
If we set $s_{i}=t-t_{N-i+1}$, then 
\begin{equation*}
\begin{split}
\mathcal{Z}(t)&=\log \int_{0= s_{0}<s_{1}<...<s_{N}<t}e^{\sum_{i=1}^N\beta_{i}(t-s_{N-i+1},t-s_{N-i})}ds_{1}...ds_{N}\\
&=\log \int_{0= s_{0}<s_{1}<...<s_{N}< t}e^{\sum_{i=1}^N\beta_{i}(t-s_{i},t-s_{i-1})}ds_{1}...ds_{N}
\end{split}
\end{equation*}
set $\hat{\beta}_{j}(s):= \beta_{N-j+1}(t)-\beta_{N-j+1}(t-s)$, then $\hat{\beta}_{j}$ is a standard Brownian motion with drift $\bar{\lambda}_{N-j+1}$, where $\bar{\lambda_{2i}}=-\bar{\lambda}_{2i-1}=-\lambda_{i}$. So $\mathcal{Z}^N_{t}$ is rewritten as
\[\mathcal{Z}^N(t)=\log \int_{0= s_{0}<s_{1}<...<s_{N}<t}e^{\sum_{i=1}^N\hat{\beta}_{i}(s_{i-1},s_{i})}ds_{1}...ds_{N}.\]
Recall that $Y_{N}(t)$ is given by 
\[\mathcal{Y}^{N}(t)=\log \int_{0<s_{1}<...<s_{N-1}<t}e^{\gamma_{1}(s_{1})+\gamma_{2}(s_{1},s_{2})+...+\gamma_{N}(s_{n-1},t)}ds_{1}...ds_{N-1}\]
So if we choose the drift $\nu$ to satisfy $\nu_{i}=\bar{\lambda}_{N-i+1}$, we find $\hat{\beta}_{i}\overset{d}{=} \gamma_{i}$ and we conclude the result.
\end{proof} 

We observe that for $1\leq k \leq N$,
\[\lim_{t\to 0}\exp(\mathcal{Z}^k(t))=\lim_{t \to 0}\int_{0<s_{1}<...<s_{k}<s_{k+1}=t}e^{\sum_{i=1}^k \beta_{i}(s_{i},s_{i+1})}ds_{1}...ds_{k}=0, \quad a.s.\]
so the process $\mathcal{Z}$ starts from $-\infty$.
At this stage we would like to be able to say that the process on $\mathbf{X}$ can be started with all the particles at \enquote{$-\infty$} and even with this special initial condition the result in Theorem \ref{first_result} holds. Unfortunately, we have not managed to prove Theorem \ref{first_result} for general starting point. Let us assume for the sake of argument that the following holds.

\begin{conjecture}Let $X^B, X^D$ be two diffusions on $\mathbb{R}^n$ with infinitesimal generators $\mathcal{L}^B_{n,\lambda}$, $\mathcal{L}_{n,\lambda}^D$, respectively. If $\lim_{t\to 0}(X^{B/D}_{i}(t)-X^{B/D}_{i+1}(t))=-\infty$, for $1\leq i \leq n$, then 
\begin{equation}
\mathcal{Z}^{N}\overset{\mathcal{L}}{=} \left\{ \begin{array}{ll}
X^B_{1} & ,N=2n\\
X^D_{1} & ,N=2n-1.
\end{array} \right.
\label{eq:connection_1}
\end{equation} 
\label{conjecture_polymers}
\end{conjecture}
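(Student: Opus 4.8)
The plan is to deduce Conjecture \ref{conjecture_polymers} from Theorem \ref{first_result} by a limiting argument in which the starting configuration of the pattern $\mathbf{X}$ is degenerated to $-\infty$. The first observation is that the \emph{edge} of the pattern, the family $(X^1_1(t),X^2_1(t),\dots,X^N_1(t))$, has autonomous dynamics: from \eqref{eq:SDEs_even}, \eqref{eq:SDEs_odd} one reads off $dX^1_1=dB^1_1+e^{-X^1_1}dt$ and $dX^k_1=dB^k_1+e^{X^{k-1}_1-X^k_1}dt$ for $2\le k\le N$, with $B^{2l-1}_1$, $B^{2l}_1$ carrying drifts $\lambda_l$, $-\lambda_l$. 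This is precisely the system defining $(\mathcal{Z}^1,\dots,\mathcal{Z}^N)$ (with $\beta_k\overset{d}{=}B^k_1$). Hence, for any common initialisation of the edge, $X^N_1$ and $\mathcal{Z}^N$ have the same law as processes; in particular $X^N_1=\mathcal{Z}^N$ when the pattern is started so that its edge starts from $-\infty$, which is how the polymer $\mathcal{Z}$ is initialised since $\exp(\mathcal{Z}^k(t))\to 0$ as $t\to 0$. On the other hand, by Theorem \ref{first_result}, when $\mathbf{X}$ is started from $\sigma^{x}_\lambda$ the bottom row $X^N$ is the diffusion with generator $\mathcal{L}^B_{n,\lambda}$ (resp.\ $\mathcal{L}^D_{n,\lambda}$) started at $x$, so $X^N_1$ is its first coordinate $X^B_1$ (resp.\ $X^D_1$). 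The task is to merge these two descriptions by pushing $x$ to $-\infty$.

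Concretely, I would take a sequence $x^{(m)}\in\mathbb{R}^n$ with $x^{(m)}_i-x^{(m)}_{i+1}\to-\infty$ for $1\le i<n$ and $x^{(m)}_n\to-\infty$, and establish two convergences. First, that for fixed $t>0$ the $\mathcal{L}^{B/D}_{n,\lambda}$-diffusion started at $x^{(m)}$ converges in law, i.e.\ that these generators admit entrance laws from $-\infty$; for $\mathcal{L}^B_{n,\lambda}$ this is contained in the Baudoin--O'Connell results cited in the excerpt, and for $\mathcal{L}^D_{n,\lambda}$ one argues in the same spirit, combining the Lyapunov function $U$ from the proof of Theorem \ref{first_result} with tightness estimates coming from the exponential decay of $\Phi^{(N)}$ in Proposition \ref{exponential_decay}. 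Second, that the law $\sigma^{x^{(m)}}_\lambda$, restricted to the edge coordinates $(x^1_1,\dots,x^{N-1}_1)$, concentrates so that the edge starts from $-\infty$; this should follow from the explicit Gibbs form $\Phi^{(N)}_\lambda(x)^{-1}e^{\mathcal{F}^{(N)}_\lambda(\mathbf{x})}$, whose penalty terms drive the interior coordinates to $-\infty$ together with $x=x^N$, again using Proposition \ref{exponential_decay}. Granting both, within each $m$ the random variable $X^N_1$ under $\sigma^{x^{(m)}}_\lambda$ is simultaneously the first coordinate of the $\mathcal{L}^{B/D}$-diffusion from $x^{(m)}$ and the $N$-th coordinate of the edge process run from the $\sigma^{x^{(m)}}_\lambda$-edge-marginal; letting $m\to\infty$ forces the former to converge to $X^B_1$ (resp.\ $X^D_1$) from the entrance law and the latter to $\mathcal{Z}^N$, whence $\mathcal{Z}^N\overset{\mathcal{L}}{=}X^B_1$ or $X^D_1$. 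The intertwining $\mathcal{L}^{B/D}_{n,\lambda}\circ K^N_\lambda=K^N_\lambda\circ\mathcal{A}^N_\lambda$ of Proposition \ref{main_intert_proposition(continuous)} is an identity of operators independent of the initial law, so in principle one re-runs the Markov-function argument of Theorem \ref{Kurtz} (equivalently Theorem \ref{rogers_pitman}) with the degenerate initial distribution.

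The main obstacle is exactly the point the authors flag: making rigorous both the start of $\mathbf{X}$ from $-\infty$ and the persistence of the Markov-function conclusion in the limit. The delicate issues are the well-posedness of the martingale problems for $(\mathcal{A}^N_\lambda,\nu)$ and $(\mathcal{L}^{B/D}_{n,\lambda},\mu)$ when $\nu,\mu$ are entrance laws rather than ordinary initial distributions, and the need for non-explosion and tightness bounds \emph{uniform} over the degenerating family $x^{(m)}$, allowing the interchange of $\lim_{m\to\infty}$ with the time evolution; the sharp exponential estimates on $\Phi^{(N)}$ and the Lyapunov function are the natural tools, but do not obviously suffice as stated, which is presumably why this is left as a conjecture.

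A complementary route, sidestepping the pattern, uses the identity $\mathcal{Z}^N(t)\overset{d}{=}\log\int_0^t e^{\mathcal{Y}^N(s)}\,ds$ proved in the excerpt together with Theorem \ref{comparison_2}, by which $\mathcal{Y}^N$ equals in law the first coordinate $X^A_1$ of the $\mathfrak{gl}_N$-Whittaker process from $-\infty$; it would then suffice to show that $t\mapsto\log\int_0^t e^{X^A_1(s)}\,ds$ is a diffusion with generator $\mathcal{L}^B_{n,\lambda}$ (resp.\ $\mathcal{L}^D_{n,\lambda}$), a Matsumoto--Yor-type statement about a functional of the $\mathfrak{gl}_N$-Whittaker process. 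One would compute the generator of the pair $(X^A_1,\log\int_0^{\cdot}e^{X^A_1})$ by It\^o's formula and try to recognise the $h$-transform structure with $h=\Phi^{(N)}$; the obstruction here is precisely the identification of the relevant $\mathfrak{gl}_N$-Whittaker integral with $\Phi^{(N)}$, i.e.\ the matching between the type-$A$ and type-$B$ Whittaker functions in the literature that the authors note they were unable to carry out.
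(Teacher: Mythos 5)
The statement you are asked to prove is labelled a \emph{conjecture} in the thesis, and the paper supplies no proof of it: the authors state explicitly that they ``have not managed to prove Theorem \ref{first_result} for general starting point'' and then \emph{assume} the conjecture ``for the sake of argument'' in order to derive Corollary \ref{connection_so_gl}. So there is no paper proof to compare against; the only meaningful question is whether your proposal actually closes the gap the authors left open. It does not. Your first route is precisely the heuristic underlying the conjecture --- the edge $(X^1_1,\dots,X^N_1)$ is autonomous and has the same SDEs and drifts as $(\mathcal{Z}^1,\dots,\mathcal{Z}^N)$, and Theorem \ref{first_result} identifies $X^N$ as the $\mathcal{L}^{B/D}_{n,\lambda}$-diffusion --- but every step needed to pass to the limit $x^{(m)}\to-\infty$ is asserted rather than proved, and these are exactly the steps that constitute the open problem. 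Concretely: (i) Theorem \ref{first_result} only applies when $\mathbf{X}$ is started from $\sigma^{x}_\lambda$ with $x$ finite, and for finite $m$ the edge under $\sigma^{x^{(m)}}_\lambda$ does \emph{not} start from $-\infty$, so $X^N_1$ is not equal in law to $\mathcal{Z}^N$ at any finite $m$; one needs continuity in the initial condition of the edge SDE system up to an entrance law at $-\infty$, which is not established. (ii) The existence and uniqueness of an entrance law from $-\infty$ for $\mathcal{L}^{D}_{n,\lambda}$ is nowhere in the paper (Baudoin--O'Connell is cited only for the type-$B$ generator, and even there only for non-explosion from finite points), and Kurtz's Theorem \ref{Kurtz} requires well-posed martingale problems for \emph{both} $(\mathcal{A}^N_\lambda,\nu)$ and $(\mathcal{L}^{B/D}_{n,\lambda},\mu)$; with $\nu,\mu$ entrance laws this well-posedness is an open analytic problem, not a routine verification. (iii) The claimed concentration of $\sigma^{x^{(m)}}_\lambda$ on patterns whose interior coordinates diverge to $-\infty$ with $x^{(m)}$ is plausible from the Gibbs form but is not a consequence of Proposition \ref{exponential_decay} as stated.

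Your alternative route through $\mathcal{Y}^N$ and Theorem \ref{comparison_2} reduces the conjecture to showing that $t\mapsto\log\int_0^t e^{X^A_1(s)}\,ds$ is an $\mathcal{L}^{B/D}_{n,\lambda}$-diffusion; but this Matsumoto--Yor-type identity is equivalent in difficulty to the conjecture itself, and it hinges on an identification between the $\mathfrak{gl}_N$- and $\mathfrak{so}_{2n+1}$-Whittaker functions that the authors say they were unable to carry out. In short, your proposal is an honest and well-organised statement of the intended strategy and of where it breaks down, but it is a research programme, not a proof; the conjecture remains open both in the paper and in your write-up.
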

Then, using relations \eqref{eq:connection_2} and  \eqref{eq:law_of_polymers} we obtain the following result.
\begin{corollary} 
\label{connection_so_gl}
Let $X^B, X^D$ be two diffusions on $\mathbb{R}^n$ with generators $\mathcal{L}_{n,\lambda}^B$, $\mathcal{L}^D_{n,\lambda}$ satisfying the initial condition $\lim_{t\to 0}(X_{i}^{B/D}(t)-X_{i+1}^{B/D})(t)=-\infty$. Let $X^A$ be a diffusion on $\mathbb{R}^N$ with infinitesimal generator $\mathcal{L}_{N,\nu}$ and initial condition $\lim_{t\to 0}(X_{i}^A(t)-X_{i+1}^A)(t)=-\infty$. Assuming that conjecture \ref{conjecture_polymers} holds. we have the following\\
(i) If $N=2n$ and $\nu$ is chosen to satisfy $\overleftarrow{\nu} = (\lambda_{1},-\lambda_{1},...,\lambda_{n},-\lambda_{n})$ then for fixed $t \geq 0$
\[X_{1}^{(B)}(t)\overset{d}{=} \log \int_{0}^t e^{X_{1}^A(s)}ds.\]
(ii) if $N=2n-1$ and $\nu$ is chosen to satisfy $\overleftarrow{\nu} = (\lambda_{1},-\lambda_{1},...,\lambda_{n})$ then for fixed $t \geq 0$
\[X_{1}^{(D)}(t)\overset{d}{=} \log \int_{0}^t e^{X_{1}^A(s)}ds.\]
In both statement $\overleftarrow{\nu} = (\nu_{N},...,\nu_{1})$.
\end{corollary}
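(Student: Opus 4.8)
The plan is to obtain the corollary by simply concatenating three distributional identities already available in this section: Conjecture \ref{conjecture_polymers}, the identity \eqref{eq:law_of_polymers}, and O'Connell's Theorem \ref{comparison_2}. Fix $t\ge 0$ and write $N=2n$ or $N=2n-1$. First I would apply Conjecture \ref{conjecture_polymers}, which under the stated hypothesis $\lim_{s\to 0}(X^{B/D}_{i}(s)-X^{B/D}_{i+1}(s))=-\infty$ gives $\mathcal{Z}^N\overset{\mathcal{L}}{=}X^{(B)}_{1}$ when $N=2n$ and $\mathcal{Z}^N\overset{\mathcal{L}}{=}X^{(D)}_{1}$ when $N=2n-1$; in particular $X^{(B)}_{1}(t)\overset{d}{=}\mathcal{Z}^N(t)$ in the even case and $X^{(D)}_{1}(t)\overset{d}{=}\mathcal{Z}^N(t)$ in the odd case. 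Here it matters that the boundary behaviour $X^{B/D}_{i}-X^{B/D}_{i+1}\to-\infty$ assumed in the corollary is exactly the initialisation required by the conjecture, and matches the fact, recorded just above the conjecture, that the polymer process $\mathcal{Z}$ itself issues from $-\infty$.

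Next I would invoke \eqref{eq:law_of_polymers}: for the drift vector $\nu$ satisfying $\overleftarrow{\nu}=(\lambda_{1},-\lambda_{1},\dots,\lambda_{n},-\lambda_{n})$ if $N=2n$, resp. $\overleftarrow{\nu}=(\lambda_{1},-\lambda_{1},\dots,\lambda_{n})$ if $N=2n-1$, one has $\mathcal{Z}^N(t)\overset{d}{=}\log\int_{0}^{t}e^{\mathcal{Y}^{N}(s)}\,ds$, where $\mathcal{Y}^N$ is the logarithm of the partition function of the O'Connell--Yor semi-discrete polymer driven by independent Brownian motions with drifts $\nu_{1},\dots,\nu_{N}$. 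Then I would apply Theorem \ref{comparison_2}, which for this same $\nu$ and for $X^A$ the $\mathfrak{gl}_{N}$-Whittaker process with generator $\mathcal{L}_{N,\nu}$ started from $\lim_{s\to 0}(X^{A}_{i}(s)-X^{A}_{i+1}(s))=-\infty$ yields the process-level identity $\mathcal{Y}^{N}\overset{\mathcal{L}}{=}X^{A}_{1}$. Since $\phi\mapsto\log\int_{0}^{t}e^{\phi(s)}\,ds$ is a deterministic Borel functional on $C([0,t])$ (well defined because $X^A$ has continuous paths, so the integral is a.s.\ finite and positive), equality in law of the processes $\mathcal{Y}^{N}$ and $X^{A}_{1}$ forces $\log\int_{0}^{t}e^{\mathcal{Y}^{N}(s)}\,ds\overset{d}{=}\log\int_{0}^{t}e^{X^{A}_{1}(s)}\,ds$. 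Chaining the three equalities gives $X^{(B)}_{1}(t)\overset{d}{=}\log\int_{0}^{t}e^{X^{A}_{1}(s)}\,ds$ in case (i) and $X^{(D)}_{1}(t)\overset{d}{=}\log\int_{0}^{t}e^{X^{A}_{1}(s)}\,ds$ in case (ii).

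The only care needed is bookkeeping rather than substance. One should check that the three appearances of $\nu$ are consistent: the proof of \eqref{eq:law_of_polymers} forces $\nu_{i}=\bar{\lambda}_{N-i+1}$ with $\bar{\lambda}=(\lambda_{1},-\lambda_{1},\lambda_{2},-\lambda_{2},\dots)$, i.e.\ $\overleftarrow{\nu}=\bar{\lambda}$, which is precisely the prescription imposed in the corollary and the one fed into Theorem \ref{comparison_2}. One should also note that all three identities are used only at the single fixed time $t$ (apart from the process-level input $\mathcal{Y}^{N}\overset{\mathcal{L}}{=}X^{A}_{1}$, which is exactly what allows the functional transport), so no joint-in-$t$ or pathwise statement is required. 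The genuinely hard input, Conjecture \ref{conjecture_polymers} --- that the $\mathcal{L}^{B/D}_{n,\lambda}$-diffusions can be realised as the bottom layer of $\mathbf{X}$ started from the all-$-\infty$ configuration --- is assumed, so no obstacle remains in the proof of the corollary itself; the entire argument is the concatenation above.
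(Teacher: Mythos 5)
Your proposal is correct and is exactly the paper's argument: the corollary is obtained by chaining Conjecture \ref{conjecture_polymers}, the identity \eqref{eq:law_of_polymers}, and Theorem \ref{comparison_2} (relation \eqref{eq:connection_2}), with the drift vector $\nu$ matched across the three statements. Your additional remarks on why the process-level identity $\mathcal{Y}^N\overset{\mathcal{L}}{=}X^A_1$ licenses applying the functional $\log\int_0^t e^{\cdot}\,ds$ only make explicit what the paper leaves implicit.
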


We remark that the result in Corollary \ref{connection_so_gl} is the positive temperature analogue of the result in \cite{Borodin_et_al_2009} where the authors proved that the running maximum of the first coordinate of a type $A$ $N$-dimensional Dyson's Brownian motion is distributed as the first coordinate of an $n$-dimensional Dyson's Brownian motion of type $B$ if $N=2n$ and of type $C$ if $N=2n-1$.

%



\bibliographystyle{alpha} 

\bibliography{refs}

\end{document}